\documentclass[11pt,reqno]{amsart}

\usepackage[T1]{fontenc}
\usepackage{lmodern}
\usepackage{amsmath,amssymb,amsthm,mathtools,mathrsfs}
\usepackage{booktabs,array,longtable}
\usepackage{enumitem}
\usepackage{aliascnt}
\usepackage{needspace}
\usepackage[margin=1.08in]{geometry}
\usepackage{microtype}
\usepackage{xcolor}

\usepackage{listings}
\usepackage{upquote}
\lstdefinestyle{paperIIWolfram}{
  language=Mathematica,
  basicstyle=\ttfamily\fontsize{8.5}{10.2}\selectfont,
  keywordstyle=\color{blue!45!black},
  commentstyle=\color{black!60},
  stringstyle=\color{black},
  columns=fullflexible,keepspaces=true,showstringspaces=false,
  upquote=true,breaklines=true,breakatwhitespace=false,
  tabsize=2,numbers=none,aboveskip=6pt,belowskip=6pt,
  frame=single,rulecolor=\color{black!18},framerule=0.3pt,
  xleftmargin=4pt,xrightmargin=4pt,framesep=4pt
}

\lstdefinelanguage{WolframLanguage}{
  sensitive=true,
  morekeywords={Module,Block,ClearAll,If,Do,While,Return,Break,Table,Total,
    Flatten,Factor,Together,PowerExpand,Expand,D,Det,Cancel,Exponent,
    Coefficient,Binomial,PolynomialRemainder,TrueQ,Print,Select,Length,
    AppendTo,Range,Times,Min,Floor,Sign,InputForm,ToString,Join,
    ConstantArray,Delete,True,False,Infinity},
  morecomment=[s]{(*}{*)},
  morestring=[b]"
}
\usepackage[colorlinks=true,linkcolor=blue!55!black,
  citecolor=blue!55!black,urlcolor=blue!55!black]{hyperref}
\usepackage[nameinlink,capitalise,noabbrev]{cleveref}
\allowdisplaybreaks
\numberwithin{equation}{section}

\newtheorem{maintheorem}{Theorem}

\crefname{maintheorem}{Theorem}{Theorems}
\Crefname{maintheorem}{Theorem}{Theorems}
\newtheorem{theorem}{Theorem}[section]
\newaliascnt{proposition}{theorem}
\newtheorem{proposition}[proposition]{Proposition}
\aliascntresetthe{proposition}
\crefname{proposition}{Proposition}{Propositions}
\Crefname{proposition}{Proposition}{Propositions}
\newaliascnt{lemma}{theorem}
\newtheorem{lemma}[lemma]{Lemma}
\aliascntresetthe{lemma}
\crefname{lemma}{Lemma}{Lemmas}
\Crefname{lemma}{Lemma}{Lemmas}
\newaliascnt{corollary}{theorem}
\newtheorem{corollary}[corollary]{Corollary}
\aliascntresetthe{corollary}
\crefname{corollary}{Corollary}{Corollaries}
\Crefname{corollary}{Corollary}{Corollaries}
\newaliascnt{problem}{theorem}

\aliascntresetthe{problem}
\crefname{problem}{Problem}{Problems}
\Crefname{problem}{Problem}{Problems}
\theoremstyle{definition}
\newaliascnt{definition}{theorem}
\newtheorem{definition}[definition]{Definition}
\aliascntresetthe{definition}
\crefname{definition}{Definition}{Definitions}
\Crefname{definition}{Definition}{Definitions}
\newaliascnt{example}{theorem}
\newtheorem{example}[example]{Example}
\aliascntresetthe{example}
\crefname{example}{Example}{Examples}
\Crefname{example}{Example}{Examples}
\theoremstyle{remark}
\newaliascnt{remark}{theorem}
\newtheorem{remark}[remark]{Remark}
\aliascntresetthe{remark}
\crefname{remark}{Remark}{Remarks}
\Crefname{remark}{Remark}{Remarks}

\newcommand{\R}{\mathbb R}
\newcommand{\norm}[1]{\lvert #1\rvert}
\newcommand{\ip}[2]{\langle #1,#2\rangle}
\newcommand{\Sph}{\mathbb S}
\newcommand{\cH}{\mathcal H}
\newcommand{\cL}{\mathcal L}
\newcommand{\cK}{\mathcal K}
\newcommand{\cA}{\mathcal A}

\newcommand{\cC}{\mathcal C}

\newcommand{\reg}{\mathrm{reg}}
\newcommand{\Sing}{\operatorname{Sing}}
\newcommand{\Per}{\operatorname{Per}}
\newcommand{\diver}{\operatorname{div}}
\newcommand{\dist}{\operatorname{dist}}
\newcommand{\Hess}{\operatorname{Hess}}
\newcommand{\Id}{\operatorname{Id}}
\newcommand{\tr}{\operatorname{tr}}
\newcommand{\dd}{\,\mathrm d}
\newcommand{\one}{\mathbf 1}
\newcommand{\eps}{\varepsilon}
\newcommand{\llbracket}{[\![}
\newcommand{\rrbracket}{]\!]}

\makeatletter
\newcommand{\PrintAuthorAddresses}{%
  \@setaddresses
  \global\let\addresses\@empty}
\makeatother

\title[Laplacian algebras and subcalibrations]
{Laplacian algebras and subcalibrations for minimal hypercones}
\author{Hongbin Cui}
\address{Wu Wen-Tsun Key Laboratory of Mathematics, USTC,
Chinese Academy of Sciences, School of Mathematical Sciences,
University of Science and Technology of China, 96 Jinzhai Road,
Hefei, Anhui 230026, China}
\email{cuihongbin@ustc.edu.cn}
\dedicatory{Dedicated to Anguo Cui}
\thanks{This work was supported by the National Natural Science Foundation
of China (No.~12301068), the project of Stable Support for
Youth Team in Basic Research Field, CAS (YSBR-001), and the Xiaomi Young
Scholar Fund.}
\date{}
\subjclass[2020]{Primary 49Q05, 53C38, 49Q20; Secondary 53C42, 14P10}
\keywords{area-minimizing cone, subcalibration, Laplacian algebra,
Jacobi operator, complex determinant, algebraic realization, strict stability, strict minimality,
isoparametric hypersurface}
\hypersetup{
  pdftitle={Laplacian algebras and subcalibrations for minimal hypercones},
  pdfauthor={Hongbin Cui},
  pdfsubject={Subcalibration theory, strict stability, and strict area minimization},
  pdfkeywords={subcalibration, Laplacian algebra, Jacobi operator,
  algebraic realization, complex determinant, isoparametric cone}}

\begin{document}
\begin{abstract}
We develop an algebraic mechanism based on Laplacian algebras for
constructing minimal hypercones and determining their variational properties. The same closure
identities identify minimal zero sets and reduce subcalibration construction
and quantitative divergence estimates to algebraic inequalities. Combined
with comparison theorems and Jacobi identities, the resulting certificates
give criteria for area minimization, strict area minimization, and strict
stability, allowing nonisolated singularities. Applications include polynomial
subcalibrations for area-minimizing isoparametric hypercones, minimizing
classifications within
the OT--FKM splitting quartic and Clifford cubic families, and strict area
minimization for the zero sets of the real parts of complex determinants of
every order at least two. We also establish odd-degree rigidity under
gradient-square closure and a pullback criterion for subcalibrations. The
algebraic identities and certificates are supported by reproducible exact
symbolic computations.
\end{abstract}
\maketitle
\enlargethispage{2pt}

\section{Introduction}

Area-minimizing cones arise as tangent cones at singularities of
area-minimizing hypersurfaces and play a central role in their regularity
theory. Recent work of Chodosh and collaborators \cite{CMS26,CMSW25}
establishes generic regularity in ambient dimensions nine, ten, and
eleven for Plateau solutions and homology minimizers. Their arguments
use detailed analysis of minimizing cones and Jacobi fields on their
regular parts. In this paper, we study two related questions: how to
construct algebraic minimal hypercones and how to determine whether
they minimize area. We use specific Laplacian closure identities to
identify minimal zero levels, then seek algebraic realizations of
quantitative subcalibrations.

Write \(N=n+1\), \(n\geq2\), \(r=|z|\), and \(R=r^2\).
For a homogeneous defining polynomial \(F\), we seek a
positive homogeneous multiplier \(a\) such that the normalized gradient
of \(G=aF\) has the required divergence signs on the two sides of the cone.
On its regular zero set, \(\nabla G=a\nabla F\), so this normalized
gradient extends the prescribed unit normal to the two phases.

The classical constructions for Simons and Lawson cones motivate this
choice. For the Simons cone defined by
\(F(x,y)=|x|^2-|y|^2\), with \(x,y\in\R^m\) and \(m\geq4\),
the potential of De Philippis--Paolini
\cite[Section~2]{DePhilippisPaolini} is, up to a positive constant,
\[
 G=|x|^4-|y|^4=(|x|^2+|y|^2)F=RF.
\]
For a Lawson cone in \(\R^k\times\R^h\), put
\[
 U=\frac{|x|^2}{k-1},\qquad V=\frac{|y|^2}{h-1},\qquad F=U-V.
\]
The quartic potential used by De Philippis--Maggi
\cite[Section~4]{DePhilippisMaggi14} factors as
\begin{equation}\label{eq:intro-Lawson-multiplier}
 G=\frac{U^2-V^2}{4}=aF,\qquad
 a=\frac{U+V}{4}=\frac{2R+(h-k)F}{4(k+h-2)}.
\end{equation}
Here \(a>0\) away from the origin, and it is radial when \(k=h\).
This quartic construction gives their quantitative subcalibration for
\(k,h\geq3\), \(k+h\geq9\), and for \(k=2\), \(h\geq12\),
with the variables interchangeable. Liu \cite[Section~2.1]{Liu19}
treats the exceptional pairs \((3,5)\) and \((2,7),\ldots,(2,11)\),
up to interchange, using different multipliers on the two sides;
see \cref{rem:phase-local-fields}.

These examples place both the defining polynomial and its multiplier
in \(\R[|x|^2,|y|^2]\). The choice of \(F\), independent of the
radius \(R\), selects a minimal zero level; the choice of \(a\)
determines whether the normalized gradient of \(aF\) gives an area
comparison.
The quantitative subcalibration theory of De Philippis--Maggi
\cite{DePhilippisMaggi14} derives quantitative perimeter comparisons
from lower bounds for the divergence. We seek explicit algebraic
constructions for which these bounds can be verified by algebraic
inequalities.

Degrees and closure hypotheses refer to a reduced defining equation,
that is, one with no repeated irreducible factor;
\cref{prop:potential-normal-form} explains the invariance under
positive odd powers. For a smooth potential \(G\), put
\begin{equation}\label{eq:L-definition}
 \cL(G)=|\nabla G|^2\Delta G
 -\frac12\left\langle\nabla|\nabla G|^2,\nabla G\right\rangle.
\end{equation}
Where its gradient is nonzero,
\begin{equation}\label{eq:div-normalized-gradient}
 X_G=\frac{\nabla G}{|\nabla G|},\qquad
 \diver X_G=\frac{\cL(G)}{|\nabla G|^3}.
\end{equation}
Minimality requires \(\cL(F)=0\) on the regular zero set; under the
algebraic hypotheses of \cref{prop:divisibility}, this is equivalent to
\(\cL(F)=F\lambda_F\) for a polynomial \(\lambda_F\).
For \(E=\{F<0\}\) and \(G=aF\), \(a>0\), the subcalibration sign is
\begin{equation}\label{eq:master-sign}
 G\cL(G)\geq0.
\end{equation}
If the critical set has locally zero \(\cH^{N-1}\)-measure,
\cref{cor:gradient-subcalibration} makes \(X_G\) a subcalibration of the
oriented boundary of \(E\). Off the zero and critical sets, the same
expressions determine the nonnegative divergence weight
\[
 q_G=\operatorname{sgn}(G)\diver X_G
     =\frac{|\cL(G)|}{|\nabla G|^3}.
\]
Our comparison theorem bounds the perimeter excess from below by the
integral of this weight over the symmetric difference of a competitor
and \(E\). Thus estimates for an explicit algebraic expression give
quantitative perimeter bounds.

To find a suitable multiplier \(a\) and compute these expressions,
we work in a graded Laplacian algebra \(\cA\) containing the
prescribed polynomial \(F\).
A polynomial algebra is graded if it contains the homogeneous
components of each of its elements. It is called a \emph{Laplacian
algebra} if it contains \(R\) and is closed under \(\Delta\).
The product rule then gives closure under
\(\Gamma(u,v)=\langle\nabla u,\nabla v\rangle\).
For the Lawson example,
\(\R[R,F]=\R[|x|^2,|y|^2]\) is already a Laplacian algebra.
In general, such an algebra has a finite set of homogeneous generators, which we
may choose to include both \(R\) and \(F\)
\cite[Lemma~24]{MendesRadeschi}. Write
\(\cA=\R[I_1,\ldots,I_k]\). Equivalently, once \(R\in\cA\),
closure is checked on the generators:
\[
 \Delta I_i\in\cA,\qquad
 \Gamma(I_i,I_j)\in\cA\quad\text{for all }i,j;
\]
the chain and product rules then give closure for every polynomial
in the generators \cite[Proposition~37(a)]{MendesRadeschi}.
These identities form the \emph{closure table}. They determine
\(\cL(F)\), \(\cL(aF)\), and \(|\nabla(aF)|^2\) on the real
image of \(I=(I_1,\ldots,I_k)\), so the same algebra serves both
the choice of a minimal level and the search for its subcalibration.
The \emph{spherical quotient dimension} of \(\cA\) is
\(\dim I(\Sph^{N-1})\): the number of independent invariant variables
after fixing the radius; see \cref{sec:algebraic-jacobi} for its
geometric description.

\subsection{Polynomial subcalibrations for isoparametric hypercones}

We first consider the case in which \(\R[R,F]\) is itself a Laplacian
algebra. Mendes--Radeschi \cite[Proposition~37(c)]{MendesRadeschi}
show that the second generator can be replaced by a homogeneous
polynomial satisfying the Cartan--M\"unzner equations.
When the zero link of \(F\) is nonempty, regular, and minimal,
\cref{thm:rank-one-classification} identifies it with the minimal
regular member of an isoparametric family. For the minimizing members,
we construct explicit positive homogeneous polynomial multipliers in
\(\R[R,F]\). Their divergence signs give a uniform subcalibration proof
of minimality and strict minimality, and their divergence weights
determine the remainder in the quantitative comparison.

\Needspace{10\baselineskip}
\begin{maintheorem}[Polynomial subcalibrations for isoparametric hypercones]
\label{main:isoparametric}
Let \(C\subset\R^N\) be a nonflat area-minimizing isoparametric
hypercone, and let \(F\) be its centered Cartan--M\"unzner defining
polynomial, as in \eqref{eq:iso-F-z}.
There is an explicit homogeneous polynomial \(a_F\), positive on
\(\R^N\setminus\{0\}\), such that \(G=a_FF\) satisfies
\[
 \deg G\leq10,\qquad
 \nabla G\neq0\quad\text{on }\R^N\setminus\{0\},
\]
and
\[
 \operatorname{sgn}(G)\diver X_G>0\quad\text{off }C.
\]
The field \(X_G\) is a subcalibration of \(C\), and its divergence
proves strict area minimality.
\end{maintheorem}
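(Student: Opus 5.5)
The plan is to reduce everything to a one-variable problem on the isoparametric orbit space, using the rank-one Laplacian algebra \(\R[R,F]\). Let \(g\in\{1,2,3,4,6\}\) be the number of distinct principal curvatures and \(m_1,m_2\) the multiplicities, so \(N=\tfrac g2(m_1+m_2)+2\). The centered Cartan--M\"unzner polynomial \(F\) of degree \(g\) has the closure table
\[
 \Gamma(F,F)=g^2R^{g-1},\qquad \Gamma(R,F)=2gF,\qquad \Gamma(R,R)=4R,
\]
\[
 \Delta R=2N,\qquad \Delta F=cR^{g/2-1},
\]
with \(c=\tfrac{g^2}{2}(m_2-m_1)\) before centering; centering shifts \(F\) by a multiple of \(R^{g/2}\) so that \(\{F=0\}\) is the minimal level. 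Since \(\nabla G\neq0\) and the sign condition are homogeneous statements, I restrict to \(R=1\). There I set \(t=F/R^{g/2}\), which ranges over the interval between the two focal values. Any candidate \(a\in\R[R,F]\) of degree \(d\) is then \(R^{d/2}\,p(t)\) with \(p\) a polynomial.

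Using the chain rule on the closure table, I will express \(|\nabla G|^2\) and \(\cL(G)\) for \(G=R^{d/2}p(t)\,t\) (times a power of \(R\)) as explicit polynomials in \(t\) on \(\Sph^{N-1}\). \(\cL\) is cubic in the gradient, so \(\cL(G)\) is a combination of \(\phi'^3\), \(\phi'^2\phi\), \(\phi'\phi''\), etc., where \(\phi(t)=p(t)t\). The coefficients involve \(g\), \(m_i\) and \(1-t^2\) (the focal factor from \(\Gamma(F,F)-g^2t^2\cdots\)). The theorem then reduces to two one-variable inequalities on the closed orbit interval \(J\):
\begin{itemize}
 \item[(i)] \(|\nabla G|^2>0\) on \(J\);
 \item[(ii)] \(t\,\cL(G)(t)>0\) for \(t\in J\setminus\{0\}\).
\end{itemize}
Inequality (ii) vanishes at \(t=0\) only through the factor \(t\), since \(\cL(F)=F\lambda_F\) (\cref{prop:divisibility}). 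Given (i) and (ii), \(\nabla G\) never vanishes away from the origin, so the critical set is \(\{0\}\) and \cref{cor:gradient-subcalibration} applies. Strict positivity of \(q_G\) off \(C\) then feeds into the comparison theorem, and gives strict minimality by the positive remainder.

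For the multiplier I will try the Lawson-type ansatz \(a=\alpha R^{g/2}+\beta F\), or \(a=R^{k}(\alpha R^{g/2}+\beta F)\) with \(k\) small, keeping \(\deg G\le 10\); for \(g=6\), \(\deg aF\le10\) forces \(a\) of degree at most \(4\). With \(p(t)=\alpha+\beta t\), inequality (ii) becomes a low-degree polynomial inequality in \(t\), with coefficients depending on \((g,m_1,m_2)\). The ratio \(\beta/\alpha\) is chosen to kill the leading asymmetric term, as in \eqref{eq:intro-Lawson-multiplier}, where \(\beta\propto m_2-m_1\). Positivity of \(a\) on \(J\) is a linear condition in \(t\).

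The main obstacle is verifying (ii) uniformly over the area-minimizing list. The list consists of the infinite families (OT--FKM for \(g=4\), all \(g=2\) with \(m_1+m_2\) large enough) and finitely many exceptional low-dimensional cases. These exceptions are where the naive quadratic multiplier fails, just as Lawson's \((3,5)\), \((2,7),\ldots,(2,11)\) needed extra care. My approach here:
\begin{itemize}
 \item For the infinite families, write \(t\,\cL(G)/t^2\) as a polynomial in \(t\) and in \(1/m\). Show it is positive by checking endpoints and the discriminant, giving positivity for all large multiplicities.
 \item For the finitely many remaining \((g,m_1,m_2)\), allow one more free coefficient, taking \(p\) quadratic while keeping the degree bound. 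Certify positivity on \(J\) by exact Sturm sequences or a sum-of-squares/Bernstein certificate.
\end{itemize}
I expect the \(g=6\) and small \(g=4\) cases near the minimizing threshold to be tightest. Nonflatness excludes \(g=1\).
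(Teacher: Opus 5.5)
Your plan is essentially the paper's proof (\cref{thm:iso-polynomial-table} via \cref{prop:iso-master}). It uses radial multipliers $R$ or $R^2$, certified by an endpoint/vertex/discriminant analysis of a quadratic in the angular variable. The borderline cases $(g,m_1,m_2)=(2,2,4),(4,3,4),(4,2,5),(4,1,7)$ get Lawson-type corrections $a=R^k(R^{g/2}+\lambda F)$ with exact Bernstein certificates, and strictness comes from \cref{cor:polynomial-strict-minimality}. A linear $p$ always suffices, and at $N=4g$ your asymmetry-killing ratio is exactly the forced value $\lambda_*=2gt/((2g+1)(1-t^2))$, where $t=(m_2-m_1)/(m_1+m_2)$.

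The case list must be taken from \cref{prop:iso-stability} and \cref{thm:classical-iso-classification}, and two corrections are needed. First, $g=6$ contributes nothing: Abresch's restriction gives $N\in\{8,14\}$, below the stability threshold $N\geq24$, so no multiplier could exist there. Second, for $g=3$ the term $R^{3/2}$ is not polynomial; the working multiplier is simply $a=R$ for $m_1=m_2\in\{4,8\}$.
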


Lawlor \cite{Lawlor91} obtained the homogeneous minimizing
classification. Earlier, Ferus--Karcher \cite{FerusKarcher85}
constructed minimizing cones from nonhomogeneous isoparametric
hypersurfaces; see also Wang's account
\cite[Section~9, p.~234]{Wang94} of the scope of their comparison
argument. Wang \cite[Theorem~A, p.~238]{Wang94} gave the full
isoparametric classification and stated strict area minimality for
the minimizing members. Tang--Zhang \cite[Section~2]{TangZhang20}
supplied an alternative proof of strict area minimality via Lawlor's
criterion, resolving a gap in Wang's argument. Here, the divergence
remainder estimates for our explicit polynomial subcalibrations,
together with the comparison theorem, give a direct proof of strict
area minimality. The degree bound is independent of the ambient
dimension. The classical stability calculation is recalled in
\cref{prop:iso-stability}; the known non-minimality results exclude
the cases outside the table in \cref{thm:iso-polynomial-table}.

\subsection{Two quadratic generators and product spheres}

We next consider \(\R[R,S,F]\), where \(S\) is quadratic and
independent of \(R\), and \(F\) is homogeneous of degree greater than two. Writing
\(S(z)=\langle Bz,z\rangle\), closure forces \(B^2\) to lie in
\(\operatorname{span}\{I,B\}\). Its two eigenspaces give
\(z=(x,y)\in\R^p\oplus\R^q\), where the dimensions need not agree,
and a linear change of the quadratic generators gives
\(S=|x|^2-|y|^2\). Subtracting a polynomial in the two squared
radii from the third generator gives a separately harmonic
bihomogeneous polynomial, whose restriction to
\(\Sph^{p-1}\times\Sph^{q-1}\) is isoparametric
(\cref{prop:two-block-normal}). This identifies a source of candidate
defining equations. We then select the minimal level and seek a
multiplier in the resulting algebra, as illustrated by the following
OT--FKM family.

For the splitting of an OT--FKM polynomial, let \(x,y\in\R^\ell\) and
\[
 Q=\langle x,y\rangle^2+
       \sum_{i=1}^{m-1}\langle A_i x,y\rangle^2,\qquad
 T=|x|^2|y|^2,\qquad F=Q-\frac{m-1}{\ell-2}T,
\]
where the skew-symmetric matrices \(A_i\) satisfy the Clifford
relations and \(2\leq m\leq\ell-2\). Since \(m\geq2\), the
relation \(A_1^2=-I_\ell\) forces \(\ell\) to be even.
These product-sphere levels
arise in \cite{CuiFKM25}. Retaining the balance equation \(|x|=|y|\)
gives a cone of codimension two; allowing the two radii to vary
independently gives the hypercone treated here.

\begin{maintheorem}[Bihomogeneous quartics from OT--FKM splitting]
\label{main:bihomogeneous}
For every such Clifford representation, \(C=\{F=0\}\subset\R^{2\ell}\)
is minimal on its regular part, and its singular set is the union
of the two coordinate \(\R^\ell\) subspaces. Its oriented boundary
current is area minimizing if and only if \(\ell\geq10\).
In this range it is strictly area minimizing and strictly stable, with
polynomial subcalibration potential \(G=RTF\) when \(\ell=10\), and \(G=TF\)
when \(\ell\geq12\). The remaining admissible dimensions
\(\ell=4,6,8\) are unstable.
\end{maintheorem}

The proof in \cref{sec:two-quadratic} uses a uniform positivity
inequality for the divergence. For \(\ell=8\), instability requires
an angular variation on the singular link, even though its transverse
tangent cones can minimize area. The octonionic Hopf pairing
\(\langle h(x),h(y)\rangle\), for \(h:\R^{16}\to\R^9\), satisfies
the same closure identities with \(\ell=16\) and \(m=8\), so its
subcalibration follows from the general calculation. Its balance
section also minimizes within the singular Simons cone
\(\{|x|=|y|\}\), using the intrinsic potential \(RF\)
(\cref{thm:Hopf-Simons-section}). Hopf incidence cubics in unequal
block dimensions give further minimizing examples from the Clifford
cubic family classified in \cref{sec:Clifford-classification}.
For the higher bidegrees
obtained from octonionic multiplication on \(\R^8\times\R^8\),
transverse tangent cones instead exclude area minimization.

\subsection{Gradient-square multipliers and singular cubic cones}

The gradient square \(S=|\nabla F|^2\) already occurs in
\(\cL(F)\) and is also a natural multiplier for singular zero
cones. This motivates the next closure hypothesis: we assume that
\(\R[R,F,S]\) is Laplacian and that \(F\) has a nonzero critical
point. Its closure table determines both minimality of \(F=0\)
and the divergence of \(G=SF\). For odd-degree \(F\), comparing
the closure identities at a critical point and a spherical maximum
forces the degree to be three. For background on minimal cubic cones,
see Hsiang~\cite{Hsiang67} and Tkachev~\cite{TkachevClassification10}.
We then determine the minimizing members of this class.

\begin{maintheorem}[Odd-degree rigidity and singular cubic cones]
\label{main:gradient-square-existence}
Let \(F\not\equiv0\) be homogeneous of odd degree \(e\geq3\) on
\(\R^N\), \(N\geq3\), and put \(S=|\nabla F|^2\).
Assume that \(\R[R,F,S]\) is a Laplacian algebra and that
\(\nabla F(p)=0\) for some \(p\neq0\).
Then \(e=3\), \(N=6k-3\) for an integer \(k\geq1\), and
\[
 \partial\llbracket\{F<0\}\rrbracket\text{ is area minimizing}
 \quad\Longleftrightarrow\quad k\geq4.
\]
For every minimizing member, \(G=SF\) is a degree-seven polynomial
subcalibration potential and proves strict area minimality.  Moreover,
\[
 Q_C(u)\geq\bigl(k+\sqrt{4k^2-20k+17}\bigr)^2\int_Cr^{-2}u^2,
 \qquad u\in C_c^\infty(C_{\reg}\setminus\{0\}),
\]
where \(Q_C\) is the second-variation form and the constant is optimal.
\end{maintheorem}

At a nonzero critical point, the leading Taylor term is a
balanced quadratic form. Its zero cone excludes minimality in the
low-dimensional range, whereas \(SF\) proves minimality in the
remaining range. A separate angular eigenfunction and cutoff argument
determines the optimal Hardy constant. Within this closure class,
Peng and Xiao's classification of the exceptional cases
\cite[Theorem~C]{PengXiao93} (see also Tkachev
\cite[Proposition~6.1]{TkachevClassification10}), together with the
real product cubic, gives \(k\in\{1,2,3,5\}\). Thus the minimizing case has \(N=27\);
\cref{rem:cubic-radial-eigencubic} explains the reduction to this
classified branch and gives a concrete realization.

\subsection{The area-minimizing Clifford cubic family}

We next consider the Clifford family without assuming the
gradient-square closure above. Let \((A_0,\ldots,A_q)\), \(q\geq1\),
be a symmetric Clifford system on \(\R^{2m}\), so that
\(A_iA_j+A_jA_i=2\delta_{ij}I\), and set
\[
 F(y,z)=\sum_{i=0}^q z_i\langle A_i y,y\rangle,
 \qquad (y,z)\in\R^{2m}\oplus\R^{q+1}.
\]
These harmonic minimal cubics were constructed by Peng--Xiao
\cite{PengXiao93} and independently by Tkachev \cite{TkachevClifford10}.
Their differential identities close in the Laplacian algebra
\(\mathbb R[u,v,T,F]\), with four homogeneous generators
\[
 u=|y|^2,\qquad v=|z|^2,\qquad
 T=\sum_{i=0}^q\langle A_i y,y\rangle^2,\qquad F.
\]
This algebra contains \(R=u+v\). Its closure table
\eqref{eq:clifford-closure-table} provides the divergence calculations
used to determine the area-minimizing range.

\begin{maintheorem}[Clifford cubic classification]
\label{main:clifford-classification}
For every such Clifford system, the oriented boundary of \(\{F<0\}\)
is area minimizing if and only if
\[
 m\geq4,\qquad q\geq3,\qquad (m,q)\neq(4,3).
\]
In the minimizing range, degree-six subcalibration potentials are
\(S^{3/4}F\) for \(m\geq8\), \(q\geq3\), and
\(R^{1/2}S^{1/2}F\) for \(m\geq4\), \(q\geq4\), where
\(R=|y|^2+|z|^2\) and \(S=|\nabla F|^2\).
The cone with \((m,q)=(4,3)\) is unstable, although every tangent
cone at a nonzero point is area minimizing.
\end{maintheorem}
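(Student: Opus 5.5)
The plan is to compute everything in the Laplacian algebra \(\R[u,v,T,F]\) through the closure table \eqref{eq:clifford-closure-table}. I will then split the classification into two parts: sufficiency, via explicit subcalibrations, and necessity, via instability or non-minimizing tangent cones. The first step records the closure identities. Each \(\Delta\) and \(\Gamma\) of the generators is a polynomial in \(u,v,T,F\), using \(|\nabla_y F|^2=4\sum_{i,j}z_iz_j\langle A_iy,A_jy\rangle\) and the Clifford relations. In particular I will express \(S=|\nabla F|^2=4uv+\text{(terms in }T)\) and \(\cL(F)=F\lambda_F\). I will also describe the real image of \(I=(u,v,T,F)\) on \(\Sph^{2m+q}\); this is a semialgebraic region cut out by \(T\le u^2\), \(F^2\le vT\) and their refinements (Cauchy--Schwarz in the Clifford sphere). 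Every divergence sign then becomes an inequality on this compact region of the spherical quotient.

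For sufficiency I take \(G=S^{3/4}F\) and \(G=R^{1/2}S^{1/2}F\) and use homogeneity to reduce \(\operatorname{sgn}(G)\diver X_G\ge0\) to a polynomial inequality. For \(G=a F\) with \(a=S^\alpha R^\beta\) I expand \(\cL(aF)\) by the product rule. This gives \(F\) times a polynomial \(P_{m,q}(u,v,T,F)\), after clearing positive powers of \(R\) and \(S\). On the critical set of \(F\), and where \(S=0\), I need separate care, since the multiplier is only continuous there. I will check that the set \(\{S=0\}\) has locally zero \(\cH^{N-1}\)-measure (it has high codimension), so \cref{cor:gradient-subcalibration} still applies. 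The inequality \(P_{m,q}\ge0\) is linear or quadratic in \(m,q\) with coefficients depending on the invariants. I will prove it by checking the corners of the region and using monotonicity in \(m\) and \(q\): the base cases \(m=8,q=3\) and \(m=4,q=4\) are handled by exact symbolic certificates (sum-of-squares or a Positivstellensatz on the region), and larger parameters only add nonnegative terms. Strict positivity off \(C\) together with the comparison theorem then gives strict minimality. The two families cover the whole range \(m\ge4\), \(q\ge3\), \((m,q)\neq(4,3)\), since for \(q=3\) the admissible \(m\) are multiples of \(4\) and so \(m\ge8\).

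For necessity I will treat two kinds of cones. For small \(m\) or \(q\) (that is, \(m\le2\) or \(q\le2\)) I will exhibit a tangent cone at a nonzero singular or regular point that is a known non-minimizing cone, for instance a quadratic cone \(\{\langle A_0y,y\rangle=0\}\times\R\) of Simons type in low dimension. Alternatively I will show instability by computing the Jacobi operator on the link restricted to functions of the invariants, and checking that its first eigenvalue violates Hardy's bound \(((N-3)/2)^2\). The case \((m,q)=(4,3)\) is the hardest. There the tangent cones are minimizing, so I need a global unstable variation. I will use an angular test function \(r^{-(N-3)/2+i\tau}\phi\), where \(\phi\) is a polynomial in the invariants (for example \(\phi=T/u^2-c\) restricted to the link), cut off near the singular link. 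I then show that the Rayleigh quotient is below the Hardy constant, and that the cutoff error near the codimension-\(\ge2\) singular set vanishes.

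I expect the main obstacles to be the \((4,3)\) instability, which needs an explicit angular eigenfunction with controlled singular behaviour, and the uniform positivity certificate at the boundary cases.
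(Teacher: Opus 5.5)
Your sufficiency argument is essentially the paper's. It uses the same potentials $S^{3/4}F$ and $R^{1/2}S^{1/2}F$. It reduces $F\cL(G)\ge0$ to an inequality that is affine in $(m,q)$ with nonnegative $m$- and $q$-derivatives. It settles the base cases $(8,3)$ and $(4,4)$, and uses the Hurwitz--Radon observation that $q\ge3$ forces $4\mid m$. The paper certifies the base cases by Bernstein coefficients on the cube $[0,1]^3$ in $(\lambda,\theta,\omega)=(u/R,\,T/u^2,\,F^2/(vT))$, using affine dependence on $\omega$. It therefore never needs the exact image you propose to describe.

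The genuine gap is in the necessity direction. Your only concrete obstruction is the blow-up at $(0,z_0)$, which is $\mathcal S_m\times\R^{q+1}$ (the factor is $\R^{q+1}$, not $\R$). This fails to minimize only for $m\le3$, and those $m$ already force $q\le2$ since $\rho(1)=\rho(3)=1$ and $\rho(2)=2$. For the infinitely many admissible pairs with $q\le2$ and $m\ge4$, this tangent cone \emph{is} area minimizing, so it excludes nothing.

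What is missing is the second critical stratum $\Sigma_+=\{(y,0):\tau(y)=0\}$.
\begin{itemize}
\item You must show it is nonempty when $m>q$. In the standard form with $A_i=\bigl(\begin{smallmatrix}0&E_i\\-E_i&0\end{smallmatrix}\bigr)$ for $i\ge2$, take $y=(a,b)$ with $|a|=|b|$ and $a\perp b,E_2b,\dots,E_qb$.
\item You must compute the blow-up there. The leading term $2\langle A_\eta y_0,\xi\rangle$ gives $\mathcal S_{q+1}\times\R^{2m-q-1}$, which is not minimizing for $q+1\le3$. Equivalently, as the paper argues, this stratum of genuine singular points has codimension $2q+2<8$, contradicting Federer's bound.
\end{itemize}
The same analysis of both strata is needed for the clause that at $(4,3)$ every tangent cone at a nonzero point is area minimizing. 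This includes uniqueness and multiplicity one of the blow-ups, and both strata give $\mathcal S_4\times\R^4$. Your proposal assumes this clause rather than proving it.

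For the $(4,3)$ instability, the mechanism you describe is the paper's, but the decisive computation is absent. You need the link identities $|A_\Lambda|^2=32/S-K/(2S^2)$ and $|\nabla_\Lambda S|^2=K-16S^2$. You also need a way to integrate invariant functions over $\Lambda$. The paper adjoins $A_4$ with $E_4=E_2E_3$ to obtain the quaternionic Hopf map $\Sph^7\to\Sph^4$. This yields the density $u^2(1-u)^{1/2}(\theta/(1-\theta))^{1/2}S^{1/2}$ in $(u,\theta)$.

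The paper then takes $\phi=S^{3/4}(1-S/2)$, chosen so that every power of $S$ in the form is an integer. This gives the exact Beta-integral value $-c_0\pi\cdot 9952/969969$. Since $S^{3/4}$ alone gives a positive value, the sign is not automatic, and nothing in your proposal decides whether $\theta-c$ works.

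Finally, the cutoff step rests on both strata having transverse dimension $7$ in $\Lambda$, so the tube volume is $O(\eps^7)$ against $|A_\Lambda|^2=O(\rho^{-2})$. Codimension $\ge2$ alone would not control the potential term for a test function that does not vanish on the strata.
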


The proof in \cref{sec:Clifford-classification} combines Clifford
identities and Bernstein coefficients with the comparison theorem.
Singular tangent cones exclude the remaining low-dimensional cases;
the pair \((4,3)\) requires an angular test function on its singular
link. These examples also distinguish the variational notions: strict
stability does not imply area minimization, as recalled in
\cref{thm:classical-iso-classification}, while the calibrated complex
hypersurfaces of \cref{prop:complex-zero-gap} minimize area but have
zero Hardy gap. The latter examples have real codimension two and
extend the phenomenon in \cite{DimlerLee24}.

\subsection{Pullback constructions}

Besides finding multipliers directly, we study how subcalibrations
can be transferred through maps. Riedler pulls back Simons cones by
quadratic Clifford harmonic morphisms to obtain quartic cones with
nonisolated singularities. He proves area minimization in a specified
parameter range by an explicit subcalibration
\cite[Theorem~5.2.5]{RiedlerThesis25}.

For Riedler's umbilic harmonic morphisms, we identify the additional
divergence term arising from the dilation of the map. We then give a
criterion ensuring that a subcalibration potential on the target
induces a subcalibration on the source
(\cref{prop:pullback-subcalibration}).
For a quadratic Clifford map \(P\), we also prove that
\[
 \R[R,P^*\cA]
\]
is a Laplacian algebra whenever \(\cA\) is one on the target.
This permits both transferred potentials and additional source-space
multipliers to be treated by the same closure identities. Riedler's
original quartic potential uses such a source-radius factor; our
comparison and Jacobi formulas give quantitative consequences of its
known divergence sign (\cref{ex:Riedler-correct-power}).

In \cref{sec:quartic-application}, we apply the pullback criterion to
Cartan's harmonic cubics \(f\) on \(\R^{14}\) and \(\R^{26}\).
Writing \(F=f\circ P\) and \(Q=|P|^2\), we obtain a degree-six
defining polynomial and a degree-ten subcalibration potential \(G=QF\).
These potentials prove strict area minimality and give explicit
positive Hardy bounds (\cref{thm:cartan-pullback}). An
unequal-multiplicity quartic pullback provides another application
(\cref{ex:lawson-pullback}).

The same quartic closure also determines the negative case left open
in \cite[Remark~5.2.6(3)]{RiedlerThesis25}. We prove that
\(C^4_{16,4}\) is unstable: an admissible angular approximation and
sharp radial Hardy approximation give a second-variation quotient
tending to \(-15/4\) (\cref{thm:Riedler-unstable}). Together with
Riedler's minimizing theorem and his exclusions, this settles the
area-minimizing range of that family.

\subsection{A uniform construction in arbitrary matrix order}

Classifying Laplacian algebras of higher spherical quotient dimension
appears difficult. From the viewpoint of area minimization, we
give a matrix family whose associated algebras realize every positive
spherical quotient dimension, and construct subcalibrations uniformly
in the matrix order. Let
\[
 F_m(Z)=\operatorname{Re}\det_{\mathbb C}Z,
 \qquad Z\in M_m(\mathbb C),
\]
and orient \(C_m=\{F_m=0\}\) as the boundary of \(\{F_m<0\}\).
For \(0\leq j\leq m\), let \(E_j\) be the sum of the squared
absolute values of all complex \(j\times j\) minors, with \(E_0=1\).

\Needspace{10\baselineskip}
\begin{maintheorem}[Real parts of complex determinants in all orders]
\label{main:complex-determinants}
For every \(m\geq1\), the integral boundary
\[
 T_m=\partial\llbracket\{Z\in M_m(\mathbb C):F_m(Z)<0\}\rrbracket
\]
is locally area minimizing in \(M_m(\mathbb C)\cong\mathbb R^{2m^2}\).
For \(m\geq2\), a subcalibration is given by the normalized gradient of
\[
 G_m=F_m\left(\prod_{j=1}^{m-1}E_j\right)^{1/2}.
\]
The field is smooth off the locus
\(\{Z:\operatorname{rank}_{\mathbb C}Z\leq m-2\}\), which has
real codimension eight.  For \(m\geq2\), the cone is strictly area minimizing in the sense of
\cref{def:four-variational-notions}, and
\begin{equation}\label{eq:main-determinant-stability}
 Q_{C_m}(u)\geq\frac{(m^2-m-1)^2}{4}
 \int_{C_m}r^{-2}u^2,
 \qquad u\in C_c^\infty((C_m)_{\reg}\setminus\{0\}).
\end{equation}
\end{maintheorem}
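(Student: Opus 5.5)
The plan is to realize everything inside the Laplacian algebra generated by \(R\), \(F_m\), the auxiliary \(\operatorname{Im}\det Z\), and the minor sums \(E_1,\dots,E_m\). Here \(E_m=|\det Z|^2\). This algebra is invariant under the \(U(m)\times U(m)\) action \(Z\mapsto UZV^*\) composed with the phase rotation of \(\det\). It is natural to diagonalize via singular values \(\sigma_1,\dots,\sigma_m\) and the phase \(\theta\) of \(\det Z\); then \(E_j=e_j(\sigma_1^2,\dots,\sigma_m^2)\) and \(F_m=\sigma_1\cdots\sigma_m\cos\theta\).

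\textbf{Step 1: closure table.} I first compute the closure identities. Since \(\det\) is holomorphic, \(F_m\) is harmonic. Moreover \(|\nabla F_m|^2=|\partial\det|^2=E_{m-1}\) (cofactor matrix), and in general \(\Gamma(E_i,E_j)\) and \(\Delta E_j\) are polynomial in the \(E\)'s. This follows from the Cauchy--Binet/Plücker description \(E_j=|\Lambda^jZ|^2\) and the fact that each \(\Lambda^jZ\) is holomorphic. Then \(\cL(F_m)=-\tfrac12\langle\nabla E_{m-1},\nabla F_m\rangle\). Using \(\langle\nabla|\partial f|^2,\nabla\operatorname{Re}f\rangle\), which reduces to \(\operatorname{Re}\) of \(\bar f\)-weighted terms, I expect to get \(\cL(F_m)=F_m\lambda\) with \(\lambda\) explicit in the \(E_j\). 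This gives minimality of the regular part.

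\textbf{Step 2: the divergence of \(G_m\).} Write \(a=(\prod_{j<m}E_j)^{1/2}\) and use \(\cL(aF)\) together with the closure table. Since \(\log a=\tfrac12\sum\log E_j\) and each \(\log E_j\) is plurisubharmonic with computable gradient pairings, the sign condition \eqref{eq:master-sign} reduces to the claim that \(F_m\,\cL(G_m)/a^3\) equals \(F_m^2\) times a quantity positive on the singular-value simplex. I would prove this positivity by a Schur-convexity or Newton-inequality argument in the \(e_j(\sigma^2)\), possibly using an induction on \(m\) via the Laplace expansion \(E_j(Z)\) for a block-diagonal \(\operatorname{diag}(Z',t)\). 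This algebraic inequality, uniform in \(m\), is the main obstacle. I would first verify it for \(m=2,3\) symbolically to guess the sum-of-squares form.

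\textbf{Step 3: smoothness and singular locus.} The field is smooth where \(\nabla G_m\neq0\) and all \(E_j>0\) for \(j<m\). This fails only on \(\{E_{m-1}=0\}=\{\operatorname{rank}\le m-2\}\), a determinantal variety of complex codimension four, so its \(\cH^{N-1}\)-measure is zero. Then \cref{cor:gradient-subcalibration} gives the subcalibration and area minimality. The case \(m=1\) is a hyperplane.

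\textbf{Step 4: strict minimality and the Hardy bound.} Strict positivity of the weight \(q_{G}\) off \(C_m\), together with homogeneity and the comparison theorem, gives strict minimality. For \eqref{eq:main-determinant-stability}, I would invoke the paper's Jacobi identity. It turns a divergence lower bound \(q_G\ge c\,r^{-1}\) near the cone, with an explicit leading-order quantity on \(C_m\), into \(Q_C(u)\ge\int(\text{weight})u^2\). Alternatively, I would write \(\psi=\log|\nabla G|\)-type positive Jacobi supersolutions \(r^{-\gamma}\phi\) with \(\gamma=(n-2)/2=(2m^2-3)/2\) adjusted. The constant \((m^2-m-1)^2/4\) should come from \(((n-2)/2-\alpha)^2\), where \(\alpha\) is the homogeneity excess of \(|\nabla G_m|\) restricted to \(C_m\), i.e. the degree \(\sum_{j<m}j+(m-1)=\tfrac{m(m-1)}2+m-1\), with \(n-2=2m^2-3\). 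I would check that this arithmetic produces \(m^2-m-1\) and adjust the choice of multiplier exponent to match.
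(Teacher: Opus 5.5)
Your overall architecture matches the paper's: closure in the minor norms $E_j$, the multiplier $a=(E_1\cdots E_{m-1})^{1/2}$, the exceptional set $\{\operatorname{rank}_{\mathbb C}Z\le m-2\}$ of real codimension eight, the comparison theorem, and strictness from a strict divergence sign. But Step~2, which you call the main obstacle, is the whole content of the theorem, and you give no argument for it.

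It is also not a single positivity statement on the singular-value simplex. The multiplier identity reads $\cL(G_m)=a^3F_m(D_0+F_m^2D_1)$, where $D_0,D_1$ depend only on the squared singular values $x_i$. Meanwhile $F_m^2=E_m\cos^2\vartheta$ sweeps $[0,E_m]$ on each $U(m)\times U(m)$-orbit. So you must control the signs of two different symmetric functions, and the paper needs a separate idea for each.
\begin{itemize}
\item For $D_0$, the paper passes to $y_i=x_i^{-1}$, which reduces $D_0\ge0$ to $2A+Y/e_1-V-2e_2/e_1\ge0$. After clearing denominators this is a symmetric polynomial $\mathcal C_m$ satisfying the formal recurrence $\mathcal C_{r+1}=e_r^2\mathcal C_r+\mathcal R_r$. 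Then $\mathcal R_r\ge0$ follows from Newton's inequalities plus the scalar bound $I_r+J_r\le r(r+1)$, with all $\mathcal C_r$ evaluated on one fixed $N$-tuple. The base specialization $e_{r+1}(y)=0$ adjoins a reciprocal variable $y=0$, i.e.\ a singular value tending to infinity. This is not the block induction via $\operatorname{diag}(Z',t)$ you suggest.
\item For $D_1$, the paper uses a covariance decomposition for the conditional Bernoulli measures $\mathbb P_s(S)\propto\prod_{i\in S}y_i$. Their pairwise negative correlation is again a Newton inequality, and this gives $D_1>0$, which is what makes $q_G>0$ off $C_m$ and yields strict minimality.
\end{itemize}
Plurisubharmonicity of $\log E_j$ does not give these signs. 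Small cases are also a weak guide: for $m=2$ one has $D_0\equiv0$.

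Step~4 misstates the mechanism. No bound $q_G\ge c\,r^{-1}$ can hold near $C_m$, since $q_G$ vanishes on the cone. The required input is the boundary Jacobi potential. With $w=|\nabla G_m|^{-1}|_{C_m}$ one has $V_G=-J_Cw/w=\cK_F(a)/(a^3|\nabla F|^2)=D_0/E_{m-1}$. So $D_0\ge0$, extended by continuity to rank $m-1$, makes $w$ a positive Jacobi supersolution of degree $1-\ell$ with $\ell=m(m+1)/2$. The ground-state identity with the radial Hardy inequality (\cref{thm:ground-state-gap}) then gives $(\ell-n/2)^2$ with $n=2m^2-1$.

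This equals your $\bigl((n-2)/2-(\ell-1)\bigr)^2=(m^2-m-1)^2/4$, so your arithmetic is right and no multiplier exponent needs adjusting. But the stability bound rests on the same unproved inequality: $D_0\ge0$ at every positive singular-value configuration, each of which occurs on the cone with $\cos\vartheta=0$.

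A minor point in Step~3: you assert $\nabla G_m\ne0$ off the rank-$(\le m-2)$ locus without proof. Justify it by Euler's identity when $F_m\ne0$, and by $\nabla G_m=a\nabla F_m$ when $F_m=0$.
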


The minor norms and \(F_m\) generate a Laplacian algebra, with
\(E_1=R\) and spherical quotient dimension \(m\). The multiplier
\(a=\sqrt{E_1\cdots E_{m-1}}\) has exponents independent of \(m\),
while \(\deg G_m=m(m+1)/2\).
The regular minimality of these zero sets was proved in
\cite[Corollary~5.2 and Example~5.4]{HoppeTkachev19}.
Here the global divergence sign gives strict area minimization.
The proof in \cref{sec:complex-determinants} uses a
symmetric-polynomial recurrence and a positive covariance decomposition
to establish the two required signs in every matrix order.

\subsection{Comparison and the Jacobi boundary identity}

The constructions share a common analytic mechanism. For \(G=aF\),
let \(\cK_F(a)=\cL(aF)/F\). We call it the \emph{multiplier defect}:
it is the coefficient of \(F\) in the divergence numerator.
Its nonnegativity gives perimeter comparison, including the
quantitative remainder of De Philippis--Maggi and Liu
\cite{DePhilippisMaggi14,Liu19}. We extend this comparison across
critical sets of locally zero \(\cH^{N-1}\)-measure in
\cref{sec:comparison}. A nonzero polynomial defect gives a positive
vertex-avoidance mass gap in the strict-minimality convention of
\cite{HardtSimon85,Lin87}.
We also retain the one-sided principle of De Philippis--Paolini
\cite[Proposition~1.2 and Theorem~1.5]{DePhilippisPaolini}, allowing
different multipliers on the two sides. For the exceptional quadratic
cone with multiplicities \((1,5)\), it recovers Lin's one-sided strict
minimality (\cref{cor:Lin-one-sided}).

On the regular cone the multiplier expression satisfies
\begin{equation}\label{eq:main-Jacobi-trace}
 \left.\cK_F(a)\right|_C
 =-a^4|\nabla F|^3J_C\left(\frac1{a|\nabla F|}\right),
 \qquad J_C=\Delta_C+|A_C|^2.
\end{equation}
Thus a subcalibration potential supplies a positive Jacobi
supersolution. If \(\deg G=\ell\), \(w=(a|\nabla F|)^{-1}|_C\), and
\(-r^2J_Cw/w\geq\beta_0\), then
\begin{equation}\label{eq:main-Hardy-gap}
 Q_C(u)\geq
 \left[\left(\ell-\frac n2\right)^2+\beta_0\right]
 \int_Cr^{-2}u^2.
\end{equation}
The degree term can give a positive Hardy gap even when the Jacobi
trace vanishes; see \cref{cor:four-degenerate-certificates}.
A lower bound for the divergence by distance to the cone also implies
strict stability and strict minimality. We state this sufficient
criterion in our singular-set comparison framework, following the
quantitative subcalibration argument of De Philippis--Maggi and Liu.
For regular area-minimizing hypercones, Niu
\cite[Theorem~3]{Niu26} proves the corresponding general equivalence.

Once an oriented hypercone is proved area minimizing, Wang's smoothing
theorem \cite[Theorem~1.1]{WangSmoothing24}, applied to the closures
of its complementary sides, gives smooth properly embedded
area-minimizing hypersurfaces on both sides, componentwise if needed.
Each is a radial graph whose positive dilations foliate the
corresponding side. This conclusion also allows nonisolated
singularities of the cone.

\Cref{sec:preliminaries,sec:comparison,sec:algebraic-jacobi} develop the
variational and algebraic tools. The applications follow in
\cref{sec:low-rank,sec:two-quadratic,sec:gradient-square-rank-two,sec:Clifford-classification,sec:quartic-application,sec:complex-determinants}.
The appendices contain auxiliary identities, positivity certificates,
the local geometry of the cubic critical set, and a second-order
rigidity argument with the moments needed for cubic normalization,
as well as the Clifford Hessian calculations and limiting parameters.
Two computation supplements follow the references.

\section{Preliminaries}
\label{sec:preliminaries}

All ambient metrics in this section are Euclidean.

\subsection{Stationarity and stability on the regular part}
\label{sec:prelim-stability}

A smooth submanifold \(M^n\subset\Omega\subset\R^N\) is \emph{minimal}
if its mean curvature vector vanishes.  Equivalently, its first variation
satisfies
\[
 \delta M(Y)=\int_M\diver_M Y\,d\cH^n=0
\]
for every ambient \(C^1\) vector field compactly supported away from
\(\partial M\).  Here \(\diver_M\) is the trace of the ambient derivative
over the tangent space of \(M\).

For singular objects we use integral currents: oriented rectifiable
sets with integer multiplicity whose boundaries are also integer
rectifiable.  The mass measure \(\|S\|\) of a current \(S\) counts
\(n\)-dimensional volume with the absolute value of the multiplicity.
The appropriate first-variation condition is \emph{stationarity},
\begin{equation}\label{eq:stationarity-definition}
 \delta|S|(Y)=\int\diver_{T_xS}Y\,d\|S\|(x)=0,
 \qquad Y\in C_c^1(\Omega\setminus\operatorname{spt}\partial S;\R^N),
\end{equation}
where \(T_xS\) is the approximate tangent plane \cite{Federer69,Simon83}.
We write \(S_{\reg}\) for the regular part
and \(\Sing S\) for the remaining support, away from the boundary.

For a smooth minimal \(M\), let \(A\) be its second fundamental form
and \(\nabla^\perp\) its normal connection.  The second variation in a
compactly supported normal direction \(V\) is
\begin{equation}\label{eq:normal-second-variation}
 Q_M^\perp(V)=\int_M\left(|\nabla^\perp V|^2
       -\sum_{i,j=1}^n\langle A(e_i,e_j),V\rangle^2\right),
\end{equation}
where \(e_1,\ldots,e_n\) is a local orthonormal tangent frame.
With the Laplacian convention \(\int u\Delta u=-\int|\nabla u|^2\),
the normal Jacobi operator is
\begin{equation}\label{eq:normal-Jacobi-definition}
 J_M^\perp V=\Delta_M^\perp V+
       \sum_{i,j}\langle A(e_i,e_j),V\rangle A(e_i,e_j),
 \qquad Q_M^\perp(V)=-\int_M\langle V,J_M^\perp V\rangle.
\end{equation}
The submanifold is \emph{stable} when
\(Q_M^\perp(V)\geq0\) for all such \(V\).  For a stationary singular
current the same definition uses compactly supported smooth normal
fields on \(S_{\reg}\), with support away from the singular set and
boundary \cite{Simons68,Simon83}.

On a smooth relatively compact domain with fixed boundary, strict
stability means a positive Dirichlet spectral gap:
\(Q_M^\perp(V)\geq\lambda\int_M|V|^2\) for some \(\lambda>0\).
For a cone, we use the scale-invariant weighted gap specified below.

\subsection{Area minimization and the strict notions for cones}
\label{sec:prelim-minimization}

An integral current \(S\) is \emph{locally mass minimizing} if
\[
 \mathbf M_U(S)\leq\mathbf M_U(S+\partial R)
\]
whenever \(U\Subset\Omega\) and \(R\) is an integral current compactly
supported in \(U\) of one higher dimension.  Here
\(\mathbf M_U(S)=\|S\|(U)\).  For multiplicity-one
submanifolds we also call this \emph{area minimization}.
Such minimizers are stationary and stable, by first and second variation.

We recall the standard formulation using functions of bounded variation.
The space \(BV_{\rm loc}(\Omega)\) consists of locally integrable
functions whose distributional first derivatives are Radon measures
with finite total variation on compact subsets.
A Lebesgue measurable set
\(E\subset\Omega\) has \emph{locally finite perimeter}
\cite{Federer69,Giusti84,Maggi12,Simon83} if
\(\one_E\in BV_{\rm loc}(\Omega)\), equivalently if its distributional
derivative \(D\one_E\) is an \(\R^N\)-valued Radon measure with finite mass
on compact subsets.  For open \(U\Subset\Omega\),
\begin{equation}\label{eq:finite-perimeter-definition}
 \Per(E;U)=|D\one_E|(U),\qquad
 D\one_E=-\nu_E\,\cH^{N-1}\llcorner\partial^*E,
\end{equation}
where \(\one_E\) is the characteristic function of \(E\).
The reduced boundary \(\partial^*E\) consists of the boundary points
with a measure-theoretic unit normal, denoted by \(\nu_E\) and oriented
outward.  At these points, rescalings of \(E\) converge locally in
measure to a half-space \cite{Federer69,Giusti84,Maggi12}.
For a smooth domain, perimeter is the usual boundary area.

The current \(\llbracket E\rrbracket\) denotes integration over \(E\)
with the standard orientation, and
\(\mathbf M_U(\partial\llbracket E\rrbracket)=\Per(E;U)\).
A set \(E\) is \emph{locally perimeter minimizing} if
\(\Per(E;U)\leq\Per(H;U)\) for every locally finite-perimeter set
\(H\) with \(H\triangle E\Subset U\Subset\Omega\).
Local perimeter minimization of \(E\) is equivalent to local mass
minimization of \(\partial\llbracket E\rrbracket\).  This is a direct
consequence of the codimension-one decomposition theorem
\cite[Theorem~27.6 and Corollary~27.8]{Simon83}; see also
\cite[Proposition~2.15]{DeLellis15}.

For the cone statements let \(C\) be a stationary multiplicity-one
integral cone of real dimension \(n\geq2\), put \(r=|x|\), and write
\(C_1=C\llcorner B_1\).  We also use \(C\) for its support.

\begin{definition}[Minimality and stability, with their strict forms]
\label{def:four-variational-notions}
The cone is \emph{area minimizing} if \(C_1\) has least mass among
integral \(n\)-currents with boundary \(\partial C_1\).  It is
\emph{strictly area minimizing} in the vertex-avoidance sense if it is
area minimizing and there is \(\Theta>0\) such that
\begin{equation}\label{eq:strict-minimality-definition}
 \mathbf M(T)-\mathbf M(C_1)\geq\Theta\eps^n
 \quad\text{whenever}\quad
 \partial T=\partial C_1,\quad
 \operatorname{spt}T\cap B_\eps=\varnothing,\quad 0<\eps<1.
\end{equation}
The competitors are integral currents of finite mass in the ambient
Euclidean space.  For \(C=\partial\llbracket E\rrbracket\) with \(E\)
conical, area minimization is equivalent to local perimeter minimization.

For a two-sided hypersurface cone and \(V=u\nu_C\),
\eqref{eq:normal-second-variation} becomes
\begin{equation}\label{eq:Q-definition}
 Q_C(u)=\int_{C_{\reg}}\bigl(|\nabla_Cu|^2-|A_C|^2u^2\bigr),
 \qquad u\in C_c^\infty(C_{\reg}\setminus\{0\}).
\end{equation}
The cone is \emph{stable} if \(Q_C(u)\geq0\) for every such \(u\), and
\emph{strictly stable} if there is \(\delta>0\) such that
\begin{equation}\label{eq:strict-stability-definition}
 Q_C(u)\geq\delta\int_{C_{\reg}}r^{-2}u^2
 \qquad\text{for every }u\in C_c^\infty(C_{\reg}\setminus\{0\}).
\end{equation}
In arbitrary codimension the same definition uses \(Q_C^\perp(V)\)
and \(|V|^2\) in place of \(Q_C(u)\) and \(u^2\).
\end{definition}

The convention \eqref{eq:strict-minimality-definition} is that of
Hardt--Simon \cite[Section~3]{HardtSimon85}, stated for integral currents
in \cite[p.~210]{Lin87}.  It prevents competitors from avoiding the
vertex at a cost smaller than a fixed multiple of \(\eps^n\).
The Hardy gap \eqref{eq:strict-stability-definition} gives uniform
coercivity of the second variation at every scale.  These quantitative
conditions enter the analysis of isolated singularities: strict
stability is used in constructing minimal hypersurfaces asymptotic to
a cone \cite{CaffarelliHardtSimon84}, and, with additional hypotheses,
strictly minimizing and strictly stable tangent cones permit deformation
under changes of the ambient metric \cite{WangDeformations20}.

Strict stability does not imply area minimization,
as \cref{prop:iso-stability,thm:classical-iso-classification} illustrate;
\cref{prop:complex-zero-gap} gives calibrated area-minimizing cones
without a positive Hardy gap.  For a singular link
\(\Lambda=C\cap\Sph^{N-1}\), angular tests are compactly supported in
\(\Lambda_{\reg}\); the corresponding spectral formulation is recorded
in \eqref{eq:exact-Hardy-gap}.

\subsection{Calibrations and the Stokes comparison}
\label{sec:prelim-calibrations}

The \emph{comass} of an \(n\)-form \(\omega\) at a point is
\[
 \|\omega\|_*=\sup\{|\omega(e_1,\ldots,e_n)|:
                    e_1,\ldots,e_n\text{ orthonormal}\}.
\]
A \emph{calibration} is a closed differential \(n\)-form of comass at
most one.  It calibrates an oriented submanifold if its restriction is
the volume form, and calibrates an integral current \(S\) if
\(\omega(\tau_S)=1\) at \(\|S\|\)-almost every point, where \(\tau_S\)
is the oriented unit tangent \(n\)-vector
\cite{HarveyLawson82a}.

If \(\omega\) calibrates \(S\) and \(T-S=\partial R\) with
\(\operatorname{spt}R\Subset U\), Stokes' theorem gives
\begin{equation}\label{eq:calibration-Stokes-comparison}
 \mathbf M_U(T)-\mathbf M_U(S)
 \geq (T-S)(\omega)=\partial R(\omega)=R(d\omega)=0.
\end{equation}
The common contributions outside \(U\) cancel; a cutoff equal to one
near \(\operatorname{spt}R\) makes the evaluation of the difference
compactly supported.  Thus a calibrated current is mass minimizing.

\subsection{Hypersurface forms and vector-field subcalibrations}
\label{sec:prelim-subcalibrations}

In codimension one, contraction with the ambient volume form \(dV\)
identifies vector fields and \((N-1)\)-forms.  More precisely, every
such form is uniquely \(\omega_X=\iota_XdV\), and
\begin{equation}\label{eq:form-vector-equivalence}
 \|\omega_X\|_*=|X|,\qquad
 d\omega_X=(\diver X)dV,\qquad
 \omega_X|_{\partial E}=(X\cdot\nu_E)\,d\mu_{\partial E}.
\end{equation}
The first and third identities follow by evaluating on an oriented
orthonormal tangent frame and its exterior unit normal; the second is
the coordinate formula for divergence.  Consequently a hypersurface
calibration is equivalently a field with \(|X|\leq1\),
\(\diver X=0\), and \(X\cdot\nu_E=1\) on the boundary.

\begin{definition}[Two-sided subcalibration, smooth formulation]
\label{def:subcalibration}
Let \(E\subset\Omega\subset\R^N\) have smooth boundary.  A
\(C^1\) field \(X\) is a two-sided subcalibration for \(E\) if
\begin{equation}\label{eq:smooth-subcalibration-signs}
 |X|\leq1,\qquad X\cdot\nu_E=1\text{ on }\partial E,\qquad
 \diver X\leq0\text{ in }E,\quad
 \diver X\geq0\text{ in }\Omega\setminus E.
\end{equation}
Equivalently, \(\omega_X\) has comass at most one, restricts to the
oriented boundary volume form, and \(d\omega_X=f_XdV\), where
\(f_X\leq0\) in \(E\) and \(f_X\geq0\) outside \(E\).
\end{definition}

These phase signs give the comparison used in the subcalibration method
of De~Philippis and Paolini \cite{DePhilippisPaolini}.
For a smooth competitor \(H\) with \(H\triangle E\Subset U\Subset\Omega\),
Stokes' theorem, or equivalently the divergence theorem, gives
\begin{align}
 \Per(H;U)-\Per(E;U)
 &\geq\int_U(\one_H-\one_E)\diver X\,dx\notag\\
 &=\int_{H\setminus E}\diver X\,dx
      -\int_{E\setminus H}\diver X\,dx\notag\\
 &=\int_{H\triangle E}q_X\,dx\geq0,
 \qquad q_X=(1-2\one_E)\diver X.
 \label{eq:smooth-subcalibration-Stokes}
\end{align}
Indeed, the boundary flux equals \(\Per(E;U)\) for \(E\), is at most
\(\Per(H;U)\) for \(H\), and the fluxes on the common exterior cancel.
\Cref{thm:removable-subcalibration} extends this calculation to sets of
finite perimeter and fields undefined on a relatively closed
\(\cH^{N-1}\)-null set.  The
one-sided and paired versions are given in
\cref{thm:paired-subcalibration}.

\subsection{Calibrated complex cones with zero stability gap}
\label{sec:prelim-complex-gap}

Calibration alone does not give the strict estimate
\eqref{eq:strict-stability-definition}.  Dimler--Lee
\cite[Theorem~4.2]{DimlerLee24} demonstrate this for complex hypersurface
cones of degree equal to their complex dimension.  The following
observation extends their construction to every larger degree.

\begin{proposition}[Area-minimizing complex cones without strict stability]
\label{prop:complex-zero-gap}
Let \(k\geq2\), and let \(f\) be a homogeneous holomorphic polynomial
of degree \(d\geq k\) on \(\mathbb C^{k+1}\), with \(df(z)\ne0\) for
\(z\ne0\).  Then \(C=f^{-1}(0)\), with its complex orientation, is
area minimizing of real dimension \(2k\), and its Hardy stability gap
is zero:
\begin{equation}\label{eq:complex-zero-gap}
 \inf_{0\ne V\in C_c^\infty(NC_{\reg})}
 \frac{Q_C^\perp(V)}{\displaystyle\int_Cr^{-2}|V|^2}=0.
\end{equation}
Thus \(C\) is area minimizing but not strictly stable in the Hardy sense.
\end{proposition}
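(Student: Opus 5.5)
Area minimization comes from calibration. The Kähler form \(\omega\) on \(\mathbb C^{k+1}\) makes \(\omega^k/k!\) a closed form of comass one. It restricts to the volume form on every complex \(k\)-dimensional submanifold with its complex orientation. Since \(df\neq0\) off the origin, \(C\setminus\{0\}\) is a smooth complex hypersurface and \(C\) is a cone with an isolated singularity. Thus \(C_1\) is calibrated, and the Stokes comparison \eqref{eq:calibration-Stokes-comparison} applies; the vertex is a point, so it carries no mass and causes no boundary issue. This yields area minimization.

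For the zero gap, I would first reduce the normal second variation on a complex hypersurface to a scalar problem. The normal bundle \(NC_{\reg}\) is a complex line bundle. Write \(V=\phi\,\nu\), with \(\nu=\overline{\nabla f}/|\nabla f|\) and \(\phi\) complex-valued. The map \(\phi\mapsto\phi\nu\) identifies \(C^\infty_c\) sections with complex functions, up to a unitary twist.

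The key input is a family of holomorphic Jacobi fields. For any holomorphic polynomial \(g\) on \(\mathbb C^{k+1}\), deform \(C\) through \(\{f+tg=0\}\). These are complex, hence minimal, hypersurfaces, so the normal component \(V_g=-(g/|\nabla f|)\,\nu\) of the variation satisfies \(J^\perp_C V_g=0\). Choosing \(g\) homogeneous of degree \(j\), \(V_g\) is homogeneous of degree \(j-d+1\) in \(r\). Taking \(g\) constant (\(j=0\)), or more generally \(j\) as needed, gives a Jacobi field \(V=r^{\gamma}\Phi(\theta)\) with \(\gamma=1-d\) (respectively \(\gamma=j+1-d\)).

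The test fields are then \(V_\eps=\eta_\eps(r)V\), where \(\eta_\eps\) is a logarithmic cutoff chosen to make the weighted integrals critical. Since \(J^\perp V=0\), integration by parts gives
\[
Q^\perp(\eta V)=\int_C|\nabla\eta|^2|V|^2 .
\]
Write \(s=\log r\) and \(\eta=\zeta(s)\,r^{-\gamma-(2k-2)/2}\); equivalently, replace \(r^\gamma\) by the critical power \(r^{-(2k-2)/2}\), using a radial factor times \(V\).

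The cleaner route is to use the radial decomposition. Let \(\mu_0\) be the bottom eigenvalue of \(-\Delta_\Lambda-|A|^2\) on the link. The exact Hardy gap equals \(\mu_0+((2k-2)/2)^2\), which is the content of \eqref{eq:exact-Hardy-gap}. It therefore suffices to show \(\mu_0=-(k-1)^2\). A Jacobi field \(r^\gamma\Phi\) has angular eigenvalue \(\mu=-\gamma(\gamma+2k-2)\). This quantity attains its minimum \(-(k-1)^2\) exactly at \(\gamma=-(k-1)\), that is, at \(\gamma=1-k\).

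So I need a holomorphic Jacobi field of degree \(1-k\). Taking \(g\) homogeneous of degree \(j=d-k\geq0\) gives \(\gamma=j+1-d=1-k\). This is exactly where the hypothesis \(d\geq k\) is used.

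It remains to check that \(\Phi\) is a genuine angular eigenfunction in \(L^2\) with eigenvalue \(-(k-1)^2\). Every eigenvalue is at least \(\mu_0\), and stability, which follows from minimization, forces \(\mu_0\geq-(k-1)^2\). Together these give \(\mu_0=-(k-1)^2\) and hence a zero gap.

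The main obstacle is making the separation of variables rigorous for sections of a twisted line bundle. Concretely, one must check that \(V_g\) really has the form \(r^\gamma\Phi(\theta)\) under parallel transport along rays, which holds because \(\nu\) is \(0\)-homogeneous. One must also check that the Hardy-optimal cutoff computation \(\zeta(s)\) with \(\int|\zeta'|^2/\int\zeta^2\to0\) gives the quotient \(\to0\). I would carry out the cutoff directly: the quotient equals \(\int\zeta'^2\,ds/\int\zeta^2\,ds\), which tends to zero for spreading bumps \(\zeta\).
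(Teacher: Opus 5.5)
Your proposal is correct and follows essentially the paper's proof: the K\"ahler calibration gives minimality and stability; the holomorphic Jacobi field \(V_g\) comes from the family \(\{f+tg=0\}\) with \(\deg g=d-k\), so that \(V_g\) has the critical homogeneity \(r^{1-k}\); and the logarithmic cutoff \(\chi_L=\zeta(\log r/L)\) together with \(J_C^\perp V_g=0\) gives \(Q_C^\perp(\chi_LV_g)=\int_C|\nabla\chi_L|^2|V_g|^2\), hence a quotient \(L^{-2}\int|\zeta'|^2/\int|\zeta|^2\to0\). Two small points: you should choose \(g\) with \(g|_C\not\equiv0\) (the paper takes \(g=\ell^{d-k}\) with \(\ell\) a linear form nonzero at some point of \(C\setminus\{0\}\)), and the angular relation is \(\mu=\gamma(\gamma+2k-2)\), not \(-\gamma(\gamma+2k-2)\); the eigenvalue detour is in any case unnecessary, since the direct cutoff uses only the homogeneity of \(|V_g|\) and so avoids the twisted-line-bundle separation of variables you flagged.
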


\begin{proof}
The K\"ahler calibration \(\omega_0^k/k!\), where
\(\omega_0=\sum_{j=1}^{k+1}dx_j\wedge dy_j\), calibrates the complex
current \(C\) \cite{HarveyLawson82a}.  Thus
\eqref{eq:calibration-Stokes-comparison} gives area minimization and
\(Q_C^\perp\geq0\).

Choose a homogeneous holomorphic polynomial \(g\) of degree \(d-k\)
whose restriction to \(C\) is not identically zero.  For example, take
\(g=\ell^{d-k}\), with \(\ell\) a linear form nonzero at some point
of \(C\setminus\{0\}\); for \(d=k\), take \(g=1\).
Write \(f=u+iv\).  Holomorphicity gives
\(\nabla v=J\nabla u\), so the two real gradients are orthogonal and
have the same length.  Therefore
\begin{equation}\label{eq:complex-critical-Jacobi-field}
 V_g=\frac{\operatorname{Re}g\,\nabla u+
                 \operatorname{Im}g\,\nabla v}{|\nabla u|^2}
 \quad\text{on }C_{\reg}
\end{equation}
is normal and satisfies \(df(V_g)=g\).
For fixed real \(t\), the equation \(f-tg=0\) is holomorphic.  Near
each compact subset of \(C_{\reg}\), its zero sets form a smooth family
of complex, hence minimal, hypersurfaces for sufficiently small \(t\).
Differentiating the equation at \(t=0\) gives the normal velocity
\(V_g\), and differentiating minimality gives
\(J_C^\perp V_g=0\).

Since \(\deg g=d-k\) and \(\deg\nabla u=d-1\),
\[
 V_g(r\theta)=r^{1-k}W(\theta),\qquad
 \theta\in\Lambda=C\cap\Sph^{2k+1},
\]
where \(W\) is smooth and not identically zero on the compact smooth
link.  The degree \(1-k=-(\dim_{\R}C-2)/2\) is the critical Hardy
degree; the following cutoff calculation uses this homogeneity.
Multiplying \(J_C^\perp V_g=0\) by \(\chi^2V_g\) and integrating
by parts in \eqref{eq:normal-Jacobi-definition} yields
\begin{equation}\label{eq:normal-Jacobi-cutoff}
 Q_C^\perp(\chi V_g)=\int_C|\nabla_C\chi|^2|V_g|^2
\end{equation}
for every smooth compactly supported scalar cutoff on \(C_{\reg}\).
Take a nonzero \(\zeta\in C_c^\infty((-2,-1))\) and, for \(L>1\), set
\(\chi_L(r)=\zeta(\log r/L)\).  Writing
\(b=\int_\Lambda|W|^2>0\) and using
\(d\mu_C=r^{2k-1}\,dr\,d\mu_\Lambda\), we obtain
\[
 Q_C^\perp(\chi_LV_g)=\frac bL\int_\R|\zeta'|^2,
 \qquad
 \int_Cr^{-2}|\chi_LV_g|^2=bL\int_\R|\zeta|^2.
\]
The quotient is
\(L^{-2}\int|\zeta'|^2/\int|\zeta|^2\), which tends to zero.
Together with stability, this proves \eqref{eq:complex-zero-gap}.
\end{proof}

\section{Subcalibrations and quantitative strict minimality}
\label{sec:comparison}

We extend the Stokes comparison \eqref{eq:smooth-subcalibration-Stokes}
to finite-perimeter sets across singular sets of locally zero
\(\cH^{N-1}\)-measure and derive quantitative strict-minimality estimates.  The one-sided
version permits a separate potential on each phase.

\subsection*{Boundary flux away from an exceptional set}

Let \(E\subset\Omega\) have locally finite perimeter, let
\(Z\subset\Omega\) be relatively closed, and let
\(X\in C^1(\Omega\setminus Z;\R^N)\).
The boundary condition in \cref{def:subcalibration} means
\(X\cdot\nu_E=1\) for \(\cH^{N-1}\)-almost every point of
\(\partial^*E\setminus Z\), where \(\nu_E\) is the measure-theoretic
exterior unit normal.
For \(\eta\in C_c^1(\Omega\setminus Z)\), the field \(\eta X\)
extends by zero near \(Z\), and \eqref{eq:finite-perimeter-definition}
gives the Gauss--Green formula
\[
 \int_E\diver(\eta X)\,dx
 =\int_{\partial^*E\setminus Z}\eta\,X\cdot\nu_E\,d\cH^{N-1}.
\]
The cutoff argument in \cref{thm:removable-subcalibration} extends
this calculation across \(Z\).

\subsection*{Cutoffs near the singular set}

\begin{lemma}[Cutoffs near sets of zero Hausdorff measure]
\label{lem:null-set-cutoffs}
Let \(Z\subset\Omega\subset\R^N\) be relatively closed and satisfy
\begin{equation}\label{eq:H-null}
 \cH^{N-1}(Z\cap K)=0
 \quad\text{for every compact }K\Subset\Omega.
\end{equation}
For every compact \(K\Subset U\Subset\Omega\), every open set
\(O\) with \(Z\cap K\subset O\Subset U\), and every \(\eps>0\),
there is \(\psi\in C_c^1(O)\) such that \(0\leq\psi\leq1\),
\(\psi=1\) on a neighborhood of \(Z\cap K\), and
\begin{equation}\label{eq:null-set-cutoff}
 \int_U\bigl(\psi+|\nabla\psi|\bigr)<\eps.
\end{equation}
\end{lemma}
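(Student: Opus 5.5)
We use the standard covering argument for sets of zero codimension-one Hausdorff measure. The set \(Z\cap K\) is compact, since \(Z\) is relatively closed in \(\Omega\) and \(K\Subset\Omega\). By \eqref{eq:H-null}, \(\cH^{N-1}(Z\cap K)=0\), so the spherical \((N-1)\)-dimensional content of \(Z\cap K\) also vanishes. Thus for any \(\delta>0\) we can cover \(Z\cap K\) by finitely many open balls \(B(x_i,\rho_i)\), \(i=1,\ldots,M\), with \(\sum_i\rho_i^{N-1}<\delta\). Finitely many suffice by compactness. We may also assume \(\rho_i<\rho_0\), where \(2\rho_0<\dist(Z\cap K,\R^N\setminus O)\), and that every ball meets \(Z\cap K\). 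Then each doubled ball \(B(x_i,2\rho_i)\) lies in \(O\).

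For each ball we build a Lipschitz bump and then smooth it. Let \(\phi_i(x)=\theta(|x-x_i|/\rho_i)\), where \(\theta\in C^\infty(\R)\) satisfies \(\theta=1\) on \((-\infty,1]\), \(\theta=0\) on \([2,\infty)\), \(0\leq\theta\leq1\), and \(|\theta'|\leq2\). Then \(\phi_i\in C_c^1(B(x_i,2\rho_i))\) and \(\phi_i=1\) on \(B(x_i,\rho_i)\). We also have
\[
\int\phi_i\leq\omega_N(2\rho_i)^N,\qquad \int|\nabla\phi_i|\leq 2\rho_i^{-1}\omega_N(2\rho_i)^N=c_N\rho_i^{N-1}.
\]
Put \(\Psi=\max_i\phi_i\). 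This function equals \(1\) on the open set \(\bigcup_iB(x_i,\rho_i)\supset Z\cap K\) and is supported in \(O\). It is Lipschitz with \(|\nabla\Psi|\leq\max_i|\nabla\phi_i|\leq\sum_i|\nabla\phi_i|\) almost everywhere. Hence
\[
\int_U(\Psi+|\nabla\Psi|)\leq\sum_i\bigl(\omega_N2^N\rho_0+c_N\bigr)\rho_i^{N-1}<C_N\delta .
\]

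The main obstacle is that the maximum is only Lipschitz, whereas the statement demands \(\psi\in C^1\). Two fixes are available. The first replaces the maximum by \(\psi=1-\prod_i(1-\phi_i)\). This is \(C^1\), takes values in \([0,1]\), equals \(1\) where some \(\phi_i=1\), and satisfies \(\psi\leq\sum_i\phi_i\) and \(|\nabla\psi|\leq\sum_i|\nabla\phi_i|\), since each factor lies in \([0,1]\). These give the same bound as for \(\Psi\) with no smoothing at all. We will use this product form. Choosing \(\delta=\eps/C_N\) yields \eqref{eq:null-set-cutoff}. The support of \(\psi\) lies in \(\bigcup_i\overline{B(x_i,2\rho_i)}\), a compact subset of \(O\), and \(\psi=1\) on the open neighborhood \(\bigcup_iB(x_i,\rho_i)\) of \(Z\cap K\).
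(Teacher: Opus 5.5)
Your proof is correct and follows the paper's argument essentially verbatim. Both use a finite cover of \(Z\cap K\) by small balls with \(\sum_i\rho_i^{N-1}\) small, bumps supported on the doubled balls with gradient \(O(\rho_i^{-1})\), and the combination \(\psi=1-\prod_i(1-\phi_i)\) with \(\psi\le\sum_i\phi_i\) and \(|\nabla\psi|\le\sum_i|\nabla\phi_i|\). One constant needs fixing: a ball \(B(x_i,\rho_i)\) that merely meets \(Z\cap K\) has its closed double within distance \(3\rho_i\) of \(Z\cap K\), so you need \(3\rho_0<\dist(Z\cap K,\R^N\setminus O)\) rather than \(2\rho_0\), or else balls centered on \(Z\cap K\).
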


\begin{proof}
Cover the compact set \(Z\cap K\) by finitely many balls
\(B_{r_i}(x_i)\) whose doubled balls lie in \(O\), with
\(r_i<1\) and \(\sum_i r_i^{N-1}<\eps/(2C_N)\).
Choose smooth cutoffs \(\psi_i\) supported in the doubled balls,
equal to one on the covering balls, with \(0\leq\psi_i\leq1\)
and \(|\nabla\psi_i|\leq C/r_i\).
Then \(\psi=1-\prod_i(1-\psi_i)\) equals one near \(Z\cap K\),
and satisfies \(\psi\leq\sum_i\psi_i\) and
\(|\nabla\psi|\leq\sum_i|\nabla\psi_i|\).  Hence
\[
 \int_U\bigl(\psi+|\nabla\psi|\bigr)
 \leq C_N\sum_i(r_i^N+r_i^{N-1})<\eps.
\]
\end{proof}

The deficit identity below is the quantitative subcalibration formula of
De Philippis--Maggi \cite[Proposition~4.1]{DePhilippisMaggi14}; see also
\cite[Section~2, Lemma~1]{Liu19}.
We prove the version needed here for bounded fields that are \(C^1\)
outside a relatively closed set of locally zero \(\cH^{N-1}\)-measure.

\Needspace{7\baselineskip}
\begin{theorem}[Two-sided comparison]
\label{thm:removable-subcalibration}
Let \(E\subset\Omega\) have locally finite perimeter, and let
\(Z\subset\Omega\) be relatively closed with
\(\cH^{N-1}(Z\cap K)=0\) for every compact \(K\Subset\Omega\).
Suppose
\(X\in C^1(\Omega\setminus Z;\R^N)\) satisfies
\begin{enumerate}[label=\textup{(\roman*)}]
 \item \(|X|\leq1\);
 \item \(X\cdot\nu_E=1\) for \(\cH^{N-1}\)-almost every point of
 \(\partial^*E\setminus Z\);
 \item \(\diver X\leq0\) almost everywhere in \(E\setminus Z\), and
 \(\diver X\geq0\) almost everywhere in \((\Omega\setminus E)\setminus Z\).
\end{enumerate}
Define \(q_X=(1-2\one_E)\diver X\geq0\) off \(Z\) and set it to zero on
\(Z\).  Then \(q_X\in L^1_{\rm loc}(\Omega)\).  For every
locally finite-perimeter competitor \(H\) with \(H\triangle E\Subset U\Subset\Omega\),
\begin{align}
 \Per(H;U)-\Per(E;U)
 &=\int_{\partial^*H\cap U}(1-X\cdot\nu_H)\,d\cH^{N-1}
   +\int_{H\triangle E}q_X\,dx.\label{eq:quantitative-comparison-identity}
\end{align}
The values of \(X\) on \(Z\) do not affect the boundary integral.
In particular,
\begin{equation}\label{eq:quantitative-comparison}
 \Per(H;U)-\Per(E;U)\geq\int_{H\triangle E}q_X\,dx.
\end{equation}
Thus \(E\) is locally perimeter minimizing and
\(\partial\llbracket E\rrbracket\) is locally mass minimizing.
If \(q_X>0\) almost everywhere, equality in perimeter comparison forces
\(H=E\) up to a null set.  When \(|X|=1\), the first integral in
\eqref{eq:quantitative-comparison-identity} is
\(\frac12\int_{\partial^*H\cap U}|\nu_H-X|^2\,d\cH^{N-1}\).
\end{theorem}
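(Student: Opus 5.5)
The plan is to apply the Gauss--Green formula to \(H\) and to \(E\) with the truncated field \((1-\psi)\eta X\), where \(\psi\) comes from \cref{lem:null-set-cutoffs}, and then let the cutoff disappear. Fix \(U\) with \(H\triangle E\Subset U\Subset\Omega\). Choose \(\eta\in C_c^1(U)\) with \(0\le\eta\le1\) and \(\eta=1\) on a neighborhood of a compact \(K\supset\overline{H\triangle E}\). Choose \(\psi_j\) as in \cref{lem:null-set-cutoffs} for \(Z\cap\operatorname{spt}\eta\), with \(\int_U(\psi_j+|\nabla\psi_j|)<1/j\). Then \(Y_j=(1-\psi_j)\eta X\) is \(C^1_c(\Omega\setminus Z)\) after extension by zero. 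Gauss--Green for \(E\) and for \(H\) gives
\[
 \int_{H}\diver Y_j-\int_E\diver Y_j
 =\int_{\partial^*H\setminus Z}Y_j\cdot\nu_H-\int_{\partial^*E\setminus Z}Y_j\cdot\nu_E .
\]

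First I expand the divergence as \(\diver Y_j=(1-\psi_j)\eta\diver X+X\cdot\nabla((1-\psi_j)\eta)\). On the left, \(\one_H-\one_E\) is supported in \(K\), where \(\eta=1\). The gradient term is therefore bounded by \(\int|\nabla\psi_j|\to0\), using \(|X|\le1\). The main term is \(\int(1-\psi_j)(\one_H-\one_E)\diver X=\int_{H\triangle E}(1-\psi_j)q_X\). Here I use that \(\one_H-\one_E\) equals \(1\) on \(H\setminus E\), where \(q_X=\diver X\), and equals \(-1\) on \(E\setminus H\), where \(q_X=-\diver X\). Since \(q_X\ge0\), monotone convergence applies. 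Because \(\psi_j\to0\) in \(L^1\), I pass to a subsequence with \(\psi_j\to0\) almost everywhere, and choose the \(\psi_j\) nested so that \(1-\psi_j\) increases. The main term then tends to \(\int_{H\triangle E}q_X\), which could a priori be \(+\infty\). Finiteness follows once the right side is shown bounded.

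Next comes \(L^1_{\rm loc}\) of \(q_X\), by the same computation with \(H=\varnothing\) replaced by \(E\) itself and a general cutoff. For any nonnegative \(\eta\in C^1_c(\Omega)\), apply Gauss--Green to \(E\) and to \(\Omega\) (no boundary) with \(Y_j\). This gives
\[
 \int(1-\psi_j)\eta\,q_X = -2\int_E\diver Y_j+\int\diver Y_j+\text{gradient terms}.
\]
The total-divergence integral vanishes. The \(E\)-integral equals a boundary flux bounded by \(\Per(E;\operatorname{spt}\eta)\). The gradient terms are bounded by \(\int|\nabla\eta|+\int|\nabla\psi_j|\). Monotone convergence then yields \(\int\eta q_X<\infty\).

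The boundary side is where I expect the care is needed. By (ii), \(\int_{\partial^*E\setminus Z}Y_j\cdot\nu_E=\int_{\partial^*E\setminus Z}(1-\psi_j)\eta\,d\cH^{N-1}\). Because \(\cH^{N-1}(Z)=0\) locally, removing \(Z\) costs nothing. The obstacle is that \(\psi_j\to0\) in \(L^1(dx)\) gives no control on \(\partial^*E\). For this I would use the covering in the proof of \cref{lem:null-set-cutoffs}. The support of \(\psi_j\) is a union of balls of radii \(2r_i\) with \(\sum r_i^{N-1}\to0\), and the covered sets shrink to \(Z\cap\operatorname{spt}\eta\), so the nested supports decrease to an \(\cH^{N-1}\)-null set. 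Since \(\cH^{N-1}\llcorner\partial^*E\) is Radon, dominated convergence gives \((1-\psi_j)\to1\) almost everywhere for that measure, and similarly for \(\partial^*H\). The right side therefore converges to \(\int_{\partial^*H}\eta X\cdot\nu_H-\int_{\partial^*E}\eta\). On the region where \(\eta<1\), the sets \(H\) and \(E\) agree, so \(\partial^*H\) and \(\partial^*E\) coincide there with the same normals, and the two contributions cancel. This leaves \(\int_{\partial^*H\cap U}X\cdot\nu_H-\Per(E;U)\) after adding and subtracting \(\int(1-\eta)\) on the common part. Rearranging gives \eqref{eq:quantitative-comparison-identity}.

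The remaining claims are immediate. Inequality \eqref{eq:quantitative-comparison} follows because \(|X|\le1\) makes the boundary integrand nonnegative, and this gives local perimeter minimality, which is equivalent to mass minimality by the decomposition theorem. If \(q_X>0\) almost everywhere and equality holds, then \(|H\triangle E|=0\). When \(|X|=1\), one has \(1-X\cdot\nu=\tfrac12|\nu-X|^2\).
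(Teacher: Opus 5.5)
Your argument is correct and is essentially the paper's proof: you truncate $X$ by $1-\psi_j$ from the null-set cutoff lemma, apply BV Gauss--Green to $\one_H-\one_E$, bound the gradient error by $\int|\nabla\psi_j|$, and pass to the limit using $q_X\geq0$ and the cancellation of fluxes where $H=E$. The remaining differences are technical:
\begin{itemize}
\item For the boundary cutoff errors, the paper chooses supports in neighborhoods $O_j$ with $|D\one_E|(O_j)+|D\one_H|(O_j)\to0$, by outer regularity. You instead use nested supports of vanishing $\cH^{N-1}$-content. The nesting you need comes from applying the lemma inside the open set where the previous cutoff equals one.
\item For $q_X\in L^1_{\rm loc}$, the paper applies the comparison to the competitors $E\setminus B$ and $E\cup B$. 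Your direct Gauss--Green estimate on $E$ and on $\Omega$ also works.
\end{itemize}
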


\begin{proof}
Choose \(K_0\Subset K_1\Subset U\) containing the support of
\(\one_H-\one_E\) in \(K_0\), and put
\(\mu=|D\one_E|+|D\one_H|\).
The null-set assumption and \eqref{eq:finite-perimeter-definition}
give \(\mu(Z\cap K_1)=0\).
Take open neighborhoods \(O_j\Subset U\) of \(Z\cap K_1\) with
\(\mu(O_j)\to0\), and use \cref{lem:null-set-cutoffs} to choose
\(\psi_j\in C_c^1(O_j)\) with
\(\int_U(\psi_j+|\nabla\psi_j|)\to0\).
Pass to a subsequence so that \(\psi_j\to0\) almost everywhere, and set
\(\eta_j=1-\psi_j\).  Fix \(\chi\in C_c^1(U)\) equal to one near
\(K_0\), with support in \(K_1\).
The field \(\chi\eta_jX\), extended by zero near \(Z\), is an admissible
\(C^1\) field.  The BV integration-by-parts formula
\cite{Federer69,Giusti84,Maggi12}, applied to \(\one_H-\one_E\), gives
\begin{align}
 \int_{H\triangle E}\eta_jq_X
 &=\int_{\partial^*H}\chi\eta_j X\cdot\nu_H
   -\int_{\partial^*E}\chi\eta_j
   -\int_U(\one_H-\one_E)X\cdot\nabla\eta_j.
 \label{eq:BV-cutoff-comparison}
\end{align}
We have used \(\chi=1\) on the difference set and
\((\one_H-\one_E)\diver X=q_X\) there.  The final integral is bounded in
absolute value by \(\int_U|\nabla\psi_j|\).
The boundary integrals converge because their cutoff errors are at most
\(\mu(O_j)\). Thus the right-hand side of
\eqref{eq:BV-cutoff-comparison} has a finite limit. Since
\(0\leq\eta_jq_X\to q_X\) almost everywhere, Fatou's lemma gives
\[
 \int_{H\triangle E}q_X
 \leq\liminf_j\int_{H\triangle E}\eta_jq_X<\infty.
\]
Now \(0\leq\eta_jq_X\leq q_X\), so dominated convergence gives
equality in the limit of \eqref{eq:BV-cutoff-comparison}.
Since \(H=E\) outside \(K_0\), the boundary fluxes cancel there.
Consequently
\[
 \int_{H\triangle E}q_X
 =\int_{\partial^*H\cap U}X\cdot\nu_H-\Per(E;U),
\]
which proves \eqref{eq:quantitative-comparison-identity}.
For almost every ball \(B\Subset\Omega\), both
\(H=E\setminus B\) and \(H=E\cup B\) have locally finite perimeter.
The integrability just proved for these two comparisons gives integrability
of \(q_X\) on the two parts of \(B\), hence local integrability in
\(\Omega\).  The current conclusion follows from the equivalence in
\cref{sec:prelim-minimization}; the other conclusions follow from
nonnegativity.
\end{proof}

\begin{remark}[Semialgebraic singular sets]
A set is \emph{semialgebraic} if it is a finite union of sets defined
by finitely many polynomial equalities and inequalities.
Such sets admit finite smooth decompositions, and their dimension is
the largest dimension of a smooth piece; see
\cite[Sections~2.3 and~2.8]{BochnakCosteRoy98}.
Each smooth piece of dimension at most \(N-2\) has zero
\(\cH^{N-1}\)-measure, by a countable collection of local smooth
charts.  Consequently a closed semialgebraic set of dimension at
most \(N-2\) satisfies \eqref{eq:H-null}.
\end{remark}

\begin{remark}[Finite perimeter of polynomial sublevel sets]
\label{rem:polynomial-finite-perimeter}
Let \(F\) be a nonzero real polynomial of degree \(d\), and put
\(E=\{F<0\}\) and \(Q_L=(-L,L)^N\).
On each coordinate line, the restriction of \(F\) is either zero
or a nonzero univariate polynomial of degree at most \(d\).
Thus the restriction of \(\one_E\) has one-dimensional variation
at most \(d\). For \(\varphi\in C_c^1(Q_L)\), Fubini and
one-dimensional integration by parts give
\[
 \left|\int_{Q_L}\one_E\,\partial_i\varphi\,dx\right|
 \leq d(2L)^{N-1}\|\varphi\|_\infty.
\]
Each distributional derivative \(D_i\one_E\) is therefore a finite
measure on \(Q_L\), and
\[
 \Per(E;Q_L)\leq\sum_{i=1}^N|D_i\one_E|(Q_L)
 \leq Nd(2L)^{N-1}.
\]
In particular, polynomial sublevel sets have locally finite perimeter.
This also applies to \(\{G<0\}\) whenever \(G\) has the same sign
almost everywhere as a polynomial.

We also recall that \(\{F=0\}\) has Lebesgue measure zero.
Write \(F\) as a polynomial in one coordinate. Outside the zero
set of a nonzero coefficient polynomial, its fibers are finite.
Induction on \(N\) and Fubini prove the assertion.
\end{remark}

\begin{corollary}[Gradient subcalibration criterion]
\label{cor:gradient-subcalibration}
Let \(G\in C^2(\Omega\setminus Z)\cap C^0(\Omega)\), assume that
\(E=\{G<0\}\) has locally finite perimeter, and suppose that \(Z\) contains
the critical set of \(G\) and the singular part of \(\{G=0\}\).
If \(Z\) is relatively closed, \(\cH^{N-1}(Z\cap K)=0\) for every
compact \(K\Subset\Omega\), and
\[
 G\cL(G)\geq0\qquad\text{on }\Omega\setminus Z,
\]
then \(E\) is locally perimeter minimizing.
\end{corollary}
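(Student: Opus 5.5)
We apply \cref{thm:removable-subcalibration} with the field \(X=X_G=\nabla G/|\nabla G|\) on \(\Omega\setminus Z\). Since \(Z\) contains the critical set of \(G\), the field is defined there. Because \(G\in C^2(\Omega\setminus Z)\), we get \(X\in C^1(\Omega\setminus Z;\R^N)\). Hypothesis (i) holds trivially: \(|X|=1\). The null-set hypothesis on \(Z\) and local finiteness of the perimeter of \(E\) are assumed directly. It remains to verify (ii) and (iii).

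For (iii), we use \eqref{eq:div-normalized-gradient}, which gives \(\diver X=\cL(G)/|\nabla G|^3\) on \(\Omega\setminus Z\). On \(E\setminus Z=\{G<0\}\setminus Z\), the assumption \(G\cL(G)\geq0\) forces \(\cL(G)\leq0\), so \(\diver X\leq0\). On \(\{G>0\}\setminus Z\) it forces \(\diver X\geq0\). The remaining set is \(\{G=0\}\setminus Z\). There \(\nabla G\neq0\), so this set is locally a \(C^2\) hypersurface and has Lebesgue measure zero. Hence the sign condition holds almost everywhere in \((\Omega\setminus E)\setminus Z\), as required.

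For (ii), take \(x\in\partial^*E\setminus Z\). Since \(E=\{G<0\}\) and \(G\) is continuous, \(x\) lies in the topological boundary of \(E\), so \(G(x)=0\). As \(\nabla G(x)\neq0\), the implicit function theorem shows that near \(x\) the set \(E\) is the open side \(\{G<0\}\) of the \(C^2\) hypersurface \(\{G=0\}\). Every point of such a hypersurface is a reduced-boundary point, and its measure-theoretic exterior normal is the classical one, namely \(\nabla G(x)/|\nabla G(x)|\), pointing toward \(\{G>0\}\). Hence \(X\cdot\nu_E=1\) at every point of \(\partial^*E\setminus Z\), not merely almost every point.

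\cref{thm:removable-subcalibration} then yields local perimeter minimization of \(E\). The one point needing care is the identification of the reduced-boundary normal with \(\nabla G/|\nabla G|\) near points where \(\nabla G\neq0\). This follows from uniqueness of blow-ups at regular points of a \(C^1\) boundary and is standard. The hypothesis that \(Z\) contains the singular part of \(\{G=0\}\) is then automatically consistent with this. I expect no genuine obstacle.
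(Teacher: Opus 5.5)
Your proposal is correct and follows the paper's proof: both apply \cref{thm:removable-subcalibration} to \(X_G=\nabla G/|\nabla G|\), taking the phase signs from \(G\cL(G)\geq0\) via \eqref{eq:div-normalized-gradient} and identifying \(X_G\) with the exterior normal on the regular zero set. You additionally spell out two details the paper leaves implicit: the Lebesgue-null set \(\{G=0\}\setminus Z\), and the identification of the reduced-boundary normal via \(\partial^*E\subset\partial E\) and the implicit function theorem.
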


\begin{proof}
Use \(X=\nabla G/|\nabla G|\) and
\eqref{eq:div-normalized-gradient}.  On \(G<0\) its divergence is
nonpositive, and on \(G>0\) it is nonnegative.  Along the regular zero set it
is the exterior unit normal.
\end{proof}

\paragraph{Verification for the algebraic constructions.}
The potentials in \cref{thm:iso-polynomial-table,thm:cubic-rank-two-existence,%
ex:Riedler-correct-power,ex:lawson-pullback,thm:cartan-pullback,%
thm:Hopf-pairing,main:complex-determinants} have a common structure:
\(G=aF\) has degree \(\ell>0\), the multiplier is smooth and positive
outside a closed conical algebraic set \(Z\) containing the singular
zeros of \(F\), and \(G\) extends continuously across \(Z\) with the
same signs as \(F\).  Hence \(E=\{G<0\}=\{F<0\}\) has locally finite
perimeter by \cref{rem:polynomial-finite-perimeter}.
Moreover, outside \(Z\),
\[
 F\neq0\ \Longrightarrow\
 \langle x,\nabla G\rangle=\ell G\neq0,
 \qquad
 F=0\ \Longrightarrow\ \nabla G=a\nabla F\neq0.
\]
Thus \(X_G\) is smooth, has norm one, and equals the exterior unit
normal on the regular boundary.  The cited constructions verify the
following exceptional sets and dimensions:
\begin{center}
\small
\begin{tabular}{@{}lll@{}}
\toprule
Construction & Exceptional set \(Z\) & Ambient codimension\\
\midrule
\Cref{thm:iso-polynomial-table} & \(\{0\}\) & \(N\)\\
\Cref{thm:cubic-rank-two-existence} & \(\{S=0\}\), \(S=|\nabla F|^2\) & at least \(2k\)\\
\Cref{ex:Riedler-correct-power} & \(\{Q=0\}\) & at least \(2k\)\\
\Cref{ex:lawson-pullback} & \(P^{-1}(0)\) & at least \(11\)\\
\Cref{thm:cartan-pullback} & \(P^{-1}(0)\) & at least \(m\in\{14,26\}\)\\
\Cref{thm:Hopf-pairing} & \(\{|x|\,|y|=0\}\) & \(16\)\\
\Cref{main:complex-determinants}, \(m\geq2\) & \(\{\operatorname{rank}_{\mathbb C}Z\leq m-2\}\) & \(8\)\\
\bottomrule
\end{tabular}
\end{center}
In the minimizing parameter ranges these codimensions are at least two,
so \(Z\) satisfies \eqref{eq:H-null}.  Once the positive multiplier
and these sets have been verified, the remaining condition in
\cref{cor:gradient-subcalibration} is the global inequality
\(G\cL(G)\geq0\).  The algebraic calculations below establish it.

\subsection*{One-sided comparison and two different fields}

A set \(E\) is \emph{inward minimizing} if perimeter comparison holds
for competitors contained in \(E\), and \emph{outward minimizing} if
it holds for competitors containing \(E\).  All changes are compactly
supported.  These are the subminimality and complementary subminimality
of \cite[Definition~1.1]{DePhilippisPaolini}.  The next statement keeps
the divergence remainder when the two sides use different fields.

\begin{theorem}[One-sided and paired comparison]
\label{thm:paired-subcalibration}
Let \(E\subset\Omega\) have locally finite perimeter, and let
\(Z\subset\Omega\) be relatively closed with
\(\cH^{N-1}(Z\cap K)=0\) for every compact \(K\Subset\Omega\).
Suppose \(X_-,X_+\in C^1(\Omega\setminus Z;\R^N)\) satisfy
\[
 |X_\pm|\leq1,\qquad X_\pm\cdot\nu_E=1
 \quad\cH^{N-1}\text{-a.e. on }\partial^*E\setminus Z.
\]
Assume only the respective phase signs
\[
 \diver X_-\leq0\text{ in }E\setminus Z,\qquad
 \diver X_+\geq0\text{ in }(\Omega\setminus E)\setminus Z.
\]
Define, with zero values on \(Z\),
\[
 q_-=-\one_E\diver X_-,\qquad
 q_+=\one_{\Omega\setminus E}\diver X_+.
\]
Then \(q_-,q_+\in L^1_{\rm loc}(\Omega)\), and the following
conclusions hold for changes compactly supported in \(U\Subset\Omega\).
\begin{enumerate}[label=\textup{(\roman*)}]
 \item The assumptions on \(X_-\) alone imply
 \[
  \Per(H;U)-\Per(E;U)\geq\int_{E\setminus H}q_-
  \qquad(H\subset E).
 \]
 The assumptions on \(X_+\) alone imply the corresponding estimate
 \[
  \Per(H;U)-\Per(E;U)\geq\int_{H\setminus E}q_+
  \qquad(H\supset E).
 \]
 \item Together they imply, for every locally finite-perimeter \(H\),
 \begin{equation}\label{eq:paired-weighted-comparison}
  \Per(H;U)-\Per(E;U)
  \geq\int_{E\setminus H}q_-+\int_{H\setminus E}q_+.
 \end{equation}
 Thus \(E\) is locally perimeter minimizing, and its oriented boundary
 is locally mass minimizing among integral currents.  If each weight
 is positive almost everywhere on its phase, equality implies
 \(H=E\) up to a null set.
\end{enumerate}
\end{theorem}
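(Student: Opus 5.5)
The plan is to repeat the cutoff argument of \cref{thm:removable-subcalibration} one phase at a time. Fix \(U\), choose \(K_0\Subset K_1\Subset U\) with the change set in \(K_0\), set \(\mu=|D\one_E|+|D\one_H|\), and use \cref{lem:null-set-cutoffs} to obtain cutoffs \(\psi_j\) supported in neighborhoods \(O_j\) of \(Z\cap K_1\) with \(\mu(O_j)\to0\) and \(\int_U(\psi_j+|\nabla\psi_j|)\to0\). Put \(\eta_j=1-\psi_j\), and take \(\chi\in C_c^1(U)\) equal to one near \(K_0\) with support in \(K_1\).

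For part (i) with \(H\subset E\), I will apply the BV integration by parts to \(\one_E-\one_H=\one_{E\setminus H}\) with the admissible field \(\chi\eta_jX_-\). This gives
\[
 \int_{E\setminus H}\eta_j\,q_-
 =\int_{\partial^*H}\chi\eta_jX_-\cdot\nu_H-\int_{\partial^*E}\chi\eta_j
   +\int_U\one_{E\setminus H}X_-\cdot\nabla\eta_j .
\]
On \(E\setminus H\subset E\), we have \(-\diver X_-=q_-\geq0\), so only the phase sign inside \(E\) enters. The last term is at most \(\int_U|\nabla\psi_j|\to0\). The two boundary integrals converge, with errors bounded by \(\mu(O_j)\). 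Fatou's lemma then gives finiteness of \(\int_{E\setminus H}q_-\), and monotone (dominated) convergence gives the identity in the limit. Outside \(K_0\) the fluxes cancel, and \(X_-\cdot\nu_H\leq1\) yields \(\Per(H;U)-\Per(E;U)\geq\int_{E\setminus H}q_-\). The case \(H\supset E\) is symmetric: use \(\one_{H\setminus E}\) and \(X_+\), where \(\diver X_+=q_+\geq0\) on \(H\setminus E\subset\Omega\setminus E\).

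Local integrability follows as before. For almost every ball \(B\Subset\Omega\), the competitor \(E\setminus B\subset E\) gives \(q_-\in L^1(B)\), and \(E\cup B\supset E\) gives \(q_+\in L^1(B)\).

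Part (ii) is the main point, since a single field is no longer available. A general \(H\) has to be split into an inward and an outward change. For this I use the submodularity of perimeter,
\[
 \Per(H\cap E;U)+\Per(H\cup E;U)\leq\Per(H;U)+\Per(E;U),
\]
which holds for locally finite-perimeter sets. Apply (i) to \(H\cap E\subset E\), whose removed set is \(E\setminus H\), and to \(H\cup E\supset E\), whose added set is \(H\setminus E\). Summing the two estimates gives
\[
 \Per(H;U)-\Per(E;U)\geq[\Per(H\cap E;U)-\Per(E;U)]+[\Per(H\cup E;U)-\Per(E;U)]\geq\int_{E\setminus H}q_-+\int_{H\setminus E}q_+ .
\]
Both auxiliary competitors coincide with \(E\) outside \(K_0\), so (i) applies to them. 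Minimality follows from nonnegativity of the right-hand side, and mass minimality follows from the equivalence in \cref{sec:prelim-minimization}. If the weights are positive almost everywhere on their phases, equality forces \(|E\triangle H|=0\).

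The step needing most care is justifying submodularity in the localized form \(\Per(\cdot;U)\). I plan to invoke the standard inequality for the open set \(U\), in the form \(|D\one_{A\cap B}|(U)+|D\one_{A\cup B}|(U)\leq|D\one_A|(U)+|D\one_B|(U)\), as in Maggi's Lemma~12.22.
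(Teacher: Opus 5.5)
Your proposal is correct and follows essentially the same route as the paper. It runs the one-phase version of the cutoff identity \eqref{eq:BV-cutoff-comparison} with $X_-$ for $H\subset E$ and $X_+$ for $H\supset E$, gets local integrability from the competitors $E\setminus B$ and $E\cup B$, and proves part (ii) by applying the union--intersection perimeter inequality to $H\cap E$ and $H\cup E$. The one step you leave implicit is passing to a subsequence with $\psi_j\to0$ almost everywhere before applying Fatou's lemma; this is already part of the cutoff argument you say you are repeating.
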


\begin{proof}
For \(H\subset E\), use \(X_-\) in the cutoff argument
\eqref{eq:BV-cutoff-comparison}.  The difference set lies in \(E\),
where
\((\one_H-\one_E)\diver X_-=q_-\geq0\).
The same Fatou and dominated-convergence steps give
\[
 \Per(H;U)-\Per(E;U)
 =\int_{\partial^*H\cap U}(1-X_-\cdot\nu_H)
  +\int_{E\setminus H}q_-.
\]
No divergence sign outside \(E\) enters this identity.  Taking
\(H=E\setminus B\) for almost every relatively compact ball proves
local integrability of \(q_-\).  For \(H\supset E\), use \(X_+\)
instead.  Then
\((\one_H-\one_E)\diver X_+=q_+\) on \(H\setminus E\).
Taking \(H=E\cup B\) proves local integrability of \(q_+\).
The norm bounds give both inequalities in \textup{(i)}.

For an unrestricted competitor, apply these inequalities to
\(H\cap E\) and \(H\cup E\).  The perimeter inequality for
unions and intersections
\cite{DePhilippisPaolini,Maggi12} gives
\[
 \Per(H\cap E;U)+\Per(H\cup E;U)
 \leq\Per(H;U)+\Per(E;U).
\]
Adding the two one-sided estimates proves
\eqref{eq:paired-weighted-comparison}.  The current conclusion follows
from the equivalence in \cref{sec:prelim-minimization}, and positivity
gives the equality assertion.
\end{proof}

\begin{remark}[Fields defined on their respective sides]
\label{rem:phase-local-fields}
When \(E\) is open and \(\partial E\setminus Z\) is smooth, it is
enough to construct \(X_-\) on an open neighborhood of
\(\overline E\setminus Z\), and \(X_+\) on an open neighborhood of
\(\overline{\Omega\setminus E}\setminus Z\), relative to
\(\Omega\setminus Z\).  A smooth cutoff equal to one on the relevant
closed phase extends each bounded field by zero to the unused side.
This preserves its norm, boundary values, and the divergence on its
own phase.  In a conical setting the cutoff may be chosen on the unit
sphere, so homogeneity is preserved.  No matching of the two potentials
is required; their normalized gradients both equal \(\nu_E\) on the
regular boundary.

Liu's subcalibrations for Lawson cones
\cite[Sections~2.1 and~3]{Liu19} use different radial weights on the
two sides.  In some cases, the two potentials also have different
homogeneous degrees.  Writing them as \(G_\pm=a_\pm F\), with
\(a_\pm>0\) near the regular cone, gives
\(\nabla G_\pm=a_\pm\nabla F\) on \(F=0\).
Their normalized gradients therefore equal \(\nu_E\) on the regular cone.
Each field is zero-homogeneous even when the potentials have different
degrees, so this construction fits \cref{thm:paired-subcalibration}.
\end{remark}

\subsection*{Strict minimality and distance-weighted comparison}

\begin{theorem}[Strict minimality from two-sided divergence integrals]
\label{thm:strict-minimality-flux}
Let \(E\subset\R^{n+1}\), \(n\geq2\), be conical, and suppose the hypotheses
of \cref{thm:removable-subcalibration} hold globally with a conical
exceptional set \(Z\) and a zero-homogeneous field \(X\).
Suppose
\begin{equation}\label{eq:two-phase-flux}
 A_-:=\int_{B_1\cap E}q_X\,dx>0,
 \qquad
 A_+:=\int_{B_1\setminus E}q_X\,dx>0.
\end{equation}
Then \(C=\partial\llbracket E\rrbracket\) is strictly area minimizing in
\cref{def:four-variational-notions}, with
\begin{equation}\label{eq:strict-minimality-constant}
 \Theta=\min\{A_-,A_+\}.
\end{equation}
In particular the assertion permits nonisolated singularities.
\end{theorem}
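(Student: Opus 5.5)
The plan is to reduce the current comparison to a perimeter comparison for a single set $H$. I will then apply \eqref{eq:quantitative-comparison} from \cref{thm:removable-subcalibration}. Vertex avoidance will force $H\triangle E$ to contain one full phase of a small ball. Homogeneity then converts the divergence integral over that ball into $\eps^n A_\pm$.

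\emph{Step 1 (reduction to an integer BV function).} Let $T$ be a finite-mass integral $n$-current with $\partial T=\partial C_1$ and $\operatorname{spt}T\cap B_\eps=\varnothing$. The cycle $T-C_1$ has compact support in $\R^N$. In top codimension it is therefore $\partial R$ for an integral $N$-current $R=\llbracket\theta\rrbracket$, where $\theta\in BV(\R^N;\mathbb Z)$ has compact support. Put $u=\one_E+\theta$ and fix a large open ball $U$ containing $\overline{B_1}\cup\operatorname{spt}\theta$. Then $\partial\llbracket u\rrbracket=\partial\llbracket E\rrbracket+\partial R$ coincides in $U$ with $T+C\llcorner(U\setminus B_1)$. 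Hence
\[
 |Du|(U)=\mathbf M(T)+\mathbf M\bigl(C\llcorner(U\setminus B_1)\bigr),\qquad
 \Per(E;U)=\mathbf M(C_1)+\mathbf M\bigl(C\llcorner(U\setminus B_1)\bigr),
\]
up to the $\cH^{n}$-null sphere $\partial B_1\cap C$.

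\emph{Step 2 (one layer).} For integer-valued BV functions the coarea formula gives $|Du|=\sum_k|D\one_{\{u\geq k\}}|$. Set $H=\{u\geq1\}$. Then $H\triangle E\subset\operatorname{spt}\theta\Subset U$, $H$ has locally finite perimeter, and $\Per(H;U)\leq|Du|(U)$. Combining this with Step 1 and \eqref{eq:quantitative-comparison} yields
\[
 \mathbf M(T)-\mathbf M(C_1)\geq\Per(H;U)-\Per(E;U)\geq\int_{H\triangle E}q_X\,dx .
\]
This step also recovers plain area minimality.

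\emph{Step 3 (vertex avoidance).} Since $\operatorname{spt}\partial\llbracket u\rrbracket\cap B_\eps=\operatorname{spt}T\cap B_\eps=\varnothing$, we have $Du=0$ in the connected ball $B_\eps$. So $u$ equals a constant integer $k$ almost everywhere there. If $k\geq1$, then $B_\eps\subset H$ and $H\triangle E\supset B_\eps\setminus E$. If $k\leq0$, then $H\cap B_\eps=\varnothing$ and $H\triangle E\supset B_\eps\cap E$. The conical set $E$ and zero-homogeneous $X$ make $q_X$ homogeneous of degree $-1$, and the sets $B_\eps\cap E$, $B_\eps\setminus E$ are dilates of $B_1\cap E$, $B_1\setminus E$. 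The change of variables $x=\eps y$ therefore gives
\[
 \int_{B_\eps\cap E}q_X=\eps^{N-1}A_-,\qquad \int_{B_\eps\setminus E}q_X=\eps^{N-1}A_+ .
\]
Here $N-1=n$. Since $q_X\geq0$, we conclude $\mathbf M(T)-\mathbf M(C_1)\geq\min\{A_-,A_+\}\eps^n$.

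\emph{Expected obstacle.} I expect the main difficulty to be Step 1: making the identification between the current $T$ and the BV function $u$ rigorous. This requires the top-dimensional constancy theorem and the existence of the integral filling $R$. It also requires checking that the mass of $\partial\llbracket u\rrbracket$ in $U$ splits exactly as stated, with no spurious contribution on $\partial B_1$. For these points I would invoke the decomposition theorem \cite[Theorem~27.6 and Corollary~27.8]{Simon83} already cited in \cref{sec:prelim-minimization}. The remaining steps are the direct consequences of \cref{thm:removable-subcalibration} and homogeneity described above.
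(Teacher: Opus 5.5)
Your Steps 2 and 3 follow the paper's argument: integer-level selection $H=\{u\geq1\}$, constancy of $u$ on $B_\eps$, and degree $-1$ homogeneity of $q_X$. Step 1, however, starts from an unjustified claim, namely that $T-C_1$ has compact support. In \cref{def:four-variational-notions} the competitors are arbitrary finite-mass integral currents in $\R^{n+1}$, with no restriction on their support. For unbounded $T$ the cone over $T-C_1$ can have infinite mass. Even when a filling $\llbracket\theta\rrbracket$ exists, its support contains $\operatorname{spt}(T-C_1)$ and is therefore unbounded. So in general there is no bounded $U$ with $H\triangle E\Subset U$, and \eqref{eq:quantitative-comparison} cannot be invoked. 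The same gap affects the plain area minimality that you say Step 2 recovers.

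The missing idea is the reduction the paper performs first: replace $T$ by $\pi_\#T$, where $\pi(x)=x/\max\{1,|x|\}$ is the nearest-point retraction onto $\overline B_1$.
\begin{itemize}
\item The map $\pi$ is $1$-Lipschitz, fixes $\partial C_1\subset\partial B_1$, and maps $\R^{n+1}\setminus B_\eps$ into itself because $\eps<1$.
\item Hence $\pi_\#T$ is an admissible competitor that avoids $B_\eps$, with $\mathbf M(\pi_\#T)\leq\mathbf M(T)$ and $\operatorname{spt}\pi_\#T\subset\overline B_1$.
\item For unbounded $T$, you define the pushforward and check $\partial\pi_\#T=\partial C_1$ by cutting off at a large radius $L$; the error is $O(L^{-1})\mathbf M(T)$.
\end{itemize}
After this reduction your Steps 1--3 go through, since coning inside $\overline B_1$ gives $\theta$ supported in $\overline B_1$.

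The reduction also fixes a smaller inaccuracy in Step 1. Without it, the identity $|Du|(U)=\mathbf M(T)+\mathbf M(C\llcorner(U\setminus B_1))$ can fail, because $T$ may overlap $C$ outside $B_1$ with opposite orientation. Only ``$\leq$'' holds in general, which happens to be the direction you need. Once $\operatorname{spt}T\subset\overline B_1$, the measures $\|T\|$ and $\|C-C_1\|$ are mutually singular because $\|C\|(\partial B_1)=0$, and equality holds as in the paper. Finally, you should note that $\Theta=\min\{A_-,A_+\}$ is finite, because $q_X\in L^1_{\rm loc}$ by \cref{thm:removable-subcalibration}.
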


\begin{proof}
Both integrals are finite by \cref{thm:removable-subcalibration}.
Homogeneity gives \(q_X(rx)=r^{-1}q_X(x)\), so their values over
\(B_\eps\) are \(\eps^nA_-\) and \(\eps^nA_+\).
If a set competitor \(H\) has no perimeter in the connected ball
\(B_\eps\), then \(\one_H\) is constant there.  Accordingly,
\(H\triangle E\) contains either \(B_\eps\cap E\) or
\(B_\eps\setminus E\), up to a null set.
Equation \eqref{eq:quantitative-comparison} proves the desired mass gap
for set-boundary competitors.

For an arbitrary integral-current competitor, consider the nearest-point
projection onto the closed unit ball,
\[
 \pi(x)=\frac{x}{\max\{1,|x|\}}.
\]
This map is \(1\)-Lipschitz, fixes \(\partial C_1\), and maps
\(\R^{n+1}\setminus B_\eps\) into itself.  The pushforward mass
estimate and the identity \(\partial(\pi_\#T)=\pi_\#(\partial T)\)
give
\[
 \mathbf M(\pi_\#T)\leq\mathbf M(T),
 \qquad \partial(\pi_\#T)=\partial C_1;
\]
see \cite[Remark~26.21(1), Lemma~26.25, and
Remark~27.2(3)]{Simon83}.  For finite-mass currents with noncompact
support, these statements follow from the compactly supported case by
cutoffs: the additional boundary term is bounded by
\(O(L^{-1})\mathbf M(T)\) for a cutoff at radius \(L\), and tends to
zero.  The same projection is used in
\cite[proof of Proposition~1.2]{DeLellisSpadaroSpolaor17}.
We may therefore replace \(T\) by \(\pi_\#T\) and assume
\(\operatorname{spt}T\subset\overline B_1\).

Since \(\partial(T-C_1)=0\), coning \(T-C_1\) to the origin gives an
integral \((n+1)\)-current \(R\) with
\(\partial R=T-C_1\) and support in \(\overline B_1\)
\cite[equation~(26.26) and Remark~27.2(2)--(3)]{Simon83}.
A top-dimensional integral current is integration against an
integer-valued multiplicity.  Since its boundary has finite mass, this
multiplicity has bounded variation
\cite[Remark~26.28]{Simon83}.  Thus
\[
 R=\llbracket k\rrbracket,
 \qquad k\in BV_c(\R^{n+1};\mathbb Z),
 \qquad v:=\one_E+k\in BV_{\mathrm{loc}}(\R^{n+1};\mathbb Z).
\]
We have
\[
 \partial\llbracket v\rrbracket=T+C-C_1.
\]
In \(B_\eps\), the right-hand side vanishes because \(T\) avoids
this ball and \(C=C_1\) there.  The Constancy Theorem
\cite[Theorem~26.27]{Simon83}, applied on the connected
open set \(B_\eps\), therefore gives \(v=m\) almost everywhere
there for some \(m\in\mathbb Z\).

Set \(H=\{v\geq1\}\). The integer-level decomposition
\cite[Theorem~27.6]{Simon83} gives, for every relatively compact
open set \(U\subset\R^{n+1}\),
\[
 |Dv|(U)=\sum_{j\in\mathbb Z}
       \Per(\{v\geq j\};U)
 \geq\Per(H;U).
\]
Thus \(H\) has locally finite perimeter. Moreover,
\(H=E\) outside \(\overline B_1\), and \(\one_H\) is constant
in \(B_\eps\).
Since \(\|C\|(\partial B_1)=0\), the mass measures of \(T\) and
\(C-C_1\) are mutually singular.  Hence, for \(\rho>1\),
\[
 \Per(H;B_\rho)\leq |Dv|(B_\rho)
 =\mathbf M(T)+\Per(E;B_\rho)-\mathbf M(C_1).
\]
Apply the set comparison in \(B_\rho\) and cancel the common exterior
perimeter.  This yields
\eqref{eq:strict-minimality-definition} with
\eqref{eq:strict-minimality-constant}.
\end{proof}

\begin{corollary}[Strict mass gap from paired fields]
\label{cor:paired-strict-minimality}
Suppose \cref{thm:paired-subcalibration} holds globally for a conical
set \(E\subset\R^{n+1}\), \(n\geq2\), a conical exceptional set \(Z\),
and fields that are zero-homogeneous on their respective phases.
If
\[
 A_-:=\int_{B_1\cap E}q_->0,\qquad
 A_+:=\int_{B_1\setminus E}q_+>0,
\]
then every integral-current competitor in
\eqref{eq:strict-minimality-definition} satisfies
\[
 \mathbf M(T)-\mathbf M(C_1)
 \geq\min\{A_-,A_+\}\eps^n.
\]
\end{corollary}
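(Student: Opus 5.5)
The plan is to repeat the proof of \cref{thm:strict-minimality-flux}, replacing the single-field identity \eqref{eq:quantitative-comparison} with the paired estimate \eqref{eq:paired-weighted-comparison} of \cref{thm:paired-subcalibration}.

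First, the finiteness and scaling of the two weights. By \cref{thm:paired-subcalibration}, applied globally, \(q_\pm\in L^1_{\rm loc}\), so \(A_\pm<\infty\). Since \(X_\pm\) is zero-homogeneous on its phase and \(E\), \(Z\) are conical, \(\diver X_\pm\) is homogeneous of degree \(-1\). Hence \(q_\pm(\rho x)=\rho^{-1}q_\pm(x)\). Changing variables gives
\[
 \int_{B_\eps\cap E}q_-=\eps^nA_-,\qquad \int_{B_\eps\setminus E}q_+=\eps^nA_+ .
\]

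Second, the set-competitor case. Let \(H\) be a locally finite-perimeter set with \(H\triangle E\Subset U\) and \(\one_H\) almost everywhere constant on \(B_\eps\). If \(\one_H=0\) on \(B_\eps\), then \(E\setminus H\supset B_\eps\cap E\), and \eqref{eq:paired-weighted-comparison} gives an excess of at least \(\eps^nA_-\). If \(\one_H=1\) on \(B_\eps\), then \(H\setminus E\supset B_\eps\setminus E\), and the excess is at least \(\eps^nA_+\). Both terms on the right of \eqref{eq:paired-weighted-comparison} are nonnegative, so we may drop the other one in each case. Either way the excess is at least \(\min\{A_-,A_+\}\eps^n\).

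Third, the reduction of an arbitrary integral current \(T\) to a set competitor. This step is verbatim from the proof of \cref{thm:strict-minimality-flux}, because it uses only \(E\) and never the field:
\begin{enumerate}[label=\textup{(\alph*)}]
 \item project onto \(\overline B_1\) by the \(1\)-Lipschitz map \(\pi\), which does not increase mass and keeps the boundary \(\partial C_1\);
 \item cone \(T-C_1\) to the origin to obtain \(R=\llbracket k\rrbracket\) with \(k\in BV_c(\R^{n+1};\mathbb Z)\), and set \(v=\one_E+k\);
 \item use the Constancy Theorem on \(B_\eps\), where \(\partial\llbracket v\rrbracket=T+C-C_1\) vanishes, to get \(v\) constant there;
 \item set \(H=\{v\geq1\}\) and use the integer-level decomposition to obtain \(\Per(H;B_\rho)\leq\mathbf M(T)+\Per(E;B_\rho)-\mathbf M(C_1)\).
\end{enumerate}
The set \(H\) agrees with \(E\) outside \(\overline B_1\) and has \(\one_H\) constant on \(B_\eps\). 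Applying the set case in \(B_\rho\) and cancelling the common exterior perimeter gives the claim.

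The only point needing care is the global validity of \eqref{eq:paired-weighted-comparison} in the sub-step \(H\cap E\), \(H\cup E\): the lattice inequality requires these to remain admissible competitors compactly supported in \(B_\rho\), which holds since \(H\triangle E\Subset B_\rho\). I expect no genuine obstacle beyond checking that the homogeneity of each \(q_\pm\) holds on its own phase, which follows from the conical structure.
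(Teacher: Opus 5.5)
Your proposal is correct and follows the paper's proof essentially step for step. Both arguments use the degree \(-1\) homogeneity of \(q_\pm\) on their phases and the fill-or-empty dichotomy on \(B_\eps\) inserted into \eqref{eq:paired-weighted-comparison}, and both reduce integral-current competitors to set competitors by the same projection, coning, constancy, and integer-level selection as in the proof of \cref{thm:strict-minimality-flux}.
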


\begin{proof}
The weights are locally integrable by
\cref{thm:paired-subcalibration} and homogeneous of degree \(-1\)
on their phases.  A set boundary avoiding \(B_\eps\) fills or empties
that ball, so \eqref{eq:paired-weighted-comparison} gives the stated
gap.  For an integral current, the radial projection, top-dimensional
BV filling, and integer-level selection in the proof of
\cref{thm:strict-minimality-flux} produce a set competitor with the same
ball avoidance and no larger mass excess.  Applying the paired set
estimate to that competitor proves the assertion.
\end{proof}

\begin{corollary}[Polynomial potentials with nonzero divergence]
\label{cor:polynomial-strict-minimality}
Let \(N\geq3\).  Suppose a homogeneous polynomial \(G\) satisfies the global hypotheses of
\cref{cor:gradient-subcalibration}.  If both \(\{G<0\}\) and \(\{G>0\}\)
are nonempty and \(\cL(G)\not\equiv0\), then
\(\partial\llbracket\{G<0\}\rrbracket\) is strictly area minimizing.
\end{corollary}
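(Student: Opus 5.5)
The plan is to reduce the statement to \cref{thm:strict-minimality-flux}, with \(X=X_G=\nabla G/|\nabla G|\). The hypotheses of \cref{cor:gradient-subcalibration} hold globally, so the field satisfies those of \cref{thm:removable-subcalibration} on \(\R^N\setminus Z\). Since \(G\) is homogeneous of some degree \(\ell>0\), \(X\) is zero-homogeneous. The exceptional set may be replaced by the conical set \(Z'=\bigcup_{t>0}tZ\) union the critical set of \(G\). I should check that \(Z'\) still has locally zero \(\cH^{N-1}\)-measure. This holds because it is a closed cone over a set of the sphere of \(\cH^{N-2}\)-measure zero; alternatively, \(Z\) is already conical in all applications, and I would assume that reading. \(E=\{G<0\}\) is conical. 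It then remains to show that \(A_-=\int_{B_1\cap E}q_X>0\) and \(A_+=\int_{B_1\setminus E}q_X>0\), where \(q_X=|\cL(G)|/|\nabla G|^3\) off \(Z\).

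The key step is positivity of \(q_X\) on a set of positive measure inside each phase. \(\cL(G)\) is a polynomial, and by hypothesis it is not identically zero. By \cref{rem:polynomial-finite-perimeter} its zero set has Lebesgue measure zero. Hence \(\cL(G)\neq0\) almost everywhere, and in particular on almost every point of the open sets \(\{G<0\}\) and \(\{G>0\}\). These sets are nonempty, so they have positive measure, and by homogeneity their intersections with \(B_1\) are nonempty and open. The set \(Z\) has measure zero. The level set \(\{G=0\}\) also has measure zero, again by \cref{rem:polynomial-finite-perimeter}, because \(G\not\equiv0\). Therefore \(q_X>0\) almost everywhere on \(B_1\cap\{G<0\}\) and on \(B_1\cap\{G>0\}\). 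Since \(B_1\setminus E\) agrees with \(B_1\cap\{G>0\}\) up to a null set, both integrals are strictly positive. They may a priori be infinite, but finiteness is supplied by the local integrability in \cref{thm:removable-subcalibration}.

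Finally, I would apply \cref{thm:strict-minimality-flux}. It gives the mass gap with \(\Theta=\min\{A_-,A_+\}\), and area minimality comes from \cref{cor:gradient-subcalibration}. The hypothesis \(N\geq3\) gives \(n\geq2\), as the theorem requires. The main obstacle I expect is the bookkeeping about the exceptional set: conicality, and the fact that \(\nabla G\) may vanish off \(Z\) only on a null set. In the stated setting, \(Z\) is assumed to contain the critical set. So I would only verify that a conical enlargement preserves the null hypothesis, or note that the critical set of a homogeneous polynomial is already conical.
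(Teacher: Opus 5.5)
Your proposal follows the paper's proof. It uses the zero-homogeneity of $X_G$ and the Lebesgue-nullity of the zero sets of the nonzero polynomials $\cL(G)$ and $G$. From these, $q_X=|\cL(G)|/|\nabla G|^3>0$ almost everywhere on both phases, so both phase integrals are positive and \cref{thm:strict-minimality-flux} applies.

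Drop the conical enlargement $\bigcup_{t>0}tZ$. Your justification for it is false in general: if $Z$ is an equatorial $(N-2)$-sphere in $\Sph^{N-1}$, then $\cH^{N-1}(Z)=0$, but its cone is a hyperplane. The enlargement is also unnecessary. As the paper does, take the exceptional set to be the critical set of $G$ itself. This set is conical because $\nabla G(tx)=t^{\ell-1}\nabla G(x)$, and it is $\cH^{N-1}$-null because it lies in $Z$. It contains the singular zeros of $G$. It may replace $Z$ in the hypotheses of \cref{thm:removable-subcalibration}, since $Z$ is both $\cH^{N-1}$-null and Lebesgue-null.
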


\begin{proof}
The normalized gradient is zero-homogeneous.  Its critical set is
conical and satisfies \eqref{eq:H-null}.  Off this set,
\[
 q_X=\frac{|\cL(G)|}{|\nabla G|^3}.
\]
A nonzero polynomial has a Lebesgue-null zero set, by the
coefficient-polynomial induction and Fubini argument in
\cref{rem:polynomial-finite-perimeter}.  Thus \(q_X>0\) almost everywhere on both open
phases.  Equation \eqref{eq:two-phase-flux} follows, and
\cref{thm:strict-minimality-flux} applies.
\end{proof}

For regular area-minimizing hypercones, Niu \cite[Theorem~3]{Niu26}
characterizes simultaneous strict stability and strict minimality by
the distance-weighted comparison in
\eqref{eq:distance-weighted-comparison}. The next proposition records
the sufficient direction supplied by a given subcalibration in the
singular-set comparison framework of
\cref{thm:removable-subcalibration}. Its proof uses the quantitative
subcalibration method of De Philippis--Maggi
\cite[Section~4]{DePhilippisMaggi14}, also applied by Liu
\cite[Section~2.2]{Liu19}.

\begin{proposition}[Distance-weighted sufficient criterion]
\label{prop:distance-weighted-comparison}
Under the geometric and comparison hypotheses of
\cref{thm:strict-minimality-flux}, suppose the cone is two-sided and minimal on its regular part and, for some \(c>0\),
\begin{equation}\label{eq:linear-distance-divergence}
 q_X(x)\geq c\,\frac{\dist(x,C)}{|x|^2}
 \quad\text{for almost every }x\notin C.
\end{equation}
Assume each phase contains points at positive distance from \(C\).
Then the cone is strictly area minimizing and strictly stable, and
\begin{equation}\label{eq:distance-weighted-comparison}
 \Per(H;U)-\Per(E;U)
 \geq c\int_{H\triangle E}\frac{\dist(x,C)}{|x|^2}\,dx.
\end{equation}
More precisely, \(Q_C(u)\geq c\int_Cr^{-2}u^2\) for the regular-part
test class in \cref{def:four-variational-notions}.
\end{proposition}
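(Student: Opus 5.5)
The plan has three parts, taken in order: the comparison inequality, then strict minimality, then strict stability.

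\emph{Comparison inequality and strict minimality.}
First, \eqref{eq:distance-weighted-comparison} follows at once from \eqref{eq:quantitative-comparison} in \cref{thm:removable-subcalibration}. The cone has Lebesgue measure zero, so the pointwise bound \eqref{eq:linear-distance-divergence} can be integrated over \(H\triangle E\).

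Second, strict minimality follows from \cref{thm:strict-minimality-flux}, which requires \(A_\pm>0\). Each phase contains a point \(x_0\) with \(\dist(x_0,C)>0\). By conicality we may rescale \(x_0\) into \(B_1\). Openness then gives a small ball around \(x_0\), inside that phase, on which \(\dist(\cdot,C)/|x|^2\) is bounded below. Hence \(\int_{B_1\cap E}q_X>0\), and likewise on the complement.

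\emph{Strict stability.}
The main work is the Hardy bound \(Q_C(u)\geq c\int_Cr^{-2}u^2\). I would follow the De Philippis--Maggi second-order expansion of the comparison.

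Fix \(u\in C_c^\infty(C_{\reg}\setminus\{0\})\). Its support \(K\) is a compact subset of the regular part, away from the origin and from \(Z\); near \(K\), \(C\) is a smooth two-sided hypersurface with normal \(\nu_C\). For small \(t\), let \(H_t\) be the set whose boundary is the normal graph \(\{x+tu(x)\nu_C(x)\}\) over \(C\), modified only in a tubular neighborhood of \(K\). Then \(H_t\triangle E\Subset U\) for a fixed bounded \(U\) avoiding \(Z\) and the origin.

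Taylor's expansion of the area of a normal graph over a minimal hypersurface gives
\[
 \Per(H_t;U)-\Per(E;U)=\frac{t^2}{2}Q_C(u)+o(t^2).
\]
The first-order term vanishes by minimality of the regular part.

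For the right-hand side, use Fermi coordinates \(x=y+s\nu_C(y)\) with \(y\in K\) and \(|s|\leq|tu(y)|\). In these coordinates \(\dist(x,C)=|s|\) for small \(s\), and \(|x|^{-2}=|y|^{-2}+O(s)\). The Jacobian is \(1+O(s)\). Therefore
\[
 \int_{H_t\triangle E}\frac{\dist(x,C)}{|x|^2}\,dx
 =\int_K\int_0^{|tu(y)|}\frac{s}{|y|^2}\,ds\,dy+o(t^2)
 =\frac{t^2}{2}\int_C r^{-2}u^2+o(t^2).
\]
Dividing both sides by \(t^2/2\) and letting \(t\to0\) gives \(Q_C(u)\geq c\int_Cr^{-2}u^2\).

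\emph{Expected obstacle.}
I expect the main difficulty to be the justification of the identification \(\dist(x,C)=|s|\). The distance to all of \(C\), including its singular set and other sheets, could in principle be smaller than the distance to the local sheet over \(K\).

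This is handled by compactness. \(K\) sits at positive distance from \(\Sing C\) and from the rest of \(C\) outside a tubular neighborhood. The tubular neighborhood theorem for the smooth piece near \(K\) then gives \(\dist(x,C)=|s|\) exactly for \(|s|\) small, uniformly on \(K\).

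Only the lower bound \(\dist(x,C)\geq|s|(1-o(1))\) is actually needed, and this also holds uniformly. The remaining error terms are uniform because \(K\) is compact and the data are smooth there.
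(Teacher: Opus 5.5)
Your proposal is correct and follows the paper's proof. The weighted comparison comes from the quantitative comparison inequality, strict minimality from positivity of the two phase integrals via the strict-minimality flux theorem, and strict stability from the De Philippis--Maggi normal-graph expansion in a fixed regular tubular neighborhood, where the distance weight contributes $\frac{t^2}{2}\int_C r^{-2}u^2+o(t^2)$. Your added steps (rescaling a point at positive distance into $B_1$ to get $A_\pm>0$, and the compactness argument identifying $\operatorname{dist}(x,C)$ with the Fermi coordinate $|s|$) only make explicit what the paper states briefly.
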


\begin{proof}
We follow the normal-variation argument of De Philippis--Maggi
\cite[Section~4]{DePhilippisMaggi14}; see also Liu
\cite[Section~2.2]{Liu19}.
The weighted comparison follows from
\eqref{eq:quantitative-comparison}. The two phase integrals are
positive, so \cref{thm:strict-minimality-flux} gives strict minimality.
For a smooth compactly supported regular-part function \(u\), perturb the
boundary by the normal graph \(tu\) in a fixed regular tubular neighborhood.
Minimality and tubular coordinates give
\begin{align*}
 \Per(E_{tu};U)-\Per(E;U)&=\tfrac12t^2Q_C(u)+o(t^2),\\
 \int_{E_{tu}\triangle E}\frac{\dist(x,C)}{|x|^2}\,dx
 &=\tfrac12t^2\int_Cr^{-2}u^2+o(t^2).
\end{align*}
Indeed, on each normal segment the leading integral is
\(r^{-2}\int_0^{|tu|}s\,ds\); its tubular Jacobian and the radial weight
contribute only higher-order terms on this compact support.
Divide \eqref{eq:distance-weighted-comparison} by \(t^2/2\) and let
\(t\to0\).
\end{proof}

\section{Algebraic reduction and Jacobi stability}
\label{sec:algebraic-jacobi}

Laplacian closure expresses both the minimality condition for
\(F=0\) and the divergence numerator for \(G=aF\) in finitely
many invariant variables. We derive the resulting sign inequality
and relate its value along the cone to the Jacobi operator.
Homogeneity then reduces the multiplier construction to functions
of the angular invariants.

\subsection{Algebraic minimality and Laplacian algebras}

For a smooth function \(F\), write
\[
 C=\{F=0\},\qquad C_{\reg}=\{x:F(x)=0,\ \nabla F(x)\neq0\}.
\]
We first give geometric conditions for polynomial divisibility.
The irreducible case is the classical algebraic minimality criterion
recalled in \cite[Section~2]{TkachevClifford10}.

\begin{proposition}[Minimality and polynomial divisibility]
\label{prop:divisibility}
Let \(F\in\R[x_1,\ldots,x_N]\) be nonzero, homogeneous of degree
\(e\geq1\), and square-free.  Suppose \(C_{\reg}\) is minimal and
each real irreducible factor of \(F\) has a nonsingular real zero.
If \(e\geq2\), then there is a homogeneous polynomial
\(\lambda_F\) of degree \(2e-4\), possibly zero, such that
\begin{equation}\label{eq:L-divisibility}
 \boxed{\cL(F)=F\lambda_F.}
\end{equation}
For \(e=1\), the same identity holds with \(\lambda_F=0\).
Conversely, \eqref{eq:L-divisibility} implies minimality of every regular
point of \(\{F=0\}\).
\end{proposition}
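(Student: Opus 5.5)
The plan is to treat \(\cL(F)\) as a homogeneous polynomial and show it vanishes on the zero set of each irreducible factor. Since \(\nabla F\) has degree \(e-1\), the term \(|\nabla F|^2\Delta F\) has degree \(2(e-1)+(e-2)=3e-4\). The term \(\langle\nabla|\nabla F|^2,\nabla F\rangle=2\langle\Hess F\,\nabla F,\nabla F\rangle\) has the same degree \(3e-4\). Hence \(\cL(F)\) is homogeneous of degree \(3e-4\), and any quotient \(\lambda_F=\cL(F)/F\) has degree \(2e-4\), which fixes the stated degree. For \(e=1\), \(F\) is linear, so \(\Delta F=0\) and \(\Hess F=0\); thus \(\cL(F)\equiv0\) and \(\lambda_F=0\) works. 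From here on assume \(e\geq2\).

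\emph{Geometric step.} At a regular point of \(\{F=0\}\), the unit normal is \(\nabla F/|\nabla F|\). The mean curvature is, up to sign, \(\diver(\nabla F/|\nabla F|)=\cL(F)/|\nabla F|^3\) by \eqref{eq:div-normalized-gradient}. Therefore minimality of \(C_{\reg}\) is equivalent to \(\cL(F)=0\) on \(C_{\reg}\). This already proves the converse direction: if \(\cL(F)=F\lambda_F\), then \(\cL(F)\) vanishes on \(\{F=0\}\), so \(H=0\) at every regular zero.

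\emph{Algebraic step.} Factor \(F=\prod_i f_i\) into distinct real irreducibles; these are distinct because \(F\) is square-free. Fix a factor \(f=f_i\) with a nonsingular real zero \(p\), meaning \(f(p)=0\) and \(\nabla f(p)\neq0\). Near \(p\), the set \(\{f=0\}\) is a smooth hypersurface of dimension \(N-1\). The other factors \(f_j\) do not vanish identically on it: otherwise \(f\) would divide \(f_j\), by the real Nullstellensatz-type argument below, which is impossible. The locus where \(\nabla F\) vanishes on \(\{f=0\}\) is \(\{\prod_{j\neq i}f_j=0\}\cup\{\nabla f=0\}\). Its complement is a nonempty relatively open subset \(W\) of the smooth part of \(\{f=0\}\). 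On \(W\), \(F\) has a nonsingular zero, so \(W\subset C_{\reg}\), and minimality gives \(\cL(F)=0\) on \(W\).

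The main obstacle is passing from vanishing on an open piece \(W\) of the real hypersurface to divisibility by \(f\). The route I would take is the standard sign-changing criterion (Bochnak--Coste--Roy, Theorem~4.5.1). An irreducible polynomial \(f\) that changes sign, equivalently one whose real zero set has dimension \(N-1\), generates a real ideal. Hence any polynomial vanishing on a Zariski-dense subset of \(\{f=0\}\) is divisible by \(f\). Concretely, \(W\) is an \((N-1)\)-dimensional semialgebraic set inside the irreducible hypersurface \(\{f=0\}_{\mathbb C}\), so it is Zariski dense there. The polynomial \(\cL(F)\) vanishes on \(W\), hence on its Zariski closure \(V_{\mathbb C}(f)\), and the complex Nullstellensatz then gives \(f\mid\cL(F)\).

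Finally, \(f_i\mid\cL(F)\) for every \(i\) and the \(f_i\) are pairwise coprime. Therefore \(F=\prod f_i\) divides \(\cL(F)\). The quotient \(\lambda_F\) is homogeneous of degree \(2e-4\), and it is zero if and only if \(\cL(F)\equiv0\).
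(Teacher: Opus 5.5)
Your proposal is correct and follows the same route as the paper: factor \(F\) into distinct real irreducibles, use each factor's nonsingular real zero to find an open patch of its zero set lying in \(C_{\reg}\), deduce \(f\mid\cL(F)\) from vanishing on that patch, and conclude by pairwise coprimality. The degree count, the case \(e=1\), and the converse are handled exactly as in the paper. The only difference is that you justify the patch-divisibility step, via the sign-changing criterion and Zariski density, whereas the paper states it without proof. Note that irreducibility of the complex hypersurface \(\{f=0\}_{\mathbb C}\) also uses the nonsingular real zero: an \(\R\)-irreducible \(f=c\,g\bar g\) that splits over \(\mathbb C\) has only singular real zeros.
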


\begin{proof}
Write \(F=c\prod_jF_j\) with distinct irreducible real factors.
A polynomial vanishing on a nonempty open patch of the nonsingular real
zero set of \(F_j\) is divisible by \(F_j\).
Thus such a patch cannot be contained in the zero set of
\(\prod_{k\ne j}F_k\).  Shrinking it gives a nonempty patch on which
\(\nabla F=c(\prod_{k\ne j}F_k)\nabla F_j\neq0\).
Minimality and \eqref{eq:div-normalized-gradient} give
\(\cL(F)=0\) there, so \(F_j\mid\cL(F)\).
The factors are pairwise coprime, hence \(F\mid\cL(F)\).
If \(\cL(F)\) is nonzero, it is homogeneous of degree
\(3e-4\), which gives the degree statement.  For \(e=1\), all
second derivatives vanish.  The converse is immediate.
\end{proof}

For irreducible \(F\), nonemptiness of \(C_{\reg}\) already gives
the condition on the factor.  The condition matters for reducible
equations.  For example, set \(t=y^2+z^2\) and
\(F=x(2x^2+t)\) on \(\R^3\).  Its regular zero set is the plane
\(x=0\) minus the origin, but
\[
 \cL(F)=8(9x^2-t)F+4xt^2,
\]
so \(F\nmid\cL(F)\).  Nevertheless, the positive smooth multiplier
\(a=(2x^2+t)^{-1}\) on \(\R^3\setminus\{0\}\) gives \(aF=x\),
whose normalized gradient is a constant calibration field.

For smooth functions put
\[
 \Gamma(u,v)=\langle\nabla u,\nabla v\rangle.
\]
A graded algebra \(\cA\subset\R[x_1,\ldots,x_N]\) is a
\emph{Laplacian algebra} if \(|x|^2\in\cA\) and \(\Delta\cA\subset\cA\),
following Mendes--Radeschi \cite{MendesRadeschi}.  Closure under \(\Gamma\)
is automatic because
\begin{equation}\label{eq:Gamma-polarization}
 2\Gamma(u,v)=\Delta(uv)-u\Delta v-v\Delta u.
\end{equation}

Finite generation follows from \cite[Lemma~24]{MendesRadeschi}.
Choose explicit homogeneous generators, write
\(\cA=\R[I_1,\ldots,I_k]\), and put \(I=(I_1,\ldots,I_k)\).
Following \cite[Definition~6 and Section~2.3]{MendesRadeschi}, identify
\(\theta,\eta\in\Sph^{N-1}\) when every polynomial in \(\cA\)
has the same value at these two points.  Since the \(I_i\) generate
\(\cA\), this is equivalent to \(I(\theta)=I(\eta)\).
The map \(I\) identifies this quotient with \(I(\Sph^{N-1})\).
Its semialgebraic dimension is the \emph{spherical quotient dimension
of \(\cA\)}, independent of the chosen generators.

A finite closure table has the form
\begin{equation}\label{eq:closure-table}
 \Delta I_i=b_i(I),\qquad
 \Gamma(I_i,I_j)=g_{ij}(I).
\end{equation}
For smooth functions of \(z=(z_1,\ldots,z_k)\) on a neighborhood of
the image, write \(p_i=\partial p/\partial z_i\) and
\(p_{ij}=\partial^2p/\partial z_i\partial z_j\), and define
\[
 \widehat\Delta p=\sum_i b_i p_i+\sum_{i,j}g_{ij}p_{ij},
 \qquad
 \widehat\Gamma(p,q)=\sum_{i,j}g_{ij}p_iq_j.
\]

\begin{theorem}[Exact finite-dimensional reduction]
\label{thm:finite-reduction}
If \eqref{eq:closure-table} holds, then for \(H=p\circ I\) and
\(K=q\circ I\),
\[
 \Delta H=(\widehat\Delta p)\circ I,
 \qquad
 \Gamma(H,K)=\widehat\Gamma(p,q)\circ I.
\]
Writing \(Q_p=\widehat\Gamma(p,p)\), one has
\begin{equation}\label{eq:reduced-L}
 \boxed{
 \cL(p\circ I)=
 \left(Q_p\widehat\Delta p-
 \frac12\widehat\Gamma(p,Q_p)\right)\circ I.}
\end{equation}
Thus the curvature inequality is an exact differential inequality on the
semialgebraic image \(I(\R^N)\).
\end{theorem}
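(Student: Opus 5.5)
The plan is the chain rule followed by the closure table. First, for \(H=p\circ I\) with \(p\) smooth near the image, I will compute
\[
 \nabla H=\sum_i p_i(I)\,\nabla I_i,
\]
and differentiate once more:
\[
 \Delta H=\sum_i p_i(I)\,\Delta I_i+\sum_{i,j}p_{ij}(I)\,\Gamma(I_i,I_j).
\]
Substituting \(\Delta I_i=b_i(I)\) and \(\Gamma(I_i,I_j)=g_{ij}(I)\) from \eqref{eq:closure-table} gives \(\Delta H=(\widehat\Delta p)\circ I\). In the same way, bilinearity of \(\Gamma\) gives
\[
 \Gamma(H,K)=\sum_{i,j}p_i(I)\,q_j(I)\,\Gamma(I_i,I_j)=\widehat\Gamma(p,q)\circ I.
\]
Both identities are pointwise on \(\R^N\), and only the values of \(p\) and its derivatives at points of \(I(\R^N)\) enter. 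Smoothness of \(p\) on a neighborhood of the image is enough to apply the chain rule.

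Next, I apply the \(\Gamma\)-identity with \(K=H\) to get \(|\nabla H|^2=Q_p\circ I\). Here \(Q_p=\widehat\Gamma(p,p)\) is again a smooth function on the same neighborhood. Because of that, the \(\Gamma\)-identity applies a second time, now to the pair \(H\) and \(Q_p\circ I\):
\[
 \Gamma\bigl(|\nabla H|^2,H\bigr)=\widehat\Gamma(Q_p,p)\circ I .
\]
Inserting these into the definition \eqref{eq:L-definition},
\[
 \cL(H)=|\nabla H|^2\Delta H-\tfrac12\langle\nabla|\nabla H|^2,\nabla H\rangle,
\]
and using the symmetry of \(\widehat\Gamma\), which follows from \(g_{ij}=g_{ji}\), yields \eqref{eq:reduced-L}.

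The final sentence then follows: \(\cL(p\circ I)\) is a function of \(I\) alone, so the sign condition \(G\cL(G)\geq0\) for \(G=p\circ I\) is equivalent to a pointwise inequality for \(p\cdot\bigl(Q_p\widehat\Delta p-\tfrac12\widehat\Gamma(p,Q_p)\bigr)\) on the semialgebraic set \(I(\R^N)\).

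There is no real obstacle here. The one point that needs care is the second application of the \(\Gamma\)-identity. The closure table is stated only for the generators, so I must check that \(Q_p\) is smooth on the domain of \(p\); this holds because the \(g_{ij}\) are polynomials. I also need that functions of \(I\) defined only near the image, for example fractional powers such as \(S^{1/2}\) where \(S>0\), are covered. I will restrict to the open set of \(x\) where \(I(x)\) lies in that neighborhood, and all identities then hold there.
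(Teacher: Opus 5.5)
Your proposal is correct and is essentially the paper's argument: the first two identities come from the chain rule combined with the closure table, and the boxed formula then follows from \(|\nabla H|^2=Q_p\circ I\) together with a second application of the \(\Gamma\)-identity. The paper phrases that second step as \(\Hess H(\nabla H,\nabla H)=\tfrac12\Gamma(H,|\nabla H|^2)=\tfrac12\widehat\Gamma(p,Q_p)\circ I\), and your remarks on the domain of \(p\) and the smoothness of \(Q_p\) are accurate refinements that the paper leaves implicit.
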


\begin{proof}
The first two identities follow from the chain rule and
\eqref{eq:closure-table}.  In particular, \(|\nabla H|^2=Q_p\circ I\), so
\[
 \Hess H(\nabla H,\nabla H)
 =\tfrac12\Gamma(H,|\nabla H|^2)
 =\tfrac12\widehat\Gamma(p,Q_p)\circ I.
\]
Together these formulas give \eqref{eq:reduced-L}.
\end{proof}

The reduced inequality is imposed on the actual image \(I(\R^N)\).
These are pullback identities, so they also hold where the invariant
map drops rank.  The next proposition describes this image and its
gradient matrix; compare \cite[Section~6.1, Proposition~26]{MendesRadeschi}.

\begin{proposition}[The real invariant image and its gradient matrix]
\label{prop:real-algebraic-quotient}
In the setting of \cref{thm:finite-reduction}, let
\[
 K=I(\R^N)\subset\R^k,
 \qquad \mathbf g(z)=\bigl(g_{ij}(z)\bigr)_{i,j=1}^k.
\]
Then \(K\) is semialgebraic, \(\mathbf g\) is positive semidefinite on
\(K\), and
\[
 \operatorname{rank}\mathbf g(I(x))=\operatorname{rank}dI_x.
\]
Consequently the minors of \(\mathbf g\) detect where the rank of the
invariant map drops.  The induced operator \(\widehat\Delta\) has
positive-semidefinite second-order coefficient matrix \(\mathbf g\),
and \(\widehat\Gamma\) represents the gradient inner product:
\[
 \widehat\Gamma(p,q)\circ I
 =\langle\nabla(p\circ I),\nabla(q\circ I)\rangle.
\]
Every polynomial normalized-gradient inequality generated inside
\(\cA\) is therefore a polynomial nonnegativity problem on \(K\).
\end{proposition}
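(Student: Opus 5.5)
We must prove three things about \(K\) and \(\mathbf g\): that \(K\) is semialgebraic, that \(\mathbf g\) is positive semidefinite on \(K\), and that \(\operatorname{rank}\mathbf g(I(x))=\operatorname{rank}dI_x\). The remaining statements then follow from \cref{thm:finite-reduction}.

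\emph{Semialgebraicity.} \(K=I(\R^N)\) is the image of a polynomial map. By Tarski--Seidenberg \cite{BochnakCosteRoy98}, the image of a semialgebraic set under a polynomial map is semialgebraic, so \(K\) is semialgebraic.

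\emph{The Gram identity.} Fix \(x\in\R^N\) and let \(D=dI_x\), the \(k\times N\) matrix whose rows are \(\nabla I_i(x)^{\mathsf T}\). The closure table \eqref{eq:closure-table} gives \(g_{ij}(I(x))=\Gamma(I_i,I_j)(x)=\langle\nabla I_i(x),\nabla I_j(x)\rangle\). This is the matrix identity
\[
 \mathbf g(I(x))=DD^{\mathsf T}.
\]

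\emph{Positivity and rank.} A Gram matrix is positive semidefinite, since \(\xi^{\mathsf T}DD^{\mathsf T}\xi=|D^{\mathsf T}\xi|^2\geq0\). Every point of \(K\) has the form \(I(x)\), so \(\mathbf g\) is positive semidefinite on \(K\). The same computation shows \(\ker DD^{\mathsf T}=\ker D^{\mathsf T}\), hence \(\operatorname{rank}DD^{\mathsf T}=\operatorname{rank}D^{\mathsf T}=\operatorname{rank}D\), which is the rank identity. Because rank is detected by the nonvanishing of minors, the minors of \(\mathbf g\) evaluated at \(I(x)\) vanish exactly where \(dI_x\) drops rank.

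\emph{The operators.} The second-order coefficient matrix of \(\widehat\Delta\) is \(\mathbf g\) by definition, so it is positive semidefinite on \(K\). The representation \(\widehat\Gamma(p,q)\circ I=\langle\nabla(p\circ I),\nabla(q\circ I)\rangle\) is the identity for \(\Gamma\) in \cref{thm:finite-reduction}, and it can also be seen directly from the chain rule, \(\nabla(p\circ I)=\sum_i p_i(I)\nabla I_i\).

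\emph{The reduction.} Let \(G=p\circ I\) with \(p\) a polynomial. \Cref{thm:finite-reduction} writes \(|\nabla G|^2\) and \(\cL(G)\) as polynomials in \(z\) composed with \(I\). An inequality such as \(G\cL(G)\geq0\) on \(\R^N\) therefore holds if and only if the corresponding polynomial inequality holds at every point \(I(x)\), that is, on \(K\). This is a polynomial nonnegativity problem on \(K\).

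The only step needing any care is the Gram identity: \(g_{ij}\) is a priori defined only as a polynomial in \(z\), so it must be evaluated on the image \(K\), where it equals the inner products of gradients. Off \(K\), \(\mathbf g\) need not be positive semidefinite.
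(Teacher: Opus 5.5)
Your proof is correct and follows essentially the same route as the paper. Both arguments use Tarski--Seidenberg for the image $K$, the Gram identity $\mathbf g(I(x))=dI_x\,dI_x^{\mathsf T}$ for positive semidefiniteness and the rank formula, and the chain rule together with \eqref{eq:reduced-L} for the operator representation and the reduction to nonnegativity on $K$.
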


\begin{proof}
The image \(K\) is semialgebraic by the Tarski--Seidenberg projection
theorem \cite[Section~2.2]{BochnakCosteRoy98}.
At a point \(x\),
\[
 \mathbf g(I(x))
 =\bigl(\langle\nabla I_i(x),\nabla I_j(x)\rangle\bigr)_{ij}
 =dI_x\,dI_x^{T}.
\]
This Gram identity proves positive semidefiniteness and the rank formula.
The chain rule gives the displayed gradient formula, and
\eqref{eq:reduced-L} gives the last statement.
\end{proof}

On a constant-rank region of rank \(r\), the common level sets of the
invariants have dimension \(N-r\).  For a particular potential
\(G=p\circ I\), the field \(X_G\) is defined precisely where
\(Q_p\circ I=|\nabla G|^2>0\), which can also occur at points where
\(\mathbf g\) drops rank.

\begin{remark}[The role of quotient dimension]
\Cref{thm:removable-subcalibration} places no restriction on the quotient dimension; a low-dimensional quotient is useful because it simplifies the sign calculation.
One seeks a small closure table and a real image on which positivity
can be checked.  For a fixed polynomial degree bound, this is a
finite-dimensional problem.
\end{remark}

\subsection{The global multiplier defect}

The subcalibration problem is unchanged when the defining equation
is replaced by a positive odd power.  The multiplier transforms with
the equation so that the normalized-gradient field remains the same.

\begin{proposition}[Odd-power invariance and regular defining potentials]
\label{prop:potential-normal-form}
Let \(\Omega\subset\R^N\setminus\{0\}\) be open and conical, and
let \(F\) be smooth and homogeneous with \(\nabla F\neq0\) on
\(C\cap\Omega=\{F=0\}\cap\Omega\).
\begin{enumerate}[label=\textup{(\roman*)}]
 \item Let \(a>0\) be smooth and homogeneous on \(\Omega\), and
 let \(m\geq1\) be an odd integer.  Set
 \[
  \widetilde F=F^m,\qquad G=aF,\qquad
  \widetilde G=a^m\widetilde F=G^m.
 \]
 Then
 \begin{equation}\label{eq:odd-power-field}
  \nabla\widetilde G=mG^{m-1}\nabla G,\qquad
  \cL(\widetilde G)=m^3G^{3m-3}\cL(G).
 \end{equation}
 Thus \(X_{\widetilde G}=X_G\) wherever \(F\neq0\) and
 \(\nabla G\neq0\), and \(X_G\) gives its smooth extension across
 \(C\cap\Omega\).  Conversely, every positive smooth homogeneous
 multiplier \(b\) for \(\widetilde F\) is obtained by setting
 \(a=b^{1/m}\).  Hence the two defining equations admit the same
 multiplier subcalibration fields, with the same regular boundary
 extensions and the same remaining exceptional sets.
 \item If a smooth homogeneous function \(G\) has the same zero set
 and signs as \(F\) on \(\Omega\), and \(\nabla G\neq0\) on
 \(C\cap\Omega\), then \(G=bF\) for a positive smooth homogeneous
 function \(b\) on \(\Omega\).
\end{enumerate}
\end{proposition}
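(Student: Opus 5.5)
For part (i) I will compute directly from the definitions. Since \(\widetilde G=G^m\), the chain rule gives \(\nabla\widetilde G=mG^{m-1}\nabla G\). Hence \(|\nabla\widetilde G|^2=m^2G^{2m-2}|\nabla G|^2\), and
\[
 \Delta\widetilde G=mG^{m-1}\Delta G+m(m-1)G^{m-2}|\nabla G|^2 .
\]
I then substitute these into \eqref{eq:L-definition}. The term \(\nabla|\nabla\widetilde G|^2\) splits into two pieces: \(m^2G^{2m-2}\nabla|\nabla G|^2\), and \(m^2(2m-2)G^{2m-3}|\nabla G|^2\nabla G\). Pairing with \(\nabla\widetilde G\), the second piece contributes \(\tfrac12 m^3(2m-2)G^{3m-4}|\nabla G|^4\). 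This cancels exactly the term \(m^3(m-1)G^{3m-4}|\nabla G|^4\) coming from \(|\nabla\widetilde G|^2\Delta\widetilde G\). What remains is \(m^3G^{3m-3}\cL(G)\), which is \eqref{eq:odd-power-field}.

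Next, \(m-1\) is even, so \(G^{m-1}>0\) wherever \(G\ne0\). Dividing \(\nabla\widetilde G\) by its norm therefore gives \(X_{\widetilde G}=X_G\) on \(\{F\ne0,\nabla G\ne0\}\). On \(C\cap\Omega\) we have \(\nabla G=a\nabla F\ne0\), so \(X_G\) is smooth there. This makes \(X_G\) the smooth extension across the cone, even though \(\nabla\widetilde G\) vanishes on \(C\) when \(m>1\).

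For the converse, let \(b>0\) be a smooth homogeneous multiplier for \(\widetilde F\). Then \(a=b^{1/m}\) is smooth, positive and homogeneous, and \(b\widetilde F=(aF)^m\). The preceding computation applies verbatim, so the fields, their boundary extensions and the exceptional sets coincide. The exceptional sets agree because \(\nabla\widetilde G\) and \(\nabla G\) vanish at the same points off \(C\), and \(X_G\) is regular on \(C\cap\Omega\).

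For part (ii), I set \(b=G/F\) on \(\Omega\setminus C\). There \(b>0\) because the signs agree, and \(b\) is homogeneous of degree \(\deg G-\deg F\). The main step is smoothness and positivity across \(C\cap\Omega\). Near a point \(p\in C\cap\Omega\), \(\nabla F(p)\ne0\), so I will choose local coordinates in which \(F\) is the first coordinate \(t\). Since \(G\) vanishes on \(\{t=0\}\), Hadamard's lemma gives
\[
 G=t\int_0^1\partial_tG(st,\cdot)\,ds ,
\]
so \(b\) extends smoothly with \(b|_C=\partial_tG=\langle\nabla G,\nabla F\rangle/|\nabla F|^2\).

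It remains to show \(b>0\) on \(C\). First, \(b\geq0\) on \(C\) by continuity from the sides. If \(b(p)=0\), then \(\nabla G(p)=b(p)\nabla F(p)=0\), contradicting \(\nabla G\neq0\) on \(C\cap\Omega\); here I use that \(G=bF\) implies \(\nabla G=b\nabla F\) on \(C\). Homogeneity of \(b\) then extends from \(\Omega\setminus C\) by continuity. The only delicate point in the whole argument is this positivity of \(b\) on the cone, and it follows from the nondegeneracy of \(\nabla G\) there.
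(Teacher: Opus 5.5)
Your proposal is correct and follows essentially the same route as the paper: your direct computation is the special case $\psi(t)=t^m$ of the chain-rule identity $\cL(\psi(G))=\psi'(G)^3\cL(G)$ that the paper uses, the converse uses the same positive root $a=b^{1/m}$, and part (ii) is the same Hadamard division in local coordinates with $t=F$. Your positivity argument on the cone (nonnegativity from the common signs, and nonvanishing from $\nabla G=b\nabla F\neq0$ there) is exactly the content of the paper's formula $b|_C=|\nabla G|/|\nabla F|>0$.
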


\begin{proof}
For \textup{(i)}, the chain rule gives
\[
 \cL(\psi(G))=(\psi'(G))^3\cL(G).
\]
Take \(\psi(t)=t^m\).
Its derivative is positive off \(t=0\), which proves the field
identity.  On the regular cone, \(\nabla G=a\nabla F\neq0\),
so \(X_G\) provides the exterior normal extension.  Off the cone,
the two gradients vanish at exactly the same points.  The inverse
correspondence follows from \(bF^m=(b^{1/m}F)^m\); the positive
root is smooth and homogeneous.

For \textup{(ii)}, use local coordinates \((y,t)\) with \(t=F\).
Since \(G(y,0)=0\),
\[
 G(y,t)=t\int_0^1\partial_tG(y,st)\,ds.
\]
The integral extends \(b=G/F\) smoothly, with
\(b|_C=|\nabla G|/|\nabla F|>0\) by the common phase signs and
regularity.  These extensions agree on overlaps; the quotient also
gives \(\deg b=\deg G-\deg F\).
\end{proof}

The odd-power change in part \textup{(i)} preserves the divergence, the phase integrals in the
comparison theorem, and the normal derivative of the divergence on
the cone.  For \(m>1\),
\(\nabla\widetilde G=0\) on the cone; the regular extension is
\(X_G\).  The Jacobi--Hardy formulas use \(G\), its degree
\(\ell=\deg\widetilde G/m\), and \(w=|\nabla G|^{-1}|_C\).

Accordingly, degrees and closure hypotheses in this paper refer to a
reduced, square-free defining polynomial.  The displayed algebra
\(\R[R,F^m]\) need not itself be Laplacian closed, even when
\(\R[R,F]\) is: for \(F=x_1\),
\(\Delta(F^3)=6F\notin\R[R,F^3]\).
The radial and gradient powers used below are discussed in
\cref{app:power-identities}.
The vector-field comparison in \cref{sec:comparison} also applies
beyond normalized-gradient fields.

For the smooth multiplier formulas, let \(\Omega\subset\R^N\) be open
and let \(F,a\in C^\infty(\Omega)\), with \(a>0\).
Assume \(\nabla F\neq0\) on \(\{F=0\}\cap\Omega\) and that
this hypersurface is minimal.  Put \(G=aF\).
Before introducing the multiplier defect, observe that
\[
 G\cL(G)=aF^2\frac{\cL(aF)}F\qquad(F\neq0).
\]
The positive factor \(aF^2\) does not affect the sign.
On the zero set, \(\nabla G=a\nabla F\) and
\(\cL(G)=a^3\cL(F)=0\).  Smooth division by \(F\), as in the proof of
\cref{prop:potential-normal-form}\textup{(ii)}, gives the extensions
\[
 \lambda_F=\frac{\cL(F)}F,\qquad \cK_F(a)=\frac{\cL(aF)}F.
\]
These are smooth functions on \(\Omega\).
When \(F\mid\cL(F)\) in the polynomial ring, \(\lambda_F\)
agrees with the polynomial quotient in \eqref{eq:L-divisibility}.
The product-rule calculation begins with
\[
 \nabla(aF)=a\nabla F+F\nabla a,\qquad
 |\nabla(aF)|^2
 =a^2|\nabla F|^2+2aF\langle\nabla F,\nabla a\rangle
       +F^2|\nabla a|^2.
\]
For the following expansion only, abbreviate the two gradients
and these three repeated contractions by
\[
 P=\nabla F,\quad Q=\nabla a,\qquad
 S=|P|^2,\quad T=\langle P,Q\rangle,\quad V=|Q|^2.
\]

\begin{theorem}[Exact multiplier expansion]
\label{thm:exact-multiplier}
Under the preceding smooth hypotheses, the multiplier defect equals
\begin{align}
\cK_F(a)={}&a^3\lambda_F\notag\\
&+a^2\bigl(S\Delta a+2T\Delta F-2\Hess F(P,Q)-\Hess a(P,P)\bigr)\notag\\
&+2a(T^2-SV)\notag\\
&+Fa\bigl(2T\Delta a+V\Delta F-\Hess F(Q,Q)
              -2\Hess a(P,Q)\bigr)\notag\\
&+F^2\cL(a).                                           \label{eq:exact-multiplier}
\end{align}
If, in addition, \(F,a\) are polynomials in a Laplacian algebra
\(\cA\) and \(F\mid\cL(F)\) in \(\R[x_1,\ldots,x_N]\),
then every term in \eqref{eq:exact-multiplier} belongs to \(\cA\).
\Cref{prop:divisibility} gives sufficient geometric conditions for
this polynomial divisibility.
\end{theorem}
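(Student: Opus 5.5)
The plan is a direct computation of \(\cL(aF)\) from the definition \eqref{eq:L-definition}, followed by sorting the terms by powers of \(F\) and dividing by \(F\). Set \(G=aF\), with \(\nabla G=aP+FQ\) and
\[
 \Delta G=a\Delta F+F\Delta a+2T,\qquad
 |\nabla G|^2=a^2S+2aFT+F^2V.
\]
For the second term of \(\cL\) it is convenient to use \(\frac12\langle\nabla|\nabla G|^2,\nabla G\rangle=\Hess G(\nabla G,\nabla G)\). Here
\[
 \Hess G=a\Hess F+F\Hess a+P\otimes Q+Q\otimes P .
\]
Evaluating on \(aP+FQ\) is then a bilinear expansion in \(a\) and \(F\).

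The key steps, in order, are as follows.

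First, expand \(|\nabla G|^2\Delta G\) as a polynomial in \(a,F\). The pure \(a^3\) part is \(a^3S\Delta F\). The \(a^2F\) part is \(a^2(S\Delta a+2T\Delta F)\), and there is also a term \(2a^2ST\) carrying no \(F\). Other terms are \(aF^2(\dots)\) and \(F^3V\Delta a\).

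Second, expand \(\Hess G(\nabla G,\nabla G)\) the same way. The \(a^3\) part is \(a^3\Hess F(P,P)\). Terms from \(P\otimes Q+Q\otimes P\) contribute \(2a^2ST\) with no \(F\), which cancels the \(F\)-free term of the first step. They also contribute \(2aF(T^2+SV)\) and \(2F^2TV\).

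Third, subtract. The combination \(a^3(S\Delta F-\Hess F(P,P))\) equals \(a^3\cL(F)=a^3F\lambda_F\). Every remaining term visibly carries at least one factor \(F\), so division by \(F\) is purely algebraic and matches the smooth extension \(\cK_F(a)=\cL(aF)/F\) off \(\{F=0\}\), hence everywhere by continuity. Collecting coefficients of \(F^0,F^1,F^2\) after division should reproduce the five lines of \eqref{eq:exact-multiplier}. Notably, the top-order terms \(F^3(V\Delta a-\Hess a(Q,Q))\) divide to \(F^2\cL(a)\), and the cross terms \(2aT^2\) and \(-2aSV\) appear from the difference between the \(4aT^2\) and \(2a(T^2+SV)\) contributions.

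The main obstacle is purely bookkeeping: tracking the mixed terms \(\Hess F(P,Q)\), \(\Hess F(Q,Q)\), \(\Hess a(P,P)\) and \(\Hess a(P,Q)\) with their correct multiplicities, especially the factor \(2\) from the symmetric cross terms in \(\Hess G(aP+FQ,aP+FQ)\). I would organize this by writing \(\Hess G(\nabla G,\nabla G)\) as
\[
 a^2\,\Hess G(P,P)+2aF\,\Hess G(P,Q)+F^2\,\Hess G(Q,Q),
\]
and expanding each piece separately.

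For the algebraic statement, each quantity \(S,T,V\), \(\Delta a\), \(\Delta F\), \(\Hess F(P,Q)=\tfrac12\Gamma(F,\Gamma(F,a))+\dots\) lies in \(\cA\) by closure under \(\Delta\) and \(\Gamma\). For the Hessian terms, I would use the polarization identity
\[
 \Hess u(\nabla v,\nabla w)=\tfrac12\bigl(\Gamma(v,\Gamma(u,w))+\Gamma(w,\Gamma(u,v))-\Gamma(u,\Gamma(v,w))\bigr),
\]
which keeps everything inside \(\cA\). The remaining piece is \(\lambda_F=\cL(F)/F\), which is a polynomial by hypothesis. The issue is that an exact quotient of elements of \(\cA\) need not lie in \(\cA\). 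To handle this, I would note that \(F\lambda_F=\cL(F)\in\cA\). The other terms are in \(\cA\), and \(\cK_F(a)\) is their sum, so the claim concerns the terms themselves. For \(\lambda_F\) I would argue via the graded, finitely generated structure, or simply interpret the statement as asserting that the other terms are in \(\cA\) while \(\lambda_F\) is the polynomial quotient. I expect this last membership point to need the most care.
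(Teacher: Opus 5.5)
Your computation of \(\cL(aF)\) is correct and follows the same route as the paper's proof. You expand \(\nabla G=aP+FQ\), \(\Delta G\) and \(\Hess G=a\Hess F+F\Hess a+P\otimes Q+Q\otimes P\), and collect powers of \(F\). The \(F\)-free terms \(2a^2ST\) cancel, as do the two \(2F^2TV\) terms. The remaining pieces give \(a^3(S\Delta F-\Hess F(P,P))=a^3F\lambda_F\), \(4aFT^2-2aF(T^2+SV)=2aF(T^2-SV)\), and \(F^3(V\Delta a-\Hess a(Q,Q))=F^3\cL(a)\). One small correction: \(\Hess F(P,Q)=\tfrac12\Gamma(a,S)\) exactly, since \(\nabla S=2\Hess F\,\nabla F\); the polarization identity with \(u=v=F\), \(w=a\) gives the same result.

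The gap is in the algebraic assertion, exactly where you flag it. Every term except \(a^3\lambda_F\) lies in \(\cA\) by closure under \(\Delta\) and \(\Gamma\) together with polarization. However, \(\lambda_F=\cL(F)/F\) is an exact quotient, and neither of your proposed remedies works.

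\begin{itemize}
\item Grading and finite generation alone do not keep polynomial quotients inside a subalgebra. For example, \(\R[x^2,x^3]\subset\R[x]\) is graded and finitely generated and contains \(x^2\) and \(x^3\), but not \(x\). It is also not Laplacian closed, since \(\Delta(x^3)=6x\), which shows the needed property must come from Laplacian closure.
\item Reading \(\lambda_F\) merely as "the polynomial quotient" weakens the theorem. The statement asserts that every term, including \(a^3\lambda_F\), lies in \(\cA\).
\item Subtracting the other terms from \(\cK_F(a)\) does not help, because \(\cK_F(a)=\cL(aF)/F\) is itself such a quotient.
\end{itemize}

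The missing ingredient is a saturation property specific to Laplacian algebras, which the paper takes from Mendes--Radeschi (Lemma~24(b)): \(\cA=\operatorname{Frac}(\cA)\cap\R[x_1,\ldots,x_N]\). Since \(F\) and \(\cL(F)\) lie in \(\cA\), and \(\lambda_F\) is a polynomial by the divisibility hypothesis, this gives \(\lambda_F\in\operatorname{Frac}(\cA)\cap\R[x_1,\ldots,x_N]=\cA\). Hence \(a^3\lambda_F\in\cA\).
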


\begin{proof}
Expand \(\nabla(aF)=aP+FQ\),
\(\Delta(aF)=a\Delta F+F\Delta a+2T\), and
\[
 \Hess(aF)=a\Hess F+F\Hess a+dF\otimes da+da\otimes dF.
\]
The constant term in \(\cL(aF)\) is
\(a^3\cL(F)=a^3F\lambda_F\); collecting the remaining powers of \(F\)
gives the formula.  The identity
\[
 2\Hess u(\nabla v,\nabla w)
 =\Gamma(v,\Gamma(u,w))+\Gamma(w,\Gamma(u,v))
  -\Gamma(u,\Gamma(v,w))
\]
expresses the Hessian terms through gradient inner products.
For the polynomial-algebra assertion, use
\cite[Lemma~24(b)]{MendesRadeschi}:
\(\cA=\operatorname{Frac}(\cA)\cap\R[x_1,\ldots,x_N]\).
Here \(\operatorname{Frac}(\cA)\) is the field of fractions \(p/q\),
where \(p,q\in\cA\) and \(q\neq0\).
Under the additional divisibility hypothesis,
\(\lambda_F=\cL(F)/F\) is a polynomial.  Since both \(F\) and
\(\cL(F)\) belong to \(\cA\), one has \(\lambda_F\in\cA\).
Every term in \eqref{eq:exact-multiplier} therefore belongs to \(\cA\).
\end{proof}

\paragraph{Logarithmic differentiation of a positive multiplier.}
The multiplier in \cref{thm:exact-multiplier} is positive on its open
domain, including the regular zeros of $F$. Thus $h=\log a$ is smooth
there, and the chain rule gives
\begin{equation}\label{eq:log-multiplier-derivatives}
 \nabla h=\frac{\nabla a}{a},\qquad
 \Delta h=\frac{\Delta a}{a}-\frac{|\nabla a|^2}{a^2},\qquad
 \Gamma(h,q)=\frac{\Gamma(a,q)}a.
\end{equation}
Writing $a=e^h$ simplifies differentiation of the multiplier.
If $a$ has degree $d$, then $h(tx)=h(x)+d\log t$ for $t>0$.

\subsection{The Jacobi boundary condition}

Let
\[
 J_{C_{\rm reg}}=\Delta_{C_{\rm reg}}+|A_C|^2
\]
be the Jacobi operator, with the standard second-variation convention
\cite{Simons68,Simon83} that a variation with normal speed \(w\) changes
scalar mean curvature by \(-J_Cw\).

\begin{theorem}[Exact Jacobi trace]
\label{thm:Jacobi-trace}
For the smooth \(F,a\) of \cref{thm:exact-multiplier}, along
\(C_{\reg}\cap\Omega\),
\begin{equation}\label{eq:Jacobi-trace}
 \boxed{
 \left.\cK_F(a)\right|_{C_{\rm reg}}
 =-a^4|\nabla F|^3J_{C_{\rm reg}}
 \left(\frac1{a|\nabla F|}\right).}
\end{equation}
Consequently, \(\cK_F(a)\geq0\) near this hypersurface requires the positive
Jacobi supersolution
\begin{equation}\label{eq:positive-Jacobi}
 w=(a|\nabla F|)^{-1}>0,
 \qquad J_Cw\leq0.
\end{equation}
It is a Jacobi field exactly when the boundary defect vanishes.
\end{theorem}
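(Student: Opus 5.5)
The plan is to read \eqref{eq:Jacobi-trace} as a first-variation formula for the mean curvature of the level sets of \(G=aF\), and then to compare it with the algebraic expression for \(\diver X_G\). Near a point of \(C_{\reg}\cap\Omega\) we have \(\nabla G=a\nabla F\neq0\). So the level sets \(\Sigma_t=\{G=t\}\), for small \(|t|\), foliate a neighborhood by smooth hypersurfaces with \(\Sigma_0=C_{\reg}\) locally. By \eqref{eq:div-normalized-gradient}, the scalar mean curvature of \(\Sigma_t\) with respect to the unit normal \(X_G\) is
\[
 H(x)=\diver X_G(x)=\frac{\cL(G)}{|\nabla G|^3}.
\]
Since \(\cL(aF)=F\,\cK_F(a)\) and \(F=G/a\), this becomes
\[
 H=\frac{G\,\cK_F(a)}{a\,|\nabla G|^3}.
\]
In particular \(H=0\) on \(C\), which recovers minimality.

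Next I differentiate \(H\) in the transverse parameter \(t=G\). Consider the vector field \(Y=\nabla G/|\nabla G|^2\). It satisfies \(Y(G)=1\), so its flow \(\Phi_t\) carries \(\Sigma_0\) onto \(\Sigma_t\). The normal speed of this flow is \(\langle Y,X_G\rangle=|\nabla G|^{-1}\), which on \(C\) equals \(w=(a|\nabla F|)^{-1}\). With the convention stated before the theorem, the variation of \(C\) with normal speed \(w\) changes the scalar mean curvature by \(-J_Cw\). The sign is checked on a sphere moving outward, where \(\diver\nu=n/r\) decreases at rate \(|A|^2\).

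The step needing care is to identify the geometric derivative \(\frac{d}{dt}\big|_{t=0}H_{\Sigma_t}(\Phi_t(p))\) with the ambient directional derivative \(Y(H)\) at \(p\). This holds because \(H\) is a single ambient function whose restriction to each \(\Sigma_t\) is that leaf's mean curvature. The tangential component of the flow therefore changes only the parametrization of the leaves. Its contribution to the variation of mean curvature is \(\langle\nabla_\Sigma H_0,\cdot\rangle\), which vanishes because \(H_0\equiv0\) on the minimal leaf. Hence \(Y(H)=-J_Cw\) on \(C_{\reg}\).

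Finally, I compute \(Y(H)\) directly from the displayed formula. Since \(Y(G)=1\) and the factor \(G\) vanishes on \(C\), only the derivative of that factor survives there:
\[
 Y(H)\big|_C=\frac{\cK_F(a)}{a\,|\nabla G|^3}\Big|_C
 =\frac{\cK_F(a)}{a^4|\nabla F|^3}.
\]
Equating the two expressions gives \eqref{eq:Jacobi-trace}.

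The consequences then follow quickly. If \(\cK_F(a)\geq0\) near the hypersurface, then \(J_Cw\leq0\) with \(w>0\), which is \eqref{eq:positive-Jacobi}. Moreover, \(J_Cw=0\) exactly where the boundary defect vanishes.
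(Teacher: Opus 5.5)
Your proof is correct and follows essentially the same route as the paper: flow by \(\nabla G/|\nabla G|^2\), normal speed \(w=|\nabla G|^{-1}\), first variation \(-J_Cw\), then differentiation of the factored divergence, where only the vanishing factor contributes at \(t=0\). The only difference is cosmetic: you write that factor as \(G/a\) with \(Y(G)=1\), whereas the paper keeps \(F\) and uses \(dF/dt|_0=1/a\). Your remark about tangential components is harmless but unnecessary, since \(Y\) is already normal to the leaves.
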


\begin{proof}
The flow of \(\nabla G/|\nabla G|^2\) parametrizes the level sets
\(M_t=\{G=t\}\), with normal speed
\(w=|\nabla G|^{-1}=(a|\nabla F|)^{-1}\) at \(t=0\).
Only the zero level is assumed minimal; its mean-curvature variation is
\(\frac{d}{dt}H_{M_t}|_0=-J_Cw\).
On the other hand, \eqref{eq:div-normalized-gradient} gives, along this flow,
\[
 H_{M_t}=\frac{F\cK_F(a)}{|\nabla G|^3},\qquad
 \frac{dF}{dt}\bigg|_0
 =\left.\frac{\langle\nabla F,\nabla G\rangle}{|\nabla G|^2}\right|_C
 =\frac1a.
\]
Since \(F=0\) on \(C\), differentiation of the other factors contributes
zero at \(t=0\).  Using \(|\nabla G|^3=a^3|\nabla F|^3\) there yields
\[
 -J_Cw=\frac{d}{dt}H_{M_t}\bigg|_0
 =\left.\frac{\cK_F(a)}{a^4|\nabla F|^3}\right|_C,
\]
which is \eqref{eq:Jacobi-trace}.
\end{proof}

\begin{theorem}[Positive-solution identity and homogeneous Hardy gap]
\label{thm:ground-state-gap}
Let \(C^n\subset\R^{n+1}\), \(n\geq2\), be a cone whose regular part
is two-sided and minimal.  Let \(G\) be a smooth homogeneous defining function of degree
\(\ell\) on a conical neighborhood of \(C_{\reg}\setminus\{0\}\), with
\(\nabla G\neq0\) there.  Set
\begin{equation}\label{eq:boundary-Jacobi-potential}
 w=|\nabla G|^{-1}\big|_C>0,\qquad
 V_G=-\frac{J_Cw}{w}=r^{-2}\beta_G(\theta).
\end{equation}
For every \(u\in C_c^\infty(C_{\reg}\setminus\{0\})\),
\begin{equation}\label{eq:ground-state-identity}
 Q_C(u)=\int_Cw^2\left|\nabla_C\left(\frac uw\right)\right|^2
             +\int_CV_Gu^2.
\end{equation}
If \(\beta_G\geq\beta_0\) on the regular link for a finite constant \(\beta_0\), then
\begin{equation}\label{eq:homogeneous-strict-stability}
 Q_C(u)\geq\left[\left(\ell-\frac n2\right)^2+\beta_0\right]
                  \int_Cr^{-2}u^2.
\end{equation}
For \(G=aF\) in \cref{thm:Jacobi-trace},
\begin{equation}\label{eq:boundary-potential-algebraic}
 V_G=\left.\frac{\cK_F(a)}{a^3|\nabla F|^2}\right|_C
     =\left.\partial_{\nu_C}\diver
          \left(\frac{\nabla G}{|\nabla G|}\right)\right|_C.
\end{equation}
Consequently a nonnegative Jacobi boundary defect implies stability;
if also \(\ell\neq n/2\), it implies strict stability with constant at
least \((\ell-n/2)^2\).
\end{theorem}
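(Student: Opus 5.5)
The proof has three steps: the ground-state identity, a conical Hardy inequality for the quotient \(u/w\), and the two expressions for \(V_G\).

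\textbf{Step 1: the ground-state identity \eqref{eq:ground-state-identity}.} Write \(u=w\varphi\) with \(\varphi\in C_c^\infty(C_{\reg}\setminus\{0\})\); this is allowed because \(w>0\) is smooth on the regular part. Expanding gives
\[
 |\nabla(w\varphi)|^2=w^2|\nabla\varphi|^2+\varphi^2|\nabla w|^2+2w\varphi\langle\nabla w,\nabla\varphi\rangle .
\]
The last two terms combine into \(\langle\nabla w,\nabla(w\varphi^2)\rangle\). Integrating by parts on the compact support, this equals \(-\int w\varphi^2\Delta_Cw\). Subtracting \(|A_C|^2u^2\) then yields
\[
 Q_C(u)=\int w^2|\nabla\varphi|^2-\int\frac{J_Cw}{w}u^2 ,
\]
which is \eqref{eq:ground-state-identity}.

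\textbf{Step 2: the homogeneous Hardy gap.} Since \(G\) has degree \(\ell\), \(|\nabla G|\) is homogeneous of degree \(\ell-1\), so \(w=r^{1-\ell}\omega(\theta)\). The function \(J_Cw/w\) is then homogeneous of degree \(-2\), which justifies the form \(r^{-2}\beta_G(\theta)\).

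To bound the first term, drop the angular gradient and keep only \(w^2|\partial_r\varphi|^2\). Use \(d\mu_C=r^{n-1}dr\,d\mu_\Lambda\), and on each ray apply the one-dimensional weighted Hardy inequality
\[
 \int_0^\infty r^{n-1+2(1-\ell)}|\partial_r\varphi|^2\,dr\geq\Bigl(\tfrac{n-2+2(1-\ell)}{2}\Bigr)^2\int_0^\infty r^{n-3+2(1-\ell)}\varphi^2\,dr ,
\]
valid for \(\varphi\) compactly supported in \((0,\infty)\). The constant is \(\bigl(\tfrac n2-\ell\bigr)^2\). Multiplying by \(\omega^2(\theta)\) and integrating over the link gives
\[
 \int w^2|\nabla\varphi|^2\geq\Bigl(\ell-\tfrac n2\Bigr)^2\int r^{-2}u^2 .
\]
Adding \(\int V_Gu^2\geq\beta_0\int r^{-2}u^2\) proves \eqref{eq:homogeneous-strict-stability}. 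I would prove the one-dimensional inequality in the standard way: substitute \(r=e^s\), or equivalently integrate \(\bigl(r^{\gamma}\varphi^2\bigr)'\) and apply Cauchy--Schwarz.

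\textbf{Step 3: the formulas \eqref{eq:boundary-potential-algebraic} and the consequences.} For \(G=aF\), \cref{thm:Jacobi-trace} with \(w=(a|\nabla F|)^{-1}\) gives
\[
 -\frac{J_Cw}{w}=\frac{\cK_F(a)}{a^4|\nabla F|^3}\cdot a|\nabla F|=\frac{\cK_F(a)}{a^3|\nabla F|^2}.
\]
For the normal-derivative form, note that
\[
 \diver X_G=\frac{F\,\cK_F(a)}{|\nabla G|^3}
\]
and that \(F\) vanishes on \(C\). Hence \(\partial_\nu\diver X_G=(\partial_\nu F)\,\cK_F(a)/|\nabla G|^3\). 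Substituting \(\partial_\nu F=|\nabla F|\) and \(|\nabla G|=a|\nabla F|\) on the cone gives the same expression.

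Finally, if \(\cK_F(a)\geq0\) on \(C\), then \(\beta_0=0\) is admissible. Then \eqref{eq:homogeneous-strict-stability} gives stability, and strict stability with constant \((\ell-n/2)^2\) whenever \(\ell\neq n/2\).

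\textbf{Expected obstacle.} The only step needing real care is the Hardy step: tracking the exponent correctly, and checking that the angular factor \(\omega^2\) passes through the one-dimensional inequality unchanged. It does, because the inequality is applied ray by ray with \(\theta\) fixed.
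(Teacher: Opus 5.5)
Your proposal is correct and follows the paper's proof essentially step for step: the substitution \(u=w\varphi\) with integration by parts, the ray-by-ray weighted Hardy inequality with exponent \(p=n+1-2\ell\) after discarding the angular gradient, and the two formulas for \(V_G\), obtained by dividing the Jacobi trace identity and by differentiating \(F\cK_F(a)/|\nabla G|^3\) along the normal where \(F=0\). The only difference is cosmetic: the paper proves the one-dimensional Hardy inequality by completing the square, rather than by the logarithmic substitution or the Cauchy--Schwarz argument you suggest.
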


\begin{proof}
Substituting \(u=wv\) and integrating by parts gives
\eqref{eq:ground-state-identity}.
Homogeneity gives \(w=r^{1-\ell}\phi(\theta)\), \(\phi>0\).
Writing \(\Lambda_{\reg}=C_{\reg}\cap\Sph^n\), the cone volume
element \(r^{n-1}\,dr\,d\mu_\Lambda\) gives
\[
 \int_Cw^2|\nabla_Cv|^2
 \geq\int_{\Lambda_{\reg}}\phi^2
       \int_0^\infty r^{n+1-2\ell}|\partial_rv|^2\,dr\,d\mu_\Lambda.
\]
For every real \(p\) and \(f\in C_c^\infty(0,\infty)\), completion of the
square proves
\[
 \int_0^\infty r^p(f')^2\,dr
 \geq\frac{(p-1)^2}{4}\int_0^\infty r^{p-2}f^2\,dr.
\]
Apply this with \(p=n+1-2\ell\) for each angular point.
The resulting term is \((\ell-n/2)^2\int_Cr^{-2}u^2\).
Adding \(\int_CV_Gu^2\geq\beta_0\int_Cr^{-2}u^2\) proves
\eqref{eq:homogeneous-strict-stability}.
The first equality in \eqref{eq:boundary-potential-algebraic} follows by
dividing \eqref{eq:Jacobi-trace} by \(a^4|\nabla F|^3w\).
For the second, differentiate
\(\diver X_G=F\cK_F(a)/|\nabla G|^3\) in the unit normal direction on
\(F=0\), where \(\partial_\nu F=|\nabla F|\).
\end{proof}

\begin{remark}[Vanishing trace and the Hardy gap]
\label{rem:vanishing-trace-strict-stability}
If \(J_Cw=0\), formula \eqref{eq:ground-state-identity} still contains
the Hardy gap \((\ell-n/2)^2\).  This lower bound vanishes only at the central degree
\(\ell=n/2\).
To prove that strict stability fails requires a test sequence with Hardy
quotient tending to zero, as in \cref{prop:complex-zero-gap}.
\end{remark}

\begin{remark}[Subcalibrations as paired geometric barriers]
\label{rem:paired-barriers}
Where \(\nabla G\neq0\), orient the level hypersurface
\(M_t=\{G=t\}\) by \(X_G=\nabla G/|\nabla G|\).  Then
\[
 H_{M_t}=\diver X_G=\frac{\cL(G)}{|\nabla G|^3}.
\]
If \(G=aF\), \(a>0\), and \(\cK_F(a)\geq0\), the identity
\(\cL(G)=F\cK_F(a)\) gives
\[
 t>0\Longrightarrow H_{M_t}\geq0,
 \qquad
 t<0\Longrightarrow H_{M_t}\leq0.
\]
Thus the two sides of the cone form a paired family of geometric sub- and
supersolutions for the minimal hypersurface equation.  More precisely, in
normal tubular coordinates over \(C_{\rm reg}\), write \(M_t\) as the normal
graph of \(s_t\).  Since \(\partial_\nu G=a|\nabla F|\) on the cone,
\[
 s_t=tw+O(t^2),\qquad
 w=(a|\nabla F|)^{-1},
\]
and the Jacobi linearization gives
\begin{equation}\label{eq:paired-barrier-expansion}
 H_{M_t}=-tJ_Cw+O(t^2).
\end{equation}
Thus \(J_Cw<0\) gives strict local barriers on both sides.
\end{remark}

\subsubsection*{The boundary condition and the global sign}

The Jacobi identity in \cref{thm:Jacobi-trace} determines the boundary
value of \(\cK_F(a)\), or equivalently the first normal derivative of
\(\diver X_G\). The sign of \(\cK_F(a)\) away from the cone requires
a separate calculation on the real invariant domain.
The stable but non-minimizing isoparametric cones in
\cref{sec:isoparametric-reduction} show that the Jacobi condition alone
cannot imply the global sign.

\subsubsection*{The admissible degree}

Let \(C^{N-1}\subset\R^N\), \(N\geq3\), be a minimal cone, let
\(\Sigma=C\cap\Sph^{N-1}\), and suppose a homogeneous potential \(G=aF\)
has total degree \(\ell\).  Then \(w=(a|\nabla F|)^{-1}\) has degree
\(1-\ell\).  If the entire link \(\Sigma\) is smooth, it is compact
and connected; see the maximum-principle argument of Hardt--Simon
\cite[p.~104]{HardtSimon85}.
Let \(\lambda_1(\Sigma)\) be the first eigenvalue of
\(-\Delta_\Sigma-|A_\Sigma|^2\).  Existence of
a positive homogeneous Jacobi supersolution requires
\begin{equation}\label{eq:degree-window}
 \ell\in I_F:=
 \left[\frac{N-1}{2}-\beta_F,
       \frac{N-1}{2}+\beta_F\right],
 \qquad
 \beta_F^2=\frac{(N-3)^2}{4}+\lambda_1(\Sigma).
\end{equation}
If the radicand is negative, no positive homogeneous Jacobi supersolution
with positive angular part exists.  More precisely, if \(\phi_1>0\) is the
first eigenfunction and
\(w=r^{1-\ell}\phi_1\), then
\[
 J_Cw=r^{-1-\ell}\left[
 (1-\ell)(N-2-\ell)-\lambda_1(\Sigma)
 \right]\phi_1.
\]
For any positive angular supersolution, integrating its inequality
against \(\phi_1\) gives the same necessary degree condition.
An interior degree in \eqref{eq:degree-window} therefore permits positive
boundary data with \(J_Cw<0\), while an endpoint gives \(J_Cw=0\).
The angular multiplier here is chosen from the first eigenfunction.
For a prescribed \(a|_C\), its Jacobi sign must be checked separately.

\paragraph{Example: admissible degrees on the Simons cone.}
Let
\[
 F(x,y)=|x|^2-|y|^2,\qquad (x,y)\in\R^4\times\R^4,\qquad R=r^2.
\]
Its zero link is
\(\Sigma=\Sph^3(1/\sqrt2)\times\Sph^3(1/\sqrt2)\), with
\(|A_\Sigma|^2=6\).  The constant function attains
\(\lambda_1(\Sigma)=-6\), so \eqref{eq:degree-window} gives
\[
 \beta_F^2=\frac{25}{4}-6=\frac14,\qquad I_F=[3,4].
\]
The quadratic \(F\) itself lies outside this interval.  Choose instead
\(G=RF=|x|^4-|y|^4\), of degree \(\ell=4\).  Direct calculation gives
\begin{align*}
 |\nabla G|^2&=4R(R^2+3F^2),& \Delta G&=24F,\\
 \cL(G)&=192RF^3,& \cK_F(R)&=192RF^2,\\
 G\cL(G)&=192R^2F^4\geq0.
\end{align*}
The gradient is nonzero away from the origin, \(G\) has the sign of
\(F\), and \(\nabla G=R\nabla F\) on the regular cone.
The phase \(E=\{F<0\}\) has locally finite perimeter by
\cref{rem:polynomial-finite-perimeter}, and the single exceptional
point satisfies the removability condition.
Hence \cref{cor:gradient-subcalibration} proves area minimality.
Both phases are nonempty and \(\cL(G)\not\equiv0\), so
\cref{cor:polynomial-strict-minimality} gives strict area minimality.

On the other hand,
\[
 w=|\nabla G|^{-1}\big|_C=\frac1{2r^3},\qquad
 J_Cw=0,\qquad \cK_F(R)\big|_C=0.
\]
Nevertheless \cref{thm:ground-state-gap} gives
\[
 Q_C(u)\geq\left(4-\frac72\right)^2\int_Cr^{-2}u^2
 =\frac14\int_Cr^{-2}u^2,
\]
and \eqref{eq:exact-Hardy-gap} shows this constant is optimal.
Thus an endpoint degree and zero Jacobi trace coexist with both
strict area minimality and strict stability.
For \(G_\eps=r^{-\eps}G\), \(0<\eps<1\), the boundary coefficient
is \(\beta_{G_\eps}=\eps-\eps^2>0\).
For all sufficiently small \(\eps>0\), \(G_\eps\) retains the
subcalibration sign and its normalized gradient satisfies the linear
distance bound; see \cref{cor:four-degenerate-certificates}.

\subsubsection*{The spectral obstruction for a singular link}
For a singular link, we use the angular Rayleigh infimum, without
assuming connectedness of \(\Sigma_{\reg}\) or existence of a first
eigenfunction:
\begin{equation}\label{eq:Friedrichs-bottom}
 \lambda_{\rm F}(\Sigma)=
 \inf_{0\neq\phi\in C_c^\infty(\Sigma_{\rm reg})}
 \frac{\int_{\Sigma_{\rm reg}}
 (|\nabla\phi|^2-|A_\Sigma|^2\phi^2)}
 {\int_{\Sigma_{\rm reg}}\phi^2}.
\end{equation}
The exact cone stability quotient is
\begin{equation}\label{eq:exact-Hardy-gap}
 \inf_{0\neq u\in C_c^\infty(C_{\rm reg}\setminus\{0\})}
 \frac{Q_C(u)}{\int_Cr^{-2}u^2}
 =\frac{(N-3)^2}{4}+\lambda_{\rm F}(\Sigma).
\end{equation}
This is the standard radial--angular separation for cones
\cite{Simons68,CabrePoggesi18}; for stable cones with smooth compact
link, see also \cite{Niu26}. The same formula holds for a singular
link when angular tests have compact support in its regular part.
Indeed, the sharp radial Hardy inequality on each ray and the angular
Rayleigh bound at each radius give the lower bound. For a product
test \(u(r,\theta)=\eta(r)\varphi(\theta)\), the cone quotient is
the sum of the radial Hardy quotient and the angular Rayleigh
quotient. Taking independent minimizing sequences gives equality,
also when \(\lambda_{\rm F}(\Sigma)=-\infty\).
If the right-hand side is negative, no fields satisfying the
comparison criterion can exist.

\begin{remark}[What the boundary trace determines]
If \(a_1-a_2=Fb\), then \(a_1|_{C_{\rm reg}}=a_2|_{C_{\rm reg}}\), and
\eqref{eq:Jacobi-trace} gives the equality of restrictions
\begin{equation}\label{eq:boundary-ideal-class}
 \left.\cK_F(a_1)\right|_{C_{\rm reg}}
 =\left.\cK_F(a_2)\right|_{C_{\rm reg}}.
\end{equation}
Thus the value of \(\cK_F(a)\) on the cone depends only on the
restriction of \(a\) to the cone.
Locally near a regular point, equality of the restrictions of
\(a_1\) and \(a_2\) is equivalent to divisibility of \(a_1-a_2\)
by the smooth defining function \(F\).
\end{remark}

\subsection{Homogeneous potentials on a spherical quotient}
\label{sec:spherical-quotient-potentials}

Homogeneity reduces the multiplier inequality to the spherical quotient.

Let \(I_1,\ldots,I_j\) be homogeneous invariants of degrees
\(d_1,\ldots,d_j\), put \(I=(I_1,\ldots,I_j)\), and write
\[
 z_i=\frac{I_i}{r^{d_i}},\qquad
 K_{\Sph}=z(\Sph^{N-1}).
\]
Assume their spherical gradients and Laplacians close as functions
of \(z\).  Euler's identity gives
\(\nabla_{\Sph}I_i=\nabla I_i-d_iI_i\theta\) at \(|\theta|=1\).
Together with the polar-coordinate formula for \(\Delta\), this gives
the spherical data of an ambient closure containing \(R\):
\begin{equation}\label{eq:spherical-closure-from-ambient}
 g^{\Sph}_{ij}=\left.\Gamma(I_i,I_j)\right|_{R=1}-d_id_jz_iz_j,
 \qquad
 b^{\Sph}_i=\left.\Delta I_i\right|_{R=1}
              -d_i(d_i+N-2)z_i.
\end{equation}
Thus \(\mathbf g^{\Sph}\) is the Gram matrix of the spherical gradients.
The rank argument of \cref{prop:real-algebraic-quotient}, applied on the
sphere, gives
\[
 \dim K_{\Sph}
 =\max_{\theta\in\Sph^{N-1}}
       \operatorname{rank}d(I|_{\Sph^{N-1}})_\theta
 =\max_{\theta\in\Sph^{N-1}}
       \operatorname{rank}\mathbf g^{\Sph}(z(\theta)).
\]
This computes the number of independent angular variables even when
the generators are redundant.
Let \(\widehat\Gamma_{\Sph}\) and \(\widehat\Delta_{\Sph}\) denote
the operators defined by the spherical gradient matrix
\(\mathbf g^\Sph\) and the first-order coefficients \(b_i^\Sph\).
They act on smooth
functions restricted from a neighborhood of the real quotient.

\begin{proposition}[The spherical equation for a homogeneous potential]
\label{prop:spherical-quotient-potential}
For \(G=r^\ell \varphi(z)\), define
\begin{align}
 W_\ell[\varphi]&=\ell^2\varphi^2+\widehat\Gamma_{\Sph}(\varphi,\varphi),
 \label{eq:general-spherical-W}\\
 \mathfrak N_\ell[\varphi]
 &=W_\ell[\varphi]\bigl(\widehat\Delta_{\Sph}\varphi+(N-1)\ell \varphi\bigr)
       -\tfrac12\widehat\Gamma_{\Sph}(\varphi,W_\ell[\varphi]).
 \label{eq:general-spherical-N}
\end{align}
Then
\begin{equation}\label{eq:general-spherical-L}
 |\nabla G|^2=r^{2\ell-2}W_\ell[\varphi],\qquad
 \cL(G)=r^{3\ell-4}\mathfrak N_\ell[\varphi].
\end{equation}
Where \(W_\ell[\varphi]>0\), one has
\(r\diver X_G=\mathfrak N_\ell[\varphi]/W_\ell[\varphi]^{3/2}\).
These identities hold at singular quotient values whenever their
pullbacks are smooth and \(W_\ell[\varphi]>0\).
\end{proposition}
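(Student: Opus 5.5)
The plan is to decompose the ambient gradient of \(G=r^\ell\varphi(z)\) into radial and spherical parts, and then compute \(\cL(G)\) from the definition \eqref{eq:L-definition} using the polar form of the Laplacian.

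First, the gradient. Each \(z_i=I_i/r^{d_i}\) is zero-homogeneous, so \(\nabla z_i\) is tangent to the spheres. At radius \(r\) it equals \(r^{-1}\) times the spherical gradient evaluated at \(\theta=x/r\). By Euler's identity this spherical gradient is \(\nabla_{\Sph}I_i=\nabla I_i-d_iI_i\theta\), taken at \(|\theta|=1\). Consequently
\[
 \langle\nabla z_i,\nabla z_j\rangle=r^{-2}g^{\Sph}_{ij}(z),
\]
with \(g^{\Sph}\) given by \eqref{eq:spherical-closure-from-ambient}. Writing
\[
 \nabla G=\ell r^{\ell-1}\varphi\,\theta+r^{\ell}\sum_i\varphi_i\nabla z_i,
\]
the two pieces are orthogonal. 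Hence \(|\nabla G|^2=r^{2\ell-2}\bigl(\ell^2\varphi^2+\widehat\Gamma_{\Sph}(\varphi,\varphi)\bigr)=r^{2\ell-2}W_\ell[\varphi]\), which is the first identity in \eqref{eq:general-spherical-L}.

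Second, the Laplacian. Use \(\Delta=\partial_r^2+\frac{N-1}{r}\partial_r+r^{-2}\Delta_{\Sph}\). The function \(\varphi(z)\) is zero-homogeneous, and its spherical Laplacian is \(\widehat\Delta_{\Sph}\varphi\). This uses the spherical closure: \(\Delta_{\Sph}z_i=b_i^{\Sph}\), which comes from the polar formula applied to the homogeneous \(I_i\) restricted to \(R=1\), together with the chain rule. This gives
\[
 \Delta G=r^{\ell-2}\bigl(\ell(\ell+N-2)\varphi+\widehat\Delta_{\Sph}\varphi\bigr).
\]

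Third, the term \(\tfrac12\langle\nabla|\nabla G|^2,\nabla G\rangle\). Set \(\Phi=r^{2\ell-2}W\) with \(W=W_\ell[\varphi]\), again using the radial/spherical orthogonal splitting. The radial part contributes \(\tfrac12(2\ell-2)\ell\,r^{3\ell-4}W\varphi\). The spherical part contributes \(\tfrac12 r^{3\ell-4}\widehat\Gamma_{\Sph}(\varphi,W)\). Subtracting from \(|\nabla G|^2\Delta G\) gives
\[
 r^{3\ell-4}\Bigl[W\bigl(\ell(\ell+N-2)\varphi-\ell(\ell-1)\varphi+\widehat\Delta_{\Sph}\varphi\bigr)-\tfrac12\widehat\Gamma_{\Sph}(\varphi,W)\Bigr].
\]
Since \(\ell(\ell+N-2)-\ell(\ell-1)=(N-1)\ell\), this is \(r^{3\ell-4}\mathfrak N_\ell[\varphi]\).

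Then \eqref{eq:div-normalized-gradient} gives \(\diver X_G=r^{3\ell-4}\mathfrak N/(r^{3\ell-3}W^{3/2})\), so \(r\diver X_G=\mathfrak N_\ell[\varphi]/W_\ell[\varphi]^{3/2}\).

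The step that needs care is the singular quotient values, where the rank of \(d(I|_{\Sph})\) drops or \(z\) lies on the boundary of \(K_{\Sph}\). There I would argue as in \cref{thm:finite-reduction}: the computation is a pointwise chain-rule identity on \(\R^N\setminus\{0\}\). It uses only the closure identities for the \(z_i\) and smoothness of \(\varphi\) on a neighborhood of the image, or of the pullbacks. It therefore holds at every point, with no rank assumption. Positivity of \(W\) is needed only for the divergence formula.

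The main obstacle is to verify that the spherical closure data \(b^{\Sph}\) and \(g^{\Sph}\) of \eqref{eq:spherical-closure-from-ambient} really are \(\Delta_{\Sph}z_i\) and \(\langle\nabla_{\Sph}z_i,\nabla_{\Sph}z_j\rangle\) on the unit sphere. This is a direct polar-coordinates check using \(\partial_rI_i=d_iI_i/r\).
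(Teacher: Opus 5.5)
Your proposal is correct and follows essentially the same route as the paper: polar differentiation gives \(\nabla G=r^{\ell-1}(\ell\varphi\,\partial_r+\nabla_{\Sph}\varphi)\) and \(\Delta G=r^{\ell-2}[\widehat\Delta_{\Sph}\varphi+\ell(\ell+N-2)\varphi]\), the radial and angular parts of \(\tfrac12\Gamma(G,|\nabla G|^2)\) combine to \(r^{3\ell-4}[\ell(\ell-1)\varphi W_\ell[\varphi]+\tfrac12\widehat\Gamma_{\Sph}(\varphi,W_\ell[\varphi])]\), and subtracting this from \(|\nabla G|^2\Delta G\) gives \(r^{3\ell-4}\mathfrak N_\ell[\varphi]\). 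You also verify that the data in \eqref{eq:spherical-closure-from-ambient} are the spherical gradient pairings and Laplacians of the \(z_i\), and you give a pointwise chain-rule argument for singular quotient values; both supply details the paper leaves implicit.
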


\begin{proof}
Polar differentiation gives
\[
 \nabla G=r^{\ell-1}(\ell \varphi\,\partial_r+\nabla_{\Sph}\varphi),\qquad
 \Delta G=r^{\ell-2}
       [\widehat\Delta_{\Sph}\varphi+\ell(\ell+N-2)\varphi].
\]
The radial and angular terms give
\[
 \tfrac12\Gamma(G,|\nabla G|^2)
 =r^{3\ell-4}\left[
  \ell(\ell-1)\varphi W_\ell[\varphi]
  +\tfrac12\widehat\Gamma_{\Sph}(\varphi,W_\ell[\varphi])\right].
\]
Subtracting this expression from \(|\nabla G|^2\Delta G\) proves the formula.
\end{proof}

For a nonzero homogeneous polynomial \(F\) of degree \(e\geq1\),
put \(R=r^2\) and \(S=|\nabla F|^2\).
When these quantities give a closed quotient as above, the angular
variables are
\begin{equation}\label{eq:rank-two-general-variables}
 f=\frac{F}{R^{e/2}},\qquad
 s=\frac{S}{R^{e-1}},\qquad K_{\Sph}=\{(f(x),s(x)):r=1\}.
\end{equation}
If this image has dimension two, a homogeneous invariant potential
with a positive multiplier has the form
\[
 G=r^\ell f\,a(f,s),\qquad a>0.
\]
The power family \eqref{eq:power-family} corresponds to
\(\varphi(f,s)=fs^\gamma\) and
\(\ell=e+2\alpha+2\gamma(e-1)\).  It is a restricted family of
angular multipliers.

\begin{corollary}[Separate potentials on the negative and positive regions]
\label{cor:rank-two-paired-potentials}
Let \(N\geq3\).  Suppose \(E=\{F<0\}\) has smooth boundary off a
closed conical set \(Z\subset\R^N\) containing the origin, with
\(\cH^{N-1}(Z\cap K)=0\) for every
compact \(K\subset\R^N\).  Use the quotient \eqref{eq:rank-two-general-variables} and
write \(K_-=K_{\Sph}\cap\{f\leq0\}\),
\(K_+=K_{\Sph}\cap\{f\geq0\}\).
Choose degrees \(\ell_-\), \(\ell_+\) and functions
\(h_\pm=f a_\pm(f,s)\), with \(a_\pm>0\), whose pullbacks are smooth
on neighborhoods of the respective closed phases outside \(Z\).
Assume there that \(W_{\ell_\pm}[h_\pm]>0\), and assume
\[
 \mathfrak N_{\ell_-}[h_-]\leq0\quad(f<0),\qquad
 \mathfrak N_{\ell_+}[h_+]\geq0\quad(f>0).
\]
Then \(G_\pm=r^{\ell_\pm}h_\pm\) give
\eqref{eq:paired-weighted-comparison} with phase weights
\[
 q_\pm=\frac1r\,
  \frac{|\mathfrak N_{\ell_\pm}[h_\pm]|}
       {W_{\ell_\pm}[h_\pm]^{3/2}}.
\]
In particular the boundary is area minimizing.  If both unit-ball
phase integrals are positive, it is strictly area minimizing among
the integral-current competitors of
\eqref{eq:strict-minimality-definition}.
\end{corollary}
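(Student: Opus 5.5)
The plan is to reduce the statement to \cref{thm:paired-subcalibration} and \cref{cor:paired-strict-minimality}, applied to the two homogeneous fields \(X_\pm=\nabla G_\pm/|\nabla G_\pm|\). I will verify the hypotheses of those results one at a time, working on the conical domain \(\R^N\setminus Z\).

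\emph{Norm and boundary values.} First, \cref{prop:spherical-quotient-potential} gives \(|\nabla G_\pm|^2=r^{2\ell_\pm-2}W_{\ell_\pm}[h_\pm]\). Since \(W_{\ell_\pm}[h_\pm]>0\) on neighborhoods of the closed phases outside \(Z\), each \(X_\pm\) is smooth, of unit length and zero-homogeneous there.

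On the regular boundary \(\{F=0\}\setminus Z\), write \(G_\pm=r^{\ell_\pm}f\,a_\pm(f,s)=b_\pm F\) with \(b_\pm=r^{\ell_\pm-e}a_\pm>0\). Then \(\nabla G_\pm=b_\pm\nabla F\) there, because \(F=0\) kills the other product-rule terms. Hence \(X_\pm=\nabla F/|\nabla F|\), which is the exterior normal \(\nu_E\) of \(E=\{F<0\}\). The boundary gradient is nonzero, as required, since \(\partial E\) is smooth off \(Z\) and \(W>0\).

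\emph{Phase signs.} Next I use \(r\diver X_{G_\pm}=\mathfrak N_{\ell_\pm}[h_\pm]/W_{\ell_\pm}[h_\pm]^{3/2}\), also from \cref{prop:spherical-quotient-potential}. The assumed signs of \(\mathfrak N\) give \(\diver X_-\le0\) on \(\{f<0\}=E\) and \(\diver X_+\ge0\) on \(\{f>0\}\). The zero set \(\{F=0\}\) is Lebesgue-null by \cref{rem:polynomial-finite-perimeter}, so these signs hold almost everywhere on the respective phases. They give exactly the stated weights \(q_\pm=r^{-1}|\mathfrak N|/W^{3/2}\).

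\emph{Extension by cutoffs.} The fields are only defined near their own closed phases. I extend each by zero to the other side using the spherical cutoff of \cref{rem:phase-local-fields}, which preserves homogeneity, the norm bound, the boundary values and the divergence on the field's own phase. The set \(E\) has locally finite perimeter by \cref{rem:polynomial-finite-perimeter}, and \(Z\) is relatively closed and \(\cH^{N-1}\)-null on compacts.

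\emph{Conclusion.} \Cref{thm:paired-subcalibration}(ii) then yields \eqref{eq:paired-weighted-comparison} and area minimization of the oriented boundary. If the unit-ball phase integrals of \(q_\pm\) are positive, \cref{cor:paired-strict-minimality} applies directly, since \(Z\) and \(E\) are conical and the fields are zero-homogeneous. It gives the strict mass gap \(\min\{A_-,A_+\}\eps^n\) for the integral-current competitors of \eqref{eq:strict-minimality-definition}.

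\emph{Main obstacle.} The one delicate point is the cutoff extension. The cutoff must not alter \(X_\pm\) on a full neighborhood of its closed phase away from \(Z\), so that the boundary normal condition and the phase divergence are unchanged. I would take the cutoff on \(\Sph^{N-1}\setminus Z\) to be a function of \(f\), equal to one on \(\{\pm f\ge0\}\) and supported where the pullbacks are smooth. Near \(Z\) no cutoff control is needed, because the comparison theorems already handle \(Z\) by the null-set argument.
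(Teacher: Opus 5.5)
Your proposal is correct and is the paper's argument written out in more detail: $\nabla G_\pm=r^{\ell_\pm-e}a_\pm\nabla F$ on the regular boundary, the spherical equation of \cref{prop:spherical-quotient-potential} gives the phase signs and weights, and \cref{rem:phase-local-fields}, \cref{thm:paired-subcalibration}, and \cref{cor:paired-strict-minimality} finish the proof. The one adjustment concerns your cutoff: a function of $f$ alone is positive on a whole slab $\{-\delta<f<0\}$, which need not lie inside a neighborhood of the closed phase that narrows near $Z$, so use instead a smooth Urysohn function on the manifold $\Sph^{N-1}\setminus Z$, extended zero-homogeneously (its behavior near $Z$ is irrelevant, since the fields need only be $C^1$ off $Z$ and each divergence enters only on its own phase).
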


\begin{proof}
On a regular boundary point,
\(\nabla G_\pm=r^{\ell_\pm-e}a_\pm\nabla F\), so both normalized
fields equal \(\nu_E\).  The spherical equation gives their phase
signs and weights.  Apply \cref{rem:phase-local-fields},
\cref{thm:paired-subcalibration}, and
\cref{cor:paired-strict-minimality}.
\end{proof}

The corollary reduces the search to one inequality on \(K_-\) and
one on \(K_+\).  It imposes no compatibility between
\(a_-|_{f=0}\) and \(a_+|_{f=0}\).  A general existence criterion for
these functions remains open.

A useful initial family is \(G=R^\alpha S^\gamma F\), with
\(S=|\nabla F|^2\).  Its full multiplier identities are collected in
\cref{app:power-identities}, especially
\cref{prop:combined-power-multiplier}.

\section{The Mendes--Radeschi framework: one-dimensional quotients and isoparametric hypercones}
\label{sec:low-rank}

Put \(R=|x|^2\), and let \(\cA_R(F)\) be the smallest graded Laplacian
algebra containing \(R\) and \(F\).  When
\(\cA_R(F)=\R[R,F]\), one spherical invariant remains after fixing the
radius. The Cartan--M\"unzner description of such an algebra is
given by Mendes--Radeschi \cite[Proposition~37(c)]{MendesRadeschi}.
The next result identifies the level corresponding to \(F=0\) and
shows that minimality forces it to be a single regular leaf.

\begin{theorem}[One-dimensional quotients and the minimal level]
\label{thm:rank-one-classification}
Let \(N\geq3\), and let \(F\) be a nonconstant homogeneous polynomial
on \(\R^N\) with \(\cA_R(F)=\R[R,F]\). Suppose
\[
 \Sigma=\{F=0\}\cap\Sph^{N-1}\neq\varnothing,\qquad
 \nabla F\neq0\quad\hbox{on }\Sigma,
\]
and suppose \(\Sigma\) is minimal in \(\Sph^{N-1}\).
Then \(\{F=0\}\) is the cone over the minimal regular member of a
Cartan--M\"unzner isoparametric family \cite{Muenzer80,Muenzer81}.
In particular, \(\deg F\in\{1,2,3,4,6\}\).
\end{theorem}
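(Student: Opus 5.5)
The plan is to use the Mendes--Radeschi description of rank-one Laplacian algebras, then locate the zero level inside the resulting isoparametric family. By \cite[Proposition~37(c)]{MendesRadeschi}, since \(\cA_R(F)=\R[R,F]\), we may replace the second generator by a homogeneous polynomial \(P\) of some degree \(g\) satisfying the Cartan--M\"unzner equations
\[
 |\nabla P|^2=g^2R^{g-1},\qquad \Delta P=cR^{g/2-1}.
\]
Then \(\R[R,F]=\R[R,P]\), so \(F=\varphi(R,P)\) for a weighted-homogeneous polynomial \(\varphi\). Münzer's theorem \cite{Muenzer80,Muenzer81} gives \(g\in\{1,2,3,4,6\}\). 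The restriction \(p=P|_{\Sph^{N-1}}\) has image \([-1,1]\). Its levels \(p=t\), \(|t|<1\), are the regular isoparametric hypersurfaces, and \(t=\pm1\) give the two focal submanifolds, which have codimension at least two.

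Next we restrict to the sphere. There \(F|_{\Sph}=\psi(p)\) for a one-variable polynomial \(\psi\), so \(\Sigma=\{\psi(p)=0\}\) is a union of level sets \(p=t_i\), with the \(t_i\) roots of \(\psi\) in \([-1,1]\).

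First claim: no root is \(\pm1\). At a focal point \(\nabla_{\Sph}p=0\), so \(\nabla_{\Sph}F=0\). Homogeneity gives \(\partial_rF=eF=0\) there, hence \(\nabla F=0\), contradicting regularity.

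Second claim: every root \(t_i\in(-1,1)\) is simple. Otherwise \(\psi'(t_i)=0\), and since \(\nabla_\Sph F=\psi'(p)\nabla_\Sph p\), the gradient would vanish again. So \(\Sigma\) is a disjoint union of regular isoparametric leaves \(M_{t_i}\), each with constant mean curvature \(H(t_i)\).

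The minimality hypothesis then forces \(H(t_i)=0\). For an isoparametric family the mean curvature, with principal curvatures \(\cot(\theta+k\pi/g)\), is strictly monotone in the parameter. So exactly one leaf is minimal, and \(\Sigma\) is that single leaf, \(\psi\) having only this one root in \((-1,1)\). The cone \(\{F=0\}\) is therefore the cone over the minimal regular member.

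For the degree, write \(\psi(t)=(t-t_0)\chi(t)\) with \(\chi\) nonvanishing on \([-1,1]\). Reducedness (the paper's convention that \(F\) is square-free) together with the structure \(F=\varphi(R,P)\) should force \(\deg F=g\), i.e.\ \(F=\lambda(P-t_0R^{g/2})\), after noting that any extra factor of \(\varphi\) is a polynomial in \(R,P\) with no real zeros on the sphere. The degree statement is then what I expect to be the main obstacle. A factor such as \(R\) or \(R^2+P^2\) could a priori multiply \(F\) without changing the zero set, and then \(F\) is square-free yet \(\deg F>g\).

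I would try to use the hypothesis \(\cA_R(F)=\R[R,F]\) itself to exclude this. If \(F=\chi(R,P)(P-t_0R^{g/2})\) with a nontrivial \(\chi\), compute \(\Gamma(F,F)\) via the closure table and require it to lie in \(\R[R,F]\). Comparing degrees in \(P\) should show that \(\R[R,F]\neq\R[R,P]\) unless \(\chi\) is constant. For instance, \(P\) must itself be a polynomial in \(R\) and \(F\), which fails when \(\deg F>g\) because \(\deg_PF\geq2\) is then impossible to invert polynomially. This gives \(\deg F=g\in\{1,2,3,4,6\}\).
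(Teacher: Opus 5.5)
Your outline is the paper's: apply Mendes--Radeschi, locate \(\Sigma\) among the level sets, and use uniqueness of the minimal leaf. The first step, however, has a genuine gap.

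Proposition~37(c) of Mendes--Radeschi only produces \(P=aF+b\,r^{e}\) (with \(b=0\) for odd \(e\)) satisfying the normalized Cartan--M\"unzner equations \(|\nabla P|^2=k^2r^{2k-2}\), \(\Delta P=c\,r^{k-2}\). These equations also have \emph{folded} solutions \(P=\epsilon(2\Phi_g^2-r^{2g})\), \(\epsilon=\pm1\). Here \(\Phi_g\) is a genuine Cartan--M\"unzner polynomial with \(m_1=m_2\), so \(k=2g\). For example, \(g=4\) with \((m_1,m_2)=(1,1)\) on \(\R^6\) gives a degree-eight \(P\) for which \(\R[R,P]\) is Laplacian.

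For such \(P\), one formal multiplicity is zero, and three of your assertions fail:
\begin{itemize}
\item M\"unzner's restriction \(k\in\{1,2,3,4,6\}\) does not apply.
\item The level \(p=-\epsilon\) is the hypersurface \(\{f_g=0\}\), not a focal submanifold of codimension at least two.
\item Each regular level \(p=t\) is the union \(\{f_g=s\}\cup\{f_g=-s\}\) of two parallel leaves, so your principal-curvature description and the conclusion ``exactly one leaf is minimal'' are false.
\end{itemize}
Your two claims (no root at \(\pm1\), all roots simple) survive. But nothing in your argument excludes the folded case, and excluding it is exactly the job of the minimality hypothesis. Without that hypothesis, \(\deg F\) can be \(8\) or \(12\), and the zero link need not be a single isoparametric leaf.

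The repair is short; either of the following works.
\begin{itemize}
\item Argue as the paper does. In the folded case \(\Sigma=\{f_g=s\}\cup\{f_g=-s\}\) with \(0<s<1\), which are two nonempty regular leaves of the honest family. When \(N\geq3\), only the single level \(f_g=c_g/(g(N-2))\) of that family is minimal. Hence \(P=\Phi_g\).
\item Compute directly from the equations. The level \(p=t\) has mean curvature \((c-k(N-2)t)/(k\sqrt{1-t^2})\) with respect to \(\nabla p\). In the folded case \(c=-\epsilon k(N-2)\), so the only zero is at the singular level \(t=-\epsilon\), and no regular level is minimal.
\end{itemize}

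The degree step you call the main obstacle is in fact immediate. Since \(P=aF+b\,r^e\) with \(a\neq0\), \(F\) is affine in \(P\) and \(\deg F=\deg P\). Equivalently, the degrees of minimal homogeneous generators of the graded algebra \(\R[R,F]=\R[R,P]\) are invariants. So no analysis of extra factors is needed. The real content of \(\deg F\in\{1,2,3,4,6\}\) is precisely the exclusion of the folded case.
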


\begin{proof}
Put \(e=\deg F\). The case \(e=1\) is a hyperplane, so assume
\(e\geq2\). By Mendes--Radeschi
\cite[Proposition~37(c) and its proof]{MendesRadeschi}, there are
constants \(a\neq0\) and \(b\), with \(b=0\) for odd \(e\), such
that \(\Psi=aF+br^e\) satisfies the normalized Cartan--M\"unzner
equations. Their proof gives a Cartan--M\"unzner polynomial
\(\Phi_g\), of degree \(g\in\{1,2,3,4,6\}\), such that either
\(\Psi=\Phi_g\), or
\begin{equation}\label{eq:rank-one-folded-alternative}
 \Psi=\epsilon(2\Phi_g^2-r^{2g}),\qquad \epsilon\in\{-1,1\}.
\end{equation}
On the unit sphere, \(\Sigma=\{\Psi=b\}\). Its regularity and
\(|\nabla_{\Sph^{N-1}}\Psi|^2=e^2(1-\Psi^2)\) imply \(|b|<1\).

Write \(f_g=\Phi_g|_{\Sph^{N-1}}\) and
\(\Delta\Phi_g=c_g r^{g-2}\). The spherical Cartan--M\"unzner
equations give the mean curvature of its regular levels:
\begin{equation}\label{eq:rank-one-level-mean-curvature}
 \diver_{\Sph^{N-1}}
 \frac{\nabla_{\Sph^{N-1}}f_g}{|\nabla_{\Sph^{N-1}}f_g|}
 =\frac{c_g-g(N-2)f_g}{g\sqrt{1-f_g^2}}.
\end{equation}
Since \(N\geq3\), only one regular level can be minimal.
In the folded case \eqref{eq:rank-one-folded-alternative},
\[
 \Sigma=\{f_g=t\}\cup\{f_g=-t\},\qquad
 t=\sqrt{(1+\epsilon b)/2}\in(0,1).
\]
Both levels are nonempty and regular, so they cannot both be minimal.
Thus \(\Psi=\Phi_g\), and \(\Sigma\) is its minimal regular level
\(f_g=c_g/[g(N-2)]\). The degree restriction follows.
\end{proof}

\subsection{One-variable reduction and classical classification}
\label{sec:isoparametric-reduction}

\subsubsection{Cartan--M\"unzner reduction}

Let \(\Phi\) be a Cartan--M\"unzner polynomial of degree
\(g\in\{1,2,3,4,6\}\) on \(\R^N\) \cite{Muenzer80,Muenzer81}.
For the geometric background see Cecil--Ryan
\cite[Sections~3.2 and~3.4--3.6]{CecilRyan15} and Chi
\cite[Section~4]{Chi20}.
For \(g\geq2\), write the two
multiplicities as \(m_1,m_2\), reverse the sign of \(\Phi\) if
necessary so that \(1\leq m_1\leq m_2\), and put
\begin{equation}\label{eq:iso-parameters}
 s=m_1+m_2,\qquad t=\frac{m_2-m_1}{s},\qquad
 d=N-2=\frac g2s.
\end{equation}
Here \(d\) is the dimension of the minimal link.  The Euclidean
Cartan--M\"unzner equations are
\begin{equation}\label{eq:CM-euclidean}
 |\nabla\Phi|^2=g^2r^{2g-2},
 \qquad
 \Delta\Phi=\frac{g^2}{2}(m_2-m_1)r^{g-2}.
\end{equation}
Let
\begin{equation}\label{eq:iso-F-z}
 f=\frac{\Phi}{r^g},\qquad z=f-t,
 \qquad F=r^gz=\Phi-tr^g.
\end{equation}
The cone \(C=\{F=0\}\) is the cone over the unique minimal regular
member of the isoparametric family.  It is algebraic: for even \(g\),
\(r^g\) is polynomial, while for odd \(g\) the multiplicities are equal
and \(t=0\).  Thus the Cartan--M\"unzner construction gives a
homogeneous polynomial equation for each of these minimal hypercones.

The coordinate \(z\) has a direct interpretation in the parallel
isoparametric family.  On its regular part put
\[
 \theta=\frac1g\arccos f\in(0,\pi/g),\qquad f=\cos(g\theta).
\]
Here \(\theta\) is spherical normal distance from the focal submanifold
\(M_+=\{f=1\}\); this description does not require homogeneity of
the foliation \cite[Section~4, pp.~14--16]{Chi20}.
The Cartan--M\"unzner equations give
\[
 |\nabla_{\Sph}\theta|=1,\qquad
 \Delta_{\Sph}\theta
 =d\,\frac{\cos(g\theta)-t}{\sin(g\theta)}.
\]
For the normal \(\nabla_{\Sph}\theta\), the last expression is the
divergence, or unaveraged scalar mean curvature.  Thus the minimal leaf
is characterized by
\[
 \cos(g\theta_*)=t,\qquad
 \tan^2(g\theta_*/2)=\frac{m_1}{m_2},\qquad
 z=\cos(g\theta)-\cos(g\theta_*).
\]
In particular \(z\) is a centered isoparametric coordinate, not a
signed distance.  For the local signed distance \(q=\theta-\theta_*\),
\[
 z=-g\sin(g\theta_*)q+O(q^2).
\]
We retain \(z\) because it preserves the polynomial form of the
subcalibration calculation.

On \(\Sph^{N-1}\), equations \eqref{eq:CM-euclidean} give the one-variable
diffusion closure
\begin{align}
 A(z):=|\nabla_{\Sph}z|^2
 &=g^2\bigl(1-(z+t)^2\bigr),                 \label{eq:A-z}\\
 B(z):=\Delta_{\Sph}z
 &=-g^2t-g(g+d)z.                            \label{eq:B-z}
\end{align}
The realizable interval is
\begin{equation}\label{eq:z-interval}
 I_t=[-1-t,1-t].
\end{equation}

\begin{proposition}[One-variable subcalibration equation]
\label{prop:iso-master}
Let \(\ell>0\), let \(h\) be smooth on a neighborhood of \(I_t\), and set
\[
 G=r^\ell h(z),\qquad h(0)=0,\qquad h'(0)>0.
\]
Applying \cref{prop:spherical-quotient-potential} with the single invariant
\(z\) gives
\[
 |\nabla G|^2=r^{2\ell-2}
       \bigl[\ell^2h^2+Ah'^2\bigr].
\]
We denote the bracket by
\begin{equation}\label{eq:E-h}
 E(z)=\ell^2h(z)^2+A(z)h'(z)^2.
\end{equation}
The same formula then yields
\[
 \cL(G)=r^{3\ell-4}
 \left\{\bigl(Ah''+Bh'+(d+1)\ell h\bigr)E
                  -\frac12Ah'E'\right\}.
\]
Denote this one-variable expression by
\begin{equation}\label{eq:N-h}
 \mathfrak N_\ell[h]
 =\bigl(Ah''+Bh'+(d+1)\ell h\bigr)E
                  -\frac12Ah'E'.
\end{equation}
Thus the original sign criterion becomes
\begin{equation}\label{eq:iso-potential-sign}
 G\cL(G)=r^{4\ell-4}h(z)\mathfrak N_\ell[h](z)\geq0.
\end{equation}
If \(E>0\) throughout \(I_t\) and
\begin{equation}\label{eq:h-sign-criterion}
 \frac{h(z)}z>0\quad(z\neq0),\qquad
 z\mathfrak N_\ell[h](z)\geq0\quad(z\in I_t),
\end{equation}
then \(G\) generates a two-sided subcalibration and \(C\) is area minimizing.
Its divergence is
\begin{equation}\label{eq:iso-divergence}
 r\diver X_G=\frac{\mathfrak N_\ell[h]}{E^{3/2}}.
\end{equation}
\end{proposition}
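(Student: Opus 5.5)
The plan is to apply \cref{prop:spherical-quotient-potential} with the single angular invariant \(z\), check the resulting formulas against \eqref{eq:E-h} and \eqref{eq:N-h}, and then verify the hypotheses of \cref{cor:gradient-subcalibration}.

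\emph{Spherical data.} Here \(z=f-t\) is the angular function, so \(\widehat\Gamma_{\Sph}(\varphi,\psi)=A\varphi'\psi'\) and \(\widehat\Delta_{\Sph}\varphi=A\varphi''+B\varphi'\). The coefficients \(A\) and \(B\) in \eqref{eq:A-z}--\eqref{eq:B-z} come from \eqref{eq:spherical-closure-from-ambient} applied to \(\Phi\), using \eqref{eq:CM-euclidean}:
\[
 |\nabla_\Sph f|^2=g^2-g^2f^2,\qquad
 \Delta_\Sph f=\tfrac{g^2}{2}(m_2-m_1)-g(g+N-2)f .
\]
With \(N-2=d\) and \(\tfrac{g}{2}(m_2-m_1)=gst/2\cdot\ldots=dt\cdot\ldots\) this becomes \(-g^2t-g(g+d)z\), since \((g^2/2)(m_2-m_1)-g(g+d)t=g\,dt-g^2t-g\,dt=-g^2t\). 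This is a short check.

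\emph{Reduced formulas.} Substituting \(\varphi=h\) into \eqref{eq:general-spherical-W} gives \(W_\ell[h]=\ell^2h^2+Ah'^2=E\). Substituting into \eqref{eq:general-spherical-N}, and using \(N-1=d+1\), gives \(\mathfrak N_\ell[h]=(Ah''+Bh'+(d+1)\ell h)E-\tfrac12Ah'E'\). Then \eqref{eq:general-spherical-L} yields the formulas for \(|\nabla G|^2\) and \(\cL(G)\). Multiplying \(G=r^\ell h\) by \(\cL(G)\) gives \eqref{eq:iso-potential-sign}, and the divergence formula gives \eqref{eq:iso-divergence}. These identities pull back smoothly even where \(z\) is critical, namely at the focal sets \(z=\pm1-t\). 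That is because \(G=r^\ell h(\Phi/r^g-t)\) is smooth off the origin.

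\emph{Subcalibration and minimality.} Under \eqref{eq:h-sign-criterion}, \(h\) has the sign of \(z\), so \(G\) has the sign of \(F\) and \(\{G<0\}=\{F<0\}\). Writing \(h=z\,\tilde a(z)\) with \(\tilde a>0\) on \(I_t\) (which uses \(h'(0)>0\)), we get \(G=aF\) with the positive smooth multiplier \(a=r^{\ell-g}\tilde a(z)\) on \(\R^N\setminus\{0\}\). Since \(E>0\) on \(I_t\), \(\nabla G\neq0\) off the origin, so \(X_G\) is smooth there. On the regular cone \(X_G\) equals \(\nabla F/|\nabla F|\), the exterior normal of \(\{F<0\}\).

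The sign condition \(G\cL(G)\geq0\) is equivalent to \(h\,\mathfrak N_\ell[h]\geq0\), which follows from \(z\mathfrak N\geq0\) together with \(h/z>0\). Finite perimeter follows from \cref{rem:polynomial-finite-perimeter} because \(G\) has the sign of the polynomial \(F\). The exceptional set is \(Z=\{0\}\), which is \(\cH^{N-1}\)-null since \(N\geq3\). \Cref{cor:gradient-subcalibration} then gives local perimeter minimality, which is equivalent to area minimization of \(C\).

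\emph{Main obstacle.} The only delicate point is the smoothness and positivity of \(a\) at the endpoints of \(I_t\), where the invariant map degenerates. This is handled by observing that \(h\) is smooth on a neighborhood of \(I_t\) and that \(z\) is a polynomial composed with \(r^{-g}\), so the composition is smooth. Everything else is bookkeeping.
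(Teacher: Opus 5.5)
Your proposal is correct and follows the paper's proof essentially step for step. You substitute \(\varphi=h\) into \cref{prop:spherical-quotient-potential} with \(\widehat\Gamma_{\Sph}(u,v)=Au'v'\), \(\widehat\Delta_{\Sph}u=Au''+Bu'\) and \(N-1=d+1\), write \(G=a_GF\) with \(a_G=r^{\ell-g}h(z)/z>0\), and apply the comparison theorem with exceptional set \(\{0\}\); going through \cref{cor:gradient-subcalibration} is just the packaged form of the paper's appeal to \cref{thm:removable-subcalibration}.

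In your optional re-derivation of \(B\), the middle expression is garbled. What you need is \(\tfrac{g^2}{2}(m_2-m_1)=g\,dt\), which follows from \(d=gs/2\) and \(t=(m_2-m_1)/s\); with that, your final line is correct.
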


\begin{proof}
In \cref{prop:spherical-quotient-potential}, take \(\varphi=h\).
The spherical gradient
pairing is \(\Gamma_{\Sph}(u,v)=Au'v'\), the spherical Laplacian is
\(Au''+Bu'\), and \(N-1=d+1\). Substitution gives the two
displayed derivative formulas and \eqref{eq:iso-potential-sign}.
The function \(h\) here is the angular profile of \(G\).
Writing \(a_G\) for the positive multiplier, we have
\[
 G=a_GF,\qquad a_G=r^{\ell-g}\frac{h(z)}z,
 \qquad \left.\frac{h(z)}z\right|_{z=0}=h'(0).
\]
This quotient is smooth across \(z=0\) and is positive under
\eqref{eq:h-sign-criterion}. These conditions give the sign in
\eqref{eq:iso-potential-sign}. The condition \(E>0\) gives
\(\nabla G\neq0\) on the punctured space and the correct exterior
normal on the regular cone. The vertex has zero
\(\cH^{N-1}\)-measure, so \cref{thm:removable-subcalibration} applies.
Finally, \eqref{eq:div-normalized-gradient} gives
\eqref{eq:iso-divergence}.
\end{proof}

\subsubsection{Stability and the classical classification}

The squared norm of the second fundamental form of a minimal
isoparametric hypersurface is constant:
\begin{equation}\label{eq:iso-A2}
 |A_\Sigma|^2=d(g-1).
\end{equation}
Hence the first eigenvalue of
\(-\Delta_\Sigma-|A_\Sigma|^2\) is \(-d(g-1)\), attained by the
constant function.  Adding the radial Hardy constant gives
\begin{equation}\label{eq:D-gd}
 D_{g,d}=\frac{(d-1)^2}{4}-d(g-1).
\end{equation}

Wang's stability calculation uses the discriminant
\((N-1)^2-4g(N-2)=4D_{g,d}\)
\cite[Lemma~3.4, p.~216, and proof of Theorem~B, p.~238]{Wang94}.
The next proposition recalls this classical calculation in its sharp
Hardy form.

\begin{proposition}[Classical stability in Hardy form]
\label{prop:iso-stability}
For \(g>1\), stability and strict stability of the minimal isoparametric
hypercone are equivalent, and hold precisely when
\begin{equation}\label{eq:iso-stability-threshold}
 d\geq4g-2,\qquad\text{equivalently}\qquad N\geq4g.
\end{equation}
The optimal constant in \eqref{eq:strict-stability-definition} is
\(D_{g,d}\) from \eqref{eq:D-gd}.  In the stable range,
\begin{equation}\label{eq:iso-optimal-gap}
 D_{g,d}=\frac14+
       \frac{d(d-4g+2)}4\geq\frac14.
\end{equation}
If \eqref{eq:iso-stability-threshold} fails, the cone is unstable and hence
not area minimizing.
\end{proposition}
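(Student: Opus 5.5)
The plan is to apply the exact cone stability quotient \eqref{eq:exact-Hardy-gap} directly, since the link here is smooth and everything is explicit. Here \(\Sigma=C\cap\Sph^{N-1}\) is the minimal regular member of the isoparametric family, so it is a smooth compact hypersurface of dimension \(d=N-2\). Hence \(\Sigma_{\reg}=\Sigma\), and \(\lambda_{\rm F}(\Sigma)\) is the bottom of the spectrum of \(-\Delta_\Sigma-|A_\Sigma|^2\) on the closed manifold \(\Sigma\).

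\textbf{Step 1: the angular eigenvalue.} By \eqref{eq:iso-A2}, \(|A_\Sigma|^2=d(g-1)\) is constant. So \(-\Delta_\Sigma-|A_\Sigma|^2\) is the nonnegative operator \(-\Delta_\Sigma\) shifted by the constant \(-d(g-1)\). Its bottom eigenvalue is therefore \(-d(g-1)\), attained by constants, and \(\lambda_{\rm F}(\Sigma)=-d(g-1)\). Since \(N-3=d-1\), formula \eqref{eq:exact-Hardy-gap} shows that the optimal constant in \eqref{eq:strict-stability-definition} is
\[
 \frac{(d-1)^2}{4}-d(g-1)=D_{g,d}.
\]
I would note that this is also the nonstrict threshold. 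If \(D_{g,d}<0\), the infimum is negative, so some \(u\) has \(Q_C(u)<0\) and the cone is unstable. If \(D_{g,d}\ge0\), the quotient is bounded below by \(D_{g,d}\).

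\textbf{Step 2: the arithmetic.} Expanding the square gives
\[
 4D_{g,d}=d^2+2d+1-4dg=1+d(d-4g+2),
\]
which is \eqref{eq:iso-optimal-gap}. The only point needing care is showing that \(D_{g,d}\ge0\) and \(D_{g,d}>0\) are both equivalent to \(d\ge4g-2\), using that \(d\) and \(g\) are integers.
\begin{itemize}
 \item If \(d\ge4g-2\), then \(4D_{g,d}\ge1\), so the cone is strictly stable with constant at least \(1/4\).
 \item If \(d\le4g-3\), then \(d(d-4g+2)\le-d\le-1\), so \(4D_{g,d}\le0\). Equality \(D_{g,d}=0\) would force both \(d=1\) and \(d-4g+2=-1\). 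Together these give \(g=1\), which is excluded by \(g>1\).
\end{itemize}
So below the threshold \(D_{g,d}<0\) and the cone is unstable. Hence stability and strict stability coincide, and both hold exactly when \(d\ge4g-2\), i.e. \(N=d+2\ge4g\).

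\textbf{Step 3: non-minimality.} When \eqref{eq:iso-stability-threshold} fails, the negative quotient gives a test function with \(Q_C(u)<0\). Area minimizers are stable by the second variation, as noted in \cref{sec:prelim-minimization}, so the cone is not area minimizing. I expect no real obstacle here. The only subtle step is the integer argument ruling out \(D_{g,d}=0\), which is what makes stability equivalent to strict stability.
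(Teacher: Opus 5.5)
Your proposal is correct and follows the paper's proof: constants attain the bottom \(-d(g-1)\) of \(-\Delta_\Sigma-|A_\Sigma|^2\), the exact Hardy-gap formula \eqref{eq:exact-Hardy-gap} gives the optimal constant \(D_{g,d}\), the expansion gives \eqref{eq:iso-optimal-gap}, and a negative second variation excludes area minimization. The only difference is in the arithmetic for the threshold. The paper locates the roots \(2g-1\pm2\sqrt{g(g-1)}\) of the quadratic in \(d\). You instead use the integer bound \(d(d-4g+2)\leq-d\) to rule out \(D_{g,d}=0\) and \(D_{g,d}>0\) below \(d=4g-2\); this is an equally valid and slightly more elementary route.
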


\begin{proof}
The constant link function attains the bottom \(-d(g-1)\).
Equation \eqref{eq:exact-Hardy-gap} therefore gives the optimal constant
\(D_{g,d}\).
The two roots of its quadratic in \(d\) are
\(2g-1\pm2\sqrt{g(g-1)}\).  The lower root lies in \((0,1)\), and the upper
root lies strictly between \(4g-3\) and \(4g-2\).  Thus the integer link
dimensions in question give exactly \eqref{eq:iso-stability-threshold}.
The expansion \eqref{eq:iso-optimal-gap} proves strict positivity and the
stated bound.  Negative second variation excludes area minimization.
\end{proof}

In this normalization, the classical area-minimizing classification is
as follows.

\begin{theorem}[{Wang \cite[Theorem~A, p.~238]{Wang94}}]
\label{thm:classical-iso-classification}
For \(g\geq2\), a minimal isoparametric hypercone is area minimizing if
and only if
\begin{equation}\label{eq:classical-iso-list}
 N\geq4g,
 \qquad
 (g,m_1,m_2)\notin\{(2,1,5),(4,1,6)\},
\end{equation}
up to interchanging the multiplicities.  The \(g=1\) member is a
hyperplane and is area minimizing.
\end{theorem}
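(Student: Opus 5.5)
This is Wang's classical theorem, and the paper only restates it in its own normalization. So the plan is to assemble its two directions from earlier results and the cited literature rather than reprove the full classification.

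\emph{Necessity.} \cref{prop:iso-stability} already shows that the cone is unstable, hence not area minimizing, whenever \(N<4g\). It remains to exclude the two stable exceptional triples \((2,1,5)\) and \((4,1,6)\).
\begin{itemize}
 \item For \(g=2\) with \((m_1,m_2)=(1,5)\), the cone is the quadratic cone in \(\R^2\times\R^6\). Here I would invoke the Lawson-cone literature (Lawlor, Lin): this cone is one-sided minimizing but not area minimizing. Concretely, one can exhibit a competitor on the nonminimizing side built from the non-foliating leaves, following Lawlor's curvature criterion.
 \item For \((4,1,6)\), I would cite Wang's argument. Equivalently, I would cite the Tang--Zhang verification via Lawlor's criterion, which fails exactly there.
\end{itemize}
Since the statement is attributed, this direction is handled by citation together with the Hardy computation of \cref{prop:iso-stability}.

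\emph{Sufficiency.} For every remaining triple with \(N\geq4g\), I would use the paper's own route. Apply \cref{prop:iso-master}: produce a profile \(h\) on \(I_t\) with \(h(z)/z>0\), \(E>0\), and \(z\,\mathfrak N_\ell[h]\geq0\). Then \cref{thm:removable-subcalibration} gives minimality.
\begin{itemize}
 \item The natural ansatz is \(h=z\,p(z)\), with \(p\) a positive polynomial of low degree.
 \item The total degree \(\ell\) should be chosen inside the admissible window \eqref{eq:degree-window}; here \(\beta_F^2=D_{g,d}\).
 \item Using \eqref{eq:A-z}--\eqref{eq:B-z}, \(\mathfrak N_\ell[h]\) becomes a one-variable polynomial on the interval \(I_t\). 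Nonnegativity of \(z\,\mathfrak N_\ell[h]\) then reduces to a finite Bernstein-coefficient or Sturm check in the parameters \((g,m_1,m_2)\).
\end{itemize}
The main obstacle is uniformity in the multiplicities. For fixed \(g\), the admissible \((m_1,m_2)\) form infinite families, so I would treat \(s\) and \(t\) as continuous parameters and prove positivity of the coefficients for \(s\) beyond an explicit threshold. The finitely many small cases would be checked exactly. The borderline triples adjacent to the exceptions, for instance \((2,1,6)\), \((2,2,4)\) and \((4,2,5)\), are where a single polynomial multiplier is most likely to fail. There I would allow phase-dependent profiles via \cref{cor:rank-two-paired-potentials} and \cref{rem:phase-local-fields}, as Liu does for Lawson cones, or fall back on Lawlor's criterion.

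Finally, the case \(g=1\) is immediate: the constant unit normal is a calibration, so the hyperplane is area minimizing.
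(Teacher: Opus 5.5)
Your decomposition matches the paper's: instability for $N<4g$ from \cref{prop:iso-stability}, the two stable exceptions by citation, and the positive cases by explicit profiles in \cref{prop:iso-master}. Two points need correction, though.

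\emph{Non-minimality of the exceptions.} The alternative justifications you offer for the exceptions are invalid. Lawlor's curvature criterion is only a sufficient condition for minimality. So ``the Tang--Zhang verification via Lawlor's criterion, which fails exactly there'' proves nothing about $(4,1,6)$, and the criterion does not produce a competitor for $(2,1,5)$ either. The paper makes the same warning after \cref{prop:forced-critical-lambda}: failure of a candidate family is not a proof of non-minimality. It relies solely on the cited classical results, namely Wang's Theorem~B and Lawlor's homogeneous classification. Lin's result gives only one-sided minimality. Keep only the citation.

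\emph{Sufficiency.} This half is a plan, not a proof. You never exhibit a profile, and your hedges (paired potentials, a Lawlor fallback) show you have not established that single certificates exist. The paper supplies them concretely.
\begin{itemize}
\item \emph{The generic families.} For $h=z$, \cref{prop:radial-Q} gives $\mathfrak N_\ell[z]=zQ_\ell(z)$. Here $Q_\ell$ is an explicit quadratic with constant term $g^2(1-t^2)\mu_\ell$, where $\mu_\ell=D_{g,d}-(\ell-\frac{d+1}{2})^2$. Take $\ell=4$ ($G=RF$) for $g=2$, $\ell=8$ ($G=R^2F$) for $g=4$, and $\ell=5$ ($G=RF$) for $g=3$, $m\in\{4,8\}$. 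A vertex/endpoint analysis then reduces positivity to $K_2\geq0$ or $K_4\geq0$. These increase with $m_1$ at fixed $s$, and are checked at the entry value $m_{1,*}$ or directly for small $s$. This covers every infinite family. In particular $(2,1,6)$ is not borderline: $K_2=23/49$ and $G=RF$ works.
\item \emph{The critical dimension.} At $N=4g$ the degree $2g$ is an endpoint of the window, so $\mu_{2g}=0$. Then \cref{prop:forced-critical-lambda} forces $h=z(1+\lambda_*z)$, with $\lambda_*=3/10,\,7/54,\,7/15$ for $(2,2,4)$, $(4,3,4)$, $(4,2,5)$; for $(2,3,3)$, $\lambda_*=0$.
\item \emph{The remaining triple.} $(4,1,7)$ needs $\ell=10$, $h=z(1+z/2)$, and a Bernstein certificate.
\end{itemize}
No phase-dependent potentials or fallbacks are needed.

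You also need the multiplicity restrictions before \cref{prop:iso-stability} disposes of the remaining degrees. These are Münzner's list for $g=3$ and Abresch's $m_1=m_2\in\{1,2\}$ for $g=6$; without them, hypothetical large multiplicities would require certificates too. Until these computations are supplied, the sufficiency direction rests on unverified claims or on citing the classification itself.
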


Wang also states that every minimizing member is strictly area
minimizing \cite[Theorem~9.19 and Theorem~A, p.~238]{Wang94}.
Tang--Zhang \cite[Introduction and Section~2]{TangZhang20} identify
a gap in this proof arising from Wang's Remark~9.17 on the
Hardt--Simon decay exponents. They give an alternative proof using
Lawlor's curvature criterion \cite{Lawlor91}.
The two homogeneous exceptions \((g,m_1,m_2)=(2,1,5)\) and
\((4,1,6)\) are strictly stable but not area minimizing, as stated in
\cite[Theorem~B, p.~238]{Wang94}; see also the homogeneous
classification in \cite{Lawlor91} and the table in
\cite[Section~2]{TangZhang20}. Their optimal Hardy constant is
\(1/4\) by \eqref{eq:iso-optimal-gap}.

The next subsection recovers the positive cases by explicit polynomial
subcalibrations. Their strict divergence signs also yield the
vertex-avoidance mass gap from \cref{cor:polynomial-strict-minimality}.

\subsection{Explicit polynomial subcalibrations}
\label{sec:isoparametric-certificates}

\begin{theorem}[Explicit polynomial subcalibrations]
\label{thm:iso-polynomial-table}
Orient \(F\) as in \eqref{eq:iso-F-z}, put \(R=r^2\), and assume the
isoparametric multiplicities exist.  Every positive case in
\cref{thm:classical-iso-classification} admits the normalized-gradient
subcalibration generated by the polynomial potential in the following table.
\begin{center}
\renewcommand{\arraystretch}{1.18}
\begin{tabular}{@{}ccl@{}}
\toprule
\(g\)&multiplicities&potential \(G\)\\
\midrule
1&hyperplane&\(F\)\\
2&\(s\geq7\), or \((m_1,m_2)=(3,3)\)&\(RF\)\\
2&\((m_1,m_2)=(2,4)\)&\(\bigl(R+\frac3{10}F\bigr)F\)\\
3&\(m_1=m_2=4\) or \(8\)&\(RF\)\\
4&\(s\geq9\), or \(s=8\) and \(m_1\geq2\)&\(R^2F\)\\
4&\((m_1,m_2)=(3,4)\)&\(\bigl(R^2+\frac7{54}F\bigr)F\)\\
4&\((m_1,m_2)=(2,5)\)&\(\bigl(R^2+\frac7{15}F\bigr)F\)\\
4&\((m_1,m_2)=(1,7)\)&\(R\bigl(R^2+\frac12F\bigr)F\)\\
\bottomrule
\end{tabular}
\end{center}
In every row the total degree of \(G\) is at most ten, independently
of \(N\). For \(g=6\), Abresch's restriction
\cite{Abresch83} gives \(m_1=m_2\in\{1,2\}\), hence \(d=6\) or
\(12\). Both cases lie below the stability threshold in
\cref{prop:iso-stability}.
For every nonflat row, the displayed polynomial field proves strict
area minimality, with the mass-gap constant in
\eqref{eq:strict-minimality-constant}.
\end{theorem}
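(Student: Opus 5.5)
Each row reduces to the one-variable criterion of \cref{prop:iso-master}. Write each potential as \(G=r^\ell h(z)\), using \(F=r^gz\) and \(R=r^2\). Then \(RF\) has \(\ell=g+2\) and \(h=z\). The potential \(R^2F\) has \(\ell=g+4\) and \(h=z\). Corrected rows such as \((R+cF)F\) for \(g=2\) give \(\ell=4\) and \(h=z(1+cz)\). For \(g=4\), \((R^2+cF)F\) gives \(\ell=8\) and \(h=z(1+cz)\). The row \(R(R^2+\frac12F)F\) gives \(\ell=10\) and \(h=z(1+\frac12z)\). The degree bound \(\ell\leq10\) can then be read off. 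The hyperplane row is the constant calibration \(\nabla F\), and the \(g=6\) remark is just \cref{prop:iso-stability} with \(d\in\{6,12\}\) against \(d\geq4g-2=22\). For each remaining row I will verify three things on \(I_t=[-1-t,1-t]\): \(h(z)/z>0\), \(E=\ell^2h^2+Ah'^2>0\), and \(z\mathfrak N_\ell[h](z)\geq0\), with \(\mathfrak N_\ell[h]\not\equiv0\). Here \(A,B\) are given by \eqref{eq:A-z}--\eqref{eq:B-z}, and \(d=gs/2\), \(t=(m_2-m_1)/s\).

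The positivity conditions are easy. For \(h=z(1+cz)\) with \(c\in\{\frac3{10},\frac7{54},\frac7{15},\frac12\}\), \(1+cz>0\) on \(I_t\) because \(|z|\leq2\) and \(c\leq\frac12\). The boundary case \(1+cz=0\) requires \(c=\frac12\) and \(z=-2\), impossible since \(t>0\) for \((1,7)\). For \(E\), the term \(Ah'^2\) is positive in the interior. At the endpoints \(A=0\) but \(h\neq0\), so \(E>0\) throughout.

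The main step is the sign of \(\mathfrak N_\ell[h]\). Since \(h(0)=0\), the expression \(\mathfrak N_\ell[h]\) vanishes at \(z=0\) by minimality, as \(B(0)=-g^2t\) balances \(\ell d\) terms. So \(\mathfrak N_\ell[h]=z\,P(z)\) for an explicit polynomial \(P\) with coefficients in \(g,d,t\). I need \(P\geq0\) on \(I_t\).

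In the pure case \(h=z\), \(E=\ell^2z^2+A\) and \(\mathfrak N=(B+(d+1)\ell z)E-\frac12AE'\). Here \(P\) is a quadratic in \(z\). I will write it with \(d\) (equivalently \(s\)) and \(t\) as parameters, and check \(P\geq0\) on \(I_t\) by examining endpoint values and the vertex or discriminant. This gives conditions like \(s\geq7\) for \(g=2\), or \(s\geq9\) together with the stated \(s=8\) cases for \(g=4\), using \(|t|\leq(s-2)/s\). The sporadic rows \((3,3)\), \((4,4)\), \((8,8)\), \((2,4)\), \((3,4)\), \((2,5)\), \((1,7)\) have all parameters numerical. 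Their \(P\) is a univariate polynomial of degree at most about five, and I will certify it on the interval by exact factorization or a Bernstein expansion on \(I_t\). I expect this to be the main obstacle, especially the uniform families. The first thing to try is substituting \(z=-1-t+u\) and showing all coefficients in \(u\) are nonnegative for the stated parameter ranges. I will also check that \(P\) has no zero of odd order inside, so that \(\mathfrak N\not\equiv0\) and the sign is strict off \(z=0\).

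Given these signs, \cref{prop:iso-master} gives the subcalibration. The exceptional set is \(\{0\}\) and \(\nabla G\neq0\) elsewhere because \(E>0\). Both phases are nonempty and \(\cL(G)\not\equiv0\), so \cref{cor:polynomial-strict-minimality}, through \cref{thm:strict-minimality-flux}, yields strict area minimality with \(\Theta=\min\{A_-,A_+\}\).
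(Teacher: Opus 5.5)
Your proposal follows the paper's proof: each row is written as \(G=r^\ell h(z)\) and checked against \cref{prop:iso-master}. The radial rows are handled through the quadratic \(Q_\ell\) of \cref{prop:radial-Q}, by vertex, endpoint and discriminant analysis in \(s,t\). The corrected rows are handled through explicit factorizations of \(z\mathfrak N_\ell[h]\), with a Bernstein certificate for \((4,1,7)\), and strictness comes from \cref{cor:polynomial-strict-minimality}.

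One sub-step is false as written. \(Ah'^2\) is not positive on the whole interior of \(I_t\), because \(h'=1+2cz\) vanishes at \(z=-15/14\in I_t\) for \((2,5)\) and at \(z=-1\in I_t\) for \((1,7)\). Argue instead, as the paper does, that \(h/z>0\) gives \(E\geq\ell^2h^2>0\) for \(z\neq0\), while \(E(0)=A(0)h'(0)^2>0\).

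There are also some smaller slips.
\begin{enumerate}
\item The bound \(1+cz>0\) at \(z=-1-t\) uses \(t<1\), not \(t>0\).
\item The vanishing at \(z=0\) is \(\mathfrak N_\ell[h](0)=A(0)h'(0)^3\bigl(B(0)-\tfrac12A'(0)\bigr)=0\), using \(B(0)=\tfrac12A'(0)=-g^2t\). There is no \(\ell d\) term to balance.
\item Your first plan, checking nonnegative coefficients in \(u=z+1+t\), cannot work when the quadratic has an interior vertex.
\item In the four rows with endpoint degree \(\ell=2g\), \(P\) has a double zero at \(z=0\). Any interval certificate must therefore be applied after factoring, that is, to \(P/z^2\).
\end{enumerate}
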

\subsubsection{Potentials obtained by a radial factor}

We first take \(h(z)=z\).  Formula \eqref{eq:N-h} factors completely.

\begin{proposition}[Radial polynomial defect]
\label{prop:radial-Q}
For \(h=z\),
\begin{equation}\label{eq:N-z-Q}
 \mathfrak N_\ell[z]=zQ_\ell(z),
\end{equation}
where
\begin{align}
 Q_\ell(z)={}&g^2(1-t^2)\mu_\ell
 +g^2t(\ell^2-2\nu_\ell)z
 +(\ell^2-g^2)\nu_\ell z^2,              \label{eq:Q-ell}\\
 \mu_\ell={}&(d+1)\ell-\ell^2-gd,\qquad
 \nu_\ell=(d+1)\ell-gd.                  \label{eq:mu-nu}
\end{align}
Moreover,
\begin{equation}\label{eq:mu-window}
 \mu_\ell=D_{g,d}-\left(\ell-\frac{d+1}{2}\right)^2.
\end{equation}
Thus the constant term is nonnegative precisely on the admissible
degree interval obtained from the Jacobi equation.
\end{proposition}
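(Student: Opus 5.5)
The plan is a direct substitution into \eqref{eq:N-h} with \(h(z)=z\), so that \(h'=1\) and \(h''=0\), followed by collecting powers of \(z\). With this choice, \eqref{eq:E-h} and \eqref{eq:A-z} give
\[
 E=\ell^2z^2+A,\qquad A'=-2g^2(z+t),\qquad E'=2\ell^2z-2g^2(z+t).
\]
Using \eqref{eq:B-z}, the first-order factor is
\[
 B+(d+1)\ell z=-g^2t+\bigl(\nu_\ell-g^2\bigr)z,
 \qquad \nu_\ell=(d+1)\ell-gd.
\]
Hence
\[
 \mathfrak N_\ell[z]=\bigl(-g^2t+(\nu_\ell-g^2)z\bigr)(\ell^2z^2+A)
 -A\bigl(\ell^2z-g^2(z+t)\bigr).
\]

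The first step is the factor \(z\) in \eqref{eq:N-z-Q}. At \(z=0\) the two terms \(-g^2tA(0)\) and \(+g^2tA(0)\) cancel. The \(A\)-terms combine to
\[
 A\bigl[-g^2t+(\nu_\ell-g^2)z-\ell^2z+g^2z+g^2t\bigr]=Az(\nu_\ell-\ell^2).
\]
Therefore
\[
 \mathfrak N_\ell[z]=z\Bigl[(\nu_\ell-\ell^2)A+\ell^2z\bigl(-g^2t+(\nu_\ell-g^2)z\bigr)\Bigr].
\]
Since \(\nu_\ell-\ell^2=\mu_\ell\) by \eqref{eq:mu-nu}, the bracket is \(Q_\ell\), and the factorization is exact.

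Next I expand \(A=g^2\bigl(1-t^2-2tz-z^2\bigr)\) and read off the three coefficients of \(Q_\ell\).
\begin{itemize}
 \item The constant term is \(g^2(1-t^2)\mu_\ell\).
 \item The linear coefficient is \(-2g^2t\mu_\ell-g^2t\ell^2\). Substituting \(\mu_\ell=\nu_\ell-\ell^2\) turns it into \(g^2t(\ell^2-2\nu_\ell)\).
 \item The quadratic coefficient is \(-g^2\mu_\ell+\ell^2(\nu_\ell-g^2)\). Substituting \(\mu_\ell=\nu_\ell-\ell^2\) again gives \(g^2\ell^2-g^2\nu_\ell+\ell^2\nu_\ell-g^2\ell^2=(\ell^2-g^2)\nu_\ell\).
\end{itemize}
This matches \eqref{eq:Q-ell}. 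The only real risk in the proof is a sign slip in this bookkeeping, for instance in \(A'\) or in the \(-\tfrac12AE'\) term. I would guard against that by checking the case \(t=0\) separately, and by checking the endpoints \(z=\pm1-t\), where \(A=0\).

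For \eqref{eq:mu-window}, I expand the right-hand side using \eqref{eq:D-gd}:
\[
 \frac{(d-1)^2-(d+1)^2}{4}-dg+d+(d+1)\ell-\ell^2
 =-d-dg+d+(d+1)\ell-\ell^2=\mu_\ell.
\]
Since \(1-t^2>0\) for the interior values \(|t|<1\), the constant term \(g^2(1-t^2)\mu_\ell\) is nonnegative exactly when \(|\ell-(d+1)/2|\leq\sqrt{D_{g,d}}\). This is the window \eqref{eq:degree-window} with \(N-1=d+1\), \(\beta_F^2=D_{g,d}\) and \(\lambda_1=-d(g-1)\), as computed from \eqref{eq:iso-A2}.
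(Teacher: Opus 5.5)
Your proposal is correct and follows the paper's route: substitute \(h=z\), \(h'=1\), \(h''=0\), \(E=\ell^2z^2+A\) into \eqref{eq:N-h}, collect powers of \(z\), and expand \eqref{eq:D-gd} for \eqref{eq:mu-window}. Your explicit cancellation showing that the \(A\)-terms collapse to \(\mu_\ell zA\), and your identification \(\beta_F^2=D_{g,d}\) via \(\lambda_1=-d(g-1)\), simply spell out details the paper leaves implicit.
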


\begin{proof}
Insert \(h=z\), \(h'=1\), \(h''=0\), and
\(E=\ell^2z^2+A\) into \eqref{eq:N-h}; collect powers of \(z\).
Identity \eqref{eq:mu-window} follows from \eqref{eq:D-gd}.
\end{proof}

\paragraph{The cases \texorpdfstring{\(g=2\) and \(g=4\)}{g=2 and g=4}.}

For \(g=2\), \(d=s\), \(\ell=4\), one has
\begin{equation}\label{eq:P2}
 Q_4=8P_2,\qquad
 P_2=(s-6)(1-t^2)-2(s-2)tz+3(s+2)z^2.
\end{equation}
The vertex of this convex quadratic is
\[
 z_*=\frac{(s-2)t}{3(s+2)},
\]
and at the right endpoint \(z_+=2m_1/s\),
\begin{equation}\label{eq:P2-right}
 P_2(z_+)=\frac{16m_1((s+2)m_1-s)}{s^2}>0.
\end{equation}
If \(z_*\leq z_+\), equivalently
\[
 m_1\geq m_{1,*}:=\frac{s(s-2)}{8(s+1)},
\]
the minimum is nonnegative precisely when
\begin{equation}\label{eq:K2}
 K_2:=3(s+2)(s-6)(1-t^2)-(s-2)^2t^2\geq0.
\end{equation}
For fixed \(s\), this quantity increases with \(m_1\).  At the entry value,
\begin{equation}\label{eq:K2-entry}
 K_2(m_{1,*})=
 \frac{3(s-2)(s+2)(s^2-8s-12)}{4(s+1)^2}\geq0
 \quad(s\geq10).
\end{equation}
For \(s=7,8,9\), direct evaluation at \(m_1=1\) gives, respectively,
\[
 \frac{23}{49},\qquad6,\qquad\frac{767}{81}.
\]
If \(z_*>z_+\), the minimum on \(I_t\) is the positive endpoint value
\eqref{eq:P2-right}.  This proves \(Q_4\geq0\) for every \(s\geq7\).
For \(s=6\), the same formula handles \((m_1,m_2)=(3,3)\), while
\((2,4)\) requires the correction in the next subsection.

For \(g=4\), \(d=2s\), \(\ell=8\),
\begin{equation}\label{eq:P4}
 Q_8=128P_4,\qquad
 P_4=(s-7)(1-t^2)-2(s-3)tz+3(s+1)z^2.
\end{equation}
Now
\[
 z_*=\frac{(s-3)t}{3(s+1)},
 \qquad
 P_4(z_+)=\frac{16m_1((s+1)m_1-s)}{s^2}>0.
\]
The vertex enters the interval at \(m_{1,*}=(s-3)/8\).  When it does, the
minimum is controlled by
\begin{equation}\label{eq:K4}
 K_4=3(s+1)(s-7)(1-t^2)-(s-3)^2t^2,
\end{equation}
and
\begin{equation}\label{eq:K4-entry}
 K_4(m_{1,*})=
 \frac{3(s-3)(s+1)(s^2-10s-3)}{4s^2}\geq0
 \quad(s\geq11).
\end{equation}
For \(s=9,10\), the values at \(m_1=1\) are \(52/27\) and \(107/25\).
For \(s=8\) and \(m_1\geq2\),
\(K_4\geq27-52(1/2)^2=14\).  This proves the \(g=4\) entries with a radial factor in
\cref{thm:iso-polynomial-table}.

\paragraph{The case \texorpdfstring{\(g=3\)}{g=3}.}

Here \(m_1=m_2=m\), \(t=0\), \(d=3m\), and we choose \(\ell=5\).  Formula
\eqref{eq:Q-ell} becomes
\begin{equation}\label{eq:g3-Q}
 Q_5(z)=9(6m-20)+16(6m+5)z^2.
\end{equation}
It is positive for \(m=4,8\), the two stable Cartan multiplicities
\cite{Muenzer80,Muenzer81}.  Since
\(r^{5}z=RF\), this gives the stated polynomial potential.

\subsubsection{Borderline polynomial corrections}

For the remaining positive cases, we use quadratic corrections in
\eqref{eq:N-h}.  The coefficients below are rational and are given exactly.

\begin{proposition}[Three quadratic corrections]
\label{prop:quadratic-corrections}
The following identities hold on the indicated realizable intervals.
{\footnotesize
\begin{alignat}{3}
g=2,\ (m_1,m_2)=(2,4):\quad
&t=\tfrac13,\ I_t=[-\tfrac43,\tfrac23],\quad
&&h=z(1+\tfrac3{10}z),\ \ell=4,\notag\\
&&&z\mathfrak N_4[h]
=\frac{648}{125}z^4(11z+30);                    \label{eq:24-certificate}\\[1mm]
g=4,\ (m_1,m_2)=(3,4):\quad
&t=\tfrac17,\ I_t=[-\tfrac87,\tfrac67],\quad
&&h=z(1+\tfrac7{54}z),\ \ell=8,\notag\\
&&&z\mathfrak N_8[h]
=-\frac{64}{19683}z^4
 (2548z^2-132237z-910926);                       \label{eq:34-certificate}\\[1mm]
g=4,\ (m_1,m_2)=(2,5):\quad
&t=\tfrac37,\ I_t=[-\tfrac{10}7,\tfrac47],\quad
&&h=z(1+\tfrac7{15}z),\ \ell=8,\notag\\
&&&z\mathfrak N_8[h]
=-\frac{128}{1125}z^4
 (588z^2-8015z-16550).                            \label{eq:25-certificate}
\end{alignat}}
In every case \(h/z>0\) and \(z\mathfrak N_\ell[h]\geq0\) on \(I_t\).
\end{proposition}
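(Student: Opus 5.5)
The proof is a direct substitution into the one-variable formula \eqref{eq:N-h} of \cref{prop:iso-master}, followed by a sign check on each interval. The steps, in order:

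\emph{Set up the data for each case.} Fix \(g\), \(d=\tfrac g2 s\) and \(t=(m_2-m_1)/s\).
\begin{itemize}
\item \((2,4)\): \(s=6\), \(d=6\), \(t=1/3\).
\item \((3,4)\): \(s=7\), \(d=14\), \(t=1/7\).
\item \((2,5)\): \(s=7\), \(d=14\), \(t=3/7\).
\end{itemize}
The interval \(I_t=[-1-t,1-t]\) from \eqref{eq:z-interval} then gives the stated endpoints. Take \(A=g^2(1-(z+t)^2)\) and \(B=-g^2t-g(g+d)z\) from \eqref{eq:A-z}--\eqref{eq:B-z}. Take \(h=z+cz^2\), so \(h'=1+2cz\) and \(h''=2c\).

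\emph{Expand \(\mathfrak N_\ell[h]\).} Compute \(E=\ell^2h^2+Ah'^2\) and \(E'\), then \(\mathfrak N_\ell[h]=(Ah''+Bh'+(d+1)\ell h)E-\tfrac12Ah'E'\). This is a polynomial in \(z\) of degree at most \(5\), and \(z\mathfrak N_\ell[h]\) has degree at most \(6\). Expanding with exact rational coefficients should give the displayed factored forms.

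A structural check explains the factor \(z^4\). The constant term of \(\mathfrak N_\ell[h]\) is \(g^2(1-t^2)\mu_\ell\) times \(h'(0)\)-factors. By \eqref{eq:mu-window}, this vanishes exactly when \(\ell\) is an endpoint of the admissible window, and indeed \(\ell=4\) (for \(d=6\)) and \(\ell=8\) (for \(d=14\)) are the endpoints \(\tfrac{d+1}{2}\pm\sqrt{D_{g,d}}\) in the borderline cases.

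The coefficient \(c\) (\(3/10\), \(7/54\), \(7/15\)) is chosen to kill the next order as well. So the first two Taylor coefficients of \(\mathfrak N_\ell[h]\) vanish, which leaves \(z\mathfrak N_\ell[h]=z^4\cdot(\text{quadratic})\). I would confirm this by checking the \(z^1\) and \(z^2\) coefficients symbolically and then reading off the remaining quadratic. This expansion is the main obstacle; it is routine but error-prone, and I would do it with exact symbolic arithmetic.

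\emph{Sign checks.} Each check is a quadratic (or linear) sign on a short interval.
\begin{itemize}
\item \((2,4)\): \(11z+30>0\) on \([-4/3,2/3]\), since \(11\cdot(-4/3)+30>0\).
\item \((3,4)\): \(p(z)=2548z^2-132237z-910926\). Its vertex lies near \(z\approx26\), far right of \(I_t\), so \(p\) is decreasing on \(I_t\). Its maximum is at \(z=-8/7\), where it is about \(3259+151128-910926<0\). Hence \(-p>0\).
\item \((2,5)\): \(588z^2-8015z-16550\). The vertex is far right, so the maximum is at \(z=-10/7\): \(1200+11450-16550<0\). Hence the sign is positive.
\end{itemize}

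\emph{Positivity of \(h/z\).} We need \(h/z=1+cz>0\) on \(I_t\), which amounts to \(1+c(-1-t)>0\):
\begin{itemize}
\item \((2,4)\): \(1-\tfrac3{10}\cdot\tfrac43=\tfrac35>0\).
\item \((3,4)\): \(1-\tfrac7{54}\cdot\tfrac87>0\).
\item \((2,5)\): \(1-\tfrac7{15}\cdot\tfrac{10}7=\tfrac13>0\).
\end{itemize}

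Finally, to apply \cref{prop:iso-master}, I would also note that \(E>0\) on \(I_t\). At interior points \(A>0\). At the endpoints \(h\neq0\), because \(1+cz\neq0\) and \(z\neq0\) there.
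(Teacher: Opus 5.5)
Your proposal is correct and follows the paper's proof. The paper substitutes $h=z(1+cz)$ exactly into \eqref{eq:N-h}, then checks signs on $I_t$: the linear and convex quadratic factors, where your vertex-location argument is equivalent to the paper's ``convex with both endpoint values negative'', and $h/z=1+cz$ at the left endpoint.

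Two slips in side remarks, neither of which affects the argument, since the identities come from direct expansion:
\begin{itemize}
\item Your heuristic for the factor $z^4$ is off by one. The constant term of $\mathfrak N_\ell[h]$ vanishes identically because $h(0)=0$ and $B(0)=\tfrac12A'(0)$. The endpoint condition $\mu_\ell=0$ then kills the linear coefficient, and the choice of $c$ kills the quadratic one. So three coefficients vanish and $\mathfrak N_\ell[h]=z^3\cdot(\text{quadratic})$, rather than two.
\item In the $(3,4)$ sign check, $2548\cdot(8/7)^2=3328$, not $3259$.
\end{itemize}
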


\begin{proof}
The three lower bounds for \(h/z\) are \(3/5\), \(23/27\), and
\(1/3\).  In \eqref{eq:24-certificate}, \(11z+30>0\) on \(I_t\).
The quadratics in parentheses in \eqref{eq:34-certificate} and
\eqref{eq:25-certificate} are convex, and their endpoint values are both
negative; hence they are negative on the entire interval.  The identities
follow by substitution into \eqref{eq:N-h}.
\end{proof}

\begin{proposition}[The \((g,m_1,m_2)=(4,1,7)\) correction]
\label{prop:17-correction}
For \(t=3/4\), \(I_t=[-7/4,1/4]\), \(\ell=10\), and
\(h=z(1+z/2)\),
\begin{equation}\label{eq:17-certificate}
 z\mathfrak N_{10}[h]=z^2P(z),
\end{equation}
where
\begin{equation}\label{eq:P-17}
 P(z)=189z^5+1778z^4+5492z^3+4848z^2-763z+42.
\end{equation}
The polynomial \(P\) is positive on \([-7/4,1/4]\).

\end{proposition}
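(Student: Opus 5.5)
Two things have to be checked: the identity \eqref{eq:17-certificate} and the positivity of \(P\) on \([-7/4,1/4]\).

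\textbf{The identity.} First I fix the data. For \((g,m_1,m_2)=(4,1,7)\) we have \(s=8\), \(t=3/4\), \(d=2s=16\). Then \eqref{eq:A-z} and \eqref{eq:B-z} give
\[
A(z)=16\bigl(1-(z+\tfrac34)^2\bigr),\qquad B(z)=-12-80z.
\]
With \(h=z+z^2/2\) we have \(h'=1+z\) and \(h''=1\). I substitute these, with \(\ell=10\) and \(d+1=17\), into \eqref{eq:E-h} to get \(E=100h^2+Ah'^2\). Then I compute \(\mathfrak N_{10}[h]=(Ah''+Bh'+170h)E-\tfrac12Ah'E'\) from \eqref{eq:N-h}. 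This is a polynomial of degree at most six.

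The factor structure should follow from the general theory. Since \(h\) and the last term of the expansion share the factor \(z\), I expect \(\mathfrak N_{10}[h]=zP(z)\); this would give \(z\mathfrak N_{10}[h]=z^2P\) with \(\deg P=5\). A consistency check on the constant term: \(P(0)=\mathfrak N'(0)\) should equal \(Q_{10}(0)\) computed from \eqref{eq:Q-ell}, because \(h\) and \(z\) agree to first order. Indeed \(g^2(1-t^2)\mu_{10}\) with \(\mu_{10}=170-100-64=6\) gives \(16\cdot\tfrac7{16}\cdot6=42\), which matches. The full coefficient comparison is a routine exact expansion, which I would do symbolically.

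\textbf{Positivity of \(P\).} This is the main obstacle, because \(P\) has the negative linear coefficient \(-763\) near \(z=0\). I split the interval.

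On \([0,1/4]\), the cubic terms and higher are nonnegative, so \(P\geq4848z^2-763z+42\). The discriminant of this quadratic is \(763^2-4\cdot4848\cdot42=582169-814464<0\), so it is positive.

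On \([-7/4,0]\), put \(u=-z\in[0,7/4]\). Then
\[
P=-189u^5+1778u^4-5492u^3+4848u^2+763u+42.
\]
On \([0,1]\) the sum \(4848u^2+763u+42\) dominates the negative terms: \(5492u^3\leq5492u^2\), which overshoots. So I would be more careful and group as \(u^2(4848-5492u+1778u^2)-189u^5\). The inner quadratic has discriminant \(5492^2-4\cdot1778\cdot4848<0\) (\(30.16\text{M}<34.48\text{M}\)), hence it is at least about \(4848-5492^2/(4\cdot1778)\approx607\). Therefore \(P\geq607u^2-189u^5+763u+42\). For \(u\leq7/4\) we have \(189u^5\leq189u^2(7/4)^3\approx1013u^2\), so this bound fails to close by itself. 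I would then add the linear and constant terms: \(763u+42\geq\) the deficit \(406u^2\) on \(u\leq1.9\)? At \(u=7/4\), \(406\cdot3.06\approx1244\) against \(763\cdot1.75+42\approx1377\), which is fine. Since \(406u^2-763u-42\) is convex, checking the endpoints suffices.

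If these estimates were too tight, I would fall back on Sturm sequences or interval subdivision with exact rational evaluation.
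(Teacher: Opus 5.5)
Your identity step is the paper's: direct substitution into \eqref{eq:N-h}. Carrying it out with \(A=7-24z-16z^2\) and \(B=-12-80z\) gives \(Ah''+Bh'+170h=-5+54z-11z^2\) and \(E=7-10z+43z^2+44z^3+9z^4\), and then exactly the stated \(P\).

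Your stated reason for the factor \(z\) is not right, though. The two terms of \(\mathfrak N_{10}[h]\) do not share that factor; at \(z=0\) they equal \(-35\) and \(35\). In general \(\mathfrak N_\ell[h](0)=A(0)h'(0)^3\bigl(B(0)-\tfrac12A'(0)\bigr)\), which vanishes because \(B(0)=-g^2t=\tfrac12A'(0)\), that is, because the level \(z=0\) is minimal. Your check \(P(0)=Q_{10}(0)=42\) is valid by \eqref{eq:boundary-ideal-class}.

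For positivity you take a genuinely different route. The paper subdivides \([-7/4,1/4]\) into five intervals and lists exact degree-five Bernstein coefficients, all positive (\cref{ex:Bernstein-17}), with a Sturm check in the supplement. Your argument has three pieces:
\begin{enumerate}[label=(\roman*)]
\item a split at \(0\);
\item the negative discriminant of \(4848z^2-763z+42\) on \([0,1/4]\);
\item on \([-7/4,0]\), the grouping \(P=u^2q(u)-189u^5+763u+42\), where \(q\) has negative discriminant, together with \(189u^5\leq189(7/4)^3u^2\) and a concave quadratic checked at the endpoints.
\end{enumerate}
This is correct and much quicker to verify by hand. The paper's certificate is more mechanical, involves no rounding, and is the same device the paper reuses and machine-checks elsewhere.

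Round in the safe direction, however. In fact \(\min q=539614/889\approx606.99<607\), so use \(q\geq606\). This gives \(P\geq-407u^2+763u+42\), which is still about \(131\) at \(u=7/4\), so the conclusion stands.
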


\begin{proof}
Substitution into \eqref{eq:N-h} gives \eqref{eq:17-certificate}.
The rational Bernstein certificate in \cref{ex:Bernstein-17} proves
\(P>0\) on the whole interval.  Also \(h/z\geq1/8\), and the
potential is \(r^{10}h=R(R^2+F/2)F\).
\end{proof}

\begin{proof}[Proof of \cref{thm:iso-polynomial-table}]
The rows with a radial factor follow from \cref{prop:radial-Q} and the calculations in
the preceding subsection.  The remaining rows follow from
\cref{prop:quadratic-corrections,prop:17-correction}.  In each case \(E>0\) on \(I_t\): if \(z\neq0\), then
\(h\neq0\) and \(E\geq\ell^2h^2>0\); at \(z=0\),
\(E(0)=A(0)h'(0)^2>0\).  Apply \cref{prop:iso-master}.
For every nonflat row the displayed numerator is nonzero, and both phases
are nonempty.  The normalized gradient is smooth away from the
vertex.  Hence \cref{cor:polynomial-strict-minimality} proves strict
minimality directly from the displayed polynomial potential.
\end{proof}

\subsubsection{Degenerate traces and quantitative strictness}

\begin{corollary}[Four polynomial potentials with vanishing Jacobi trace]
\label{cor:four-degenerate-certificates}
For the four polynomial potentials in \cref{thm:iso-polynomial-table}
corresponding to
\begin{equation}\label{eq:four-degenerate-list}
 (g,m_1,m_2)=(2,3,3),\ (2,2,4),\ (4,3,4),\ (4,2,5),
\end{equation}
one has \(n=d+1=4g-1\), \(\ell=2g\), and
\begin{equation}\label{eq:degenerate-trace-gap}
 \beta_G\equiv0,\qquad J_C(|\nabla G|^{-1})=0,\qquad
 \inf_{u\neq0}\frac{Q_C(u)}{\int_Cr^{-2}u^2}=\frac14.
\end{equation}
Their signed divergence satisfies, uniformly near the regular cone,
\begin{equation}\label{eq:cubic-distance-degeneracy}
 q_{X_G}(x)\asymp\frac{\dist(x,C)^3}{r^4}.
\end{equation}
In particular these original polynomial fields do not satisfy a positive
linear distance bound \eqref{eq:linear-distance-divergence}, although they
prove strict minimality and strict stability.
For every sufficiently small \(\eps>0\), the perturbed potential
\(G_\eps=r^{-\eps}G\) has boundary coefficient
\begin{equation}\label{eq:four-degenerate-strictification}
 \beta_{G_\eps}=\eps-\eps^2>0
\end{equation}
and satisfies the distance-weighted comparison of
\cref{prop:distance-weighted-comparison}.
\end{corollary}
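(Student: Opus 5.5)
The plan is to work entirely in the one-variable reduction of \cref{prop:iso-master}, where \(G=r^\ell h(z)\) and the divergence is \(r\diver X_G=\mathfrak N_\ell[h]/E^{3/2}\). Everything reduces to the order of vanishing of \(z\mathfrak N_\ell[h]\) at \(z=0\) and to how this changes when \(\ell\) is lowered to \(\ell-\eps\).

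\emph{Dimension and degree bookkeeping.} For \(g=2\) the list has \(s=6\), and for \(g=4\) it has \(s=7\). Since \(d=gs/2\), this gives \(d=6\) and \(d=14\) respectively, so \(d=4g-2\) and \(n=d+1=4g-1\). The potentials \(RF\) and \((R+\tfrac3{10}F)F\) have degree \(4\), and \((R^2+cF)F\) has degree \(8\), so \(\ell=2g\) in every case. Because \(d=4g-2\), \eqref{eq:iso-optimal-gap} gives \(D_{g,d}=1/4\). Then \eqref{eq:exact-Hardy-gap}, with the constant first eigenfunction of the smooth link, gives the exact Hardy quotient \(1/4\). Equivalently, \eqref{eq:mu-window} gives \(\mu_\ell=D-(\ell-\tfrac{d+1}2)^2=\tfrac14-\tfrac14=0\), so \(\ell\) is an endpoint degree.

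\emph{Vanishing trace and cubic degeneracy.} I will read off the expansions at \(z=0\).
\begin{itemize}
\item For \((2,3,3)\) one has \(t=0\) and \(s=6\), so \eqref{eq:P2} gives \(z\mathfrak N_4[z]=8z^2P_2=192z^4\).
\item For the three corrected cases, \cref{prop:quadratic-corrections} gives \(z\mathfrak N_\ell[h]=z^4p(z)\). Here \(p\) is strictly positive on \(I_t\), including at \(z=0\).
\end{itemize}
Hence \(\mathfrak N_\ell[h]=z^3p(z)\) with \(p>0\) on \(I_t\). By \eqref{eq:boundary-potential-algebraic}, \(V_G\) is the normal derivative on \(C\) of \(\diver X_G=\mathfrak N_\ell[h]/(rE^{3/2})\), which vanishes. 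Therefore \(\beta_G\equiv0\), and by \cref{thm:Jacobi-trace}, \(J_C(|\nabla G|^{-1})=0\).

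Near the cone, \(E\) is bounded above and below by positive constants. Moreover \(\theta_*\in(0,\pi/g)\) is interior, and the link \(\Sigma\) is smooth and compact. So \(z=-g\sin(g\theta_*)q+O(q^2)\) gives \(|z|\asymp\dist(x,C)/r\) uniformly on a conical neighbourhood. This yields \(q_{X_G}\asymp|z|^3/r\asymp\dist^3/r^4\), which is \eqref{eq:cubic-distance-degeneracy}. The linear bound \eqref{eq:linear-distance-divergence} then fails because \(\dist^3/r^4\ll\dist/r^2\) as \(\dist\to0\). Strict minimality and strict stability were already obtained from \cref{thm:iso-polynomial-table} and \cref{thm:ground-state-gap}, using \(\ell-n/2=1/2\).

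\emph{Perturbation.} The potential \(G_\eps=r^{\ell-\eps}h(z)\) has the same angular profile, and its boundary function is \(w_\eps=r^\eps w_0\), where \(w_0=r^{1-\ell}\phi\) and \(J_Cw_0=0\). Only the radial part of \(J_C\) changes. On an \(n\)-dimensional cone, \(\Delta_C(r^p\phi)\) has radial coefficient \(p(p+n-2)\). With \(p=1-2g+\eps\) and \(n-2=4g-3\), this coefficient is \((1-2g+\eps)(2g-2+\eps)\). Subtracting its value at \(\eps=0\) gives \(-\eps+\eps^2\), so \(\beta_{G_\eps}=\eps-\eps^2\), which is \eqref{eq:four-degenerate-strictification}.

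\emph{Global sign for small \(\eps\) (main obstacle).} It remains to show that \(G_\eps\) keeps the global sign and satisfies the distance bound; this uniformity is the step I expect to be the main obstacle. Since \(\mathfrak N_{\ell-\eps}[h]\) is polynomial in \(\eps\) and \(z\), I will write
\[
z\mathfrak N_{\ell-\eps}[h]=z^2\bigl(a_\eps+b_\eps z+c_\eps z^2\bigr)+z^4O(|z|+\eps).
\]
By \eqref{eq:boundary-potential-algebraic} together with the computed \(\beta\), one has \(a_\eps=\kappa(\eps-\eps^2)\) with \(\kappa>0\), coming from \(A(0)h'(0)^3>0\). Also \(b_\eps=O(\eps)\), and \(c_\eps\to c_0>0\).

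For \(|z|\le\delta\), the discriminant condition \(b_\eps^2<4a_\eps c_\eps\) holds for small \(\eps\), because \(b_\eps^2=O(\eps^2)\) while \(a_\eps\asymp\eps\). This gives \(z\mathfrak N_{\ell-\eps}\ge c\,\eps z^2\).

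For \(\delta\le|z|\) on the compact interval \(I_t\), including the focal endpoints where \(A\) vanishes but \(h\) remains smooth, \(z\mathfrak N_\ell[h]\ge c\delta^4>0\). An \(O(\eps)\) perturbation preserves this. The condition \(E>0\) is likewise stable under small \(\eps\).

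Combining the two regions gives \(q_{X_{G_\eps}}\gtrsim\eps|z|/r\asymp\eps\dist(x,C)/r^2\) near the cone, and a positive multiple of \(1/r\) away from it. Both are bounded below by a constant times \(\dist/r^2\). \cref{prop:distance-weighted-comparison} then applies.
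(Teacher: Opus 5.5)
Your proof is correct. Up to the perturbation step it follows the paper's proof. The bookkeeping \(d=4g-2\), \(\ell=2g\), \(\mu_\ell=0\), the factorization \(\mathfrak N_\ell[h]=z^3p(z)\) with \(p(0)>0\) taken from \eqref{eq:P2} and \cref{prop:quadratic-corrections}, and the comparison \(|z|\asymp\dist(x,C)/r\) are exactly what the paper uses.

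The difference is at the end. The paper verifies the hypotheses of \cref{prop:radial-strictification} (strict sign off \(C\), \(\beta_G\geq0\), \(\ell\neq n/2\), smooth compact link) and quotes it. You instead re-derive that proposition inside the one-variable reduction. Your computation of \(\beta_{G_\eps}\) by separating variables on the cone is the same as the appendix proof.

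For the global sign the two arguments diverge. You expand \(z\mathfrak N_{\ell-\eps}[h]=z^2(a_\eps+b_\eps z+c_\eps z^2+\cdots)\), identify \(a_\eps=A(0)h'(0)^3(\eps-\eps^2)\) from \eqref{eq:boundary-potential-algebraic}, and close with a discriminant estimate. The appendix argues more softly. The quotient of the divergence by signed spherical distance is \(\geq0\) near the link at \(\eps=0\), and its \(\eps\)-derivative is at least \(|\ell-n/2|\) on a fixed tube, so integrating in \(\eps\) gives the linear lower bound directly.

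The appendix route needs only nonnegativity near the link, and it applies to any smooth compact minimal link and any degree \(\ell\neq n/2\). Your route uses extra information specific to this reduction: \(c_0=p(0)>0\), the exact cubic vanishing order, and polynomial dependence on \(\ell\). In exchange it is self-contained and gives explicit constants.

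One detail should be stated when you invoke \cref{prop:distance-weighted-comparison}. The comparison hypotheses for \(G_\eps\) come from \cref{prop:iso-master} with \(\ell\) replaced by \(\ell-\eps\). The angular profile \(h\) is unchanged, so \(h/z>0\) still holds, and you have already checked \(E>0\) and the sign.
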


\begin{proof}
Along the minimal link the gradient length \(|\nabla G|\) is constant,
since \(h'(0)=1\) in each case and \(A(0)=g^2(1-t^2)\).
Here \(\mu_\ell=0\) in \eqref{eq:mu-window}, so the boundary coefficient
vanishes.  Alternatively this follows from the absence of a linear term in
\(\mathfrak N_\ell[h]\) in the displayed factorizations.
The gap \(1/4\) follows both as a lower bound from
\cref{thm:ground-state-gap}, since \(\ell-n/2=1/2\), and as the optimal
constant from \cref{prop:iso-stability}.
For \((2,3,3)\), equations \eqref{eq:P2} and \eqref{eq:N-z-Q} give
\(\mathfrak N_4[z]=192z^3\).  For the other three potentials,
\eqref{eq:24-certificate}--\eqref{eq:25-certificate} give
\(\mathfrak N_\ell[h]=z^3p(z)\) with \(p>0\) near zero and
\(E(0)>0\).  The spherical function \(z\) is a regular defining
function of the compact minimal link.  Thus \(|z|\asymp\dist(x,C)/r\),
and \eqref{eq:iso-divergence} proves
\eqref{eq:cubic-distance-degeneracy}.
The same factorizations give strict signed divergence everywhere off the
cone.  All hypotheses of \cref{prop:radial-strictification} hold, yielding
\eqref{eq:four-degenerate-strictification} and the claimed distance bound.
\end{proof}

Niu \cite[Lemma~5.3]{Niu26} constructs quantitative subcalibrations from
the Hardt--Simon foliation under strict stability and strict minimality.
Here the distance weight is obtained from the explicit potentials, with
a radial correction in the four cases of
\cref{cor:four-degenerate-certificates}.

\begin{corollary}[Distance-weighted comparison for the nonflat minimizing cases]
\label{cor:iso-distance-weighted}
Every nonflat cone in \cref{thm:iso-polynomial-table} admits a homogeneous
potential, smooth and semialgebraic on \(\R^N\setminus\{0\}\), whose
normalized gradient satisfies \eqref{eq:linear-distance-divergence}.
Except for \eqref{eq:four-degenerate-list}, the polynomial potential in the
table itself has this property.  In the four remaining cases, a sufficiently
small positive rational radial perturbation has it.
\end{corollary}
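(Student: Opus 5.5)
The plan is to verify \eqref{eq:linear-distance-divergence} case by case, using the one-variable reduction of \cref{prop:iso-master}. For a homogeneous potential \(G=r^\ell h(z)\), \eqref{eq:iso-divergence} gives
\[
 q_{X_G}=\frac{|\mathfrak N_\ell[h](z)|}{r\,E(z)^{3/2}}.
\]
The spherical function \(z\) is a regular defining function of the compact minimal link, because \(A(0)=g^2(1-t^2)>0\). Hence \(|z|\asymp\dist(x,C)/r\) uniformly on the sphere, first near the link and then globally by compactness. The target bound \(q\geq c\,\dist(x,C)/r^2\) is therefore equivalent to
\[
 |\mathfrak N_\ell[h](z)|\geq c'|z|E(z)^{3/2}\quad\text{on }I_t .
\]
Since \(E>0\) is continuous on the compact interval \(I_t\), this reduces to showing that \(\mathfrak N_\ell[h]/z\) has no zero on \(I_t\). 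Away from \(z=0\), strict positivity is already supplied by the proof of \cref{thm:iso-polynomial-table}. The only remaining question is the value of \(\mathfrak N_\ell[h]/z\) at \(z=0\), that is, the coefficient of \(z\) in \(\mathfrak N_\ell[h]\).

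For the rows with \(h=z\), \cref{prop:radial-Q} gives \(\mathfrak N_\ell[z]/z=Q_\ell(z)\), with \(Q_\ell(0)=g^2(1-t^2)\mu_\ell\). Using \eqref{eq:mu-window} with \(\ell=2g\) and \(n=d+1\), we get \(\mu_{2g}=D_{g,d}-(2g-\tfrac{d+1}2)^2\). This is positive unless \(d=4g-2\). In that case \(D_{g,d}=1/4=(2g-\tfrac{d+1}{2})^2\), which is exactly the \((2,3,3)\) case. For \(g=3\), \eqref{eq:g3-Q} has positive constant term for \(m=4,8\). The strict positivity of \(Q_\ell\) on all of \(I_t\) should come from the discriminant computations: \(K_2>0\), \(K_4>0\), and the positive endpoint values. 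I will recheck that these inequalities are strict in the non-borderline cases (\(s\geq7\) for \(g=2\); \(s\geq8\) with the stated \(m_1\) for \(g=4\)).

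For \((4,1,7)\), \eqref{eq:17-certificate} gives \(\mathfrak N_{10}[h]/z=zP(z)/z\cdot\), that is, \(\mathfrak N_{10}[h]=zP(z)\). Since \(P(0)=42>0\) and \(P>0\) on \(I_t\), the bound follows. The quadratic corrections \((2,2,4),(4,3,4),(4,2,5)\) have \(\mathfrak N=z^3p\) and are exactly the degenerate list. I will also check that the radial rows with \(s=8\) for \(g=4\) (so \(d=16\neq14\)) and with \(s\geq7\) for \(g=2\) are nondegenerate, and confirm that the degenerate set is precisely \eqref{eq:four-degenerate-list}.

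For the four degenerate cases, I will invoke \cref{cor:four-degenerate-certificates}. It states that for small \(\eps>0\) the potential \(G_\eps=r^{-\eps}G\) retains the subcalibration sign and satisfies \cref{prop:distance-weighted-comparison}. Choosing \(\eps\) rational makes \(G_\eps\) semialgebraic and smooth off the origin. The main obstacle is hidden in that corollary: the radial strictification must keep a global sign while creating the linear term \(\eps-\eps^2\) at \(z=0\). Because the corollary is stated earlier, it can be taken as given. The only remaining check is that \(\nabla G_\eps\neq0\) for small \(\eps\), which follows by continuity from \(E>0\).
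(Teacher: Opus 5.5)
Your proposal is correct and follows essentially the same route as the paper. In the nondegenerate rows you show \(\mathfrak N_\ell[h]=z\,p(z)\) with \(p>0\) on \(I_t\), using the strict versions of the \(K_2\), \(K_4\), endpoint and \(g=3\) checks together with \(P>0\) for \((4,1,7)\); you combine this with \(E>0\) and \(|z|\asymp\dist(x,C)/r\) via \eqref{eq:iso-divergence}; and in the four degenerate cases you apply \cref{cor:four-degenerate-certificates} with a small positive rational \(\eps\), so that \(r^{-\eps}G\) is semialgebraic. One wording fix: for the radial rows, the proof of \cref{thm:iso-polynomial-table} only gives \(Q_\ell\geq0\). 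Strictness away from \(z=0\) is therefore not "already supplied" there; it comes from the strictness recheck you rightly plan, exactly as in the paper.
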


\begin{proof}
The estimates \eqref{eq:K2-entry} and \eqref{eq:K4-entry} are
strict in their indicated ranges, as are their low-dimensional checks and
endpoint values.  Thus \(Q_\ell>0\) on the entire realizable interval in
all these nondegenerate rows; \eqref{eq:g3-Q} is also strictly positive.
The remaining nondegenerate correction has \(P>0\) by
\cref{prop:17-correction}.
In these cases \(\mathfrak N_\ell[h]=z p(z)\) with \(p>0\) on the
compact interval and \(E>0\).  Near the link \(|z|\) is comparable to
spherical distance, and away from it compactness supplies a positive lower
bound.  Equation \eqref{eq:iso-divergence} and homogeneity prove
\eqref{eq:linear-distance-divergence}.
Apply \cref{cor:four-degenerate-certificates} to the other four cases,
choosing a sufficiently small positive rational \(\eps\). The factor
\(r^{-\eps}=(|x|^2)^{-\eps/2}\), and hence \(G_\eps\), is
semialgebraic on \(\R^N\setminus\{0\}\); see
\cref{prop:radial-strictification}.
\end{proof}

\subsubsection{The endpoint mechanism and the two classical exceptions}

At the critical dimension \(N=4g\), one has \(d=4g-2\).  The natural
endpoint degree is \(\ell=2g\), for which \(\mu_\ell=0\).  Consider the
first transverse correction
\begin{equation}\label{eq:critical-h}
 h=z(1+\lambda z).
\end{equation}

\begin{proposition}[Forced critical correction]
\label{prop:forced-critical-lambda}
For \(t>0\), a two-sided sign near \(z=0\) forces
\begin{equation}\label{eq:lambda-star}
 \lambda=\lambda_*=
 \frac{2gt}{(2g+1)(1-t^2)}.
\end{equation}
For this correction to preserve the sign of \(z\) on all of \(I_t\), it is
necessary and sufficient that
\begin{equation}\label{eq:t-threshold}
 t<\frac{2g+1}{4g+1}.
\end{equation}
\end{proposition}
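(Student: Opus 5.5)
The plan is to expand \(\mathfrak N_{2g}[h]\) in powers of \(z\) at the critical dimension and read off the coefficients. Fix \(d=4g-2\) and \(\ell=2g\), so that \(\mu_\ell=0\) by \eqref{eq:mu-window}, since \(D_{g,d}=1/4\) and \(\ell-(d+1)/2=1/2\). Insert \(h=z+\lambda z^2\) into \eqref{eq:N-h}, with \(A,B\) given by \eqref{eq:A-z}--\eqref{eq:B-z}, and write
\[
 \mathfrak N_{2g}[h]=c_0+c_1z+c_2z^2+O(z^3).
\]

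First, \(c_0=0\) because \(h(0)=0\). Next, \(c_1=0\) for every \(\lambda\). One way to see this is that the linear coefficient is the Jacobi trace \eqref{eq:boundary-potential-algebraic}. That trace depends only on \(a|_C=h'(0)=1\), by \eqref{eq:boundary-ideal-class}, so it agrees with its value for \(h=z\). For \(h=z\) it equals \(g^2(1-t^2)\mu_\ell=0\) by \cref{prop:radial-Q}.

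Consequently \(z\mathfrak N_{2g}[h]=c_2z^3+O(z^4)\). A two-sided sign near \(z=0\) therefore forces \(c_2=0\). The coefficient \(c_2\) is affine in \(\lambda\), so this condition determines \(\lambda\) uniquely.

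Computing \(c_2\) is the main obstacle, though it is only bookkeeping. The contributions are as follows.
\begin{itemize}[label=\textbullet]
 \item For \(\lambda=0\), \(c_2\) is the linear coefficient of \(Q_\ell\), namely \(g^2t(\ell^2-2\nu_\ell)\). With \(\ell=2g\) and \(\nu_\ell=(d+1)\ell-gd=2g^2\), this equals \(g^2t(4g^2-4g^2)\cdot(\text{sign check})\). More carefully, it should reduce to a multiple \(-\kappa\,g^2t\) with \(\kappa\neq0\).
 \item The \(\lambda\)-dependent part comes from \(Ah''=2\lambda A(0)+\dots\) and from the \(\lambda\)-terms in \(Bh'\), \((d+1)\ell h\), \(E\) and \(E'\). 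These are multiplied against \(E(0)=A(0)=g^2(1-t^2)\), minus \(\tfrac12A(0)h'(0)E'(0)\).
\end{itemize}
I expect the \(\lambda\)-coefficient to come out proportional to \(g^4(1-t^2)^2(2g+1)\), and the \(\lambda\)-free part proportional to \(g^5t(1-t^2)\). Solving \(c_2=0\) should then give \eqref{eq:lambda-star}. I would verify this with the same symbolic substitution used for \cref{prop:quadratic-corrections}. As consistency checks, the case \((g,m_1,m_2)=(2,2,4)\) with \(t=1/3\) should give \(\lambda=3/10\), and the case \((4,3,4)\) with \(t=1/7\) should give \(7/54\).

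For the second claim, preserving the sign of \(z\) means \(h/z=1+\lambda_*z>0\) on \(I_t=[-1-t,1-t]\). Since \(t>0\), we have \(\lambda_*>0\), so the binding constraint is at the left endpoint: \(1-\lambda_*(1+t)>0\). Substituting \(\lambda_*\) and cancelling \(1+t>0\), this becomes \(2gt<(2g+1)(1-t)\), that is, \(t<(2g+1)/(4g+1)\). Conversely, if this strict inequality fails, \(h/z\) vanishes or changes sign inside \(I_t\), since that endpoint is realized. Hence the condition is both necessary and sufficient, giving \eqref{eq:t-threshold}.
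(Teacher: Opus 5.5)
Your approach matches the paper's proof. You substitute \(h=z(1+\lambda z)\) at \(d=4g-2\), \(\ell=2g\), observe that the \(z^0\) and \(z^1\) coefficients of \(\mathfrak N_{2g}[h]\) vanish, force the \(z^3\) coefficient of \(z\mathfrak N_{2g}[h]\) to vanish, and then test \(h/z\) at \(z=-1-t\). Your derivation of \eqref{eq:t-threshold} is the paper's argument. Your reason for \(c_1=0\) fills in a step the paper leaves implicit. Since \(\ell=2g\), the multiplier is \(a=r^g(1+\lambda z)=r^g+\lambda F\), so \eqref{eq:boundary-ideal-class} together with \(\mu_{2g}=0\) gives \(c_1=0\) for every \(\lambda\). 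A small correction on \(c_0\): one has \(\mathfrak N(0)=A(0)h'(0)^3\bigl(B(0)-\tfrac12A'(0)\bigr)\). This vanishes because the level \(z=0\) is minimal, not merely because \(h(0)=0\).

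The gap is that the first assertion, the value \eqref{eq:lambda-star}, is never derived. You do not compute \(c_2\), the one piece you do compute is wrong, and the affinity of \(c_2\) in \(\lambda\) is only asserted.
\begin{itemize}
\item With \(d=4g-2\) one has \(\nu_{2g}=(4g-1)\cdot 2g-g(4g-2)=4g^2\), not \(2g^2\). So by \cref{prop:radial-Q} the \(\lambda\)-free part of \(c_2\) is \(g^2t(\ell^2-2\nu_{2g})=-4g^4t\). Your value would make it zero and force \(\lambda_*=0\).
\item The \(\lambda\)-coefficient is \(2g^3(2g+1)(1-t^2)\). Both of your predicted forms are off by a factor \(g(1-t^2)\); only their ratio happens to be right.
\item The \(\lambda^2\) and \(\lambda^3\) terms do cancel, but this needs an argument. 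In \eqref{eq:exact-multiplier} with \(a=r^g+\lambda F\), the quantity \(T^2-SV\) is independent of \(\lambda\). The \(\lambda\)-linear part of the second-line bracket and the \(\lambda^2\)-part of the fourth-line bracket both equal \(3\cL(F)=3F\lambda_F\). Hence every \(\lambda^{\geq2}\) term of \(\cK_F(a)\) is \(O(F^2)\) and does not affect \(c_2\).
\end{itemize}
Carrying out the expansion gives
\[
 c_2=2g^3\bigl[(2g+1)(1-t^2)\lambda-2gt\bigr],
\]
which is the identity in the paper's proof and yields \eqref{eq:lambda-star}. Your checks \(\lambda=3/10\) and \(\lambda=7/54\) agree with this formula. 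However, two sample values cannot establish a formula in \((g,t)\).
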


\begin{proof}
At the critical degree, \(d=4g-2\) and \(\ell=2g\).
Substitution in \eqref{eq:N-h} gives
\[
 z\mathfrak N_{2g}[z(1+\lambda z)]
 =2g^3\bigl[(2g+1)(1-t^2)\lambda-2gt\bigr]z^3+O(z^4).
\]
The lower-order terms vanish. Nonnegativity for both signs of \(z\)
therefore forces the cubic coefficient to vanish, which gives
\eqref{eq:lambda-star}. Since
\(\lambda_*>0\), the only possible sign loss for \(h/z=1+\lambda_*z\)
is at \(z=-1-t\).  The inequality
\(1-\lambda_*(1+t)>0\) is exactly \eqref{eq:t-threshold}.
\end{proof}

For \(g=2,s=6\), condition \eqref{eq:t-threshold} accepts
\((3,3)\) and \((2,4)\), but rejects \((1,5)\).  For \(g=4,s=7\), it
accepts \((3,4)\) and \((2,5)\), but rejects \((1,6)\).
The quadratic correction \(h=z+\lambda z^2\) therefore fails
precisely for the two stable non-minimizing members of the classical
list.  Failure of this candidate family alone is not a proof of
non-minimality.

Both exceptional cones possess positive Jacobi fields.  Indeed the radial
Jacobi equation is
\begin{equation}\label{eq:iso-radial-Jacobi}
 \gamma(\gamma+d-1)+d(g-1)=0,
\qquad
 \gamma_\pm=-\frac{d-1}{2}\pm\sqrt{D_{g,d}}.
\end{equation}
For \((g,m_1,m_2)=(2,1,5)\), the fields are \(r^{-2}\) and \(r^{-3}\); for
\((4,1,6)\), they are \(r^{-6}\) and \(r^{-7}\). These fields give
stability. Their non-minimality is supplied by the classical results
cited after \cref{thm:classical-iso-classification}. Together with
the instability test in \cref{prop:iso-stability}, the explicit
subcalibrations above therefore recover the full area-minimizing
classification.

\begin{corollary}[Lin's one-sided strict minimality]
\label{cor:Lin-one-sided}
Let \(x\in\R^2\), \(y\in\R^6\), and set
\[
 E=\{|y|^2<5|x|^2\},\qquad C=\partial\llbracket E\rrbracket,
 \qquad C_1=C\llcorner B_1.
\]
The cone over
\(S^1(\sqrt{1/6})\times S^5(\sqrt{5/6})\) is one-sided strictly
area minimizing in \(\overline E\), recovering \cite[p.~210]{Lin87}.
More precisely, it minimizes against finite-perimeter changes
\(H\subset E\) of compact support.  There is \(\Theta>0\) such that
every finite-mass integral current \(T\) with
\[
 \partial T=\partial C_1,\qquad
 \operatorname{spt}T\subset\overline E,\qquad
 \operatorname{spt}T\cap B_\eps=\varnothing
\]
satisfies \(\mathbf M(T)-\mathbf M(C_1)\geq\Theta\eps^7\)
for \(0<\eps<1\).
\end{corollary}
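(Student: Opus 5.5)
The plan is to use the one-sided comparison of \cref{thm:paired-subcalibration}\textup{(i)}, with a single field \(X_-\) built on the closed phase \(\overline E\) from the one-variable reduction of \cref{prop:iso-master}. The cone over \(S^1(\sqrt{1/6})\times S^5(\sqrt{5/6})\) is the minimal member with \(g=2\), \((m_1,m_2)=(1,5)\). Thus \(s=6\), \(t=2/3\), \(d=6\), \(n=7\) and \(I_t=[-5/3,1/3]\). Up to a positive constant, \(F=\Phi-tr^2\) is \(\pm(5|x|^2-|y|^2)\); the first step fixes the sign \(\sigma\in\{\pm1\}\) for which \(E\) corresponds to \(\{\sigma z>0\}\). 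Orient the potential so that the exterior normal of \(E\) is \(\nabla G/|\nabla G|\). Only the sub-interval \(I_t^E=I_t\cap\{\sigma z\geq0\}\) then matters, and the phase condition on \(E\) becomes
\[
 z\,\mathfrak N_\ell[h](z)\geq0\quad\text{on }I_t^E,\qquad h/z>0 .
\]
\Cref{prop:forced-critical-lambda} shows that no two-sided quadratic correction exists here. Dropping the condition on the complementary half-interval is exactly what should make the construction possible.

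First I would try the radial profile \(h=z\) with a degree \(\ell\) in the interior of the admissible window \eqref{eq:degree-window}. Here \(D_{2,6}=1/4\), so the window is \(\ell\in[3,4]\). The natural choice is \(\ell=7/2\), where \(\mu_\ell=1/4>0\) by \eqref{eq:mu-window}. Then \(\mathfrak N_\ell[z]=zQ_\ell(z)\) with the explicit quadratic \eqref{eq:Q-ell}. Its constant term \(g^2(1-t^2)\mu_\ell\) is positive, so \(Q_\ell>0\) near \(z=0\). It remains to check the sign of this convex or concave quadratic at the endpoint of \(I_t^E\) and, if needed, at its vertex.

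This elementary quadratic check is the main obstacle. If \(\ell=7/2\) fails on the relevant half-interval, I would let \(\ell\in(3,4)\) vary, and then add a one-sided correction \(h=z(1+\lambda z)\) with \(\lambda\) free. Since only one sign of \(z\) is constrained, \(\lambda\) is no longer forced to equal \(\lambda_*\), and the sign condition on \(h/z\) only needs to hold on \(I_t^E\). Because \(G=r^\ell h(z)\) need not be polynomial, the resulting numerator may be a non-polynomial expression in \(z\) and \(\ell\); I would still verify it by a Bernstein certificate on the half-interval. I also need \(E>0\) on \(I_t^E\), which follows as in the proof of \cref{thm:iso-polynomial-table}. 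The final check is that \(\mathfrak N_\ell[h]\not\equiv0\) there, so that the weight \(q_-\) is positive almost everywhere in \(E\).

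With \(X_-=\nabla G/|\nabla G|\) defined on a conical neighborhood of \(\overline E\setminus\{0\}\), \cref{rem:phase-local-fields} extends it to all of \(\R^8\setminus\{0\}\). \Cref{thm:paired-subcalibration}\textup{(i)} then gives inward minimality for finite-perimeter changes \(H\subset E\), with remainder \(\int_{E\setminus H}q_-\). By homogeneity, \(\int_{B_\eps\cap E}q_-=\eps^7A_-\) with \(A_->0\).

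For integral currents, I would follow the proof of \cref{thm:strict-minimality-flux}. The radial projection \(\pi\) and the cone construction preserve \(\overline E\), because \(\overline E\) is conical and star-shaped. Hence \(R=\llbracket k\rrbracket\) has \(k=0\) almost everywhere outside \(E\), and \(v=\one_E+k\) vanishes outside \(E\). Consequently \(H=\{v\geq1\}\subset E\). In \(B_\eps\), the constancy theorem gives \(v\equiv m\); since \(v=0\) on the nonempty open set \(B_\eps\setminus\overline E\), we get \(m=0\). Thus \(H\) empties \(B_\eps\cap E\), and the one-sided estimate yields \(\mathbf M(T)-\mathbf M(C_1)\geq A_-\eps^7\). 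This gives the claim with \(\Theta=A_-\).
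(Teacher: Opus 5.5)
Your plan is correct and reaches the result by a different choice of potential, and the step you flag as the main obstacle closes at once. Since \(F=\Phi-\tfrac23r^2=\tfrac13(|y|^2-5|x|^2)\), one has \(E=\{z<0\}\). At \(\ell=7/2\), \eqref{eq:Q-ell} gives
\[
 Q_{7/2}(z)=\tfrac59-34z+\tfrac{825}{8}z^2 .
\]
Every term is nonnegative for \(z\leq0\) and the constant term is positive, so \(\mathfrak N_{7/2}[z]=zQ_{7/2}<0\) throughout \(E\), and no fallback or Bernstein certificate is needed. The same holds for every \(\ell\in[3,4]\), because \(\ell^2-2\nu_\ell=\ell^2-14\ell+24<0\) there. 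Moreover \(E(z)>0\) on all of \(I_t\), so your field is defined on \(\R^8\setminus\{0\}\) and the cutoff of \cref{rem:phase-local-fields} is unnecessary.

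The paper instead takes the endpoint \(\ell=4\), that is, the polynomial potential \(G=RF\) with \(F=|y|^2-5|x|^2\). For it, \(\cL(G)=64RF^2(|y|^2-17|x|^2)\), which is negative on \(E\) because \(|y|^2<5|x|^2\) forces \(|y|^2-17|x|^2<0\). The paper's route buys a polynomial potential and a one-line factorization. Your \(R^{3/4}F\) is only semialgebraic, but its boundary coefficient \(\mu_{7/2}=1/4\) is positive. Hence your weight \(q_-\) vanishes linearly at the cone on the inner side, whereas for \(RF\) it vanishes quadratically; this is more than the corollary requires.

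For the current step, you obtain \(H\subset E\) from the support of the cone filling lying in the closed cone \(\overline E\), using that \(\partial E\) is Lebesgue-null. The paper instead observes that the multiplicity function \(\one_E+k\) has zero distributional gradient on the connected open set \(\R^8\setminus\overline E\) and vanishes there outside a compact set. Both arguments are valid and lead to the same ball-emptying competitor and the same mass gap.
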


\begin{proof}
Put \(u=|x|^2\), \(v=|y|^2\), \(R=u+v\), and \(F=v-5u\).
Take the radial potential \(G=RF\).  For functions of \(u,v\),
\[
 \Delta P=4uP_{uu}+4vP_{vv}+4P_u+12P_v,\qquad
 \Gamma(P,Q)=4uP_uQ_u+4vP_vQ_v.
\]
Hence
\begin{align*}
 \Delta G&=16(v-8u),\\
 |\nabla G|^2&=16(25u^3+24u^2v+v^3),\\
 \cL(G)&=64R(v-5u)^2(v-17u).
\end{align*}
The last identity follows by inserting the first two in
\(\cL(G)=|\nabla G|^2\Delta G-
\tfrac12\Gamma(G,|\nabla G|^2)\), the same radial calculation as
\cref{prop:radial-Q}.
Thus \(X=\nabla G/|\nabla G|\) is smooth off the origin,
equals \(\nu_E\) on the regular cone, and satisfies
\(\diver X<0\) throughout \(E\).
Since \(\cH^{N-1}(\{0\})=0\), the one-sided part of
\cref{thm:paired-subcalibration} gives
\[
 \Per(H;U)-\Per(E;U)
 \geq\int_{E\setminus H}-\diver X\qquad(H\subset E).
\]

Set \(\Theta=\int_{E\cap B_1}-\diver X>0\).
This integral is finite, and homogeneity makes its value over
\(E\cap B_\eps\) equal to \(\Theta\eps^7\).
To pass to currents, use radial projection and an integer-valued BV
filling as in the proof of \cref{thm:strict-minimality-flux}.
The resulting function \(v\) has zero distributional gradient in
the connected open set \(\R^8\setminus\overline E\) and vanishes
there outside a compact set, so it vanishes throughout that set.
It is also constant in \(B_\eps\). Since it vanishes on the
nonempty open set \(B_\eps\setminus\overline E\), this constant
is zero. Hence \(H=\{v\geq1\}\) satisfies \(H\subset E\) and
\(H\cap B_\eps=\varnothing\), up to null sets. Thus \(E\setminus H\)
contains \(E\cap B_\eps\), and the displayed one-sided comparison
proves the claimed mass gap.
\end{proof}

\section{Two quadratic generators and bihomogeneous hypercones}
\label{sec:two-quadratic}

Two independent quadratic generators, one of them $R=|z|^2$,
reduce the search for defining equations to product spheres.
We first derive this reduction, then determine minimal levels in
explicit families and prove their minimizing ranges. The closure
conditions thus select geometric candidates for the multiplier
calculations that follow.

\subsection{Reduction to two orthogonal blocks}

The reduction below uses quadratic Jordan closure
\cite[Proposition~37(b)]{MendesRadeschi}
and the Euler-operator bigrading argument of
\cite[Proposition~18(a)]{MendesRadeschiQuadratic}.
With only one higher-degree generator, these yield a separately
harmonic normal form; closure then imposes the common coefficients
and dimension relation stated below.

\begin{proposition}\label{prop:two-block-normal}
Let $\cA=\R[R,S,F]$ be a graded Laplacian algebra with independent
quadratic generators $R,S$ and a homogeneous generator $F$ of degree
$e>2$, with $F\notin\R[R,S]$. After an orthogonal
change of coordinates and a linear change of $R,S$, its quadratic
part is generated by $u=|x|^2$, $v=|y|^2$, where
$x\in\R^p$, $y\in\R^q$. There is a radial polynomial $H(u,v)$
such that $P=F-H$ is bihomogeneous of some bidegree $(a,b)$,
$a+b=e$, and $\Delta_xP=\Delta_yP=0$.

Suppose that $p,q\geq2$ and both bidegrees are positive. Then closure
is equivalent to identities
\begin{align}
 |\nabla_xP|^2&=a^2\bigl(Ku^{a-1}v^b+
               Lu^{a/2-1}v^{b/2}P\bigr),\label{eq:two-block-x}\\
 |\nabla_yP|^2&=b^2\bigl(Ku^av^{b-1}+
               Lu^{a/2}v^{b/2-1}P\bigr),\label{eq:two-block-y}
\end{align}
for constants $K>0$ and $L\in\R$. The terms involving $L$ are
omitted unless $a,b$ are both even. Necessarily
\begin{equation}\label{eq:two-block-dimensions}
 \frac{p-2}{a}=\frac{q-2}{b}.
\end{equation}
In particular, $P$ restricts to an isoparametric function on
$\Sph^{p-1}\times\Sph^{q-1}$.
\end{proposition}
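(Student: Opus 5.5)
We first normalize the quadratic part. Write \(S(z)=\langle Bz,z\rangle\) with \(B\) symmetric. Closure gives \(\Gamma(S,S)=4|Bz|^2=4\langle B^2z,z\rangle\in\cA\). Since \(\cA\) is graded, this quadratic lies in \(\operatorname{span}\{R,S\}\), so \(B^2=\alpha I+\beta B\). Thus \(B\) has at most two eigenvalues, and exactly two because \(S\) is independent of \(R\). The eigenspaces give the orthogonal splitting \(\R^p\oplus\R^q\), and a linear change of \(R,S\) replaces them by \(u=|x|^2\) and \(v=|y|^2\); this is \cite[Proposition~37(b)]{MendesRadeschi}.

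Next we bigrade \(F\). The Euler operators satisfy \(E_x=\tfrac12\Gamma(u,\cdot)\) and \(E_y=\tfrac12\Gamma(v,\cdot)\), so they preserve \(\cA\) and act diagonally on the bidegree decomposition of \(F\). Following \cite[Proposition~18(a)]{MendesRadeschiQuadratic}, every bihomogeneous component of \(F\) then lies in \(\cA\). Since \(\cA\) has only one generator of degree \(e\) outside \(\R[u,v]\), modulo \(\R[u,v]\) exactly one bidegree \((a,b)\) survives. Among the components of that bidegree, we use the bi-harmonic (Fischer) decomposition \(P_0=\sum u^iv^jP_{ij}\), with \(P_{ij}\) separately harmonic. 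The operators \(\Delta_x=\tfrac12[\Delta,\ldots]\) type combinations, namely \(\Delta_x\) obtained from \(\Delta\), \(E_x\), and multiplication by \(u\) on bihomogeneous elements, preserve \(\cA\). Applying them repeatedly extracts each \(P_{ij}\) in \(\cA\) up to radial terms, and uniqueness of the new generator forces a single harmonic piece. Subtracting the radial polynomial \(H\) leaves \(P\) with \(\Delta_xP=\Delta_yP=0\).

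For the closure equivalence, write \(\Gamma(P,P)=|\nabla_xP|^2+|\nabla_yP|^2\in\cA\) in degree \(2e-2\). Applying \(E_x\) and \(E_y\) separates this into two bihomogeneous pieces of bidegrees \((2a-2,2b)\) and \((2a,2b-2)\), each in \(\cA\) and hence a polynomial in \(u,v,P\). By bidegree counting, the only possible monomials are \(u^{a-1}v^b\) and \(u^{a/2-1}v^{b/2}P\) (the latter only when \(a,b\) are even, since \(P^2\) has the wrong bidegree), and similarly for \(y\). This gives \eqref{eq:two-block-x} and \eqref{eq:two-block-y} with a priori independent constants \(K_x,L_x,K_y,L_y\). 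Conversely, these identities together with \(\Delta P=0\) and the standard closure for \(u,v\) give closure by \cite[Proposition~37(a)]{MendesRadeschi}.

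To identify the constants, we use \(\Delta_x(P^2)=2|\nabla_xP|^2\). Integrating \(|\nabla_xP|^2\) over \(\Sph^{p-1}\) against the harmonic \(P\) compares spherical-harmonic energies, \(\int|\nabla_{\Sph}P|^2=a(a+p-2)\int P^2\). Averaging over both spheres relates \(K_x\) and \(K_y\) to \(\int P^2\). The matching \(K_x/a^2=K_y/b^2\) should come from applying \(\Gamma(u,\cdot)\) and \(\Gamma(v,\cdot)\) to \(\Gamma(P,P)\) and comparing mixed terms. \textbf{This is the step I expect to be the main obstacle.} The cleanest route is probably the commutation of \(\Delta\) with \(\Gamma(P,P)\) in the \(P\)-coefficient, using the Bochner-type identity \(\Delta|\nabla_xP|^2=2|\Hess_xP|^2\) restricted to the sphere; this also yields \(L_x/a^2=L_y/b^2\). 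The dimension relation \eqref{eq:two-block-dimensions} then follows by computing \(\Delta\) of both sides of \eqref{eq:two-block-x} and comparing the coefficients of \(u^{a-2}v^b\) with those from \eqref{eq:two-block-y}. Finally, on \(\Sph^{p-1}\times\Sph^{q-1}\) (\(u=v=1\)), the identities give \(|\nabla P|^2\) and \(\Delta P\) as functions of \(P\), which is the isoparametric condition.
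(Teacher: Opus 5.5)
You follow the paper up to obtaining \eqref{eq:two-block-x}--\eqref{eq:two-block-y} with a priori independent constants \(K_x,L_x,K_y,L_y\), and your converse is also the paper's. The separately harmonic step is cleaner than your Fischer extraction: \(\Delta_xP_0\) and \(\Delta_yP_0\) are bihomogeneous elements of \(\cA_{e-2}=\R[u,v]_{e-2}\). So one multiple of \(u^{a/2}v^{b/2}\) removes both, and \(\Delta_x\Delta_y=\Delta_y\Delta_x\) makes the two required multiples agree.

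The genuine gap is the one you flag. You never prove \(K_x=K_y\), \(L_x=L_y\), or \(K>0\), and the routes you suggest cannot give this. The operators \(\Gamma(u,\cdot)\) and \(\Gamma(v,\cdot)\) act on bihomogeneous pieces by the scalars \(2\deg_x\) and \(2\deg_y\), so they produce no relation. The natural concrete form of your commutation route uses separate harmonicity of \(P\): \(\Delta_y|\nabla_xP|^2=2|\nabla_x\nabla_yP|^2=\Delta_x|\nabla_yP|^2\). This yields \(a(2b+q-2)K_x=b(2a+p-2)K_y\) and \(a(3b+q-2)L_x=b(3a+p-2)L_y\). Let \(\mu_j\) denote the mean of \(f^j\) on the product of spheres. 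Your spherical energy identities \(aK_x=(2a+p-2)\mu_2\) and \(bK_y=(2b+q-2)\mu_2\), with their cubic analogues \(2aL_x\mu_2=(3a+p-2)\mu_3\) and \(2bL_y\mu_2=(3b+q-2)\mu_3\), already imply both relations identically. So none of these integral or commutation identities can force the constants to agree. Moreover, the energy identities make \eqref{eq:two-block-dimensions} equivalent to \(K_x=K_y\), so your proposed derivation of the dimension relation does not close either.

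The missing idea is pointwise, and it is how the paper argues. Put \(f=P|_{\Sph^{p-1}\times\Sph^{q-1}}\). By Euler's identity, the tangential gradient squares on the two factors are \(a^2(K_x+L_xf-f^2)\) and \(b^2(K_y+L_yf-f^2)\). Separate harmonicity with \(a,b>0\) gives \(f\) mean zero, and \(f\not\equiv0\). Hence its maximum \(M\) and minimum \(m\) satisfy \(M>0>m\), and at the corresponding points both partial gradients vanish. Thus \(M\neq m\) are common roots of \(K_x+L_xt-t^2\) and \(K_y+L_yt-t^2\), so the two quadratics coincide: \(K_x=K_y=-mM>0\) and \(L_x=L_y=m+M\). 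When \(a\) or \(b\) is odd, \(L\) is absent and \(M=-m\). Only then do your energy identities give \(aK=(2a+p-2)\mu_2\) and \(bK=(2b+q-2)\mu_2\), hence \eqref{eq:two-block-dimensions}. Your final isoparametric remark holds even without the matching. The common constants, \(K>0\), and the dimension relation, however, are exactly what the proposition asserts.
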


\begin{proof}
Write $S(z)=\langle Bz,z\rangle$, with $B$ symmetric. The product
rule gives $\Gamma(S,S)\in\cA_2=\operatorname{span}\{R,S\}$,
so $B^2$ is a linear combination of $B$ and the identity. Since
$S$ is independent of $R$, $B$ has exactly two eigenvalues.
Its orthogonal eigenspaces give $u,v$.

Put $\mathcal B=\R[u,v]$. The operator
$x\cdot\nabla_x=\Gamma(u,\cdot)/2$ preserves $\cA$, and
$\cA_e=\mathcal B_e+\R F$. Decomposing $F$ by bidegree therefore
shows that all its nonradial terms have the same $x$-degree $a$.
Subtract the other radial terms to obtain a bihomogeneous generator
$P_0$ of bidegree $(a,b)$. This algebra is now bigraded. The distinct
bidegree components of $\Delta P_0$ belong to $\mathcal B$, since
they have degree less than $e$.

They vanish unless $a,b$ are even. In the even mixed case write
$\Delta_xP_0=\lambda_xu^{a/2-1}v^{b/2}$ and
$\Delta_yP_0=\lambda_yu^{a/2}v^{b/2-1}$. Commuting these partial
Laplacians gives
$\lambda_x b(b+q-2)=\lambda_y a(a+p-2)$.
Subtracting the appropriate multiple of $u^{a/2}v^{b/2}$ kills
both. If one bidegree is zero, the subtraction only uses the active
block. This proves the separately harmonic normal form.

The two bidegree components of $\Gamma(P,P)$ also lie in the
algebra. Their degree is $2e-2$, so their expressions contain at
most the first power of $P$. Matching bidegrees gives
\eqref{eq:two-block-x}--\eqref{eq:two-block-y} initially with
possibly different constant and linear coefficients. On the unit
product spheres, let $f=P|_{\Sph^{p-1}\times\Sph^{q-1}}$.
The separate tangential gradient squares are quadratics in $f$,
with leading coefficients $-a^2$ and $-b^2$. Its mean is zero,
by separate harmonicity. Its maximum $M>0$ and minimum $m<0$
are roots of both quadratics, since both partial gradients vanish
at these points. Thus the two quadratics are respectively
$a^2(K+Lf-f^2)$ and $b^2(K+Lf-f^2)$, with
$K=-mM>0$ and $L=m+M$. This proves the common coefficients.
Conversely, the two identities and the separate Euler and Laplace
identities give every generator entry of the closure table, so
\cref{thm:finite-reduction} proves sufficiency.

If $\mu_2$ is the average of $f^2$, integration by parts on each
sphere gives
\[
 a^2K=a(2a+p-2)\mu_2,\qquad
 b^2K=b(2b+q-2)\mu_2.
\]
This implies \eqref{eq:two-block-dimensions}. Finally,
\[
 |\nabla f|^2=(a^2+b^2)(K+Lf-f^2),\qquad
 \Delta f=-[a(a+p-2)+b(b+q-2)]f,
\]
which are the isoparametric equations on the product.
\end{proof}

This reduction identifies the product spheres on which candidate
defining polynomials can be sought. The block dimensions $p,q$ need
not agree; if $p=q>2$, \eqref{eq:two-block-dimensions} forces $a=b$.
The change $F\mapsto F-H(u,v)$ preserves the algebra but generally
changes its zero set, so we next determine the minimal level within
each explicit family. If one bidegree is zero, the active block
reduces to \cite[Proposition~37(c)]{MendesRadeschi}, with an independent
Euclidean factor adjoined.

\subsection{OT--FKM splitting quartics and their minimizing range}

Let $x,y\in\R^\ell$. Choose skew-symmetric matrices
$A_1,\ldots,A_{m-1}$ satisfying
\[
 A_iA_j+A_jA_i=-2\delta_{ij}\Id.
\]
Assume
\begin{equation}\label{bh:range}
 2\leq m\leq\ell-2.
\end{equation}
Only parameters for which this Clifford representation exists are
included. In particular, $A_1$ is a complex structure, so $\ell$ is even.
Set
\begin{gather}\label{bh:quartic-data}
 u=\norm{x}^2,\quad v=\norm{y}^2,\quad R=u+v,\quad T=uv,\qquad
 Q=\ip{x}{y}^2+\sum_{i=1}^{m-1}\ip{A_i x}{y}^2,\\
 c=\frac{m-1}{\ell-2},\quad a=1-2c,\quad b=c(1-c),\quad
 F=Q-cT.\label{bh:centered}
\end{gather}
Here $0<c<1$ and $b>0$.

\begin{theorem}\label{bh:main}
The hypersurface $C=\{F=0\}\subset\R^{2\ell}$ is minimal on its
regular part, and
\[
 \Sing C=(\R^\ell\times\{0\})\cup(\{0\}\times\R^\ell).
\]
The oriented boundary $\partial\llbracket\{F<0\}\rrbracket$ is
area minimizing if and only if $\ell\geq10$. In all these minimizing
cases it is strictly area minimizing in the sense of \cref{def:four-variational-notions}. Polynomial subcalibration potentials are
\begin{equation}\label{bh:potentials}
 G=\begin{cases}
 RTF,&\ell=10,\\
 TF,&\ell\geq12.
 \end{cases}
\end{equation}
Their degrees are respectively ten and eight. The minimizing members
are strictly stable, with the explicit, not asserted optimal, bound
\begin{equation}\label{eq:bihomogeneous-Hardy}
 Q_C(\varphi)\geq\left(\ell^2-9\ell+\frac14\right)
 \int_Cr^{-2}\varphi^2,\qquad
 \varphi\in C_c^\infty(C_{\reg}\setminus\{0\}).
\end{equation}
For the remaining
admissible dimensions $\ell=4,6,8$, the regular part of $C$ is unstable.
\end{theorem}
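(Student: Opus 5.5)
The plan is to set up a closure table for the algebra \(\R[u,v,Q]\), or equivalently \(\R[R,T,F]\) together with \(u-v\), and then carry out every sign computation in the invariants \(u,v,Q\).

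\textbf{Closure table.} The Clifford relations make \(\{x, A_1x,\ldots,A_{m-1}x\}\) orthonormal up to the factor \(|x|\). Hence \(Q\) is the squared length of the projection of \(y\) onto an \(m\)-dimensional subspace of scale \(u\), and symmetrically in \(x\). From this we get
\[
|\nabla_yQ|^2=4uQ,\qquad |\nabla_xQ|^2=4vQ,\qquad \Delta_xQ=2mv,\qquad \Delta_yQ=2mu,
\]
and \(\langle x,\nabla_xQ\rangle=2Q\) (and likewise in \(y\)). This is the normal form of \cref{prop:two-block-normal} with bidegree \((2,2)\) and \(p=q=\ell\). A direct substitution into \eqref{eq:L-definition} then yields \(\cL(F)=F\lambda_F\). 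The constant \(c=(m-1)/(\ell-2)\) is exactly the value that removes the non-\(F\) remainder \(Q(\ldots)-cT(\ldots)\) in the \(T\)-direction. By the converse in \cref{prop:divisibility}, this gives minimality of the regular part.

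\textbf{Singular set.} Using the closure identities, \(|\nabla F|^2\) is a polynomial in \(u,v,Q\). On \(F=0\) we have \(Q=cuv\). Substituting gives \(|\nabla F|^2\) proportional to \(uv\,(u+v)\,Q\)-type terms, which vanish exactly when \(u=0\) or \(v=0\). This identifies \(\Sing C\) as the union of the two coordinate \(\R^\ell\) subspaces.

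\textbf{Subcalibration and strictness.} For the minimizing range I would apply \cref{thm:finite-reduction} with \(p=TF\) or \(p=RTF\). The image is the semialgebraic region \(\{u,v\geq0,\ 0\leq Q\leq uv\}\). After normalizing \(u+v=1\), set \(T=s(1-s)\) and \(Q=\tau T\) with \(\tau\in[0,1]\), so that \(F=(\tau-c)T\). The goal is to show \(G\cL(G)=(\tau-c)^2T^k\cdot P\) with \(P\geq0\) on the square \((s,\tau)\in[0,1]^2\), uniformly in \(\ell\geq12\), and separately for \(\ell=10\). Since \(m\) enters only through \(c\), \(P\) is a polynomial in \(c\), \(\ell\), \(s\), \(\tau\). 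I expect this uniform positivity to be the main obstacle. I would first try writing \(P\) in Bernstein form in \(\tau\) with coefficients quadratic in \(s(1-s)\), and then check the sign of each coefficient over \(0<c\leq(\ell-3)/(\ell-2)\).

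Once positivity holds, the remaining steps follow from earlier results:
\begin{itemize}
\item The exceptional set \(Z=\{uv=0\}\) has codimension \(\ell\geq2\), so \cref{cor:gradient-subcalibration} gives area minimality.
\item \cref{cor:polynomial-strict-minimality} gives strict minimality.
\item The bound \eqref{eq:bihomogeneous-Hardy} comes from \cref{thm:ground-state-gap}: take \(\deg G=\ell_G\in\{8,10\}\) and \(n=2\ell-1\), and compute \(\beta_G\) from \eqref{eq:boundary-potential-algebraic} as \(\cK_F(a)/(a^3|\nabla F|^2)\) restricted to \(\tau=c\). Adding \((\ell_G-n/2)^2\), I expect the sum to simplify to at least \(\ell^2-9\ell+1/4\).
\end{itemize}

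\textbf{Instability for \(\ell=4,6,8\).} By \eqref{eq:exact-Hardy-gap}, it suffices to produce an angular test function on \(\Lambda_{\reg}\) with Rayleigh quotient below \(-(2\ell-2)^2/4\).
\begin{itemize}
\item For \(\ell=4,6\) I would first try a test function depending only on \(s\), with a cutoff near \(s\in\{0,1\}\). The link near the singular strata has tangent cones whose transverse sections are quadratic cones of the form \(\{Q=c|y|^2u\}\) in low dimension. These are non-minimizing, and unstable whenever their own Hardy bound fails.
\item For \(\ell=8\), as the text before the theorem indicates, the transverse tangent cones may be stable. Here I would use \(\phi(s)=(s(1-s))^{\alpha}\), extended by a suitable cutoff, and compute \(\int(|\nabla\phi|^2-|A|^2\phi^2)\) via the closure identities (\(|A|^2\) is itself expressible from \(\cL\)-type invariants). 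The aim is to choose \(\alpha\) so that the quotient falls below \(-49\).
\end{itemize}
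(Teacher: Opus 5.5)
Your outline follows the paper's architecture: the closure table in $(u,v,Q)$, $\cL(F)=-8F\{b(R^2+2T)+2aF\}$, the potentials $TF$ and $RTF$ fed into \cref{cor:gradient-subcalibration} and \cref{cor:polynomial-strict-minimality}, and the ground-state bound. The Hardy step does work: $\beta_{TF}=2(\ell-8)R^2/T-8$, $\beta_{RTF}=2(\ell-8)R^2/T+4\ell-46$ and $R^2/T\geq4$ give exactly $\ell^2-9\ell+\frac14$.

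\textbf{The sign of $G\cL(G)$ is not established.} This sign is the whole content of the minimizing half, and you leave it as an expectation. The direct check you propose also fails. With $z=F/T$ and $\sigma=T/R^2\in(0,\frac14]$ one has $\cL(TF)=8R^2T^3F\,p_8$, where $p_8=b(\ell-8-4\sigma)+a(\ell-4-4\sigma)z+3(\ell-4\sigma)z^2$. Take the admissible case $\ell=16$, $m=8$ (so $c=\frac12$) at $\sigma=\frac14$. In $\tau=Q/T$ this is $45\tau^2-45\tau+13$, whose degree-two Bernstein coefficients are $(13,-\frac{19}2,13)$. What is needed are three reductions. First, $\partial_\sigma p_8=-4(b+az+3z^2)<0$, because $b+az=(1-c)^2Q/T+c^2(1-Q/T)>0$; so it suffices to take $\sigma=\frac14$. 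Second, the symmetry $Q\mapsto T-Q$ (with $c\mapsto1-c$, $F\mapsto-F$, $m\mapsto\ell-m$) lets you assume $c\leq\frac12$. Third, a vertex/discriminant analysis of $q(z)=b(\ell-9)+a(\ell-5)z+3(\ell-1)z^2$ on $[-c,1-c]$, using $q(-c)=4c((\ell-1)c-1)>0$ and the monotonicity of $D_8(c)$. For $\ell=10$ and $RTF$ you must separately show that the discriminant of $p_{10}$ in $z$ is negative for all $0<\sigma\leq\frac14$, using $a^2\leq\frac9{16}$.

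\textbf{The instability threshold is wrong, which makes the $\ell=8$ plan impossible.} The cone has dimension $n=2\ell-1$, so \eqref{eq:exact-Hardy-gap} compares the angular bottom with $-(2\ell-3)^2/4$, not $-(2\ell-2)^2/4$. The closure table gives $|A_\Lambda|^2=(\ell-2)/t+2(\ell-1)$, $|\nabla_\Lambda t|^2=4t(1-4t)$ and $\Delta_\Lambda t=2(\ell-1)-8\ell t$. Hence for $\ell=8$ the function $\phi=t^{-1}$ is a positive solution of $(\Delta_\Lambda+|A_\Lambda|^2)\phi=46\phi$ on $\Lambda_{\reg}$. The positive-solution identity then forces every angular Rayleigh quotient to be $\geq-46>-49$. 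The only power $t^\alpha$ that formally goes below $-49$ is $t^{-3/2}$, with formal eigenvalue $50$. Its gradient is not square integrable against the transverse density $\rho^6\,d\rho$, so its cutoffs do not converge. The correct conclusion is $\frac{169}4-46=-\frac{15}4$, taking $\alpha=-1$ and checking $\int\phi^2|\nabla\eta_\eps|^2=O(\eps)$ from $t\asymp\rho^2$.

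\textbf{Two smaller omissions.} For $\ell=4,6$ you must verify that the transverse margin $(\ell-3)^2/4-(\ell-2)$ is negative. You must also transfer the variation from the product of $\R^\ell$ with the transverse Lawson cone back to $C_{\reg}$, by large-scale cutoffs and then rescaling. For $\Sing C$ you have only located the critical set $\{uv=0\}$; on $F=0$ one has $|\nabla F|^2=4bRT$. Showing these points are genuinely singular requires the transverse expansion $|x_0|^2\bigl((1-c)|y_\parallel|^2-c|y_\perp|^2\bigr)$.
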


\subsubsection{Closure, the minimal level, and the singularities}

For $x\ne0$, the vectors
\[
 x,A_1x,\ldots,A_{m-1}x
\]
are orthogonal and have length $|x|$. Consequently $0\leq Q\leq T$.
Direct differentiation gives, in the variables $I=(u,v,Q)$,
\begin{equation}\label{bh:original-table}
 (\Gamma(I_i,I_j))=
 \begin{pmatrix}4u&0&4Q\\0&4v&4Q\\4Q&4Q&4RQ\end{pmatrix},
 \qquad
 \Delta I=(2\ell,2\ell,2mR).
\end{equation}
For example,
$\nabla_yQ=2\sum_{i=0}^{m-1}\ip{A_i x}{y}A_i x$, with $A_0=\Id$,
so $\norm{\nabla_yQ}^2=4uQ$; differentiating in $x$ gives $4vQ$.
The chain and product rules prove that
\begin{equation}\label{bh:algebra}
 \cA=\R[u,v,Q]=\R[R,u-v,F]
\end{equation}
is a graded Laplacian algebra, with generator degrees $(2,2,4)$.
The determinant of its gradient matrix is $64RQ(T-Q)$, which is
positive at points with $u,v>0$ and $0<Q<T$. These points exist by
choosing the components of $y$ both in and perpendicular to the
span above. Thus the three generators are algebraically independent;
the spherical quotient dimension is two.

In $(R,T,F)$, the useful identities become
\begin{gather}\label{bh:reduced-table}
 \Gamma(R,R)=4R,\quad\Gamma(R,T)=8T,\quad\Gamma(R,F)=8F,\\
 \Gamma(T,T)=4RT,\quad\Gamma(T,F)=4RF,\quad
 \Gamma(F,F)=4R(aF+bT),\qquad\Delta F=2aR.\nonumber
\end{gather}
They imply
\begin{equation}\label{bh:minimality}
 \cL(F)=-8F\{b(R^2+2T)+2aF\}.
\end{equation}
On $F=0$ with $u,v>0$ we have
$\norm{\nabla F}^2=4bRT>0$, proving regularity and minimality.
On either coordinate subspace, $F$ and its first derivatives vanish.
These are genuine singularities, as the following local description shows.

Fix $x_0\ne0$, and write
$W_{x_0}=\operatorname{span}\{x_0,A_1x_0,\ldots,A_{m-1}x_0\}$.
The first nonzero term of $F$ at $(x_0,0)$ is
\begin{equation}\label{bh:transverse}
 \norm{x_0}^2\big((1-c)\norm{y_\parallel}^2
                         -c\norm{y_\perp}^2\big),
 \quad y_\parallel\in W_{x_0},\quad y_\perp\in W_{x_0}^\perp.
\end{equation}
The tangent cone is the product of $\R^\ell$ with this nonlinear
Lawson cone. Its transverse link is
\[
 \Sph^{m-1}(\sqrt c)\times
 \Sph^{\ell-m-1}(\sqrt{1-c})\subset\Sph^{\ell-1}.
\]
Both multiplicities are positive by \eqref{bh:range}. Smooth local
orthonormal frames for $W_x$ and $W_x^\perp$ identify a neighborhood
of the stratum with a smoothly varying family of these quadratic cones.
The same description holds on the other coordinate subspace. Both
phases approach every point of these strata, so the support of
$\partial\llbracket\{F<0\}\rrbracket$ is exactly $C$.

\begin{remark}[Relation to the original OT--FKM cone]\label{bh:fkm-relation}
In these coordinates, the Cartan--M\"unzner OT--FKM polynomial is
\[
 \Phi=R^2-2(u-v)^2-8Q
      =-R^2+8(1-c)T-8F.
\]
On $u=v$, the equation $F=0$ is equivalent to
$\Phi=(1-2c)R^2$. Without the balance condition it is instead
$\Phi=-R^2+8(1-c)T$, whose normalized right-hand side varies with
$T/R^2$. Therefore $C$ is not recovered by simply dropping one
equation from a fixed spherical OT--FKM level description.
Its nonzero singularities also distinguish it from a cone over a
regular spherical isoparametric hypersurface. The codimension-two cone considered in \cite{CuiFKM25} is exactly $C\cap\{u=v\}$, and has only the
vertex as a singular point.
\end{remark}

\subsubsection{The degree-eight potential in dimensions
\texorpdfstring{$\ell\geq12$}{ell at least 12}}

For $G_8=TF$, the table \eqref{bh:original-table} gives
\begin{gather}
 \norm{\nabla G_8}^2=4RT(bT^2+aTF+3F^2),\qquad
 \Delta G_8=R\{2aT+2(\ell+4)F\},\label{bh:G8-basic}\\
 \begin{split}\label{bh:G8-defect}
 \cL(G_8)=8TF\big\{&bT^2[(\ell-8)R^2-4T]\\
                  &+aTF[(\ell-4)R^2-4T]
                    +3F^2[\ell R^2-4T]\big\}.
 \end{split}
\end{gather}
Introduce the dimensionless variables
\[
 s=T/R^2\in(0,1/4],\qquad z=F/T\in[-c,1-c].
\]
The braces in \eqref{bh:G8-defect}, divided by $R^2T^2$, are
\begin{equation}\label{bh:p8}
 p_8(z,s)=b(\ell-8-4s)+a(\ell-4-4s)z+3(\ell-4s)z^2.
\end{equation}
Since
\[
 b+az=(1-c)^2(Q/T)+c^2(1-Q/T)>0,
\]
we have $\partial_s p_8=-4(b+az+3z^2)<0$. It is enough to prove
\begin{equation}\label{bh:q8}
 q(z):=p_8(z,1/4)
       =b(\ell-9)+a(\ell-5)z+3(\ell-1)z^2>0.
\end{equation}

Replacing $Q$ by $T-Q$ exchanges $c$ with $1-c$ and $F$ with $-F$.
The same closure table holds with $m$ replaced by $\ell-m$.
We can thus assume $1/(\ell-2)\leq c\leq1/2$, so $a\geq0$.
For $z\geq0$, \eqref{bh:q8} is positive. On $[-c,0]$ the vertex is
$z_*=-a(\ell-5)/(6(\ell-1))$. It lies in this interval exactly when
\[
 c\geq c_*:=\frac{\ell-5}{8(\ell-2)}.
\]
If $c<c_*$, the minimum is at $-c$, where
\[
 q(-c)=4c((\ell-1)c-1)>0.
\]
If $c\geq c_*$, the minimum is positive exactly when
\begin{equation}\label{bh:D8}
 D_8(c):=12(\ell-1)(\ell-9)c(1-c)
                  -(\ell-5)^2(1-2c)^2>0.
\end{equation}
This expression increases on $0<c\leq1/2$. For $\ell\geq13$,
\[
 D_8(c_*)=
 \frac{3(\ell-1)(\ell-5)(\ell^2-14\ell+21)}{4(\ell-2)^2}>0.
\]
For $\ell=12$, $c\geq1/10>c_*$ and $D_8(1/10)=107/25>0$.
This proves \eqref{bh:q8}, and hence the desired sign, for every
$\ell\geq12$. 

\subsubsection{The degree-ten potential in dimension
\texorpdfstring{$\ell=10$}{ell equals 10}}

For $G_{10}=RTF$, direct use of the same table gives
\begin{equation}\label{bh:G10}
 \cL(G_{10})=8R^5T^3F\,p_{10}(z,s),
\end{equation}
where
\begin{align}
 p_{10}(z,s)={}&b[\ell-8+(2\ell-23)s]
       +a[\ell-4+(2\ell-14)s]z\nonumber\\
       &+[3\ell+(15\ell-33)s+(18\ell+27)s^2]z^2.\label{bh:p10}
\end{align}
When $\ell=10$, this reduces to
\[
 b(2-3s)+a(6+6s)z+(30+117s+207s^2)z^2.
\]
Here $c\in[1/8,7/8]$, so $a^2\leq9/16$ and $b=(1-a^2)/4$.
The negative of the discriminant is bounded below by
\begin{align*}
 &4b(2-3s)(30+117s+207s^2)-a^2(6+6s)^2\\
 &\hspace{10mm}\geq
 6+\frac{45}{2}s+\frac{117}{16}s^2-\frac{4347}{16}s^3
 \geq\frac{1797}{1024}>0.
\end{align*}
In the last step we used $0<s\leq1/4$ and discarded the positive
linear and quadratic terms. The leading coefficient is positive,
so $p_{10}>0$ for all real $z$ and all allowed $s$.

\subsubsection{From the sign calculation to global strict minimality}

For either potential in \eqref{bh:potentials}, its multiplier of $F$
is positive on $T>0$. Euler's identity shows that a critical point
there must satisfy $G=0$, hence $F=0$. At such a point
$\nabla G$ is a positive multiple of the nonzero $\nabla F$.
There are therefore no critical points with $T>0$.
The critical set is precisely the union $Z$ of the two coordinate
subspaces, and $\dim Z=\ell<2\ell-1$.

Let $E=\{F<0\}=\{G<0\}$ and $X=\nabla G/\norm{\nabla G}$ off $Z$.
Polynomial sign sets have locally finite perimeter. The field has
length one, agrees with the exterior normal of $E$ on its regular
boundary, and has the required two-sided divergence sign by
\eqref{bh:G8-defect} or \eqref{bh:G10}. Since $\mathcal H^{2\ell-1}(Z)=0$
locally, \cref{cor:gradient-subcalibration,thm:removable-subcalibration} extend across $Z$ and
proves local perimeter and integral-current mass minimization.

For completeness, strictness uses more than the boundary value of
the divergence. Both open phases are nonempty, and the formulas give
$\norm{\diver X}>0$ everywhere in either phase. The comparison theorem
also gives local integrability. The two integrals of
$\norm{\diver X}$ over the phases in the unit ball are thus finite
and positive. Homogeneity gives degree $-1$ for this density. The
polynomial strict-minimality criterion of \cref{cor:polynomial-strict-minimality} gives a positive mass gap
$\Theta\varepsilon^{2\ell-1}$ for competitors with the same unit-sphere
boundary which avoid $B_\varepsilon$. This proves the positive part
of Theorem~\ref{bh:main}.

For stability, the boundary identity
\eqref{eq:boundary-potential-algebraic} gives
\[
 \beta_{TF}=2(\ell-8)\frac{R^2}{T}-8,\qquad
 \beta_{RTF}=2(\ell-8)\frac{R^2}{T}+4\ell-46.
\]
Use the first potential for $\ell\geq12$ and the second for
$\ell=10$. Since $R^2/T\geq4$, \cref{thm:ground-state-gap}, with
cone dimension $2\ell-1$ and potential degree eight or ten, gives
\eqref{eq:bihomogeneous-Hardy} in both cases. Its coefficient is
positive for $\ell\geq10$.

\subsubsection{The unstable dimensions}

For $\ell=4,6$, the transverse Lawson cone in \eqref{bh:transverse}
is unstable: its link has dimension $\ell-2$ and constant squared
curvature $\ell-2$, giving the Hardy margin
\begin{equation}\label{bh:Lawson-margin}
 \frac{(\ell-3)^2}{4}-(\ell-2)<0.
\end{equation}
An unstable variation on this cone gives one on its product with
$\R^\ell$ by multiplying by cutoffs which vary on a sufficiently
large scale in the Euclidean factor. The negative quadratic-form
term grows as that scale to the power $\ell$, and the extra gradient
term only as the power $\ell-2$. Rescaling toward a nonzero point
of the singular stratum transfers a compactly supported variation
to $C_{\rm reg}$, since the rescaled hypersurfaces converge smoothly
on compact subsets of the regular tangent cone.

The case $\ell=8$ requires a different obstruction. Its transverse
Lawson cone can be area minimizing, but the whole hypercone is unstable.
We give the angular calculation and the necessary cutoff argument.

\begin{lemma}\label{bh:curvature}
On the regular part of the quartic cone,
\begin{equation}\label{bh:AC}
 \norm{A_C}^2=(\ell-2)\frac{R}{T}+\frac{2(\ell-1)}R.
\end{equation}
On $\Lambda=C\cap\Sph^{2\ell-1}$, put $t=T|_\Lambda$. Then
\begin{gather}\label{bh:link-table}
 \norm{A_\Lambda}^2=\frac{\ell-2}{t}+2(\ell-1),\qquad
 \norm{\nabla_\Lambda t}^2=4t(1-4t),\\
 \Delta_\Lambda t=2(\ell-1)-8\ell t.\nonumber
\end{gather}
\end{lemma}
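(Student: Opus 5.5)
We would compute everything from the closure table \eqref{bh:reduced-table}, supplemented by the entries $\Delta R=4\ell$ and $\Delta T=\Delta(uv)=2\ell R$ (the cross term $\Gamma(u,v)$ vanishes). Put $S=\norm{\nabla F}^2=4R(aF+bT)$, so $S=4bRT$ on $C$. Throughout we use the Hessian identity from the proof of \cref{thm:exact-multiplier},
\[
 2\Hess u(\nabla v,\nabla w)=\Gamma(v,\Gamma(u,w))+\Gamma(w,\Gamma(u,v))-\Gamma(u,\Gamma(v,w)),
\]
to express every Hessian contraction through the table.

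For \eqref{bh:AC}, we use the standard formula for the second fundamental form of a regular level set. With $\nu=\nabla F/\norm{\nabla F}$,
\[
 \norm{A_C}^2=\frac{1}{S}\Bigl(\norm{\Hess F}^2-\frac{2\norm{\Hess F\,\nabla F}^2}{S}+\frac{\Hess F(\nabla F,\nabla F)^2}{S^2}\Bigr).
\]
The three contractions reduce to the algebra as follows:
\begin{itemize}[label={}]
 \item $\Hess F\,\nabla F=\tfrac12\nabla S$, so $\norm{\Hess F\,\nabla F}^2=\tfrac14\Gamma(S,S)$;
 \item $\Hess F(\nabla F,\nabla F)=\tfrac12\Gamma(F,S)$;
 \item Bochner gives $\norm{\Hess F}^2=\tfrac12\Delta S-\Gamma(F,\Delta F)$.
\end{itemize}
Each quantity is a polynomial in $R,T,F$ by the table, via the chain and product rules; for example, $\Delta F=2aR$ gives $\Gamma(F,\Delta F)=16aF$. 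We then set $F=0$. This evaluation of $\Delta S$ and $\Gamma(S,S)$ on $F=0$ is the only real computation, and we expect it to be the main obstacle. The delicate point is that $\Gamma(F,F)$ contributes terms proportional to $a$ that must cancel, because the answer contains no $a$. As a cross-check, minimality \eqref{bh:minimality} should reappear as the vanishing of $\Delta F-\Hess F(\nu,\nu)$ on $C$.

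For the link, $C$ is a cone, so $\norm{A_\Lambda}^2=r^2\norm{A_C}^2$ on $\Lambda$. Setting $R=1$ in \eqref{bh:AC} gives the first identity of \eqref{bh:link-table}.

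For $\norm{\nabla_\Lambda t}^2$, we remove from $\nabla T$ its radial and normal components. Euler's identity gives $\langle\nabla T,x\rangle=4T$. The normal component is $\Gamma(T,F)/\norm{\nabla F}=4RF/\norm{\nabla F}=0$ on $C$. Hence
\[
 \norm{\nabla_\Lambda t}^2=\Gamma(T,T)-16T^2=4t(1-4t)\quad\text{at }R=1.
\]

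For $\Delta_\Lambda t$, minimality of $C$ gives $\Delta_CT=\Delta T-\Hess T(\nu,\nu)$. By the Hessian identity,
\[
 \Hess T(\nabla F,\nabla F)=\Gamma(F,\Gamma(T,F))-\tfrac12\Gamma(T,\Gamma(F,F)),
\]
which is a table computation: $\Gamma(F,4RF)$ and $\Gamma(T,4R(aF+bT))$, evaluated at $F=0$. Dividing by $S=4bRT$ should leave a result free of $a$ and $b$.

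Finally, $T$ is homogeneous of degree four on the cone of dimension $n=2\ell-1$. The polar formula therefore gives
\[
 \Delta_CT=\Delta_\Lambda t+4(4+n-2)t=\Delta_\Lambda t+4(2\ell+1)t\quad\text{at }r=1.
\]
Solving for $\Delta_\Lambda t$ should yield $2(\ell-1)-8\ell t$. We would check consistency by integrating over $\Lambda$ against the gradient formula.
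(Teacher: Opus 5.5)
Your proposal is correct and follows essentially the same route as the paper. The paper also combines the level-set formula for $\norm{A_C}^2$ with Bochner's identity and the closure table; the $a^2$ terms cancel, leaving $\bigl(4b(\ell-2)R^2+8b(\ell-1)T\bigr)/(4bRT)$. It then uses the $\Gamma$-Hessian identity to get $\Hess T(\nu,\nu)=2R-4T/R$ on $C$, and subtracts the radial contribution $4(2\ell+1)t$. The only cosmetic difference is that you obtain $\Hess F(\nu,\nu)$ as $\tfrac12\Gamma(F,S)/S$, whereas the paper reads it off as $\Delta F=2aR$.
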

\begin{proof}
Put $W=\norm{\nabla F}^2$ and $\nu=\nabla F/\sqrt W$.
The Euclidean Bochner identity gives
$\norm{\nabla^2F}^2=\Delta W/2-\Gamma(F,\Delta F)$.
On $F=0$, \eqref{bh:reduced-table} yields
\begin{align*}
 W&=4bRT,\\
 \norm{\nabla^2F}^2&=4(a^2+b\ell)R^2+8b(\ell+4)T,\\
 \norm{\nabla\sqrt W}^2&=4[(a^2+b)R^2+5bT],\\
 \nabla^2F(\nu,\nu)&=\Delta F=2aR.
\end{align*}
Now use
\[
 \norm{A_C}^2=
 \frac{\norm{\nabla^2F}^2-2\norm{\nabla\sqrt W}^2
                     +(\nabla^2F(\nu,\nu))^2}{W}
\]
to obtain \eqref{bh:AC}. The normal Hessian of any polynomial $V$ can also be computed
from the closure table, using
\[
 \Hess V(\nu,\nu)=\frac{
 \Gamma(F,\Gamma(F,V))-\tfrac12\Gamma(V,W)}{W}.
\]
On $F=0$, this gives
\[
 \Gamma(T,F)=0,\qquad
 \Hess T(\nu,\nu)=2R-\frac{4T}{R},\qquad
 \Delta_C T=2(\ell-1)R+\frac{4T}{R}.
\]
Subtracting the radial contribution of the degree-four function $T$
on the $(2\ell-1)$-dimensional cone gives the last identity in
\eqref{bh:link-table}. Subtracting the squared radial derivative from
$\Gamma(T,T)=4RT$ gives the gradient identity.
\end{proof}

For $\ell=8$, the positive function $\phi=t^{-1}$ satisfies
\begin{equation}\label{bh:eigenfunction}
 (\Delta_\Lambda+\norm{A_\Lambda}^2)\phi=46\phi.
\end{equation}
This formal identity must be supplemented by an approximation on the
singular link. Its two singular strata have dimension seven and
transverse dimension seven in $\Lambda$. The local quadratic-cone
description after \eqref{bh:transverse} implies
$t\asymp\rho^2$ and transverse volume density comparable to
$\rho^6\,d\rho$, where $\rho$ is distance to the singular stratum.
Thus $\phi\asymp\rho^{-2}$, its gradient is $O(\rho^{-3})$, and
$\phi$, its gradient and the potential term have the required
finite integrals.

Choose cutoffs $\eta_\varepsilon$ vanishing for $\rho<\varepsilon$
and equal to one for $\rho>2\varepsilon$, with gradient bounded by
$O(\varepsilon^{-1})$. Then
\[
 \int_\Lambda\phi^2\norm{\nabla\eta_\varepsilon}^2
 =O(\varepsilon^{-4}\varepsilon^{-2}\varepsilon^7)
 =O(\varepsilon)\longrightarrow0.
\]
Multiplying \eqref{bh:eigenfunction} by
$\eta_\varepsilon^2\phi$ and integrating by parts on the regular
part proves that the angular Rayleigh quotients of
$\eta_\varepsilon\phi$ tend to $-46$. The cone has dimension fifteen,
so \eqref{eq:exact-Hardy-gap} gives compactly supported regular
variations whose scale-invariant quotients tend to
\begin{equation}\label{bh:negative-margin}
 \frac{(15-2)^2}{4}-46=-\frac{15}{4}<0.
\end{equation}
This proves instability. Together with \eqref{bh:Lawson-margin} and
the evenness of $\ell$, it completes Theorem~\ref{bh:main}.

\subsection{The octonionic Hopf pairing and its balance section}
\label{sec:Hopf-pairing}

The Hopf pairing satisfies the preceding quartic identities, so the
same potential proves area minimization. Its balance section requires
the intrinsic gradient and divergence on the Simons cone.

\subsubsection{The pairing algebra and its quartic hypercone}

Let \(h:\R^{16}\to\R^9\) be the quadratic extension of the standard
octonionic Hopf map \(\Sph^{15}\to\Sph^8\), normalized by
\begin{equation}\label{eq:Hopf-normalization}
 |h(x)|^2=|x|^4,\qquad
 Dh(x)Dh(x)^{\mathsf T}=4|x|^2\Id_9,\qquad \Delta h=0.
\end{equation}
Thus \(h_i(x)=\langle A_ix,x\rangle\) for a nine-matrix Clifford
system on \(\R^{16}\); see \cite[Theorem~A and Section~1.2]{Radeschi14}.
On \(\R^{16}\oplus\R^{16}\), set
\begin{equation}\label{eq:Hopf-pairing-definitions}
 u=|x|^2,\quad v=|y|^2,\quad R=u+v,\quad S=u-v,\quad T=uv,
 \qquad F(x,y)=\langle h(x),h(y)\rangle.
\end{equation}
In particular, \(|F|\leq T\), and \(F\) has bidegree \((2,2)\).

\begin{proposition}[The pairing closure]\label{prop:Hopf-closure}
The algebra \(\cA=\R[u,v,F]=\R[R,S,F]\) is a graded Laplacian
algebra of spherical quotient dimension two. Its closure table, in
the ordered generators \((u,v,F)\), is
\begin{equation}\label{eq:Hopf-closure}
 (\Delta u,\Delta v,\Delta F)=(32,32,0),\qquad
 \bigl(\Gamma(I_i,I_j)\bigr)=
 \begin{pmatrix}
  4u&0&4F\\
  0&4v&4F\\
  4F&4F&4uv(u+v)
 \end{pmatrix}.
\end{equation}
The quartic zero set \(C=\{F=0\}\) is minimal on its regular part,
and its geometric singular set is
\begin{equation}\label{eq:Hopf-singular-set}
 Z=(\R^{16}\times\{0\})\cup(\{0\}\times\R^{16}).
\end{equation}
\end{proposition}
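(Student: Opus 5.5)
The plan is to verify the closure table \eqref{eq:Hopf-closure} entry by entry from the normalization \eqref{eq:Hopf-normalization}, and then read off minimality and the singular set as in the OT--FKM calculation of \cref{bh:main}.

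\textbf{Closure table.} The entries involving only \(u,v\) are immediate in dimension sixteen: \(\Delta u=\Delta v=32\), \(\Gamma(u,u)=4u\), \(\Gamma(v,v)=4v\), and \(\Gamma(u,v)=0\).

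Since \(\Delta h=0\) and \(F\) is bilinear in \(h(x),h(y)\) with separated variables, \(\Delta F=\langle\Delta_xh(x),h(y)\rangle+\langle h(x),\Delta_yh(y)\rangle=0\).

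The entry \(\Gamma(u,F)=x\cdot\nabla_xF\) equals \(2F\) by Euler's identity, because \(F\) has \(x\)-degree two. With \(\Gamma(u,\cdot)=2x\cdot\nabla_x\) this gives \(4F\), and symmetrically \(\Gamma(v,F)=4F\).

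For \(\Gamma(F,F)\), write \(\nabla_xF=Dh(x)^{\mathsf T}h(y)\). The identity \(Dh\,Dh^{\mathsf T}=4u\Id_9\) gives \(|\nabla_xF|^2=4u|h(y)|^2=4uv^2\). Likewise \(|\nabla_yF|^2=4vu^2\), so \(\Gamma(F,F)=4uv(u+v)\).

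All entries lie in \(\R[u,v,F]\), so the algebra is Laplacian closed by the chain-rule criterion recalled in the introduction, i.e.\ \cref{thm:finite-reduction}.

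\textbf{Spherical quotient dimension.} I would compute the Gram determinant \(\det\mathbf g=64\,uv(uv(u+v)-F(u+v))\cdot\)(appropriate factor), and more cleanly show it is positive at some point with \(u,v>0\) and \(|F|<T\). By \cref{prop:real-algebraic-quotient}, this gives rank three, hence independence and spherical quotient dimension two. Such points exist because \(h\) maps onto the sphere of radius \(u\): choose \(h(x)\) and \(h(y)\) at an intermediate angle.

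\textbf{Minimality and singular set.} Compute \(\cL(F)\) from the table. We have \(\Delta F=0\), and \(\tfrac12\Gamma(F,\Gamma(F,F))\) uses \(\Gamma(F,u)=\Gamma(F,v)=4F\), so it is a multiple of \(F\). Hence \(\cL(F)=F\lambda_F\), and \cref{prop:divisibility} (converse direction) gives minimality of the regular part.

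On \(F=0\) we have \(|\nabla F|^2=4uv(u+v)\), which vanishes exactly when \(u=0\) or \(v=0\); there \(F\) vanishes identically, so \(Z\) contains the critical points of the cone. To see that these points are genuinely singular, I expect the only real obstacle: showing that both phases accumulate at each point of \(Z\). At \((x_0,0)\) with \(x_0\neq0\), the leading term of \(F\) in \(y\) is the quadratic form \(\langle h(x_0),h(y)\rangle\). This is \(|x_0|^2\) times a trace-free symmetric form \(\langle A_\xi y,y\rangle\) with \(A_\xi^2=\Id\), of signature \((8,8)\). So the tangent cone is \(\R^{16}\) times a Simons-type cone \(\{|y_+|=|y_-|\}\), which is not a hyperplane. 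Both signs of \(F\) therefore occur near every such point, and the point is singular; the same holds on the other block. This gives \eqref{eq:Hopf-singular-set}.
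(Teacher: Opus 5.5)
Your proposal is correct and follows essentially the paper's argument. It uses the same closure entries derived from \eqref{eq:Hopf-normalization}, the same computation \(\cL(F)=-\tfrac12\Gamma(F,4RT)=-8(R^2+2T)F\), and the same signature-\((8,8)\) transverse quadratic form at nonzero points of \(Z\); the vertex, which you do not mention, is then singular as a limit of such points. The only deviation is that the paper gets independence and quotient dimension two from the explicit image \(\{(u,v,f):u,v\geq0,\ |f|\leq uv\}\) rather than from the Gram determinant. Your displayed determinant is also wrong: it is \(\det\mathbf g=64(u+v)(u^2v^2-F^2)=64R(T^2-F^2)\). The positivity condition you then use (\(R>0\), \(|F|<T\)) is nonetheless exactly the right one.
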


\begin{proof}
Equation \eqref{eq:Hopf-normalization} gives
\[
 |\nabla_xF|^2=4uv^2,\qquad
 |\nabla_yF|^2=4u^2v,\qquad \Delta F=0.
\]
Euler's identity in each block gives the remaining entries of
\eqref{eq:Hopf-closure}. The product and chain rules prove closure,
as in \cref{thm:finite-reduction}. Surjectivity of the Hopf map gives
the full image
\[
 \{(u,v,f):u,v\geq0,\ |f|\leq uv\}.
\]
This image contains an open set, so the three generators are
algebraically independent. Its slice \(u+v=1\) has dimension two.
In particular, fixing \(R\) and \(F=0\) still allows \(uv\) to vary;
thus \(\Gamma(F,F)=4Ruv\) does not belong to \(\R[R,F]\).

Since \(\Gamma(F,R)=8F\) and \(\Gamma(F,T)=4RF\), we obtain
\[
 \cL(F)=-\tfrac12\Gamma(F,4RT)=-8(R^2+2T)F.
\]
This proves regular minimality and identifies the critical set as
\(Z\). At \((0,y_0)\), \(y_0\ne0\), the transverse quadratic term
is \(\langle A(h(y_0))x,x\rangle\), where
\(A(a)=\sum_i a_iA_i\). Its eigenvalues are
\(\pm|y_0|^2\), each with multiplicity eight. Hence the tangent
cone is the product of a quadratic cone of signature \((8,8)\)
with \(\R^{16}\), and the point is geometrically singular.
The other axis is identical; the vertex is singular as well.
\end{proof}

\begin{remark}[A composed Clifford realization]
The pairing table also has the composed Clifford realization of
\cite[Theorems~A--B and Section~3]{Radeschi14}. Put
\(Q=u^2+v^2-2F\). For a nine-matrix Clifford system
\(P_0,\ldots,P_8\) on \(\R^{32}\), the generators
\[
 \widehat R=|z|^2,\qquad
 \widehat S=\langle P_0z,z\rangle,\qquad
 \widehat Q=\sum_{i=0}^8\langle P_iz,z\rangle^2
\]
have the same gradient and Laplacian identities as \((R,S,Q)\).
Both images are
\(\{(R,S,Q):R\geq0,\ S^2\leq Q\leq R^2\}\).
The inverse relations
\(u=(R+S)/2\), \(v=(R-S)/2\), and \(F=(R^2+S^2-2Q)/4\)
therefore identify their graded differential algebras.
The minimizing potential below follows directly from the pairing
identities \eqref{eq:Hopf-closure}.
\end{remark}

\begin{theorem}[A polynomial subcalibration of the pairing cone]
\label{thm:Hopf-pairing}
The potential \(G=TF=|x|^2|y|^2F\) has degree eight and
subcalibrates \(C=\{F=0\}\). Its oriented boundary current
\(\partial\llbracket\{F<0\}\rrbracket\) is strictly area minimizing
in \(\R^{32}\). Moreover,
\begin{equation}\label{eq:Hopf-Hardy-bound}
 Q_C(\varphi)\geq\frac{449}{4}\int_C r^{-2}\varphi^2,
 \qquad \varphi\in C_c^\infty(C_{\reg}),\quad r=\sqrt R.
\end{equation}
This stability bound is not asserted to be optimal.
\end{theorem}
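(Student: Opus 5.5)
The plan is to reduce everything to the closure table \eqref{eq:Hopf-closure} and \cref{thm:finite-reduction}, in the same way as the degree-eight OT--FKM computation \eqref{bh:G8-basic}--\eqref{bh:G8-defect}.

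\emph{Step 1: the table in the variables \((R,T,F)\).} From \eqref{eq:Hopf-closure} I would first record
\[
 \Gamma(R,T)=8T,\quad \Gamma(R,F)=8F,\quad \Gamma(T,T)=4RT,\quad \Gamma(T,F)=4RF,\quad \Gamma(F,F)=4RT,
\]
together with \(\Delta T=32R\) and \(\Delta F=0\). This is the OT--FKM table \eqref{bh:reduced-table} with \(\ell=16\), \(a=0\), and with \(b\) replaced by \(1\) in \(\Gamma(F,F)\). The change of \(b\) only rescales the \(bT^2\) terms.

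\emph{Step 2: the divergence numerator.} For \(G=TF\), the product rule gives
\[
 |\nabla G|^2=4RT(T^2+3F^2),
\]
which is positive wherever \(T>0\). Next I would expand \(\Delta G\) and \(\tfrac12\Gamma(G,|\nabla G|^2)\) through the table. Mirroring \eqref{bh:G8-defect}, I expect
\[
 \cL(G)=8TF\bigl\{T^2[(\ell-8)R^2-4T]+3F^2[\ell R^2-4T]\bigr\},\qquad \ell=16,
\]
possibly up to a positive constant. This expansion is the main routine burden, and I would check it symbolically.

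\emph{Step 3: the sign.} Put \(s=T/R^2\in(0,1/4]\) and \(z=F/T\in[-1,1]\). The braces divided by \(R^2T^2\) equal \((8-4s)+3(16-4s)z^2\), which is at least \(7\) on this domain. Hence \(G\cL(G)\ge 0\), with strict inequality off \(\{F=0\}\cup Z\).

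\emph{Step 4: critical set and minimality.} As in the OT--FKM argument, Euler's identity shows there are no critical points with \(T>0\). The critical set is therefore \(Z\) from \eqref{eq:Hopf-singular-set}, which has codimension \(16\) and so satisfies \eqref{eq:H-null}. On the regular cone, \(\nabla G=T\nabla F\), so \(X_G\) is the exterior normal there. \Cref{cor:gradient-subcalibration} then gives minimality. Since both phases are nonempty and \(\cL(G)\not\equiv0\), \cref{cor:polynomial-strict-minimality} gives strict minimality.

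\emph{Step 5: the Hardy bound.} I would apply \eqref{eq:boundary-potential-algebraic} with multiplier \(a=T\). This requires \(\cK_F(T)|_C=\cL(G)/F|_{F=0}=8T^3\cdot 8R^2\) and \(|\nabla F|^2=4RT\) on \(C\). These give
\[
 \beta=\frac{64R^2T^3}{T^3\cdot 4RT}\cdot R=\frac{16R^2}{T},
\]
which is at least \(64\) because \(R^2\ge 4T\).

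That value of \(\beta\) disagrees with the OT--FKM formula \(2(\ell-8)R^2/T-8\), which would give \(56\) at \(\ell=16\). The extra \(-8\) should come from the correction terms in \cref{thm:ground-state-gap}. I would therefore recompute \(\beta\) carefully via \eqref{eq:boundary-potential-algebraic}, including the \(-4T\) terms.

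Taking \(\beta_0=56\), \cref{thm:ground-state-gap} with cone dimension \(n=31\) and \(\ell=8\) gives
\[
 \bigl(8-\tfrac{31}{2}\bigr)^2+56=\tfrac{225}{4}+56=\tfrac{449}{4},
\]
which matches \eqref{eq:Hopf-Hardy-bound}. I expect the main obstacle to be the exact bookkeeping in Steps 2 and 5, since the stated constant depends on it.
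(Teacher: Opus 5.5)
Your Steps 1--4 follow the paper's route and are sound. The paper matches \eqref{eq:Hopf-closure} to \eqref{bh:original-table} with $\ell=16$, $m=8$ through $Q_0=(T+F)/2$, so that the centered quartic is $F/2$ and $b=1/4$, and then cites the positive part of \cref{bh:main}. You instead keep $F$ with $b=1$ and check the sign directly, which is immediate here because $a=0$. Your claim that $b$ enters \eqref{bh:G8-defect} only through the $bT^2$ term is correct, so your numerator agrees with \eqref{eq:Hopf-defect}.

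The gap is in Step 5. You wrote $\cK_F(T)|_C=8T^3\cdot 8R^2$. By your own Step 2, the $T^2$-coefficient is $(\ell-8)R^2-4T=8R^2-4T$, so $\cK_F(T)|_C=8T^3(8R^2-4T)=32T^3(2R^2-T)$. With $a=T$ and $|\nabla F|^2=4RT$ on $C$, \eqref{eq:boundary-potential-algebraic} gives $\beta_G=R\cdot 32T^3(2R^2-T)/(T^3\cdot 4RT)=16R^2/T-8$. This is at least $56$ because $R^2\geq 4T$.

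Your explanation of the missing $-8$ is wrong. The identity $V_G=\cK_F(a)/(a^3|\nabla F|^2)$ is exact, and \cref{thm:ground-state-gap} adds only the radial term $(\ell-n/2)^2$, so there are no further ``correction terms''. As written, your Step 5 derives $\beta\geq 64$ and then adopts $\beta_0=56$ from the OT--FKM formula $2(\ell-8)R^2/T-8$ without proof. That cross-check is legitimate, since $b$ cancels in the ratio, but it is not a derivation. Once you keep the $-32T^4$ term, your final computation $(8-\tfrac{31}{2})^2+56=\tfrac{449}{4}$ is exactly the paper's.
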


\begin{proof}
Put \(Q_0=(T+F)/2\). Equations \eqref{eq:Hopf-closure} give
\eqref{bh:original-table} with \(\ell=16\), \(m=8\); the centered
quartic there is \(F/2\). The positive part of
\cref{bh:main} uses only this table, \(0\leq Q_0\leq T\), and the
singular-set bound already proved in \cref{prop:Hopf-closure}.
It therefore gives strict area minimization by \(TF\). Explicitly,
\(\Delta T=32R\), \(\Gamma(T,T)=4RT\), and
\(\Gamma(T,F)=4RF\). The multiplier calculation therefore gives
\begin{align}
 \Delta G&=40RF,\qquad
 |\nabla G|^2=4RT(T^2+3F^2),\label{eq:Hopf-gradient}\\
 \cL(G)&=32TF\bigl[T^2(2R^2-T)
                    +3F^2(4R^2-T)\bigr].\label{eq:Hopf-defect}
\end{align}
Finally, \eqref{eq:boundary-potential-algebraic} and
\eqref{eq:Hopf-defect} give
\[
 \beta_G=16\frac{R^2}{T}-8\geq56
 \quad\hbox{on }C_{\reg}.
\]
Apply \cref{thm:ground-state-gap} with \(\ell=8\) and \(n=31\)
to obtain \eqref{eq:Hopf-Hardy-bound}.
\end{proof}

\begin{remark}[Relation with the OT--FKM and Riedler constructions]
\label{rem:Hopf-FKM-Riedler}
The matrices \(B_i=\operatorname{diag}(A_i,-A_i)\) form a Clifford
system on \(\R^{32}\). Its OT--FKM polynomial
\cite[Section~1.3]{Radeschi14} is
\[
 \Psi=R^2-2\sum_{i=1}^9\langle B_i(x,y),(x,y)\rangle^2
     =4F-S^2.
\]
It has multiplicities \((8,7)\). Applying the centering formula
\eqref{eq:iso-F-z} to \(-\Psi\), with ordered multiplicities
\((7,8)\), shows that its minimal regular hypercone is
\(\{\Psi+R^2/15=0\}\). The cone \(C\) has nonvertex
singularities and hence cannot be congruent to a cone over any
regular isoparametric hypersurface.

It is also distinct from Riedler's Clifford--Simons quartic family.
In ambient dimension \(N\), that family's singular set has
dimension \(N-2k\), where \(2k\) is the number of Clifford
matrices \cite[Remark~5.2.6(1)]{RiedlerThesis25}. Congruence with \(C\)
would force \(N=32\) and \(2k=16\), whereas sixteen such matrices
require ambient dimension at least \(256\)
\cite[Table~4.1]{RiedlerThesis25}. Although
\(F=\langle\cdot,\cdot\rangle\circ(h,h)\), the product map
\(\Phi(x,y)=(h(x),h(y))\) satisfies
\[
 D\Phi D\Phi^{\mathsf T}=4\operatorname{diag}(u\Id_9,v\Id_9).
\]
It is not horizontally conformal on \(u\ne v\), so the pullback
criterion in \cref{prop:pullback-subcalibration} does not apply
to this map on \(\R^{32}\).
\end{remark}

\subsubsection{An area-minimizing hypersurface of the Simons cone}

Let
\[
 \mathcal B=\{S=0\}=\{|x|=|y|\},\qquad
 D=\{S=F=0\},\qquad E_{\mathcal B}=\mathcal B\cap\{F<0\}.
\]
The Simons cone \(\mathcal B\) is a singular space of dimension
\(31\), smooth away from the origin. We use its induced metric
and the orientation determined by \(\nabla S\).
Intrinsic perimeter is variation in the smooth part, extended across
the vertex by the cutoff below. Write it as \(\Per_{\mathcal B}\), and set
\(D_a=\partial\llbracket E_{\mathcal B}\rrbracket
\mathbin{\llcorner}B_a(0)\).

\begin{theorem}[The pairing section of a Simons cone]
\label{thm:Hopf-Simons-section}
The cone \(D\) has dimension \(30\), is smooth away from the
origin, and is minimal in \(\R^{32}\). The intrinsic potential
\(G_{\mathcal B}=RF\) subcalibrates it in \(\mathcal B\).
For every \(a>0\), the integral current \(D_a\) minimizes mass
among integral \(30\)-currents supported in
\(\mathcal B\cap\overline B_a(0)\) with boundary \(\partial D_a\).
Thus \(D_a\) solves the Plateau problem in the singular ambient
space \(\mathcal B\), with its Euclidean induced area.
\end{theorem}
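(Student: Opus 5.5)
We work intrinsically on the Simons cone \(\mathcal B=\{S=0\}\) with \(S=u-v\), using the induced closure table. On \(\mathcal B\setminus\{0\}\) the unit normal is \(\nabla S/|\nabla S|\) and \(|\nabla S|^2=\Gamma(S,S)=4R\). For a function \(P\) in \(\cA=\R[R,S,F]\), the intrinsic gradient is
\[
 \nabla^{\mathcal B}P=\nabla P-\frac{\Gamma(P,S)}{4R}\nabla S .
\]
Its intrinsic Laplacian is \(\Delta P-\Hess P(\nu,\nu)-H_{\mathcal B}\,\partial_\nu P\). We will compute \(H_{\mathcal B}\) and \(\Hess P(\nu,\nu)\) using the Hessian polarization identity from the proof of \cref{thm:exact-multiplier}. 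Note that \(\Gamma(R,S)=4S\) vanishes on \(\mathcal B\), and \(\Gamma(F,S)=4F-4F=0\) by \eqref{eq:Hopf-closure}. Hence \(R\) and \(F\) are tangential on \(\mathcal B\), and their intrinsic gradient pairings coincide with the ambient ones restricted to \(u=v=R/2\). In particular \(\Gamma_{\mathcal B}(F,F)=4R\,T=R^3\), \(\Gamma_{\mathcal B}(R,F)=8F\) and \(\Gamma_{\mathcal B}(R,R)=4R\). The Laplacians need the correction terms, but they close in \(\R[R,F]\). The resulting two-generator table \((R,F)\) on \(\mathcal B\) then plays the role of \eqref{eq:closure-table}.

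\textbf{Geometry of \(D\).} On \(D\) we have \(\nabla^{\mathcal B}F\neq0\), since \(\Gamma_{\mathcal B}(F,F)=R^3>0\) away from the origin. Equivalently, \(\nabla S\) and \(\nabla F\) are independent there, because the Gram determinant \(4R\cdot4Ruv-0>0\). So \(D\) is a smooth cone of dimension \(30\) off the vertex. Next we compute the intrinsic \(\cL_{\mathcal B}(F)\). If it vanishes on \(F=0\), then \(D\) is minimal in \(\mathcal B\). Minimality in \(\R^{32}\) additionally needs the component of mean curvature along \(\nabla S\), namely the second fundamental form of \(\mathcal B\) traced over \(TD\), which is
\[
 H_{\mathcal B}-\frac{\Hess S(\nu_D,\nu_D)}{|\nabla S|}.
\]
Since \(\mathcal B\) is minimal, this reduces to showing \(\Hess S(\nabla F,\nabla F)=0\) on \(D\). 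By polarization, \(2\Hess S(\nabla F,\nabla F)=2\Gamma(F,\Gamma(S,F))-\Gamma(S,\Gamma(F,F))\). The first term is \(0\) because \(\Gamma(S,F)\equiv0\). The second is \(\Gamma(S,4RT)=4T\cdot4S+4R\,\Gamma(S,uv)\), and \(\Gamma(S,uv)=4uv-4uv=0\) on \(\mathcal B\). So both components vanish on \(D\).

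\textbf{Subcalibration.} With \(G_{\mathcal B}=RF\), we apply \cref{thm:finite-reduction} on \(\mathcal B\) with the table above to get \(\cL_{\mathcal B}(RF)\) as an explicit multiple of \(F\) times a polynomial in \(R\) and \(F/R^2\in[-1/4,1/4]\). We then check that the cofactor is nonnegative on this interval, as in the radial calculation of \cref{prop:radial-Q} (a one-variable quadratic check). This gives \(\diver_{\mathcal B}X\) with the phase signs on \(\mathcal B\setminus\{0\}\), \(|X|=1\), and \(X=\nu\) on \(D\). The comparison then follows from the cutoff argument of \cref{thm:removable-subcalibration} carried out on the smooth manifold \(\mathcal B\setminus\{0\}\). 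The vertex has zero \(30\)-dimensional measure, and \cref{lem:null-set-cutoffs} works in \(\mathcal B\) with cutoffs of total mass \(O(\eps^{30})\). This yields intrinsic perimeter minimality of \(E_{\mathcal B}\).

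\textbf{Passage to currents.} Let \(T\) be supported in \(\mathcal B\cap\overline B_a\) with \(\partial T=\partial D_a\). Cone \(T-D_a\) to the origin, which stays inside the cone \(\mathcal B\). This gives a \(31\)-current \(R\) in \(\mathcal B\) with \(\partial R=T-D_a\). On the oriented smooth \(31\)-manifold \(\mathcal B\setminus\{0\}\), \(R\) is given by an integer BV multiplicity. Choosing \(H=\{\one_{E_{\mathcal B}}+k\geq1\}\) as in the proof of \cref{thm:strict-minimality-flux}, the decomposition theorem on this manifold gives \(\Per_{\mathcal B}(H)\le\mathbf M(T)+\ldots\). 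The vertex causes no difficulty: it is a single point of zero \(\cH^{30}\) measure, and the flux identity survives the cutoffs.

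The main obstacle is the computation of the intrinsic Laplacian data. Specifically, we need \(\Delta_{\mathcal B}F\) and \(\Delta_{\mathcal B}R\), including the normal Hessian and mean-curvature corrections, expressed in \(\R[R,F]\). The sign of the resulting cofactor for \(RF\) then has to be checked on the realizable range \(|F|\le R^2/4\). I would first compute \(\Hess F(\nu,\nu)\) by polarization, expecting \(\Delta_{\mathcal B}F\) to be a multiple of \(F\). If the sign fails near the endpoints, I would try \(R^\alpha F\) with an adjusted exponent.
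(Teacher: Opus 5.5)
Your proposal follows essentially the same route as the paper. Both use the intrinsic $(R,F)$ table on $\mathcal B\setminus\{0\}$, the chain-rule computation of $\cL_{\mathcal B}(RF)$, Euclidean minimality via the vanishing of $\Hess S(\nabla F,\nabla F)=-8TS$ on $\mathcal B$ (the paper phrases this as $|\nu_x|=|\nu_y|$), and the vertex-cutoff, coning and integer-level argument for currents.

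The computation you deferred is routine. It gives $\Delta_{\mathcal B}R=62$ and $\Delta_{\mathcal B}F=4F/R$. So the table closes in $\R[R,F,R^{-1}]$ rather than in $\R[R,F]$, which is harmless on $\mathcal B\setminus\{0\}$. From there you get $|\nabla_{\mathcal B}(RF)|^2=R^5+20RF^2$ and $\cL_{\mathcal B}(RF)=16RF(2R^4+85F^2)$. The cofactor is manifestly positive, so no interval check on $F/R^2$ and no change of exponent is needed.
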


\begin{proof}
On \(\mathcal B\setminus\{0\}\), the gradients of \(S\) and
\(F\) are nonzero and orthogonal by \eqref{eq:Hopf-closure}.
Hence \(D\setminus\{0\}\) is smooth of codimension two.
The unit normal to \(\mathcal B\) is
\(n_{\mathcal B}=(x,-y)/\sqrt R\). The bidegree of \(F\) gives
\(\Hess F((x,-y),(x,-y))=-4F\). Since \(\mathcal B\) is minimal,
the induced operators satisfy
\begin{equation}\label{eq:Hopf-intrinsic-closure}
 \begin{gathered}
 \Delta_{\mathcal B}R=62,\qquad
 \Delta_{\mathcal B}F=4F/R,\\
 \Gamma_{\mathcal B}(R,R)=4R,\qquad
 \Gamma_{\mathcal B}(R,F)=8F,\qquad
 \Gamma_{\mathcal B}(F,F)=R^3.
 \end{gathered}
\end{equation}
Here \(\nabla F\) is tangent to \(\mathcal B\), and \(T=R^2/4\).
The same chain rule as in \cref{thm:finite-reduction} now yields
\begin{equation}\label{eq:Hopf-intrinsic-defect}
 \begin{gathered}
 \Delta_{\mathcal B}(RF)=82F,\qquad
 |\nabla_{\mathcal B}(RF)|^2=R^5+20RF^2,\\
 \cL_{\mathcal B}(RF)=16RF(2R^4+85F^2),
 \end{gathered}
\end{equation}
where \(\cL_{\mathcal B}\) is \eqref{eq:L-definition} for the
induced metric. The normalized gradient
\(X=\nabla_{\mathcal B}(RF)/|\nabla_{\mathcal B}(RF)|\) is
smooth on \(\mathcal B\setminus\{0\}\), equals the exterior
normal along \(D\setminus\{0\}\), and its divergence has the
strict sign of \(F\) off \(D\).

We apply the proof of \cref{thm:removable-subcalibration} in the
smooth part of \(\mathcal B\). Only the vertex requires a cutoff.
For a radial cutoff \(\eta_\eps\), zero on \(B_\eps(0)\) and
one outside \(B_{2\eps}(0)\),
\[
 \int_{\mathcal B\cap B_{2\eps}(0)}
       |\nabla_{\mathcal B}\eta_\eps|\,d\cH^{31}
 =O(\eps^{30}).
\]
This estimate shows that extending a bounded characteristic function
across the vertex creates no additional perimeter mass.
Moreover, \(|\diver_{\mathcal B}X|\leq c/r\) by homogeneity
and smoothness on the compact link. The cutoff errors therefore
vanish. For every relatively compact open set \(U\subset\mathcal B\)
and every locally finite-perimeter set \(H\subset\mathcal B\) with
\(H\triangle E_{\mathcal B}\Subset U\),
\begin{equation}\label{eq:Hopf-intrinsic-comparison}
 \Per_{\mathcal B}(H;U)-\Per_{\mathcal B}(E_{\mathcal B};U)
 \geq\int_{H\triangle E_{\mathcal B}}
       |\diver_{\mathcal B}X|\,d\cH^{31}\geq0.
\end{equation}
The set \(E_{\mathcal B}\) has locally finite perimeter because
its boundary is a cone with a smooth compact link.

For the asserted integral-current comparison, cone a competitor
minus \(D_a\) to the origin inside \(\mathcal B\cap\overline B_a(0)\).
This gives an integral \(31\)-dimensional filling. On the oriented smooth part of \(\mathcal B\), write this filling as
\(\llbracket k\rrbracket_{\mathcal B}\), with integer-valued
locally BV multiplicity, and set
\(v=\one_{E_{\mathcal B}}+k\) and \(H=\{v\geq1\}\).
The integer-level decomposition used in
\cref{thm:strict-minimality-flux} gives
\(\lvert D\one_H\rvert\leq|Dv|\) on the smooth part.
Writing \(T\) for the competitor, for \(b>a\) we obtain
\[
 0\leq \Per_{\mathcal B}(H;B_b)-
          \Per_{\mathcal B}(E_{\mathcal B};B_b)
 \leq \mathbf M(T)-\mathbf M(D_a).
\]
Here the first inequality is \eqref{eq:Hopf-intrinsic-comparison};
the second cancels the common exterior mass, since \(D\) has no
mass on a sphere. The representation and integer-level decomposition hold in
smooth charts; the same vertex cutoff extends the resulting bounded
characteristic function across the origin. No positive-dimensional
integral current is supported at that point. This proves the
claimed mass inequality, including competitors meeting the vertex.

Finally, the divergence in \eqref{eq:Hopf-intrinsic-defect} vanishes
on \(D\), so \(D\) is minimal in \(\mathcal B\).
For its unit normal \(\nu\) within \(\mathcal B\), the two block
lengths satisfy \(|\nu_x|=|\nu_y|\), by the partial gradient
identities above and \(u=v\). Hence
\(\mathrm{II}_{\mathcal B}(\nu,\nu)=0\). Since
\(\operatorname{tr}_{T\mathcal B}\mathrm{II}_{\mathcal B}=0\),
its trace on \(TD\) is also zero. The second fundamental form
composition formula proves Euclidean minimality of \(D\).
\end{proof}

The comparison in \cref{thm:Hopf-Simons-section} concerns competitors
constrained to \(\mathcal B\). Euclidean minimality of this
codimension-two cone and area minimization within \(\mathcal B\)
are separate conclusions; ambient area minimization in \(\R^{32}\)
is not established here.

Further families of Euclidean area-minimizing cones of codimension two
contained in Simons cones were constructed in
\cite[Theorem~B]{CuiFKM25}. They are therefore also area-minimizing
hypersurfaces within those Simons cones. The intrinsic certificate
above illustrates the Riemannian formulation of the subcalibration
method \cite{DePhilippisPaolini,DePhilippisMaggi14}, using the induced
gradient and divergence. At ambient singularities the comparison
requires control of the cutoff errors, as verified here at the
vertex. These examples motivate further study of algebraic
subcalibrations in Riemannian manifolds and singular ambient spaces.

\subsection{Unequal dimensions: Hopf incidence cubics}

Let $d\in\{2,4,8\}$ and let $h_d:\R^{2d}\to\R^{d+1}$ be the
standard quadratic Hopf map, with $|h_d(y)|^2=|y|^4$. Set
\[
 F_d(x,y)=\langle x,h_d(y)\rangle,\qquad
 x\in\R^{d+1},\quad y\in\R^{2d},\quad u=|y|^2,\quad v=|x|^2.
\]
The bidegree is $(1,2)$, and the Hopf identities give
\begin{equation}\label{eq:incidence-closure}
 (\Gamma(I_i,I_j))_{I=(u,v,F_d)}=
 \begin{pmatrix}4u&0&4F_d\\0&4v&2F_d\\
 4F_d&2F_d&u^2+4uv\end{pmatrix},\qquad
 \Delta I=(4d,2(d+1),0).
\end{equation}
Its determinant is $16(u+4v)(u^2v-F_d^2)$, positive on an open set.
Thus the algebra has three independent generators of degrees $(2,2,3)$.
Furthermore $\cL(F_d)=-8(u+v)F_d$, and the singular set of the
minimal zero hypercone is $\{y=0\}$. At a nonzero point of this
stratum the transverse quadratic form has signature $(d,d)$.

These are the Clifford cubics of \cref{sec:Clifford-classification}
with parameters $(m,q)=(d,d)$, whose two blocks have dimensions $2m$
and $q+1$. The classification in \cref{thm:clifford-classification}
gives area minimization precisely
when $m\geq4$, $q\geq3$, and $(m,q)\ne(4,3)$.
For the present Hopf family, this yields $d=4,8$, in ambient
dimensions $13,25$. For $d=2$, the transverse
Lawson cone in $\R^4$ is unstable, and the localization argument
following \eqref{bh:Lawson-margin} gives instability in $\R^7$.
The three-generator description uses $|h_d(y)|^2=u^2$.
For a general Clifford system, $|\tau(y)|^2$ is a separate quartic
invariant. The computations in \cref{sec:invariants} use the
four-generator algebra $\R[u,v,|\tau|^2,F]$.

\subsection{A tangent-cone obstruction for higher bidegrees}

The two-block equations also admit higher bidegrees. In the
following octonionic examples, an unstable transverse tangent cone
excludes area minimization. This obstruction explains why these
families do not enter the subcalibration constructions above.

Identify $\R^8$ with $\mathbb O$ and set
\[
 \mu(x,y)=x\overline y,\qquad x,y\in\mathbb O.
\]
Then $\norm{\mu}^2=T$ and
\begin{equation}\label{bh:mu}
 D\mu\,D\mu^{\mathsf T}=R\Id_8,\qquad
 \Delta\mu=0,\qquad D\mu(\nabla R)=4\mu.
\end{equation}
These follow because multiplication by a fixed nonzero octonion is
a linear similarity, with its squared norm as dilation squared.

We use T.~Wang's octonionic pullback construction
\cite[Section~3.4]{WangTeng26}. For $g=3,4,6$, choose the normalized
Cartan--M\"unzner polynomial $\Phi_g$ on $\R^8$ with multiplicities
respectively $(2,2)$, $(1,2)$, and $(1,1)$. Set
\[
 P_g=\Phi_g\circ\mu,\qquad F_g=P_g-t_gT^{g/2},\qquad
 (\kappa_g,t_g)=(0,0),(8,1/3),(0,0),
\]
in the same order, where $\Delta\Phi_g=\kappa_g|z|^{g-2}$ and
terms with zero coefficient are omitted. The value $t_g$ is the
minimal level from \cref{sec:low-rank}. These polynomials have
bidegree $(g,g)$.

\begin{proposition}\label{bh:octonionic}
Each $\R[u,v,P_g]$ is a graded Laplacian algebra with three
algebraically independent generators. Each $C_g=\{F_g=0\}\subset\R^{16}$
is a minimal hypercone, with singular set equal to the two coordinate
$\R^8$ subspaces. For $g=3,4,6$, these hypercones are unstable.
\end{proposition}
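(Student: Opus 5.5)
The proof has three parts, each built from tools already used above. First, closure and minimality come from the similarity identities \eqref{bh:mu}. Second, the singular set comes from the transverse local form. Third, instability follows from an unstable transverse tangent cone, localized as in the argument after \eqref{bh:Lawson-margin}.

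\textbf{Closure and minimality.} Write \(P_g=\Phi_g\circ\mu\). Since \(D\mu\,D\mu^{\mathsf T}=R\Id_8\) and \(\Delta\mu=0\), the map \(\mu\) is a horizontally conformal harmonic morphism with dilation squared \(R\). The chain rule then gives
\[
 \Gamma(P_g,P_g)=R\,(|\nabla\Phi_g|^2\circ\mu)=g^2RT^{g-1},\qquad
 \Delta P_g=R\,(\Delta\Phi_g\circ\mu)=\kappa_gRT^{g/2-1}.
\]
These are the Cartan--M\"unzner equations \eqref{eq:CM-euclidean}, using \(|\mu|^2=T\). Bihomogeneity of bidegree \((g,g)\) gives \(\Gamma(u,P_g)=2gP_g=\Gamma(v,P_g)\), via \(x\cdot\nabla_x=\Gamma(u,\cdot)/2\). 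For odd \(g=3\), the identities \(|\nabla P_3|^2=9RT^2\) and \(\Delta P_3=0\) involve only \(u,v\). Together with the entries for \(u\) and \(v\), this is a closed table, and \cref{thm:finite-reduction} gives Laplacian closure.

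Algebraic independence follows from surjectivity. The map \(\mu\) sends \(\{|x|=|y|=1\}\) onto \(\Sph^7\), and \(\Phi_g|_{\Sph^7}\) takes values in \([-1,1]\). So the image of \((u,v,P_g)\) contains an open set.

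For minimality of the centered level \(F_g=P_g-t_gT^{g/2}\), I will compute \(\cL(F_g)\) from the table in the variables \((R,T,F_g)\), noting \(\Gamma(T,P_g)=2gRP_g\). The target is a factorization \(\cL(F_g)=F_g\lambda\). The expected mechanism is that on \(F_g=0\) we have \(P_g=t_gT^{g/2}\), and the choice of \(t_g\) as the minimal isoparametric level in \cref{sec:low-rank}, with \(d=6\) for the Cartan data on \(\R^8\), makes the residual vanish. This is a short symbolic check of the kind used in the supplements.

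\textbf{Singular set.} On \(\{u,v>0\}\) the gradient satisfies \(|\nabla F_g|^2=R\,(|\nabla(\Phi_g-t_g|z|^g)|^2\circ\mu)\). This is positive on \(F_g=0\), because the minimal level is regular. On the coordinate subspaces, \(F_g\) vanishes to order \(g\ge3\), so these points are critical.

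At \((x_0,0)\) the leading term is \(|x_0|^g\,(\Phi_g-t_g|\cdot|^g)(\overline{y}\,\text{rotated})\), since \(y\mapsto x_0\overline y\) is a similarity. Hence the tangent cone there is \(\R^8\times C_{\mathrm{iso}}\), where \(C_{\mathrm{iso}}\subset\R^8\) is the minimal isoparametric cone of degree \(g\) with the stated multiplicities. This cone has nonzero second fundamental form, so the point is genuinely singular.

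\textbf{Instability.} By \cref{prop:iso-stability}, \(C_{\mathrm{iso}}\) is unstable exactly when \(N=8<4g\), which holds for \(g=3,4,6\). An unstable compactly supported variation on \(C_{\mathrm{iso}}\) is multiplied by a cutoff on a large scale in the \(\R^8\) factor. The negative term grows like \(L^8\), while the added gradient term grows only like \(L^6\).

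The variation is then transported to \(C_{g,\reg}\) by rescaling toward a nonzero singular point. The rescalings of \(C_g\) converge smoothly on compact subsets of the regular tangent cone, so this is the same localization argument as for \(\ell=4,6\) following \eqref{bh:Lawson-margin}.

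The main obstacle is the factorization \(\cL(F_g)=F_g\lambda\) in the \(g=4\) case, where \(\kappa_4\ne0\) produces cross terms. I would first try rewriting \(\cL(F_g)\) through \cref{prop:spherical-quotient-potential} in the variables \(s=T/R^2\) and \(f=P_g/T^{g/2}\), and checking that the coefficient of \(f-t_g\) absorbs everything.
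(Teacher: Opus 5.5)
Your outline matches the paper's proof everywhere except at minimality. The matching parts are closure through the horizontally conformal harmonic morphism $\mu$, the tangent cone $\R^8\times C_{\rm iso}$ at $(x_0,0)$, and instability. For instability you use \cref{prop:iso-stability} with $d=6$, so $D_{g,6}=\tfrac{25}{4}-6(g-1)<0$, and then the product localization after \eqref{bh:Lawson-margin}. Your surjectivity argument for algebraic independence is a valid substitute for the paper's Gram determinant $16g^2R(T^g-P_g^2)$. It needs two small additions: $\Phi_g$ maps $\Sph^7$ \emph{onto} $[-1,1]$, and $P_g=(uv)^{g/2}\Phi_g(\mu(\hat x,\hat y))$ with $u,v$ varying freely.

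The gap is minimality. You defer the factorization of $\cL(F_g)$ to a symbolic check you do not perform and call it the main obstacle, so as written the minimality claim is unproved. It closes with one more identity from \eqref{bh:mu}, namely $D\mu(\nabla R)=4\mu$. With Euler's identity this gives $\Gamma(H\circ\mu,R)=4h\,H\circ\mu$ for $H$ homogeneous of degree $h$. Combining this with the gradient and Laplacian transfer formulas you already wrote, \eqref{eq:L-definition} yields the paper's identity
\[
 \cL(H\circ\mu)=R^2(\cL H)\circ\mu-2h\,(H|\nabla H|^2)\circ\mu .
\]
Now $F_g=H_g\circ\mu$ with $H_g=\Phi_g-t_g|z|^g$; you already use this in your singular-set paragraph. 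Since $\cL(H_g)$ vanishes on the regular minimal cone $\{H_g=0\}$, both terms vanish on $\{F_g=0\}$, and no separate treatment of $g=4$ is needed.

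Your direct route also works. In the variables $(R,T,F_4)$ the closure table gives
\[
\begin{aligned}
\Gamma(F_4,R)&=16F_4, & \Gamma(F_4,T)&=8RF_4,\\
\Gamma(F_4,F_4)&=16(1-t^2)RT^3-32tRTF_4, & \Delta F_4&=(8-40t)RT.
\end{aligned}
\]
The $F_4$-free part of $\cL(F_4)$ is then $128(1-t^2)(1-3t)R^2T^4$, which vanishes exactly at $t_4=1/3$. For $g=3,6$ the factorization is immediate, since $\Delta P_g=0$ gives $\cL(P_g)=-g^3P_gT^{g-2}\bigl(2T+(g-1)R^2\bigr)$.
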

\begin{proof}
The closure identities are
\begin{gather}\label{bh:higher-table}
 \Delta u=\Delta v=16,\quad\Gamma(u,v)=0,\quad
 \Gamma(u,u)=4u,\quad\Gamma(v,v)=4v,\\
 \Gamma(u,P_g)=\Gamma(v,P_g)=2gP_g,\quad
 \Gamma(P_g,P_g)=g^2RT^{g-1},\nonumber\\
 \Delta P_g=\kappa_gRT^{g/2-1}.\nonumber
\end{gather}
The last expression is polynomial for even $g$, and zero for $g=3$.
The gradient determinant is
$16g^2R(T^g-P_g^2)$, positive at regular intermediate levels.
The same chain-rule argument as before proves the assertion about
the algebra and the quotient dimension.

Put $H_g(z)=\Phi_g(z)-t_g\norm{z}^g$, with the second term zero when
$t_g=0$. This is a homogeneous polynomial in every case under
consideration, and its zero set is the minimal isoparametric
hypercone in $\R^8$. A useful consequence of \eqref{bh:mu} is, for any
homogeneous $H$ of degree $h$,
\begin{equation}\label{bh:mu-pullback}
 \cL(H\circ\mu)=R^2(\cL(H))\circ\mu
                    -2h(H\norm{\nabla H}^2)\circ\mu.
\end{equation}
Indeed $\norm{\nabla(H\circ\mu)}^2=R(\norm{\nabla H}^2)\circ\mu$,
$\Delta(H\circ\mu)=R(\Delta H)\circ\mu$, and
$\Gamma(H\circ\mu,R)=4h(H\circ\mu)$. Substitution proves
\eqref{bh:mu-pullback}. Both terms vanish on $H_g\circ\mu=0$, proving
minimality. Off the coordinate subspaces $\mu\ne0$, the target
minimal level is regular, and \eqref{bh:mu} gives a nonzero source
gradient. Along either coordinate subspace that gradient vanishes.

At $(x_0,0)$ with $x_0\ne0$, the first nonzero term is
$H_g(x_0\overline y)$, so the tangent cone is a product of $\R^8$
with an orthogonal image of the target minimal cone. The target
link has dimension six and constant squared curvature $6(g-1)$.
Its classical Hardy margin is
\[
 D_g=\frac{25}{4}-6(g-1)=
 \begin{cases}-23/4,&g=3,\\-47/4,&g=4,\\-95/4,&g=6.
 \end{cases}
\]
Each is negative. The product and localization argument used after
\eqref{bh:Lawson-margin} proves instability of $C_g$. The target cone
is nonlinear, so these coordinate strata are genuine singularities.
\end{proof}

\begin{remark}[The quadratic target and the sign obstruction]
For a quadratic target with both multiplicities positive, let
$Q=\norm{\operatorname{pr}_W\mu}^2$, $m=\dim W$, and
$c=(m-1)/6$. This $Q$ obeys \eqref{bh:original-table} with $\ell=8$.
The curvature and cutoff proof above applies without a choice of
Clifford splitting, so the full $(2,2)$ hypercone $Q=cT$ is also
unstable. Thus all four bidegrees in this $\R^8\times\R^8$ construction
give unstable hypercones. The minimizing question for a codimension-two balance section is separate.

Equation \eqref{bh:mu-pullback} also exhibits the correction term
that must be controlled to pull back a subcalibration. Minimality
of the pulled-back zero set alone does not provide that sign control.
\end{remark}

\section{Gradient-square closed singular cubics}
\label{sec:gradient-square-rank-two}

We study the gradient-square multiplier \(S=|\nabla F|^2\) and
the associated potential \(G=SF\) when \(F\) has a nonzero
critical point. The assumption that \(\R[R,F,S]\) is a Laplacian
algebra permits a uniform calculation. For odd-degree \(F\),
this closure first forces the degree to be three. We then determine
area minimization and use the singular link geometry to compute
the optimal Hardy constant.

\subsection{Odd-degree rigidity and cubic normalization}

\begin{theorem}[Odd-degree rigidity]\label{thm:odd}
Let \(N\geq3\), and let \(F\not\equiv0\) be a homogeneous polynomial of
odd degree \(e\geq3\) on \(\R^N\). Suppose \(\R[R,F,S]\) is a
Laplacian algebra and \(\nabla F(p)=0\) at some \(p\ne0\).
Then \(e=3\).
\end{theorem}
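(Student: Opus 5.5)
The plan is to turn Laplacian closure into a few explicit scalar identities using only degree counting. Then I evaluate these identities in two places: to second order at the critical point \(p\), and exactly at a spherical maximum \(q\) of \(F\). Comparing the constants obtained at the two points should force \(e=3\).

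\emph{Step 1: degree counting.} Write \(\cA=\R[R,F,S]\), with generator degrees \(2\), \(e\) (odd) and \(2e-2\). A homogeneous element of \(\cA\) of odd degree has odd total power of \(F\), so it is divisible by \(F\). Applying this to the odd-degree expressions \(\Delta F\) (degree \(e-2<e\)) and \(\Gamma(F,S)\) (degree \(3e-4\)) gives
\[
\Delta F=0,\qquad \Gamma(F,S)=2\Hess F(\nabla F,\nabla F)=c\,FR^{e-2}.
\]
For the even-degree expressions, the monomials of degree \(2e-4\) and \(4e-6\) in \(R,F,S\) are restricted in the same way. With Bochner's identity and \(\Delta F=0\) this gives
\[
\Delta S=2|\Hess F|^2=\kappa R^{e-2},\qquad
\Gamma(S,S)=4|\Hess F\,\nabla F|^2=a_1R^{2e-3}+a_2SR^{e-2}+a_3F^2R^{e-3}.
\]

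\emph{Step 2: expansion at the critical point.} Euler's identity gives \(F(p)=0\) and \(\Hess F(p)\,p=0\). Put \(H=\Hess F(p)\), which is nonzero by the \(\kappa\)-identity once \(\kappa\neq0\), and write \(x=p+w\). Matching the quadratic terms in \(w\) of the \(\Gamma(F,S)\) identity gives \(4H^3=c|p|^{2e-4}H\). Matching the constant and quadratic terms of the \(\Gamma(S,S)\) identity gives \(a_1=0\) and \(H^4=a_2|p|^{2e-4}H^2\). Hence \(H\) has eigenvalues \(0\) and \(\pm\lambda\), with \(\lambda^2=a_2|p|^{2e-4}=c|p|^{2e-4}/4\). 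Since \(\tr H=\Delta F(p)=0\), the eigenvalues \(\pm\lambda\) occur with equal multiplicity \(k\), and then \(\kappa=2\cdot 2k\lambda^2|p|^{4-2e}\). This gives the balanced quadratic Taylor term mentioned in the introduction, and records \(c=4a_2\) and \(\kappa=4ka_2\).

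\emph{Step 3: evaluation at a spherical maximum.} Take \(|q|=1\) with \(F(q)=M>0\), which exists because \(F\) is odd. There \(\nabla F(q)=eMq\), \(\Hess F(q)q=e(e-1)Mq\) and \(S(q)=e^2M^2\). Evaluating the identities at \(q\) gives
\[
c=2e^3(e-1)M^2,\qquad e^4(e-1)^2M^4=(a_2e^2+a_3)M^2 .
\]
The remaining constant \(a_3\) has to be pinned down by a second relation. I would get it from the tangential eigenvalues of \(\Hess F(q)\): their sum is \(-e(e-1)M\) by harmonicity, and their squares sum to \(\kappa-e^2(e-1)^2M^2\). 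Additional relations can come from \(\Delta\Gamma(F,S)\), or from integrating the identities over the sphere. In particular, \(\int_{\Sph}S\) and \(\int_{\Sph}|\Hess F|^2\) are both expressible through \(\int_{\Sph}F^2\) for harmonic \(F\).

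\emph{Main obstacle.} The hard step is closing this system of relations among \(c\), \(\kappa\), \(a_2\), \(a_3\), \(k\) and \(M\) into an equation in \(e\) alone whose only admissible odd solution is \(e=3\). I would first try to combine the exact relation \(c=4a_2=2e^3(e-1)M^2\) with the expression of \(\kappa=4ka_2\) through spherical averages. The aim is a relation such as \((e-3)\cdot(\text{positive quantity})=0\), using \(k\geq1\) and \(k\lambda^2\) bounded by the data at \(q\). If this fails, my fallback is a third-order expansion at \(p\) along the kernel and \(\pm\lambda\) eigenspaces. The cubic Taylor coefficients must be compatible with \(\Gamma(F,S)=cFR^{e-2}\), and for \(e\geq5\) the factor \(R^{e-2}\) should produce a mismatch in the \(|p|\)-weights of the cubic terms.
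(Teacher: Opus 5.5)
You have a genuine gap: the proposal stops exactly where the proof has to happen, and the closing routes you suggest cannot succeed. Your Steps 1 and 2 are sound and match the first stage of the paper's argument, up to two factor slips. First, the quadratic term of \(|\nabla S|^2\) at \(p\) is \(4\langle H^4w,w\rangle\), so \(4H^4=a_2|p|^{2e-4}H^2\); this gives \(a_2=c\) and \(\kappa=kc\), not \(c=4a_2\). Second, \(|\nabla S(q)|^2=4(e-1)^2S(q)^2\), so your value identity at \(q\) already forces \(a_3=e(e-2)c\); no constant is left undetermined. The real problem is that every relation you extract is consistent for every odd \(e\): the quadratic Taylor terms at \(p\) and the zeroth/first-order data at the maximum \(q\). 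The moment route only gives a dimension relation. Integrating \(\Delta_{\Sph}(f^2)\), \(f^2\Delta_{\Sph}s\), \(fs\,\Delta_{\Sph}f\) and \(\Delta_{\Sph}(s^2)\) yields \(\kappa/c=(N+3e-6)/(2e)\). Combined with \(\kappa=kc\), this is \(N=e(2k-3)+6\), which is solvable for each odd \(e\) (for instance \(e=5\), \(k=2\), \(N=11\)). The trace and square-sum relations for the tangential eigenvalues of \(\Hess F(q)\) are only two scalar equations in \(N-1\) unknowns; the square sum is \(\kappa/2-e^2(e-1)^2M^2\). The third-order fallback at \(p\) is not developed.

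The missing idea is to do at the maximum what you did at \(p\): expand the closure identities to \emph{second} order there. Normalize \(c=4\) and restrict to the sphere. With \(f=F|_{\Sph^{N-1}}\) and \(s=S|_{\Sph^{N-1}}\), one has
\[
 |\nabla_{\Sph}f|^2=s-e^2f^2,\qquad
 \langle\nabla_{\Sph}f,\nabla_{\Sph}s\rangle=f\bigl(4-2e(e-1)s\bigr),\qquad
 |\nabla_{\Sph}s|^2=4s+4e(e-2)f^2-4(e-1)^2s^2 .
\]
At the maximum \(x_0\), both spherical gradients vanish, \(a^2=f(x_0)^2=2/(e^3(e-1))\) and \(s(x_0)=2/(e(e-1))\). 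Put \(B=\Hess_{\Sph}f(x_0)\) and \(C=\Hess_{\Sph}s(x_0)\). Comparing quadratic terms in normal coordinates gives
\[
 C=2B(B+e^2aI),\qquad BC=-e(e-1)aC,\qquad C^2=\Bigl(\tfrac8e-6\Bigr)C+4e(e-2)aB .
\]
Hence \(B\) and \(C\) commute, and the eigenvalues of \(B\) lie in \(\{0,-e^2a,-e(e-1)a\}\). The value \(-e^2a\) is excluded by the third relation. Since \(\tr B=-e(N+e-2)a<0\), there is an eigenvector with \(B=-e(e-1)a\) and \(C=-4/e\). Inserting these into the third relation gives \(8(e-2)(e-3)/e^2=0\). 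These relations pin the tangential eigenvalues of \(\Hess F(q)\), which are the eigenvalues of \(B\) shifted by \(eM\), to specific values. That is exactly the information your trace and square-sum relations cannot supply.
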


The proof is given in \cref{sec:odd-rigidity-proof}.
Degree counting first restricts the closure table. Second-order
expansions at a nonzero critical point and a positive spherical
maximum then determine the degree.
The additional generator in this hypothesis is specifically
\(S=|\nabla F|^2\). The pullback examples in
\cref{sec:quartic-application} use a different generator,
\(Q=|P|^2\), and have different closure identities.

\begin{lemma}[Three-generator cubic closure]
\label{lem:cubic-gradient-closure}
Let \(F\) be a nonzero homogeneous cubic on \(\R^N\).  Suppose
\(\R[R,F,S]\) is a Laplacian algebra, where \(S=|\nabla F|^2\).
There are constants \(\alpha,\beta,\gamma,\delta,\varepsilon\) such that
\begin{equation}\label{eq:cubic-five-constants}
\begin{aligned}
 \Delta F&=0,& \Delta S&=\alpha R,\\
 \Gamma(F,S)&=\beta RF,&
 \Gamma(S,S)&=\gamma R^3+\delta F^2+\varepsilon RS.
\end{aligned}
\end{equation}
Moreover, \(\alpha>0\) and
\(\cL(F)=-\beta RF/2\), so the regular zero set is minimal.
\end{lemma}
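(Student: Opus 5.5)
The argument is a degree count inside the graded algebra, followed by two short evaluations. Give \(R\), \(F\), \(S\) degrees \(2,3,4\). For each closure quantity we list the monomials \(R^iF^jS^k\) of the right total degree.

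\textbf{Degree counts.} The quantity \(\Delta F\) has degree one. Since \(\cA\) is graded and has no degree-one elements, \(\Delta F=0\). Next, \(\Delta S\) has degree two and \(\cA_2=\R R\), so \(\Delta S=\alpha R\). For \(\Gamma(F,S)\), of degree five, the only monomial available is \(RF\), which gives \(\beta RF\). For \(\Gamma(S,S)\), of degree six, the monomials are \(R^3\), \(F^2\) and \(RS\), which gives the last identity in \eqref{eq:cubic-five-constants}. These expansions may fail to be unique if \(R,F,S\) are algebraically dependent. In that case any choice of constants will do, since only existence is claimed.

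\textbf{Positivity of \(\alpha\).} Use the Bochner identity \(\tfrac12\Delta|\nabla F|^2=|\nabla^2F|^2+\langle\nabla F,\nabla\Delta F\rangle\). Since \(\Delta F=0\), this gives \(\alpha R=2|\nabla^2F|^2\). The Hessian of a nonzero cubic is not identically zero, so \(\alpha>0\). Alternatively, evaluate at a point where \(\nabla^2F\neq0\).

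\textbf{The minimality identity.} Since \(\Delta F=0\), \eqref{eq:L-definition} reduces to \(\cL(F)=-\tfrac12\langle\nabla S,\nabla F\rangle=-\tfrac12\Gamma(F,S)=-\beta RF/2\). This is of the form \eqref{eq:L-divisibility} with \(\lambda_F=-\beta R/2\). The converse part of \cref{prop:divisibility}, or directly \eqref{eq:div-normalized-gradient}, then gives minimality at every regular zero.

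The only delicate point is the bookkeeping for the degree-six space in \(\Gamma(S,S)\), namely making sure that no monomial has been missed. This is settled by listing the solutions of \(2i+3j+4k=6\): they are \((3,0,0)\), \((0,2,0)\) and \((1,0,1)\). So I do not expect a genuine obstacle.
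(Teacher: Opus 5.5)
Your proposal is correct and matches the paper's proof: the same degree count over monomials of degrees \(1,2,5,6\) in generators of degrees \(2,3,4\), Bochner's identity for \(\alpha>0\), and harmonicity giving \(\cL(F)=-\tfrac12\Gamma(F,S)\). You use one step tacitly, namely that \(\Gamma(F,S)\) and \(\Gamma(S,S)\) lie in the algebra at all; this follows from Laplacian closure through the polarization identity \eqref{eq:Gamma-polarization}, and the paper cites it explicitly at this point.
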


\begin{proof}
The generators have degrees \(2,3,4\).  Their homogeneous monomials
of degrees \(1,2,5,6\) are, respectively, none, \(R\), \(RF\), and
\(R^3,F^2,RS\).  Laplacian closure and the product rule
\eqref{eq:Gamma-polarization}
give \eqref{eq:cubic-five-constants}.  Bochner's identity gives
\(\alpha R=2|\Hess F|^2\); the Hessian of a nonzero cubic is not
identically zero, so \(\alpha>0\).  The expression for \(\cL(F)\)
follows from harmonicity.
\end{proof}

On the sphere write \(f=F|_{\Sph^{N-1}}\), \(s=S|_{\Sph^{N-1}}\).
The spherical gradient matrix and Laplacian coefficients are
\begin{equation}\label{eq:cubic-spherical-quotient}
\begin{aligned}
 g^{ff}&=s-9f^2,&
 g^{fs}&=f(\beta-12s),\\
 g^{ss}&=\gamma+\delta f^2+\varepsilon s-16s^2,\\
 \widehat\Delta f&=-3(N+1)f,&
 \widehat\Delta s&=\alpha-4(N+2)s.
\end{aligned}
\end{equation}
These identities specify \(\widehat\Gamma\) and \(\widehat\Delta\)
by the chain rule.  With the operators in \eqref{eq:cubic-spherical-quotient},
\cref{prop:spherical-quotient-potential} gives, for \(G=r^\ell h(f,s)\),
\begin{equation}\label{eq:cubic-angular-defect}
 \cL(G)=r^{3\ell-4}\left\{
 W\bigl(\widehat\Delta h+(N-1)\ell h\bigr)
 -\tfrac12\widehat\Gamma(h,W)\right\},\qquad
 W=\ell^2h^2+\widehat\Gamma(h,h).
\end{equation}
A nonzero critical point further restricts the closure coefficients,
as the following proposition shows.

\begin{proposition}[Normalized closure and the critical set]
\label{prop:cubic-singular-normalization}
Assume the hypotheses of \cref{lem:cubic-gradient-closure} and that
\(\nabla F(p)=0\) for some \(p\neq0\).  After multiplying \(F\)
by a positive constant, the closure becomes
\begin{equation}\label{eq:cubic-normalized-closure}
\begin{aligned}
 \Delta F&=0,& \Delta S&=\alpha R,&
 \alpha&=\frac{2(N+3)}3,\\
 \Gamma(F,S)&=4RF,&
 \Gamma(S,S)&=4RS+12F^2.
\end{aligned}
\end{equation}
There is an integer \(k\geq1\) such that
\begin{equation}\label{eq:cubic-dimension-quantization}
 N=6k-3,\qquad \alpha=4k.
\end{equation}
At every nonzero critical point, \(\Hess F\) has \(k\) positive
and \(k\) negative eigenvalues, all of absolute value \(|p|\), and
\(N-2k\) zero eigenvalues.
The punctured critical set \(Z^*=\{x\ne0:\nabla F(x)=0\}\) is a
real-analytic submanifold of dimension \(N-2k=4k-3\), with
\(T_pZ^*=\ker\Hess F(p)\).  In orthonormal coordinates
\(z=(u,v,w)\), where \(u,v\in\R^k\) are the positive and negative
Hessian directions and \(w\in\R^{N-2k}\),
\[
 F(p+z)=\frac{|p|}{2}\bigl(|u|^2-|v|^2\bigr)+O(|z|^3).
\]
\end{proposition}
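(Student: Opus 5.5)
Near a nonzero critical point \(p\) I will expand every closure identity in \eqref{eq:cubic-five-constants} to low order. Write \(H=\Hess F(p)\). Because \(F\) is a cubic with \(\nabla F(p)=0\), Euler's identities give \(F(p)=0\) and \(Hp=0\). The Taylor expansion is exact:
\[
 F(p+z)=\tfrac12\langle Hz,z\rangle+F(z),\qquad
 \nabla F(p+z)=Hz+\nabla F(z),
\]
where the second term uses \(\tfrac12 D^3F(z,z)=\nabla F(z)\). Hence
\[
 S(p+z)=|Hz|^2+2\langle Hz,\nabla F(z)\rangle+|\nabla F(z)|^2 .
\]

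\textbf{Linear and quadratic orders.} In \(\Gamma(S,S)=\gamma R^3+\delta F^2+\varepsilon RS\) every term except \(\gamma R^3\) vanishes at \(p\), so \(\gamma=0\). Matching the quadratic parts of \(\Gamma(F,S)=\beta RF\) gives
\[
 2\langle Hz,H^2z\rangle=\tfrac{\beta}{2}|p|^2\langle Hz,z\rangle,
 \qquad\text{that is}\qquad H^3=\tfrac{\beta}{4}|p|^2H .
\]
Matching the quadratic part of \(\Gamma(S,S)\) gives \(16|H^2z|^2=\varepsilon|p|^2|Hz|^2\), so \(H^4=\tfrac{\varepsilon}{16}|p|^2H^2\). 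The nonzero eigenvalues of \(H\) therefore satisfy \(\lambda^2=\beta|p|^2/4=\varepsilon|p|^2/16\), so \(\varepsilon=4\beta\). The trace gives \(\tr H=\Delta F(p)=0\), so the positive and negative eigenvalues occur in equal numbers \(k\), and \(\tr H^2=2k\lambda^2\). Bochner's identity \(\Delta S=2|\Hess F|^2\) at \(p\) gives \(\alpha|p|^2=2\tr H^2=4k\lambda^2\). Since \(\alpha>0\) and \(F\neq0\), \(H\neq0\), so \(k\geq1\). Rescaling \(F\) by a positive constant multiplies \(\beta\) by that constant, so we may set \(\beta=4\). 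Then \(\lambda^2=|p|^2\), \(\varepsilon=16\), and \(\alpha=4k\), which gives the displayed Hessian spectrum and the normal form \(F(p+z)=\tfrac{|p|}{2}(|u|^2-|v|^2)+O(|z|^3)\).

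\textbf{Remaining constants.} Three relations are still needed: \(\varepsilon=4\), \(\delta=12\), and \(\alpha=2(N+3)/3\). I expect these to be the main obstacle. With \(\beta=4\) fixed, the coefficient of \(RS\) in \(\Gamma(S,S)\) must be determined again, and the two derivations must be reconciled. I would first obtain linear relations from global integral identities on \(\Sph^{N-1}\) using the spherical operators \eqref{eq:cubic-spherical-quotient}. Integrating \(\widehat\Delta s\), and \(\widehat\Delta(s^2)\) or \(\widehat\Delta(f^2)\), against the uniform measure gives moment relations among \(\int s\), \(\int f^2\), \(\int s^2\) and the constants. These are the moments the appendix refers to. Next I would match the cubic and quartic Taylor orders of \(\Gamma(S,S)\) at \(p\) along \(\ker H\). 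Finally I would evaluate the spherical quotient at a maximum of \(f\), where \(g^{ff}=0\), so \(s=9f^2\), and \(g^{fs}=0\). Together these should force \(\delta=12\) and the normalized coefficients, and yield a linear relation between \(N\) and \(k\). Combined with \(\alpha=4k\), that relation should give \(N=6k-3\) and \(\alpha=2(N+3)/3\). If the algebra does not close by this route, I would instead compute \(\Delta S\) at \(p\) through the full Hessian and the cubic term \(F(z)\) restricted to the kernel directions.

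\textbf{Critical-set geometry.} The critical set is the common zero set of the polynomials \(\partial_iF\). Near \(p\), the equation \(\nabla F(p+z)=0\) reads \(Hz+\nabla F(z)=0\). Its projection onto \(\operatorname{im}H\) has surjective differential \(H\) there, so by the implicit function theorem it cuts out a real-analytic graph over \(\ker H\) of dimension \(N-2k\). To show the remaining kernel components vanish identically on this graph, I would use \(S\geq0\) together with the relation \(\Gamma(S,S)=4RS+12F^2\) restricted to \(\{S=0\}\). On that set the relation gives \(0=12F^2\), and since \(F=0\) already holds on the critical set, this makes the critical set locally exactly the graph, with tangent space \(\ker H\).
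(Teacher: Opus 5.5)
The Hessian-spectrum and normal-form part is essentially the paper's argument. However, it contains a factor error, which you then mistake for a genuine obstacle. Since \(\nabla S(p+z)=2H^2z+O(|z|^2)\), the quadratic part of \(\Gamma(S,S)\) is \(4|H^2z|^2\), not \(16|H^2z|^2\). Hence \(H^4=\tfrac{\varepsilon}{4}|p|^2H^2\) and \(\varepsilon=\beta\), so the normalization \(\beta=4\) gives \(\varepsilon=4\) at once. As written, your derivation gives \(\varepsilon=16\), which contradicts the statement; nothing can be ``reconciled'' later. (Also, \(F\mapsto cF\) multiplies \(\beta\) by \(c^2\), not \(c\); this is harmless.)

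Beyond this, \(\delta=12\), \(\alpha=2(N+3)/3\) and \(N=6k-3\) are only announced, not proved.
\begin{itemize}
\item For \(\delta\), a positive maximum of \(f\) is the right place to look, but you also need \(\nabla_{\Sph}s=0\) there. This follows because \(\nabla F=3Fx\) and \(\Hess F\,x=2\nabla F\) give \(\nabla S=4Sx\). Then \(\Gamma(F,S)\) yields \(S=\beta/12\), and \(\Gamma(S,S)\) yields \(16S^2=\delta S/9+\beta S\), i.e.\ \(\delta=3\beta=12\). With your \(\varepsilon=4\beta\), the same evaluation would force \(\delta=-24\beta<0\).
\item For \(\alpha\), the integrals of \(\Delta_{\Sph}s\), \(\Delta_{\Sph}(f^2)\) and \(\Delta_{\Sph}(s^2)\) that you list give only three relations among \(\alpha\) and three moments, and these leave \(\alpha\) undetermined. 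The missing relations come from testing the \(\Delta_{\Sph}s\) equation against \(f^2\) and the \(\Delta_{\Sph}f\) equation against \(fs\). These introduce \(M_3=\langle f^2s\rangle\) and close a homogeneous linear system in \(\langle f^2\rangle,\langle s\rangle,\langle s^2\rangle,M_3\). Solvability with \(\langle f^2\rangle>0\) is \((N+6)(3\alpha+8)=3(N+4)(\alpha+4)+12\), i.e.\ \(\alpha=2(N+3)/3\). Together with \(\alpha=4k\) this gives \(N=6k-3\).
\end{itemize}

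The critical-set step has a genuine gap. The implicit function theorem applied to the \(\operatorname{im}H\)-components of \(\nabla F\) gives an analytic graph \(x=\psi(y)\) over \(\ker H\) that contains \(Z^*\) near \(p\). But \(Z^*\) is only the subset of that graph where the remaining components also vanish, namely \(\nabla g\) for \(g(y)=F(\psi(y),y)\). A priori this could be a lower-dimensional analytic subset. Restricting \(\Gamma(S,S)=4RS+12F^2\) to \(\{S=0\}\) says something only about points already known to be critical, so it cannot show that the graph lies in \(Z^*\).

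What is needed is \(g\equiv0\), which the paper proves by contradiction:
\begin{itemize}
\item Let \(g_j\), with \(j\geq3\), be the lowest nonzero homogeneous term of \(g\).
\item On the graph, \(\nabla F=(0,\nabla g)\), \(S=|\nabla g|^2\) and \(F=g\).
\item Differentiating \(F_x(\psi(y),y)=0\) gives \(F_{xy}=O(|y|)\) and \(F_{yy}=O(|y|^{j-2})+O(|y|^2)\). Hence \(\nabla S=2\Hess F\,\nabla F=O(|y|^j)\), so \(|\nabla S|^2\) vanishes to order \(2j\).
\item On the other hand, \(4R|\nabla g|^2+12g^2\) contains the nonzero degree-\((2j-2)\) term \(4|p|^2|\nabla g_j|^2\), a contradiction.
\end{itemize}
Analyticity then gives \(g\equiv0\), so \(Z^*\) is the whole graph and \(T_pZ^*=\ker H\).
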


\begin{proof}
Euler's identity gives \(F(p)=0\), and
\(\nabla S(p)=2\Hess F(p)\nabla F(p)=0\).  Evaluation of the last
identity in \eqref{eq:cubic-five-constants} at \(p\) gives
\(\gamma=0\).  Put \(H=\Hess F(p)\) and \(R_0=|p|^2\).
Since \(2|H|^2=\alpha R_0>0\), the symmetric matrix \(H\) is
nonzero.  The quadratic terms of the Taylor expansions at \(p\) are
\[
 F(p+z)=\tfrac12\langle Hz,z\rangle+O(|z|^3),\qquad
 S(p+z)=\langle H^2z,z\rangle+O(|z|^3).
\]
Comparison in the two gradient identities gives
\[
 H^3=\tfrac14\beta R_0H,\qquad
 H^4=\tfrac14\varepsilon R_0H^2.
\]
A nonzero real eigenvalue of \(H\) therefore shows that
\(\beta>0\) and \(\varepsilon=\beta\).

Choose a positive maximum of \(F\) on the unit sphere.  At this
point \(\nabla F=3Fx\) and \(S=9F^2>0\).
Homogeneity of \(\nabla F\) gives
\(\nabla S=2\Hess F\nabla F=4Sx\).  Consequently
\(12FS=\beta F\), so \(S=\beta/12\).  The identity for
\(\Gamma(S,S)\) now reads
\(16S^2=\delta S/9+\beta S\), and hence \(\delta=3\beta\).

Rescale \(F\) by \(2/\sqrt\beta\).  Then \(\beta=4\) and
\(\delta=12\).  The spherical moment computation in
\cref{sec:odd-rigidity-proof}, equation~\eqref{eq:alphadim}, gives \(\alpha=2(N+3)/3\), proving
\eqref{eq:cubic-normalized-closure}.

At any nonzero critical point the nonzero eigenvalues of \(H\) are
now \(\pm\sqrt{R_0}\).  Since \(\tr H=\Delta F=0\), their
multiplicities agree, say they both equal \(k\).  Bochner's identity
then gives \(2kR_0=\alpha R_0/2\), so \(\alpha=4k\), proving
\eqref{eq:cubic-dimension-quantization}.

The implicit function argument in \cref{sec:cubic-local-geometry}
proves the asserted description of \(Z^*\).  The displayed Euclidean
Taylor expansion follows directly from the Hessian eigenvalues.
\end{proof}

Let \(M=Z^*\cap \Sph^{N-1}\). Since \(Z^*\) is conical and radial
vectors are transverse to the sphere, \(M\) is a compact smooth
submanifold of dimension \(4k-4\). The zero link
\(\Sigma=\{F=0\}\cap \Sph^{N-1}\) has regular part of dimension \(6k-5\).
Its transverse dimension along \(M\) is thus \(2k-1\).
If \(\rho\) denotes spherical distance to \(M\), then, near \(M\),
\begin{equation}\label{eq:tube}
 s\asymp\rho^2,\qquad
 \cH^{N-2}\bigl(\Sigma_{\reg}\cap\{\rho<\eps\}\bigr)
 \leq C\eps^{2k-1}.
\end{equation}
These estimates control the cutoffs near the singular set in
\cref{thm:sharp}.  Their proof, including the local coordinates used
to estimate the link volume, is given in
\cref{sec:cubic-local-geometry}.

\subsection{The tangent obstruction and the existence theorem}

\begin{theorem}[Area minimization from gradient-square closure]
\label{thm:cubic-rank-two-existence}
Let \(F\) be a nonzero homogeneous cubic on \(\R^N\), \(N\geq3\),
and let \(S=|\nabla F|^2\).  Suppose that
\(\R[R,F,S]\) is a Laplacian algebra and that \(F\) has a
nonzero critical point.  Write \(N=6k-3\) as in
\cref{prop:cubic-singular-normalization}.  Then
\begin{equation}\label{eq:cubic-minimality-threshold}
 \partial\llbracket\{F<0\}\rrbracket\text{ is locally mass minimizing}
 \quad\Longleftrightarrow\quad k\geq4.
\end{equation}
Every minimizing member is strictly area minimizing in the sense of
\cref{def:four-variational-notions}, with polynomial potential
\(G=SF\) of degree seven. It is also strictly stable, with the
optimal Hardy constant determined in \cref{thm:sharp}.
\end{theorem}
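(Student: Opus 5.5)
The plan has two directions: a tangent-cone obstruction for \(k\leq3\), and the gradient-square potential \(G=SF\) for \(k\geq4\). Both use the normalized closure \eqref{eq:cubic-normalized-closure} and the local description in \cref{prop:cubic-singular-normalization}.

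\emph{Necessity.} Suppose \(\partial\llbracket\{F<0\}\rrbracket\) is locally mass minimizing, and take a nonzero critical point \(p\).
\begin{enumerate}[label=\textup{(\arabic*)}]
 \item By \cref{prop:cubic-singular-normalization}, \(F(p+z)=\tfrac{|p|}2(|u|^2-|v|^2)+O(|z|^3)\). Blowing up at \(p\) gives the tangent cone \(\{|u|=|v|\}\times\R^{N-2k}\), with \(u,v\in\R^k\).
 \item Tangent cones of minimizers are minimizing. A product with a Euclidean factor is minimizing exactly when its factor is, and here the factor is the Simons-type cone in \(\R^{2k}\).
 \item That quadratic cone is minimizing only for \(2k\geq8\). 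For \(k=2,3\) its link \(\Sph^{k-1}\times\Sph^{k-1}\) has Hardy margin \((2k-3)^2/4-2(k-1)<0\). For \(k=1\) the cone is a transverse pair of hyperplanes.
 \item The instability transfers to \(C_{\reg}\) by the product-and-rescaling localization used after \eqref{bh:Lawson-margin}. Hence \(k\leq3\) cannot be minimizing.
\end{enumerate}

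\emph{Sufficiency for \(k\geq4\).} The approach is to expand \(\cL(SF)\) algebraically and reduce its sign to a positivity question on the spherical quotient.
\begin{enumerate}[label=\textup{(\arabic*)}]
 \item Apply \cref{thm:exact-multiplier} with \(a=S\). The closure identities needed are
 \[
  \Gamma(F,F)=S,\quad \Gamma(R,F)=6F,\quad \Gamma(R,S)=8S,\quad \Delta R=2N,
 \]
 together with \eqref{eq:cubic-normalized-closure}, \(\lambda_F=-2R\) and \(\alpha=4k\), \(N=6k-3\). The Hessian terms come from the \(\Gamma\)-identity in the proof of \cref{thm:exact-multiplier}.
 \item This should give \(\cK_F(S)\) as an explicit degree-six polynomial in \(R,F,S\), homogeneous in the weights \(2,3,4\). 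Equivalently, use \eqref{eq:cubic-angular-defect} with \(h=fs\) and \(\ell=7\).
 \item Restrict to \(R=1\). It remains to show \(\cK_F(S)\geq0\), and in fact \(>0\) off a null set, on the real image \(K_{\Sph}\) of \((f,s)\).
 \item Describe \(K_{\Sph}\) by positive semidefiniteness of the spherical Gram matrix \eqref{eq:cubic-spherical-quotient}. Its entries are \(g^{ff}=s-9f^2\) and \(g^{ss}=4s+12f^2-16s^2\). This gives at least \(s\geq9f^2\), \(0\leq s\leq s_{\max}\) and \(\det\mathbf g^{\Sph}\geq0\), with \(s=0\) attained on the critical link.
 \item Verify positivity on this region uniformly in \(k\geq4\). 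I would treat the defect as a quadratic in \(f^2\) with coefficients linear in \(k\) and polynomial in \(s\), then check the vertex and endpoint cases as in the isoparametric calculations, falling back to a Bernstein certificate if needed.
\end{enumerate}
This positivity step, together with pinning down the exact image boundary, is where I expect the real difficulty to lie.

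\emph{From the sign to minimality.} Given the sign:
\begin{enumerate}[label=\textup{(\arabic*)}]
 \item Off \(Z=\{S=0\}\), Euler's identity \(\langle x,\nabla G\rangle=7G\) forces any critical point of \(G\) to lie on \(F=0\). There \(\nabla G=S\nabla F\neq0\).
 \item So the critical set is \(Z\), the cone over \(M\). It has codimension \(2k\geq8\) and satisfies \eqref{eq:H-null}.
 \item \Cref{cor:gradient-subcalibration} gives local minimality. Since \(\cL(G)\not\equiv0\) and both phases are nonempty, \cref{cor:polynomial-strict-minimality} gives strict minimality.
\end{enumerate}

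\emph{Strict stability.} I would apply \cref{thm:ground-state-gap} with \(\ell=7\) and \(n=6k-4\). The degree term is \((7-(3k-2))^2=(9-3k)^2>0\) for \(k\geq4\). The boundary term \(\beta_G\) comes from \eqref{eq:boundary-potential-algebraic}, and step 5 of the sufficiency argument should show \(\beta_G\geq0\) on the link. The singular link is handled by cutoffs justified by \eqref{eq:tube}. The optimal constant itself is deferred to \cref{thm:sharp}.
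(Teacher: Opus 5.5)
You follow the paper's route: the blow-up at a nonzero critical point excludes $k\le3$, and the potential $G=SF$ with \cref{cor:polynomial-strict-minimality} and \cref{thm:ground-state-gap} handles $k\ge4$. Your passage from the sign to minimality is correct. The gap is that the sufficiency direction never establishes the one inequality everything rests on. The sign of $\cK_F(S)=\cL(SF)/F$ is neither computed nor checked; it is only announced as the expected difficulty, with a Bernstein fallback. (This defect also has degree $14$, not six.)

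The missing computation is short. The normalized closure table gives
\[
 \frac{\cL(SF)}{F}=(\alpha-16)RS^3+F^2S\bigl(12\alpha R^2-126S\bigr)+12(\alpha-6)RF^4,\qquad \alpha=4k.
\]
The only constraint needed on the image is $S\le R^2/3$, and your own Gram conditions already give it: at $R=1$, $16s^2\le4s+12f^2\le4s+\tfrac{12}{9}s$. For $k\ge4$ each term is then nonnegative, because $12\alpha R^2-126S\ge(12\alpha-42)R^2>0$. The last term gives $F\cL(G)>0$ wherever $F\neq0$. No exact description of the image and no certificate are required. Uniformity in $k$ is automatic, since the $\alpha$-derivative of the defect is $R|\nabla(SF)|^2\ge0$. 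The same formula supplies the stability input you also left unverified. On $C$ it reduces to $(4k-16)RS^3$, so $\beta_G=(4k-16)R^2/S\ge0$, and \cref{thm:ground-state-gap} gives a Hardy constant of at least $(3k-9)^2>0$.

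Two smaller points. First, in the necessity part, your step (4) cannot conclude the case $k=1$. There the tangent cone is a crossing pair of hyperplanes times $\R^{N-2}$, whose regular part is flat and stable, so there is no instability to transfer. Steps (1)--(3) already suffice: blow-up limits of minimizers minimize, the product cannot minimize unless its factor does, and the crossing has a shorter reconnection. This is the paper's argument; it proves the product step by a long-cube comparison. Second, the Hardy lower bound needs no cutoffs, since test functions are already supported in $C_{\reg}\setminus\{0\}$. The estimate \eqref{eq:tube} enters only the optimality proof in \cref{thm:sharp}.
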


\begin{proof}
Normalize as in \eqref{eq:cubic-normalized-closure}.  First,
\begin{equation}\label{eq:cubic-S-bound}
 0\leq S\leq\frac{R^2}{3}.
\end{equation}
To prove this, take a maximum of \(s\) on the unit sphere.  There
\(\nabla_{\Sph}s=0\), so \eqref{eq:cubic-spherical-quotient} gives
\[
 4f(1-3s)=0,\qquad 4s+12f^2-16s^2=0.
\]
If \(f\neq0\), then \(s=1/3\); if \(f=0\), a nonzero maximum
has \(s=1/4\).  Homogeneity proves \eqref{eq:cubic-S-bound}.

The normalized closure table gives the polynomial identities
\begin{align}
 |\nabla(SF)|^2
 &=S^3+12RSF^2+12F^4,\label{eq:cubic-SF-gradient}\\
 \frac{\cL(SF)}{F}
 &=(\alpha-16)RS^3
   +F^2S(12\alpha R^2-126S)
   +12(\alpha-6)RF^4.\label{eq:cubic-SF-defect}
\end{align}
The quotient in \eqref{eq:cubic-SF-defect} denotes its polynomial
extension across \(F=0\).  Here is a direct computation: for
\(G=SF\), one has \(\Delta G=(\alpha+8)RF\), and the first
line gives \(Q_G=|\nabla G|^2\).  Substitution in
\(\cL(G)=Q_G\Delta G-\Gamma(G,Q_G)/2\) gives the second line.

Suppose \(k\geq4\), so \(\alpha=4k\geq16\).  By
\eqref{eq:cubic-S-bound},
\[
 12\alpha R^2-126S\geq(12\alpha-42)R^2>0.
\]
Thus \(F\cL(G)\geq0\) everywhere, strictly where \(F\neq0\).
The parameter dependence also has the exact form
\[
 \mathcal P_k=\mathcal P_4+4(k-4)R|\nabla(SF)|^2,
 \qquad \mathcal P_k=\frac{\cL(SF)}F,
\]
where \(\mathcal P_k\) denotes the expression
\eqref{eq:cubic-SF-defect} with \(\alpha=4k\).
The added term is nonnegative for \(k\geq4\), so the sign
at \(k=4\) persists on the common domain \eqref{eq:cubic-S-bound}.
Equation \eqref{eq:cubic-SF-gradient} shows that
\(\{\nabla G=0\}=\{S=0\}\), whose codimension is at least
\(2k\), and hence has locally zero \(\cH^{N-1}\)-measure.
Also \(G\) and \(F\) have the same sign.  The nonzero
harmonic polynomial \(F\) changes sign in every neighborhood of
each of its zeroes, by the strong maximum principle.  Hence its zero
set is the topological boundary of \(\{F<0\}\).
\Cref{cor:polynomial-strict-minimality} gives area minimality and the
strict mass gap.

The sharp stability assertion is proved separately in
\cref{thm:sharp} below.

For the converse, let \(p\neq0\) be a critical point and put
\(E=\{F<0\}\).  The Taylor expansion in
\cref{prop:cubic-singular-normalization} gives, uniformly on compact
sets of \(z=(u,v,w)\),
\[
 r^{-2}F(p+rz)\longrightarrow
 \frac{|p|}{2}\bigl(|u|^2-|v|^2\bigr).
\]
The limiting quadratic polynomial has a zero set of Lebesgue measure
zero.  Away from that set the signs converge, so dominated convergence
gives
\[
 \one_{(E-p)/r}\longrightarrow
 \one_{\{|u|^2<|v|^2\}\times\R^{N-2k}}
 \quad\hbox{in }L^1_{\rm loc}.
\]
Both phases occur in every neighborhood of \(p\), so \(p\) belongs
to the support of the boundary current.  If \(E\) were perimeter
minimizing, local mass bounds and compactness for minimizing boundaries
would imply that this limiting quadratic phase is also perimeter
minimizing \cite{Federer69,Simon83}.
For \(k=1\) the planar crossing has a shorter reconnection.  For
\(k=2,3\), the equal-factor quadratic cone is unstable: its link
has \(|A|^2=2k-2\), and
\[
 \frac{(2k-3)^2}{4}-(2k-2)<0.
\]
These are also the non-minimizing cases in the classical quadratic
classification \cite{Simons68,DePhilippisPaolini}.
Taking a product does not repair a strict perimeter improvement:
apply the improvement on a long cube in the kernel variables and
join it to the original phase in a boundary layer.  The gain has the
order of the cube volume, while the joining cost has the order of its
boundary area.  This contradicts minimality for sufficiently large
cubes.  Thus \(k\geq4\) is necessary.
\end{proof}

\Needspace{12\baselineskip}
\subsection{Exact link geometry and the sharp Hardy constant}
\subsubsection{Two geometric identities}
On the regular cone, let \(\nu=\nabla F/\sqrt S\). Since
\(\Gamma(F,S)=0\) on \(F=0\), one has \(\Hess F(\nu,\nu)=0\).
Also
\[
 |\Hess F|^2=2kR,\qquad
 |\Hess F\nu|^2=\frac{|\nabla S|^2}{4S}=R.
\]
Projecting the Hessian onto the tangent space gives
\begin{equation}\label{eq:Aexact}
 |A_C|^2=\frac{2(k-1)R}{S},\qquad
 |A_\Sigma|^2=\frac{2(k-1)}s.
\end{equation}

The spherical cross term \(\Gamma_{\Sph}(f,s)=4f(1-3s)\) vanishes on
the zero link. Therefore
\begin{equation}\label{eq:linksgrad}
 |\nabla_\Sigma s|^2=4s-16s^2.
\end{equation}
In particular \(0<s\leq1/4\) on \(\Sigma_{\mathrm{reg}}\), improving
the global bound \(s\leq1/3\) used in the subcalibration proof.

To compute the link Laplacian, differentiate the closure table:
\begin{align*}
 \Gamma(F,\Gamma(F,S))&=4RS+24F^2,\\
 \Hess F(\nabla F,\nabla S)&=\tfrac12|\nabla S|^2=2RS+6F^2.
\end{align*}
Their difference is \(\Hess S(\nabla F,\nabla F)\); hence
\(\Hess S(\nu,\nu)=2R\) on the cone. Since \(S\) has degree four,
its spherical normal Hessian at \(r=1\) equals \(2-4s\).
Subtract this from \(\Delta_{\Sph}s=4k-4(N+2)s\). The link is minimal,
so there is no mean-curvature term, and
\begin{equation}\label{eq:linksLap}
 \Delta_\Sigma s=4k-2-4(N+1)s
 =4k-2-(24k-8)s.
\end{equation}

\subsubsection{A positive eigenfunction and its domain}
Let \(J_\Sigma=\Delta_\Sigma+|A_\Sigma|^2\).
For any real \(p\), equations \eqref{eq:Aexact}--\eqref{eq:linksLap} give
\begin{equation}\label{eq:powerJ}
 J_\Sigma(s^{-p})
 =\left\{\frac{4p^2+(6-4k)p+2k-2}{s}
           +4p(6k-6-4p)\right\}s^{-p}.
\end{equation}
For \(k\geq4\), define
\begin{equation}\label{eq:p}
 \delta_k=\sqrt{4k^2-20k+17}>0,\qquad
 p_k=\frac{2k-3-\delta_k}{4}>0,\qquad
 B_k=4p_k(6k-6-4p_k).
\end{equation}
The coefficient of \(s^{-1}\) in \eqref{eq:powerJ} vanishes, so
\begin{equation}\label{eq:eig}
 \phi=s^{-p_k}>0,\qquad J_\Sigma\phi=B_k\phi.
\end{equation}

\begin{theorem}[Optimal Hardy stability constant]\label{thm:sharp}
Under \eqref{eq:cubic-normalized-closure}, with a nonvertex critical point and \(k\geq4\),
the Friedrichs bottom of \(-J_\Sigma\), initially defined on
\(C_c^\infty(\Sigma_{\mathrm{reg}})\), is \(-B_k\).
The optimal constant in
\[
 Q_C(u)\geq H_k\int_Cr^{-2}u^2,
 \qquad u\in C_c^\infty(C_{\mathrm{reg}}\setminus\{0\}),
\]
is
\begin{equation}\label{eq:sharpH}
 \boxed{H_k=\bigl(k+\sqrt{4k^2-20k+17}\bigr)^2.}
\end{equation}
In particular, for \(k=5\), \(N=27\),
\[
 H_5=42+10\sqrt{17}.
\]
\end{theorem}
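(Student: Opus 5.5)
The plan is a ground-state argument on the regular link using the explicit positive eigenfunction \(\phi=s^{-p_k}\) of \eqref{eq:eig}, followed by a cutoff approximation near the critical link \(M\), and then the radial--angular separation \eqref{eq:exact-Hardy-gap}. Recall that on \(\Sigma_{\reg}\) one has \(s>0\), and \(s\) vanishes on \(\Sigma\) only along \(M\), where \(s\asymp\rho^2\) by \eqref{eq:tube}.

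\emph{Lower bound.} Take \(\varphi\in C_c^\infty(\Sigma_{\reg})\) and write \(\varphi=\psi\phi\). Since \(\phi\) is smooth and positive on the support of \(\varphi\), \(\psi\) is again compactly supported in \(\Sigma_{\reg}\). The standard substitution and integration by parts, as in \eqref{eq:ground-state-identity} but on the link, give
\[
 \int_{\Sigma_{\reg}}\bigl(|\nabla\varphi|^2-|A_\Sigma|^2\varphi^2\bigr)
 =\int_{\Sigma_{\reg}}\phi^2|\nabla\psi|^2-\int_{\Sigma_{\reg}}\frac{J_\Sigma\phi}{\phi}\varphi^2
 \geq -B_k\int_{\Sigma_{\reg}}\varphi^2 .
\]
Hence \(\lambda_{\rm F}(\Sigma)\geq -B_k\). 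No boundary terms appear, because everything is compactly supported in the regular part.

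\emph{Upper bound.} I would test with \(\eta_\eps\phi\), where \(\eta_\eps\) is a function of \(\rho\) that vanishes for \(\rho<\eps\), equals one for \(\rho>2\eps\), and satisfies \(|\nabla\eta_\eps|\lesssim\eps^{-1}\). Multiplying \(J_\Sigma\phi=B_k\phi\) by \(\eta_\eps^2\phi\) and integrating by parts gives
\[
 \int\bigl(|\nabla(\eta_\eps\phi)|^2-|A_\Sigma|^2\eta_\eps^2\phi^2\bigr)
 =-B_k\int\eta_\eps^2\phi^2+\int\phi^2|\nabla\eta_\eps|^2 .
\]
So it suffices to show two things: \(\phi\in L^2(\Sigma_{\reg})\), and the cutoff error \(\int\phi^2|\nabla\eta_\eps|^2\to0\). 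By \eqref{eq:tube}, \(\phi\asymp\rho^{-2p_k}\) and the transverse dimension is \(2k-1\), so the volume element is comparable to \(\rho^{2k-2}\,d\rho\) over the compact \(M\). The error is then \(O(\eps^{-2-4p_k+2k-1})\), and \(\phi^2\) is integrable if \(4p_k<2k-1\). Since \(2k-3-4p_k=\delta_k>0\), both conditions hold, the error being \(O(\eps^{\delta_k})\). Consequently the Rayleigh quotients of \(\eta_\eps\phi\) tend to \(-B_k\), and \(\lambda_{\rm F}(\Sigma)=-B_k\).

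This is exactly where the smaller root \(p_k\) of \(4p^2+(6-4k)p+2k-2=0\) is essential; the larger root would fail the integrability condition. I expect the main obstacle to be making the volume and density estimates near \(M\) rigorous, that is, justifying \(\rho\asymp\sqrt s\) uniformly and the tube volume bound. For this I would rely on the local normal form of \cref{prop:cubic-singular-normalization} and the estimates \eqref{eq:tube} from \cref{sec:cubic-local-geometry}.

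\emph{Computing \(H_k\).} Finally apply \eqref{eq:exact-Hardy-gap} with \(N=6k-3\), which gives
\[
 H_k=\frac{(N-3)^2}{4}-B_k=9(k-1)^2-B_k .
\]
Using the quadratic relation \(4p_k^2=(4k-6)p_k-2k+2\), we get \(B_k=8kp_k+8k-8=2k(2k-3-\delta_k)+8k-8\). Therefore
\[
 H_k=5k^2-20k+17+2k\delta_k=k^2+\delta_k^2+2k\delta_k=(k+\delta_k)^2 ,
\]
which is \eqref{eq:sharpH}. For \(k=5\), \(\delta_5=\sqrt{17}\), so \(H_5=25+17+10\sqrt{17}=42+10\sqrt{17}\).
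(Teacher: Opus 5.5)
Your proposal is correct and follows the paper's proof essentially step for step: the ground-state identity with \(\phi=s^{-p_k}\) gives the lower bound, the cutoff \(\eta_\eps\phi\) with error \(O(\eps^{\delta_k})\) from \eqref{eq:tube} gives the upper bound, and \eqref{eq:exact-Hardy-gap} gives \(H_k\). The paper shortens your final algebra via \(9(k-1)^2-B_k=(3(k-1)-4p_k)^2\), which avoids the quadratic relation.

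One small correction to your side remark: for the larger root the real obstruction is the cutoff energy \(O(\eps^{-\delta_k})\), not \(L^2\)-integrability. At \(k=4\) one has \(\delta_4=1\), so \(4p=2k-3+\delta_k=6<7=2k-1\), and \(s^{-p}\) is still square integrable.
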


\begin{proof}
For every compactly supported smooth \(v\) on the regular link,
the positive-solution identity applied to \eqref{eq:eig} gives
\begin{equation}\label{eq:linkground}
 \int_\Sigma\bigl(|\nabla v|^2-|A_\Sigma|^2v^2\bigr)
 =\int_\Sigma\phi^2\left|\nabla(v/\phi)\right|^2-B_k\int_\Sigma v^2.
\end{equation}
Thus the bottom is at least \(-B_k\). To prove equality, we
approximate \(\phi\) by smooth functions supported away from the
singular set and estimate the resulting cutoff energy.

By \eqref{eq:tube}, \(\phi\asymp\rho^{-2p_k}\), and
\[
 2k-3-4p_k=\delta_k>0.
\]
Thus \(\phi\in L^2(\Sigma)\), with finite gradient and potential
energy near \(M\). Choose smooth cutoffs \(\eta_\epsilon\), zero
when \(\rho<\epsilon\), one when \(\rho>2\epsilon\), with
\(|\nabla\eta_\epsilon|\leq C/\epsilon\). The local volume bound gives
\[
 \int_\Sigma\phi^2|\nabla\eta_\epsilon|^2
 =O(\epsilon^{2k-3-4p_k})=O(\epsilon^{\delta_k})\longrightarrow0.
\]
Use \(v=\eta_\epsilon\phi\) in \eqref{eq:linkground}. Its Rayleigh
quotient tends to \(-B_k\), proving the claimed bottom.
The same estimates give convergence to \(\phi\) in the Friedrichs
form norm. Testing \eqref{eq:eig} first against compactly supported
smooth functions and then using form density shows that \(\phi\)
belongs to the Friedrichs operator domain, with eigenvalue \(-B_k\)
for \(-J_\Sigma\).

The cone dimension is \(n=N-1=6k-4\).  By
\eqref{eq:exact-Hardy-gap}, the optimal cone constant is
\[
 \frac{(n-2)^2}{4}+\inf\operatorname{spec}(-J_\Sigma)
 =9(k-1)^2-B_k.
\]
Finally,
\[
 9(k-1)^2-B_k
 =\bigl(3(k-1)-4p_k\bigr)^2=(k+\delta_k)^2,
\]
which proves \eqref{eq:sharpH}.
\end{proof}

\begin{remark}
The smaller root in \eqref{eq:p} is essential. For the larger root,
\(2k-3-4p=-\delta_k\), so the cutoff energy does not tend to zero.
The two algebraic roots cannot both be used to identify the Friedrichs
bottom. The radicand \(\delta_k^2\) equals four times the Hardy stability
constant of the transverse balanced quadratic cone:
\[
 \delta_k^2=4\left[\frac{(2k-3)^2}{4}-(2k-2)\right].
\]
Thus the singular transverse geometry determines the admissible root.
\end{remark}

\subsection{Realized parameters}
\begin{remark}[Relation with radial eigencubics]
\label{rem:cubic-radial-eigencubic}
A homogeneous cubic satisfying \(\cL(F)=\lambda RF\) for a constant
\(\lambda\) is called a \emph{radial eigencubic} in the cited
classification literature.
After the normalization in
\cref{prop:cubic-singular-normalization}, one has
\[
 \cL(F)=-2RF,\qquad
 \tr\bigl((\Hess F)^2\bigr)=2kR.
\]
Tkachev's cubic trace identity \cite[Lemma~5.1]{TkachevClassification10}
then gives
\[
 \tr\bigl((\Hess F)^3\bigr)=6F.
\]
We use the existing classification results for the branch selected
by the present closure identities. The type parameters \((n_1,n_2)\) used in
\cite{FoxTkachev24} are determined here by the dimension and the
cubic Hessian-trace coefficient.  They are
\[
 n_1=1+\frac{6}{3(-2)}=0,
 \qquad n_2=\frac{N+1}{2}=3k-1.
\]
The quadratic Hessian-trace identity is the additional condition
needed for \cite[Theorems~2.9 and~4.18]{FoxTkachev24}.  Under this
condition, a radial eigencubic is either exceptional or one of the
four Hurwitz eigencubics of Clifford type.
For the exceptional branch, Peng and Xiao
\cite[Theorem~C and Lemma~9]{PengXiao93} classify the cases with
\(n_1=0\), including uniqueness for each normalized type.
Their parameters satisfy \((m_1,m_2,m_3)=(n_1,n_3,n_2)\);
the three cases \(m_1=0\), \(m_3=5,8,14\) therefore give
\[
 (n_1,n_2)=(0,5),\ (0,8),\ (0,14).
\]
See also \cite[Proposition~6.1]{TkachevClassification10}.
Among the four Clifford-type eigencubics, \(n_1=0\) occurs only for
the real product cubic in dimension three
\cite[Proposition~5.1]{TkachevClassification10}; see also the list in
\cite[Theorem~4.18]{FoxTkachev24}.
Consequently the possible parameters under the present closure are
\begin{equation}\label{eq:cubic-realized-dimensions}
 k\in\{1,2,3,5\},\qquad N\in\{3,9,15,27\}.
\end{equation}
The minimizing branch \(k\geq4\) therefore occurs only at \(k=5\),
\(N=27\); the formal threshold \(k=4\) is not realized.
This conclusion uses the classification of the selected branch only;
the general classification of radial eigencubics remains incomplete
\cite[Section~2.2]{FoxTkachev24}.
The dimension-27 case is nonempty: the trace cubic on the traceless
quaternionic Hermitian \(4\times4\) matrices is the \((0,14)\) model
in \cite[Theorem~C, pp.~17--18]{PengXiao93}; see also
\cite[Section~6.1]{TkachevClassification10}.
More explicitly, on
\[
 V=\{A=A^*\in M_4(\mathbb H):\tr A=0\},\qquad
 \langle A,B\rangle=\operatorname{Re}\tr(AB),
\]
take
\[
 R=\tr(A^2),\qquad F=\tfrac13\operatorname{Re}\tr(A^3),\qquad
 \nabla F=A^2-\tfrac R4\Id,\qquad
 S=\tr(A^4)-\tfrac{R^2}{4}.
\]
Conjugation by \(\operatorname{Sp}(4)\) is orthogonal, and its invariant
algebra is generated by the traces of degrees two, three, and four.
It is therefore exactly \(\R[R,F,S]\) and is Laplacian closed.
These normalizations give \(\Delta F=0\), \(\Delta S=20R\),
\(\Gamma(F,S)=4RF\), and \(\Gamma(S,S)=4RS+12F^2\).
For instance, diagonalizing \(A\) reduces the two gradient identities
to Newton identities for four eigenvalues of sum zero;
\(|\Hess F|^2=10R\) gives the Laplacian identity by Bochner's formula.
The matrix \(\operatorname{diag}(1,1,-1,-1)\) is a nonzero critical
point. Thus the closure proof applies to this concrete model without
using the classification to establish its minimality or stability.
\end{remark}

\section{Area-minimizing Clifford cubic cones}
\label{sec:Clifford-classification}

The Clifford family of irreducible minimal cubics was constructed by
Peng and Xiao \cite{PengXiao93} and independently rediscovered through
symmetric Clifford systems by Tkachev \cite{TkachevClifford10}; see also
\cite{FoxTkachev24}.  We apply the multiplier identities and two-sided
comparison theorem to classify its area-minimizing members.

Let \(q\geq1\), and let
\(\mathcal A=(A_0,\ldots,A_q)\) be a symmetric Clifford system on
\(\R^{2m}\), so that
\begin{equation}\label{eq:clifford-system}
 A_iA_j+A_jA_i=2\delta_{ij}\Id_{\R^{2m}}.
\end{equation}
For \((y,z)\in\R^{2m}\oplus\R^{q+1}\), put
\(A_z=\sum_{i=0}^qz_iA_i\).  The corresponding cubic and its zero cone are
\begin{equation}\label{eq:clifford-F}
 F=F_{\mathcal A}(y,z)=\sum_{i=0}^qz_i\langle A_i y,y\rangle
 =y^TA_zy,\qquad \mathcal C_{\mathcal A}=\{F=0\}.
\end{equation}
We call the pair \((m,q)\) admissible when such a Clifford system exists.

Set
\[
 R=|y|^2+|z|^2,\qquad S=|\nabla F|^2,\qquad N=2m+q+1.
\]
The matrices are trace free, so \(F\) is harmonic.  The Clifford
relations give \(\cL(F)=-8RF\), and hence the regular zero set is
minimal; the calculations are given in \cref{sec:invariants}.
We orient the cone as \(\partial\llbracket\{F<0\}\rrbracket\)
and use the local mass-minimizing convention for integral-current
competitors with compactly supported difference that bounds.

\begin{theorem}[Area-minimizing classification]
\label{thm:clifford-classification}
For every admissible symmetric Clifford system with \(q\geq1\), the
oriented multiplicity-one cone \(\mathcal C_{\mathcal A}\) is area
minimizing if and only if
\begin{equation}\label{eq:clifford-classification-list}
 \boxed{m\geq4,\qquad q\geq3,\qquad(m,q)\neq(4,3).}
\end{equation}
\end{theorem}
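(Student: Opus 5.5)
The proof splits into a positive part, certified by the two degree-six potentials announced in \cref{main:clifford-classification}, and a negative part, handled by tangent cones and one angular test. Both parts rest on the four-generator closure table of \(\R[u,v,T,F]\), with \(u=|y|^2\), \(v=|z|^2\), \(T=|\tau(y)|^2=\sum_i\langle A_iy,y\rangle^2\). First compute, from the Clifford relations, \(\Delta\) and \(\Gamma\) on these generators. For example, \(\Gamma(F,F)=S=4|A_zy|^2+T=4uv+T\), since \(A_z^2=v\Id\); and \(\Gamma(u,F)=4F\), \(\Gamma(v,F)=2F\), \(\Delta F=0\). Also record the image constraints \(0\le T\le u^2\) and \(F^2\le uvT\)-type bounds, since \(F=\langle z,\tau(y)\rangle\). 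By \cref{thm:finite-reduction}, \(\cL(G)\) for \(G=S^{3/4}F\) and for \(G=R^{1/2}S^{1/2}F\) becomes an explicit expression in \((u,v,T,F)\), divided by powers of \(S\) and \(R\). I would use \cref{app:power-identities} for the fractional powers.

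For the positive direction I would verify \(\cK_F(a)\ge0\) on the real image, with \(a=S^{3/4}\) when \(m\ge8\), \(q\ge3\), and \(a=R^{1/2}S^{1/2}\) when \(m\ge4\), \(q\ge4\). These two ranges cover \eqref{eq:clifford-classification-list}. I would normalize \(R=1\) and pass to the dimensionless variables \(u\in[0,1]\), \(\tau=T/u^2\in[0,1]\), \(\zeta=F/(u\sqrt{vT})\in[-1,1]\). The defect is then a polynomial in these variables after clearing positive powers of \(S\). I would certify it by a Bernstein expansion on the box, arranging the dependence on \(m,q\) as in the \(\mathcal P_k=\mathcal P_4+\cdots\) trick of \cref{thm:cubic-rank-two-existence}. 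Concretely, I would write the defect as the boundary case plus \((m-m_0)\) and \((q-q_0)\) times manifestly nonnegative terms, so that only the corner parameters \((8,3)\) and \((4,4)\) need exact certificates.

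The remaining hypotheses are routine. The exceptional set is \(\{S=0\}\), which equals \(\{y=0\}\) together with \(\{y\neq0,\,z=0,\,\tau(y)=0\}\), and I need it to have codimension at least two. The multiplier must be positive off this set, and \(G\) must have the sign of \(F\). These give the comparison through \cref{cor:gradient-subcalibration}, and \cref{cor:polynomial-strict-minimality}, adapted to these semialgebraic potentials, gives strict minimality. I expect this sign certification to be the main obstacle: the quotient is three-dimensional, the potentials are nonpolynomial, and near the singular stratum \(y=0\) the terms of lowest order in \(u\) must be checked to have the correct sign uniformly in \(q\).

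For the necessity direction, at a point \((0,z_0)\) the first nonzero term of \(F\) is \(y^TA_{z_0}y\). This quadratic form has signature \((m,m)\), so the tangent cone is \(\R^{q+1}\) times the balanced quadratic cone in \(\R^{2m}\). That cone is unstable for \(m\le3\) by the margin \((2m-3)^2/4-(2m-2)<0\), and the product-and-localization argument after \eqref{bh:Lawson-margin} then gives instability. For \(q\le2\), I would localize at points \((y_0,0)\) with \(\tau(y_0)=0\), where \(S\) vanishes. There the transverse tangent cone is a quadratic cone in the \(q+1\) variables \(z\) and the normal directions, and I expect it to be unstable or planar-crossing type in low \(q\), just as in the converse of \cref{thm:cubic-rank-two-existence}. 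For \((4,3)\) every tangent cone minimizes, so I would construct a positive angular function \(\phi=s^{-p}\) or \(\phi=t^{-1}\) on the link, as in \eqref{bh:eigenfunction} and \eqref{eq:eig}. I would use the link curvature computed from the closure table, justify \(\phi\) by the cutoff estimate near the singular link, and show via \eqref{eq:exact-Hardy-gap} that the resulting Rayleigh quotient gives a negative Hardy margin.
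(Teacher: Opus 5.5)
Your sufficiency argument follows the paper's route: the two degree-six potentials, scale-free cube variables, monotonicity in $m,q$ from $\partial_m\widehat Q_{\alpha,\gamma},\partial_q\widehat Q_{\alpha,\gamma}\ge0$, and Bernstein certificates at the corners $(8,3)$ and $(4,4)$. The necessity argument, however, has a genuine gap at the exceptional pair $(4,3)$.

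The eigenfunctions in \eqref{bh:eigenfunction} and \eqref{eq:eig} exist only because those links carry a closed one-variable table, and here they do not. On $R=1$, $F=0$ one has $K=16(S+3vT)$. So \cref{lem:link-curvature} gives $|A_\Lambda|^2=24/S-24vT/S^2$ and $|\nabla_\Lambda S|^2=16(S+3vT-S^2)$, where $vT=(1-u)u^2\theta$ is independent of $S$ on the two-dimensional link quotient. From $\Delta S=64$ and \eqref{eq:HSUU} one also gets $\Delta_\Lambda S=56-52S+24vT/S$. Hence
\[
 S^{p}J_\Lambda\bigl(S^{-p}\bigr)
 =24(2p-1)(p+1)\frac{vT}{S^2}+8(2p-3)(p-1)\frac1S+4p(9-4p),
\]
which is never constant. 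There is therefore no closed-form positive eigenfunction of your type, and you cannot read off the Friedrichs bottom.

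Instability must instead come from an explicit $\phi\in\mathcal D(\Lambda)$ whose form $\mathcal B(\phi,\phi)$ is evaluated, exactly or with rigorous bounds, on the singular link. Your proposal has no means of computing such integrals. The missing ingredient is \cref{lem:hopf-coarea}. Coarea on $\Sph^{11}$ and on the $z$-sphere, followed by integration along quaternionic Hopf fibres after adjoining $A_4$, reduces link integrals of functions of $(u,\theta)$ to the explicit weight $u^2(1-u)^{1/2}(\theta/(1-\theta))^{1/2}S^{1/2}$. The paper then takes $\phi=S^{3/4}(1-S/2)$, whose exponents make every integral a Beta moment, and obtains $\mathcal B(\phi,\phi)=-c_0\pi\cdot9952/969969$. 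It shows $\phi\in\mathcal D(\Lambda)$ by the cutoff estimate with transverse volume $\rho^6\,d\rho$.

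There are smaller problems as well.

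\textbf{The case $m=1$.} Your margin $(2m-3)^2/4-(2m-2)$ equals $1/4>0$ at $m=1$. The tangent cone at $(0,z_0)$ is then a planar crossing times $\R^{q+1}$, whose regular part is flat and stable. For the admissible pair $(1,1)$ one also has $\Sigma_+=\{0\}$, so your instability argument gives nothing there. You need non-minimality of the crossing instead, or, as the paper does, Federer's codimension-seven bound applied to the genuine singular strata $\Sigma_0$ (codimension $2m$) and $\Sigma_+$ (codimension $2q+2$).

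\textbf{The case $q\le2$.} You must prove $\Sigma_+\setminus\{0\}\neq\varnothing$, which uses $m>q$ (\cref{lem:Sigmaplus-nonempty}). By \cref{prop:tangent-cones} the transverse cone there is $\mathcal S_{q+1}\times\R^{2m-q-1}$, never a planar crossing.

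\textbf{The normalization in the sufficiency part.} The correct bound is $F^2\le vT$, from Cauchy--Schwarz applied to $F=\langle z,\tau(y)\rangle$. The bounded variable is therefore $\omega=F^2/(vT)\in[0,1]$; your $F/(u\sqrt{vT})$ is unbounded as $u\to0$.

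\textbf{Covering the list.} Your two ranges cover \eqref{eq:clifford-classification-list} only after the Hurwitz--Radon restriction: $q\ge3$ forces $m=4$ or $m\ge8$, and $m=4$ forces $q\le4$. Without it, the pairs $(5,3)$, $(6,3)$, $(7,3)$ would remain open.
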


The positive range is covered by two homogeneous functions of degree
six: \(G_1=S^{3/4}F\) for \(m\geq8\), \(q\geq3\), and
\(G_2=R^{1/2}S^{1/2}F\) for \(m\geq4\), \(q\geq4\).
Their divergence signs reduce to polynomial inequalities on a cube
containing all parameter values arising from points \((y,z)\)
(\cref{prop:Clifford-first-potential,prop:Clifford-second-potential}).
The dimensions of the singular strata exclude the low-dimensional
cases.  For the exceptional pair, the tangent models are minimizing,
but an explicit link variation proves instability.

\begin{theorem}[Instability of the pair \((4,3)\)]
\label{thm:Clifford-43-instability}
For every Clifford system with \((m,q)=(4,3)\), the cone
\(\cC_\cA\subset\R^{12}\) is unstable.  More precisely, if
\(\Lambda=\cC_\cA\cap\Sph^{11}\), then
\(\phi=S^{3/4}(1-S/2)\) belongs to the energy space
\(\mathcal D(\Lambda)\) defined in
\cref{sec:instability} and satisfies
\[
 \int_{\Lambda_{\reg}}
 \left(|\nabla_\Lambda\phi|^2-|A_\Lambda|^2\phi^2
       +\frac{81}{4}\phi^2\right)\dd\cH^{10}<0,
\]
where \(A_\Lambda\) is the second fundamental form of
\(\Lambda\subset\Sph^{11}\).
Every tangent cone of \(\cC_\cA\) at a nonzero point is area minimizing.
At such a singular point it is the product of the seven-dimensional Simons
cone and \(\R^4\).
\end{theorem}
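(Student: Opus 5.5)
The plan has two parts. First I identify the singular set and the tangent cones. Then I run a one-variable Rayleigh-quotient computation on the link, using the four-generator closure table $\R[u,v,T,F]$ of \cref{sec:invariants}.

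\emph{Tangent cones.} On $\cC_\cA$ I will write
\[
 S=|\nabla_yF|^2+|\nabla_zF|^2=4|A_zy|^2+\sum_i\langle A_iy,y\rangle^2 .
\]
The Clifford relations give $|A_zy|^2=|z|^2|y|^2$. Hence $S=0$ away from the origin exactly on $\{y=0\}\cong\R^{4}$.

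At a point $(0,z_0)$ with $z_0\neq0$, the leading Taylor term of $F$ is the quadratic form $y^TA_{z_0}y$. Since $A_{z_0}^2=|z_0|^2\Id$ and $\tr A_{z_0}=0$, its eigenvalues are $\pm|z_0|$, each with multiplicity $m=4$. The blow-up argument in the proof of \cref{thm:cubic-rank-two-existence} (convergence of phases in $L^1_{\rm loc}$) then identifies the tangent cone at $(0,z_0)$ as the Simons cone $\{|y_+|=|y_-|\}\subset\R^8$ times $\R^4$. This cone is area minimizing: the Simons cone is minimizing by Bombieri--De Giorgi--Giusti, or by the potential $RF$ of the Simons example in \cref{sec:algebraic-jacobi}, and the product with a Euclidean factor is minimizing by calibrating with the pulled-back field. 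At regular nonzero points the tangent cone is a hyperplane.

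\emph{Link quantities.} On $\Lambda$ I will compute $|A_\Lambda|^2$, $|\nabla_\Lambda s|^2$ and $\Delta_\Lambda s$, where $s=S|_{\Sph^{11}}$. I follow the pattern of \eqref{eq:Aexact}--\eqref{eq:linksLap}: use Bochner's identity for $|\Hess F|^2$, the normal Hessian formula from \cref{bh:curvature}, and the closure identities restricted to $F=0$. I expect these to involve $s$ and possibly the remaining invariant $t=|\tau(y)|^2$. For $\phi=\psi(s)$ with $\psi(s)=s^{3/4}(1-s/2)$, the integrand becomes
\[
 \psi'(s)^2|\nabla_\Lambda s|^2-\bigl(|A_\Lambda|^2-\tfrac{81}{4}\bigr)\psi(s)^2 .
\]
The constant is $81/4=(n-2)^2/4$ with $n=11$, so by \eqref{eq:exact-Hardy-gap} a negative value gives instability.

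\emph{Energy space.} Near the stratum $M=\{y=0\}\cap\Sph^{11}$, the local structure gives $s\asymp\rho^2$ and transverse volume density $\rho^{7}\,d\rho$, as in \eqref{eq:tube}. Then $\phi\asymp\rho^{3/2}$, $|\nabla\phi|\lesssim\rho^{1/2}$, and $|A_\Lambda|^2\phi^2\asymp\rho$, so all three energy terms are integrable. The cutoff error $\int\phi^2|\nabla\eta_\epsilon|^2=O(\epsilon^{3+8-2-1})\to0$, so $\phi\in\mathcal D(\Lambda)$ and the form may be evaluated directly.

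\emph{Evaluating the integral.} This is the main obstacle, because the integrand contains fractional powers of $s$ and the law of $s$ on $\Lambda$ is not given explicitly. I will first try integrating $\Delta_\Lambda(s^{p})$ over $\Lambda$ for real $p$, which is legitimate by the same cutoff estimate when $p>-3$. This yields a recursion among the moments $\mu_p=\int_\Lambda s^p$, and possibly among mixed moments in $t$ if $t$ enters. The aim is to express every term of the quadratic form as a combination of $\mu_{1/2},\mu_{3/2},\mu_{5/2},\mu_{7/2}$, and then reduce these to a single positive moment with a negative rational coefficient.

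If $t$ does not decouple, I will parametrize $\Lambda$ through the orbit structure of $\R[u,v,T,F]$ and compute the pushforward density of $(s,t)$ directly from the Gram determinant in \cref{prop:real-algebraic-quotient}. The integral then becomes an explicit two-variable integral, whose sign I would certify exactly.
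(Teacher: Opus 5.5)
Your overall frame is the paper's: separation of variables with $81/4=(n-2)^2/4$, membership of $\phi$ in $\mathcal D(\Lambda)$, and tangent cones by blow-up. However, there are two genuine gaps.

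\emph{The singular set.} Your claim that $S$ vanishes off the origin only on $\{y=0\}$ is false. Since $S=4|y|^2|z|^2+T$ with $T=|\tau(y)|^2$, it also vanishes on $\Sigma_+=\{(y,0):\tau(y)=0\}$. This set is nonempty because $m=4>q=3$ (\cref{lem:Sigmaplus-nonempty}); for $(4,3)$ it meets $\Sph^{11}$ in two further $3$-spheres, the Hopf fibers over the poles in \cref{sec:instability}.

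At $(y_0,0)\in\Sigma_+$ the expansion of $F$ begins with the mixed term $2\langle A_\eta y_0,\xi\rangle$, not with $y^TA_{z_0}y$, so your eigenvalue argument does not apply. You need the change of variables in \cref{prop:tangent-cones}\textup{(ii)}, which writes this term as $|y_0|(|x|^2-|x'|^2)$ and again gives $\mathcal S_4\times\R^4$. Your energy-space estimates must also be made near these components. They go through in the same way. Note that the transverse volume density is $\rho^6\,d\rho$, not $\rho^7\,d\rho$, because the transverse model is the seven-dimensional Simons cone; this does not change your conclusion.

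\emph{The negative sign.} This is the substance of the theorem, and your proposal does not establish it. Neither route you sketch works as stated.

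On $\Lambda$, \cref{lem:link-curvature} gives $|A_\Lambda|^2=32/S-K/(2S^2)$ and $|\nabla_\Lambda S|^2=K-16S^2$, where $K=16S+48(1-u)T$. Similarly, $\Delta_\Lambda s=56-52s+24(1-u)T/s$. These are not functions of $s$ alone. Integrating $\Delta_\Lambda(s^p)$ therefore produces the mixed moments $\int s^{p-2}(1-u)T$, and the $s$-recursion never closes.

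The fallback also fails. By the coarea formula, the pushforward density of the invariants is the fiber volume divided by $\sqrt{\det\mathbf g}$, and the Gram determinant supplies only the denominator.

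What you are missing is an explicit fiber integration. Write $y=\sqrt u\,\xi$ and $z=\sqrt{1-u}\,\zeta$. Then $F=u\sqrt{1-u}\,\langle\zeta,\bar\tau(\xi)\rangle$, so $\Lambda$ cuts each $\zeta$-sphere in an equatorial $2$-sphere, with $|\nabla_\zeta F|=u\sqrt{1-u}\,\theta^{1/2}$. Next adjoin $A_4$, with $E_4=E_2E_3$. This turns $\xi\mapsto(\langle A_i\xi,\xi\rangle)_{i\le4}$ into the quaternionic Hopf fibration. Hence $\theta=T/u^2=1-\sigma^2$, where $\sigma=\langle A_4\xi,\xi\rangle$ has density proportional to $1-\sigma^2$ on $[-1,1]$. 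Together these give the density $u^2(1-u)^{1/2}(\theta/(1-\theta))^{1/2}S^{1/2}$ of \cref{lem:hopf-coarea}.

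The factor $S^{1/2}$ explains the exponents $3/4$ and $7/4$. With them, every power of $S$ in \eqref{eq:monomial-link-form} is a nonnegative integer, and the integrals reduce to Beta functions. One obtains the exact $2\times2$ matrix and $\mathcal B(\phi,\phi)=-c_0\pi\cdot9952/969969$. The sign is delicate, since $\mathcal B(\phi_{3/4},\phi_{3/4})>0$, so this exact evaluation is really needed.

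If you prefer to stay within the closure formalism, the density is still recoverable, but not from $\det\mathbf g$ alone. It is fixed up to a constant by the full link closure in $(u,T)$, that is, the Gram matrix $g_{ij}$ together with the drift $b_i$, through the equations $b_i\rho=\sum_j\partial_j(g_{ij}\rho)$.
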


\subsection{Differential identities for Clifford cubics}
\label{sec:invariants}

Throughout this section, $\cA=(A_0,\ldots,A_q)$ satisfies
\eqref{eq:clifford-system}, $q\geq1$, and $F$ is defined by
\eqref{eq:clifford-F}.  Introduce
\[
 \tau_i=y^TA_i y,\qquad
 \tau=(\tau_0,\ldots,\tau_q),\qquad
 A_\tau=\sum_{i=0}^{q}\tau_iA_i,
\]
and the scalar functions
\[
 u=|y|^2,\qquad v=|z|^2,\qquad R=u+v,\qquad T=|\tau|^2.
\]
The Clifford relations imply
\[
 A_z^2=v\Id,\qquad A_\tau^2=T\Id,\qquad
 A_zA_\tau+A_\tau A_z=2F\Id.
\]
Consequently,
\begin{equation}\label{eq:clifford-contractions}
 |A_zy|^2=uv,\quad |A_\tau y|^2=uT,\quad
 \langle A_zy,A_\tau y\rangle=uF.
\end{equation}

The vectors $A_0y,\ldots,A_qy$ are pairwise orthogonal and all have
length $|y|$.  Bessel's inequality, followed by Cauchy--Schwarz,
gives the inequalities
\begin{equation}\label{eq:box-constraints}
 0\leq T\leq u^2,\qquad 0\leq F^2\leq vT.
\end{equation}
We shall also use
\[
 \langle A_i y,A_z y\rangle=uz_i,\qquad
 \langle A_i y,A_\tau y\rangle=u\tau_i.
\]

Set
\[
 U=\nabla F,\qquad S=|U|^2,\qquad Z=\nabla S,\qquad K=|Z|^2.
\]

The four functions \(I=(u,v,T,F)\) have the following gradient
inner products and Laplacians:
\begin{equation}\label{eq:clifford-closure-table}
 \begin{aligned}
 \bigl(\langle\nabla I_i,\nabla I_j\rangle\bigr)&=
 \begin{pmatrix}
 4u&0&8T&4F\\
 0&4v&0&2F\\
 8T&0&16uT&8uF\\
 4F&2F&8uF&4uv+T
 \end{pmatrix},\\
 \Delta I&=(4m,\,2(q+1),\,8(q+1)u,\,0).
 \end{aligned}
\end{equation}
For example, \(\nabla_yT=4A_\tau y\), so
\(|\nabla T|^2=16uT\) and \(\langle\nabla T,\nabla F\rangle=8uF\).
Euler's identity and the Clifford relations give the remaining entries.
The table has a useful consequence: if \(P\) and \(Q\) are polynomial
expressions in \(u,v,T,F\), then so are \(\Delta P\) and
\(\langle\nabla P,\nabla Q\rangle\).  Thus the algebra
\(\R[u,v,T,F]\) is closed under these two differential operations.
Since it also contains \(R=u+v\), it is a Laplacian algebra.
For every smooth function \(G=G(u,v,T,F)\), the chain rule computes
\(|\nabla G|^2\), \(\Delta G\), and \(\cL(G)\) from this table.

\begin{remark}
On the unit sphere, the pair \((u,F)\) ranges over a set with nonempty
interior in \(\R^2\).
For a fixed unit \(y_0\) with \(\tau(y_0)\neq0\), set
\(y=\sqrt u\,y_0\) and \(|z|^2=1-u\), with \(0<u<1\).
Varying the direction of \(z\) varies \(F\) over an interval,
independently of \(u\), since \(q\geq1\).
\end{remark}

\begin{proposition}[Gradients and Laplacians]
\label{prop:first-order}
One has
\begin{align}
 U&=(2A_z y,\tau),                                      \label{eq:U}\\
 S&=4uv+T,                                               \label{eq:S}\\
 Z&=(8vy+4A_\tau y,8uz),                                 \label{eq:Z}\\
 K&=16\bigl(4uvR+(u+4v)T\bigr),                          \label{eq:K}\\
 \Delta F&=0,                                            \label{eq:DeltaF}\\
 \Delta S&=16mv+16(q+1)u,                                \label{eq:DeltaS}\\
 \langle U,Z\rangle&=16RF.                               \label{eq:UZ}
\end{align}
\end{proposition}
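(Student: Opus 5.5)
The plan is direct differentiation of \(F=y^TA_zy\), using the Clifford relations \(A_z^2=v\Id\), \(A_\tau^2=T\Id\), \(A_zA_\tau+A_\tau A_z=2F\Id\) and the contractions \eqref{eq:clifford-contractions}. I will compute the first-order quantities \(U\), \(S\), \(Z\), \(K\), then the second-order quantities \(\Delta F\), \(\Delta S\), and finally \(\langle U,Z\rangle\). Each line should follow from the previous ones together with the closure table \eqref{eq:clifford-closure-table}, which I may use as a consistency check.

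\textbf{First-order quantities.} Since \(A_z\) is symmetric, \(\nabla_yF=2A_zy\) and \(\partial_{z_i}F=\langle A_iy,y\rangle=\tau_i\). This gives \eqref{eq:U}. Then
\[
 S=4|A_zy|^2+|\tau|^2=4uv+T
\]
by \eqref{eq:clifford-contractions}, which is \eqref{eq:S}. For \eqref{eq:Z}, differentiate \(S=4uv+T\). Using \(\nabla u=(2y,0)\), \(\nabla v=(0,2z)\) and \(\nabla_yT=4A_\tau y\) (noted after the table), I get
\[
 Z=4v(2y,0)+4u(0,2z)+(4A_\tau y,0).
\]
Then \(K=|8vy+4A_\tau y|^2+64u^2v\). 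Expanding the first square gives \(64v^2u+64v\langle y,A_\tau y\rangle+16uT\). Here \(\langle y,A_\tau y\rangle=\sum\tau_i^2=T\). Collecting terms,
\[
 K=64uv(u+v)+64vT+16uT=16\bigl(4uvR+(u+4v)T\bigr),
\]
which is \eqref{eq:K}.

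\textbf{Second-order quantities.} \(\Delta F=\sum_i z_i\cdot 2\tr A_i=0\), since the \(A_i\) are trace free; this is \eqref{eq:DeltaF}. For \(\Delta S\), I use the table: \(\Delta(uv)=v\Delta u+u\Delta v+2\Gamma(u,v)=4mv+2(q+1)u\), and \(\Delta T=8(q+1)u\). Then
\[
 \Delta S=16mv+8(q+1)u+8(q+1)u=16mv+16(q+1)u,
\]
which is \eqref{eq:DeltaS}. Alternatively, Bochner \(\Delta S=2|\Hess F|^2\) gives the same result with \(\Hess F=\begin{pmatrix}2A_z&2A_iy\\2(A_iy)^T&0\end{pmatrix}\), whose squared norm is \(8\cdot 2m\,v/2\cdot\ldots\). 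I will use only the table route to avoid constant bookkeeping.

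\textbf{The mixed product.} For \eqref{eq:UZ}, compute
\[
 \langle U,Z\rangle=\langle 2A_zy,8vy+4A_\tau y\rangle+8u\langle\tau,z\rangle
 =16vF+8uF+8uF=16RF.
\]
This uses \(\langle A_zy,y\rangle=F\), \(\langle A_zy,A_\tau y\rangle=uF\) and \(\langle\tau,z\rangle=F\). As a check, \(\Gamma(F,S)=4\Gamma(F,uv)+\Gamma(F,T)\) computed from the table gives \(4(2vF+uF)+8uF=16RF\).

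The main obstacle is only bookkeeping in the \(\tau\)-derivatives. The key point is the identity \(\nabla_yT=4A_\tau y\), which comes from \(\nabla_y\tau_i=2A_iy\) and gives \(\sum_i 2\tau_i\cdot2A_iy\). Everything else reduces to the contraction identities.
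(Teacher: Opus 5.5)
Your proof is correct. For $U$, $S$, $Z$, $K$, $\Delta F$ and $\Delta S$ it is the paper's argument: differentiate $F=y^TA_zy$, apply the contractions \eqref{eq:clifford-contractions} and trace-freeness, and use $\Delta(4uv)=16mv+8(q+1)u$ and $\Delta T=8(q+1)u$.

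The one real difference is \eqref{eq:UZ}. The paper writes $Z=2\Hess F(U)$ and quotes $\Hess F(U,U)=8RF$ from the later \cref{prop:contractions}, which is proved in the appendix. You instead expand $\langle U,Z\rangle$ directly from the explicit formulas for $U$ and $Z$, using $\langle A_zy,y\rangle=F$, $\langle A_zy,A_\tau y\rangle=uF$ and $\langle\tau,z\rangle=F$. Your route is self-contained and avoids the forward reference. The paper's route ties the identity to the Hessian contractions it needs anyway for the multiplier formulas.

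Two side remarks in your write-up are wrong, though the proof does not use either:
\begin{enumerate}
\item The consistency check should read $4(4vF+2uF)+8uF=16RF$, because the table gives $\Gamma(F,u)=4F$ and $\Gamma(F,v)=2F$. As written, $4(2vF+uF)+8uF$ does not equal $16RF$.
\item The abandoned Bochner aside is garbled. It should say $|\Hess F|^2=8mv+8(q+1)u$, so that $\Delta S=2|\Hess F|^2$ gives the same value.
\end{enumerate}
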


\begin{proof}
The first two identities follow by differentiating $F$ and using
$A_z^2=v\Id$.  Differentiating $S=4uv+T$ gives
\[
 \nabla_yS=8vy+4A_\tau y,\qquad \nabla_zS=8uz.
\]
Squaring both components and using \eqref{eq:clifford-contractions}
gives \eqref{eq:K}.

Every $A_i$ is trace free: for $j\neq i$, conjugation by $A_j$
sends $A_i$ to $-A_i$.  Hence $\Delta F=0$.  Moreover,
\[
 \Delta(4uv)=16mv+8(q+1)u,\qquad
 \Delta T=8(q+1)u,
\]
which proves \eqref{eq:DeltaS}.  Finally,
$Z=2\Hess F(U)$ and \cref{prop:contractions} below gives
$\Hess F(U,U)=8RF$, proving \eqref{eq:UZ}.
\end{proof}

For tangent vectors $V=(\xi,\zeta)$ and $W=(\eta,\vartheta)$,
\begin{align}
\Hess F(V,W)
 &=2\langle A_z\xi,\eta\rangle
   +2\langle A_\zeta y,\eta\rangle
   +2\langle A_\vartheta y,\xi\rangle,                  \label{eq:HessF}\\
\Hess S(V,W)
 &=8v\langle\xi,\eta\rangle
   +8\sum_{i=0}^{q}\langle A_i y,\xi\rangle
                       \langle A_i y,\eta\rangle
   +4\langle A_\tau\xi,\eta\rangle                       \notag\\
 &\quad
   +16\langle y,\xi\rangle\langle z,\vartheta\rangle
   +16\langle y,\eta\rangle\langle z,\zeta\rangle
   +8u\langle\zeta,\vartheta\rangle.                    \label{eq:HessS}
\end{align}

\begin{proposition}[Hessian identities]
\label{prop:contractions}
Let
\[
 M=4u^2+12uv+4v^2+T.
\]
Then
\begin{align}
\Hess F(U,U)&=8RF,                                      \label{eq:HFUU}\\
\Hess F(U,Z)&=\frac12K,                                 \label{eq:HFUZ}\\
\Hess F(Z,Z)&=32FM,                                     \label{eq:HFZZ}\\
\Hess S(U,U)&=32uv^2+32u^2v+8uT-16vT+96F^2,             \label{eq:HSUU}\\
\Hess S(U,Z)&=F(128u^2+640uv+128v^2+96T),               \label{eq:HSUZ}\\
\Hess S(Z,Z)&=512uv^3+2048u^2v^2+512u^3v                \notag\\
 &\quad+1280v^2T+1920uvT+128u^2T+64T^2.                \label{eq:HSZZ}
\end{align}
\end{proposition}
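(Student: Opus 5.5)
The plan is to compute all six contractions directly from the bilinear formulas \eqref{eq:HessF} and \eqref{eq:HessS}. We substitute the explicit vectors \(U=(2A_zy,\tau)\) from \eqref{eq:U} and \(Z=(8vy+4A_\tau y,8uz)\) from \eqref{eq:Z}. Every resulting term reduces to one of the scalar contractions \eqref{eq:clifford-contractions}, together with \(\langle A_iy,A_zy\rangle=uz_i\), \(\langle A_iy,A_\tau y\rangle=u\tau_i\), and the Clifford products \(A_z^2=v\Id\), \(A_\tau^2=T\Id\), \(A_zA_\tau+A_\tau A_z=2F\Id\). We also use the elementary identities \(\langle y,A_zy\rangle=F\), \(\langle y,A_\tau y\rangle=T\), \(\langle z,\tau\rangle=F\), and \(|\tau|^2=T\).

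For the \(F\)-Hessian, write \(V=(\xi,\zeta)\), so that \(\Hess F(V,V)=2\langle A_z\xi,\xi\rangle+4\langle A_\zeta y,\xi\rangle\).

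With \(\xi=2A_zy\) and \(\zeta=\tau\), the first term is \(8\langle A_z^3y,y\rangle=8vF\). The second is \(8\langle A_\tau y,A_zy\rangle=8uF\). Together they give \eqref{eq:HFUU}.

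Identity \eqref{eq:HFUZ} needs no separate computation. Since \(Z=2\Hess F(U)\), we have \(\Hess F(U,Z)=\tfrac12|Z|^2=K/2\).

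For \eqref{eq:HFZZ}, take \(\xi=8vy+4A_\tau y\) and \(\zeta=8uz\), and expand \(\langle A_z\xi,\xi\rangle\) into three pieces:
\begin{itemize}
\item \(64v^2F\) from the \(y\)-\(y\) part;
\item \(64v\langle A_zy,A_\tau y\rangle=64uvF\) from the cross terms;
\item \(16\langle A_\tau A_zA_\tau y,y\rangle\) from the \(A_\tau y\)-\(A_\tau y\) part.
\end{itemize}
The last piece is handled with \(A_\tau A_zA_\tau=2FA_\tau-TA_z\), which gives \(16(2FT-TF)=16TF\).

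The term \(4\langle A_\zeta y,\xi\rangle=32u\langle A_zy,\xi\rangle\) becomes \(32u(8vF+4uF)\). Collecting everything with the factor \(2\) yields \(32FM\).

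For the \(S\)-Hessian we use \eqref{eq:HessS} with \(V=W\). Besides the Clifford products above, this needs the sums \(\sum_i\langle A_iy,\xi\rangle^2\).
\begin{itemize}
\item For \(\xi=2A_zy\), the identity \(\langle A_iy,A_zy\rangle=uz_i\) makes the sum equal to \(4u^2v\).
\item For \(\xi=8vy+4A_\tau y\), we get \(\langle A_iy,\xi\rangle=8v\tau_i+4u\tau_i\). The sum is therefore \((8v+4u)^2T\).
\item The mixed sum needed for \eqref{eq:HSUZ} is \(\sum_i(2uz_i)(8v+4u)\tau_i=2u(8v+4u)F\).
\end{itemize}

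The remaining terms are evaluated the same way:
\begin{itemize}
\item \(\langle A_\tau\xi,\eta\rangle\) terms, such as \(\langle A_\tau A_zy,A_zy\rangle=\langle A_zA_\tau A_zy,y\rangle=2vF-vF\) and \(\langle A_\tau(8vy+4A_\tau y),8vy+4A_\tau y\rangle=64v^2T+64uT^2/u\cdot\ldots\), all reduce through the contractions;
\item \(\langle y,\xi\rangle\langle z,\zeta\rangle\) terms, such as \(\langle y,2A_zy\rangle\langle z,\tau\rangle=2F^2\);
\item \(8u|\zeta|^2\) terms, which are \(8uT\), \(64u^3v\), or the cross term \(64u^2F\).
\end{itemize}
After collecting, these should give \eqref{eq:HSUU}, \eqref{eq:HSUZ}, and \eqref{eq:HSZZ}.

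The main obstacle is bookkeeping in \eqref{eq:HSZZ}, which has the most cross terms. A useful check is available there. The Euler identity \(\Hess S(x,\cdot)=3\nabla S\), applied with the radial components, must be consistent with the totals. An independent check comes from the identity
\[
\Hess S(U,U)=\Gamma(F,\Gamma(F,S))-\tfrac12\Gamma(S,S),
\]
whose right-hand side can be computed from the closure table \eqref{eq:clifford-closure-table}, since \(S=4uv+T\). If the direct \(\Hess S(Z,Z)\) computation becomes unwieldy, I would instead derive it by polarization from \(\Gamma(S,\Gamma(S,S))\) and the table, expressing \(K=\Gamma(S,S)\) as in \eqref{eq:K}. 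This would use \(\Hess S(Z,Z)=\Gamma(S,K)-\tfrac12\Gamma(S,K)=\tfrac12\Gamma(S,K)\), which is a purely algebraic chain-rule computation in \(\R[u,v,T,F]\).
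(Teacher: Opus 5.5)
Your three $\Hess F$ computations are correct and follow the paper's direct substitution. The paper writes $Z_y=4(2v\Id+A_\tau)y$ for \eqref{eq:HFZZ}, and your shortcut $Z=2\Hess F(U)$ for \eqref{eq:HFUZ} is fine.

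The $\Hess S$ half, however, is not yet a proof. You never carry out the collection, and several displayed values are wrong, so substituting them would not give \eqref{eq:HSUU}--\eqref{eq:HSZZ}. The errors are these:
\begin{itemize}
\item From $A_zA_\tau A_z=2FA_z-vA_\tau$ one gets $\langle A_\tau A_zy,A_zy\rangle=2F^2-vT$, not $2vF-vF$, which is not even of degree six. This term supplies the $-16vT$ and $32F^2$ of the $96F^2$ in \eqref{eq:HSUU}.
\item Your expression for $\langle A_\tau Z_y,Z_y\rangle$ is garbled; the correct value is $64v^2T+64uvT+16T^2$.
\item For $\zeta=8uz$, the term $8u|\zeta|^2$ equals $512u^3v$, not $64u^3v$.
\end{itemize}
With these corrected and the terms actually summed, the direct route is exactly the paper's proof, whose term tables for \eqref{eq:HSUZ} and \eqref{eq:HSZZ} you would be reproducing.

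The cleanest repair is your fallback, used as the argument rather than as a check. It is a genuinely different route and it does close. The polarization identity for Hessians gives
\[
\Hess S(U,U)=\Gamma(F,\Gamma(F,S))-\tfrac12K,\qquad
\Hess S(U,Z)=\tfrac12\Gamma(F,K),\qquad
\Hess S(Z,Z)=\tfrac12\Gamma(S,K).
\]
You had the first and third but not the middle one. From \eqref{eq:clifford-closure-table} and $S=4uv+T$ one reads off
\[
\Gamma(F,S)=16RF,\quad \Gamma(F,R)=6F,\quad \Gamma(S,u)=16uv+8T,\quad \Gamma(S,v)=16uv,\quad \Gamma(S,T)=16uT+32vT,
\]
and $K$ is \eqref{eq:K}. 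The chain rule in $\R[u,v,T,F]$ then returns \eqref{eq:HSUU}, \eqref{eq:HSUZ} and \eqref{eq:HSZZ} exactly. It even gives \eqref{eq:HFZZ} as $\Gamma(S,16RF)-\tfrac12\Gamma(F,K)$.

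This is not circular: the table entries and \eqref{eq:K} come from first derivatives alone, and $\Gamma(F,S)=16RF$ follows from the table without \eqref{eq:HFUU}. Compared with the paper's term-by-term tables, this route replaces second-order Clifford manipulations by a mechanical computation with first-order closure data, the same mechanism the paper uses in \cref{thm:exact-multiplier}. The paper's route instead checks each identity directly against the explicit bilinear forms \eqref{eq:HessF}--\eqref{eq:HessS}.
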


Substitution into \eqref{eq:HessF}--\eqref{eq:HessS} gives these
identities.  We give the term-by-term calculation in
\cref{sec:appendix-calculus}.  In particular,
\eqref{eq:DeltaF} and \eqref{eq:HFUU} recover the standard radial
minimality identity for this family \cite{PengXiao93,TkachevClifford10},
\[
 \cL(F)=-8RF,
\]
so $\cC_{\cA,\reg}$ is minimal.

\subsection{Divergence inequalities}

\subsubsection{Differential formulas for the auxiliary functions}
\label{sec:master}

We apply the formula for multiplying a defining function by powers of
\(R\) and \(S\) in
\cref{prop:combined-power-multiplier} with
\(\deg F=3\) and \(\cL(F)=-8RF\).
For \(h_\gamma=S^\gamma F\), set
\[
 c=\frac\gamma S,\qquad e=\frac{\gamma(\gamma-1)}{S^2},\qquad
 M=4u^2+12uv+4v^2+T.
\]
The identities in \cref{prop:first-order,prop:contractions} give
\begin{align}
 V_\gamma&=S+32cRF^2+c^2KF^2,\label{eq:Vgamma}\\
 D_\gamma&=\frac\gamma S(\Delta S+32R)
             +\frac{\gamma(\gamma-1)}{S^2}K.\label{eq:Dgamma}
\end{align}
Thus
\[
 |\nabla h_\gamma|^2=S^{2\gamma}V_\gamma,\qquad
 \Delta h_\gamma=S^\gamma FD_\gamma,\qquad
 \cL(h_\gamma)=S^{3\gamma}FQ_\gamma,
\]
where
\begin{align}
 Q_\gamma
 &=D_\gamma V_\gamma
   -\bigl(8R+cK+32c^2F^2M\bigr)                         \notag\\
 &\quad
   -2c(16R+cK)(S+16cRF^2)                              \notag\\
 &\quad
   -c\bigl[
       \Hess S(U,U)+2cF\Hess S(U,Z)+c^2F^2\Hess S(Z,Z)
     \bigr]                                              \notag\\
 &\quad
   -eF^2(16R+cK)^2.                                     \label{eq:Qgamma}
\end{align}

Substitute \eqref{eq:HSUU}--\eqref{eq:HSZZ} into \eqref{eq:Qgamma}
and collect powers of \(F^2\).  The quadratic terms cancel, leaving a
polynomial of degree at most one in \(F^2\).

For \(G_{\alpha,\gamma}=R^\alpha S^\gamma F\), put
\(d=3+4\gamma\).  The formula for the additional factor \(R^\alpha\) gives
\begin{align}
 \widehat V_{\alpha,\gamma}
 &=V_\gamma+\frac{4\alpha(d+\alpha)}R F^2,\label{eq:Vhat}\\
 \widehat Q_{\alpha,\gamma}
 &=Q_\gamma+\frac{4\alpha(d+\alpha)}R F^2D_\gamma\notag\\
 &\quad+\frac{2\alpha(N-2d-2\alpha-1)}R V_\gamma
       +\frac{4\alpha(d+\alpha)(d+2\alpha(N-1))}{R^2}F^2,
                                                       \label{eq:Qhat-specialized}\\
 |\nabla G_{\alpha,\gamma}|^2
 &=R^{2\alpha}S^{2\gamma}\widehat V_{\alpha,\gamma},
                                                       \label{eq:grad-Gag}\\
 \cL(G_{\alpha,\gamma})
 &=R^{3\alpha}S^{3\gamma}F\widehat Q_{\alpha,\gamma}.
                                                       \label{eq:L-Gag}
\end{align}
The function \(\widehat Q_{\alpha,\gamma}\) is homogeneous of degree
two and affine in \(F^2\).  By \eqref{eq:L-Gag}, its sign determines
the sign of \(F\diver(\nabla G_{\alpha,\gamma}/|\nabla G_{\alpha,\gamma}|)\)
where \(F\neq0\) and \(\nabla G_{\alpha,\gamma}\neq0\).

The formulas for gradient inner products and Hessians are independent of \(m,q\); these
parameters occur only in the Laplacian through
\(\Delta S=16mv+16(q+1)u\) and \(N=2m+q+1\).
Differentiating \eqref{eq:Qgamma} and
\eqref{eq:Qhat-specialized} with respect to these parameters gives
\begin{align}
 \partial_m\widehat Q_{\alpha,\gamma}
 &=\widehat V_{\alpha,\gamma}
       \left(\frac{16\gamma v}S+\frac{4\alpha}R\right),\label{eq:dmQ}\\
 \partial_q\widehat Q_{\alpha,\gamma}
 &=\widehat V_{\alpha,\gamma}
       \left(\frac{16\gamma u}S+\frac{2\alpha}R\right).\label{eq:dqQ}
\end{align}
We compare these expressions for different \(m,q\) on the same
parameter domain, which we describe next.

\paragraph*{Parameters unchanged by dilation}

On $\{R>0,S>0\}$ define
\[
 \lambda=\frac{u}{R},
\]
and, whenever the denominators are positive,
\[
 \theta=\frac{T}{u^2},\qquad
 \omega=\frac{F^2}{vT}.
\]
If $u=0$ or $vT=0$, set the undefined parameter equal to zero.
The identities below remain valid with this choice.
By \eqref{eq:box-constraints},
\[
 0\leq\lambda,\theta,\omega\leq1.
\]
These parameters are unchanged under \((y,z)\mapsto(ry,rz)\), \(r>0\).
Their values lie in the unit cube, and we prove the sign inequalities
on this larger domain.  Not every point of the cube need arise from
a point \((y,z)\).
Let
\[
 H=4(1-\lambda)+\theta\lambda.
\]
Then
\begin{align}
 u&=\lambda R,&
 v&=(1-\lambda)R,&
 T&=\theta\lambda^2R^2,                                  \notag\\
 F^2&=\omega\theta\lambda^2(1-\lambda)R^3,&
 S&=\lambda HR^2.                                        \label{eq:dimensionless-substitution}
\end{align}
Since $\widehat Q_{\alpha,\gamma}$ has degree $2$, there is a
rational function
$P^{\alpha,\gamma}_{m,q}$ such that
\begin{equation}\label{eq:P-def}
 \widehat Q_{\alpha,\gamma}
 =R\,P^{\alpha,\gamma}_{m,q}(\lambda,\theta,\omega).
\end{equation}
By \eqref{eq:Qhat-specialized},
$P^{\alpha,\gamma}_{m,q}$ is affine in $\omega$:
\begin{equation}\label{eq:omega-affine}
 P^{\alpha,\gamma}_{m,q}(\lambda,\theta,\omega)
 =(1-\omega)P^{\alpha,\gamma}_{m,q}(\lambda,\theta,0)
 +\omega P^{\alpha,\gamma}_{m,q}(\lambda,\theta,1).
\end{equation}
Thus it suffices to prove two polynomial inequalities on the unit
square.

\subsubsection{The function \texorpdfstring{$G_1=S^{3/4}F$}{G1}}
\label{sec:G1}

\begin{proposition}[The first choice of defining function]
\label{prop:Clifford-first-potential}
The function \(G_1=S^{3/4}F\) satisfies
\(\cL(G_1)F\geq0\) and \(\nabla G_1\neq0\) on \(\{S>0\}\)
whenever
\begin{equation}\label{eq:first-range}
 m\geq8,\qquad q\geq3.
\end{equation}
\end{proposition}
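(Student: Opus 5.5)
The argument has two parts: the nonvanishing of \(\nabla G_1\) on \(\{S>0\}\), and the sign of \(\cL(G_1)F\). Both are reduced, through the formulas of \cref{sec:master} with \(\alpha=0\), \(\gamma=3/4\), to elementary statements on the unit cube of dilation-invariant parameters.

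\emph{Nonvanishing of the gradient.} By \eqref{eq:grad-Gag} we have \(|\nabla G_1|^2=S^{3/2}V_{3/4}\), where
\[
 V_{3/4}=S+\tfrac{24}{S}RF^2+\tfrac{9}{16S^2}KF^2
\]
by \eqref{eq:Vgamma}. Every term is nonnegative and the first is positive on \(\{S>0\}\), so \(\nabla G_1\neq0\) there.

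\emph{Reduction of the sign condition.} By \eqref{eq:L-Gag},
\[
 \cL(G_1)F=S^{9/4}F^2\widehat Q_{0,3/4}.
\]
It therefore suffices to prove \(\widehat Q_{0,3/4}=Q_{3/4}\geq0\) on \(\{S>0\}\).

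\emph{Monotonicity in the parameters.} I will use \eqref{eq:dmQ} and \eqref{eq:dqQ} with \(\alpha=0\):
\[
 \partial_m Q_{3/4}=\frac{12v}{S}V_{3/4}\geq0,\qquad
 \partial_q Q_{3/4}=\frac{12u}{S}V_{3/4}\geq0.
\]
These derivatives are taken formally in the parameters, and they are legitimate because the Hessian and gradient data are independent of \(m,q\). Consequently it suffices to treat the corner \((m,q)=(8,3)\) on the common domain. That domain is the unit cube in \((\lambda,\theta,\omega)\) from \eqref{eq:dimensionless-substitution}, which contains every realizable point.

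\emph{Reduction to two polynomials.} Next I substitute \eqref{eq:dimensionless-substitution} into \eqref{eq:Qgamma}, using \eqref{eq:K}, \eqref{eq:DeltaS} and \eqref{eq:HSUU}--\eqref{eq:HSZZ}, and obtain \(Q_{3/4}=RP(\lambda,\theta,\omega)\) as in \eqref{eq:P-def}. Since \(S=\lambda HR^2\), the powers of \(S\) in the denominators become powers of \(\lambda H\), and I clear them with \((\lambda H)^2\). By the affine structure \eqref{eq:omega-affine}, it is enough to prove nonnegativity of the two polynomials
\[
 \Pi_0(\lambda,\theta)=(\lambda H)^2P(\lambda,\theta,0),\qquad
 \Pi_1(\lambda,\theta)=(\lambda H)^2P(\lambda,\theta,1)
\]
on \([0,1]^2\), where \(H=4(1-\lambda)+\theta\lambda\geq\theta\lambda\) and \(H>0\) except at \((\lambda,\theta)=(1,0)\). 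I will factor out the obvious powers of \(\lambda\) and \(F^2\)-related factors such as \(\theta\lambda^2(1-\lambda)\) that appear from the substitution.

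\emph{Main obstacle: positivity certificates.} The hard step is to certify that \(\Pi_0\) and \(\Pi_1\) are nonnegative at \((m,q)=(8,3)\) on the square. The \(\omega=0\) polynomial should be comparatively simple, since it comes only from \(D_\gamma V_\gamma-8R-\cdots\) evaluated at \(F=0\). I expect it to reduce to
\[
 S\bigl(\tfrac{3}{4}(\Delta S+32R)/S\bigr)-8R-\tfrac{3}{4}\Hess S(U,U)/S+\cdots,
\]
which should be checkable by hand, for instance as a quadratic in \(\theta\) with coefficients in \(\lambda\). For \(\Pi_1\), I would first try rewriting it in the Bernstein basis of bidegree equal to its degrees in \((\lambda,\theta)\) and checking that all Bernstein coefficients are nonnegative, as in \cref{ex:Bernstein-17}. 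If some coefficients are negative, I would subdivide the square, most likely near the edge \(\lambda=1\), where \(v\to0\) and the bound \(F^2\leq vT\) degenerates, or near the corner \(\lambda=1\), \(\theta=0\), where \(S\) is small relative to \(R^2\). On each piece I would recompute the Bernstein coefficients. Because the positive range \(m\geq8\) is presumably sharp for this potential at \(q=3\), I expect \(\Pi_1\) to be nearly tight somewhere on the boundary, and that is where the subdivision will be needed.
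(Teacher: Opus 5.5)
Your proposal is the paper's proof: $V_{3/4}>0$ gives $\nabla G_1\neq0$, the identities $\partial_mQ_{3/4}=12vV_{3/4}/S\geq0$ and $\partial_qQ_{3/4}=12uV_{3/4}/S\geq0$ reduce everything to $(m,q)=(8,3)$ on the unit cube, and affinity in $\omega$ leaves two endpoint numerators, handled by hand at $\omega=0$ and by a Bernstein certificate at $\omega=1$.

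One bookkeeping correction: the two $K^2F^2/S^4$ contributions cancel and the $\lambda$-factors drop out, so the denominators are $H$ at $\omega=0$ and $H^3$ at $\omega=1$, and clearing by $(\lambda H)^2$ does not produce a polynomial. With the correct denominators, $\mathfrak P^{(0)}_{8,3}=\lambda(33\lambda-32)\theta+192\lambda^2-388\lambda+196$ is linear in $\theta$ with nonnegative edge values $(15\lambda-14)^2$ and $4(\lambda-1)(48\lambda-49)$, and the bidegree-$(4,3)$ Bernstein coefficients of $\mathfrak P^{(1)}_{8,3}$ are already all nonnegative, so no subdivision is needed.
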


Set $\alpha=0$ and $\gamma=3/4$.  We abbreviate
$P_{m,q}=P^{0,3/4}_{m,q}$.  From
\eqref{eq:L-Gag} and \eqref{eq:P-def},
\begin{equation}\label{eq:L-G1}
 \cL(G_1)=S^{9/4}FRP_{m,q}(\lambda,\theta,\omega).
\end{equation}
Moreover,
\begin{equation}\label{eq:V-G1-positive}
 V_{3/4}
 =S+\frac{24RF^2}{S}+\frac{9KF^2}{16S^2}>0,
\end{equation}
so $\nabla G_1\neq0$ wherever $S>0$.

At $\omega=0$ one obtains
\begin{equation}\label{eq:P0-G1}
 P_{m,q}(\lambda,\theta,0)
 =\frac{\mathfrak P^{(0)}_{m,q}(\lambda,\theta)}{H}.
\end{equation}
The numerator is
\begin{align}
\mathfrak P^{(0)}_{m,q}
 &=93\lambda^2\theta-48\lambda^2-128\lambda\theta
   +236\lambda-188                                      \notag\\
 &\quad
 +m(-12\lambda^2\theta+48\lambda^2+12\lambda\theta
     -96\lambda+48)                                     \notag\\
 &\quad
 +q(12\lambda^2\theta-48\lambda^2+48\lambda).
 \label{eq:N0-G1}
\end{align}
At $\omega=1$,
\[
 P_{m,q}(\lambda,\theta,1)
 =\frac{\mathfrak P^{(1)}_{m,q}(\lambda,\theta)}{H^3},
\]
where $\mathfrak P^{(1)}_{m,q}$ has degree at most $4$ in $\lambda$
and at most $3$ in $\theta$.

\paragraph*{Nonnegativity from Bernstein polynomials}

Our use of Bernstein coefficients to prove polynomial nonnegativity
was inspired by Lien--Tsai \cite[Section~6.3 and Appendix~A]{LienTsai26}.
Here we apply the same positivity principle in each of two variables.

For a polynomial of degree at most \(M\) in \(\lambda\) and at most
\(L\) in \(\theta\), we use the Bernstein polynomial expansion
\[
 P(\lambda,\theta)=\sum_{i=0}^M\sum_{j=0}^L b_{ij}
 \binom Mi\lambda^i(1-\lambda)^{M-i}
 \binom Lj\theta^j(1-\theta)^{L-j}.
\]
The matrices below list the rational coefficients \(b_{ij}\) in the
stated degrees.  Every basis function is nonnegative on the unit
square, so nonnegative entries prove \(P\geq0\).

If
\[
 P(\lambda,\theta)=\sum_{k=0}^{M}\sum_{\ell=0}^{L}
 a_{k\ell}\lambda^k\theta^\ell,
\]
then these coefficients are given explicitly by
\begin{equation}\label{eq:bernstein-conversion}
 b_{ij}=\sum_{k=0}^{i}\sum_{\ell=0}^{j}a_{k\ell}
 \frac{\binom{i}{k}}{\binom{M}{k}}
 \frac{\binom{j}{\ell}}{\binom{L}{\ell}},
 \qquad 0\leq i\leq M,\quad 0\leq j\leq L.
\end{equation}
Indeed, the binomial theorem gives
\[
 \sum_{i=k}^{M}\frac{\binom{i}{k}}{\binom{M}{k}}
 \binom{M}{i}x^i(1-x)^{M-i}
 =x^k\sum_{r=0}^{M-k}\binom{M-k}{r}x^r(1-x)^{M-k-r}=x^k.
\]
Applying this identity in each variable proves
\eqref{eq:bernstein-conversion}.  Thus every displayed coefficient
is obtained by a finite sum of rational numbers.

\begin{proposition}[The case $(m,q)=(8,3)$]
\label{prop:G1-base}
For $(m,q)=(8,3)$, the rational function $P_{8,3}$ is nonnegative
on $[0,1]^3\cap\{H>0\}$.  The two endpoint numerators
$\mathfrak P^{(0)}_{8,3}$ and $\mathfrak P^{(1)}_{8,3}$ are nonnegative
on the closed square $[0,1]^2$.
\end{proposition}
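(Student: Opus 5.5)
The plan is to make the statement a finite exact computation. First I would obtain $\mathfrak P^{(0)}_{8,3}$ explicitly by setting $m=8$, $q=3$ in \eqref{eq:N0-G1}:
\[
\mathfrak P^{(0)}_{8,3}=93\lambda^2\theta-48\lambda^2-128\lambda\theta+236\lambda-188
+8(-12\lambda^2\theta+48\lambda^2+12\lambda\theta-96\lambda+48)
+3(12\lambda^2\theta-48\lambda^2+48\lambda).
\]
Collecting terms gives
\[
\mathfrak P^{(0)}_{8,3}=33\lambda^2\theta+192\lambda^2-32\lambda\theta-388\lambda+196,
\]
which has degree two in $\lambda$ and degree one in $\theta$. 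Next I would convert it to the Bernstein basis of bidegree $(2,1)$ using \eqref{eq:bernstein-conversion}. At $\theta=0$ the coefficients are $196$, $196-194=2$ and $196-388+192=0$. At $\theta=1$ they are $196$, $196-194-16=-14$ and $196-388+192-32+33=1$.

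One coefficient is negative, so degree $(2,1)$ does not settle the question. I would handle this either by degree elevation in $\lambda$, or by a direct argument: the polynomial is affine in $\theta$, so it suffices to check $\theta=0$ and $\theta=1$.
\begin{itemize}
\item At $\theta=0$ we get $192\lambda^2-388\lambda+196=4(48\lambda^2-97\lambda+49)=4(\lambda-1)(48\lambda-49)$, which is $\geq0$ on $[0,1]$.
\item At $\theta=1$ we get $225\lambda^2-420\lambda+196=(15\lambda-14)^2\geq0$.
\end{itemize}
So $\mathfrak P^{(0)}_{8,3}\geq0$ on the square. Both endpoint slices have zeros, at $\lambda=1$ and at $\lambda=14/15$ respectively, so any Bernstein certificate must be obtained after elevation or subdivision. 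The affine-in-$\theta$ argument is cleaner.

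For $\mathfrak P^{(1)}_{8,3}$ I would first compute it symbolically. Substitute \eqref{eq:HSUU}--\eqref{eq:HSZZ}, \eqref{eq:K}, \eqref{eq:DeltaS}, \eqref{eq:Vgamma} and \eqref{eq:Dgamma} into \eqref{eq:Qgamma} with $\gamma=3/4$. Then pass to the variables of \eqref{eq:dimensionless-substitution} with $\omega=1$, and clear the denominator $H^3$. I would then expand this polynomial, of degree at most $(4,3)$, in the Bernstein basis via \eqref{eq:bernstein-conversion} and check that every $b_{ij}\geq0$. If some coefficients come out negative, I would degree-elevate, or subdivide the square at $\lambda=1/2$ (and in $\theta$ if needed) and recompute on each subsquare. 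I would also look for zeros on the boundary of the square, for instance at $\lambda=1$ where $v=0$, and factor out a power of $(1-\lambda)$ when one appears.

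Finally I would assemble the result. On $[0,1]^3$ with $H>0$, \eqref{eq:omega-affine} writes $P_{8,3}$ as a convex combination of $\mathfrak P^{(0)}_{8,3}/H$ and $\mathfrak P^{(1)}_{8,3}/H^3$. Both terms are nonnegative, so $P_{8,3}\geq0$.

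I expect the main obstacle to be producing $\mathfrak P^{(1)}_{8,3}$ exactly and finding a Bernstein certificate for it. Its degenerate zeros on the boundary of the square, analogous to the tangencies seen in the $\omega=0$ slice, will likely force subdivision or factoring rather than a single nonnegative coefficient matrix.
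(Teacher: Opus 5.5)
Your proposal is the paper's proof. The $\omega=0$ slice is handled the same way: your endpoint check in $\theta$ is the paper's case split at $\lambda=32/33$, with the same factorizations $(15\lambda-14)^2$ and $4(\lambda-1)(48\lambda-49)$. The conclusion likewise follows from the convex combination \eqref{eq:omega-affine}.

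For $\omega=1$ the paper does exactly what you plan, and the bidegree-$(4,3)$ Bernstein matrix of $\mathfrak P^{(1)}_{8,3}$ is already entrywise nonnegative, so the subdivision or factoring you anticipate is not needed. Its zeros lie only in the corner block near $(\lambda,\theta)=(1,0)$, where $H=0$. That explicit matrix is the whole content of this half, though, and you still have to compute it to complete the proof.
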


\begin{proof}
At $\omega=0$,
\[
 \mathfrak P^{(0)}_{8,3}
 =\lambda(33\lambda-32)\theta
  +192\lambda^2-388\lambda+196.
\]
If $\lambda\leq32/33$, the minimum in $\theta$ occurs at $\theta=1$,
and
\[
 \mathfrak P^{(0)}_{8,3}\geq225\lambda^2-420\lambda+196
 =(15\lambda-14)^2.
\]
If $\lambda\geq32/33$, the minimum occurs at $\theta=0$, and
\[
 \mathfrak P^{(0)}_{8,3}\geq4(\lambda-1)(48\lambda-49)\geq0.
\]
The Bernstein coefficient matrix of $\mathfrak P^{(1)}_{8,3}$, with rows
indexed by $i=0,\ldots,4$ and columns by $j=0,\ldots,3$, is
\[
\begin{pmatrix}
3136&7448&11760&16072\\
16&1432&3293&5599\\
0&1108/3&2855/3&1747\\
0&0&277/2&1405/4\\
0&0&0&1
\end{pmatrix}.
\]
Every displayed entry is nonnegative.  Thus both endpoint numerators
are nonnegative on the closed square.  On $H>0$,
\eqref{eq:omega-affine} proves the assertion for the rational function;
this includes all parameter values with $S>0$.
\end{proof}

\begin{proof}[Proof of \cref{prop:Clifford-first-potential}]
For $\alpha=0$, $\gamma=3/4$,
\eqref{eq:dmQ}--\eqref{eq:dqQ} give
\[
 \partial_m Q_{3/4}=\frac{12v}{S}V_{3/4}\geq0,\qquad
 \partial_q Q_{3/4}=\frac{12u}{S}V_{3/4}\geq0.
\]
Starting with $(m,q)=(8,3)$, this monotonicity gives
\eqref{eq:first-range}.  The sign
$\cL(G_1)F\geq0$ follows from \eqref{eq:L-G1}.
\end{proof}

\subsubsection{The function \texorpdfstring{$G_2=R^{1/2}S^{1/2}F$}{G2}}
\label{sec:G2}

\begin{proposition}[The second choice of defining function]
\label{prop:Clifford-second-potential}
The function \(G_2=R^{1/2}S^{1/2}F\) satisfies
\(\cL(G_2)F\geq0\) and \(\nabla G_2\neq0\) on
\(\{R>0,S>0\}\) whenever \(m\geq4\), \(q\geq4\).
\end{proposition}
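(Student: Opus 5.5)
The plan follows the proof of \cref{prop:Clifford-first-potential}, now with \(\alpha=\gamma=1/2\), so that \(d=3+4\gamma=5\) and \(\deg G_2=6\).

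\emph{Nonvanishing gradient.} By \eqref{eq:grad-Gag}, \(|\nabla G_2|^2=RS\,\widehat V_{1/2,1/2}\). From \eqref{eq:Vgamma} and \eqref{eq:Vhat},
\[
 \widehat V_{1/2,1/2}=S+\frac{16RF^2}{S}-\frac{KF^2}{4S^2}+\frac{11F^2}{R}.
\]
Here \(c^2=1/(4S^2)\) and \(4\alpha(d+\alpha)=11\). The last term is nonnegative, so the danger is the possibly negative middle combination. I would substitute \eqref{eq:dimensionless-substitution} and \(K=16(4uvR+(u+4v)T)\) from \eqref{eq:K}, divide by a positive power of \(R\), and use the affine dependence on \(\omega\). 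Positivity then reduces to positivity at \(\omega=0\), which is immediate since the value is \(S>0\), and at \(\omega=1\). The \(\omega=1\) case is a rational inequality in \((\lambda,\theta)\) on the square, with denominator a power of \(H\).

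If the term \(KF^2/(4S^2)\) is not dominated directly, I would use the Euler argument from the OT--FKM section instead. \(G_2\) is homogeneous of degree six, so at a critical point \(G_2=0\), hence \(F=0\). There \(\nabla G_2=R^{1/2}S^{1/2}\nabla F\neq0\). This second route seems cleaner, and I would lead with it.

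\emph{Divergence sign.} By \eqref{eq:L-Gag} and \eqref{eq:P-def}, \(\cL(G_2)F=R^{5/2}S^{3/2}F^2\,R\,P^{1/2,1/2}_{m,q}\). It therefore suffices to show \(P^{1/2,1/2}_{m,q}\geq0\) on \([0,1]^3\cap\{H>0\}\). The derivatives \eqref{eq:dmQ} and \eqref{eq:dqQ} are \(\widehat V(8v/S+2/R)\) and \(\widehat V(8u/S+1/R)\). Both are nonnegative once \(\widehat V\geq0\), which holds whenever \(S>0\) as shown above, or which is checked on the cube along the first route. Hence monotonicity reduces everything to the base case \((m,q)=(4,4)\).

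\emph{Base case.} Using \eqref{eq:omega-affine}, I would compute the endpoint numerators \(\mathfrak P^{(0)}_{4,4}\) and \(\mathfrak P^{(1)}_{4,4}\), obtained after clearing powers of \(H\). The extra \(R^{\alpha}\) terms in \eqref{eq:Qhat-specialized} keep these polynomial in \((\lambda,\theta)\), since \(u,v,T,F^2\) are all multiples of powers of \(R\). I would then prove their nonnegativity on \([0,1]^2\). For the low-degree \(\omega=0\) numerator I would try completing squares in \(\theta\), as in \cref{prop:G1-base}. For the \(\omega=1\) numerator I would use Bernstein coefficients via \eqref{eq:bernstein-conversion}, subdividing the square if some coefficients are negative.

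The \textbf{main obstacle} is this base-case certificate. Near \(\lambda\to1\), where \(v\to0\), and near \(\theta=0\), the numerators may vanish on the boundary of the square. Tight boundary zeros can defeat a single Bernstein expansion, and if that happens I would factor out the boundary vanishing, for example powers of \((1-\lambda)\), before expanding. Finally, \(\{S=0\}\) has codimension at least two, so it is excluded exactly as in the table of exceptional sets.
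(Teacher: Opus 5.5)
Your outline is the paper's proof. It reduces to \(P^{1/2,1/2}_{m,q}\geq0\) on \([0,1]^3\cap\{H>0\}\), propagates from \((m,q)=(4,4)\) using \eqref{eq:dmQ}--\eqref{eq:dqQ}, and settles the base case by affineness in \(\omega\) and Bernstein coefficients.

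One correction concerns \(\widehat V\). In \eqref{eq:Vgamma} the term \(c^2KF^2\) carries a plus sign, so
\[
 \widehat V_{1/2,1/2}=S+\frac{16RF^2}{S}+\frac{KF^2}{4S^2}+\frac{11F^2}{R}
\]
is \(S>0\) plus nonnegative terms, since \(K\geq0\) by \eqref{eq:K}. This gives \(\nabla G_2\neq0\) immediately. It also gives positivity of both parameter derivatives on the whole cube, which is how the paper argues. The ``dangerous'' middle term does not exist, although your Euler argument is also valid. Separately, \(\cL(G_2)F=R^{5/2}S^{3/2}F^2P^{1/2,1/2}_{m,q}\); your extra factor \(R\) is harmless for the sign.

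What you leave as a plan is the actual content of the proposition. The paper computes \(M^{(0)}=HP^{1/2,1/2}_{4,4}|_{\omega=0}\), of bidegree \((3,2)\), and \(M^{(1)}=H^3P^{1/2,1/2}_{4,4}|_{\omega=1}\), of bidegree \((6,4)\). A single Bernstein expansion of each on \([0,1]^2\) already has all coefficients nonnegative. So no subdivision or factoring out of boundary zeros is needed, even though both numerators vanish at boundary points such as the corner \((\lambda,\theta)=(1,0)\).
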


The first function does not give the required divergence sign at
$(m,q)=(4,4)$.  By
\eqref{eq:P0-G1}--\eqref{eq:N0-G1},
\[
 P_{4,4}\left(\frac13,1,0\right)=-1.
\]
The configurations in \eqref{eq:actual-y}--\eqref{eq:actual-omega},
with $q=4$, approach $(\lambda,\theta,\omega)=(1/3,1,0)$ through
points $(y,z)$ with $F\neq0$.  By continuity, the sign also fails at
nearby points for this Clifford system.  We therefore use $G_2$.

We now set $\alpha=\gamma=1/2$.  Then $d=5$ and
\begin{equation}\label{eq:V-G2-positive}
 \widehat V_{1/2,1/2}
 =S+\frac{16RF^2}{S}+\frac{KF^2}{4S^2}
   +\frac{11F^2}{R}>0.
\end{equation}
Thus $\nabla G_2\neq0$ on $\{R>0,S>0\}$.

\begin{proposition}[Nonnegativity at $(m,q)=(4,4)$]
\label{prop:G2-base}
For $(m,q)=(4,4)$,
\[
 P^{1/2,1/2}_{4,4}(\lambda,\theta,\omega)\geq0
 \quad\text{on }[0,1]^3\cap\{H>0\}.
\]
The endpoint numerators $M^{(0)}$ and $M^{(1)}$ in
\eqref{eq:G2-endpoints} are nonnegative on $[0,1]^2$.
\end{proposition}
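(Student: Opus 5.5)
We follow the same pattern as \cref{prop:G1-base}. With \(\alpha=\gamma=1/2\) and \((m,q)=(4,4)\), so \(N=13\) and \(d=5\), we substitute the identities of \cref{prop:first-order,prop:contractions} into \eqref{eq:Qgamma} and \eqref{eq:Qhat-specialized}. We then pass to the dilation-invariant variables via \eqref{eq:dimensionless-substitution}. Since \(S=\lambda HR^2\) and the multiplier contributes powers of \(R^{-1}\), \(P^{1/2,1/2}_{4,4}\) is a rational function whose denominator is a power of \(\lambda H\), up to positive constants. The factors of \(\lambda\) should cancel, because every \(c=\gamma/S\) term is paired with \(F^2\) or \(K\), and \(F^2\) carries \(\lambda^2\). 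Clearing the positive denominators leaves two polynomial numerators, \(M^{(0)}=M^{(0)}(\lambda,\theta)\) at \(\omega=0\) and \(M^{(1)}\) at \(\omega=1\). Their exact forms are what \eqref{eq:G2-endpoints} records. By the affine identity \eqref{eq:omega-affine}, it suffices to prove \(M^{(0)}\ge0\) and \(M^{(1)}\ge0\) on \([0,1]^2\). Dividing by the positive powers of \(H\) on \(\{H>0\}\) then gives the claim on \([0,1]^3\cap\{H>0\}\).

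The \(\omega=0\) numerator has low degree: \(D_\gamma V_\gamma\) reduces to \(D_\gamma S\), and the \(F^2\) terms drop out. I expect it to be affine or quadratic in \(\theta\) and at most quadratic in \(\lambda\). I would first handle it by hand, as in \cref{prop:G1-base}. For each \(\lambda\), minimize over \(\theta\in\{0,1\}\) when the expression is affine in \(\theta\), and check that the resulting one-variable quadratics in \(\lambda\) are nonnegative. A perfect-square or factored form like \((15\lambda-14)^2\) there is plausible. If hand analysis is awkward, the Bernstein expansion \eqref{eq:bernstein-conversion} serves equally well.

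For \(M^{(1)}\), I would compute the coefficients \(a_{k\ell}\) exactly and apply \eqref{eq:bernstein-conversion} in degrees \((M,L)\) matching its bidegree, which I expect to be about \((4,3)\) or \((5,3)\). I would then display the rational Bernstein matrix and check that all entries are nonnegative. The main obstacle is here. The example \(P_{4,4}(1/3,1,0)=-1\) for \(G_1\) shows that the margin is small in this base case. Some Bernstein coefficients may therefore be negative at the native degree, particularly near the corner \(\lambda=1\), where \(H=\theta\) can vanish and zeros of the numerator are forced. If that happens, I would first degree-elevate in \(\lambda\) (and in \(\theta\) if needed), since elevation converges to the polynomial values. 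If a genuine boundary zero prevents strict positivity, I would split the square at \(\lambda=1/2\) or at the failing region and use subdivided Bernstein expansions on each piece. At the vertex \(\lambda=1,\theta=0\), where the numerator may vanish, I would factor out the vanishing monomial, for example \((1-\lambda)\) or \(\theta\), before expanding.
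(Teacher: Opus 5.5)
Your proposal is correct and is essentially the paper's proof: the affine dependence \eqref{eq:omega-affine} reduces the claim to the two endpoint numerators, and each is certified by the exact Bernstein conversion \eqref{eq:bernstein-conversion}. For the record, $M^{(0)}=\lambda\bigl(\lambda^2(4-\theta)^2-16(1-\theta)(4\lambda-3)\bigr)$ has bidegree $(3,2)$ and is a convex quadratic in $\theta$ with interior minima for some $\lambda$, so the endpoint-in-$\theta$ shortcut of \cref{prop:G1-base} does not apply directly. Also, $M^{(1)}$ has bidegree $(6,4)$ rather than $(4,3)$ or $(5,3)$, and at these native degrees every Bernstein coefficient is already nonnegative, so no degree elevation or subdivision is needed.
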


\begin{proof}
By \eqref{eq:omega-affine},
\begin{equation}\label{eq:G2-endpoints}
 P^{1/2,1/2}_{4,4}
 =(1-\omega)\frac{M^{(0)}}{H}
 +\omega\frac{M^{(1)}}{H^3}.
\end{equation}
The first numerator is
\[
 M^{(0)}
 =\lambda\bigl(
 \lambda^2\theta^2-8\lambda^2\theta+16\lambda^2
 +64\lambda\theta-64\lambda-48\theta+48
 \bigr).
\]
Its Bernstein coefficient matrix, with degrees $3$ in $\lambda$ and
$2$ in $\theta$, is
\[
\begin{pmatrix}
0&0&0\\
16&8&0\\
32/3&16/3&0\\
0&4&9
\end{pmatrix}.
\]
The numerator $M^{(1)}$ has degree $6$ in $\lambda$ and $4$ in
$\theta$.  Its Bernstein coefficient matrix is
\[
\begin{pmatrix}
0&720&1440&2160&2880\\
128&864&4880/3&2416&3232\\
256/3&4112/5&14648/9&12504/5&51632/15\\
128/5&1464/5&10976/15&6726/5&2136\\
0&976/15&1064/5&7379/15&4756/5\\
0&0&244/9&302/3&814/3\\
0&0&0&2&9
\end{pmatrix}.
\]
All entries are nonnegative, so both endpoints in
\eqref{eq:G2-endpoints} are nonnegative.
\end{proof}

\begin{proof}[Proof of \cref{prop:Clifford-second-potential}]
For $\alpha=\gamma=1/2$,
\eqref{eq:dmQ}--\eqref{eq:dqQ} become
\[
 \partial_m\widehat Q_{1/2,1/2}
 =\widehat V_{1/2,1/2}
  \left(\frac{8v}{S}+\frac{2}{R}\right)>0,
\]
\[
 \partial_q\widehat Q_{1/2,1/2}
 =\widehat V_{1/2,1/2}
  \left(\frac{8u}{S}+\frac{1}{R}\right)>0.
\]
This monotonicity extends the inequality in \cref{prop:G2-base} to
every $m\geq4$, $q\geq4$.
Equations \eqref{eq:L-Gag} and \eqref{eq:P-def} give
\[
 \cL(G_2)
 =R^{5/2}S^{3/2}F
   P^{1/2,1/2}_{m,q}(\lambda,\theta,\omega),
\]
and hence $\cL(G_2)F\geq0$.
\end{proof}

\subsection{From subcalibrations to the classification}

\subsubsection{The sign inequalities and area minimization}
\label{sec:minimizing}

From \eqref{eq:U}, the critical set is
\[
 \Sigma:=\{\nabla F=0\}
 =\Sigma_0\cup\Sigma_+,
\]
where
\[
 \Sigma_0=\{(0,z):z\in\R^{q+1}\},\qquad
 \Sigma_+=\{(y,0):\tau(y)=0\}.
\]
Indeed, $A_z^2=v\Id$ shows that $A_zy=0$ forces $z=0$ or $y=0$.
Moreover, $S=0$ if and only if $(y,z)\in\Sigma$.

At a nonzero point of $\Sigma_+$, the gradients
$\nabla\tau_i=2A_i y$ are mutually orthogonal, so this stratum is
smooth there and has ambient codimension $2q+2$.  The stratum
$\Sigma_0$ has ambient codimension $2m$.  In the sufficient range
$m\geq4$, $q\geq3$, and with $N=2m+q+1$,
\begin{equation}\label{eq:Sigma-thin}
 \cH^{N-1}(\Sigma\cap K)=0
 \quad\text{for every compact }K.
\end{equation}

\begin{proposition}[Area minimization from the divergence inequalities]
\label{prop:potential-to-min}
Suppose $m\geq4$, $q\geq3$.  If either $G_1$ or $G_2$ satisfies
$\cL(G_i)F\geq0$ on $\{S>0\}$, then $\cC_{\cA}$ is area minimizing.
\end{proposition}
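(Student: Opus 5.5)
The proof will apply \cref{cor:gradient-subcalibration} with \(Z=\Sigma=\{S=0\}\), the critical set of \(F\). We take \(G=G_i\), where \(G_1=S^{3/4}F\) and \(G_2=R^{1/2}S^{1/2}F\), and set \(E=\{F<0\}\).

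First, the sign and finite-perimeter hypotheses. On \(\{R>0,S>0\}\) the multiplier \(a=S^{3/4}\), respectively \(R^{1/2}S^{1/2}\), is smooth and positive. Hence \(G_i\) is \(C^2\) off \(Z\). It extends continuously by zero across \(Z\), since \(S^{3/4}F\to0\) there. It has the same sign as \(F\). So \(\{G_i<0\}=E\), and \(E\) has locally finite perimeter by \cref{rem:polynomial-finite-perimeter}.

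Second, \(Z\) is large enough. It contains the critical set of \(G_i\): by \eqref{eq:V-G1-positive} and \eqref{eq:V-G2-positive}, \(\nabla G_i\neq0\) wherever \(S>0\). It also contains the singular part of \(\{G_i=0\}\), because off \(Z\) the zero set is \(\{F=0,\nabla F\neq0\}\), which is regular. Moreover \(Z\) is closed, conical and semialgebraic. By \eqref{eq:Sigma-thin}, its strata \(\Sigma_0\) and \(\Sigma_+\) have codimensions \(2m\geq8\) and \(2q+2\geq8\) in the range \(m\geq4\), \(q\geq3\). Thus \(\cH^{N-1}(Z\cap K)=0\).

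Third, the divergence sign. The assumption \(\cL(G_i)F\geq0\) on \(\{S>0\}\), multiplied by the positive factor \(a\), gives \(G_i\cL(G_i)\geq0\) on \(\R^N\setminus Z\). \Cref{cor:gradient-subcalibration} then shows that \(E\) is locally perimeter minimizing. The equivalence in \cref{sec:prelim-minimization} upgrades this to local mass minimality of \(\partial\llbracket E\rrbracket\) among integral currents.

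It remains to check that this current is the multiplicity-one cone \(\cC_\cA\), i.e.\ that its support is all of \(\{F=0\}\). The cubic \(F\) is harmonic by \eqref{eq:DeltaF}, so by the strong maximum principle it changes sign near each of its zeros. Hence \(\{F=0\}\) is the topological boundary of \(E\), exactly as in the proof of \cref{thm:cubic-rank-two-existence}.

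The main point needing care is the continuity of \(G_i\) and the smoothness of \(X_{G_i}\) near \(Z\), where the fractional powers of \(S\) are not smooth. The corollary, however, only requires \(G\in C^2(\Omega\setminus Z)\cap C^0(\Omega)\). The fractional powers are smooth wherever \(S>0\), and that suffices, since the comparison in \cref{thm:removable-subcalibration} needs only a bounded field off the null set \(Z\).
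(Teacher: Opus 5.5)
Your proposal is correct and follows the paper's proof. Both verify the comparison hypotheses with exceptional set $\Sigma=\{S=0\}$: you go through \cref{cor:gradient-subcalibration}, while the paper builds $X_i$ directly for \cref{thm:removable-subcalibration}, using the same nonvanishing-gradient, sign, and codimension facts. The only difference is the support check: you use harmonicity of $F$ and the strong maximum principle, as in \cref{thm:cubic-rank-two-existence}, whereas the paper verifies the sign change separately at regular points, on $\Sigma_0$, on $\Sigma_+$, and at the vertex, using the local quadratic expansions. Your version is shorter and equally valid.
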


\begin{proof}
Both functions are continuous after being set equal to zero on
$\Sigma$, and both have the same sign as $F$.  Their gradient norms
are nonzero on $\{S>0\}$ by
\eqref{eq:V-G1-positive} and \eqref{eq:V-G2-positive}.
Set \(E=\{F<0\}\) and \(X_i=\nabla G_i/|\nabla G_i|\) off
\(\Sigma\).  Since \(E\) is defined by a polynomial inequality,
it has locally finite perimeter.
On its regular boundary, the product rule gives
\(\nabla G_i=a_i\nabla F\), with \(a_i>0\), so \(X_i=\nu_E\).
Moreover, \(|X_i|=1\), and the assumed sign gives
\(\diver X_i\leq0\) in \(E\) and \(\diver X_i\geq0\) outside it.

Condition \eqref{eq:Sigma-thin} also supplies the required removable-set
cutoffs.  For a compact set \(K\), cover \(\Sigma\cap K\) by balls
of radii \(r_j<1\), with \(\sum_jr_j^{N-1}\) arbitrarily small.
The balls may be chosen inside any prescribed neighborhood of
\(\Sigma\cap K\).  Smooth ball cutoffs, combined into a cutoff
\(\psi=1\) near \(\Sigma\cap K\), satisfy
\[
 \int_{\R^N}(\psi+|\nabla\psi|)
 \leq C\sum_jr_j^{N-1}.
\]
Thus the critical set has the local cutoff property required by the
two-sided comparison theorem
\cref{thm:removable-subcalibration}.  That theorem proves both
local perimeter minimization of \(E\) and local mass minimization of
\(\partial\llbracket E\rrbracket\).

We also check that $\partial E=F^{-1}(0)$ as a topological boundary.
At a regular point this follows from the implicit function theorem.
At $(0,z_0)$ with $z_0\neq0$, the quadratic form
$y\mapsto y^TA_{z_0}y$ has eigenvalues $\pm|z_0|$, both with
multiplicity $m$, and hence changes sign.  At $(y_0,0)\in\Sigma_+$
with $y_0\neq0$, the leading perturbation
$2\langle A_\eta y_0,\xi\rangle$ changes sign: fix $\eta\neq0$ and
take $\xi=\pm A_\eta y_0$.  At the origin, use
homogeneity and the fact that $F$ is nonzero and odd in $z$.
Thus the support of
$\partial\llbracket E\rrbracket$ is precisely $\cC_{\cA}$.
This identifies the minimizing current with the stated oriented
multiplicity-one cone.
\end{proof}

\paragraph*{Dimensions in which Clifford systems exist}

The Hurwitz--Radon admissibility condition \cite{FKM81,TkachevClifford10} for a system
$(A_0,\ldots,A_q)$ on $\R^{2m}$ is
\[
 q\leq\rho(m),
\]
where, if $m=2^{4a+b}c$ with $c$ odd and $0\leq b\leq3$, then
\[
 \rho(m)=8a+2^b.
\]
In particular,
\[
 \rho(4)=4,\quad
 \rho(5)=1,\quad
 \rho(6)=2,\quad
 \rho(7)=1.
\]
Thus an admissible pair with $q\geq3$ has either $m=4$ or $m\geq8$.
If $m=4$, then $q\in\{3,4\}$.

\begin{proof}[Proof of the sufficient part of
\cref{thm:clifford-classification}]
If $m\geq8$ and $q\geq3$, use $G_1$ and
\cref{prop:Clifford-first-potential}.  If $m=4$, the only admissible values with
$q\geq3$ are $q=3,4$.  The pair $(4,4)$ is treated by $G_2$ and
\cref{prop:Clifford-second-potential}.  Apply
\cref{prop:potential-to-min}; the remaining pair $(4,3)$ is handled
separately by the instability argument in \cref{sec:instability}.
\end{proof}

\begin{remark}[Smooth minimizing hypersurfaces]
Wang's extension \cite{WangSmoothing24} of the Hardt--Simon theorem
\cite{HardtSimon85} applies to the positive cases, including their
nonisolated singularities: on either side there is a smooth minimizing
hypersurface whose tangent cone at infinity is the given cone.
\end{remark}

\subsubsection{Singular strata and tangent cones}
\label{sec:tangents}

We compute the nonvertex tangent cones to identify the singularities
and obtain the dimensional exclusions in
\cref{thm:clifford-classification}.  Define the generalized Simons cone
\[
 \mathcal S_k
 =\{(a,b)\in\R^k\oplus\R^k:|a|=|b|\}.
\]
Let $E=\{F<0\}$.

\begin{proposition}[Unique tangents with multiplicity one]
\label{prop:tangent-cones}
\begin{enumerate}[label=\textup{(\roman*)}]
\item Let $p=(0,z_0)\in\Sigma_0\setminus\{0\}$.  In orthogonal coordinates
$\R^{2m+q+1}=\R^m\oplus\R^m\oplus\R^{q+1}$, the unique set blow-up is
\begin{equation}\label{eq:tangent-Sigma0}
 E_p^{(0)}=\{(a,b,\eta):|a|<|b|\},\qquad
 \partial E_p^{(0)}=\mathcal S_m\times\R^{q+1}.
\end{equation}
\item At $p=(y_0,0)\in\Sigma_+\setminus\{0\}$, after an orthogonal
identification
$\R^{2m+q+1}=\R^{q+1}\oplus\R^{q+1}
 \oplus\R^{2m-q-1}$, the unique set blow-up is
\begin{equation}\label{eq:tangent-Sigmaplus}
 E_p^{(+)}=\{(x,x',\xi^\perp):|x|<|x'|\},\qquad
 \partial E_p^{(+)}
 =\mathcal S_{q+1}\times\R^{2m-q-1}.
\end{equation}
\item At the vertex, every dilation fixes $E$ and $\cC_{\cA}$, so the
tangent cone is $\cC_{\cA}$ itself.
\end{enumerate}
In each case, the oriented current tangent is the boundary of the
stated set, and the varifold tangent has multiplicity one on its regular
part.  The perimeter measures converge under dilation.  These statements
hold for every admissible Clifford system, without an area-minimizing
assumption.
\end{proposition}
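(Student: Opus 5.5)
The plan is to compute the quadratic Taylor term of \(F\) at each nonvertex critical point and identify its zero set. Then I will show that the rescaled sets \((E-p)/r\) converge in \(L^1_{\rm loc}\) to the sublevel set of that quadratic form. Finally, I will upgrade this set convergence to convergence of perimeter measures and of the currents.

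\emph{Case (i).} At \(p=(0,z_0)\), expand \(F(p+r(\xi,\zeta))=r^2\xi^TA_{z_0}\xi+r^3\xi^TA_\zeta\xi\). The symmetric matrix \(A_{z_0}\) satisfies \(A_{z_0}^2=|z_0|^2\Id\) and is trace free, so it has eigenvalues \(\pm|z_0|\), each of multiplicity \(m\). Diagonalizing orthogonally, with \(a\) in the negative eigenspace and \(b\) in the positive one, the normalized leading term becomes \(r^{-2}F(p+r\cdot)\to|z_0|(|b|^2-|a|^2)\) locally uniformly. Hence \(\{r^{-2}F(p+r\cdot)<0\}\) converges to \(\{|a|>|b|\}\) after relabelling, which gives \eqref{eq:tangent-Sigma0} up to interchanging \(a\) and \(b\).

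\emph{Case (ii).} At \(p=(y_0,0)\) with \(\tau(y_0)=0\), the quadratic term of \(F(p+(\xi,\eta))\) is \(2\langle A_\eta y_0,\xi\rangle=2\sum_i\eta_i\langle A_iy_0,\xi\rangle\). The vectors \(A_iy_0\) are orthonormal up to the factor \(|y_0|\), so set \(x_i=\langle A_iy_0,\xi\rangle/|y_0|\in\R^{q+1}\) and let \(\xi^\perp\) be the complementary \(2m-q-1\) coordinates. The form becomes \(2|y_0|\langle x,\eta\rangle\). The rotation \(a=(x+\eta)/\sqrt2\), \(b=(x-\eta)/\sqrt2\) turns it into \(|y_0|(|a|^2-|b|^2)\), so its zero set is \(\mathcal S_{q+1}\times\R^{2m-q-1}\), as in \eqref{eq:tangent-Sigmaplus}. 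In both cases the error is \(O(r)\) uniformly on compacts. The limiting quadratic has a Lebesgue-null zero set, so the indicators converge almost everywhere, hence in \(L^1_{\rm loc}\). Uniqueness of the blow-up is automatic because the limit exists along the full family \(r\to0\). Case (iii) is immediate from homogeneity.

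\emph{Convergence of perimeters.} This is the main obstacle, since \(L^1\) convergence alone gives only lower semicontinuity. I will try two routes.

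The first route treats the minimizing range. There \(E\) is perimeter minimizing by \cref{prop:potential-to-min}, and the standard compactness for minimizing boundaries gives convergence of \(|D\one_{E_r}|\).

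The second route is uniform in all cases and does not use minimality. Near \(p\), off the critical set, write \(\partial E_{p,r}\) as the zero set of \(f_r=r^{-2}F(p+r\cdot)\). The gradients \(\nabla f_r\) converge in \(C^1_{\rm loc}\) to the gradient of the quadratic form, and that gradient vanishes only on the linear axis of the limit cone, which has codimension at least \(2\). Away from a neighbourhood of that axis, the implicit function theorem shows that the zero sets converge smoothly with multiplicity one. Inside the neighbourhood, I bound the perimeter directly. Using \cref{rem:polynomial-finite-perimeter} applied to the polynomials \(f_r\), whose degree is bounded uniformly, together with the coarea formula, the perimeter of \(\{f_r<0\}\) in a \(\delta\)-tube around the axis is \(O(\delta)\) uniformly in \(r\). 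Letting \(\delta\to0\) yields convergence of the perimeter measures. Convergence of the current \(\partial\llbracket E_r\rrbracket\) and multiplicity one for the varifold follow from this together with the \(L^1\) convergence.
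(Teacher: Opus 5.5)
Your proposal is correct. The Taylor expansions and the identification of the blow-up sets agree with the paper's. You get equal multiplicities of $\pm|z_0|$ from $\tr A_{z_0}=0$, while the paper uses that $A_w$, $w\perp z_0$, interchanges the eigenspaces; both work.

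The real difference is in the perimeter convergence. The paper writes down exact local coordinates in which $E$ \emph{is} the quadratic model:
near $(0,z_0)$ it takes smooth orthonormal eigenframes of $A_z$, so that $y=P(z)(a,b)$ gives $F=|z|(|a|^2-|b|^2)$;
near $(y_0,0)$ it uses $\tau_i(y)/(2|y_0|)$ and $P_\perp(y-y_0)$ as coordinates, so that $F=2|y_0|\,a\cdot\eta$ exactly.
This gives a diffeomorphism $\Phi$ with $\Phi(0)=p$ and $D\Phi(0)=\Id$ carrying the model onto $E$. The rescalings $\Phi_r$ tend to the identity in $C^1$, and the area formula gives convergence of perimeters and varifolds in one step.

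Your second route instead splits space. Away from a neighbourhood of the kernel of the leading form, $\{f_r=0\}$ converges smoothly with multiplicity one. In a thin tube around that kernel, you use a uniform perimeter bound.

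This is softer and more general. It uses only that $f_r=f_0+rf_1$ has bounded degree and that $\nabla f_0$ vanishes on a subspace of codimension $k\geq2$ (here $k=2m$ or $2q+2$). It needs no Clifford-specific normal form. The paper's coordinates buy more: an exact product structure along the whole stratum, which is reused for \eqref{eq:link-tube} and for $S\asymp\rho^2$ in the $(4,3)$ instability argument.

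Two small remarks. First, only your second route proves the statement as posed. The proposition is needed without minimality, for instance to certify genuine singular strata in the necessary part of \cref{thm:clifford-classification}.

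Second, the tube bound should come from the line-counting argument of \cref{rem:polynomial-finite-perimeter}. Apply it in rotated coordinates to the box $(-\delta,\delta)^k\times(-\rho,\rho)^{N-k}$, or to a cover by $\delta$-cubes. This gives $O(\delta^{k-1})$ uniformly in $r$. Applying the remark to a single cube containing the tube gives no smallness, and the coarea formula is not needed.
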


\begin{proof}
At $p=(0,z_0)$ use translated dilation coordinates
$y=r\xi$, $z=z_0+r\eta$.  The expansion is exact:
\[
 F(r\xi,z_0+r\eta)
 =r^2\langle A_{z_0}\xi,\xi\rangle
  +r^3\langle A_\eta\xi,\xi\rangle.
\]
Since $A_{z_0}^2=|z_0|^2\Id$, its eigenvalues are
$\pm|z_0|$.  They have equal multiplicity: if $w\perp z_0$, then
$A_w$ interchanges the two eigenspaces.  Thus both multiplicities are
$m$.  Writing $\xi=(a,b)$ in the positive and negative eigenspaces,
the quadratic leading term is
$|z_0|(|a|^2-|b|^2)$, which gives \eqref{eq:tangent-Sigma0} with the
stated set side.

At $p=(y_0,0)\in\Sigma_+$ write
$y=y_0+r\xi$, $z=r\eta$.  Since $\tau(y_0)=0$,
\[
 F(y_0+r\xi,r\eta)
 =2r^2\langle A_\eta y_0,\xi\rangle
  +r^3\langle A_\eta\xi,\xi\rangle.
\]
The vectors
\[
 e_i=\frac{A_i y_0}{|y_0|},\qquad 0\leq i\leq q,
\]
are orthonormal.  Decompose
$\xi=\sum_i a_i e_i+\xi^\perp$.  Then the leading form is
$2|y_0|a\cdot\eta$.  Under
\[
 x=\frac{a+\eta}{\sqrt2},\qquad
 x'=\frac{a-\eta}{\sqrt2},
\]
one has $2a\cdot\eta=|x|^2-|x'|^2$, which proves
\eqref{eq:tangent-Sigmaplus}, including the stated set side.

The two expansions give \(L^1_{\rm loc}\) convergence of the sets:
the signs converge wherever the leading quadratic form is nonzero,
and its zero set has Lebesgue measure zero.  To prove convergence of
the perimeter measures as well, we give exact local coordinates.

Near \(z_0\neq0\), choose smooth orthonormal frames for the positive
and negative eigenspaces of \(A_z\).  In the resulting coordinates
\(y=P(z)(a,b)\),
\[
 F(y,z)=|z|\bigl(|a|^2-|b|^2\bigr).
\]
The coordinate map has orthogonal derivative at \((0,z_0)\), since
the derivatives of \(P(z)\) are multiplied by \(y=0\).

Near \(y_0\neq0\) with \(\tau(y_0)=0\), let \(P_\perp\) be the
orthogonal projection onto
\(\operatorname{span}\{A_i y_0:0\leq i\leq q\}^{\perp}\).
The map
\[
 y\longmapsto\left(
 a_i(y)=\frac{\tau_i(y)}{2|y_0|}\ (0\leq i\leq q),\quad
 \xi^\perp=P_\perp(y-y_0)\right)
\]
has orthogonal derivative at \(y_0\), because
\(da_i|_{y_0}(\xi)=\langle e_i,\xi\rangle\).
It is therefore a smooth local coordinate map.  With \(z=\eta\),
we have exactly
\[
 F(y,\eta)=2|y_0|a\cdot\eta
          =|y_0|\bigl(|x|^2-|x'|^2\bigr).
\]

In either case, after the stated orthogonal identification, a smooth
local diffeomorphism \(\Phi\) takes the quadratic sublevel model onto
\(E\), with \(\Phi(0)=p\) and \(D\Phi(0)=\Id\).
The rescaled maps \(\Phi_r(w)=(\Phi(rw)-p)/r\) converge to the identity
in \(C^1\) on compact sets.  Each quadratic model has locally finite
perimeter and multiplicity one on its regular boundary.  The area
formula for \(\Phi_r\), whose tangential Jacobians converge uniformly
to one, gives convergence of the perimeter measures and the associated
varifolds.  The set convergence identifies the oriented current limit
with the boundary of the stated model.  At the vertex, all rescalings
agree exactly by homogeneity.
\end{proof}

\begin{remark}
The tangent descriptions show that all nonzero algebraic singular
points above are genuine geometric singularities: neither
\eqref{eq:tangent-Sigma0} nor \eqref{eq:tangent-Sigmaplus} is a
hyperplane.  They also explain the threshold $m\geq4$, $q\geq3$:
the transverse factors are the first stable and area-minimizing
equal-dimensional Simons cones at $m=4$ and $q+1=4$.  In particular,
for $(m,q)=(4,3)$ both strata have the same tangent model
$\mathcal S_4\times\R^4$.  The corresponding multiplicity-one current tangents are area minimizing;
at regular points the tangent is a multiplicity-one hyperplane.
Thus the instability in \cref{thm:Clifford-43-instability} occurs
despite minimizing tangent cones along both nonisolated singular strata.
These are the classical Simons-cone thresholds
\cite{Simons68,DePhilippisPaolini}.
\end{remark}

We next verify the nonemptiness needed when $q\leq2$.

\begin{lemma}
\label{lem:Sigmaplus-nonempty}
If $m>q$, then $\Sigma_+\setminus\{0\}$ is nonempty.
\end{lemma}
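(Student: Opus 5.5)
The plan is to exhibit an explicit nonzero \(y\in\R^{2m}\) with \(\tau(y)=0\). Such a point \((y,0)\) then lies in \(\Sigma_+\setminus\{0\}\). The construction uses only the block structure forced by the Clifford relations \eqref{eq:clifford-system}.

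First, split \(\R^{2m}\) into the eigenspaces of \(A_0\). Since \(A_0^2=\Id\), these are \(E_\pm=\ker(A_0\mp\Id)\). Since \(q\geq1\), the matrix \(A_1\) anticommutes with \(A_0\), so it interchanges \(E_+\) and \(E_-\). Because \(A_1\) is invertible, \(\dim E_+=\dim E_-=m\); this is the same equal-multiplicity argument used in the proof of \cref{prop:tangent-cones}.

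Next, for \(1\leq i\leq q\), use the anticommutation \(A_iA_0=-A_0A_i\) to write \(A_i\) in block form with respect to \(E_+\oplus E_-\):
\[
 A_i=\begin{pmatrix}0&B_i^{T}\\ B_i&0\end{pmatrix},\qquad B_i:E_+\to E_-.
\]
The relation \(A_i^2=\Id\) gives \(B_i^TB_i=\Id\), so each \(B_i\) is an isometry. For \(y=(a,b)\) with \(a\in E_+\) and \(b\in E_-\), the components of \(\tau\) become
\[
 \tau_0=|a|^2-|b|^2,\qquad \tau_i=2\langle B_ia,b\rangle\quad(1\leq i\leq q).
\]

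Finally, fix a unit vector \(a\in E_+\). The \(q\) vectors \(B_1a,\ldots,B_qa\) span a subspace of \(E_-\) of dimension at most \(q<m=\dim E_-\). Hence there is a unit vector \(b\in E_-\) orthogonal to all of them. With this choice \(\tau_i=0\) for \(1\leq i\leq q\), and \(|a|=|b|\) gives \(\tau_0=0\). Thus \(y=(a,b)\neq0\) satisfies \(\tau(y)=0\), and \((y,0)\in\Sigma_+\setminus\{0\}\).

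There is no real obstacle here. The only point to check is the block form of \(A_i\), that is, that \(A_i\) maps \(E_+\) into \(E_-\) and is isometric there; both follow directly from anticommutation with \(A_0\) and from \(A_i^2=\Id\).
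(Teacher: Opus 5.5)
Your proof is correct and follows the same route as the paper. Both arguments split $\R^{2m}$ along the $\pm1$ eigenspaces of $A_0$, take two blocks of equal length, and make one block orthogonal to $q$ vectors built from the other, which is possible because $q<m$. The paper first puts the system in the standard form \eqref{eq:standard-first-two}--\eqref{eq:standard-rest} and chooses $a$ orthogonal to $b,E_2b,\ldots,E_qb$; you keep general isometries $B_i$ and choose $b$ orthogonal to $B_1a,\ldots,B_qa$, which is only a cosmetic difference.
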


\begin{proof}
After an orthogonal change of coordinates, the first two Clifford
matrices have the standard form
\begin{equation}\label{eq:standard-first-two}
 A_0=
 \begin{pmatrix}\Id&0\\0&-\Id\end{pmatrix},
 \qquad
 A_1=
 \begin{pmatrix}0&\Id\\\Id&0\end{pmatrix},
\end{equation}
and, for $i\geq2$,
\begin{equation}\label{eq:standard-rest}
 A_i=
 \begin{pmatrix}0&E_i\\-E_i&0\end{pmatrix},
\end{equation}
where the $E_i$ are skew-symmetric orthogonal matrices.  Fix
$b\neq0$ in $\R^m$.  The $q$ vectors
$b,E_2b,\ldots,E_qb$ are mutually orthogonal.  Since $m>q$, choose
$a\neq0$ of the same length as $b$ and orthogonal to their span.
For $y=(a,b)$, all components of $\tau(y)$ vanish.
\end{proof}

\begin{proof}[Proof of the necessary part of
\cref{thm:clifford-classification}]
Suppose first that $m\leq3$.  The stratum
$\Sigma_0\setminus\{0\}$ has dimension $q+1$ and consists of genuine
singular points by \cref{prop:tangent-cones}.  Its ambient codimension
is $2m<8$.  Federer's codimension-seven bound for the singular set of an
area-minimizing hypersurface \cite{Federer70} therefore excludes area
minimization.

Now suppose $q\leq2$ and $m\geq4$.  Then $m>q$, so
\cref{lem:Sigmaplus-nonempty} applies.  The smooth stratum
$\Sigma_+\setminus\{0\}$ has ambient codimension $2q+2<8$ and again
consists of genuine singular points.  The same dimension bound excludes area minimization in this case as well.
\end{proof}

\subsection{The exceptional pair \texorpdfstring{$(4,3)$}{(4,3)}}

\subsubsection{Instability}
\label{sec:instability}

To prove \cref{thm:Clifford-43-instability}, we separate the radial
and spherical variables in the second variation of area.  We construct
a test function on the link and verify the estimates needed to
approximate it by functions supported away from the singular set.

\paragraph*{Second variation and the radial Hardy inequality}

Fix $(m,q)=(4,3)$ and write
\[
 \Lambda=\cC_{\cA}\cap\Sph^{11},\qquad
 \mathscr S=\Sing(\cC_{\cA})\cap\Sph^{11},\qquad
 \mathscr S_0=\Sigma_0\cap\Sph^{11},\qquad
 \mathscr S_+=\Sigma_+\cap\Sph^{11}.
\]
Thus $\dim\Lambda=10$, while the two singular strata $\mathscr S_0$ and
$\mathscr S_+$ each have dimension $3$.
Let \(\mathcal D(\Lambda)\) be the closure of
\(C_c^\infty(\Lambda_{\reg})\) in the norm with square
\(\int_{\Lambda_{\reg}}(|\nabla\psi|^2+(1+|A_\Lambda|^2)\psi^2)\).
The radial Hardy inequality in cone dimension \(n=11\) has optimal
constant \((n-2)^2/4=81/4\).  Separating the radial variable in the
second variation of area therefore gives the quadratic form
\[
 \mathcal B(\varphi,\psi)=\int_{\Lambda_{\reg}}
 \left(\langle\nabla_\Lambda\varphi,\nabla_\Lambda\psi\rangle
       -|A_\Lambda|^2\varphi\psi+\frac{81}{4}\varphi\psi\right).
\]
It remains to find \(\phi\in\mathcal D(\Lambda)\) with
\(\mathcal B(\phi,\phi)<0\).

For the present link, the smooth compact singular strata and their
nondegenerate transverse quadratic models give
\begin{equation}\label{eq:link-tube}
 \cH^{10}(N_r(\mathscr S)\cap\Lambda)\leq Cr^7,
 \qquad |A_\Lambda|\leq C\rho^{-1},\qquad
 \rho=\dist(\cdot,\mathscr S).
\end{equation}
Indeed, the local coordinates in \cref{prop:tangent-cones} identify
the transverse zero sets with seven-dimensional quadratic cones in
eight normal variables.  Their volume element has radial order
\(\rho^6\dd\rho\), and their second fundamental forms have size
\(O(\rho^{-1})\).  Restricting to the unit sphere removes the radial
direction along the singular stratum.  The coordinate maps and their
inverses have bounded first and second derivatives on a finite cover
of the compact singular strata.  The same estimates therefore hold on
\(\Lambda\), which proves \eqref{eq:link-tube}.

\paragraph*{Curvature identities on the link}

Retain the functions $u,v,S,K$ from \cref{sec:invariants}.  On
$\Lambda$ one has $R=u+v=1$ and $F=0$.

\begin{lemma}[Link curvature]
\label{lem:link-curvature}
On $\Lambda_{\reg}$,
\begin{align}
 |A_\Lambda|^2&=\frac{32}{S}-\frac{K}{2S^2},
 \label{eq:link-curvature}\\
 |\nabla_\Lambda S|^2&=K-16S^2.
 \label{eq:link-gradS}
\end{align}
\end{lemma}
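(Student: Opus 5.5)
The plan is to use the hypersurface curvature formula already used in the proof of \cref{bh:curvature}, together with the Hessian contractions of \cref{prop:contractions}, for both identities. On the regular cone put \(W=S=|\nabla F|^2\) and \(\nu=U/\sqrt S\). For a level set of \(F\),
\[
 |A_C|^2=\frac{|\Hess F|^2-2|\nabla\sqrt S|^2+(\Hess F(\nu,\nu))^2}{S}.
\]
Each term is then computed from the closure table. Bochner's identity gives \(|\Hess F|^2=\tfrac12\Delta S-\Gamma(F,\Delta F)=\tfrac12\Delta S\). This equals \(8mv+8(q+1)u\) by \eqref{eq:DeltaS}, which is \(32R\) when \((m,q)=(4,3)\).

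Next, \(|\nabla\sqrt S|^2=K/(4S)\). By \eqref{eq:HFUU} we have \(\Hess F(\nu,\nu)=8RF/S\), which vanishes on \(F=0\). These give
\[
 |A_C|^2=\frac{32R}{S}-\frac{K}{2S^2}.
\]

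To pass from the cone to the link, use that the link \(\Lambda\subset\Sph^{11}\) of a cone satisfies \(|A_C|^2=r^{-2}|A_\Lambda|^2\), since the radial direction contributes no curvature. Setting \(R=1\) then yields \eqref{eq:link-curvature}.

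For \eqref{eq:link-gradS}, project \(\nabla S\) onto the tangent space of \(\Lambda\). This means removing its components along the unit normal \(\nu\) of the cone and along the radial unit vector \(x/r\). These two directions are orthogonal on the cone because \(\langle x,\nabla F\rangle=3F=0\). The normal component is \(\langle\nabla S,\nu\rangle=\langle Z,U\rangle/\sqrt S=16RF/\sqrt S\) by \eqref{eq:UZ}, and it vanishes on \(F=0\). The radial component is \(\langle\nabla S,x\rangle/r=4S/r\) by Euler's identity, since \(\deg S=4\). Hence
\[
 |\nabla_\Lambda S|^2=|Z|^2-16S^2/R=K-16S^2
\]
at \(R=1\). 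Tangency of this projection to the sphere is automatic once the radial part is removed.

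The main obstacle is bookkeeping rather than anything conceptual. One must check the dimension dependence, namely that \(\tfrac12\Delta S\) reduces to \(32R\) exactly at \((m,q)=(4,3)\), where \(16m=64\) and \(16(q+1)=64\). One must also justify the codimension-one curvature formula on \(\Lambda_{\reg}\) (where \(S>0\)), either by quoting it as in \cref{bh:curvature} or by deriving it from \(A_C=\Hess F|_{TC}/\sqrt S\) with \(|\Hess F|_{TC}|^2=|\Hess F|^2-2|\Hess F\nu|^2+\Hess F(\nu,\nu)^2\) and \(|\Hess F\nu|^2=|Z|^2/(4S)\).
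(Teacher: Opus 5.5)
Your proof is correct and follows essentially the same route as the paper: drop the normal row and column of \(\Hess F\), using \eqref{eq:HFUU} and \(|\Hess F\,\nu|^2=K/(4S)\); pass to the link because the radial direction of a cone has zero curvature; and project \(Z\) off \(\nu\) and off the radial direction using \eqref{eq:UZ} and Euler's identity. The only cosmetic difference is that you get \(|\Hess F|^2=8mv+8(q+1)u\) from Bochner's identity and \eqref{eq:DeltaS}, whereas the paper reads it off directly from the block form \eqref{eq:HessF}.
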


\begin{proof}
The block Hessian in \eqref{eq:HessF} has squared norm
\(8mv+8(q+1)u\), which equals \(32\) at \((m,q)=(4,3)\) and
\(R=1\).  Put \(\nu=U/\sqrt S\).  On \(F=0\),
\(\Hess F(\nu,\nu)=0\) by \eqref{eq:HFUU}, while
\(\Hess F(\nu,\cdot)=Z/(2\sqrt S)\).  Removing the normal row
and column from the Hessian gives
\[
 |A_{\cC_{\cA}}|^2
 =\frac1S\left(|\Hess F|^2-
                  2|\Hess F(\nu,\cdot)|^2\right)
 =\frac{32}{S}-\frac{K}{2S^2}.
\]
At radius one this equals \(|A_\Lambda|^2\), since the radial
principal curvature of a cone is zero.  Also, \(Z\perp\nu\) on
\(F=0\) by \eqref{eq:UZ}, and homogeneity gives
\(\langle Z,(y,z)\rangle=4S\).  Hence
\(\nabla_\Lambda S=Z-4S(y,z)\), proving
\eqref{eq:link-gradS}.
\end{proof}

\paragraph*{Integration along the Hopf fibers}

For $(4,3)$, the standard block form allows us to extend the Clifford
system by one generator.  In the notation of
\eqref{eq:standard-first-two}--\eqref{eq:standard-rest}, put
$E_4=E_2E_3$ and
\[
 A_4=\begin{pmatrix}0&E_4\\-E_4&0\end{pmatrix}.
\]
Then $(A_0,\ldots,A_4)$ is a Clifford system on $\R^8$.  For
$\xi\in\Sph^7$, the map
\begin{equation}\label{eq:hopf-map}
 \Pi(\xi)
 =\bigl(\langle A_0\xi,\xi\rangle,\ldots,
        \langle A_4\xi,\xi\rangle\bigr)
\end{equation}
is the quaternionic Hopf map $\Sph^7\to\Sph^4$; see, for example,
\cite{FKM81}.  To verify its measure normalization, write
\(p_i(\xi)=\langle A_i\xi,\xi\rangle\).  If
$\xi=(a,b)\in\R^4\oplus\R^4$, then
$b,E_2b,E_3b,E_4b$ are mutually orthogonal with the same length;
this proves $|\Pi(\xi)|=1$.  The Clifford relations also give
\begin{equation}\label{eq:hopf-horizontal}
 \nabla_{\Sph^7}p_i=2(A_i\xi-p_i\xi),\qquad
 \langle\nabla_{\Sph^7}p_i,\nabla_{\Sph^7}p_j\rangle
 =4(\delta_{ij}-p_ip_j).
\end{equation}
Thus \(D\Pi\) multiplies lengths by \(2\) on the horizontal space
\((\ker D\Pi)^\perp\), and its four-dimensional Jacobian is the
constant \(2^4\).
The quaternionic Hopf fibers are congruent round \(3\)-spheres.
Integration over the fibers gives, for every integrable function \(h\),
\[
 \int_{\Sph^7}h(\Pi(\xi))\dd\xi
 =c_H\int_{\Sph^4}h(p)\dd p,
 \qquad c_H>0.
\]

The Hopf map also describes the second singular stratum: $\mathscr S_+$ is the inverse image under $\Pi$ of the two poles
$(0,0,0,0,\pm1)\in\Sph^4$.  Hence $\mathscr S_+$ has two connected
components, each a Hopf fiber diffeomorphic to $S^3$.

Set
\[
 \theta=\frac{T}{u^2}\in[0,1].
\]
If $s=\langle A_4\xi,\xi\rangle$, then $\theta=1-s^2$.
The following lemma expresses integration on the link in terms of
$u$ and $\theta$.

\begin{lemma}[Integration on the link]
\label{lem:hopf-coarea}
There is a constant $c_0>0$ such that every integrable function
$h=h(u,\theta)$ satisfies
\begin{align}
 \int_{\Lambda_{\reg}}h\,\dd\cH^{10}
 &=c_0\int_0^1\int_0^1
     h(u,\theta)\,
     u^2(1-u)^{1/2}
     \left(\frac{\theta}{1-\theta}\right)^{1/2}
     S^{1/2}\,\dd\theta\,\dd u,
 \label{eq:hopf-coarea}\\
 S&=4u(1-u)+u^2\theta,                                  \notag\\
 K&=16\left[
       4u(1-u)+\bigl(u+4(1-u)\bigr)u^2\theta
     \right].
 \label{eq:SK-43}
\end{align}
\end{lemma}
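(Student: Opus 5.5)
We will compute the measure on \(\Lambda\) in product-polar coordinates adapted to the splitting \(\R^{12}=\R^8\oplus\R^4\). The plan has three steps: write the round measure of \(\Sph^{11}\) in these coordinates, apply the coarea formula to the hypersurface \(\{F=0\}\subset\Sph^{11}\), and push the remaining \(\Sph^7\)-integral forward through the Hopf map \eqref{eq:hopf-map}. The formulas \eqref{eq:SK-43} for \(S\) and \(K\) are immediate. We have \(q+1=4\), and on \(\Lambda\) we have \(R=1\) and \(v=1-u\). By \eqref{eq:S} and \eqref{eq:K}, \(S=4uv+T\) and \(K=16(4uvR+(u+4v)T)\), and \(T=\theta u^2\) by the definition of \(\theta\). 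Substituting gives \eqref{eq:SK-43}.

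For the measure, write \(y=\sqrt u\,\xi\) and \(z=\sqrt{1-u}\,\omega\), with \(\xi\in\Sph^7\), \(\omega\in\Sph^3\), and \(u\in(0,1)\). Writing \(\sqrt u=\cos\varphi\), the round measure is \(\cos^7\!\varphi\,\sin^3\!\varphi\,d\varphi\,d\xi\,d\omega\). Converting to \(u\) gives
\[
 d\sigma_{\Sph^{11}}=\tfrac12u^3(1-u)\,du\,d\xi\,d\omega .
\]
Here \(F=u\sqrt{1-u}\,\langle\tau(\xi),\omega\rangle\), where \(\tau(\xi)=(p_0,\ldots,p_3)(\xi)\). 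Since \(|\Pi(\xi)|=1\), we have \(|\tau(\xi)|^2=1-p_4^2=\theta\). On \(\Lambda\) the gradient \(\nabla F\) is tangent to the sphere, because Euler gives \(\langle\nabla F,x\rangle=3F=0\). Hence \(|\nabla_{\Sph}F|=\sqrt S\).

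The coarea formula, applied to \(F|_{\Sph^{11}}\) away from the critical set, represents \(\int_{\Lambda_{\reg}}h\,d\cH^{10}\) as the limit of \((2\eps)^{-1}\int_{\{|F|<\eps\}}h\sqrt S\,d\sigma\). Fix \(u\) and \(\xi\) with \(\theta>0\). The \(\omega\)-integral of \(\delta\bigl(c\langle\tau,\omega\rangle\bigr)\), where \(c=u\sqrt{1-u}\), equals \(|\Sph^2|/(c\sqrt\theta)\): the zero set is a great \(2\)-sphere of \(\Sph^3\), and the normal derivative there is \(c|\tau|\). Collecting factors gives
\[
 2\pi\int_0^1\!\!\int_{\Sph^7}h\,u^2(1-u)^{1/2}\,\theta^{-1/2}S^{1/2}\,d\xi\,du .
\]

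The integrand now depends on \(\xi\) only through \(\theta=1-p_4(\xi)^2\). By the fiber-integration identity for \(\Pi\), this equals \(c_H\int_{\Sph^4}g(p_4)\,dp\) for the corresponding function \(g\). Since the integrand depends only on the last coordinate, the \(\Sph^4\)-integral is \(c\int_{-1}^1g(s)(1-s^2)\,ds\). We then change variables \(\theta=1-s^2\) on each of the two branches \(s=\pm\sqrt{1-\theta}\), with \(|ds|=d\theta/(2\sqrt{1-\theta})\). This contributes the factor \(\theta(1-\theta)^{-1/2}\). Combined with \(\theta^{-1/2}\), it produces \((\theta/(1-\theta))^{1/2}\), which gives \eqref{eq:hopf-coarea} with an explicit \(c_0>0\).

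The main obstacle is justifying the coarea/delta computation up to the singular set and the degenerate parameter values. The excluded sets are \(\{u=0\}\), \(\{u=1\}\), \(\{\theta=0\}\) (which corresponds to \(\mathscr S_+\)), and the singular strata. We will argue as follows.
\begin{enumerate}[label=\textup{(\roman*)}]
 \item Each of these is \(\cH^{10}\)-null on \(\Lambda\), by the dimension count of the strata and \eqref{eq:link-tube}.
 \item It is also null for the product measure in \((u,\theta)\).
 \item On the complement, the map \((u,\xi,\omega)\mapsto x\) is a diffeomorphism onto its image, and \(F\) is a submersion on each \(\omega\)-sphere. The coarea formula therefore applies to nonnegative integrable \(h\) with no limiting argument at the singularities.
 \item The signed case follows by splitting \(h=h^+-h^-\).
\end{enumerate}
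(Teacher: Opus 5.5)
Your proposal is correct and follows the paper's proof essentially step for step. It uses the same decomposition $u^3(1-u)\,du\,d\xi\,d\zeta$ of the measure on $\Sph^{11}$ and the same coarea formula on $\Sph^{11}$ with $|\nabla_{\Sph}F|=\sqrt S$, followed by coarea on the $\zeta$-sphere along a great $\Sph^2$ with gradient $u\sqrt{1-u}\sqrt\theta$, and then the same fiber integration through the quaternionic Hopf map with the two branches of $\theta=1-s^2$. One small inaccuracy does not affect the argument: on $\Lambda$ the set $\{\theta=0\}$ is the seven-dimensional set $\{\tau(y)=0\}$, consisting mostly of regular points and containing $\mathscr S_+=\{\theta=0,\ u=1\}$ only as a three-dimensional piece, but it is still $\cH^{10}$-null, so your null-set argument goes through.
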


\begin{proof}
Write $y=\sqrt u\,\xi$ and $z=\sqrt{1-u}\,\zeta$, with
$\xi\in\Sph^7$ and $\zeta\in\Sph^3$.  Spherical measure on $\Sph^{11}$
decomposes, up to a positive constant, as
\[
 u^3(1-u)\,\dd u\,\dd\xi\,\dd\zeta.
\]
If
$\bar\tau=(\langle A_0\xi,\xi\rangle,\ldots,
\langle A_3\xi,\xi\rangle)$, then
$|\bar\tau|=\sqrt\theta$ and
\[
 F=u\sqrt{1-u}\,\langle\zeta,\bar\tau\rangle.
\]
For fixed $0<u<1$ and $\xi$ with $\theta>0$, the equation $F=0$
defines an equatorial $\Sph^2$ in the $\zeta$-sphere.  Along this equator,
\begin{equation}\label{eq:z-coarea}
 |\nabla_{\Sph^3,\zeta}F|=u\sqrt{1-u}\sqrt\theta.
\end{equation}
On $\{F=0\}\cap\Sph^{11}$, Euler's identity gives
$|\nabla_{S^{11}}F|=|\nabla F|=\sqrt S$.
Applying the coarea formula first on $\Sph^{11}$ and then on the
$\zeta$-sphere gives, for a constant $c_1>0$,
\[
 \int_{\Lambda_{\reg}}h\dd\cH^{10}
 =c_1\int_0^1u^2(1-u)^{1/2}
       \int_{\Sph^7}h(u,\theta)\frac{S^{1/2}}{\sqrt\theta}
       \dd\xi\,\dd u.
\]

It remains to integrate in $\xi$.  The Hopf map
\eqref{eq:hopf-map} has congruent fibers and constant Jacobian.
For the last coordinate $s$ on $\Sph^4$, spherical measure has density
proportional to $(1-s^2)\dd s$.  Since $\theta=1-s^2$, the two
branches of $s$ contribute
\[
 \frac{\theta}{\sqrt{1-\theta}}\,\dd\theta.
\]
Substitution into the preceding integral proves
\eqref{eq:hopf-coarea}; \eqref{eq:SK-43} is
\eqref{eq:S}--\eqref{eq:K} with $R=1$ and $T=u^2\theta$.
\end{proof}

\paragraph*{A test function proving instability}

For $a\in\R$, put $\phi_a=S^a$ on $\Lambda_{\reg}$.  Combining
\cref{lem:link-curvature,lem:hopf-coarea} gives, whenever the
integral is finite,
\begin{align}
 \frac1{c_0}\mathcal B(\phi_a,\phi_b)
 &=\int_0^1\int_0^1
    u^2(1-u)^{1/2}
    \left(\frac{\theta}{1-\theta}\right)^{1/2}
    \Bigg[
       \left(ab+\frac12\right)KS^{a+b-3/2}              \notag\\
 &\hspace{27mm}
       -32S^{a+b-1/2}
       +\left(\frac{81}{4}-16ab\right)S^{a+b+1/2}
    \Bigg]\dd\theta\,\dd u.
 \label{eq:monomial-link-form}
\end{align}
For $a,b\in\{3/4,7/4\}$, every power of $S$ in
\eqref{eq:monomial-link-form} is a nonnegative integer.  Expanding
\eqref{eq:SK-43} and using only
\begin{align*}
 \int_0^1u^{i+2}(1-u)^{1/2}\dd u
   &=\mathrm B\left(i+3,\frac32\right),\\
 \int_0^1\theta^{j+1/2}(1-\theta)^{-1/2}\dd\theta
   &=\mathrm B\left(j+\frac32,\frac12\right),
\end{align*}
we obtain
\[
 \frac1{c_0\pi}
 \begin{pmatrix}
  \mathcal B(\phi_{3/4},\phi_{3/4})
   &\mathcal B(\phi_{3/4},\phi_{7/4})\\
  \mathcal B(\phi_{7/4},\phi_{3/4})
   &\mathcal B(\phi_{7/4},\phi_{7/4})
 \end{pmatrix}
 =
 \begin{pmatrix}
  \dfrac{96}{715}
   &\dfrac{471936}{1616615}\\[2mm]
  \dfrac{471936}{1616615}
   &\dfrac{571904}{969969}
 \end{pmatrix}.
\]
In particular, for
\[
 \phi=\phi_{3/4}-\frac12\phi_{7/4}
      =S^{3/4}\left(1-\frac12S\right),
\]
\begin{equation}\label{eq:negative-link-form}
 \mathcal B(\phi,\phi)
 =-c_0\pi\,\frac{9952}{969969}<0.
\end{equation}

Near either singular stratum of $\mathscr S$, the nondegenerate transverse
quadratic models give
\[
 S\asymp\rho^2,
 \qquad
 K=O(\rho^2),
 \qquad
 |\nabla_\Lambda S|=O(\rho),
\]
where $\rho=\dist(\cdot,\mathscr S)$; the last estimate also follows from
\eqref{eq:link-gradS}.  Therefore
\[
 |\phi|=O(\rho^{3/2}),
 \qquad
 |\nabla_\Lambda\phi|=O(\rho^{1/2}).
\]
In particular, both $|\nabla_\Lambda\phi|^2$ and
$|A_\Lambda|^2\phi^2$ are $O(\rho)$.  Since the transverse volume
element has order $\rho^6\dd\rho$, all terms in the form energy are
locally integrable.  Choose smooth cutoffs \(\chi_\eps\) that vanish
for \(\rho\leq\eps\), equal one for \(\rho\geq2\eps\), and satisfy
\(|\nabla_\Lambda\chi_\eps|\leq C/\eps\).
The estimate \eqref{eq:link-tube} and the preceding pointwise bounds give
\[
 \|(1-\chi_\eps)\phi\|_{\mathcal D}^2
 \leq C\eps^8\longrightarrow0.
\]
Indeed, the gradient and curvature terms are bounded by
\(C\int_0^{2\eps}\rho\,\rho^6\dd\rho=O(\eps^8)\);
the cutoff derivative contributes at most
\(C\eps^{-2}\eps^3\eps^7=O(\eps^8)\).
Thus \(\chi_\eps\phi\in C_c^\infty(\Lambda_{\reg})\) approximates
\(\phi\) in \(\mathcal D(\Lambda)\).
The cone stability formula \eqref{eq:exact-Hardy-gap} and
\eqref{eq:negative-link-form} prove
\cref{thm:Clifford-43-instability}.  Together with the sufficient and
dimension-obstruction arguments above, this also completes the proof
of \cref{thm:clifford-classification}.

\subsubsection{Failure of the divergence sign for all powers of \texorpdfstring{$R$ and $S$}{R and S}}
\label{sec:obstruction}

The exceptional cone also gives an algebraic obstruction to the whole
two-parameter family of functions \(R^\alpha S^\gamma F\).

\begin{proposition}[Failure for all exponents at \((4,3)\)]
\label{prop:Clifford-power-obstruction}
For every Clifford system with \((m,q)=(4,3)\) and every
\((\alpha,\gamma)\in\R^2\), there is a point with
\(R>0\), \(S>0\), \(F\neq0\), and
\(\nabla G_{\alpha,\gamma}\neq0\), where
\[
 G_{\alpha,\gamma}=R^\alpha S^\gamma F,\qquad
 F\,\diver\frac{\nabla G_{\alpha,\gamma}}
                     {|\nabla G_{\alpha,\gamma}|}<0.
\]
\end{proposition}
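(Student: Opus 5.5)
The plan is to reduce the claim to the boundary (Jacobi) behaviour of the divergence together with the instability of the $(4,3)$ cone. I would not attempt a direct sign analysis of $\widehat Q_{\alpha,\gamma}$ on the whole cube. By \eqref{eq:L-Gag}, on $\{R>0,S>0\}$ the sign of $F\diver X_{G_{\alpha,\gamma}}$ equals the sign of $F^2\widehat Q_{\alpha,\gamma}$, hence the sign of $\widehat Q_{\alpha,\gamma}$ wherever $F\neq0$. By \eqref{eq:Vhat}, $\widehat V_{\alpha,\gamma}\geq S>0$ on that set, so $\nabla G_{\alpha,\gamma}\neq0$ there automatically. It therefore suffices to find, for each $(\alpha,\gamma)$, a point with $R,S>0$, $F\neq0$ and $\widehat Q_{\alpha,\gamma}<0$.

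The key step argues by contradiction. Suppose $\widehat Q_{\alpha,\gamma}\geq0$ at every point with $R,S>0$, $F\neq0$. By continuity and density of $\{F\neq0\}$, it is then nonnegative on all of $\{R>0,S>0\}$, in particular on the regular cone. The function $G=R^\alpha S^\gamma F=aF$ has smooth positive multiplier $a=R^\alpha S^\gamma$ off $\Sigma$. By \cref{thm:Jacobi-trace} and \eqref{eq:boundary-potential-algebraic}, $\widehat Q_{\alpha,\gamma}$ restricted to $F=0$ is a positive multiple of $\cK_F(a)|_C$. Its nonnegativity means that $w=(a|\nabla F|)^{-1}$ is a positive Jacobi supersolution with $V_G\geq0$. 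The ground-state identity \eqref{eq:ground-state-identity} then gives $Q_C(u)\geq0$ for all $u\in C_c^\infty(C_{\reg}\setminus\{0\})$. This contradicts \cref{thm:Clifford-43-instability}. Hence $\widehat Q_{\alpha,\gamma}$ is negative somewhere on the regular cone, and by continuity also at nearby points with $F\neq0$, $R,S>0$. Those points realize the required strict inequality.

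The main obstacle is justifying \eqref{eq:ground-state-identity} for this $w$. That identity uses only the regular part and compactly supported $u$, so the integration by parts $u=wv$ is legitimate on $C_{\reg}\setminus\{0\}$ with no decay condition near $\Sigma$. The only care needed is that $w$ is smooth and positive on $C_{\reg}$, which holds because $S>0$ there. Thus the argument is soft and works uniformly in $(\alpha,\gamma)$. One should also confirm that $\{F\neq0,R>0,S>0\}$ accumulates on every regular cone point where $\widehat Q<0$; this is immediate from the implicit function theorem, since $\nabla F\neq0$ there.

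If an explicit witness is preferred, a fallback is to evaluate $P^{\alpha,\gamma}_{4,3}(\lambda,\theta,0)$ on the boundary face. The configurations \eqref{eq:actual-y}--\eqref{eq:actual-omega} with $q=3$ approach a point such as $(\lambda,\theta,\omega)=(1/3,1,0)$. One would then show that the resulting quadratic polynomial in $(\alpha,\gamma)$ cannot be nonnegative together with the value at a second realizable point. The Jacobi argument above avoids this case analysis and is the route I would take.
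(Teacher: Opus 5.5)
Your argument is correct, but it takes a different route from the paper. The paper never invokes stability. It evaluates the face $\theta=1$, $\omega=0$ directly:
\begin{itemize}
\item $\lim_{\lambda\downarrow0}P^{\alpha,\gamma}_{4,3}(\lambda,1,0)=-8(\gamma-1)(2\gamma-1)$, which is negative unless $\tfrac12\leq\gamma\leq1$;
\item $P^{\alpha,\gamma}_{4,3}(2/3,1,0)=-\tfrac83[2\alpha^2+(8\gamma-5)\alpha+8\gamma^2-6\gamma+3]$, and in that window a completed square makes this at most $-5$.
\end{itemize}
It then realizes these parameter values by the explicit points $y=(x,x)$, $z_\eps=(\sqrt{v-\eps^2},\eps,0,\ldots,0)$. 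These have $\theta=1$, $F=u\eps\neq0$ and $\omega\to0$.

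Since $\omega=0$ is the face $F=0$, the paper is also locating points of $C_{\reg}$ where the Jacobi potential $V_G=\widehat Q_{\alpha,\gamma}/S$ is negative. It does so pointwise and by hand. You obtain such points by contradiction instead. If $\widehat Q_{\alpha,\gamma}\geq0$ on $C_{\reg}$, then $w=(a|\nabla F|)^{-1}$ with $a=R^\alpha S^\gamma$ is a positive supersolution. Then \eqref{eq:ground-state-identity} gives $Q_C\geq0$, contradicting \cref{thm:Clifford-43-instability}.

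This is valid. The ground-state identity needs only compactly supported $u$ on $C_{\reg}$, and the instability theorem is proved before and independently of the proposition, so there is no circularity. Your route is also more general: on any unstable cone, no positive multiplier smooth near the regular part can satisfy even the boundary sign, not just the powers $R^\alpha S^\gamma$.

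What it gives up is what the remark after the proposition stresses. The paper's proof is independent of the second-variation calculation, namely the Hopf-fiber beta integrals and the cutoff approximation in $\mathcal D(\Lambda)$. It is elementary and yields explicit witness points with a quantitative negative value.

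One claim in your first paragraph is false: $\widehat V_{\alpha,\gamma}\geq S$ does not hold in general. The terms $32\gamma RF^2/S$ and $4\alpha(3+4\gamma+\alpha)F^2/R$ in \eqref{eq:Vhat} can be negative. For $(\alpha,\gamma)=(-3/2,0)$ one gets $\widehat V=S-9F^2/R$. At $R=1$ this equals $|\nabla_{\Sph}F|^2$, which vanishes at a spherical maximum of $F$, a point with $R,S>0$ and $F\neq0$. So the negation of the proposition only gives $\widehat Q_{\alpha,\gamma}\geq0$ where $\nabla G_{\alpha,\gamma}\neq0$.

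This does not break your argument. Since $\widehat V_{\alpha,\gamma}=S+O(F^2)$, the gradient is nonzero on a neighborhood of $C_{\reg}$. That is all you need, both for the continuity step onto the cone and for the final witness points. The paper uses the same fact, $|\nabla G_{\alpha,\gamma}|^2=R^{2\alpha}S^{2\gamma+1}$ on $F=0$.
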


\begin{proof}
We evaluate necessary boundary conditions at \(F=0\), then construct
nearby points with \(F\neq0\) where the divergence has the wrong sign.

For $(m,q)=(4,3)$, specialize the function
$P^{\alpha,\gamma}_{4,3}$ from \eqref{eq:P-def} to
$\theta=1$, $\omega=0$.  Direct substitution into
\eqref{eq:Qhat-specialized}
gives
\begin{align}
 \lim_{\lambda\downarrow0}
 P^{\alpha,\gamma}_{4,3}(\lambda,1,0)
 &=-8(\gamma-1)(2\gamma-1),                              \label{eq:obstruction-lambda0}\\
 P^{\alpha,\gamma}_{4,3}\left(\frac23,1,0\right)
 &=-\frac83\left[
 2\alpha^2+(8\gamma-5)\alpha+8\gamma^2-6\gamma+3
 \right].                                                \label{eq:obstruction-lambda23}
\end{align}
If the required sign $P^{\alpha,\gamma}_{4,3}\geq0$ held, then
\eqref{eq:obstruction-lambda0} would force
\begin{equation}\label{eq:gamma-window}
 \frac12\leq\gamma\leq1.
\end{equation}
But the bracket in \eqref{eq:obstruction-lambda23} satisfies
\begin{align}
 &2\alpha^2+(8\gamma-5)\alpha+8\gamma^2-6\gamma+3\notag\\
 &\qquad
 =2\left(\alpha+\frac{8\gamma-5}{4}\right)^2
  +4\gamma-\frac18
 \geq\frac{15}{8}
 \quad\text{under \eqref{eq:gamma-window}}.
 \label{eq:obstruction-square}
\end{align}
Therefore the value in \eqref{eq:obstruction-lambda23} is at most
$-5$.

We now construct points $(y,z)$ whose parameters approach the values
used above.  This step is needed because the cube contains parameter
values that need not occur for a given Clifford system.
Put the first two matrices in
\eqref{eq:standard-first-two}.  For prescribed
$\lambda\in(0,1)$, set $u=\lambda$, $v=1-\lambda$, choose
$x\in\R^4$ with $|x|^2=u/2$, and let
\begin{equation}\label{eq:actual-y}
 y=(x,x).
\end{equation}
Then
\[
 \tau(y)=(0,u,0,\ldots,0),\qquad T=u^2,\qquad\theta=1.
\]
For $0<\eps<\sqrt v$, set
\[
 z_\eps=(\sqrt{v-\eps^2},\eps,0,\ldots,0).
\]
Now $R=1$, $F=u\eps$, and
\begin{equation}\label{eq:actual-omega}
 \omega=\frac{F^2}{vT}=\frac{\eps^2}{v}\longrightarrow0.
\end{equation}
Thus every point $(\lambda,1,0)$ used in
\eqref{eq:obstruction-lambda0}--\eqref{eq:obstruction-lambda23}
is approached by points with $F\neq0$ in the given $(4,3)$ Clifford
system.  For fixed \(\alpha,\gamma\), the product rule on
\(\{F=0,R>0,S>0\}\) gives
\[
 |\nabla(R^\alpha S^\gamma F)|^2
 =R^{2\alpha}S^{2\gamma+1}>0.
\]
Hence the gradient remains nonzero for sufficiently small \(\eps\).
If \(\gamma\notin[1/2,1]\), first fix a sufficiently small
\(\lambda>0\) using \eqref{eq:obstruction-lambda0}; otherwise take
\(\lambda=2/3\).  The function \(P^{\alpha,\gamma}_{4,3}\) remains negative
for all sufficiently small \(\eps>0\),
so the normalized gradient is defined and has the wrong divergence sign
at the nearby points.
\end{proof}

\begin{remark}
By \cref{thm:Clifford-43-instability}, the $(4,3)$ cone is not area
minimizing.  The preceding proposition explains directly why every
function \(R^\alpha S^\gamma F\) fails the divergence inequality in
this case, independently of the second variation calculation.
\end{remark}

\section{Algebraic subcalibrations for Riedler's pullback constructions}
\label{sec:quartic-application}

We study when a subcalibration potential on the target of a harmonic
morphism induces a subcalibration on the source. Riedler's pullback
constructions \cite{RiedlerThesis25} provide the setting.
The divergence acquires a term involving the dilation of the map;
controlling this term gives the criterion used below for quadratic
and Cartan cubic targets.

\subsection{The pullback identity and the dilation term}

Let \(\phi:\R^N\to\R^m\) be a homogeneous polynomial harmonic
morphism of degree \(d\geq1\).  Thus, on its regular set, its
dilation \(\lambda>0\) satisfies
\[
 \Delta\phi_i=0,\qquad
 \Gamma(\phi_i,\phi_j)=\lambda^2\delta_{ij}.
\]
In particular, \(\|d\phi\|_{\mathrm{HS}}^2=m\lambda^2\).
We use the subscripts \(x\) and \(y\) to distinguish source and
target differential operators. We first compute the divergence where
the pulled-back normalized gradient is defined. The exceptional-set
conditions in the proposition allow this calculation to yield a
global perimeter comparison.

\begin{proposition}[Pullback criterion]
\label{prop:pullback-subcalibration}
Suppose that, on the regular set,
\begin{equation}\label{eq:pullback-dilation-condition}
 d\phi\bigl(\nabla_x\lambda^2\bigr)=\eta(x)\phi(x).
\end{equation}
Equivalently,
\(d\phi(\nabla_x\|d\phi\|_{\mathrm{HS}}^2)=m\eta\phi\);
this records the factor \(m\) in the differential-norm formulation
of \cite[Definition~5.1.1(1)]{RiedlerThesis25}.
Let \(H\) be a nonzero homogeneous polynomial of degree \(\ell>0\)
on \(\R^m\), put \(G=H\circ\phi\), and write
\(W_H=|\nabla_yH|^2\).  Then
\begin{align}
 |\nabla_xG|^2&=\lambda^2(W_H\circ\phi),\qquad
 \Delta_xG=\lambda^2(\Delta_yH)\circ\phi,
 \label{eq:pullback-gradient-laplacian}\\
 \cL_x(G)&=\lambda^4\cL_y(H)\circ\phi
       -\frac{\ell\eta}{2}(H W_H)\circ\phi.
 \label{eq:pullback-defect}
\end{align}
Suppose that a constant \(\kappa\in\R\) satisfies
\begin{equation}\label{eq:pullback-kappa-bound}
 \frac{|\phi(x)|^2\eta(x)}{2\lambda(x)^4}\leq\kappa
\end{equation}
at regular points with \(\phi(x)\ne0\), and that
\begin{equation}\label{eq:pullback-target-sign}
 |y|^2H(y)\cL_y(H)(y)
 \geq\kappa\ell H(y)^2W_H(y)
 \qquad(y\ne0).
\end{equation}
Then \(G\cL_x(G)\geq0\) at regular points with \(\phi(x)\ne0\).

If \(G\not\equiv0\) and a closed algebraic set \(Z\subset\R^N\)
of dimension at most \(N-2\) contains the critical set of \(G\),
the singular zeros of \(G\), and the points excluded from this
calculation, then \(\{G<0\}\) is locally perimeter minimizing and
\(\partial\llbracket\{G<0\}\rrbracket\) is locally mass minimizing.
\end{proposition}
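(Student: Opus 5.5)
The plan is a direct chain-rule computation at regular points of \(\phi\), followed by an application of \cref{cor:gradient-subcalibration}. Since \(\nabla_xG=\sum_i(\partial_iH\circ\phi)\nabla_x\phi_i\) and \(\Gamma(\phi_i,\phi_j)=\lambda^2\delta_{ij}\), we get
\[
 |\nabla_xG|^2=\lambda^2\sum_i(\partial_iH)^2\circ\phi=\lambda^2(W_H\circ\phi).
\]
For the Laplacian, \(\Delta_xG=\sum_{i,j}(\partial_{ij}H\circ\phi)\Gamma(\phi_i,\phi_j)+\sum_i(\partial_iH\circ\phi)\Delta\phi_i=\lambda^2(\Delta_yH)\circ\phi\), because the components of \(\phi\) are harmonic. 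This gives \eqref{eq:pullback-gradient-laplacian}.

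For \eqref{eq:pullback-defect} I will use \(\cL(G)=|\nabla G|^2\Delta G-\tfrac12\Gamma(G,|\nabla G|^2)\). The first term is \(\lambda^4(W_H\Delta_yH)\circ\phi\). By the product rule,
\[
 \Gamma\bigl(G,\lambda^2(W_H\circ\phi)\bigr)=\lambda^2\,\Gamma\bigl(G,W_H\circ\phi\bigr)+(W_H\circ\phi)\,\Gamma(G,\lambda^2).
\]
Conformality gives \(\Gamma(G,W_H\circ\phi)=\lambda^2\langle\nabla_yH,\nabla_yW_H\rangle\circ\phi\). The second piece is \((W_H\circ\phi)\langle\nabla_yH\circ\phi,d\phi(\nabla_x\lambda^2)\rangle\). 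The key step is hypothesis \eqref{eq:pullback-dilation-condition}: it converts this into \(\eta\langle\nabla_yH,y\rangle\circ\phi=\ell\eta H\circ\phi\) by Euler's identity. Collecting terms yields \eqref{eq:pullback-defect}. I also need the reformulation \(d\phi(\nabla\|d\phi\|_{\mathrm{HS}}^2)=m\eta\phi\), which follows from \(\|d\phi\|_{\mathrm{HS}}^2=m\lambda^2\).

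For the sign, multiply \eqref{eq:pullback-defect} by \(G=H\circ\phi\):
\[
 G\cL_x(G)=\lambda^4\bigl(H\cL_yH\bigr)\circ\phi-\tfrac{\ell\eta}{2}\bigl(H^2W_H\bigr)\circ\phi.
\]
At a regular point with \(y=\phi(x)\neq0\), apply \eqref{eq:pullback-target-sign} divided by \(|y|^2\). This gives \(\lambda^4H\cL_yH\geq\lambda^4\kappa\ell|y|^{-2}H^2W_H\). Since \(H^2W_H\ge0\), \eqref{eq:pullback-kappa-bound} gives \(\tfrac{\ell\eta}{2}H^2W_H\le\lambda^4\kappa\ell|y|^{-2}H^2W_H\), so the difference is nonnegative. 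I expect the only delicate point here to be sign bookkeeping: \(\kappa\) may be negative, but the inequalities are used only in the direction stated, so this causes no problem.

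For the global conclusion I will apply \cref{cor:gradient-subcalibration} with \(\Omega=\R^N\) and the given \(Z\). The set \(\{G<0\}\) has locally finite perimeter by \cref{rem:polynomial-finite-perimeter}, since \(G\) is a polynomial and \(G\not\equiv0\). The set \(Z\) is closed, contains the critical set and singular zeros, and has dimension at most \(N-2\), so by the semialgebraic remark it satisfies \eqref{eq:H-null}. Off \(Z\) every point is regular with \(\phi\neq0\), so the sign \(G\cL(G)\ge0\) holds there. The corollary then gives local perimeter minimization, and the equivalence in \cref{sec:prelim-minimization} gives mass minimization of the boundary current.

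The main obstacle I anticipate is the \(\Gamma(G,\lambda^2)\) term. Its reduction requires \(\lambda^2\) to be smooth on the regular set, where \(\lambda^2=\Gamma(\phi_1,\phi_1)\) is polynomial, so this should be fine. It also requires that \eqref{eq:pullback-dilation-condition} be used in exactly this pairing against \(\nabla_yH\circ\phi\).
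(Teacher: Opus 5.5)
Your proposal is correct and follows the paper's proof essentially step for step. It uses the chain rule under conformality and harmonicity, splits \(\tfrac12\Gamma_x(G,|\nabla_xG|^2)\) into the Hessian term and the dilation term, and converts the latter by the dilation condition and Euler's identity. It then combines the two inequalities into \(G\cL_x(G)\geq\ell H^2W_H\bigl(\kappa\lambda^4/|y|^2-\eta/2\bigr)\geq0\). Finally, it gets the global conclusion from the gradient subcalibration corollary together with the semialgebraic null-set remark.
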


\begin{proof}
The chain rule gives \eqref{eq:pullback-gradient-laplacian}.  Moreover,
\begin{align*}
 \tfrac12\Gamma_x(G,|\nabla_xG|^2)
 &=\frac{W_H\circ\phi}{2}
   \left\langle d\phi(\nabla_x\lambda^2),
                   (\nabla_yH)\circ\phi\right\rangle
   +\lambda^4\bigl(\Hess_yH(\nabla_yH,\nabla_yH)\bigr)\circ\phi.
\end{align*}
Euler's identity \(\langle y,\nabla_yH\rangle=\ell H\)
and \eqref{eq:pullback-dilation-condition} prove
\eqref{eq:pullback-defect}.  At a regular point with
\(y=\phi(x)\ne0\), the two assumed inequalities give
\[
 G\cL_x(G)
 \geq\ell H(y)^2W_H(y)
       \left(\frac{\kappa\lambda^4}{|y|^2}
                    -\frac{\eta}{2}\right)\geq0.
\]
For the global assertion, polynomial sublevel sets have locally finite
perimeter by \cref{rem:polynomial-finite-perimeter}.  The dimension
bound gives \(\cH^{N-1}(Z\cap K)=0\) on compact sets.  Off \(Z\),
the normalized gradient is smooth, has norm one, and agrees with the
exterior unit normal of \(\{G<0\}\) on its regular boundary.
Thus \cref{cor:gradient-subcalibration,thm:removable-subcalibration}
apply.
\end{proof}

Condition \eqref{eq:pullback-dilation-condition} explains the
distinction between stationarity and the required phase signs.  On a
regular minimal level cone \(H=0\), both terms in
\eqref{eq:pullback-defect} vanish.  Away from that cone, the second
term must be controlled.  For example, if \(H\) is linear and
\(\eta>0\), then \(\cL_y(H)=0\) but
\(G\cL_x(G)=-\eta H^2W_H/2<0\) wherever \(H\ne0\).

\begin{remark}[Weighted interpretation]
Let \(\phi\) be a harmonic morphism between Riemannian manifolds,
and let \(X\) be a unit vector field on the target.  Its unit
horizontal lift \(V=\lambda^{-1}d\phi^*X\) satisfies
\begin{equation}\label{eq:pullback-field-divergence}
 \diver_xV=\lambda(\diver_yX)\circ\phi
       -\lambda^{-2}\langle d\phi(\nabla_x\lambda),X\circ\phi\rangle.
\end{equation}
Indeed, harmonicity gives
\(\diver_x(d\phi^*X)=\lambda^2(\diver_yX)\circ\phi\), and the
product rule gives the formula.  For a positive target density
\(\psi\), the same calculation gives
\[
 \diver_x\bigl(\lambda(\psi\circ\phi)V\bigr)
 =\lambda^2\bigl(\diver_y(\psi X)\bigr)\circ\phi.
\]
Since \(\lambda>0\), this identity transfers the sign of
\(\diver_y(\psi X)\) to
\(\diver_x(\lambda(\psi\circ\phi)V)\) on the regular set.  In the Euclidean
setting of \cref{prop:pullback-subcalibration},
\eqref{eq:pullback-target-sign} is the normalized-gradient phase
inequality for the target density \(|y|^{-\kappa}\).  In fact,
for \(X_H=\nabla H/|\nabla H|\),
\[
 |y|^{\kappa}\diver_y\bigl(|y|^{-\kappa}X_H\bigr)
 =\frac{\cL_y(H)}{W_H^{3/2}}
       -\frac{\kappa\ell H}{|y|^2W_H^{1/2}}.
\]
The dilation bound converts this weighted target inequality into the
ordinary-area inequality on the source.  Global comparison still uses
the exceptional-set hypotheses of the proposition.
\end{remark}

For the subclass \(\lambda^2=c|x|^{2d-2}\), with \(c>0\),
Euler's identity gives
\[
 \eta=2d(d-1)c|x|^{2d-4},\qquad
 d^2|\phi(x)|^2=|d\phi_x(x)|^2\leq\lambda^2|x|^2.
\]
Hence \eqref{eq:pullback-kappa-bound} holds with
\(\kappa=(d-1)/d\).  This bound uses the radial form of the dilation.

\subsection{Clifford maps and Laplacian algebras}

Let \(A_1,\ldots,A_m\) be a symmetric Clifford system on \(\R^N\),
where \(m\geq2\):
\[
 A_iA_j+A_jA_i=2\delta_{ij}\Id.
\]
Set \(P_i(x)=\langle A_ix,x\rangle\), \(P=(P_1,\ldots,P_m)\),
\(R=|x|^2\), and \(Q=|P|^2\).  The Clifford relations imply
\begin{equation}\label{eq:Clifford-pullback-identities}
 \Delta P_i=0,\qquad
 \Gamma(P_i,P_j)=4R\delta_{ij},\qquad
 \Gamma(R,P_i)=4P_i.
\end{equation}
For example, a second Clifford matrix interchanges the two eigenspaces
of \(A_i\), so \(\tr A_i=0\).  These are the quadratic harmonic
morphisms used in Riedler's construction \cite[Definition~4.5.1]{RiedlerThesis25}.

\begin{lemma}[Laplacian algebra under Clifford pullback]
\label{lem:Clifford-algebra-pullback}
If \(\cA\subset\R[y_1,\ldots,y_m]\) is a graded Laplacian algebra,
then \(\R[R,P^*\cA]\) is a Laplacian algebra on \(\R^N\).
Moreover, \(P\) is onto, \(Q\leq R^2\), and
\(P^{-1}(0)\setminus\{0\}\), when nonempty, is a smooth
submanifold of codimension \(m\).
For this map, \cref{prop:pullback-subcalibration} applies with
\(\lambda^2=4R\), \(\eta=16\), and \(\kappa=1/2\).
\end{lemma}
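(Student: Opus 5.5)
The plan is to check the four assertions of the lemma in order: closure, surjectivity with the bound \(Q\leq R^2\), smoothness of \(P^{-1}(0)\), and the constants for \cref{prop:pullback-subcalibration}. Everything follows from the identities \eqref{eq:Clifford-pullback-identities} together with the chain rule.

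\emph{Closure.} Let \(a\in\cA\) and set \(H=a\circ P\). The chain rule together with \(\Delta P_i=0\) and \(\Gamma(P_i,P_j)=4R\delta_{ij}\) gives
\[
 \Delta_x(a\circ P)=4R\,(\Delta_ya)\circ P,\qquad
 \Gamma_x(a\circ P,b\circ P)=4R\,\Gamma_y(a,b)\circ P .
\]
Combining \(\Gamma(R,P_i)=4P_i\) with Euler's identity on the target, we get
\(\Gamma_x(R,a\circ P)=4\bigl(\sum_iy_i\partial_ia\bigr)\circ P\). For homogeneous \(a\) this equals \(4\deg(a)\,a\circ P\). Because \(\cA\) is graded, each homogeneous component of \(a\) lies in \(\cA\), so the value stays in \(P^*\cA\). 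Also \(\Delta R=2N\) and \(\Gamma(R,R)=4R\). These entries form a closure table on the generators \(R\) together with the pullbacks of finitely many homogeneous generators of \(\cA\). By \cite[Proposition~37(a)]{MendesRadeschi}, or the polarization \eqref{eq:Gamma-polarization} with the product rule, \(\R[R,P^*\cA]\) is closed under \(\Delta\). It contains \(R\), so it is a graded Laplacian algebra.

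\emph{Image and the bound \(Q\leq R^2\).} For \(x\neq0\), the Clifford relations make the vectors \(A_ix\) orthonormal up to the common factor \(|x|\). Bessel's inequality then gives \(\sum_i\langle A_ix,x\rangle^2\leq|x|^4\), which is \(Q\leq R^2\). For surjectivity it suffices, by homogeneity of degree two, to hit every unit vector \(w\in\R^m\). The matrix \(A_w=\sum w_iA_i\) satisfies \(A_w^2=\Id\), and it is traceless because another Clifford matrix anticommutes with it. Hence it has eigenvalue \(1\). For a unit eigenvector \(x\) we have \(\langle P(x),w\rangle=1\). Since \(|P(x)|\leq1\), this forces \(P(x)=w\).

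\emph{Smoothness of \(P^{-1}(0)\).} At \(x\neq0\) the gradients \(\nabla P_i=2A_ix\) are mutually orthogonal with length \(2|x|\), so \(dP_x\) has rank \(m\). The implicit function theorem then shows that \(P^{-1}(0)\setminus\{0\}\) is a smooth submanifold of codimension \(m\).

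\emph{Constants for the pullback criterion.} The dilation satisfies \(\lambda^2=4R\), and \(dP(\nabla R)=(\Gamma(R,P_i))_i=4P\). Therefore \(dP(\nabla\lambda^2)=16P\), which gives \(\eta=16\) in \eqref{eq:pullback-dilation-condition}. Then
\[
 \frac{|P|^2\eta}{2\lambda^4}=\frac{16Q}{32R^2}\leq\frac12
\]
by \(Q\leq R^2\), so \(\kappa=1/2\). This also matches the radial case \(d=2\), where \(\kappa=(d-1)/d\). The main obstacle is the gradedness bookkeeping for \(\Gamma(R,\cdot)\); once that is handled, the remaining steps are direct.
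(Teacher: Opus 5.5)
Your proposal is correct and follows the paper's proof essentially step by step. The ingredients match: chain-rule closure with Euler's identity for \(\Gamma(R,\cdot)\), orthogonality of the vectors \(A_ix\) for \(Q\leq R^2\), a unit eigenvector of the traceless involution \(A_w\) for surjectivity, full rank of \(dP\) for the zero set, and \(dP(\nabla(4R))=16P\) for the constants. The one small difference is in surjectivity: you conclude \(P(x)=w\) from \(\langle P(x),w\rangle=1\) and \(|P(x)|\leq1\), the equality case of Cauchy--Schwarz. The paper instead uses anticommutation to get \(\langle A(v)x,x\rangle=0\) for \(v\perp w\); both arguments work.
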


\begin{proof}
For target polynomials \(u,v\), the chain rule gives
\[
 \Delta_x(u\circ P)=4R(\Delta_yu)\circ P,\qquad
 \Gamma_x(u\circ P,v\circ P)=4R\Gamma_y(u,v)\circ P.
\]
If \(u\) has degree \(j\), then
\(\Gamma_x(R,u\circ P)=4j(u\circ P)\).
Together with \(\Delta R=2N\) and \(\Gamma(R,R)=4R\), these
identities and the product rule prove the asserted closure.

For \(x\ne0\), the vectors \(A_ix\) are orthogonal and have
length \(|x|\).  Projection of \(x\) onto their span gives
\(Q/R\leq R\).  To prove surjectivity, take \(u\in\Sph^{m-1}\)
and put \(A(u)=\sum u_iA_i\).  This symmetric involution has trace
zero, since every \(A_i\) does.  It therefore has a unit eigenvector
\(x\) with eigenvalue one.  For \(v\perp u\),
the matrices \(A(v)\) and \(A(u)\) anticommute, so
\(\langle A(v)x,x\rangle=0\).  Hence \(P(x)=u\), and scaling
proves that \(P\) is onto.  Finally, \eqref{eq:Clifford-pullback-identities}
shows that \(dP\) has rank \(m\) at every nonzero point.  The
submersion theorem proves the statement about \(P^{-1}(0)\).
Since \(dP(\nabla(4R))=16P\), the last assertion follows from
\(Q/(2R^2)\leq1/2\).
\end{proof}

In particular, if a target homogeneous polynomial \(f\) has degree
at least two and \(\nabla f(y)\ne0\) for \(y\ne0\), then
\[
 |\nabla(f\circ P)|^2=4R(|\nabla f|^2\circ P).
\]
Thus its critical set is \(Z=P^{-1}(0)\), and its zero set is
regular off \(Z\).  The set \(Z\) is algebraic and has dimension
at most \(N-m\), apart from the possible isolated origin.
Consequently it has zero \(\cH^{N-1}\)-measure.  Surjectivity also
shows that both source phases are nonempty whenever both target
phases are nonempty.  These facts verify the geometric part of the
comparison criterion for the examples below.

\subsection{Quartic pullbacks and their potentials}

In addition to pulling back a target potential, one can multiply the
pullback by a function of the source radius \(R\). Riedler's
potential below has this form, and its divergence can be computed
in the enlarged Laplacian algebra.

For the quartic constructions, the closure table itself can first
select the minimal levels. The following criterion applies to three
orthogonal quartic quantities and identifies which linear combinations
can define the cones for which we seek a multiplier.

\begin{proposition}[Minimal levels of orthogonal quartic generators]
\label{prop:orthogonal-quartics}
Suppose that nonzero quartic polynomials $E_0,E_1,E_2$ satisfy
\begin{equation}\label{eq:orthogonal-quartic-data}
 E_0+E_1+E_2=R^2,\qquad
 \Gamma(E_i,E_j)=16RE_i\delta_{ij},\qquad
 \Delta E_i=8r_iR
\end{equation}
for constants $r_i>0$. They generate a Laplacian algebra together
with $R$, of generator degrees $(2,4,4)$ after eliminating one $E_i$.
Let $F=\sum_i\alpha_iE_i$ have a regular zero point with all $E_i>0$,
and put $\sigma=\sum_i r_i\alpha_i$.
Then $F=0$ is minimal if and only if its nonzero coefficients have
exactly two values $\alpha>0>\beta$, satisfying
\begin{equation}\label{eq:quartic-level-condition}
 \alpha(r_+-1)+\beta(r_--1)=0,\qquad
 r_+=\sum_{\alpha_i=\alpha}r_i,\quad r_-=\sum_{\alpha_i=\beta}r_i.
\end{equation}
If $F=0$ is minimal and none of the coefficients vanishes,
$\R[R,F]$ is already Laplacian.
\end{proposition}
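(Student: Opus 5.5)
The plan is to compute \(\cL(F)\) directly from the closure data \eqref{eq:orthogonal-quartic-data} using \cref{thm:finite-reduction}, and then read off minimality from a coefficient comparison on the zero level.

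\emph{Closure.} Euler's identity gives \(\Gamma(R,E_i)=8E_i\). Together with \(\Delta R=2N\), \(\Gamma(R,R)=4R\) and \eqref{eq:orthogonal-quartic-data}, every generator entry of the closure table lies in \(\R[R,E_0,E_1,E_2]\). Eliminating \(E_2=R^2-E_0-E_1\) gives generators of degrees \((2,4,4)\), and the chain and product rules give Laplacian closure.

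\emph{The defect.} For \(F=\sum\alpha_iE_i\) one has \(|\nabla F|^2=16R\sum\alpha_i^2E_i\) and \(\Delta F=8\sigma R\). Moreover
\[
 \Gamma\bigl(F,|\nabla F|^2\bigr)=128F\sum_j\alpha_j^2E_j+256R^2\sum_i\alpha_i^3E_i .
\]
Hence
\[
 \cL(F)=128R^2\sum_i\alpha_i^2(\sigma-\alpha_i)E_i-64F\sum_j\alpha_j^2E_j .
\]
So \(F=0\) is minimal near a regular point if and only if the linear form \(\Lambda(E)=\sum_i\alpha_i^2(\sigma-\alpha_i)E_i\) vanishes on the zero set there.

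\emph{Independence step (expected main obstacle).} I will show that, near a regular zero point with all \(E_i>0\), the vector \((E_0,E_1,E_2)\) restricted to \(\{F=0\}\cap\Sph^{N-1}\) sweeps out an open subset of the line \(\{\sum E_i=1,\ \sum\alpha_iE_i=0\}\). This uses the Gram matrix \(\operatorname{diag}(16RE_i)\): by \cref{prop:real-algebraic-quotient} its rank is \(3\) where all \(E_i>0\), so on the sphere the map \(E\) is a submersion onto the open simplex. Restricting to the regular hypersurface \(F=0\), which is transverse to the fibres because \(\nabla F\neq0\), the image is open in the intersection line. Consequently \(\Lambda\) vanishes on an open piece of the plane \(\{\sum\alpha_iE_i=0\}\). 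A linear form vanishing there is proportional to that plane's defining form, so \(\alpha_i^2(\sigma-\alpha_i)=c\,\alpha_i\) for every \(i\) and some constant \(c\). Checking the transversality and the openness of the image carefully is the point I expect to require the most care.

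\emph{Conclusion.} Every nonzero \(\alpha_i\) is then a root of \(t(\sigma-t)=c\), so there are at most two nonzero values, and if there are two they satisfy \(\alpha+\beta=\sigma\). A zero with all \(E_i>0\) forces both signs to occur, so there are exactly two values \(\alpha>0>\beta\); a single nonzero value would make \(F\) of one sign on \(\{E_i>0\}\). Inserting \(\sigma=\alpha r_++\beta r_-\) into \(\sigma=\alpha+\beta\) gives \eqref{eq:quartic-level-condition}. Conversely, \eqref{eq:quartic-level-condition} gives \(\Lambda=\alpha\beta F\), so
\[
 \cL(F)=F\Bigl(128\alpha\beta R^2-64\sum_j\alpha_j^2E_j\Bigr),
\]
and minimality follows from the converse part of \cref{prop:divisibility}. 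If no coefficient vanishes, set \(A=\sum_{\alpha_i=\alpha}E_i\). Then \(F=(\alpha-\beta)A+\beta R^2\), so \(A\in\R[R,F]\). Moreover \(\Gamma(A,A)=16RA\), \(\Gamma(R,A)=8A\) and \(\Delta A=8r_+R\), which proves that \(\R[R,F]\) is Laplacian.
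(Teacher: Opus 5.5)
Your proposal is correct and follows essentially the same route as the paper: both compute $\cL(F)=128R^2\sum_i\alpha_i^2(\sigma-\alpha_i)E_i-64F\sum_j\alpha_j^2E_j$ from the closure table, use the full-rank Gram matrix $\operatorname{diag}(16RE_i)$ at points where all $E_i>0$ to obtain an open piece of the plane $\{\sum_i\alpha_iE_i=0\}$ in the image, compare linear coefficients to get $\alpha_i(\sigma-\alpha_i)$ constant, and finish with $\sigma=\alpha+\beta$ and $A\in\R[R,F]$. The one point to tidy is the phrase ``transverse to the fibres'': since $F$ is constant on fibres of $E$, the fibres through zeros of $F$ lie inside $\{F=0\}$, and what you actually need is that $E$, being a submersion near the point, has open image in $\R^3$, so the intersection of that image with the plane consists of values $E(x)$ at nearby regular zeros of $F$ (the restriction to the sphere is then unnecessary).
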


\begin{proof}
The norm identities give $E_i\geq0$. Where they are positive,
their gradient matrix is positive definite, so their image has
interior. Set $A_j=\sum_i\alpha_i^jE_i$. The closure table gives
\[
 \Gamma(F,F)=16RA_2,\qquad
 \cL(F)=128R^2(\sigma A_2-A_3)-64FA_2.
\]
Thus minimality on the regular zero plane is equivalent to
$\sigma A_2-A_3=\kappa F$ for a constant $\kappa$. The image contains
an open piece of that plane, so comparison of its linear coefficients
is valid. For each nonzero $\alpha_i$ this reads
$\alpha_i(\sigma-\alpha_i)=\kappa$. There are at most two such
values, and a zero point with positive $E_i$ requires both signs.
Their sum is $\sigma$, which is precisely
\eqref{eq:quartic-level-condition}. These conditions also make
$\cL(F)$ divisible by $F$ globally and prove sufficiency.
In the minimal case, if all coefficients are nonzero, then
$A_2=(\alpha+\beta)F-\alpha\beta R^2$; the displayed norm and
Laplacian identities prove closure of $\R[R,F]$.
\end{proof}

For the splitting quartics of \cref{sec:two-quadratic}, take
\[
 (E_0,E_1,E_2)=((u-v)^2,4Q,4(T-Q)),\qquad
 (r_0,r_1,r_2)=(1,m,\ell-m).
\]
The defining quartic is $F=((1-c)E_1-cE_2)/4$, and
\eqref{eq:quartic-level-condition} recovers
$c=(m-1)/(\ell-2)$. Thus the $(2,4,4)$ subalgebra selects the same
minimal level inside the $(2,2,4)$ construction. The next Clifford
pullbacks have the same orthogonal quartic structure and lead to
Riedler's potentials.

Take \(m=2k\) and
\[
 F=\sum_{i=1}^{k}P_i^2-\sum_{i=k+1}^{2k}P_i^2.
\]
These are the quartic cones considered by Riedler \cite[Section~5.2.1]{RiedlerThesis25}.
The chain rule gives
\begin{equation}\label{eq:quartic-ambient-closure}
 \bigl(\Gamma(I_i,I_j)\bigr)_{I=(R,Q,F)}=
 \begin{pmatrix}
 4R&8Q&8F\\
 8Q&16RQ&16RF\\
 8F&16RF&16RQ
 \end{pmatrix},\qquad
 \Delta I=(2N,16kR,0).
\end{equation}
The determinant is \(1024R(R^2-Q)(Q^2-F^2)\), so the spherical
quotient has dimension two wherever \(0<Q<R^2\) and \(|F|<Q\).
For a concrete example, take the quadratic octonionic Hopf map
\(\widetilde P=(P_1,\ldots,P_9):\R^{16}\to\R^9\), normalized by
\(|\widetilde P|^2=R^2\), and retain its first eight components.
Then \(k=4\) and \(Q=R^2-P_9^2\). Choose an image with
\(P_9\neq0\) and nonzero components in both four-dimensional blocks.
This gives \(0<Q<R^2\) and \(|F|<Q\). Taking two copies of this
eight-matrix Clifford system gives \(N=32\), \(k=4\), in the
minimizing range below; the same point, placed in one copy, retains
the strict inequalities.
In \eqref{eq:orthogonal-quartic-data}, these generators correspond to
\[
 (E_0,E_1,E_2)=\left(R^2-Q,\frac{Q+F}{2},\frac{Q-F}{2}\right),
 \qquad (r_0,r_1,r_2)=\left(\frac N2-2k+1,k,k\right).
\]
The defining polynomial itself satisfies
\[
 |\nabla F|^2=16RQ,\qquad \Delta F=0,\qquad
 \cL(F)=-64F(Q+2R^2).
\]
The last formula follows by computing
\(-\Gamma(F,\Gamma(F,F))/2\) from
\eqref{eq:quartic-ambient-closure}.

\begin{remark}[Choice of the quartic generator]
Here \(S=|\nabla F|^2=16RQ\), so the quartic \(Q\) supplies the
information needed for the multiplier calculation. Replacing \(Q\)
by \(S\) need not preserve polynomial closure: indeed,
\[
 \Delta S=32(N+8)Q+256kR^2.
\]
When \(R,F,Q\) are algebraically independent, the term in \(Q\)
does not belong to the degree-four part
\(\operatorname{span}\{R^2,F\}\) of \(\R[R,F,S]\).
We therefore compute in \(\R[R,Q,F]\).
\end{remark}

\begin{example}[Riedler's subcalibration potential]
\label{ex:Riedler-correct-power}
For \(N\geq32\) and \(k\geq4\), Riedler's thesis uses
\cite[Section~5.2.2, Lemmas~5.2.12--5.2.13]{RiedlerThesis25}
\begin{equation}\label{eq:Riedler-correct-potential}
 G=RQ^{3/4}F=16^{-3/4}R^{1/4}S^{3/4}F,
 \qquad S=|\nabla F|^2=16RQ.
\end{equation}
It has degree \(\ell=9\) and belongs to \eqref{eq:power-family},
up to a positive constant.  The closure table gives
\begin{align}
 |\nabla G|^2
 &=RQ^{1/2}\bigl(16R^2Q^2+33R^2F^2+32QF^2\bigr),
 \label{eq:Riedler-correct-gradient}\\
 \frac{\cL(G)}{F}
 &=RQ^{1/4}\Bigl[
 16(12k-47)R^4Q^2+16(2N-41)R^2Q^3\notag\\
 &\hspace{17mm}
 +F^2\bigl((396k+66)R^4+(384k+66N-681)R^2Q
                       +(64N+160)Q^2\bigr)\Bigr].
 \label{eq:Riedler-correct-defect}
\end{align}
Every displayed coefficient is positive in this range.  These
formulas follow from the quotient chain rule or
\cref{prop:combined-power-multiplier}, with the powers in
\eqref{eq:Riedler-correct-potential}.

The field is smooth on \(Q>0\); its exceptional set
\(P^{-1}(0)\) has zero \(\cH^{N-1}\)-measure by
\cref{lem:Clifford-algebra-pullback}.  The potential extends
continuously there and has the same signs as \(F\).  Both phases
are nonempty by surjectivity.  Hence
\cref{cor:gradient-subcalibration} proves Riedler's minimizing
conclusion. The normalized gradient \(X_G\) has strictly positive
divergence on \(\{F>0\}\) and strictly negative divergence on
\(\{F<0\}\). Hence \cref{thm:strict-minimality-flux} gives the
strict mass gap.
The boundary Jacobi coefficient is
\begin{equation}\label{eq:Riedler-positive-boundary}
 \beta_G=(2N-41)+(12k-47)\frac{R^2}{Q}>0.
\end{equation}
Since \(Q\leq R^2\), \cref{thm:ground-state-gap} gives
\[
 Q_C(u)\geq
 \left[\left(9-\frac{N-1}{2}\right)^2+2N+12k-88\right]
 \int_Cr^{-2}u^2.
\]
The minimizing range is Riedler's result.  The comparison and Jacobi
formulas give these quantitative bounds for his potential.
\end{example}

\begin{theorem}[Instability of Riedler's sixteen-dimensional quartic]
\label{thm:Riedler-unstable}
The cone $C^4_{16,4}$ is unstable on its regular part and is not area
minimizing. More precisely, there are compactly supported regular
variations whose scale-invariant second-variation quotients tend to
$-15/4$.
\end{theorem}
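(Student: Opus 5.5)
The plan is to run the angular argument used for the $\ell=8$ splitting quartic in \cref{bh:main}; the closure data suggest this is the same phenomenon. For $C^4_{16,4}$ we have $N=16$, $m=2k=8$, and the $k=4$ eight-matrix Clifford system on $\R^{16}$, for instance the first eight components of the octonionic Hopf map. The orthogonal quartic data in \cref{prop:orthogonal-quartics} are then $(r_0,r_1,r_2)=(1,4,4)$. These are exactly the data of the OT--FKM splitting with $\ell=8$, $m=4$, $c=1/2$. The cone has dimension $n=15$, so $(n-2)^2/4=169/4$. The target value $-15/4$ then requires an angular Rayleigh bottom of $-46$, as in \eqref{bh:negative-margin}.

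\emph{Step 1: curvature and link table.} On $F=0$, write $W=|\nabla F|^2=16RQ$, which is positive off $P^{-1}(0)$. From \eqref{eq:quartic-ambient-closure} we compute:
\begin{itemize}
\item $|\Hess F|^2$, via Bochner: $|\Hess F|^2=\Delta W/2-\Gamma(F,\Delta F)$;
\item $|\nabla\sqrt W|^2$;
\item $\Hess F(\nu,\nu)=\Delta F=0$.
\end{itemize}
Inserting these into the curvature formula in the proof of \cref{bh:curvature} should give
\[
|A_C|^2=\alpha\,\frac{R}{Q}+\frac{\beta}{R}
\]
for explicit constants $\alpha,\beta$.

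Next put $q=Q|_\Lambda$ on $\Lambda=C\cap\Sph^{15}$. Using the normal-Hessian formula $\Hess V(\nu,\nu)=\bigl(\Gamma(F,\Gamma(F,V))-\tfrac12\Gamma(V,W)\bigr)/W$ with $V=Q$, and then removing the radial contributions, we obtain $|\nabla_\Lambda q|^2$ and $\Delta_\Lambda q$ as polynomials in $q$. Here $\Gamma(F,Q)=16RF$ vanishes on the cone, which simplifies the computation.

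\emph{Step 2: eigenfunction.} Try $\phi=q^{-p}$ and expand $(\Delta_\Lambda+|A_\Lambda|^2)\phi$ as a combination of $q^{-1}\phi$ and $\phi$, as in \eqref{eq:powerJ}. I expect $p=1$ to kill the $q^{-1}$ coefficient and to leave eigenvalue $46$, reproducing \eqref{bh:eigenfunction}. Equivalently, after identifying $q$ with $4t$, the table should coincide with \eqref{bh:link-table} for $\ell=8$ up to this rescaling. If it does not, choose $p$ to be the root giving the admissible decay and recompute the eigenvalue.

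\emph{Step 3: cutoff near the singular set.} This is the main obstacle. The singular set is $Z=P^{-1}(0)$, of ambient codimension $8$ by \cref{lem:Clifford-algebra-pullback}, so $Z\cap\Sph^{15}$ is $7$-dimensional. We need three estimates near $Z$:
\begin{itemize}
\item $q\asymp\rho^2$, where $\rho$ is the spherical distance to $Z\cap\Sph^{15}$;
\item the transverse model of $\Lambda$ is a quadratic cone, so the link volume density is $\asymp\rho^6\,d\rho$;
\item $|A_\Lambda|=O(\rho^{-1})$.
\end{itemize}
For these I would expand $F$ at $x_0\in Z$. The leading term is $\sum_{i\le4}\langle A_ix_0,\xi\rangle^2-\sum_{i>4}\langle A_ix_0,\xi\rangle^2$, up to a factor of $4$. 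Since the vectors $A_ix_0$ are orthonormal multiples of $|x_0|$, this gives a Simons-type cone $\mathcal S_4\times\R^8$. One also needs a smooth local frame, as after \eqref{bh:transverse}, to make these estimates uniform along $Z$.

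With these bounds, $\phi\asymp\rho^{-2}$, the gradient and potential energies are integrable, and
\[
\int\phi^2|\nabla\eta_\eps|^2=O(\eps^{-4-2+7})\to0 .
\]
Multiplying the eigen-identity by $\eta_\eps^2\phi$ and integrating by parts then gives Rayleigh quotients tending to $-46$. Finally, \eqref{eq:exact-Hardy-gap} with $n=15$ gives regular compactly supported variations with quotients tending to $169/4-46=-15/4$. Instability, and hence non-minimality, follows.
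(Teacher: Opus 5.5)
Your proposal is correct and follows the paper's proof essentially step for step. The paper likewise identifies the closure with the $\ell=8$, $c=1/2$ splitting table via $T=Q/4$, $H=F/8$. It obtains $|A_\Lambda|^2=24/q+14$, $|\nabla_\Lambda q|^2=16q(1-q)$ and $\Delta_\Lambda q=56-64q$, so that $\phi=q^{-1}$ satisfies $(\Delta_\Lambda+|A_\Lambda|^2)\phi=46\phi$. It then uses the transverse seven-dimensional Simons model along the singular link, the $O(\eps)$ cutoff estimate, and \eqref{eq:exact-Hardy-gap}.

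Your hedge in Step~2 resolves as you expected. The $q^{-1}$ coefficient is $8(2p-3)(p-1)$, and the other root $p=3/2$ is excluded because its cutoff energy is of order $\eps^{-1}$.
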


\begin{proof}
Set $N=16$, $k=4$ in \eqref{eq:quartic-ambient-closure}, and write
$C=C^4_{16,4}$ and $\Lambda=C\cap\Sph^{15}$. The change of variables
$T=Q/4$, $H=F/8$ gives exactly \eqref{bh:reduced-table} with
$\ell=8$, $c=1/2$. The curvature proof in \cref{bh:curvature}
uses only this closure and homogeneity. Equivalently, with
$W=|\nabla F|^2$, direct computation on $F=0$ gives
\[
 W=16RQ,\qquad |\Hess F|^2=512R^2+384Q,\qquad
 |\nabla\sqrt W|^2=64R^2+80Q,\qquad \Hess F(\nu,\nu)=0.
\]
Hence
\[
 |A_C|^2=24\frac RQ+\frac{14}R.
\]
For $q=Q|_\Lambda$, the induced identities are
\begin{equation}\label{eq:Riedler-16-link}
 |A_\Lambda|^2=\frac{24}{q}+14,\qquad
 |\nabla_\Lambda q|^2=16q(1-q),\qquad
 \Delta_\Lambda q=56-64q.
\end{equation}
In particular, $\phi=q^{-1}$ satisfies
$(\Delta_\Lambda+|A_\Lambda|^2)\phi=46\phi$.

The singular link is $Z=P^{-1}(0)\cap\Sph^{15}$. It is nonempty:
in the eight-component Hopf realization described above it consists
of the inverse images of the two poles of the omitted ninth
component. At $Z$, $dP$ has rank eight even after restriction to
the sphere, by \eqref{eq:Clifford-pullback-identities}.
Thus $Z$ is a smooth compact seven-dimensional submanifold, and the
transverse cone in $\Lambda$ is the seven-dimensional Simons cone
in $\R^4\oplus\R^4$. Consequently $q\asymp\rho^2$, and transverse
volume is comparable to $\rho^6\,d\rho$ near $Z$.

The cutoffs used after \eqref{bh:eigenfunction} give
$\int_\Lambda\phi^2|\nabla\eta_\varepsilon|^2=O(\varepsilon)$.
Integration by parts therefore makes the angular Rayleigh quotients
of $\eta_\varepsilon\phi$ tend to $-46$. Applying
\eqref{eq:exact-Hardy-gap}, or taking separated angular and radial
approximations directly, gives
\[
 \frac{(16-3)^2}{4}-46=-\frac{15}{4}<0.
\]
The approximations have compact support in the regular part and
therefore prove instability and exclude area minimization.
\end{proof}

This is the case left undecided in
\cite[Remark~5.2.6(3)]{RiedlerThesis25}. Together with
Theorem~5.2.5 and the exclusions in Remark~5.2.6(1)--(2)
of that thesis, it gives
\[
 C^4_{N,k}\text{ is area minimizing}
 \quad\Longleftrightarrow\quad N\geq32\text{ and }k\geq4
\]
among the admissible Clifford--Simons quartics. The positive result
is Riedler's; the theorem supplies the remaining instability argument.

\begin{example}[An unequal quadratic target]
\label{ex:lawson-pullback}
This example pulls back a Lawson cone in \(\R^5\oplus\R^6\)
to a quartic cone in \(\R^{64}\). The target potential \(H=Tf\)
satisfies the weighted sign condition in
\cref{prop:pullback-subcalibration}, which gives a subcalibration
for the pulled-back cone. On the target, put
\[
 T=|u|^2+|v|^2,\qquad f=5|u|^2-4|v|^2,\qquad H=Tf.
\]
Then \(\Delta f=2\), \(|\nabla f|^2=80T+4f\), and
\(\Gamma(T,f)=4f\).  In particular \(\cL(f)=-160f\), so the
regular cone \(f=0\) is minimal.  The same identities give
\begin{align*}
 |\nabla H|^2&=4T(20T^2+Tf+3f^2),\\
 \cL(H)&=8Tf(60T^2+7Tf+33f^2),\\
 2T\cL(H)-4H|\nabla H|^2
 &=32T^2f(20T^2+3Tf+15f^2).
\end{align*}
The last quadratic factor is positive for \(T>0\), since its
discriminant as a polynomial in \(f\) is \(-1191T^2\).
The displayed weighted defect has the sign of \(f\).
Multiplying it by \(H=Tf\) therefore gives a nonnegative quantity,
strictly positive when \(f\ne0\). This is
\eqref{eq:pullback-target-sign} with \(\ell=4\) and \(\kappa=1/2\).

Choose eleven Clifford matrices on \(\R^{64}\), and set
\(F=f\circ P\), \(Q=T\circ P=|P|^2\), and
\(R=|x|^2\) on the source. Thus \(G=H\circ P=QF\).
Such a system exists by the
Clifford module dimensions in \cite[Table~4.1]{RiedlerThesis25}.
The polynomial \(G=QF\) has degree eight, whereas the displayed
equation \(F=0\) has degree four.  Its gradient does not vanish on
\(Q>0\): the quadratic factor in \(|\nabla H|^2\) is positive
because its discriminant is \(-239T^2\).  Its critical
set is \(P^{-1}(0)\), of codimension eleven away from the origin.
Since \(G\) has the same phases as \(F\),
\cref{prop:pullback-subcalibration,lem:Clifford-algebra-pullback}
prove that \(\{F<0\}\) is perimeter minimizing in \(\R^{64}\).
Write \(C=\partial\llbracket\{F<0\}\rrbracket\).
Both phases are nonempty, and the strict target inequality gives
nonzero divergence on them. Hence
\cref{cor:polynomial-strict-minimality} gives strict area minimality.
On the regular cone, \(|\nabla F|^2=320RQ\) and the pullback identity
give
\[
 \beta_G=\left.\frac{R\cK_F(Q)}{Q^3|\nabla F|^2}\right|_C
 =24\frac{R^2}{Q}-8\geq16.
\]
Since \(\deg G=8\), \cref{thm:ground-state-gap} yields
\[
 Q_C(u)\geq\frac{2273}{4}\int_Cr^{-2}u^2,
 \qquad u\in C_c^\infty(C_{\reg}\setminus\{0\}).
\]
This explicit bound is not asserted to be optimal.
\end{example}

\subsection{Cartan cubic targets}

Cartan's harmonic cubics on \(\R^m\), for \(m=14,26\), can be
normalized so that
\begin{equation}\label{eq:Cartan-target-normalization}
 \Delta f=0,\qquad |\nabla f|^2=9T^2,\qquad |f|^2\leq T^3,
 \qquad T=|y|^2.
\end{equation}
Their multiplicities are four and eight, respectively; see
\cite[Theorems~2.5 and~4.1]{Chi20}.  The algebra \(\R[T,f]\) is
Laplacian, with \(\Gamma(T,f)=6f\).

\begin{theorem}[Cartan cubic pullbacks]
\label{thm:cartan-pullback}
Let \(f\) be either normalized Cartan cubic in
\eqref{eq:Cartan-target-normalization}, and let
\(P:\R^N\to\R^m\) be a quadratic Clifford map.  Then
\[
 F=f\circ P,\qquad Q=|P|^2,\qquad G=QF
\]
give a perimeter-minimizing phase \(\{F<0\}\).
Its boundary current \(C\) is strictly area minimizing and is supported
on \(\{F=0\}\). It is strictly stable, with the explicit bound
\begin{equation}\label{eq:Cartan-pullback-Hardy}
 Q_C(u)\geq
 \left[\left(10-\frac{N-1}{2}\right)^2+8m-106\right]
 \int_Cr^{-2}u^2,
 \qquad u\in C_c^\infty(C_{\reg}\setminus\{0\}).
\end{equation}
No optimality is claimed for this constant.
The polynomial \(F\) has degree six and the
subcalibration potential \(G\) has degree ten.  The construction
exists for \((m,N)=(14,256)\) and \((26,8192)\), and for direct
sums of either Clifford system.
\end{theorem}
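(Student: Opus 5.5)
The plan is to apply \cref{prop:pullback-subcalibration} with target potential \(H=Tf\) on \(\R^m\) and the Clifford map \(P\). \Cref{lem:Clifford-algebra-pullback} supplies \(\lambda^2=4R\), \(\eta=16\), \(\kappa=1/2\), so the task reduces to the target weighted inequality \eqref{eq:pullback-target-sign} with \(\ell=5\) and \(\kappa=1/2\). The required identity is
\[
 2T\,H\,\cL(H)\geq5H^2W_H .
\]

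\textbf{Step 1: target quantities.} I will compute \(W_H\), \(\Delta H\) and \(\cL(H)\) from the closure of \(\R[T,f]\) given by \eqref{eq:Cartan-target-normalization}. The inputs are
\[
 \Gamma(T,T)=4T,\quad\Gamma(T,f)=6f,\quad\Gamma(f,f)=9T^2,\quad\Delta T=2m,\quad\Delta f=0,
\]
and \(\cL(f)=-\tfrac12\Gamma(f,9T^2)=-54Tf\). For \(H=Tf\) this gives
\[
 W_H=4Tf^2+12Tf^2+9T^4=9T^4+16Tf^2,\qquad \Delta H=(2m+12)f,
\]
and \(\cL(H)=W_H\Delta H-\tfrac12\Gamma(H,W_H)\), which is expanded by the chain rule. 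I expect
\[
 2T\cL(H)-5HW_H=f\cdot p(T,f^2)
\]
with \(p\) homogeneous, roughly of the form \(aT^5+bT^2f^2\). The coefficients should be positive for \(m=14,26\), using \(f^2\leq T^3\) if \(b\) is negative. Since the expression has the sign of \(f\), multiplying by \(H=Tf\) gives \eqref{eq:pullback-target-sign}, and strictly so when \(f\neq0\). This sign check is the main obstacle: the \(m\)-dependent term in \(\Delta H\) must beat the dilation penalty \(5HW_H\). If \(p\) fails at \(f^2=T^3\), I would fall back to estimating \(p\) on the interval \(0\leq f^2/T^3\leq1\).

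\textbf{Step 2: geometric hypotheses.} Next I verify the exceptional-set conditions, following \cref{ex:lawson-pullback}. The identity \(|\nabla G|^2=4R\,W_H\circ P\) with \(W_H>0\) for \(T>0\) shows that the critical set of \(G\) is \(P^{-1}(0)\). The zero set of \(F\) is regular off \(P^{-1}(0)\), because \(\nabla f\neq0\) for \(y\neq0\). The set \(P^{-1}(0)\) has codimension \(m\geq14\), so \cref{prop:pullback-subcalibration} gives perimeter minimization.

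\textbf{Step 3: nonempty phases and strictness.} Surjectivity of \(P\) together with oddness of \(f\) gives nonempty phases. The nonzero divergence then yields strict minimality via \cref{cor:polynomial-strict-minimality}. The support statement follows because \(F\) is harmonic in the pulled-back sense: its sign changes across the zero set, or equivalently the transverse structure at \(P^{-1}(0)\) is that of the target cone.

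\textbf{Step 4: Hardy bound.} I will compute \(\beta_G\) from \eqref{eq:boundary-potential-algebraic}, using \(|\nabla F|^2=36RQ^2\) on \(C\) and the pullback defect restricted to \(F=0\). On \(f=0\) only the terms of \(\cL_x(G)\) linear in \(f\) survive. I expect
\[
 \beta_G=c_1\frac{R^2}{Q}+c_2
\]
with \(Q\leq R^2\), giving the lower bound \(\beta_0=8m-106\). Then \cref{thm:ground-state-gap} with \(\ell=10\) and \(n=N-1\) yields \eqref{eq:Cartan-pullback-Hardy}. Finally, existence for \((m,N)=(14,256)\) and \((26,8192)\) comes from the Clifford module dimensions in \cite[Table~4.1]{RiedlerThesis25}; direct sums preserve \eqref{eq:Clifford-pullback-identities}.
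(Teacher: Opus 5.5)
Your proposal is correct and follows the paper's proof essentially step by step. The paper applies \cref{prop:pullback-subcalibration} to $H=Tf$ with $\kappa=1/2$ and finds $2T\cL(H)-5HW_H=T^2f\bigl(9(4m-53)T^3+16(4m-3)f^2\bigr)$, which has both coefficients positive for $m=14,26$, so your fallback via $f^2\leq T^3$ is not needed. It then checks the exceptional set $P^{-1}(0)$ using $|\nabla G|^2=4RQ(9Q^3+16F^2)$ and $|\nabla F|^2=36RQ^2$, and obtains $\beta_G=(8m-96)R^2/Q-10\geq 8m-106$ before applying \cref{thm:ground-state-gap} with $\ell=10$, exactly as you outline.

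For the support claim the paper uses your second option, the submersion argument at $P^{-1}(0)$ plus homogeneity. Your first option also works, since $F=f\circ P$ is genuinely harmonic: $\Delta F=4R(\Delta f)\circ P=0$, so it changes sign near each zero.
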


\begin{proof}
Set \(H=Tf\), of degree five.  The target closure gives
\begin{align}
 |\nabla H|^2&=9T^4+16Tf^2,\qquad
 \Delta H=2(m+6)f,\label{eq:Cartan-target-gradient}\\
 \cL(H)&=9(2m-24)T^4f+16(2m+1)Tf^3.
 \label{eq:Cartan-target-defect}
\end{align}
Substituting the target closure in
\(\cL(H)=|\nabla H|^2\Delta H-\Gamma(H,|\nabla H|^2)/2\) gives
the second identity. Hence
\begin{equation}\label{eq:Cartan-target-weighted-defect}
 2T\cL(H)-5H|\nabla H|^2
 =T^2f\bigl(9(4m-53)T^3+16(4m-3)f^2\bigr).
\end{equation}
Both coefficients in parentheses are positive for \(m=14,26\).
After multiplication by \(H\), this is precisely
\eqref{eq:pullback-target-sign} for \(\kappa=1/2\).

The lifted algebra is \(\R[R,Q,F]\) by
\cref{lem:Clifford-algebra-pullback}.  For example,
\[
 \Delta Q=8mR,\quad \Delta F=0,\quad
 \Gamma(Q,F)=24RF,\quad |\nabla F|^2=36RQ^2.
\]
The pullback formulas also display the source certificate explicitly:
\begin{align}
 |\nabla G|^2&=4RQ(9Q^3+16F^2),
 \label{eq:Cartan-pullback-gradient}\\
 \cL(G)&=8QF\Bigl[
 9\bigl((4m-48)R^2-5Q\bigr)Q^3\notag\\
 &\hspace{31mm}
 +16\bigl((4m+2)R^2-5Q\bigr)F^2\Bigr].
 \label{eq:Cartan-pullback-defect}
\end{align}
Since \(Q\leq R^2\), the bracket is positive when \(Q>0\).
Thus \(G\cL(G)\geq0\) throughout the regular set.

The gradient in \eqref{eq:Cartan-pullback-gradient} is nonzero on
\(Q>0\).  Its critical set is \(Z=P^{-1}(0)\), which is algebraic
and has codimension \(m\) away from the origin.  It contains the
singular zeros of \(F\), since \(|\nabla F|^2=36RQ^2\).
The polynomial phases of \(G\) and \(F\) coincide and have locally
finite perimeter.  All hypotheses of
\cref{cor:gradient-subcalibration} are therefore satisfied, and
\cref{thm:removable-subcalibration} gives both comparison conclusions.

The target cubic is odd and nonzero, so both its phases are nonempty.
Surjectivity of \(P\) gives the same assertion on the source.
At nonzero points of \(Z\), the submersion theorem and the two
target phases show that every neighborhood meets both source phases.
Homogeneity gives this property at the origin as well.  Thus the
support of the boundary current is exactly \(\{F=0\}\).
The strict sign of \eqref{eq:Cartan-pullback-defect} on both phases
also gives strict area minimality by
\cref{cor:polynomial-strict-minimality}.

For stability, restrict the defect to the regular cone. Since
\(a=Q\) and \(|\nabla F|^2=36RQ^2\),
\eqref{eq:boundary-potential-algebraic} gives
\[
 \beta_G=\left.\frac{R\cK_F(Q)}{Q^3|\nabla F|^2}\right|_C
 =(8m-96)\frac{R^2}{Q}-10\geq8m-106>0.
\]
Apply \cref{thm:ground-state-gap} with \(\ell=10\) to obtain
\eqref{eq:Cartan-pullback-Hardy}.

Finally, the standard Clifford module dimensions give systems with
fourteen matrices on \(\R^{256}\) and twenty-six matrices on
\(\R^{8192}\); see \cite[Table~4.1]{RiedlerThesis25}.  Direct sums preserve
the Clifford relations and all the identities used above.
\end{proof}

\section{Real parts of complex determinants in all orders}
\label{sec:complex-determinants}

This section proves \cref{main:complex-determinants}.  We use the
Frobenius Euclidean metric
\(\langle U,V\rangle=\operatorname{Re}\tr(UV^*)\) on
\(M_m(\mathbb C)\), so that \(|U|^2=\sum_{i,j}|U_{ij}|^2\), and write
\[
 f(Z)=\det_{\mathbb C}Z,\qquad F=\operatorname{Re}f,
 \qquad C_m=\{F=0\},\qquad E^-_m=\{F<0\}.
\]
For $0\leq k\leq m$, let
\begin{equation}\label{det:minor-norms}
 E_0=1,\qquad
 E_k(Z)=\sum_{|I|=|J|=k}|\det Z_{I,J}|^2.
\end{equation}
We write $e_k$ for the $k$th elementary symmetric polynomial:
\[
 e_0=1,\qquad
 e_k(x_1,\ldots,x_m)
 =\sum_{1\leq i_1<\cdots<i_k\leq m}x_{i_1}\cdots x_{i_k}
 \quad(1\leq k\leq m).
\]
Let $x_1,\ldots,x_m$ be the squared singular values of $Z$, or
equivalently the eigenvalues of $A=ZZ^*$.  The Cauchy--Binet formula
gives, for each index set $I$ of cardinality $k$,
\[
 \det A_{I,I}
 =\sum_{|J|=k}\det Z_{I,J}\,
                  \overline{\det Z_{I,J}}
 =\sum_{|J|=k}|\det Z_{I,J}|^2.
\]
Thus $E_k$ is the sum of the $k$th-order principal minors of $A$.
Expanding $\det(\Id+tA)$ in principal minors and, separately, in
the eigenvalues of $A$, we obtain
\[
 \sum_{k=0}^m E_k(Z)t^k
 =\det(\Id+tZZ^*)
 =\prod_{i=1}^m(1+tx_i)
 =\sum_{k=0}^m e_k(x_1,\ldots,x_m)t^k.
\]
Comparison of coefficients proves
$E_k(Z)=e_k(x_1,\ldots,x_m)$ for every $Z$.
In particular, $E_1=|Z|^2$ and $E_m=|f|^2$.
The multiplier and potential are
\begin{equation}\label{det:potential}
 a=\left(\prod_{k=1}^{m-1}E_k\right)^{1/2},\qquad G=aF.
\end{equation}
The homogeneous degree of $G$ is $m(m+1)/2$.
The case $m=1$ is a hyperplane; the coefficient calculations assume $m\geq2$.

\subsection{Invariant calculus and the multiplier identity}

\begin{lemma}\label{det:basic-identities}
For $1\leq j,k\leq m$, one has
\begin{align}
 \Delta F&=0,&\Gamma(F,F)&=E_{m-1},\label{det:basic-one}\\
 \Gamma(F,E_k)&=2(m-k+1)E_{k-1}F,
 &\Delta E_k&=4(m-k+1)^2E_{k-1},\label{det:basic-two}\\
 \Gamma(E_j,E_k)
 &=4\sum_{i=1}^m x_i
       \frac{\partial e_j}{\partial x_i}
       \frac{\partial e_k}{\partial x_i}.
 \label{det:basic-three}
\end{align}
\end{lemma}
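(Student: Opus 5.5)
The plan is to work with holomorphic derivatives and Wirtinger calculus. Write \(\partial_{ij}=\partial/\partial Z_{ij}\). For real \(u,v\) on \(\mathbb C^{m^2}\cong\R^{2m^2}\) one has
\[
\Gamma(u,v)=2\sum_{i,j}\bigl(\partial_{ij}u\,\overline\partial_{ij}v+\overline\partial_{ij}u\,\partial_{ij}v\bigr),
\qquad
\Delta=4\sum_{i,j}\partial_{ij}\overline\partial_{ij}.
\]

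\textbf{Identities \eqref{det:basic-one}.} Since \(f\) is holomorphic, \(F=\operatorname{Re}f\) is pluriharmonic. Hence \(\Delta F=0\) and
\[
\Gamma(F,F)=\sum_{i,j}|\partial_{ij}f|^2 .
\]
By Laplace expansion, \(\partial_{ij}f\) is the \((i,j)\) cofactor. The sum of their squared moduli is the sum of all squared \((m-1)\)-minors, which is \(E_{m-1}\).

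\textbf{Identities \eqref{det:basic-two}.} Write \(E_k=\sum_{I,J}|\det Z_{I,J}|^2\), a sum of \(|g|^2\) with \(g\) holomorphic. Then
\[
\Delta|g|^2=4\sum_{i,j}|\partial_{ij}g|^2 .
\]
For \(g=\det Z_{I,J}\), the derivatives are the \((k-1)\)-minors \(\det Z_{I\setminus i,J\setminus j}\) with \(i\in I\), \(j\in J\). Each \((k-1)\)-minor \((I',J')\) arises from \((m-k+1)^2\) choices of the added row and column. This gives \(\Delta E_k=4(m-k+1)^2E_{k-1}\).

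For \(\Gamma(F,E_k)\) I would use the formula
\[
\Gamma(F,E_k)=\operatorname{Re}\sum_{i,j}\partial_{ij}f\,\overline{\partial_{ij}E_k}\cdot 2 ,
\]
with the constants to be checked. This becomes \(\operatorname{Re}\sum\operatorname{cof}(Z)_{ij}\,\partial_{ij}E_k\) up to conjugation. The cleanest route is to verify it on a dense invariant set using \(U(m)\times U(m)\) equivariance. All quantities are invariant under \(Z\mapsto UZV\) with \(\det U\det V=1\); for general unitaries \(f\) rotates by a phase, and the identity is linear in \(f\), so it survives. This reduces to diagonal \(Z=\operatorname{diag}(z_1,\dots,z_m)\).

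At a diagonal point, the first-order variation of \(E_k=e_k(|z_1|^2,\dots)\) in the diagonal directions suffices, because \(\nabla F\) is supported on the diagonal: the cofactor matrix of a diagonal matrix is diagonal. One gets
\[
\Gamma(F,E_k)=\sum_i 2\operatorname{Re}\Bigl(\prod_{l\ne i}z_l\cdot \bar z_i\Bigr)\,\partial_{x_i}e_k\cdot(\text{const}),
\]
using \(\prod_{l\ne i}z_l\,\bar z_i\cdots\) and \(x_i\,\partial_i e_k\). Then \(\sum_i x_i\,\partial_ie_k=k\,e_k\) is not what is needed. Instead the product \(\prod_l z_l\) factors out, leaving \(\sum_i\partial_ie_k=(m-k+1)e_{k-1}\), which yields \(2(m-k+1)E_{k-1}F\).

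\textbf{Identity \eqref{det:basic-three}.} Again reduce to diagonal \(Z\). There \(\nabla E_k\) has only diagonal components, since the off-diagonal first derivatives of \(e_k(\text{eigenvalues of }ZZ^*)\) vanish at a diagonal point by first-order perturbation theory. The radial derivative in \(z_i\) is \(2\sqrt{x_i}\,\partial_ie_k\), which gives \(4\sum_i x_i\,\partial_ie_j\,\partial_ie_k\).

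\textbf{Main obstacle.} The hardest part is justifying the diagonal reduction. This needs (a) invariance of \(E_k\) and of \(|\nabla F|\)-type pairings under \(U(m)\times U(m)\), with \(F\) transforming through the phase \(\det U\det V\). It also needs (b) vanishing of the off-diagonal gradient components of \(E_k\) at diagonal points with distinct singular values, followed by extension to all \(Z\) by density and continuity. I would handle (b) by writing \(E_k\) as a sum of principal minors of \(ZZ^*\) and differentiating directly, which avoids the eigenvalue perturbation argument.
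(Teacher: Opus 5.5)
Your proposal is correct and follows the paper's proof essentially step for step: the Wirtinger forms of \(\Gamma\) and \(\Delta\), cofactors for \(\Gamma(F,F)\), \(\Delta|q|^2=4\sum|\partial q|^2\) with the \((m-k+1)^2\) count, unitary reduction to a diagonal matrix (keeping the determinant phase) for \(\Gamma(F,E_k)\), and direct differentiation of \(E_k=\sum|\det Z_{I,J}|^2\) at a diagonal point, where the non-principal minors vanish, for \(\Gamma(E_j,E_k)\). Two small fixes are needed. First, since \(\partial_{ii}E_k=\bar z_i\,\partial_ie_k\), the conjugate in your diagonal display belongs on this factor; the summands are then \(\prod_{l\ne i}z_l\cdot z_i\,\partial_ie_k=f\,\partial_ie_k\), so \(\Gamma(F,E_k)=2\operatorname{Re}\bigl(f\sum_i\partial_ie_k\bigr)\) with no further constant. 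Second, no density argument over distinct singular values is needed, because the minor computation works verbatim at repeated or zero singular values.
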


\begin{proof}
The real part of a holomorphic polynomial is harmonic.  The complex
first derivatives of $f$ are its cofactors, so their squared norms sum
to $E_{m-1}$.  This proves \eqref{det:basic-one}.

For the mixed identity, use complex matrix coordinates $z_\alpha$ and
observe that
\[
 \Gamma(F,E_k)
 =2\operatorname{Re}\sum_\alpha
       \frac{\partial f}{\partial z_\alpha}
       \frac{\partial E_k}{\partial\overline z_\alpha}.
\]
The complex expression in this sum transforms by the same determinant
phase as $f$ under the left and right unitary actions.  Thus singular
value decomposition reduces the identity to a diagonal matrix, with its
determinant phase retained.  At such a matrix,
\[
 \sum_\alpha
       \frac{\partial f}{\partial z_\alpha}
       \frac{\partial E_k}{\partial\overline z_\alpha}
 =f\sum_i e_{k-1}(x_1,\ldots,\widehat x_i,\ldots,x_m)
 =(m-k+1)E_{k-1}f.
\]
The polynomial identity extends to singular matrices by continuity.

For a holomorphic polynomial $q$,
$\Delta|q|^2=4\sum_\alpha|\partial q/\partial z_\alpha|^2$.
Apply this to the minors in \eqref{det:minor-norms}.  Each fixed
$(k-1)$-minor occurs $(m-k+1)^2$ times, which gives the Laplacian
formula.

To prove \eqref{det:basic-three}, take a singular value decomposition
$Z=UDV^*$, where
$D=\operatorname{diag}(\sigma_1,\ldots,\sigma_m)$ and
$\sigma_i\geq0$.  The map $T(W)=UWV^*$ is a linear isometry for
the Frobenius metric, and $E_k\circ T=E_k$.  Hence
$\nabla E_k(TW)=T(\nabla E_k(W))$, so it suffices to compute the
gradient pairing at $D$.

Put $q_{I,J}(Z)=\det Z_{I,J}$.  For any matrix variation $H$,
\[
 \mathrm dE_k(D)[H]
 =2\operatorname{Re}\sum_{|I|=|J|=k}
       \overline{q_{I,J}(D)}\,\mathrm dq_{I,J}(D)[H].
\]
If $I\ne J$, then $q_{I,J}(D)=0$.  For a principal minor,
\[
 q_{I,I}(D)=\prod_{\ell\in I}\sigma_\ell,
 \qquad
 \mathrm dq_{I,I}(D)[H]
 =\sum_{i\in I}
       \left(\prod_{\ell\in I\setminus\{i\}}\sigma_\ell\right)H_{ii}.
\]
Writing $x_i=\sigma_i^2$ and collecting the terms with the same
diagonal entry of $H$ gives
\[
 \begin{aligned}
 \mathrm dE_k(D)[H]
 &=2\sum_{i=1}^m\sigma_i
       \left(\sum_{\substack{|I|=k\\i\in I}}
               \prod_{\ell\in I\setminus\{i\}}x_\ell\right)
       \operatorname{Re}H_{ii}\\
 &=2\sum_{i=1}^m\sigma_i
       \frac{\partial e_k}{\partial x_i}\operatorname{Re}H_{ii}.
 \end{aligned}
\]
This also shows that the off-diagonal and imaginary diagonal
directions have zero first derivative.  Therefore
\[
 \nabla E_k(D)
 =\operatorname{diag}\left(
       2\sigma_i\frac{\partial e_k}{\partial x_i}\right)_{i=1}^m,
 \qquad
 \Gamma(E_j,E_k)(D)
 =4\sum_{i=1}^m x_i
       \frac{\partial e_j}{\partial x_i}
       \frac{\partial e_k}{\partial x_i}.
\]
The unitary invariance gives \eqref{det:basic-three} at $Z$.
The calculation applies equally to repeated or zero singular values.
\end{proof}

Since
\(\partial e_j/\partial x_i
=e_{j-1}(x_1,\ldots,\widehat x_i,\ldots,x_m)\), a permutation
of the \(x_i\) merely permutes the summands in
\eqref{det:basic-three}. Its right-hand side is therefore a symmetric
polynomial in the \(x_i\). The fundamental theorem of symmetric
polynomials expresses it as a polynomial in
\(e_1,\ldots,e_m\), hence in \(E_1,\ldots,E_m\).
Together with the remaining closure identities, this shows that
\(\mathbb R[E_1,\ldots,E_m,F]\) is a Laplacian algebra.
For distinct positive squared singular values, the Jacobian
\[
 \det\left(\frac{\partial e_j}{\partial x_i}\right)_{i,j=1}^m
 =\pm\prod_{i<j}(x_i-x_j)
\]
is nonzero, so the minor norms determine these values locally.
Writing \(\det Z=\sqrt{E_m}\,e^{i\vartheta}\), one has
\(F=\sqrt{E_m}\cos\vartheta\).  Varying the phase with the singular
values fixed changes \(F\) whenever \(\operatorname{Im}\det Z\neq0\).
Thus the \(m\) minor norms and \(F\) have independent differentials
there.  Fixing \(E_1=|Z|^2=1\) leaves \(m-1\) singular-value
parameters and one phase parameter, giving spherical quotient dimension
\(m\).

The identities \eqref{det:basic-one}--\eqref{det:basic-two} also give
\begin{equation}\label{det:mean-curvature}
 \cL F
 =-\tfrac12\Gamma(F,E_{m-1})=-2E_{m-2}F.
\end{equation}
Thus the regular part of $C_m$ is minimal, as in
\cite[Corollary~5.2 and Example~5.4]{HoppeTkachev19}.
The singular set has
real codimension eight and is given by
\begin{equation}\label{det:singular-set}
 \mathcal S_m=\{Z:\operatorname{rank}_{\mathbb C}Z\leq m-2\}
 =\{\nabla F=0\}.
\end{equation}
The cofactor identity gives the second equality. To see that every
point of this set is geometrically singular, take a matrix of rank
$q\leq m-2$. Schur-complement coordinates, absorbing a nonvanishing
determinant factor into one row, identify its zero set locally with
a smooth factor times $\{\operatorname{Re}\det W=0\}$ at $W=0$,
where $W\in M_{m-q}(\mathbb C)$. Since $m-q\geq2$, this transverse
cone contains every complex rank-one line. Their real tangent
directions span the whole matrix space, so the zero set cannot be
a smooth hypersurface there.
The rank-\(q\) stratum has real dimension
\(2q(2m-q)\), so the largest stratum in \(\mathcal S_m\)
has dimension \(2m^2-8\).
The multiplier in \eqref{det:potential} is smooth and strictly positive
on $M_m(\mathbb C)\setminus\mathcal S_m$: at every matrix of rank at
least $m-1$, all $E_1,\ldots,E_{m-1}$ are positive. This includes the
regular zeros of $F$, even when the matrix is not invertible.
On $\mathcal S_m$ the displayed product gives $a=0$ and $G=0$.
These are continuous extensions; Section~\ref{det:geometric-proof}
verifies $\mathcal H^{2m^2-1}(\mathcal S_m)=0$.

We now specialize \cref{thm:exact-multiplier} to a positive smooth
invariant multiplier $a=a(E_1,\ldots,E_m)$ on the full-rank locus.
By \eqref{det:basic-two}, the chain rule gives
\[
 \Gamma(F,a)
 =F\sum_{k=1}^m2(m-k+1)E_{k-1}
                   \frac{\partial a}{\partial E_k}.
\]
Thus the quotients by $F$ below extend smoothly across $F=0$.
Since $\Delta F=0$ and $|\nabla F|^2=E_{m-1}$, the product rule gives
\[
 \nabla G=a\nabla F+F\nabla a,\qquad
 \Delta G=F\Delta a+2\Gamma(F,a),
\]
and hence
\[
 \begin{aligned}
 a^{-2}|\nabla G|^2
 &=E_{m-1}
   +F^2\left(2\frac{\Gamma(F,a)}{aF}
                    +\frac{\Gamma(a,a)}{a^2}\right),\\
 \frac{\Delta G}{aF}
 &=\frac{\Delta a}{a}+2\frac{\Gamma(F,a)}{aF}.
 \end{aligned}
\]
The first line already exhibits the coefficient of $F^2$ in the
squared gradient. We write $Q=E_{m-1}$ and
$P=a^{-2}|\nabla G|^2$. Substituting $|\nabla G|^2=a^2P$ into
\eqref{eq:L-definition}, and differentiating $a^2P$, gives
\[
 \begin{aligned}
 a^{-3}\cL(G)
 &=FP\left(\frac{\Delta a}{a}
                  +2\frac{\Gamma(F,a)}{aF}\right)
   -FP\left(\frac{\Gamma(F,a)}{aF}
                  +\frac{\Gamma(a,a)}{a^2}\right)\\
 &\qquad-\frac12\Gamma(F,P)-\frac F{2a}\Gamma(a,P)\\
 &=FP\left(\frac{\Delta a}{a}
                  +\frac{\Gamma(F,a)}{aF}
                  -\frac{\Gamma(a,a)}{a^2}\right)
   -\frac12\Gamma(F,P)-\frac F{2a}\Gamma(a,P).
 \end{aligned}
\]
The last line exhibits a second combination, the coefficient of $FP$
after cancellation. We now name the two repeated gradient ratios
$c$ and $t$, the $F^2$ coefficient in the squared gradient $K$,
and this remaining coefficient $\delta$:
\begin{equation}\label{det:divergence-notation}
 c=\frac{\Gamma(F,a)}{aF},\qquad
 t=\frac{\Gamma(a,a)}{a^2},\qquad
 K=2c+t,\qquad \delta=\frac{\Delta a}{a}+c-t.
\end{equation}
To differentiate an invariant coefficient $q=q(E_1,\ldots,E_m)$,
we use the same chain rule in the form
\begin{equation}\label{det:T-operator}
 \mathcal Tq=\frac{\Gamma(F,q)}F
 =\sum_{k=1}^m2(m-k+1)E_{k-1}\frac{\partial q}{\partial E_k}.
\end{equation}

\begin{lemma}[The multiplier identity]\label{det:divergence-formula}
With the notation just introduced, one has
\begin{align}
 P&=Q+F^2K,\label{det:P}\\
 \cL(G)&=a^3F(D_0+F^2D_1),
 \label{det:divergence}\\
 D_0&=Q\delta-2E_{m-2}-QK-\frac{\Gamma(a,Q)}{2a},
 \label{det:D0}\\
 D_1&=K\delta-\tfrac12\left(\mathcal TK+2cK+\frac{\Gamma(a,K)}a\right).
 \label{det:D1}
\end{align}
\end{lemma}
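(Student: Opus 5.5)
The proof will start from the last display before the lemma. There, $a^{-3}\cL(G)$ is expressed through $P$, $\Gamma(F,P)$ and $\Gamma(a,P)$; in the notation \eqref{det:divergence-notation} its first term is $FP\delta$. Identity \eqref{det:P} is immediate from the first line of the squared-gradient computation: the coefficient of $F^2$ there is $2c+t=K$, and $|\nabla F|^2=E_{m-1}=Q$ by \cref{det:basic-identities}. It remains to expand the two gradient pairings of $P=Q+F^2K$ and collect powers of $F$.

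For $\Gamma(F,P)$ I will use the product rule:
\[
 \Gamma(F,P)=\Gamma(F,Q)+2F\,\Gamma(F,F)\,K+F^2\Gamma(F,K).
\]
By \cref{det:basic-identities} with $k=m-1$, $\Gamma(F,Q)=4E_{m-2}F$, and $\Gamma(F,F)=Q$. By \eqref{det:T-operator}, $\Gamma(F,K)=F\,\mathcal TK$; this is legitimate because $K$ is an invariant function of $E_1,\dots,E_m$, the multiplier $a$ being a function of the $E_k$ alone. Hence
\[
 \tfrac12\Gamma(F,P)=F\bigl(2E_{m-2}+QK\bigr)+\tfrac12F^3\,\mathcal TK .
\]
Next, $\Gamma(a,P)=\Gamma(a,Q)+2F\,\Gamma(a,F)\,K+F^2\Gamma(a,K)$. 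Since $\Gamma(a,F)=acF$, this gives
\[
 \frac{F}{2a}\Gamma(a,P)=\frac{F}{2a}\Gamma(a,Q)+F^3cK+\frac{F^3}{2a}\Gamma(a,K).
\]

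Substituting both expansions together with $FP\delta=F(Q\delta)+F^3K\delta$ and collecting the coefficients of $F$ and $F^3$ yields exactly $D_0$ and $D_1$ in \eqref{det:D0}--\eqref{det:D1}. This bookkeeping is the only step needing care. The point to watch is that $\Gamma(F,K)$ contributes $F^3$ rather than $F^2$, because $K$ is invariant and $\mathcal T$ absorbs exactly one factor of $F$. That factor comes from the structural identity $\Gamma(F,E_k)=2(m-k+1)E_{k-1}F$.

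All quantities here are smooth on the full-rank locus $M_m(\mathbb C)\setminus\mathcal S_m$, where $a>0$. The apparent division by $F$ in $c$ and $\mathcal T$ is removable by the chain-rule formula for $\Gamma(F,a)$ displayed before the lemma, so the identity holds on all of this locus, including $F=0$.
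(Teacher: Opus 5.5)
Your proposal is correct and follows the paper's proof essentially verbatim. You start from $a^{-3}\cL(G)=FP\delta-\tfrac12\Gamma(F,P)-\frac F{2a}\Gamma(a,P)$, expand both pairings of $P=Q+F^2K$ using $\Gamma(F,Q)=4E_{m-2}F$, $\Gamma(F,F)=Q$, $\Gamma(F,K)=F\,\mathcal TK$ and $\Gamma(a,F)=acF$, and collect the coefficients of $F$ and $F^3$, exactly as the paper does.
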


\begin{proof}
The preceding expansion gives \eqref{det:P} and
\[
 a^{-3}\cL(G)
 =FP\delta-\tfrac12\Gamma(F,P)-\frac F{2a}\Gamma(a,P).
\]
It remains to collect the coefficients of $F$ and $F^3$.
Since $Q$ and $K$ depend only on the minor norms,
\[
 \begin{aligned}
 \Gamma(F,P)&=4E_{m-2}F+2FQK+F^3\mathcal TK,\\
 \frac{\Gamma(a,P)}a
 &=\frac{\Gamma(a,Q)}a+2cF^2K+\frac{F^2\Gamma(a,K)}a.
 \end{aligned}
\]
Substitution and collection of the coefficients of $F$ and $F^3$
give \eqref{det:divergence}--\eqref{det:D1}.
\end{proof}

In the notation of \cref{thm:exact-multiplier}, the lemma states
\[
 \cK_F(a)=a^3(D_0+F^2D_1),\qquad
 G\cL(G)=aF^2\cK_F(a)=a^4F^2(D_0+F^2D_1).
\]
Thus nonnegativity of $D_0$ and $D_1$ suffices for the required sign.
We now take the uniform product multiplier
\begin{equation}\label{det:uniform-weight}
 a=\prod_{k=1}^{m-1}E_k^\lambda.
\end{equation}
The value $\lambda=1/2$ gives \eqref{det:potential}.

\subsection{From the coefficient signs to area minimization}
\label{det:geometric-proof}

For the multiplier $a$ in \eqref{det:potential},
\cref{det:first-reduction,det:first-inequality} give $D_0\geq0$, and
\cref{det:second-positive} gives $D_1>0$ on the full-rank locus.
We use these estimates first and prove them in the next two subsections.

\begin{proof}[Proof of \cref{main:complex-determinants}]
The case $m=1$ is an oriented hyperplane.

\emph{The sign of $G\cL(G)$.}  For $m\geq2$, take
$\lambda=1/2$ in \eqref{det:uniform-weight}.  Then
$G=aF$ is the potential in \eqref{det:potential}, and
\cref{det:first-reduction,det:first-inequality,det:second-positive}
give $D_0,D_1\geq0$.  Therefore \eqref{det:divergence} yields
\begin{equation}\label{det:divergence-signs}
 G\cL(G)=a^4F^2(D_0+F^2D_1)\geq0
 \qquad\text{on the full-rank locus}.
\end{equation}

\emph{Extension and nonvanishing gradient.}
We check $G$ on the whole complement of $\mathcal S_m$.
There, $E_k>0$ for every $1\leq k\leq m-1$, so $a$ and $G$ are
smooth.  If $F\ne0$, then $G\ne0$ and Euler's identity
\[
 \langle Z,\nabla G(Z)\rangle=\frac{m(m+1)}2G(Z)
\]
implies $\nabla G\ne0$.  If $F=0$, then
$\nabla G=a\nabla F\ne0$ off $\mathcal S_m$.
Moreover, $G\cL(G)$ is continuous there, so
\eqref{det:divergence-signs} extends to matrices of rank $m-1$.

\emph{Perimeter comparison.}  Define
\[
 X_G=\frac{\nabla G}{|\nabla G|},\qquad
 \diver X_G=\frac{\cL(G)}{|\nabla G|^3}
 \quad\text{on }M_m(\mathbb C)\setminus\mathcal S_m.
\]
This field is smooth and has norm one.  Since $G$ has the same sign
as $F$, the inequality $G\cL(G)\geq0$ gives
$\diver X_G\geq0$ on $\{F>0\}$ and
$\diver X_G\leq0$ on $\{F<0\}$.
On $C_m\setminus\mathcal S_m$, the identity
$\nabla G=a\nabla F$ identifies $X_G$ with the outward unit normal
to $E_m^-$.  The set $E_m^-$ is semialgebraic and has locally finite perimeter.
By \eqref{det:singular-set},
$\mathcal H^{2m^2-1}(\mathcal S_m\cap K)=0$ for every compact $K$.
The exceptional set is closed, so all hypotheses of
\cref{thm:removable-subcalibration} hold.  That theorem proves that $E_m^-$ is locally perimeter minimizing
and that $\partial\llbracket E_m^-\rrbracket$ is locally mass minimizing.

\emph{Strict minimality.}  The strict sign $D_1>0$ in
\cref{det:second-positive} applies to $\lambda=1/2$.
Thus $G\cL(G)>0$ at every full-rank point with $F\neq0$, and hence
$q_{X_G}=\operatorname{sgn}(G)\diver X_G>0$ there.
Both phases contain such points, so their
integrals of $q_{X_G}$ over the unit ball are positive.  The field is
zero-homogeneous, and the exceptional set is conical.
\Cref{thm:strict-minimality-flux} therefore proves strict area
minimality.

\emph{Stability.}  The multiplier identity
\eqref{det:divergence} gives directly
\[
 \cK_F(a)=a^3(D_0+F^2D_1).
\]
The invariant expression \eqref{det:D0} is smooth off
\(\mathcal S_m\), since \(E_1,\ldots,E_{m-1}>0\) there.
Thus \(D_0\geq0\) extends from full-rank matrices to rank \(m-1\)
by continuity. On the regular cone, the Jacobi potential in
\eqref{eq:boundary-potential-algebraic} is consequently
\[
 V_G=\frac{\cK_F(a)}{a^3|\nabla F|^2}=\frac{D_0}{E_{m-1}}\geq0.
\]
Apply \cref{thm:ground-state-gap} with
$n=2m^2-1$ and $\ell=m(m+1)/2$. This gives
\eqref{eq:main-determinant-stability}:
\[
 Q_{C_m}(u)\geq
 \left(\ell-\frac n2\right)^2\int_{C_m}r^{-2}u^2
 =\frac{(m^2-m-1)^2}{4}\int_{C_m}r^{-2}u^2.\qedhere
\]
\end{proof}

\subsection{Nonnegativity of the first coefficient}
\label{det:first-section}

We prove $D_0\geq0$ for the choice $\lambda=1/2$ in
\eqref{det:uniform-weight}.  On the full-rank locus, set
$y_i=x_i^{-1}$.  In this subsection,
$e_s$ denotes the elementary symmetric polynomial in $y_1,\ldots,y_m$.
For $1\leq s\leq m-1$, define
\begin{equation}\label{det:a-variables}
 a_{si}=\frac{e_s(y_1,\ldots,\widehat y_i,\ldots,y_m)}{e_s(y)},
 \qquad P_i=\sum_{s=1}^{m-1} a_{si},
\end{equation}
and introduce four scalar sums for the coefficient calculation:
\begin{equation}\label{det:A-V-Y}
 A=\sum_i y_i\sum_{s=1}^{m-1}(s a_{si}-a_{si}^2),\qquad
 B=\sum_i y_iP_i,\qquad
 V=\sum_i y_iP_i^2,\qquad Y=\sum_i y_i^2P_i.
\end{equation}

\begin{lemma}\label{det:first-reduction}
For the weight \eqref{det:uniform-weight},
\begin{equation}\label{det:first-reduction-equation}
 \frac{D_0}{Q}
 =4\lambda A-4\lambda^2V
       -2\frac{e_2}{e_1}+2\lambda\frac{Y}{e_1}.
\end{equation}
Consequently, at $\lambda=1/2$, the inequality $D_0\geq0$ is equivalent
to
\begin{equation}\label{det:first-homogeneous}
 2A+\frac{Y}{e_1}-V-2\frac{e_2}{e_1}\geq0.
\end{equation}
\end{lemma}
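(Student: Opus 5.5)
The plan is to evaluate every term of \eqref{det:D0} at a diagonal matrix $D=\operatorname{diag}(\sigma_i)$ with positive squared singular values $x_i$. Since $D_0$ is an invariant expression in $E_1,\ldots,E_m$, this is enough, by unitary invariance as in \cref{det:basic-identities}. We then convert everything to the reciprocal variables $y_i=1/x_i$. Write $h=\log a=\lambda\sum_{s=1}^{m-1}\log E_s$ and use \eqref{eq:log-multiplier-derivatives}. Then
\[
 c=\mathcal Th,\qquad t=\Gamma(h,h),\qquad \frac{\Delta a}{a}=\Delta h+t,
\]
so $\delta=\Delta h+c$ and $K=2c+t$. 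Consequently
\[
 D_0=Q(\Delta h-c-t)-2E_{m-2}-\tfrac12\Gamma(h,Q).
\]
Dividing by $Q=E_{m-1}$ isolates the three pieces that must be computed.

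\emph{The terms $c$, $\Delta h$ and $\Gamma(h,Q)/Q$.} By \eqref{det:basic-two}, $\mathcal T\log E_s=2(m-s+1)E_{s-1}/E_s$ and $\Delta\log E_s=4(m-s+1)^2E_{s-1}/E_s-\Gamma(E_s,E_s)/E_s^2$. The first contributions cancel between $\Delta h$ and $c$ only partially, so we keep them. The key dictionary is $e_k(x)=e_m(x)\,e_{m-k}(y)$. Hence
\[
 \frac{E_{s-1}}{E_s}=\frac{e_{m-s+1}(y)}{e_{m-s}(y)},\qquad
 \frac{E_{m-2}}{E_{m-1}}=\frac{e_2(y)}{e_1(y)},
\]
which accounts for the term $-2e_2/e_1$. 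For the gradient pairings we use \eqref{det:basic-three}. From $x_i\,\partial_{x_i}e_k(x)=e_k(x)-e_m(x)\,e_{m-k}(\hat y_i)$, that is $e_k(x)\bigl(1-a_{m-k,i}\bigr)$, we get
\[
 \frac{\Gamma(E_j,E_k)}{E_jE_k}=4\sum_i y_i\bigl(1-a_{m-j,i}\bigr)\bigl(1-a_{m-k,i}\bigr).
\]
After the reindexing $s=m-k$, the sum over $j,k\le m-1$ becomes a sum over $s\ge1$, so $t$ is a quadratic expression in the quantities $1-a_{si}$. Similarly $\Gamma(h,Q)/Q$ uses $j=m-1$, that is $s=1$, paired with $\sum_s(1-a_{si})$.

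\emph{Assembly.} I expect the constant pieces $\sum_i y_i\cdot(m-1)^2$ and the like to cancel against the $4(m-s+1)^2E_{s-1}/E_s$ terms. For that I will use the identities $\sum_i y_i a_{si}=(s+1)e_{s+1}/e_s$ and $\sum_i a_{si}=m-s$, both of which follow from $e_s(y)=e_s(\hat y_i)+y_ie_{s-1}(\hat y_i)$. What survives should be $4\lambda\sum_i y_i\sum_s(sa_{si}-a_{si}^2)=4\lambda A$ from $\Delta h$ and $c$, and $-4\lambda^2V$ from $-t$, after $\sum_s(1-a_{si})$ is recentred. The term $2\lambda Y/e_1$ should come from $\Gamma(h,Q)/Q$ together with the $s=1$ recentring, using $1-a_{1i}=y_i/e_1$. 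The single-index cross terms must be checked with care.

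\emph{Final step and the main obstacle.} Setting $\lambda=1/2$ multiplies the inequality by the positive factor $1/Q$ only, which gives the equivalence with \eqref{det:first-homogeneous}. The main obstacle is the bookkeeping in the assembly step. In particular, the linear terms in $a_{si}$ coming from $t$ must combine with the $\Delta h$ terms to produce exactly the pattern $sa_{si}-a_{si}^2$, with no leftover constants. I would first verify the identity symbolically for $m=2,3$ to fix the signs, and then carry out the general collection.
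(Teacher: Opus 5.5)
Your route is the paper's: you pass to reciprocal variables via $e_k(x)=e_m(x)e_{m-k}(y)$, take logarithmic derivatives of $a$, and evaluate $c$, $t$, $\Delta a/a$ and $\Gamma(a,Q)/(aQ)$. Several of your ingredients are correct:
\begin{itemize}
\item the rewriting $D_0=Q(\Delta h-c-t)-2E_{m-2}-\tfrac12\Gamma(h,Q)$;
\item the formula $\mathcal T\log E_k=2(m-k+1)E_{k-1}/E_k$ and the ratio $E_{m-2}/E_{m-1}=e_2/e_1$;
\item the identities $\sum_iy_ia_{si}=(s+1)e_{s+1}/e_s$ and $1-a_{1i}=y_i/e_1$.
\end{itemize}

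The gap is in the central dictionary entry. You write $x_i\partial_{x_i}e_k(x)=e_k(x)-e_m(x)e_{m-k}(y_{\widehat i})=e_k(x)(1-a_{m-k,i})$. The middle expression equals $e_k(x_{\widehat i})$, not $x_i\partial_{x_i}e_k$. The complement identity gives instead
\[
 x_i\,\partial_{x_i}e_k(x)=x_i\,e_{k-1}(x_{\widehat i})
 =e_m(x)\,e_{m-k}(y_{\widehat i})=e_k(x)\,a_{m-k,i}.
\]
Already for $m=2$, $k=1$ the left side is $x_1$, while your expression gives $x_2$. The correct pairing is therefore $\Gamma(E_j,E_k)/(E_jE_k)=4\sum_iy_i\,a_{m-j,i}a_{m-k,i}$, not the version with $1-a$. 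With your version three quantities change:
\begin{itemize}
\item $t$ becomes $4\lambda^2\bigl((m-1)^2e_1-2(m-1)B+V\bigr)$;
\item the gradient part of $\Delta h$ becomes $-4\lambda\sum_{s,i}y_i(1-a_{si})^2$;
\item $\Gamma(h,Q)/Q$ becomes $4\lambda\bigl((m-1)\sum_iy_i^2-Y\bigr)/e_1$.
\end{itemize}
Nothing in $D_0$ cancels the resulting extra terms. For $m=2$ the net error in $D_0/Q$ is $-(6\lambda+4\lambda^2)(y_1-y_2)^2/e_1\neq0$. So the recentring you plan cannot yield \eqref{det:first-reduction-equation}, and the assembly you flag as the main obstacle is never actually carried out.

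With the corrected entry there are no constant pieces and no recentring:
\begin{itemize}
\item $c=2\lambda B$ and $t=4\lambda^2V$ directly;
\item $\Delta h=4\lambda\sum_{s,i}y_i\bigl((s+1)a_{si}-a_{si}^2\bigr)=4\lambda(A+B)$, using $\Delta E_k/E_k=4(s+1)^2e_{s+1}/e_s=4(s+1)\sum_iy_ia_{si}$ with $s=m-k$;
\item $\Gamma(h,Q)/Q=4\lambda\sum_iy_ia_{1i}P_i=4\lambda(B-Y/e_1)$, by $a_{1i}=1-y_i/e_1$.
\end{itemize}
Substituting into your expression gives
\[
 \frac{D_0}{Q}=4\lambda(A+B)-2\lambda B-4\lambda^2V-2\frac{e_2}{e_1}-2\lambda\Bigl(B-\frac{Y}{e_1}\Bigr).
\]
The three $B$-terms cancel and \eqref{det:first-reduction-equation} follows; this is exactly the paper's computation. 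Your final step, setting $\lambda=1/2$ and dividing by $Q=E_{m-1}>0$ on the full-rank locus, is fine.
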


\begin{proof}
We first explain the reciprocal substitution. Put $p=\prod_i x_i$.
For a $k$-element index set $I$,
$\prod_{i\in I}x_i=p\prod_{j\notin I}y_j$. Summing over $I$ gives
\[
 E_k(x)=p\,e_{m-k}(y),\qquad
 \frac{E_{k-1}(x)}{E_k(x)}
 =\frac{e_{m-k+1}(y)}{e_{m-k}(y)}.
\]
Write $y_{\widehat i}$ for the tuple with $y_i$ omitted. Since
\[
 \partial_{y_i}e_{s+1}(y)=e_s(y_{\widehat i}),
\]
Euler's identity for the homogeneous polynomial $e_{s+1}$ gives
\[
 \sum_i y_i e_s(y_{\widehat i})=(s+1)e_{s+1}(y),\qquad
 \sum_i y_i a_{si}=(s+1)\frac{e_{s+1}}{e_s}.
\]
For $s=m-k$, applying the complement identity in the $m-1$ variables
with $x_i$ omitted also gives
\[
 e_{k-1}(x_{\widehat i})=p\,y_i e_s(y_{\widehat i}),\qquad
 \frac{e_{k-1}(x_{\widehat i})}{E_k(x)}=y_i a_{si}.
\]
Consequently, the gradient formula \eqref{det:basic-three} yields
\[
 \frac{\Gamma(E_k,E_k)}{E_k^2}
 =4\sum_i x_i\left(\frac{e_{k-1}(x_{\widehat i})}{E_k(x)}\right)^2
 =4\sum_i y_i a_{si}^2,
\]
where $x_i y_i^2=y_i$.
For the following differentiation only, put
$h=\log a=\lambda\sum_{k=1}^{m-1}\log E_k$.
The chain rule in \eqref{eq:log-multiplier-derivatives} gives
\[
 \frac{\nabla a}{a}=\lambda\sum_{k=1}^{m-1}\frac{\nabla E_k}{E_k},
 \qquad
 \frac{\Delta a}{a}-t
 =\lambda\sum_{k=1}^{m-1}
 \left(\frac{\Delta E_k}{E_k}
       -\frac{\Gamma(E_k,E_k)}{E_k^2}\right).
\]
Using \cref{det:basic-identities} and the identities just proved, we
obtain the quantities defined in \eqref{det:divergence-notation}:
\begin{equation}\label{det:radial-identities}
 \begin{gathered}
 c=2\lambda B,\qquad t=4\lambda^2V,\qquad
 \frac{\Delta a}{a}-t=4\lambda(A+B),\\
 \delta=4\lambda A+6\lambda B,\qquad
 K=4\lambda B+4\lambda^2V.
 \end{gathered}
\end{equation}
Finally,
\[
 \frac{E_{m-2}}Q=\frac{e_2}{e_1},\qquad
 \frac{\Gamma(a,Q)}{aQ}=4\lambda\left(B-\frac{Y}{e_1}\right).
\]
Insert these identities in \eqref{det:D0}.  The three terms involving
$B$ cancel, leaving \eqref{det:first-reduction-equation}.
\end{proof}

\begin{theorem}[First-coefficient inequality]\label{det:first-inequality}
For every $m\geq2$ and every $y_1,\ldots,y_m>0$, the quantities in
\eqref{det:a-variables}--\eqref{det:A-V-Y} satisfy
\eqref{det:first-homogeneous}.
\end{theorem}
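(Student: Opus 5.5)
The plan is to rewrite \eqref{det:first-homogeneous} in probabilistic form and reduce it to negative correlation inequalities for elementary symmetric distributions. For each $1\leq s\leq m-1$, let $\mu_s$ be the probability measure on $s$-element subsets $I\subset\{1,\ldots,m\}$ with weight $\prod_{j\in I}y_j/e_s(y)$. Set $\pi_{si}=\mu_s(i\in I)$. The recurrence
\[
 e_s(y)=e_s(y_{\widehat i})+y_ie_{s-1}(y_{\widehat i})
\]
gives $a_{si}=1-\pi_{si}$ and $\sum_i\pi_{si}=s$. Moreover $\pi_{si}$ is nondecreasing in $s$, since the ratios $e_{s-1}(y_{\widehat i})/e_s(y_{\widehat i})$ decrease by Newton's inequalities. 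In these variables
\[
 sa_{si}-a_{si}^2=(1-\pi_{si})(s-1+\pi_{si}).
\]
Expanding $P_i^2=\sum_s a_{si}^2+2\sum_{s<t}a_{si}a_{ti}$ then gives
\[
 2A-V=\sum_iy_i\Bigl[\sum_s\bigl(2sa_{si}-3a_{si}^2\bigr)-2\sum_{s<t}a_{si}a_{ti}\Bigr].
\]
The remaining two terms combine as $(Y-2e_2)/e_1$, which is the only place where $e_1$ enters.

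As a sanity check and base case, I will first verify $m=2$ directly. There only $s=1$ occurs and $a_{1i}=y_j/e_1$ for $j\neq i$. One finds $A=V=e_2/e_1$ and $Y=e_2$, so the left side of \eqref{det:first-homogeneous} is exactly zero. This matches the vanishing Jacobi-type behaviour seen elsewhere, and it shows that the inequality is sharp. Consequently no crude bound will suffice, and the argument must be an identity plus manifestly nonnegative remainders.

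For general $m$ I will look for a \emph{covariance decomposition}. The quantity $\sum_iy_i\pi_{si}\pi_{ti}$ is a weighted covariance-type expression under the product of $\mu_s$ and $\mu_t$. By the negative association of the strongly Rayleigh measures $\mu_s$, the joint inclusion probabilities satisfy $\mu_s(i,j\in I)\leq\pi_{si}\pi_{sj}$. Identities of the form $\sum_iy_i(1-\pi_{si})=(s+1)e_{s+1}/e_s$ (Euler, as in the proof of \cref{det:first-reduction}) convert the linear terms into ratios of consecutive $e$'s. I will write the left side of \eqref{det:first-homogeneous} as
\[
 \sum_{s}\Bigl(\text{telescoping ratios }e_{s+1}/e_s\Bigr)+\sum_{s\leq t}\ \sum_{i<j}(\text{nonnegative weights})\bigl(\pi_{si}-\pi_{sj}\bigr)\bigl(\pi_{ti}-\pi_{tj}\bigr).
\]
The double sum is nonnegative because $\pi_{si}$ and $\pi_{ti}$ are both increasing functions of $y_i$, so the differences in each product have the same sign. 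The term $(Y-2e_2)/e_1$ should cancel exactly against the $s=1$ boundary part of the telescoping sum, as it does at $m=2$. As a fallback, I would induct on $m$ by deleting the variable $y_m$ via the same recurrence, checking that the defect of the inequality for $m$ variables equals the defect for the remaining $m-1$ variables plus a sum of squares in $y_m$.

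The main obstacle is the cross terms $-2\sum_{s<t}a_{si}a_{ti}$, which couple different levels $s$ and $t$. Finding the exact combination in which they, together with $-3a_{si}^2$, assemble into products of same-sign differences is the crux. I would first try the monotone coupling of $\mu_s$ and $\mu_t$ (nested random sets $I_s\subset I_t$, available for these measures) to express $\pi_{ti}-\pi_{si}$ as a probability. I would then verify the resulting identity symbolically for $m=3,4$ before proving it in general.
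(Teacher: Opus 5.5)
Your preliminary reformulations are correct:
\begin{itemize}
\item $a_{si}=1-\pi_{si}$ and $\sum_i\pi_{si}=s$;
\item the expansion of $2A-V$ and the Euler identity $\sum_iy_ia_{si}=(s+1)e_{s+1}/e_s$;
\item the $m=2$ evaluation $A=V=e_2/e_1$, $Y=e_2$, which is $\mathcal C_2=0$;
\item $\pi_{si}-\pi_{sj}=(y_i-y_j)e^{ij}_{s-1}/e_s$, with $e^{ij}_{s-1}$ omitting $y_i,y_j$, so your same-sign products are nonnegative.
\end{itemize}
One slip: Newton's inequalities make $e_{s-1}(y_{\widehat i})/e_s(y_{\widehat i})$ \emph{increase} in $s$, not decrease. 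That is what makes $\pi_{si}$ nondecreasing in $s$.

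For $m\geq3$, however, nothing is proved. You announce, but never produce, a representation of the left side of \eqref{det:first-homogeneous} as ``telescoping ratios'' plus a nonnegatively weighted sum of $(\pi_{si}-\pi_{sj})(\pi_{ti}-\pi_{tj})$. The weights and the telescoping part are never identified, and no reason is given for the telescoping part to be nonnegative. The cancellation of $(Y-2e_2)/e_1$ is also conjectural. That identity is the whole theorem, and you leave it open.

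There is a structural reason the sign facts you list cannot by themselves produce it. By \eqref{det:first-reduction-equation}, for the weight $\prod E_k^\lambda$, $D_0/Q$ is a quadratic in $\lambda$ with coefficients $-2e_2/e_1$, $4A+2Y/e_1$ and $-4V$, of mixed signs. At $m=2$ it equals $-2(e_2/e_1)(2\lambda-1)(\lambda-1)$. This is nonnegative only for $\lambda\in[1/2,1]$ and vanishes at $\lambda=1/2$.

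Negative association ($w_{sij}\geq0$) and monotonicity of $\pi_{si}$ hold for every $\lambda$, but the inequality does not. So any valid decomposition must encode a quantitative balance among $A$, $V$, $Y$ and $e_2/e_1$. This is unlike $D_1$ in \cref{det:second-positive}, where each power of $\lambda$ has a nonnegative coefficient. Likewise, equality at $m=2$ only fixes the base case; it does not force the induction step to be an identity.

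The paper supplies the quantitative input differently.
\begin{enumerate}
\item It clears denominators to the symmetric polynomial $\mathcal C_m$.
\item It proves $\mathcal C_{r+1}=e_r^2\mathcal C_r+\mathcal R_r$. The remainder is the part affine in the formal variable $e_{r+1}$, evaluated on one fixed $N$-tuple, rather than a comparison of $m$ and $m-1$ variables.
\item It bounds every negative term of $\mathcal R_r/\mathcal P_r$ by normalized Newton inequalities, $e_r/(e_pe_{r-p})\leq1/\binom rp$ and $e_ke_r/(e_{k+d}e_{r-d})\leq\binom{k+d}d/\binom rd$.
\item It closes with the scalar budget $I_r+J_r\leq r(r+1)$, which is an equality at $r=2$.
\item The induction proves $\mathcal C_r^{[N]}\geq0$ for all $N\geq r$, which is stronger than the $r$-variable statement and is needed because the remainder is controlled on the same tuple.
\end{enumerate}
Your fallback, deleting $y_m$ and hoping for a sum of squares in $y_m$, is equally unverified and is not the mechanism the paper shows to work.
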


We prove the theorem by a recurrence for symmetric polynomials.  Put
\begin{equation}\label{det:cleared-polynomial}
 D_m^{\mathrm{sym}}=\prod_{s=1}^{m-1}e_s,\qquad
 \mathcal C_m=e_1(D_m^{\mathrm{sym}})^2
       \left(2A+\frac{Y}{e_1}-V-2\frac{e_2}{e_1}\right).
\end{equation}
This is a symmetric polynomial, viewed as a polynomial in
$e_1,\ldots,e_m$.  For $N\geq m$, let $\mathcal C_m^{[N]}$ denote
this same formal polynomial evaluated at the elementary symmetric
functions of $N$ positive variables. Its coefficients and summation
limits remain those defining \(\mathcal C_m\); they are not recomputed
by replacing \(m\) with \(N\) in \eqref{det:cleared-polynomial}.
In the induction we hold \(N\) fixed and increase \(r\).  Each
\(\mathcal C_r^{[N]}\) is therefore evaluated on the same \(N\)-tuple.
This lets Newton's inequalities control all the remainders on one
domain.  The recurrence below is a formal polynomial identity and
remains valid under these evaluations.

\begin{lemma}[Recurrence for the symmetric polynomials]\label{det:stable-recurrence}
One has $\mathcal C_2=0$.  For every $r\geq2$,
\begin{equation}\label{det:C-recurrence}
 \mathcal C_{r+1}=e_r^2\mathcal C_r+\mathcal R_r.
\end{equation}
With
\[
 \mathcal P_r=e_1^3e_re_{r+1}\prod_{j=2}^{r-1}e_j^2,
\]
the remainder is given by
\begin{align}
 \frac{\mathcal R_r}{\mathcal P_r}
 ={}&r(r+1)
 -\sum_{p=1}^{\lfloor r/2\rfloor}
       (r+1)(2+\mathbf1_{p=1}+\mathbf1_{2p=r})
       \frac{e_r}{e_pe_{r-p}}\notag\\
 &-\sum_{d=1}^{\lfloor(r-1)/2\rfloor}
       \sum_{k=1}^{r-2d}
       (r-k+1)(2+\mathbf1_{k=r-2d})
       \frac{e_ke_r}{e_{k+d}e_{r-d}}.
 \label{det:R-recurrence}
\end{align}
Every denominator on the right cancels against $\mathcal P_r$.
\end{lemma}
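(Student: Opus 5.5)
The plan is to prove the recurrence as a formal identity of symmetric functions. The idea is to write $\mathcal C_r$ as an explicit finite sum of products of elementary symmetric functions, and then compare $\mathcal C_{r+1}$ with $e_r^2\mathcal C_r$ term by term.

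\emph{Expanding the four sums.} I first expand $A$, $V$, $Y$ and the ratio $e_2/e_1$ appearing in \eqref{det:cleared-polynomial} in terms of the $e_j$. The basic tools are:
\begin{itemize}
\item the identity $e_s(y_{\widehat i})=\sum_{j\ge0}(-y_i)^j e_{s-j}(y)$;
\item power sums weighted by these, of the form $\sum_i y_i^{a} e_s(y_{\widehat i})e_t(y_{\widehat i})$. These are symmetric, and I express them through the products $e_pe_q$ by the standard bilinear formula $\sum_i y_i\,e_s(y_{\widehat i})e_t(y_{\widehat i})=\sum_{j\ge0}(s+t+1-2j)\,e_{s-j}e_{t+1+j}$ for $s\le t$. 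This follows by differentiating $\prod_i(1+uy_i)$ and comparing coefficients, or from Newton-type generating functions.
\end{itemize}
With these:
\begin{itemize}
\item the linear part gives $\sum_i y_ia_{si}=(s+1)e_{s+1}/e_s$;
\item the quadratic sums $\sum_i y_ia_{si}a_{ti}$ become $\frac{1}{e_se_t}\sum_j(s+t+1-2j)e_{s-j}e_{t+1+j}$;
\item $Y$ uses the companion identity with $y_i^2$, namely $\sum_i y_i^2e_s(y_{\widehat i})=e_1(s+1)e_{s+1}-(s+2)e_{s+2}$.
\end{itemize}

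\emph{Clearing denominators.} Multiplying by $e_1(D_r^{\rm sym})^2$ turns $\mathcal C_r$ into an explicit polynomial. It is a double sum over $1\le s,t\le r-1$ of terms $(s+t+1-2j)\,e_{s-j}e_{t+1+j}\prod_{k\ne s,t}e_k\cdots$, together with the single sums coming from $2A$ (the $s\,a_{si}$ part and the diagonal $-a_{si}^2$ part), $Y$, and $-2e_2/e_1$.

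\emph{Base case.} Before the general step I check $\mathcal C_2=0$ directly. Here $m=2$, only $s=1$ occurs, and $a_{1i}=y_{\widehat i}/e_1$, so $A$, $V$, $Y$ are elementary and the combination cancels.

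\emph{Comparing $\mathcal C_{r+1}$ with $e_r^2\mathcal C_r$.} The index set for $\mathcal C_{r+1}$ is $\{1,\dots,r\}$. Passing from $r$ to $r+1$ has two effects:
\begin{itemize}
\item the prefactor gains $e_r^2$;
\item the new terms are those with $s=r$ or $t=r$.
\end{itemize}
The terms with $s,t\le r-1$ reproduce exactly $e_r^2\mathcal C_r$, since their cleared prefactors differ by precisely $e_r^2$. So $\mathcal R_r$ collects the new terms only: the cross terms $s\le r-1$, $t=r$ (counted twice by symmetry), the diagonal term $s=t=r$, and the new $s=r$ contributions to $2A$ and $Y$. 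Each carries the factor $e_1^3e_re_{r+1}\prod_{j=2}^{r-1}e_j^2$ after cancelling one $e_s e_r$ from the denominator. Dividing by $\mathcal P_r$ should yield:
\begin{itemize}
\item the constant $r(r+1)$, from the linear $s=r$ term combined with the diagonal leading product $e_re_{r+1}$;
\item the first sum, from the cross terms with $j=0$ and from their reflections;
\item the double sum indexed by the shift $d$ and by $k=s-j$, from the cross terms with $j\ge1$.
\end{itemize}
The indicator corrections $\mathbf 1_{p=1}$, $\mathbf1_{2p=r}$ and $\mathbf1_{k=r-2d}$ record the diagonal coincidences: $p=r-p$, the extra contribution of $Y$ or $e_2/e_1$ at $p=1$, and the endpoint of the $j$-range.

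\emph{Main obstacle.} The hard step is the bookkeeping. I need to reindex the cross terms $(s,r,j)$ into the pairs $(k,d)$ with products $e_ke_r/(e_{k+d}e_{r-d})$, and confirm that their coefficients sum to $(r-k+1)(2+\mathbf1_{k=r-2d})$, including the contributions that cancel between $2A$ and $-V$. I would first verify the formula symbolically for $r=2,3,4$, and then match the general coefficients using the bilinear identity above. Finally, the last assertion, that every denominator cancels against $\mathcal P_r$, follows from the indices involved: $p,r-p\in[1,r-1]$, $k+d\le r-1$, and $r-d\in[2,r-1]$, so each denominator divides $\prod_{j=2}^{r-1}e_j^2\cdot e_1$.
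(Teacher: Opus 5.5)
\paragraph{Gap in the old/new split.}
Your comparison step rests on a false claim: the terms of $\mathcal C_{r+1}$ with $s,t\le r-1$ do \emph{not} reproduce $e_r^2\mathcal C_r$. The polynomial $\mathcal C_r$ is computed with $e_j=0$ for every $j>r$; it is the expression in $r$ variables, viewed as a polynomial in $e_1,\dots,e_r$. Inside $\mathcal C_{r+1}$, the same old sums are expanded with $e_{r+1}$ retained.

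Write $H^{(1)}_{st}=\sum_iy_ie_s(y_{\widehat i})e_t(y_{\widehat i})$ and $H^{(2)}_{s0}=\sum_iy_i^2e_s(y_{\widehat i})$. The correct expansions are
$H^{(1)}_{st}=\sum_{b=0}^{\min(s,t)}(s+t+1-2b)\,e_be_{s+t+1-b}$ and $H^{(2)}_{s0}=e_1e_{s+1}-(s+2)e_{s+2}$.
Both of your formulas are off. Your bilinear formula reverses the coefficients: at $s=t=1$ it gives $3e_1e_2+e_3$ instead of $e_1e_2+3e_3$. Your companion formula has a spurious factor $s+1$.

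With the correct expansions, every old pair with $s+t\ge r$ acquires the extra term $(2r+1-s-t)\,e_{s+t-r}e_{r+1}$. Likewise $H^{(2)}_{r-1,0}$ acquires $-(r+1)e_{r+1}$. None of these terms is present in $e_r^2\mathcal C_r$. Already for $r=2$ one has $\mathcal C_2=0$, while the $s=t=1$ part of $\mathcal C_3$ equals $-12e_1e_2^2e_3$.

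If you instead keep all $e_j$ untruncated, the old terms do match. But then the object is no longer the paper's $\mathcal C_r$: at $N\ge3$ variables its $m=2$ value is $-12e_1e_3\neq0$, and the new terms involve $e_{r+2},e_{r+3},\dots$, which do not occur in \eqref{det:R-recurrence}.

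\paragraph{Where the remainder actually comes from.}
The old-term corrections above are exactly the two subtracted sums in $\mathcal R_r$:
\begin{itemize}
\item pairs with $s+t=r$ give the terms $e_r/(e_pe_{r-p})$;
\item pairs with $s+t=r+k$ give $e_ke_r/(e_{k+d}e_{r-d})$, with $s=k+d$ and $t=r-d$;
\item the indicator $\mathbf1_{k=r-2d}$ marks the diagonal $s=t$, where the diagonal part $-2\sum_sH^{(1)}_{ss}/e_s^2$ of $2A$ adds to the single $s=t$ term of $-V$;
\item the $Y$-term at $s=r-1$ produces the $\mathbf1_{p=1}$ correction.
\end{itemize}
The genuinely new terms contribute only the constant. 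In $\mathcal C_{r+1}$ one has $H^{(1)}_{sr}=(r+1-s)e_se_{r+1}$, $H^{(1)}_{rr}=e_re_{r+1}$ and $H^{(2)}_{r0}=e_1e_{r+1}$. These terms therefore sum to $\bigl[2r(r+1)-2-(r^2+r-1)+1\bigr]\mathcal P_r=r(r+1)\mathcal P_r$.

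In particular, your plan to extract the double sum from ``cross terms with $j\ge1$'' cannot work. Those terms vanish identically once indices above $r+1$ are set to zero.

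\paragraph{The missing idea.}
Use $e_{r+1}$ itself to separate the two pieces, as the paper does. Each $H$ is at most affine in $e_{r+1}$. Setting $e_{r+1}=0$ kills the new terms and restores the truncation of $\mathcal C_r$, which gives $e_r^2\mathcal C_r$. The remainder is then $\mathcal R_r=e_{r+1}\,\partial_{e_{r+1}}\mathcal C_{r+1}$. Compute it from the generating function $\sum_{s,t}H^{(1)}_{st}z^sw^t=\bigl(z\mathscr E'(z)\mathscr E(w)-w\mathscr E'(w)\mathscr E(z)\bigr)/(z-w)$, together with $\partial_{e_{r+1}}H^{(2)}_{s0}$.
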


\begin{proof}
For $q\geq1$, write
\[
 H_{st}^{(q)}=\sum_i y_i^q e_s(\widehat y_i)e_t(\widehat y_i).
\]
Here and below $e_j=0$ for $j<0$.  Direct substitution in
\eqref{det:cleared-polynomial} gives
\begin{align}
 \mathcal C_m=e_1(D_m^{\mathrm{sym}})^2\biggl[
 &2\sum_{s=1}^{m-1}s(s+1)\frac{e_{s+1}}{e_s}
 -2\sum_{s=1}^{m-1}\frac{H_{ss}^{(1)}}{e_s^2}
 -\sum_{s,t=1}^{m-1}\frac{H_{st}^{(1)}}{e_se_t}\notag\\
 &+\frac1{e_1}\sum_{s=1}^{m-1}\frac{H_{s0}^{(2)}}{e_s}
 -\frac{2e_2}{e_1}\biggr].
 \label{det:C-expanded}
\end{align}
This also shows that the denominators cancel.  The case $m=2$ gives
$\mathcal C_2=0$.

We calculate the coefficient of $e_{r+1}$ in
\eqref{det:C-expanded} for $m=r+1$.  Let
$\mathscr E(z)=\sum_{j=0}^{r+1}e_jz^j=\prod_i(1+y_i z)$.
Summing \(H_{st}^{(1)}z^sw^t\) gives the formal power series identity
\begin{align}
 \sum_{s,t\geq0}H_{st}^{(1)}z^sw^t
 &=\mathscr E(z)\mathscr E(w)
       \sum_i\frac{y_i}{(1+y_i z)(1+y_i w)}\notag\\
 &=\frac{z\mathscr E'(z)\mathscr E(w)
          -w\mathscr E'(w)\mathscr E(z)}{z-w}.
 \label{det:H-generating}
\end{align}
Differentiation with respect to the formal variable $e_{r+1}$ gives
\[
 \sum_{j=0}^r(r+1-j)e_j
       \frac{z^{r+1}w^j-w^{r+1}z^j}{z-w}.
\]
For $1\leq s,t\leq r$, comparison of the coefficients of \(z^sw^t\)
therefore yields
\begin{equation}\label{det:H-coefficient}
 \frac{\partial H_{st}^{(1)}}{\partial e_{r+1}}
 =\begin{cases}
   (2r+1-s-t)e_{s+t-r},&s+t\geq r,\\
   0,&s+t<r.
  \end{cases}
\end{equation}
Similarly,
\[
 \sum_{s\geq0}H_{s0}^{(2)}z^s
 =\frac{e_1\mathscr E(z)-\mathscr E'(z)}z,
 \qquad
 H_{s0}^{(2)}=e_1e_{s+1}-(s+2)e_{s+2}.
\]
It follows that
\begin{equation}\label{det:H2-coefficient}
 \frac{\partial H_{s0}^{(2)}}{\partial e_{r+1}}
 =\begin{cases}
   0,&s\leq r-2,\\
   -(r+1),&s=r-1,\\
   e_1,&s=r.
  \end{cases}
\end{equation}
These formulas show that $\mathcal C_{r+1}$ depends affinely on
$e_{r+1}$.  Setting $e_{r+1}=0$ leaves $e_r^2\mathcal C_r$.
Indeed, append a zero variable to $r$ positive variables.  The terms
with $s=r$ vanish at every nonzero coordinate, and every term from the
zero coordinate has a factor $y_i$ or $y_i^2$.  This proves the claimed
restriction as a polynomial identity.

Since the ratio of $e_1(D_{r+1}^{\mathrm{sym}})^2$ to
$\mathcal P_r/e_{r+1}$ is $e_r$, equations
\eqref{det:H-coefficient}--\eqref{det:H2-coefficient} give
\begin{align}
 \frac{\mathcal R_r}{\mathcal P_r}
 ={}&2r(r+1)
 -2e_r\sum_{\substack{1\leq s\leq r\\2s\geq r}}
       \frac{(2r+1-2s)e_{2s-r}}{e_s^2}\notag\\
 &-e_r\sum_{\substack{1\leq s,t\leq r\\s+t\geq r}}
       \frac{(2r+1-s-t)e_{s+t-r}}{e_se_t}
 +1-(r+1)\frac{e_r}{e_1e_{r-1}}.
 \label{det:R-before-grouping}
\end{align}
The diagonal term $s=r$ contributes $-2$.  In the double sum, the
terms with $s=r$ or $t=r$ contribute $-(r^2+r-1)$.  Together with
$2r(r+1)+1$, these terms give the constant $r(r+1)$.

The remaining terms with $s+t=r$ have the form
$e_r/(e_pe_{r-p})$.  The double sum contributes the factor
$2-\mathbf1_{2p=r}$, and the diagonal sum contributes
$2\mathbf1_{2p=r}$.  The last term of
\eqref{det:R-before-grouping} adds $\mathbf1_{p=1}$.
Their total coefficient is the first coefficient in
\eqref{det:R-recurrence}.

For the terms with $s+t=r+k>r$ and $s,t<r$, choose $s\leq t$ and
write $s=k+d$, $t=r-d$.  Then
$1\leq d\leq\lfloor(r-1)/2\rfloor$ and $1\leq k\leq r-2d$.
The two orders in the double sum and the diagonal contribution give
$2+\mathbf1_{k=r-2d}$, while
$2r+1-s-t=r-k+1$.  This is the second sum in
\eqref{det:R-recurrence}, and proves the recurrence.
\end{proof}

Newton's inequalities will bound the two sums subtracted in
\eqref{det:R-recurrence} by the binomial sums below. The next lemma
shows that their total does not exceed the positive term \(r(r+1)\),
which will give \(\mathcal R_r^{[N]}\geq0\).

\Needspace{10\baselineskip}
\begin{lemma}[Scalar binomial inequality]\label{det:binomial-budget}
For $r\geq2$, set
\begin{align*}
 I_r&=\sum_{p=1}^{\lfloor r/2\rfloor}
       \frac{(r+1)(2+\mathbf1_{p=1}+\mathbf1_{2p=r})}{\binom rp},\\
 J_r&=\sum_{d=1}^{\lfloor(r-1)/2\rfloor}\sum_{k=1}^{r-2d}
       (r-k+1)(2+\mathbf1_{k=r-2d})
       \frac{\binom{k+d}{d}}{\binom rd}.
\end{align*}
Then
\begin{equation}\label{det:scalar-budget}
 I_r+J_r\leq r(r+1).
\end{equation}
\end{lemma}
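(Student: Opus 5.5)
The inequality \eqref{det:scalar-budget} involves only binomial coefficients, so the plan is to bound \(I_r\) and \(J_r\) separately. I would treat a finite initial range of \(r\) by exact rational evaluation and all larger \(r\) by crude but uniform estimates. Nothing about the matrices or the symmetric functions enters here; this lemma is only the scalar comparison that, combined with Newton's inequalities \(e_ke_r/(e_{k+d}e_{r-d})\leq\binom kd\binom rr/\ldots\), will later give \(\mathcal R_r^{[N]}\geq0\).

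\emph{Step 1: the sum \(I_r\).} The \(p=1\) term is \((r+1)(3+\mathbf 1_{r=2})/r\leq 4(r+1)/r\). For \(2\leq p\leq r/2\), I would use \(\binom rp\geq\binom r2\), together with the fact that there are at most \(r/2\) such terms. This gives
\[
 I_r\leq \frac{4(r+1)}r+\frac{(r+1)\cdot 3\cdot(r/2)}{\binom r2}=O(1).
\]
In particular \(I_r\leq 10\) for all \(r\geq2\), which is negligible against \(r(r+1)\).

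\emph{Step 2: the sum \(J_r\).} I would separate the weight \(2+\mathbf 1_{k=r-2d}\) into the bulk weight \(2\) and an endpoint term. For the bulk, the double hockey-stick identity gives
\[
 \sum_{k=0}^{r-2d}(r-k+1)\binom{k+d}{d}\leq\binom{r-d+2}{d+2}.
\]
Hence the bulk contribution is at most
\[
 2\sum_{d\geq1}\frac{\binom{r-d+2}{d+2}}{\binom rd}.
\]
Writing out the factorials, each summand equals
\[
 \frac{(r-d+2)(r-d+1)}{(d+1)(d+2)}\prod_{j=0}^{d-1}\frac{r-d-j}{r-j}.
\]
This is at most \(r^2/((d+1)(d+2))\), and carries the additional decay factor \(\prod_j(1-d/(r-j))\leq(1-d/r)^d\).

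The endpoint term \(k=r-2d\) has weight \((2d+1)\binom{r-d}{d}/\binom rd\leq 2d+1\) for each \(d\). Summed over \(d\leq r/2\), it contributes \(O(r^2)\) with constant \(1/4\) at worst. I would, however, estimate it more sharply: the ratio \(\binom{r-d}{d}/\binom rd\) is exponentially small once \(d\) is a fixed fraction of \(r\), so the endpoint term is in fact \(O(r)\).

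\emph{Step 3: combining, and the main obstacle.} The main obstacle is the constant in front of \(r^2\). Dropping the decay factor in Step 2 gives the bulk bound
\[
 2r^2\sum_{d\geq1}\frac1{(d+1)(d+2)}=r^2,
\]
which is exactly borderline. The proof must therefore genuinely use the decay: for \(d\geq2\), the factor \((1-d/r)^d\) together with the \(-d\) shifts in \((r-d+2)(r-d+1)\) gives an explicit saving \(-c\,r^2\) from the \(d=2,3\) terms alone. Concretely, the \(d=1\) term is \((r+1)(r-1)/6\) and the \(d=2\) term is \((r-2)(r-3)/12\), both exact. I would then show that the tail \(d\geq3\) is at most \((1/10)r^2\) with room to spare, so that
\[
 J_r\leq\Bigl(\tfrac13+\tfrac16+\tfrac15+\epsilon\Bigr)r^2+O(r),
\]
where the first three fractions are twice the \(d=1\), \(d=2\) and tail coefficients, and the total stays safely below \(r^2\). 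With explicit error terms, this proves \(I_r+J_r\leq r(r+1)\) for all \(r\geq r_0\), for some explicit \(r_0\) (I expect \(r_0\) around \(20\)).

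The remaining cases \(2\leq r<r_0\) are a finite exact rational check of \(I_r+J_r\) against \(r(r+1)\). This check is of the same reproducible kind as the Bernstein certificates used earlier in the paper.
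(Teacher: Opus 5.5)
\textbf{There is a genuine gap, in Steps~2 and~3.} Your Step~3 rests on a false claim: the decay factor does not save a constant fraction of $r^2$. For each fixed $d$, the $d$-th inner sum of $J_r$ equals $\frac{2r^2}{(d+1)(d+2)}\bigl(1+O_d(1/r)\bigr)$. Since $\sum_{d\geq1}\frac{2}{(d+1)(d+2)}=1$, it follows that $J_r/r^2\to1$. The factor $\prod_{j}\bigl(1-d/(r-j)\bigr)\approx e^{-d^2/r}$ is close to one for $d\ll\sqrt r$. It only bites where the summands, of size about $r^2/d^2$, already total $O(r^{3/2})$.

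Consequently the tail $d\geq3$ of $J_r$ tends to $r^2/2$, not at most $r^2/5$. Numerically, $J_{20}\approx378\approx0.945\,r^2$, and its part with $d\geq3$ is about $111>r^2/5=80$. Your own Step~1 shows that $I_r$ stays bounded. Hence $I_r+J_r\leq r(r+1)$ is sharp to leading order, with slack $o(r^2)$. No estimate with an unquantified $O(r)$, let alone $O(r^{3/2})$, error can close it. Nor does any $r_0$ exist beyond which $J_r\leq(0.7+\epsilon)r^2+O(r)$ holds.

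The bulk estimate in Step~2 is also wrong. Since $r-k+1=(r-2d-k+1)+2d$, the double hockey-stick identity gives
\[
 \sum_{k=0}^{r-2d}(r-k+1)\binom{k+d}{d}
 =\binom{r-d+2}{d+2}+2d\binom{r-d+1}{d+1},
\]
which exceeds your bound. For $r=3$, $d=1$, the two sides of your inequality are $10$ and $4$. Put $t_{r,d}=\binom{r-d}{d}/\binom rd$. The omitted piece adds about $4(r-d)t_{r,d}$ to the $d$-th term, and summed over $d$ it is of order $r^{3/2}$. This is the same order as the entire saving produced by the decay.

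What is missing is a bound that is exact at leading order and also controls the next order. The paper evaluates the inner sum exactly as $S_{r,d}=t_{r,d}B_{r,d}-2(r+1)/\binom rd$, with an explicit rational $B_{r,d}$. It then proves the telescoping comparison
\[
 S_{r,d}\leq\frac{2r^3}{r-1}
 \left(\frac{t_{r,d}}{d+1}-\frac{t_{r,d+1}}{d+2}\right)
\]
by exhibiting a numerator with nonnegative coefficients in $u=d-1$ and $v=r-2d-1$. Summation over $d$ gives exactly $J_r\leq r^2$. The separate bound $I_r\leq r$ then absorbs the remaining budget.
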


\begin{proof}
Let $S_{r,d}$ denote the inner sum in $J_r$, and put
\[
 t_{r,d}=\frac{\binom{r-d}{d}}{\binom rd},\qquad
 t_{r,d}=0\quad\text{if }2d>r.
\]
For $\ell=r-2d$, the identities
\[
 \sum_{k=1}^{\ell}\binom{k+d}{d}
     =\binom{r-d+1}{d+1}-1,\qquad
 \sum_{k=1}^{\ell}k\binom{k+d}{d}
     =(d+1)\binom{r-d+1}{d+2}
\]
give
\begin{align}
 S_{r,d}={}&\frac2{\binom rd}
  \left[(r+1)\left(\binom{r-d+1}{d+1}-1\right)
       -(d+1)\binom{r-d+1}{d+2}\right]\notag\\
 &+(2d+1)t_{r,d}.
 \label{det:Srd}
\end{align}
Also,
\[
 \frac{t_{r,d+1}}{t_{r,d}}
 =\frac{(r-2d)(r-2d-1)}{(r-d)^2}.
\]
We claim that
\begin{equation}\label{det:telescoping}
 S_{r,d}\leq\frac{2r^3}{r-1}
  \left(\frac{t_{r,d}}{d+1}-\frac{t_{r,d+1}}{d+2}\right).
\end{equation}
To check the sign explicitly, divide \eqref{det:Srd} by $t_{r,d}$.
After omitting the negative term
$-2(r+1)/\binom{r-d}{d}$, the result is
\[
 B_{r,d}=\frac{2(r-d+1)(r+2d^2+3d+2)}{(d+1)(d+2)}+2d+1.
\]
Put $u=d-1\geq0$ and $v=r-2d-1\geq0$.  Over the positive denominator
$(r-1)(d+1)(d+2)(r-d)^2$, the numerator of
\[
 \frac{2r^3}{r-1}\left(\frac1{d+1}
    -\frac{(r-2d)(r-2d-1)}{(d+2)(r-d)^2}\right)-B_{r,d}
\]
is
\begin{align*}
 {}&2uv^4+(4u^3+25u^2+29u)v^3\\
 &+(16u^4+102u^3+206u^2+140u+14)v^2\\
 &+(18u^5+137u^4+385u^3+484u^2+258u+44)v\\
 &+4u^6+38u^5+140u^4+252u^3+230u^2+104u+24.
\end{align*}
Every coefficient is nonnegative.  This proves
\eqref{det:telescoping}.  Summation in $d$, with
$t_{r,1}=(r-1)/r$, gives
\begin{equation}\label{det:J-bound}
 J_r\leq\frac{2r^3}{r-1}\frac{t_{r,1}}2=r^2.
\end{equation}

For the remaining sum, direct calculation gives
\[
 (I_2,I_3,I_4)=\left(6,4,\frac{25}{4}\right),\qquad
 (J_2,J_3,J_4)=\left(0,6,\frac{43}{4}\right).
\]
Thus \eqref{det:scalar-budget} holds for $r\leq4$.
Also, $I_5=24/5\leq5$ and $I_6=329/60\leq6$.  For $r\geq7$,
$\binom rp\geq\binom r2$ when $2\leq p\leq r/2$, so
\[
 I_r\leq\frac{3(r+1)}r
       +\frac{3(r+1)(r-2)}{r(r-1)}\leq r.
\]
Combine this with \eqref{det:J-bound} to finish the proof.
\end{proof}

\begin{proof}[Proof of \cref{det:first-inequality}]
Fix $N\geq r+1$ positive variables.  Newton's inequalities state that
the normalized sequence $\overline e_j=e_j/\binom Nj$ satisfies
\(\overline e_j^{\,2}\geq\overline e_{j-1}\overline e_{j+1}\).
This is its log concavity.  Hence
\begin{align}
 \frac{e_r}{e_pe_{r-p}}
 &\leq\frac{\binom Nr}{\binom Np\binom N{r-p}}
 \leq\frac1{\binom rp},\label{det:newton-one}\\
 \frac{e_ke_r}{e_{k+d}e_{r-d}}
 &\leq\frac{\binom Nk\binom Nr}
           {\binom N{k+d}\binom N{r-d}}
 \leq\frac{\binom{k+d}{d}}{\binom rd}.
 \label{det:newton-two}
\end{align}
For the last inequality, the quotient of the middle expression by the
last expression is
\[
 \prod_{j=1}^d\frac{N-r+j}{N-k-d+j}\leq1,
\]
since $k+d\leq r$.  Thus \cref{det:binomial-budget} and
\eqref{det:R-recurrence} give $\mathcal R_r^{[N]}\geq0$ for every
$N\geq r+1$.

We induct in $r$ with the assertion
$\mathcal C_r^{[N]}\geq0$ for every $N\geq r$.  The base is
$\mathcal C_2=0$, and \eqref{det:C-recurrence} proves the next step.
Taking $N=m$ and dividing by the positive factor in
\eqref{det:cleared-polynomial} proves
\eqref{det:first-homogeneous}.  Equivalently, under $e_1(y)=1$, this is
$2A+Y-V-2e_2\geq0$.
\end{proof}

\subsection{Nonnegativity of the second coefficient}
\label{det:second-section}

It remains to prove $D_1\geq0$.  We obtain a covariance formula
which gives this sign for every $\lambda\geq0$, and hence for
$\lambda=1/2$.  We continue to use the unnormalized variables
$y_i=x_i^{-1}$.
\begin{lemma}\label{det:second-positive}
For the uniform weight \eqref{det:uniform-weight}, $D_1\geq0$ on the
full-rank locus whenever $\lambda\geq0$. If $\lambda>0$, then $D_1>0$.
\end{lemma}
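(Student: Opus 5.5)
Starting from \eqref{det:D1}, I would express every quantity in $D_1$ in the reciprocal variables using \eqref{det:radial-identities}: $K=4\lambda B+4\lambda^2V$, $c=2\lambda B$, and $\delta=4\lambda A+6\lambda B$. Only three further objects are required. The first is $\mathcal TK$. By \eqref{det:T-operator}, this is a first-order derivation along the vector field $\sum_k 2(m-k+1)E_{k-1}\partial_{E_k}$. In the $x$-variables that field is $\sum_i 2\partial_{x_i}$, because $\mathcal T e_k=2\sum_i\partial_{x_i}e_k$ and $\partial e_k/\partial x_i=e_{k-1}(\widehat x_i)$ summed over $i$ gives $(m-k+1)e_{k-1}$. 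In the $y$-variables it therefore becomes $-2\sum_i y_i^2\partial_{y_i}$. The second and third objects are $\Gamma(a,K)/a=\lambda\sum_k\Gamma(E_k,K)/E_k$ and the gradient pairings. These come from \eqref{det:basic-three}, which in $y$-variables reads $\Gamma(\varphi,\psi)=4\sum_i y_i^{-1}\cdot y_i^4\,\partial_{y_i}\varphi\,\partial_{y_i}\psi$, that is, $4\sum_i y_i^3\partial_{y_i}\varphi\partial_{y_i}\psi$ for symmetric functions. The key structural fact is that $\nabla\log a$ is encoded by the vector $w_i=\lambda P_i$ with weights $y_i$: $\Gamma(\log a,\varphi)=-4\lambda\sum_i y_iP_i\,y_i\partial_{y_i}\varphi$, and similarly for the other pairings.

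With these formulas, I would write $D_1$ as a quadratic-and-cubic expression in $\lambda$ whose coefficients are sums over $i$ weighted by $y_i$. The expected shape is a covariance. Introduce the probability-like weights $\pi_i=y_i/\sum_j y_j$ together with random variables built from $P_i$ and the logarithmic derivatives $y_i\partial_{y_i}P_j$. Then $K\delta$ should cancel the mixed terms of $\tfrac12(\mathcal TK+2cK+\Gamma(a,K)/a)$ up to expressions of the form $\sum_i y_i(\ldots)^2-(\sum_i y_i(\ldots))^2/\sum y_i$, together with manifestly positive terms such as $\lambda\sum_{s}\sum_i y_i a_{si}(1-a_{si})$. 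To establish that last positivity, I would use $0<a_{si}<1$, which holds because $e_s(\widehat y)<e_s(y)$, and the Euler identity $\sum_i y_ia_{si}=(s+1)e_{s+1}/e_s$. The derivatives $y_j\partial_{y_j}a_{si}$ are explicit: for $j\ne i$ one gets $y_j\bigl(e_{s-1}(\widehat y_{i,j})/e_s-a_{si}e_{s-1}(\widehat y_j)/e_s\bigr)$. These are exactly the terms that combine into covariances.

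The strategy is therefore the following. First, verify the symbolic identity
\[
 D_1=\lambda\,\Pi_1+\lambda^2\,\Pi_2+\lambda^3\,\Pi_3
\]
with each $\Pi_j$ a nonnegative combination of variances and terms $a_{si}(1-a_{si})$. I would check this identity first in the cases $m=2$ and $m=3$ by hand to fix the pattern. Second, read off the strict positivity for $\lambda>0$ from a single strictly positive term: for instance $\Pi_1\ge c\sum_i y_i a_{1i}(1-a_{1i})>0$, using $m\ge2$ and all $y_i>0$.

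The main obstacle is the bookkeeping of $\mathcal TK$ and $\Gamma(a,K)$. Both involve second derivatives of the $a_{si}$, and the cancellation against $K\delta$ must be organized so that no indefinite cross terms survive. If a clean covariance does not appear directly, my fallback is to apply Cauchy--Schwarz with weights $y_i$ to bound the cross sums $B^2\le(\sum y_i)V$ and $\sum_i y_iP_i\,(\cdot)$. This bound holds separately at each power of $\lambda$, so it suffices for all $\lambda\ge0$.
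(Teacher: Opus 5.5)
Your translation into the reciprocal variables is correct and matches the paper. On invariant functions $\mathcal T=-2\sum_iy_i^2\partial_{y_i}$ and $\Gamma(a,q)/a=-4\lambda\sum_iy_iP_i\,y_i\partial_{y_i}q$, and $D_1=\lambda\Pi_1+\lambda^2\Pi_2+\lambda^3\Pi_3$. But the lemma \emph{is} the sign of these three coefficients for every $m$. There you only announce an expected shape, and it is not the one that occurs.

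Since $K$ and $\delta$ are homogeneous of degree one in $y$, $D_1$ is homogeneous of degree two. So neither a term $\lambda\sum_{s,i}y_ia_{si}(1-a_{si})$ nor a bound $\Pi_1\geq c\sum_iy_ia_{1i}(1-a_{1i})$ with constant $c$ can hold. Nor do second derivatives of the $a_{si}$ appear: $\mathcal T$ and $\Gamma(a,\cdot)$ are first order, and $K=4\lambda B+4\lambda^2V$. What does appear are the cross derivatives $w_{sij}:=y_j\partial_{y_j}a_{si}$, $j\neq i$. After pairing indices, each coefficient is a diagonal sum minus pairwise terms weighted by them; for instance
\[
 \Pi_1=4\sum_{s=1}^{m-1}\Bigl[\sum_iy_i^2a_{si}-\sum_{i<j}w_{sij}(y_i-y_j)^2\Bigr].
\]
For $s\geq2$ these sums are governed by a measure on $s$-element subsets. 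A measure on single indices such as $y_i/e_1$ only reproduces $s=1$, so the $y$-weighted variances you anticipate do not produce them.

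The missing idea is the sign and row-sum structure of the $w_{sij}$. You computed $y_j\partial_{y_j}a_{si}$ correctly, but you must show
\[
 w_{sij}=\frac{y_iy_j}{e_s^2}\Bigl[(e^{ij}_{s-1})^2-e^{ij}_{s-2}e^{ij}_{s}\Bigr]\geq0,
\]
where $e^{ij}_q$ omits $y_i,y_j$. This is Newton's inequality. Equivalently, $w_{sij}=-\operatorname{Cov}_s(\mathbf{1}_{i\in S},\mathbf{1}_{j\in S})$ for the conditional Bernoulli measure $\mathbb P_s(S)\propto\prod_{i\in S}y_i$, under which $a_{si}=\mathbb P_s(i\notin S)$. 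Since $|S|=s$, one also has $y_i\partial_{y_i}a_{si}=-\sum_{j\neq i}w_{sij}$ and $\sum_{j\neq i}w_{sij}=a_{si}(1-a_{si})$.

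These identities do three things:
\begin{itemize}
 \item They give $A+B=L+\sum_{s,i<j}w_{sij}(y_i+y_j)$, with $L=\sum_iy_i\sum_ss\,a_{si}$.
 \item They let the diagonal sum absorb the pairwise one: $\Pi_1=4\sum_s\bigl[\sum_iy_i^2a_{si}^2+2\sum_{i<j}w_{sij}y_iy_j\bigr]$.
 \item They turn $\Pi_2$ into $16LB+12Z_2+16\sum_{s,i<j}w_{sij}\bigl[B(y_i+y_j)-(y_i-y_j)(y_iP_i-y_jP_j)\bigr]$ and $\Pi_3$ into $16LV+8U+16\sum_{s,i<j}w_{sij}\bigl[V(y_i+y_j)-(y_iP_i-y_jP_j)^2\bigr]$, where $Z_2=\sum_iy_i^2P_i^2$ and $U=\sum_iy_i^2P_i^3$.
\end{itemize}
The brackets are nonnegative by elementary pairwise bounds, so everything rests on $w_{sij}\geq0$.

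Your Cauchy--Schwarz fallback $B^2\leq e_1V$ does not address this: the problematic terms carry the coefficients $w_{sij}$, and their sign is the whole issue. Checking $m=2,3$ by hand does not yield the identity for general $m$. Strict positivity then comes from a degree-two term, for example $\Pi_2\geq12Z_2>0$.
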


\begin{proof}
We use a finite probability space to organize derivatives of the
ratios $a_{si}$. All notation introduced in this proof is local to it.
For each $1\leq s\leq m-1$, consider the probability measure on
$s$-element subsets of $\{1,\ldots,m\}$ given by
\[
 \mathbb P_s(S)=\frac{\prod_{i\in S}y_i}{e_s(y)}.
\]
This is the conditional Bernoulli distribution of
\cite[pp.~876--877, equations~(3) and~(6)]{ChenLiu97}, with weights
\(y_i\) and fixed sample size \(s\).
For random variables \(\xi,\eta\) on this finite probability space,
their covariance is
\[
 \operatorname{Cov}_s(\xi,\eta)
 =\mathbb E_s(\xi\eta)-\mathbb E_s(\xi)\mathbb E_s(\eta).
\]
Let \(\xi_i(S)\) be the indicator of \(i\in S\), equal to one
when \(i\) belongs to \(S\) and zero otherwise.
Define
\[
 \pi_{si}=\mathbb P_s(i\in S)=1-a_{si},\qquad
 w_{sij}=\pi_{si}\pi_{sj}-\mathbb P_s(i,j\in S)\quad(i\ne j).
\]
Thus \(w_{sij}=-\operatorname{Cov}_s(\xi_i,\xi_j)\).
If $e_q^{ij}$ denotes the elementary symmetric polynomial with $y_i,y_j$
omitted, then
\begin{equation}\label{det:covariance-positive}
 w_{sij}=\frac{y_iy_j}{e_s^2}
       \left[(e_{s-1}^{ij})^2-e_{s-2}^{ij}e_s^{ij}\right]\geq0.
\end{equation}
This is Newton's inequality, with the convention $e_{-1}^{ij}=0$.
It is also the Rayleigh inequality for \(e_s\), expressing pairwise
negative correlation; see
\cite[Definitions~2.3 and~2.5]{BorceaBrandenLiggett09}.
Since every subset has exactly \(s\) elements,
\(\sum_i\xi_i=s\).  This gives
\begin{equation}\label{det:covariance-rowsum}
 \sum_{j\ne i}w_{sij}=\pi_{si}(1-\pi_{si})=\pi_{si}a_{si}.
\end{equation}
Indeed, the sum of the covariances of the indicator of $i\in S$ with
all the indicators is its covariance with the constant $|S|=s$.

In addition to \eqref{det:A-V-Y}, put
\[
 L=\sum_i y_i\sum_{s=1}^{m-1}s a_{si},\qquad Z_2=\sum_i y_i^2P_i^2,
 \qquad U=\sum_i y_i^2P_i^3,
\]
and define
\begin{align}
 H_s&=\sum_i y_i^2a_{si}
       -\sum_{i<j}w_{sij}(y_i-y_j)^2,
 \label{det:H-positive}\\
 \mathcal Q_1&=\sum_{s,i<j}w_{sij}
       \left[B(y_i+y_j)-(y_i-y_j)(y_iP_i-y_jP_j)\right],
 \label{det:Q1}\\
 \mathcal Q_2&=\sum_{s,i<j}w_{sij}
       \left[V(y_i+y_j)-(y_iP_i-y_jP_j)^2\right].
 \label{det:Q2}
\end{align}

We will derive the decomposition
\begin{equation}\label{det:D1-positive-formula}
 D_1=\lambda R_1+\lambda^2R_2+\lambda^3R_3,
\end{equation}
where
\begin{align}
 R_1&=4\sum_sH_s,\label{det:R1}\\
 R_2&=16LB+12Z_2+16\mathcal Q_1,\label{det:R2}\\
 R_3&=16LV+8U+16\mathcal Q_2.\label{det:R3}
\end{align}
We then show that its three coefficients are nonnegative.
Write
$\mathscr D_j=y_j\partial_{y_j}$.  Differentiating the probabilities gives
\begin{equation}\label{det:probability-derivative}
 \mathscr D_j a_{si}
 =\begin{cases}
   w_{sij},&i\ne j,\\
   -\sum_{\ell\ne i}w_{si\ell},&i=j.
  \end{cases}
\end{equation}
Indeed, direct differentiation gives
\(\mathscr D_j\mathbb P_s(S)=(\xi_j(S)-\pi_{sj})\mathbb P_s(S)\).
Thus \(-\mathscr D_j a_{si}=\operatorname{Cov}_s(\xi_i,\xi_j)\),
the usual covariance identity in the logarithmic parameters
\(\log y_j\); cf.~\cite[Section~3.2, p.~883]{ChenLiu97}.
Consequently,
\begin{align}
 \mathscr D_jB
 &=y_jP_j+\sum_s\sum_{i\ne j}w_{sij}(y_i-y_j),
 \label{det:DB}\\
 \mathscr D_jV
 &=y_jP_j^2+2\sum_s\sum_{i\ne j}w_{sij}(y_iP_i-y_jP_j).
 \label{det:DV}
\end{align}
For any smooth invariant function $q$ written in the $y$ variables,
the change of variables gives
$\partial_{x_j}q=-y_j\mathscr D_jq$.
The product multiplier \eqref{det:uniform-weight} satisfies
$a^{-1}\partial_{x_j}a=\lambda y_jP_j$, by the complement identity
in the proof of \cref{det:first-reduction}.
Thus \eqref{det:T-operator} and the gradient pairing
\eqref{det:basic-three} give
\begin{equation}\label{det:radial-operators}
 \mathcal Tq=-2\sum_j y_j\mathscr D_jq,\qquad
 \frac{\Gamma(a,q)}a=-4\lambda\sum_j y_jP_j\mathscr D_jq.
\end{equation}
Insert \eqref{det:radial-identities} and
\eqref{det:radial-operators} in \eqref{det:D1}.  The coefficients of
$\lambda,\lambda^2,\lambda^3$ are, respectively,
\begin{align*}
 R_1&=4\sum_jy_j\mathscr D_jB,\\
 R_2&=16B(A+B)+4\sum_jy_j\mathscr D_jV
                       +8\sum_jy_jP_j\mathscr D_jB,\\
 R_3&=16V(A+B)+8\sum_jy_jP_j\mathscr D_jV.
\end{align*}
Pairing the terms with indices $i,j$ in
\eqref{det:DB}--\eqref{det:DV} gives
\begin{align*}
 \sum_jy_j\mathscr D_jB&=\sum_sH_s,\\
 \sum_jy_j\mathscr D_jV
 &=Z_2-2\sum_{s,i<j}w_{sij}(y_i-y_j)(y_iP_i-y_jP_j),\\
 \sum_jy_jP_j\mathscr D_jB
 &=Z_2-\sum_{s,i<j}w_{sij}(y_i-y_j)(y_iP_i-y_jP_j),\\
 \sum_jy_jP_j\mathscr D_jV
 &=U-2\sum_{s,i<j}w_{sij}(y_iP_i-y_jP_j)^2.
\end{align*}
The row-sum identity \eqref{det:covariance-rowsum} also gives
\begin{equation}\label{det:AplusB}
 A+B=L+\sum_{s,i<j}w_{sij}(y_i+y_j).
\end{equation}
These identities yield \eqref{det:R1}--\eqref{det:R3}.

To prove the signs, use \eqref{det:covariance-rowsum} to obtain
\[
 \sum_{i<j}w_{sij}(y_i-y_j)^2
 \leq\sum_i y_i^2\pi_{si}a_{si}
 \leq\sum_i y_i^2a_{si}.
\]
Thus $H_s\geq0$.  Also,
\[
 |(y_i-y_j)(y_iP_i-y_jP_j)|
 \leq(y_i+y_j)(y_iP_i+y_jP_j)
 \leq B(y_i+y_j),
\]
so $\mathcal Q_1\geq0$.  Cauchy--Schwarz gives
\[
 (y_iP_i-y_jP_j)^2
 \leq(y_i+y_j)(y_iP_i^2+y_jP_j^2)
 \leq V(y_i+y_j),
\]
which proves $\mathcal Q_2\geq0$.  The other terms in
\eqref{det:R2}--\eqref{det:R3} are nonnegative by definition.
Finally, $P_i>0$ and $y_i>0$ imply $Z_2>0$.
Thus $R_2\geq12Z_2$ and $D_1\geq12\lambda^2Z_2>0$ when $\lambda>0$.
\end{proof}

\begin{remark}[Low orders and determinant phase]\label{det:low-orders-phase}
For \(m=2\), the cone \(C_2\) is orthogonally congruent to the
classical Simons cone in \(\R^4\oplus\R^4\).
Indeed, for \(Z=\left(\begin{smallmatrix}z_1&z_2\\z_3&z_4\end{smallmatrix}\right)\),
the unitary change of coordinates
\[
 w=\frac1{\sqrt2}\bigl(z_1+z_4,\ i(z_1-z_4),\ i(z_2+z_3),\ z_2-z_3\bigr)
\]
gives \(\sum_{j=1}^4w_j^2=2\det Z\).
Writing \(w=u+iv\), with \(u,v\in\R^4\), we obtain
\(2F_2=|u|^2-|v|^2\). Thus this order recovers the standard
quadratic cone; see \cite[Example~5.3]{HoppeTkachev19} for that model.

The first two nontrivial potentials are
\[
 G_2=F_2E_1^{1/2},\qquad G_3=F_3(E_1E_2)^{1/2}.
\]
The first coefficient vanishes identically for $m=2$.  For $m=3$,
\[
 \mathcal C_3=6e_1e_2e_3(e_1^2-2e_2),
\]
so, under $e_1(y)=1$,
\[
 2A+Y-V-2e_2
 =6e_3\frac{\sum_i y_i^2}{e_2}>0.
\]

For every real $\theta$, the isometry $Z\mapsto e^{-i\theta/m}Z$
preserves all the $E_k$ and sends $F$ to
$\operatorname{Re}(e^{-i\theta}\det_{\mathbb C}Z)$.
Consequently the phase cone
$\{\operatorname{Re}(e^{-i\theta}\det_{\mathbb C}Z)=0\}$ has the
corresponding subcalibration and is locally area minimizing for every
$m\geq1$, with the same strict minimality and stability conclusions
when $m\geq2$.  In particular, the real and imaginary determinant zero cones
are orthogonally congruent.
\end{remark}

\appendix
\section{Power multipliers and radial perturbations}
\label{app:power-identities}

This appendix supplies the explicit formulas for the power family used
in the quotient constructions.  It also proves the radial perturbation
that gives a distance bound from a subcalibration of a regular cone.
All identities involving negative or nonintegral powers are used on
\(R,S>0\).  In the power calculations, \(F\) is homogeneous of
degree \(e\geq1\) and \(\cL(F)=F\lambda_F\), with \(\lambda_F\)
smooth on the calculation domain.  It is a polynomial whenever the
divisibility hypothesis of \cref{thm:exact-multiplier} holds.

\subsection{Notation and power factors}

The family
\begin{equation}\label{eq:power-family}
 G_{\alpha,\gamma}=R^\alpha S^\gamma F,
 \qquad R=|x|^2,\quad S=|\nabla F|^2,
\end{equation}
provides a concrete starting point for the multiplier equation.
The exponents are real.
Write \(e=\deg F\), and use the contractions
\[
 U=\Gamma(F,S),\quad V=\Gamma(S,S),\quad
 D=\Delta F,\quad B=\Delta S.
\]
Here \(U,V,D,B\) are ambient functions; in a finite Laplacian closure
they are functions of the generators.  Define
\begin{align}
 \mathcal V_\gamma
 &=S+\frac{2\gamma FU}{S}+\frac{\gamma^2F^2V}{S^2},
 \label{eq:gradient-power-V}\\
 \mathcal D_\gamma
 &=D+\frac{2\gamma U}{S}
       +\frac{\gamma FB}{S}
       +\frac{\gamma(\gamma-1)FV}{S^2}.
 \label{eq:gradient-power-D}
\end{align}
Let \(\mathcal Q_\gamma\) be the explicit expression in
\eqref{eq:gradient-power-Q}.  It satisfies
\(\cL(S^\gamma F)=S^{3\gamma}F\mathcal Q_\gamma\).

\subsection{Gradient and radial identities}

\begin{proposition}[The gradient-power defect]
\label{prop:gradient-power-identity}
For \(H=S^\gamma F\),
\[
 |\nabla H|^2=S^{2\gamma}\mathcal V_\gamma,\qquad
 \Delta H=S^\gamma\mathcal D_\gamma,\qquad
 \cL(H)=S^{3\gamma}F\mathcal Q_\gamma,
\]
where \(\mathcal V_\gamma,\mathcal D_\gamma\) are given in
\eqref{eq:gradient-power-V}--\eqref{eq:gradient-power-D}, and
\begin{align}
 \mathcal Q_\gamma={}&\lambda_F+\gamma B
 +\frac{2\gamma UD-\gamma\Gamma(F,U)
                 -(\gamma^2+\tfrac32\gamma)V}{S}
 +\frac{\gamma(\gamma+1)U^2}{S^2}\notag\\
 &+\frac{\gamma^2F}{S^2}
       \left(2UB+VD-\Gamma(S,U)-\tfrac12\Gamma(F,V)\right)\notag\\
 &+\frac{\gamma^3F^2}{S^3}
       \left(VB-\tfrac12\Gamma(S,V)\right).
 \label{eq:gradient-power-Q}
\end{align}
At every regular zero of \(F\), one has \(S>0\). Under the assumed
smoothness of \(\lambda_F\), the displayed formula is therefore
well-defined there; no division by \(F\) remains.
\end{proposition}

\begin{proof}
The gradient and Laplacian follow from the product rule.
For the defect, insert \(a=S^\gamma\) in
\cref{thm:exact-multiplier}.  The derivative identities are
\[
 \nabla a=\gamma S^{\gamma-1}\nabla S,\qquad
 \Hess a=\gamma S^{\gamma-1}\Hess S
       +\gamma(\gamma-1)S^{\gamma-2}\,dS\otimes dS.
\]
Differentiating \(S=|\nabla F|^2\), \(U=\Gamma(F,S)\), and
\(V=|\nabla S|^2\) gives
\begin{align*}
 \Hess F(\nabla F,\nabla S)&=V/2,\\
 \Hess S(\nabla F,\nabla F)&=\Gamma(F,U)-V/2,\\
 \Hess F(\nabla S,\nabla S)&=\Gamma(S,U)-\Gamma(F,V)/2,\\
 \Hess S(\nabla F,\nabla S)&=\Gamma(F,V)/2,\\
 \Hess S(\nabla S,\nabla S)&=\Gamma(S,V)/2.
\end{align*}
Substitution and collection of the powers of \(F\) yield
\eqref{eq:gradient-power-Q}.
\end{proof}

\begin{proposition}[The radial master identity]
\label{prop:radial-master-identity}
Let \(H\) be smooth and homogeneous of degree \(m\), and set
\(S_H=|\nabla H|^2\), \(D_H=\Delta H\), and
\(c_\alpha=4\alpha(m+\alpha)\).  Then
\begin{align}
 |\nabla(R^\alpha H)|^2
 &=R^{2\alpha}\left(S_H+\frac{c_\alpha H^2}{R}\right),
 \label{eq:radial-master-gradient}\\
 \cL(R^\alpha H)
 &=R^{3\alpha}\Biggl[\cL(H)
  +\frac{c_\alpha H^2D_H}{R}
  +\frac{\bigl(2\alpha(N-1)-c_\alpha\bigr)HS_H}{R}\notag\\
 &\hspace{31mm}
  +\frac{c_\alpha\bigl(m+2\alpha(N-1)\bigr)H^3}{R^2}
  \Biggr].
 \label{eq:radial-master-L}
\end{align}
\end{proposition}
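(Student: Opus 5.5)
The plan is a direct product-rule computation with \(G=R^\alpha H\), organized as in \cref{thm:exact-multiplier} but with the multiplier \(a=R^\alpha\). Only the three contractions of \(R\) enter: \(\nabla R=2x\), \(\Gamma(R,R)=4R\), \(\Delta R=2N\), and, by Euler's identity, \(\Gamma(R,H)=2mH\). The gradient identity comes first. Write \(\nabla(R^\alpha H)=R^\alpha\nabla H+\alpha R^{\alpha-1}H\nabla R\) and square it. The cross term is \(2\alpha R^{2\alpha-1}H\cdot 2mH\) and the last term is \(\alpha^2R^{2\alpha-2}H^2\cdot 4R\). Their sum is \(R^{2\alpha-1}H^2(4\alpha m+4\alpha^2)=R^{2\alpha-1}c_\alpha H^2\), which gives \eqref{eq:radial-master-gradient}.

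Next comes the Laplacian. From \(\Delta(R^\alpha)=\alpha R^{\alpha-1}(2N)+\alpha(\alpha-1)R^{\alpha-2}4R=2\alpha(N+2\alpha-2)R^{\alpha-1}\) and the product rule,
\[
 \Delta(R^\alpha H)=R^\alpha D_H+R^{\alpha-1}H\bigl(2\alpha(N+2\alpha-2)+4\alpha m\bigr).
\]
Set \(P=S_H+c_\alpha H^2/R\), so that \(|\nabla G|^2=R^{2\alpha}P\). Then
\[
 \tfrac12\Gamma(G,R^{2\alpha}P)=\tfrac12R^{2\alpha}\Gamma(G,P)+\alpha R^{2\alpha-1}P\,\Gamma(G,R).
\]
Homogeneity makes this easy: \(G\) has degree \(m+2\alpha\), so \(\Gamma(G,R)=2(m+2\alpha)G\). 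Further,
\[
 \Gamma(G,P)=R^\alpha\Gamma(H,P)+\alpha R^{\alpha-1}H\,\Gamma(R,P),
\]
and \(P\) has degree \(2m-2\), so \(\Gamma(R,P)=2(2m-2)P\). The remaining piece is \(\Gamma(H,P)=\Gamma(H,S_H)+c_\alpha\Gamma(H,H^2/R)\), where
\[
 \Gamma(H,H^2/R)=2HS_H/R-2mH^3/R^2.
\]
The term \(\tfrac12\Gamma(H,S_H)\) combines with \(S_HD_H\) to reproduce \(\cL(H)\).

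Last, I collect everything in
\[
 \cL(G)=R^{2\alpha}P\,\Delta G-\tfrac12\Gamma(G,|\nabla G|^2)
\]
and factor out \(R^{3\alpha}\). I then sort the terms into four monomials: \(\cL(H)\), \(H^2D_H/R\), \(HS_H/R\), and \(H^3/R^2\). The \(H^2D_H\) coefficient is immediately \(c_\alpha\). The hardest step is the bookkeeping of the \(HS_H/R\) and \(H^3/R^2\) coefficients. Several contributions must cancel: the \(\alpha\)-dependent pieces \(2\alpha(N+2\alpha-2)+4\alpha m\) from \(\Delta G\), the term \(-\alpha(m+2\alpha)\cdot 2P\), the term \(-\alpha(2m-2)P\), and the \(c_\alpha\) pieces from \(\Gamma(H,H^2/R)\). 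After cancellation these should leave \(2\alpha(N-1)-c_\alpha\) and \(c_\alpha(m+2\alpha(N-1))\), respectively. I would verify this by expanding each coefficient as a polynomial in \(\alpha,m,N\). As a sanity check I would use \(H=F\), \(m=2\), \(\alpha=1\) on the Simons-cone example, which gives \(\cL(RF)=192RF^3\).
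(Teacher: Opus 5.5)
Your proposal is correct and is essentially the paper's proof: the same product-rule computation, using the Euler identities \(\Gamma(R,H)=2mH\), \(\Gamma(R,G)=2(m+2\alpha)G\), \(\Gamma(R,P)=2(2m-2)P\), and the same expansion of \(\Gamma(H,H^2/R)\). The paper simply records the intermediate step your bookkeeping produces: the \(HP/R\) coefficients collapse to \(2\alpha(N-1)\), so \(R^{-3\alpha}\cL(R^\alpha H)=PD_H+2\alpha(N-1)HP/R-\tfrac12\Gamma(H,P)\), and only then is \(P=S_H+c_\alpha H^2/R\) substituted.
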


\begin{proof}
Euler's identity gives \(\Gamma(R,H)=2mH\).  Hence
\[
 \Delta(R^\alpha H)
 =R^\alpha\left[D_H+
    \frac{2\alpha(2m+N+2\alpha-2)H}{R}\right],
\]
and the product rule gives \eqref{eq:radial-master-gradient}.
Writing \(W=S_H+c_\alpha H^2/R\), of degree \(2m-2\), yields
\[
 R^{-3\alpha}\cL(R^\alpha H)
 =WD_H+\frac{2\alpha(N-1)HW}{R}-\tfrac12\Gamma(H,W).
\]
Use
\(\Gamma(H,H^2/R)=2HS_H/R-2mH^3/R^2\)
to expand the last term.  This proves \eqref{eq:radial-master-L}.
\end{proof}

\subsection{The combined multiplier}

\begin{proposition}[The combined power multiplier]
\label{prop:combined-power-multiplier}
Suppose \(\cL(F)=F\lambda_F\).  Set
\[
 m=e+2\gamma(e-1),\quad \ell=m+2\alpha,\quad
 c_\alpha=4\alpha(m+\alpha),\quad
 b_\alpha=2\alpha(N-1)-c_\alpha.
\]
For \(G=G_{\alpha,\gamma}\), one has
\begin{align}
 |\nabla G|^2
 &=R^{2\alpha}S^{2\gamma}\widehat V_{\alpha,\gamma},
 &\widehat V_{\alpha,\gamma}
 &=\mathcal V_\gamma+\frac{c_\alpha F^2}{R},
 \label{eq:combined-power-V}\\
 \cL(G)
 &=R^{3\alpha}S^{3\gamma}F\widehat Q_{\alpha,\gamma},
 \label{eq:combined-power-L}
\end{align}
where
\begin{equation}\label{eq:combined-power-Q}
 \widehat Q_{\alpha,\gamma}
 =\mathcal Q_\gamma+\frac{c_\alpha F\mathcal D_\gamma}{R}
 +\frac{b_\alpha\mathcal V_\gamma}{R}
 +\frac{c_\alpha\bigl(m+2\alpha(N-1)\bigr)F^2}{R^2}.
\end{equation}
Thus \(\widehat V_{\alpha,\gamma}>0\) and
\(\widehat Q_{\alpha,\gamma}\geq0\) give the normalized-gradient
sign on the region where they hold.  If that region is one phase,
they give its one-sided sign.  Comparison across the omitted set
requires the hypotheses on the exceptional set and boundary flux in
\cref{thm:paired-subcalibration}.
\end{proposition}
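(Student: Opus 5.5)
The plan is to obtain the combined identity by composing \cref{prop:gradient-power-identity} with the radial identity \cref{prop:radial-master-identity}. First set \(H=S^\gamma F\). Since \(S\) is homogeneous of degree \(2e-2\), the function \(H\) is homogeneous of degree \(m=e+2\gamma(e-1)\). This is legitimate on the open set \(\{R>0,S>0\}\), where all real powers are smooth and Euler's identity holds.

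\cref{prop:gradient-power-identity} supplies the three quantities needed for \(H\):
\[
 S_H=|\nabla H|^2=S^{2\gamma}\mathcal V_\gamma,\qquad
 D_H=\Delta H=S^\gamma\mathcal D_\gamma,\qquad
 \cL(H)=S^{3\gamma}F\mathcal Q_\gamma .
\]
Here \(\mathcal D_\gamma\) is as in \eqref{eq:gradient-power-D}. I note in passing that the proposition as displayed writes \(S^\gamma\mathcal D_\gamma\), while the product rule gives \(\Delta H=S^\gamma\mathcal D_\gamma\) with the \(F\)-factors already inside \(\mathcal D_\gamma\) through \(D\) and \(FB\). I would recheck that convention against \eqref{eq:combined-power-Q}, which contains \(F\mathcal D_\gamma\).

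Next apply \cref{prop:radial-master-identity} to \(R^\alpha H=G\), with \(c_\alpha=4\alpha(m+\alpha)\). The gradient formula \eqref{eq:radial-master-gradient} immediately gives
\[
 R^{2\alpha}\bigl(S^{2\gamma}\mathcal V_\gamma+c_\alpha S^{2\gamma}F^2/R\bigr),
\]
which is \eqref{eq:combined-power-V}. For \eqref{eq:radial-master-L}, substitute term by term. The first term is \(\cL(H)=S^{3\gamma}F\mathcal Q_\gamma\). The second is \(c_\alpha H^2D_H/R=c_\alpha S^{3\gamma}F^2\mathcal D_\gamma/R\), under the convention that \(D_H=S^\gamma\mathcal D_\gamma\). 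The third is \(b_\alpha HS_H/R=b_\alpha S^{3\gamma}F\mathcal V_\gamma/R\). The last is \(c_\alpha(m+2\alpha(N-1))S^{3\gamma}F^3/R^2\). Factoring out \(R^{3\alpha}S^{3\gamma}F\) produces exactly \eqref{eq:combined-power-Q}, and in particular the term \(c_\alpha F\mathcal D_\gamma/R\).

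The main point to verify is this bookkeeping of the \(\mathcal D_\gamma\) convention, i.e.\ that \(\Delta(S^\gamma F)\) equals \(S^\gamma\) times \eqref{eq:gradient-power-D} with \(D=\Delta F\) included. The remaining checks are that \(\widehat V_{\alpha,\gamma}>0\) gives \(\nabla G\ne0\), and that \(\operatorname{sgn}(G)=\operatorname{sgn}(F)\). Together these show that \(G\cL(G)=R^{4\alpha}S^{4\gamma}F^2\widehat Q_{\alpha,\gamma}\) has the sign of \(\widehat Q_{\alpha,\gamma}\).

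By \eqref{eq:div-normalized-gradient}, this sign determines \(\diver X_G\) on each phase. The one-sided and global comparison statements then reduce to \cref{thm:paired-subcalibration} once its exceptional-set and boundary-flux hypotheses are supplied; this final part is a remark rather than something proved from the identity.
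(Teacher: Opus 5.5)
Your proposal is correct and follows the paper's proof: apply \cref{prop:radial-master-identity} to the degree-$m$ function $H=S^\gamma F$, substitute the formulas of \cref{prop:gradient-power-identity}, and note that $G$ and $F$ have the same sign. The convention you flag is consistent: the product rule gives $\Delta(S^\gamma F)=S^\gamma\Delta F+F\Delta(S^\gamma)+2\Gamma(S^\gamma,F)=S^\gamma\mathcal D_\gamma$, with $\mathcal D_\gamma$ exactly as in \eqref{eq:gradient-power-D}, so the term $c_\alpha H^2D_H/R$ contributes precisely $c_\alpha F\mathcal D_\gamma/R$.
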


\begin{proof}
The product rule gives
\(|\nabla(S^\gamma F)|^2=S^{2\gamma}\mathcal V_\gamma\) and
\(\Delta(S^\gamma F)=S^\gamma\mathcal D_\gamma\).
Apply \cref{prop:radial-master-identity} to the homogeneous function
\(H=S^\gamma F\) of degree \(m\), and substitute
\eqref{eq:gradient-power-Q}.  This yields
\eqref{eq:combined-power-V}--\eqref{eq:combined-power-Q}.
Since \(R^\alpha S^\gamma>0\), the functions \(G\) and \(F\)
have the same sign.  Hence \(G\cL(G)\) has the sign of
\(F^2\widehat Q_{\alpha,\gamma}\).
\end{proof}

\subsection{Radial perturbations giving strict divergence estimates}

\begin{proposition}[Moving the degree toward \(n/2\)]
\label{prop:radial-strictification}
Let \(G\in C^\infty(\R^{n+1}\setminus\{0\})\) be homogeneous of degree
\(\ell\neq n/2\), with \(\nabla G\neq0\) throughout the punctured space.
Suppose \(C=\{G=0\}\) has a smooth compact minimal link,
\(G\) takes both signs, and
\[
 \operatorname{sgn}(G)\diver X_G>0\quad\text{off }C,
 \qquad\beta_G\geq0\quad\text{on }C\cap\Sph^n,
\]
where \(\beta_G\) is defined in \eqref{eq:boundary-Jacobi-potential}.
Put \(\sigma=\operatorname{sgn}(\ell-n/2)\).  For all sufficiently small
\(\eps>0\), the potential
\begin{equation}\label{eq:strictifying-potential}
 G_\eps=r^{-\sigma\eps}G,\qquad \ell_\eps=\ell-\sigma\eps,
\end{equation}
has nonvanishing gradient on the punctured space and generates a global
subcalibration satisfying
\begin{equation}\label{eq:strictified-distance-bound}
 \operatorname{sgn}(G_\eps)\diver X_{G_\eps}
 \geq c_\eps\frac{\dist(x,C)}{|x|^2}
 \qquad(x\neq0)
\end{equation}
for a constant \(c_\eps>0\).  Writing \(\beta_\eps=\beta_{G_\eps}\), its boundary coefficient is
\begin{equation}\label{eq:strictified-boundary}
 \beta_\eps=\beta_G+\left(\ell-\frac n2\right)^2
                 -\left(\ell_\eps-\frac n2\right)^2
       =\beta_G+2\left|\ell-\frac n2\right|\eps-\eps^2.
\end{equation}
The new potential is generally not polynomial. If \(G\) is polynomial
and \(\eps\) is positive rational, then \(G_\eps\) is semialgebraic
on \(r>0\): the radial factor is a rational power of the positive
polynomial \(R\), hence is described by a polynomial equation and
the choice of its positive root.
\end{proposition}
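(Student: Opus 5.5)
The argument runs through the spherical equation of \cref{prop:spherical-quotient-potential}, applied to the single profile \(h_\eps=h\) with the degree shifted. Write \(G=r^\ell h(\theta)\) and \(G_\eps=r^{\ell_\eps}h(\theta)\); the angular profile is unchanged. Put \(W_\ell=\ell^2h^2+|\nabla_{\Sph}h|^2\). Then
\[
 \mathfrak N_{\ell_\eps}[h]
 =\mathfrak N_\ell[h]
 +(\ell_\eps-\ell)\,\Phi_\ell[h]+O(\eps^2)\,h\,(\cdots),
\]
where the right-hand side is an explicit polynomial in \(\ell_\eps\). Differentiating \eqref{eq:general-spherical-N} in \(\ell\) shows that every \(\ell\)-dependent term carries a factor \(h\): both \(\ell^2h^2\) in \(W\) and \((N-1)\ell h\) contain \(h\). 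Hence \(\mathfrak N_{\ell_\eps}[h]-\mathfrak N_\ell[h]=h\,\Psi_\eps\), with \(\Psi_\eps=O(\eps)\) uniformly on the compact sphere. Gradient nonvanishing is immediate: off \(C\), Euler gives \(\langle x,\nabla G_\eps\rangle=\ell_\eps G_\eps\neq0\) for small \(\eps\) (note \(\ell>0\) is not needed, only \(\ell_\eps\neq0\), which holds for small \(\eps\) unless \(\ell=0\); I would treat \(\ell\) near \(0\) separately or note that \(\ell\neq0\) follows from \(G\) taking both signs with nonzero gradient). On \(C\), \(\nabla G_\eps=r^{-\sigma\eps}\nabla G\neq0\). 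So \(W_{\ell_\eps}>0\) everywhere on the sphere.

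\textbf{Boundary coefficient.} On the cone, \(w_\eps=|\nabla G_\eps|^{-1}=r^{\sigma\eps}w\) with \(w=r^{1-\ell}\phi\). Computing \(J_C\) of \(r^{1-\ell_\eps}\phi\) by radial–angular separation gives
\[
 r^2\frac{-J_Cw}{w}=\beta_G=-\frac{\Delta_\Sigma\phi+|A|^2\phi}{\phi}-(1-\ell)(n-1-\ell).
\]
Replacing \(\ell\) by \(\ell_\eps\) changes only the radial term, and \((1-\ell)(n-1-\ell)=(\ell-n/2)^2-(n/2-1)^2\). This yields \eqref{eq:strictified-boundary}, and \(2|\ell-n/2|\eps-\eps^2>0\) for \(0<\eps<2|\ell-n/2|\). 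Hence \(\beta_\eps\geq c>0\) on the compact link.

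\textbf{Global sign and distance bound.} This is the main obstacle: one must combine the near-cone and far-from-cone regimes uniformly. Near the link, use \eqref{eq:boundary-potential-algebraic}: \(\partial_\nu\diver X_{G_\eps}=V_{G_\eps}=r^{-2}\beta_\eps\geq c\,r^{-2}\) on \(C\). Since \(\diver X_{G_\eps}\) vanishes on \(C\) (minimality) and is \(C^1\) in a tubular neighbourhood of the compact smooth link, a Taylor expansion in the signed distance \(s\) gives
\[
 \operatorname{sgn}(s)\,r\,\diver X_{G_\eps}\geq \tfrac{c}{2}|s|/r
\]
on a tube of angular width \(\delta\). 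Uniformity in \(\eps\) follows because the second derivatives are bounded uniformly for \(\eps\in[0,\eps_0]\), although \(c\) itself depends on \(\eps\). The order of choices therefore has to be set carefully, and I would arrange it as follows. For \(\eps=0\), the hypothesis \(\operatorname{sgn}(G)\diver X_G>0\) off \(C\) with \(\beta_G\geq0\) gives, on the compact set where the angular distance to the link is at least \(\delta\), a positive lower bound \(m_\delta\) for \(\operatorname{sgn}(h)\mathfrak N_\ell[h]/W^{3/2}\). By continuity in \(\eps\), this persists for \(\eps<\eps_1(\delta)\).

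Inside the tube, write \(\operatorname{sgn}(h)\mathfrak N_{\ell_\eps}[h]=|h|\bigl(\mathfrak N_\ell[h]/h+\Psi_\eps\bigr)\). Since \(\mathfrak N_\ell[h]/h\geq0\) and its boundary value is proportional to \(\beta_G\), the term \(\Psi_\eps\) restricted to the link equals the positive increment \((2|\ell-n/2|\eps-\eps^2)\) times a positive factor. By continuity, \(\Psi_\eps\geq c'\eps\) on a tube whose width \(\delta\) is independent of \(\eps\) (the derivative of \(\Psi_\eps/\eps\) is bounded). Fix that \(\delta\) first, then take \(\eps<\eps_1(\delta)\). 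Combining the two regimes, and using \(|h|\asymp\dist/r\) near the link together with homogeneity of degree \(-1\), yields \eqref{eq:strictified-distance-bound}.

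The global subcalibration then follows from \cref{cor:gradient-subcalibration} with \(Z=\{0\}\). The semialgebraicity remark is immediate from \(r^{-\sigma\eps}=R^{-\sigma\eps/2}\).
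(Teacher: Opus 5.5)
Your proof is correct and follows the paper's argument. Your splitting \(\mathfrak N_{\ell_\eps}[h]=\mathfrak N_\ell[h]+h\Psi_\eps\), with \(\mathfrak N_\ell[h]/h\geq0\) and \(\Psi_\eps/\eps\) bounded below on a fixed tube, is the same mechanism as the paper's bound \(\partial_\eps(f_\eps/s)\geq|\ell-n/2|\) integrated from \(h_0\geq0\); the order of choices is also the same (fix the tube first, then take \(\eps\) small for the outer compact region). The loose end \(\ell\neq0\) is settled at a maximum of \(G\) on the unit sphere, where \(\nabla G=\ell Gx\); alternatively, as in the paper, note that \(\nabla G_\eps=r^{-\sigma\eps}(\nabla G-\sigma\eps Gx/r^2)\) is a small perturbation of \(\nabla G\) on the compact sphere.
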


\begin{proof}
Nonvanishing of \(\nabla G_\eps\) for small \(\eps\) follows from smooth
parameter dependence and compactness of the unit sphere.
On \(C\), its reciprocal gradient length is
\(w_\eps=r^{\sigma\eps}w=r^{1-\ell_\eps}\phi\).
For \(w=r^\alpha\phi\), separation of variables gives
\[
 -r^2\frac{J_Cw}{w}
 =\frac{(-\Delta_\Sigma-|A_\Sigma|^2)\phi}{\phi}
                  -\alpha(\alpha+n-2).
\]
Use \(\alpha=1-\ell\) and \(\alpha_\eps=1-\ell_\eps\) to obtain
\eqref{eq:strictified-boundary}.

For the global sign, let \(s\) be signed spherical distance to the link
in a fixed tubular neighborhood, positive on \(G>0\).  On \(r=1\), put
\(f_\eps=\diver X_{G_\eps}\).
Minimality gives \(f_\eps=0\) on the link.  The quotient
\(h_\eps=f_\eps/s\) extends smoothly across it, with
\(h_\eps|_\Sigma=\beta_\eps\).
By \eqref{eq:strictified-boundary},
\(\partial_\eps h_\eps|_{\eps=0,\Sigma}=2|\ell-n/2|>0\).
Compactness and continuity give a smaller fixed tube and a parameter
interval on which \(\partial_\eps h_\eps\geq|\ell-n/2|\).
Since \(h_0\geq0\), integration in the parameter yields
\(h_\eps\geq\eps|\ell-n/2|\) throughout that tube.
On the complement of a still smaller tube the original strict sign has a
positive minimum, which persists for small \(\eps\).
Spherical normal distance is comparable to \(\dist(\theta,C)\) near the
link; away from it the latter is bounded.  This proves the uniform
unit-sphere estimate, and homogeneity gives
\eqref{eq:strictified-distance-bound}.
\end{proof}

Compactness of the smooth link and nonvanishing of the gradient are
used to make both estimates uniform.  For a singular link,
\eqref{eq:strictified-boundary} still holds on the regular part.
Preserving the global sign then requires estimates near each singular
stratum in addition to those needed for perimeter comparison.

\section{Exact positivity certificates}
\label{app:exact-certificates}

For a polynomial \(P\) of degree at most \(m\) on \([a,b]\), put
\(z=a+(b-a)t\) and write \(P(a+(b-a)t)=\sum_{j=0}^m c_jt^j\).
Its Bernstein expansion is
\begin{equation}\label{eq:Bernstein-expansion}
 P(a+(b-a)t)=\sum_{i=0}^m b_i\binom mi t^i(1-t)^{m-i},
\end{equation}
where
\begin{equation}\label{eq:Bernstein-coefficients}
 b_i=\sum_{j=0}^i c_j\frac{\binom ij}{\binom mj}.
\end{equation}
Indeed,
\[
 \sum_{i=j}^m\frac{\binom ij}{\binom mj}\binom mi t^i(1-t)^{m-i}
 =t^j\sum_{i=j}^m\binom{m-j}{i-j}t^{i-j}(1-t)^{m-i}=t^j.
\]
The Bernstein basis functions are nonnegative and sum to one on
\([0,1]\), so \(P\geq\min_i b_i\) on \([a,b]\).

\begin{example}[An exact check for the \((4,1,7)\) correction]
\label{ex:Bernstein-17}
For the polynomial \(P\) in \eqref{eq:P-17}, \eqref{eq:Bernstein-coefficients}, at degree five on each interval,
gives the following minimum coefficients:
\[
\begin{array}{c|ccccc}
 \text{interval}
 &[-7/4,-1]&[-1,-1/2]&[-1/2,0]&[0,1/8]&[1/8,1/4]\\ \hline
 \min b_i
 &372925/1024&33735/32&42&10973/1280&1099085/32768
\end{array}
\]
All five numbers are positive.  These intervals cover the entire
realizable interval, so this proves the positivity assertion in
\cref{prop:17-correction}.
\end{example}

The accompanying file \texttt{Verify\_Appendix\_B.wl} checks these
positivity certificates with exact rational arithmetic. The related
isoparametric identities are checked in \texttt{Verify\_Chapter\_05.wl}.

\section{Local geometry near the cubic critical set}
\label{sec:cubic-local-geometry}

We prove the local geometric assertions in
\cref{prop:cubic-singular-normalization} and the estimates
\eqref{eq:tube}.  We use only the normalized identities
\eqref{eq:cubic-normalized-closure} and the Hessian eigenvalues
already established in that proposition.

\subsection{The critical set}
Fix \(p\in Z^*\).  Translate \(p\) to the coordinate origin and
write orthonormal coordinates as \((x,y)\), with
\(x\in\R^{2k}\) in the nonzero Hessian eigenspaces and
\(y\in\R^{N-2k}\) in its kernel.  In these translated coordinates,
\(R\) is still the original squared radius, so its constant term
is \(|p|^2>0\).  The matrix \(F_{xx}(0,0)\) is invertible, while
\(F_{xy}(0,0)=F_{yy}(0,0)=0\).

The analytic implicit function theorem solves \(F_x=0\) locally
and uniquely as \(x=\psi(y)\), with
\(\psi(0)=0\) and \(\psi'(0)=0\).  Set
\(g(y)=F(\psi(y),y)\).  Since \(F(p)=0\), \(\nabla F(p)=0\),
and \(F_{yy}(p)=0\), the function \(g\) vanishes to order at
least three.  We claim that it vanishes identically.

Suppose instead that its first nonzero homogeneous term is \(g_j\),
of degree \(j\geq3\).  On the graph of \(\psi\), the identity
\(F_x=0\) gives
\[
 \nabla F=(0,\nabla g),\qquad S=|\nabla g|^2,\qquad F=g.
\]
Differentiating \(F_x(\psi(y),y)=0\) and \(g_y=F_y\) gives
\[
 F_{xy}=-F_{xx}\psi',\qquad
 F_{yy}=g_{yy}+(\psi')^tF_{xx}\psi'.
\]
Consequently,
\[
 F_{xy}=O(|y|),\qquad
 F_{yy}=O(|y|^{j-2})+O(|y|^2),\qquad
 \nabla g=O(|y|^{j-1}).
\]
Since \(j\geq3\), these estimates imply
\(\nabla S=2\Hess F\nabla F=O(|y|^j)\) on the graph.
The last identity in \eqref{eq:cubic-normalized-closure} therefore
becomes
\[
 |\nabla S|^2=4R|\nabla g|^2+12g^2.
\]
Its left side vanishes to order at least \(2j\), but its right
side has the nonzero term
\(4|p|^2|\nabla g_j|^2\) of degree \(2j-2\).
This contradiction proves \(g\equiv0\), by analyticity.

Every point of the graph \(x=\psi(y)\) is now critical, and every
nearby critical point lies on that graph.  Thus \(Z^*\) is
real analytic of dimension \(N-2k\).  Its tangent space is the
Hessian kernel: differentiating \(\nabla F=0\) along the graph
gives inclusion in that kernel, and the dimensions agree.

\subsection{Local coordinates and the link volume}
Subtract the graph by setting \(x=\psi(y)+\xi\).  The resulting
function \(\widetilde F(\xi,y)\) and its first \(\xi\)-derivatives
vanish at \(\xi=0\).  Taylor's integral formula gives
\[
 \widetilde F(\xi,y)=\xi^t A(\xi,y)\xi,\qquad
 A(\xi,y)=\int_0^1(1-t)\,
 F_{xx}(\psi(y)+t\xi,y)\,dt.
\]
The matrix \(A\) is analytic and symmetric.  After shrinking the
neighborhood it is invertible with \(k\) positive and \(k\)
negative eigenvalues.  Successive completion of squares gives
an analytic invertible matrix \(B(\xi,y)\) such that
\[
 A=B^tDB,\qquad D=\operatorname{diag}(\Id_k,-\Id_k).
\]
Indeed, at the coordinate origin \(A\) is diagonal with nonzero
entries.  All pivots remain nonzero on a sufficiently small
neighborhood, and the positive square roots used to normalize their
absolute values are analytic there.  The map
\((\xi,y)\mapsto(B(\xi,y)\xi,y)\) has invertible derivative at
the origin.  The analytic inverse function theorem therefore gives
coordinates \((u,v,y)\) in which
\[
 F=|u|^2-|v|^2,\qquad Z^*=\{u=v=0\}.
\]
These coordinates are used only for local estimates.  Their
derivatives and inverse derivatives are bounded on smaller
neighborhoods, so Euclidean lengths and submanifold areas are
comparable to those in the coordinate model.

The radial vector lies in \(T_pZ^*\), because \(Z^*\) is a cone.
It is transverse to the unit sphere.  Hence
\(M=Z^*\cap\Sph^{N-1}\) is a smooth compact submanifold of
dimension \(N-2k-1\).  On the sphere, \(f=F|_{\Sph^{N-1}}\)
and its first derivatives vanish along \(M\).  Its Hessian has
the same \(2k\) nonzero transverse eigenvalues as the ambient
Hessian; the removed radial direction belongs to the kernel.
Applying the preceding Taylor formula and completion of squares
in a sphere chart gives local coordinates \((u,v,y)\), now with
\(y\in\R^{N-2k-1}\), for which
\[
 f=|u|^2-|v|^2,\qquad M=\{u=v=0\}.
\]

Put \(t=(|u|^2+|v|^2)^{1/2}\).  The smooth positive-definite
spherical metric and the bounded coordinate derivatives give
\(\rho\asymp t\), where \(\rho\) is spherical distance to \(M\).
They also give \(|\nabla_{\Sph}f|^2\asymp t^2\).  Since
\(s=|\nabla_{\Sph}f|^2+9f^2\) and \(|f|\leq t^2\), it follows
that \(s\asymp\rho^2\).

In these coordinates the regular zero link is the product, in a
bounded chart, of the \(y\)-variables with
\[
 \{(u,v):|u|=|v|>0\}\subset\R^{2k}.
\]
This quadratic cone has dimension \(2k-1\).  Its link is the
compact product of two spheres of radius \(1/\sqrt2\), and its
area inside a ball of radius \(a\) is a constant times
\(a^{2k-1}\).  Bounded coordinate distortion thus bounds the
area in each chart with \(\rho<\eps\) by
\(C\eps^{2k-1}\).  A finite cover of \(M\) gives
\[
 \cH^{N-2}\bigl(\Sigma_{\reg}\cap\{\rho<\eps\}\bigr)
 \leq C\eps^{2k-1},
\]
which completes the proof of \eqref{eq:tube}.

\section{Odd-degree rigidity by second-order expansions}
\label{sec:odd-rigidity-proof}
We prove \cref{thm:odd} and then compute the remaining coefficient
in the normalized cubic closure.

\subsection{The closure identities and the degree}
Throughout this subsection, \(e\geq3\) is odd and
\(S=|\nabla F|^2\). Counting homogeneous monomials in the
generators of degrees \(2,e,2e-2\) gives
\begin{equation}\label{eq:oddtable}
\begin{aligned}
 \Delta F&=0,&\Delta S&=\alpha R^{e-2},\\
 \Gamma(F,S)&=\beta R^{e-2}F,&
 \Gamma(S,S)&=\gamma R^{2e-3}
       +\delta R^{e-3}F^2+\varepsilon R^{e-2}S.
\end{aligned}
\end{equation}
Bochner's identity gives \(2|\Hess F|^2=\alpha R^{e-2}\),
with \(\alpha>0\).

Let \(p\ne0\) be a critical point. Euler's identity gives
\(F(p)=0\), and \(\nabla S(p)=0\), so \(\gamma=0\).
Put \(H=\Hess F(p)\) and \(R_0=|p|^2\). The quadratic terms
at \(p\) in the last two identities of \eqref{eq:oddtable} give
\[
 H^3=\frac\beta4R_0^{e-2}H,\qquad
 H^4=\frac\varepsilon4R_0^{e-2}H^2.
\]
Since \(H\ne0\), these identities imply \(\beta>0\) and
\(\varepsilon=\beta\).

At a positive maximum of \(F\) on the unit sphere,
\(\nabla F=eFx\) and \(S=e^2F^2>0\). Moreover,
\(\nabla S=2\Hess F\,\nabla F\) and the differentiated Euler
identity \(\Hess F\,x=(e-1)\nabla F\) give
\(\nabla S=2(e-1)Sx\). Substitution gives
\[
 S=\frac{\beta}{2e(e-1)},\qquad
 4(e-1)^2S^2=\delta\frac{S}{e^2}+\beta S,
\]
whence \(\delta=e(e-2)\beta\). Multiplying \(F\) by
\(2/\sqrt\beta\), we obtain
\begin{equation}\label{eq:normalizedodd}
\begin{aligned}
 \Delta F&=0,&\Delta S&=\alpha R^{e-2},\\
 \Gamma(F,S)&=4R^{e-2}F,&
 \Gamma(S,S)&=4R^{e-2}S+4e(e-2)R^{e-3}F^2.
\end{aligned}
\end{equation}

Write \(f=F|_{\Sph^{N-1}}\) and \(s=S|_{\Sph^{N-1}}\).
Removing the radial derivatives from \eqref{eq:normalizedodd} gives
\begin{equation}\label{eq:odd-spherical-products}
\begin{aligned}
 |\nabla_{\Sph}f|^2&=s-e^2f^2,\\
 \langle\nabla_{\Sph}f,\nabla_{\Sph}s\rangle
   &=f\bigl(4-2e(e-1)s\bigr),\\
 |\nabla_{\Sph}s|^2&=4s+4e(e-2)f^2-4(e-1)^2s^2.
\end{aligned}
\end{equation}
Let \(a>0\) be the maximum of \(f\), attained at \(x_0\).
Both spherical gradients vanish at \(x_0\), and
\[
 s(x_0)=\frac{2}{e(e-1)},\qquad
 a^2=\frac{2}{e^3(e-1)}.
\]
Let \(B=\Hess_{\Sph}f(x_0)\) and
\(C=\Hess_{\Sph}s(x_0)\). Comparing second-order terms in
the three identities of \eqref{eq:odd-spherical-products},
in geodesic normal coordinates at \(x_0\), gives
\begin{equation}\label{eq:odd-maximum-hessians}
\begin{aligned}
 C&=2B(B+e^2aI),\\
 BC&=-e(e-1)aC,\\
 C^2&=\left(\frac8e-6\right)C+4e(e-2)aB.
\end{aligned}
\end{equation}
The first equation shows that \(B\) and \(C\) commute.
The first two equations restrict every eigenvalue of \(B\) to
\(0,-e^2a,-e(e-1)a\). The value \(-e^2a\) would give a zero
eigenvalue of \(C\) but a nonzero right-hand side in the last
equation, so it is impossible. Since
\(\tr B=-e(N+e-2)a<0\), some eigenvalue of \(B\) is nonzero.
On the corresponding eigenvector, \(B=-e(e-1)a\) and
\(C=-4/e\). The last equation now gives
\[
 0=\frac{16}{e^2}
   -\left(\frac8e-6\right)\left(-\frac4e\right)
   +4e^2(e-1)(e-2)a^2
   =\frac{8(e-2)(e-3)}{e^2}.
\]
Thus \(e=3\), proving \cref{thm:odd}.

\subsection{The remaining cubic coefficient}
For \(e=3\), the spherical Laplacians are
\[
 \Delta_{\Sph}f=-3(N+1)f,\qquad
 \Delta_{\Sph}s=\alpha-4(N+2)s.
\]
Let brackets denote normalized spherical integration, and set
\[
 M_0=\langle f^2\rangle>0,\quad M_1=\langle s\rangle,
 \quad M_2=\langle s^2\rangle,\quad M_3=\langle f^2s\rangle.
\]
Using \eqref{eq:odd-spherical-products} with \(e=3\), integration
by parts gives
\begin{align}
 M_1&=3(N+4)M_0,\label{eq:m1}\\
 (\alpha+8)M_0&=4(N+8)M_3,\label{eq:m3}\\
 M_2+4M_0&=3(N+8)M_3,\label{eq:m2}\\
 4(N+6)M_2&=(\alpha+4)M_1+12M_0.\label{eq:m4}
\end{align}
These follow, respectively, by integrating
\(\Delta_{\Sph}(f^2)\), by testing the equation for
\(\Delta_{\Sph}s\) against \(f^2\), by testing the equation for
\(\Delta_{\Sph}f\) against \(fs\), and by integrating
\(\Delta_{\Sph}(s^2)\).
The middle two identities give
\(4M_2=(3\alpha+8)M_0\). Substituting this and
\eqref{eq:m1} into \eqref{eq:m4}, and cancelling \(M_0>0\), yields
\[
 (N+6)(3\alpha+8)=3(N+4)(\alpha+4)+12.
\]
Consequently,
\begin{equation}\label{eq:alphadim}
 \alpha=\frac{2(N+3)}3.
\end{equation}

\section{Derivation of the Hessian identities}
\label{sec:appendix-calculus}

We derive the identities in \cref{prop:contractions} term by term.
Write
\[
 B=A_z,\qquad C=A_\tau.
\]
For $V=(\xi,\zeta)$, expansion in one variable gives
\begin{align*}
 F(y+t\xi,z+t\zeta)
 &=F(y,z)
 +t\bigl(2\langle By,\xi\rangle+\langle A_\zeta y,y\rangle\bigr)\\
 &\quad
 +t^2\bigl(\langle B\xi,\xi\rangle
            +2\langle A_\zeta y,\xi\rangle\bigr)
 +t^3\langle A_\zeta\xi,\xi\rangle.
\end{align*}
This gives \eqref{eq:HessF}.  Since $S=4uv+T$ and
$T=\sum_i\tau_i^2$, the product rule gives \eqref{eq:HessS}.

From \eqref{eq:U} and \eqref{eq:Z},
\[
 U=(2By,\tau),\qquad
 Z=(8vy+4Cy,8uz).
\]
Using $B^2=v\Id$, $C^2=T\Id$, and $BC+CB=2F\Id$ in
\eqref{eq:HessF} gives
\[
 \Hess F(U,U)=8RF,\qquad
 \Hess F(U,Z)=\frac12K.
\]
If $D=2v\Id+C$, then the $y$-component of $Z$ is $4Dy$, and
\[
 y^TDBDy=F(4v^2+4uv+T),\qquad
 y^TBDy=F(u+2v).
\]
Substitution into \eqref{eq:HessF} gives
$\Hess F(Z,Z)=32FM$.

To evaluate $\Hess S$ on $U$ and $Z$, put
\[
 p_i(V)=\langle A_i y,V_y\rangle.
\]
The elementary quantities are
\begin{align}
 p_i(U)&=2uz_i,&
 p_i(Z)&=4(u+2v)\tau_i,                                  \label{eq:pUZ}\\
 \langle y,U_y\rangle&=2F,&
 \langle z,U_z\rangle&=F,                                \label{eq:rad-U}\\
 \langle y,Z_y\rangle&=8uv+4T,&
 \langle z,Z_z\rangle&=8uv.                              \label{eq:rad-Z}
\end{align}
For $\Hess S(U,Z)$, the six terms of \eqref{eq:HessS} are
\[
\begin{array}{c|l}
1&64uvF+128v^2F\\
2&64u^2F+128uvF\\
3&64uvF+32TF\\
4&256uvF\\
5&128uvF+64TF\\
6&64u^2F
\end{array}
\]
and their sum is \eqref{eq:HSUZ}.

For $\Hess S(Z,Z)$, use
\begin{align*}
 |Z_y|^2&=64uv^2+64vT+16uT,\\
 \sum_i p_i(Z)^2&=16(u+2v)^2T,\\
 \langle CZ_y,Z_y\rangle&=64v^2T+64uvT+16T^2,\\
 |Z_z|^2&=64u^2v.
\end{align*}
The five combined contributions are
\[
\begin{array}{c|l}
1&512uv^3+512v^2T+128uvT\\
2&128u^2T+512uvT+512v^2T\\
3&256v^2T+256uvT+64T^2\\
4&2048u^2v^2+1024uvT\\
5&512u^3v
\end{array}
\]
and sum to \eqref{eq:HSZZ}.  For \eqref{eq:HSUU}, substitute $U$ into \eqref{eq:HessS} and use
$BCB=2FB-vC$.  This gives
\[
 \Hess S(U,U)
 =32uv^2+32u^2v+8uT-16vT+96F^2.
\]

\section{Homogeneity and limiting parameter values}

For $G_{\alpha,\gamma}=R^\alpha S^\gamma F$, the homogeneous degree is
\[
 \deg G_{\alpha,\gamma}=2\alpha+4\gamma+3.
\]
Consequently,
\[
 \deg\cL(G_{\alpha,\gamma})
 =3(2\alpha+4\gamma+3)-4
 =6\alpha+12\gamma+5.
\]
The factor on the right of
\[
 \cL(G_{\alpha,\gamma})
 =R^{3\alpha+1}S^{3\gamma}F
   P^{\alpha,\gamma}_{m,q}
\]
has exactly the same degree:
$2(3\alpha+1)+4(3\gamma)+3
=6\alpha+12\gamma+5$.

The parameter domain includes boundary points at which
one of $u,v,T$ vanishes.  All inequalities in
\cref{sec:G1,sec:G2} are polynomial inequalities on the closed unit
square after multiplication by a power of $H$.  At points with
$S>0$, the factor $H>0$, so clearing denominators preserves
the sign.  At $H=0$ the polynomial inequalities still hold, although
the rational functions and the normalized gradient may be undefined.

\section*{Acknowledgments}

The author is grateful to Professors Xiaoxiang Jiao and Chia-Kuei Peng
for inspiring discussions on calibrated geometry and isoparametric
geometry many years ago. He is especially grateful to Professor
Xiaowei Xu for encouraging him many years ago to study the
area-minimizing property of minimal cubic cones, and for bringing
the papers by Peng and Xiao \cite{PengXiao93} and by Hoppe and
Tkachev \cite{HoppeTkachev19} to his attention. He also thanks
Professor Quo-Shin Chi for summer lectures and discussions on
isoparametric geometry at Fujian Normal University, and for his
engaging account of the history of OT--FKM isoparametric hypersurfaces.
The author thanks Fujian Normal University for its hospitality.

\bigskip
\noindent\textbf{AI disclosure.} ChatGPT (OpenAI) was used to assist
with language editing, notation standardization, structural reorganization,
literature searches, mathematical exploration, the application of relevant
mathematical methods, and the preparation of computer code for symbolic
and algebraic computations. All computational claims based on such code
were independently checked by the author. The author ran the supplied
verification code in Wolfram Mathematica~12.0.0 (Windows 64-bit).
The code and execution records are provided to make these computations
reproducible and independently verifiable. The author takes full
responsibility for the mathematical content and computational conclusions
of the paper.

{\small
\renewcommand{\bibliofont}{\fontsize{9}{10.2}\selectfont}

}

\PrintAuthorAddresses

\clearpage
\begingroup
\raggedbottom
\setcounter{equation}{0}
\renewcommand{\theequation}{S.\arabic{equation}}
\renewcommand{\theHequation}{verification.\arabic{equation}}
\markboth{COMPUTATIONAL SUPPLEMENT}{COMPUTATIONAL SUPPLEMENT}
\section*{Computer-algebra verification supplement}
\addcontentsline{toc}{section}{Computer-algebra verification supplement}
\label{supp:verification}

The verification files are organized by Chapters~4--10 and
Appendices~A--F. Each unit has a Mathematica notebook and a
self-contained Wolfram Language source file. This supplement records
the formulas and finite certificates; the files let the reader check
one chapter or appendix independently. The geometric comparison,
singular-set, and spectral-domain arguments are proved in the paper.

\subsection*{Running the verification in Mathematica 12}

Unzip the complete v5 package into a new folder. To run all checks,
open \path{START_HERE_Verification_v5.nb} at the package root and choose
\texttt{Evaluation > Evaluate Notebook}. Quit any previously running
verification kernel before starting. To check one unit, open its notebook in
\path{mathematica/chapters}. Keep each notebook next to its source
file. For example, the Chapter~4 pair is
\begin{center}
\texttt{Verify\_Chapter\_04.nb}\\
\texttt{Verify\_Chapter\_04.wl}.
\end{center}
The other pairs use chapter numbers \texttt{05} through \texttt{10},
or appendix letters \texttt{A} through \texttt{F}. Evaluate the
notebook with \texttt{Evaluation > Evaluate Notebook}, or evaluate
its first input cell with Shift+Enter. For Chapter~4 the command is
\begin{lstlisting}[numbers=none]
Block[{$IterationLimit = Infinity,
       $ContextPath = {"System`"}, $Context = "Global`"},
 Get[FileNameJoin[{NotebookDirectory[],
   "Verify_Chapter_04.wl"}]]
]
{MergedVerificationV5`Chapter04`VerificationResults,
 Length[MergedVerificationV5`Chapter04`VerificationLog]}
\end{lstlisting}
The expected summary for this unit is \texttt{\{True, 6\}}.
Each source file contains its own calculation helpers and records
the actual \texttt{\$Version}, \texttt{\$SystemID}, execution times,
check counts, and residuals. It automatically exports text and CSV
results to the adjacent \path{verification_results} folder, using
its source filename followed by \texttt{\_Results}. Separate
Wolfram Language contexts keep the units independent of one another
and of variables in the reader's notebook. The code uses functions
available in Mathematica~12 and requires no additional packages.

The root notebook runs all thirteen units and exports a combined summary.
A complete successful run reports 550 assertions and returns \texttt{True}.
The first output identifies the code release as v5.
The notebooks have no prefilled outputs; save an evaluated notebook
to retain a native Mathematica execution record.

The author ran all thirteen units in Wolfram Mathematica~12.0.0 for
Microsoft Windows (64-bit). All 550 assertions passed, with the
expected count attained in every unit and no evaluation messages
recorded. The evaluated notebook and exported results are included
in \path{records/v5_mathematica12_execution}; S.10 gives the details.
The subsection headings below retain the mathematical grouping of
the original calculations; the chapter and appendix files follow
the present manuscript's organization.

For generators \(y=(y_1,\ldots,y_s)\), a symmetric matrix \(M\), and
a vector \(b\), the program uses the differential operators
\begin{equation}\label{supp:operators}
 \begin{split}
 \Gamma(p,q)&=\sum_{i,j}M_{ij}\partial_i p\,\partial_jq,\\
 \Delta p&=\sum_i b_i\partial_i p+
              \sum_{i,j}M_{ij}\partial_i\partial_jp,\\
 \mathcal L(p)&=\Gamma(p,p)\Delta p-
                   \tfrac12\Gamma\bigl(p,\Gamma(p,p)\bigr).
 \end{split}
\end{equation}
When \(M_{ij}=\langle\nabla y_i,\nabla y_j\rangle\) and
\(b_i=\Delta y_i\), these are the ambient chain rules.  Every
identity check simplifies the difference of the two sides to zero.
All coefficients and test inputs are exact integers or rational
numbers.  Real powers are differentiated on the domain where their
bases are positive.

\subsection*{S.1. The gradient-power identity}

Set \(S=|\nabla F|^2\), \(U=\Gamma(F,S)\),
\(V=\Gamma(S,S)\), \(D=\Delta F\), and \(B=\Delta S\).
For a symbolic real exponent \(\gamma\), let
\[
 H=S^\gamma F,\qquad
 \lambda_F=\frac{SD-U/2}{F}.
\]
The calculation treats the further contractions
\(\Gamma(F,U),\Gamma(S,U),\Gamma(F,V),\Gamma(S,V)\)
as independent symbols.  Thus it does not require a particular
polynomial closure.  With
\begin{align*}
 \mathcal V_\gamma
 &=S+\frac{2\gamma FU}{S}+\frac{\gamma^2F^2V}{S^2},\\
 \mathcal D_\gamma
 &=D+\frac{2\gamma U}{S}+\frac{\gamma FB}{S}
                      +\frac{\gamma(\gamma-1)FV}{S^2},
\end{align*}
the two residuals checked are
\begin{equation}\label{supp:gradient-power}
 \Gamma(H,H)-S^{2\gamma}\mathcal V_\gamma=0,\qquad
 S^{-3\gamma}\mathcal L(H)-F\mathcal Q_\gamma=0,
\end{equation}
where the Laplacian used in the second calculation is
\(\Delta H=S^\gamma\mathcal D_\gamma\), and
\begin{align*}
 \mathcal Q_\gamma={}&\lambda_F+\gamma B
 +\frac{2\gamma UD-\gamma\Gamma(F,U)
                -(\gamma^2+\tfrac32\gamma)V}{S}
 +\frac{\gamma(\gamma+1)U^2}{S^2}\\
 &+\frac{\gamma^2F}{S^2}
       \left(2UB+VD-\Gamma(S,U)-\tfrac12\Gamma(F,V)\right)\\
 &+\frac{\gamma^3F^2}{S^3}
       \left(VB-\tfrac12\Gamma(S,V)\right).
\end{align*}
The symbolic substitution for \(\lambda_F\) is initially made where
\(F\ne0\).  Under the paper's hypothesis
\(\mathcal L(F)=F\lambda_F\) with smooth \(\lambda_F\), the final
formula also holds at regular zeros of \(F\), where \(S>0\).

\subsection*{S.2. The radial master identity}

Let \(H\) be homogeneous of degree \(m\), and write
\(R=|x|^2\), \(S=|\nabla H|^2\), \(U=\Gamma(H,S)\),
and \(D=\Delta H\).  Euler's identities supply the contractions
\[
 \Gamma(R,R)=4R,\quad \Gamma(R,H)=2mH,\quad
 \Gamma(R,S)=4(m-1)S,\quad \Delta R=2N.
\]
For \(G=R^\alpha H\) and \(c=4\alpha(m+\alpha)\), the program
checks the following three identities with symbolic \(m,\alpha,N\):
\begin{align*}
 R^{-2\alpha}\Gamma(G,G)
 &=S+cH^2/R,\\
 R^{-\alpha}\Delta G
 &=D+2\alpha(2m+N+2\alpha-2)H/R,\\
 R^{-3\alpha}\mathcal L(G)
 &=SD-U/2+\frac{cH^2D}{R}
       +\frac{\bigl(2\alpha(N-1)-c\bigr)HS}{R}\\
 &\hspace{20mm}
       +\frac{c\bigl(m+2\alpha(N-1)\bigr)H^3}{R^2}.
\end{align*}
These are symbolic identities for arbitrary degree and exponent;
no list of sample exponents is used.

\subsection*{S.3. The normalized cubic closure}

For the generators \((R,F,S)\), the normalized closure is
\[
 M=\begin{pmatrix}
 4R&6F&8S\\
 6F&S&4RF\\
 8S&4RF&4RS+12F^2
 \end{pmatrix},\qquad
 b=\bigl(2(6k-3),0,4kR\bigr).
\]
Substitution of \(G=SF\) in \eqref{supp:operators} gives
\begin{align*}
 \Gamma(G,G)&=V_k:=S^3+12RSF^2+12F^4,\\
 \Delta G&=(4k+8)RF,\\
 \mathcal L(G)&=FQ_k,\\
 Q_k&=(4k-16)RS^3
          +F^2S(48kR^2-126S)+12(4k-6)RF^4.
\end{align*}
The fourth check is the parameter-propagation identity
\[
 Q_k-Q_4=4(k-4)R\bigl(S^3+12RSF^2+12F^4\bigr).
\]
This calculation isolates the dependence on \(k\).  The geometric
restrictions on \((R,F,S)\), and the sign at the initial parameter,
are established in the paper.

\subsection*{S.4. Riedler's quartic potential}

In the generator order \((R,Q,F)\), use
\[
 M=\begin{pmatrix}
 4R&8Q&8F\\
 8Q&16RQ&16RF\\
 8F&16RF&16RQ
 \end{pmatrix},\qquad b=(2N,16kR,0).
\]
On \(R,Q>0\), the potential is \(G=RQ^{3/4}F\).  Define
\begin{align*}
 V&=RQ^{1/2}\bigl(16R^2Q^2+33R^2F^2+32QF^2\bigr),\\
 T&=RQ^{1/4}\Bigl[16(12k-47)R^4Q^2
                     +16(2N-41)R^2Q^3\\
  &\hspace{22mm}
       +F^2\bigl((396k+66)R^4
          +(384k+66N-681)R^2Q+(64N+160)Q^2\bigr)\Bigr].
\end{align*}
The ambient residuals are
\[
 \Gamma(G,G)-V=0,\qquad \mathcal L(G)-FT=0,
\]
together with
\[
 \det M=1024R(R^2-Q)(Q^2-F^2).
\]
Writing \(a=RQ^{3/4}\), the boundary coefficient is checked as
\[
 \beta_G=\frac{R\,T|_{F=0}}{a^3\,16RQ}
        =2N-41+(12k-47)\frac{R^2}{Q}.
\]

The program also performs an independent spherical substitution.
On \(R=1\), set \(f=F\), \(z=Q\), and \(h=z^{3/4}f\).
The spherical operator has matrix and drift
\[
 M_{\mathrm{sph}}=\begin{pmatrix}
 16z-16f^2&16f(1-z)\\
 16f(1-z)&16z(1-z)
 \end{pmatrix},\qquad
 b_{\mathrm{sph}}=\bigl(-4(N+2)f,16k-4(N+2)z\bigr).
\]
Since \(G\) has degree nine, put
\[
 W=81h^2+\Gamma_{\mathrm{sph}}(h,h),\qquad
 A=W\bigl(\Delta_{\mathrm{sph}}h+9(N-1)h\bigr)
        -\tfrac12\Gamma_{\mathrm{sph}}(h,W).
\]
The additional residuals are \(W-V|_{(R,Q,F)=(1,z,f)}=0\) and
\(A-(FT)|_{(R,Q,F)=(1,z,f)}=0\).

\subsection*{S.5. Isoparametric certificates}

For a polynomial \(h=h(z)\), define
\begin{align*}
 A(z)&=g^2\bigl(1-(z+t)^2\bigr),&
 B(z)&=-g^2t-g(g+d)z,\\
 E(z)&=\ell^2h^2+A(h')^2,&
 \mathcal N_\ell[h]&=\bigl(Ah''+Bh'+(d+1)\ell h\bigr)E
                     -\tfrac12Ah'E'.
\end{align*}
The realizable interval is \([-1-t,1-t]\).  With
\(\mu=(d+1)\ell-\ell^2-gd\) and
\(\nu=(d+1)\ell-gd\), the symbolic radial check is
\[
 \mathcal N_\ell[z]
 =z\left[g^2(1-t^2)\mu+g^2t(\ell^2-2\nu)z
                         +(\ell^2-g^2)\nu z^2\right].
\]
For the first three corrections \(h=z(1+cz)\), the exact
factorizations are \(z\mathcal N_\ell[h]=z^4K(z)\), with
\[
\begin{array}{c|c|l}
 (g,d,t,\ell)&c&K(z)\\ \hline
 (2,6,1/3,4)&3/10&\frac{648}{125}(11z+30)\\[1mm]
 (4,14,1/7,8)&7/54&-\frac{64}{19683}(2548z^2-132237z-910926)\\[1mm]
 (4,14,3/7,8)&7/15&-\frac{128}{1125}(588z^2-8015z-16550)
\end{array}
\]
Their positivity is checked using exact Bernstein coefficients.
If \(P(a+(b-a)u)=\sum_{j=0}^n c_ju^j\), then
\[
 P(a+(b-a)u)=\sum_{i=0}^n b_i\binom ni u^i(1-u)^{n-i},
 \qquad b_i=\sum_{j=0}^i c_j\frac{\binom ij}{\binom nj}.
\]
Consequently \(P\geq\min_i b_i\) on \([a,b]\).  Using the degree
of \(K\) in each row gives
\[
\begin{array}{c|c|c}
 (g,t)&[a,b]&\min_i b_i(K)\\ \hline
 (2,1/3)&[-4/3,2/3]&9936/125\\
 (4,1/7)&[-8/7,6/7]&48414080/19683\\
 (4,3/7)&[-10/7,4/7]&6656/15
\end{array}
\]

For the fourth correction, \((g,d,t,\ell)=(4,16,3/4,10)\) and
\(h=z(1+z/2)\), the checked identity is
\[
 z\mathcal N_{10}[h]=z^2P(z),\qquad
 P(z)=189z^5+1778z^4+5492z^3+4848z^2-763z+42.
\]
The degree-five Bernstein calculation gives
\[
\begin{array}{c|c}
 [a,b]&\min_i b_i(P)\\ \hline
 {[-7/4,-1]}&372925/1024\\
 {[-1,-1/2]}&33735/32\\
 {[-1/2,0]}&42\\
 {[0,1/8]}&10973/1280\\
 {[1/8,1/4]}&1099085/32768
\end{array}
\]
All entries are positive and the intervals cover \([-7/4,1/4]\).
As a separate exact check, let \(P_0=P\), \(P_1=P'\), and form the
Sturm sequence by negative polynomial remainders.  The endpoint sign
vectors are
\[
\begin{array}{c|c|c}
 z&(\operatorname{sgn}P_0(z),\ldots,\operatorname{sgn}P_5(z))
       &\text{sign changes}\\ \hline
 -7/4 &(+,+,-,-,+,+)&2\\
 1/4 &(+,+,+,-,-,+)&2
\end{array}
\]
Thus \(P\) has no real root in this interval; \(P(0)=42>0\)
confirms its positive sign throughout.

\subsection*{S.6. Determinant recurrence and scalar inequalities}

There are two kinds of checks in this group: a symbolic polynomial
identity, and exact rational evaluations over specified finite ranges.

For \(d=u+1\), \(r=v+2d+1\), put
\[
 B_{r,d}=\frac{2(r-d+1)(r+2d^2+3d+2)}{(d+1)(d+2)}+2d+1,
 \qquad D_{r,d}=(r-1)(d+1)(d+2)(r-d)^2.
\]
Define
\[
 \Theta_{r,d}=\frac{2r^3}{r-1}
  \left(\frac1{d+1}
       -\frac{(r-2d)(r-2d-1)}{(d+2)(r-d)^2}\right)
       -B_{r,d}.
\]
The all-variable identity checked by the program is
\begin{align*}
 D_{r,d}\Theta_{r,d}
 ={}&2uv^4+(4u^3+25u^2+29u)v^3\\
 &+(16u^4+102u^3+206u^2+140u+14)v^2\\
 &+(18u^5+137u^4+385u^3+484u^2+258u+44)v\\
 &+4u^6+38u^5+140u^4+252u^3+230u^2+104u+24.
\end{align*}
The numerator has nonnegative coefficients and positive constant
term.  Since \(D_{r,d}>0\) for \(u,v\geq0\), this identity supplies
the sign used in the paper's telescoping estimate.

For the remaining checks, let \(e_j\) be elementary symmetric
polynomials, with \(e_0=1\).  In \(m\) positive variables \(y_i\),
write
\[
 a_{si}=\frac{e_s(y_1,\ldots,\widehat y_i,\ldots,y_m)}{e_s(y)},
 \quad p_i=\sum_{s=1}^{m-1}a_{si},\quad
 D_m=\prod_{s=1}^{m-1}e_s,
\]
and set
\begin{align*}
 A&=\sum_i y_i\left(\sum_{s=1}^{m-1}s\,a_{si}
                              -\sum_{s=1}^{m-1}a_{si}^2\right),&
 V&=\sum_i y_i p_i^2,&Y&=\sum_i y_i^2p_i,\\
 \mathcal C_m&=e_1D_m^2\left(2A+Y/e_1-V-2e_2/e_1\right).
\end{align*}
The direct expression in the variables \(y_i\) is compared with its
formal expression in \(e_1,\ldots,e_m\), obtained from
\[
 H_{ij}=\sum_{b=0}^{\min(i,j)}(i+j+1-2b)e_{i+j+1-b}e_b
\]
and \(H_{s0}^{(2)}=e_1e_{s+1}-(s+2)e_{s+2}\).  Indices greater
than \(m\) are set to zero in the formal expression defining
\(\mathcal C_m\).

The recurrence tested is
\[
 \mathcal C_{r+1}=e_r^2\mathcal C_r+\mathcal R_r,
 \qquad \mathcal P_r=e_1^3e_re_{r+1}\prod_{j=2}^{r-1}e_j^2,
\]
where
\begin{align*}
 \frac{\mathcal R_r}{\mathcal P_r}
 ={}&r(r+1)
 -\sum_{p=1}^{\lfloor r/2\rfloor}
     (r+1)(2+\mathbf1_{p=1}+\mathbf1_{2p=r})\frac{e_r}{e_pe_{r-p}}\\
 &-\sum_{d=1}^{\lfloor(r-1)/2\rfloor}\sum_{j=1}^{r-2d}
     (r-j+1)(2+\mathbf1_{j=r-2d})
                  \frac{e_je_r}{e_{j+d}e_{r-d}}.
\end{align*}
For each \(m=2,\ldots,10\), the program evaluates the direct and
formal expressions at \(y=(1,2,\ldots,m)\), checks their equality,
and checks \(\mathcal C_m(y)\geq0\).  For each
\(m=3,\ldots,10\), it also checks the recurrence with \(r=m-1\)
at \(e_j=e_j(1,\ldots,N)\), for both \(N=m\) and \(N=m+2\).
In the latter evaluation, the fixed formal polynomials
\(\mathcal C_m\) and \(\mathcal C_{m-1}\) retain their respective
definitions; their summation ranges are not enlarged to \(N\).

Finally, define the scalar sums
\begin{align*}
 I_r&=\sum_{p=1}^{\lfloor r/2\rfloor}
 \frac{(r+1)(2+\mathbf1_{p=1}+\mathbf1_{2p=r})}{\binom rp},\\
 J_r&=\sum_{d=1}^{\lfloor(r-1)/2\rfloor}\sum_{j=1}^{r-2d}
 (r-j+1)(2+\mathbf1_{j=r-2d})
             \frac{\binom{j+d}{d}}{\binom rd}.
\end{align*}
Exact rational arithmetic checks \(I_r+J_r\leq r(r+1)\) for every
integer \(r=2,\ldots,100\).  For example,
\[
 (I_2,I_3,I_4)=\left(6,4,\frac{25}{4}\right),\qquad
 (J_2,J_3,J_4)=\left(0,6,\frac{43}{4}\right).
\]
The evaluations for orders \(2\) through \(10\), and for scalar
indices \(2\) through \(100\), are finite consistency checks.
The statements for arbitrary matrix order and all positive variables
follow from the symbolic recurrence, Newton's inequalities, and the
scalar estimates proved in the paper.

\clearpage
\subsection*{S.7. Hopf pairing and the intrinsic certificate}
The seventh group applies the closure table \eqref{eq:Hopf-closure}
to check the quartic minimality equation, the three ambient potential
identities \eqref{eq:Hopf-gradient}--\eqref{eq:Hopf-defect}, and the
Jacobi weight \(16R^2/T-8\). It also checks
\[
 |\nabla(4F-S^2)|^2=16R^3,\qquad
 \Delta(4F-S^2)=-8R.
\]
For the intrinsic calculation it uses the ordered variables \((R,F)\),
the gradient matrix and drift
\[
 \begin{pmatrix}4R&8F\\8F&R^3\end{pmatrix},\qquad
 (62,4F/R),
\]
and verifies all three identities in
\eqref{eq:Hopf-intrinsic-defect}. These ten checks are exact symbolic
identities. The Clifford construction, geometric singular sets, and
comparison across the vertex are proved in \cref{sec:Hopf-pairing}.
In this group the code variable \texttt{r} denotes \(R\), not
\(\sqrt R\).

\subsection*{S.8. Additional algebraic checks}

\paragraph{Clifford pullbacks (original group 8; Chapter~9).}
Starting with a harmonic Cartan--M\"unzner target of degree \(g\),
the program derives the source closure and checks
\[
 \det\bigl(\Gamma(R,Q,F)\bigr)
 =256g^2R(R^2-Q)(Q^g-F^2)
\]
and both the abstract pullback defect and its expanded form for
\(G=QF\). It specializes to the cubic targets in
\cref{thm:cartan-pullback}. For unequal quadratic targets it checks
the radial-power transfer formula, the two positive square completions
in \cref{ex:lawson-pullback}, and the source certificate.
The boundary calculations give
\[
 \beta_{\rm Cartan}=(8m-96)\frac{R^2}{Q}-10,
 \qquad \beta_{\rm Lawson}=24\frac{R^2}{Q}-8,
 \qquad \left(8-\frac{63}{2}\right)^2+16=\frac{2273}{4}.
\]
The group has 33 exact checks, including positivity of the relevant
Cartan coefficients at \(m=14,26\).

\paragraph{Odd-degree rigidity and cubic spectrum (original group 9).}
The program checks the local second-order identities in
\cref{sec:odd-rigidity-proof}, including the maximum values,
the Hessian eigenvalue relations, and the final residual
\[
 \frac{8(e-2)(e-3)}{e^2}.
\]
It also checks the cubic spherical gradient and Laplacian table,
the four moment identities before integration, and their elimination
leading to \(\alpha=2(N+3)/3\).

For the link calculation it checks the power-Jacobi identity, the
smaller root, the cutoff exponent, and
\[
 9(k-1)^2-B_k=(k+\delta_k)^2,
 \qquad H_5=42+10\sqrt{17}.
\]
The quaternionic trace model is checked through its eigenvalue
identities and Hessian trace, including the off-diagonal multiplicity
four. The group distinguishes the bound \(52\) from \(G=SF\) at
\(k=5\) from the optimal value \(H_5\). Together with the Simons
and quartic identities, these are 32 exact checks. They verify the
algebra used in the proofs, while the local singular-set geometry
and Friedrichs-domain arguments remain analytic.

\paragraph{Critical correction and Lin's cone (original group 10; Chapter~5).}
At \(d=4g-2\), \(\ell=2g\), and \(h=z(1+\lambda z)\),
the coefficients of \(z^0,z^1,z^2\) in \(z\mathfrak N\) vanish and
\[
 [z^3](z\mathfrak N)
 =2g^3\bigl((2g+1)(1-t^2)\lambda-2gt\bigr).
\]
The remaining three checks give \(\Delta(RF)\),
\(|\nabla(RF)|^2\), and \(\mathcal L(RF)\) for Lin's
\(\R^2\times\R^6\) cone. There are seven checks in this group.

\paragraph{Changes of generators (original group 11; Chapters~5 and~6).}
Nine checks verify the six independent gradient entries and three
Laplacians after changing the Hopf generators to
\((R,S,Q)=(u+v,u-v,u^2+v^2-2F)\).
Four further checks verify the Lawson factorization
\eqref{eq:intro-Lawson-multiplier} and the inverse expressions for the
two squared block radii. All thirteen are exact polynomial or rational
identities.

\paragraph{General multiplier identities (original group 12).}
Seven checks cover the two-jet expansion for an arbitrary multiplier,
the scalar composition identity, the spherical numerator obtained
directly in polar coordinates, the combined-power gradient and
defect, and the two completed-square identities for the Hardy degree
term and its change under radial perturbation. The combined-power
calculation uses formal gradient contractions before any specialized
closure is imposed.

\subsection*{S.9. Bihomogeneous quartics and the singular-link obstruction}

The last two original groups check the closure calculations of
\cref{sec:two-quadratic,prop:orthogonal-quartics,thm:Riedler-unstable}.
For the splitting quartic, the input table is
\begin{lstlisting}[numbers=none]
vars = {u,v,q}; rr = u+v; tt = u v;
matrix = {{4u,0,4q},{0,4v,4q},{4q,4q,4rr q}};
drift = {2 ell,2 ell,2 mm rr};
f = q-c tt; a = 1-2c; b = c(1-c);
minimalRule = mm -> 1+c(ell-2);
\end{lstlisting}
The exact checks compare the degree-eight and degree-ten divergence
numerators with \eqref{bh:G8-defect} and \eqref{bh:p10}, verify the
endpoint and discriminant formulas used for positivity, and recover
\eqref{bh:AC} through the Bochner identity. The higher-degree checks
substitute the minimal levels into the octonionic pullback closure.

For Riedler's $C^4_{16,4}$, the Euclidean closure independently gives
\eqref{eq:Riedler-16-link}. Its angular calculation is
\begin{lstlisting}[numbers=none]
phi = 1/q;
Factor[16 q(1-q) D[phi,{q,2}]
  +(56-64q) D[phi,q]+(24/q+14)phi-46phi]
(* 0 *)
(13/2)^2-46
(* -15/4 *)
\end{lstlisting}
These outputs verify the displayed algebraic identities. The cutoff
estimate $O(\varepsilon)$ in \cref{thm:Riedler-unstable} is needed
to turn the formal angular function into admissible variations.
The final group also checks the general orthogonal-quartic numerator
and the Hopf incidence cubic table.

\Needspace{23\baselineskip}
\subsection*{S.10. Files, check counts, and execution records}

The thirteen independent units have the following expected counts.
Each unit must run every listed check before it reports success.
\begin{center}
\begin{tabular}{lr}
\toprule
Unit & Expected checks\\
\midrule
Chapter 4: algebraic reduction and Jacobi stability & 6\\
Chapter 5: isoparametric hypercones & 16\\
Chapter 6: bihomogeneous hypercones & 49\\
Chapter 7: gradient-square closed cubics & 18\\
Chapter 8: Clifford cubic classification & 193\\
Chapter 9: pullback constructions & 50\\
Chapter 10: complex determinants & 134\\
Appendix A: power multipliers & 8\\
Appendix B: positivity certificates & 15\\
Appendix C: singular cutoff exponent & 1\\
Appendix D: odd-degree rigidity and moments & 15\\
Appendix E: Hessian contractions & 18\\
Appendix F: homogeneity and limiting parameters & 27\\
\midrule
Total & 550\\
\bottomrule
\end{tabular}
\end{center}
The total retains all 307 assertions from the original general
program and all 193 from the original Clifford program. It also
includes four checks from the separate Hardy-coefficient file,
18 added checks for Appendix~E, and 27 for Appendix~F.
The Appendix~C unit repeats one cutoff-exponent identity already
checked in Chapter~7; it does not verify the analytic construction
of the critical set or its volume estimate.

The author ran the thirteen units of code release v5 in
Wolfram Mathematica~12.0.0 for Microsoft Windows (64-bit), with system
identifier \texttt{Windows-x86-64}. The saved record is dated
15 September 2026 by the author's local system clock. All 550
assertions passed: 404 equality checks returned exact zero
residuals, and 146 condition checks returned \texttt{True}.
Every unit attained its expected count, and no evaluation-message
groups were recorded. The evaluated notebook, per-unit text and
CSV files, and combined summary are retained in
\path{records/v5_mathematica12_execution}.
The earlier Mathics3~10.0.1 execution gave the same check labels
and residuals; its record is retained separately.

The source package keeps the original execution records separately
from results of the reorganized programs. Every coefficient and
sample is exact. Finite determinant tests do not replace the
all-orders recurrence and positivity proofs in
\cref{sec:complex-determinants}; finite coordinate tests of Clifford
contractions supplement the general proof in Appendix~E.

\endgroup
\clearpage
\begingroup
\raggedbottom
\renewcommand{\thesection}{T}
\renewcommand{\theHsection}{clifford-verification}
\renewcommand{\theHsubsection}{clifford-verification.\arabic{subsection}}
\renewcommand{\theHequation}{clifford-verification.\arabic{equation}}
\markboth{CLIFFORD CUBIC VERIFICATION}{CLIFFORD CUBIC VERIFICATION}
\section{Clifford cubic verification in Mathematica 12}
\label{sec:mathematica}

This supplement describes the Wolfram Language calculations for
\cref{sec:Clifford-classification}. The independent Chapter~8 file
retains all 193 assertions of the original Clifford verification
program. The code is written for Mathematica~12.0 and later and
requires no additional packages or external data.

\subsection{How to run the verification}

Open the Chapter~8 notebook and keep it beside its source file
in the \path{mathematica/chapters} folder. Evaluate the first input cell with
Shift+Enter, or use \emph{Evaluate Notebook} in the \emph{Evaluation} menu.
The source is loaded by
\begin{lstlisting}[style=paperIIWolfram]
Get[FileNameJoin[{NotebookDirectory[],
  "Verify_Chapter_08.wl"}]]
\end{lstlisting}
The remaining cells display the result log, the three Bernstein
matrices, and the Jacobi matrix. The file exports the actual
execution record automatically, as described in S.10.

A successful run reports 193 completed checks and returns
\texttt{True}. Failed or incomplete checks prevent a successful
summary. A separate Wolfram Language context isolates the calculation
symbols. The listing in
Subsection~\ref{subsec:complete-mathematica-code} gives the mathematical
calculation blocks; the complete source file also contains the
local variable declarations, execution control, and result logging.

\subsection{Inputs and correspondence with the paper}

The following table records the main program variables. Variables
\texttt{u}, \texttt{v}, \texttt{m}, and \texttt{q} have the same meanings
as in the paper. The auxiliary symbol \texttt{f2} is treated as an
independent variable when checking affine dependence on $F^2$.
\begin{center}
\begin{tabular}{@{}ll@{}}
\toprule
Program variables & Mathematical notation\\
\midrule
\texttt{rr, ss, kk, mm, t} & $R,S,K,M,T$\\
\texttt{al, ga, f2} & $\alpha,\gamma,F^2$\\
\texttt{lam, theta, omega} & $\lambda,\theta,\omega$\\
\texttt{vv, dd, qq} & $V_\gamma,D_\gamma,Q_\gamma$\\
\texttt{vhat, qhat} & $\widehat V_{\alpha,\gamma},\widehat Q_{\alpha,\gamma}$\\
\texttt{hh} & $H=4(1-\lambda)+\theta\lambda$\\
\bottomrule
\end{tabular}
\end{center}

The six code blocks have the following roles.
\begin{enumerate}[label=(\arabic*),leftmargin=*]
\item Block 1 defines exact equality tests and implements
\eqref{eq:bernstein-conversion}. It also reconstructs each input
polynomial from its Bernstein matrix and checks every coefficient's sign.
\item Block 2 checks the multiplier identity
\eqref{eq:Qhat-specialized}, its affine dependence on $F^2$, and
both parameter derivatives \eqref{eq:dmQ}--\eqref{eq:dqQ}.
\item Block 3 computes the three Bernstein matrices in
\cref{sec:G1,sec:G2}, verifies the endpoint factorizations, and checks
the algebraic formulas used in the power obstruction at $(m,q)=(4,3)$.
\item Block 4 computes the Jacobi matrix twice: once through exact
Gamma values and once through the rational moment recurrences below.
It then evaluates the quadratic form on $(1,-1/2)$.
\item Block 5 constructs the quaternionic Clifford matrices, checks
their symmetry, trace and Clifford relations, and differentiates the
associated cubics in ambient coordinates. For each of $(m,q)=(4,3)$
and $(4,4)$, it checks eleven contractions at five specified integer
points. These finite-point checks supplement the proof for arbitrary
Clifford systems in Appendix~\ref{sec:appendix-calculus}.
\item Block 6 checks the Hopf norm and spherical-gradient Gram
identities as polynomial identities in eight coordinate variables.
\end{enumerate}

The multiplier formula \eqref{eq:Qhat-specialized} and the substitution
\eqref{eq:dimensionless-substitution} determine the polynomials used in
\cref{sec:G1,sec:G2}.  The following table gives the inputs for the three
Bernstein coefficient matrices; all expressions are restricted to $R=1$.
\begin{center}
\begin{tabular}{@{}lll@{}}
\toprule
Polynomial & Expression & Degrees in $(\lambda,\theta)$\\
\midrule
$\mathfrak P^{(1)}_{8,3}$
 & $H^3 P^{0,3/4}_{8,3}(\lambda,\theta,1)$ & $(4,3)$\\
$M^{(0)}$
 & $H P^{1/2,1/2}_{4,4}(\lambda,\theta,0)$ & $(3,2)$\\
$M^{(1)}$
 & $H^3 P^{1/2,1/2}_{4,4}(\lambda,\theta,1)$ & $(6,4)$\\
\bottomrule
\end{tabular}
\end{center}
After cancelling the denominators, expansion in the monomial basis and
\eqref{eq:bernstein-conversion} give the displayed matrices.  Substituting
these matrices into the Bernstein expansion recovers the input polynomials.

For the Jacobi calculation in \eqref{eq:monomial-link-form}, set
\[
 U_i=\mathrm B\left(i+3,\frac32\right),\qquad
 V_j=\frac1\pi\mathrm B\left(j+\frac32,\frac12\right).
\]
The beta-function recurrence gives
\begin{align*}
 U_0&=\frac{16}{105},&
 U_i&=\frac{2(i+2)}{2i+7}U_{i-1}\quad(i\geq1),\\
 V_0&=\frac12,&
 V_j&=\frac{2j+1}{2j+2}V_{j-1}\quad(j\geq1).
\end{align*}
Thus a monomial $u^i\theta^j$ in the polynomial part of the integrand
contributes $U_iV_j$ after division by $\pi$.  These rational recurrences
recover the Jacobi matrix and the negative value in
\eqref{eq:negative-link-form}.

In particular, the two moment calculations must both return
\[
 J=\begin{pmatrix}
 96/715&471936/1616615\\
 471936/1616615&571904/969969
 \end{pmatrix},\qquad
 (1,-1/2)J(1,-1/2)^T=-\frac{9952}{969969}.
\]

\subsection{Execution record}

The original Clifford program was run in Mathematica~12.0.0 for
Microsoft Windows (64-bit), with system identifier
\texttt{Windows-x86-64}, on 12 September 2026. The author's evaluated
notebook, \texttt{paper\_II\_verification\_evaluated\_20260912.nb},
and its execution screenshot are retained in
\path{records/original_clifford_execution}. Its saved output
records 193 exact assertions, all checks passing, the return value
\texttt{True}, and the exact negative value $-9952/969969$. The earlier preparation run in
Mathics3~10.0.1 gave the same assertion count and negative value.
The archived \texttt{validation\_report.md} describes both records.
These saved outputs belong to the original 193-assertion program.
The renamed Chapter~8 file also passed all 193 checks in the
complete Mathematica~12.0.0 run documented in S.10; its evaluated
notebook and exported results are included with that run.
The final printed status of the original Mathematica run is
\begin{lstlisting}[style=paperIIWolfram]
Exact assertions checked: 193
ALL CHECKS PASSED.
\end{lstlisting}
The notebook's remaining cells display the exact matrices. These provide
additional outputs against which a reader can check a local run.

\clearpage
\subsection{Calculation blocks}
\label{subsec:complete-mathematica-code}

The listing gives the six calculation blocks inside the Chapter~8
verification routine. The complete independent file includes
\texttt{CheckEqual} and \texttt{CheckTrue}, which record exact residuals
and assertions, together with local variable declarations and the
execution and export routines. Run the supplied file with the
\texttt{Get} command above.

\begin{lstlisting}[style=paperIIWolfram]
(* Block 1. Exact checks and conversion to the Bernstein basis. *)
must[condition_, label_] := If[!TrueQ[CheckTrue[condition, label]],
  Throw[$Failed, "CliffordClassificationFailure"]];
zero[expr_, label_] := If[!TrueQ[CheckEqual[expr, 0, label]],
  Throw[$Failed, "CliffordClassificationFailure"]];
bernstein[poly_, x_, y_, nx_, ny_] := Module[{p, a, b, rebuilt},
  p = Expand[Cancel[poly]];
  must[PolynomialQ[p, {x, y}] && Exponent[p, x] <= nx &&
    Exponent[p, y] <= ny, "Bernstein bidegree"];
  a = Table[Coefficient[Coefficient[p, x, k], y, l],
    {k, 0, nx}, {l, 0, ny}];
  b = Table[Total[Flatten[Table[
    a[[k + 1, l + 1]] Binomial[i, k]/Binomial[nx, k]
    Binomial[j, l]/Binomial[ny, l], {k, 0, i}, {l, 0, j}]]],
    {i, 0, nx}, {j, 0, ny}];
  rebuilt = Total[Flatten[Table[
    b[[i + 1, j + 1]] Binomial[nx, i] x^i (1 - x)^(nx - i)
    Binomial[ny, j] y^j (1 - y)^(ny - j), {i, 0, nx}, {j, 0, ny}]]];
  zero[p - rebuilt, "Bernstein reconstruction"]; b];
matrixCheck[actual_, expected_, label_, positive_:False] := (
  must[Dimensions[actual] === Dimensions[expected], label];
  zero[actual - expected, label];
  If[positive, must[And @@ (TrueQ[# >= 0] & /@ Flatten[actual]), label]]);

(* Block 2. The multiplier identity; f2 denotes F^2. *)
rr = u + v; ss = 4 u v + t;
kk = 16 (4 u v rr + (u + 4 v) t);
mm = 4 u^2 + 12 u v + 4 v^2 + t;
ds = 16 m v + 16 (q + 1) u;
dim = 2 m + q + 1; deg = 3 + 4 ga;
cc = ga/ss; ee = ga (ga - 1)/ss^2;
vv = ss + 32 cc rr f2 + cc^2 kk f2;
dd = ga (ds + 32 rr)/ss + ga (ga - 1) kk/ss^2;
hssUU = 32 u v^2 + 32 u^2 v + 8 u t - 16 v t + 96 f2;
hssUZ = 128 u^2 + 640 u v + 128 v^2 + 96 t;
hssZZ = 512 u v^3 + 2048 u^2 v^2 + 512 u^3 v
  + 1280 v^2 t + 1920 u v t + 128 u^2 t + 64 t^2;
qq = dd vv - (8 rr + cc kk + 32 cc^2 f2 mm)
  - 2 cc (16 rr + cc kk) (ss + 16 cc rr f2)
  - cc (hssUU + 2 cc f2 hssUZ + cc^2 f2 hssZZ)
  - ee f2 (16 rr + cc kk)^2;
vhat = vv + 4 al (deg + al) f2/rr;
dhat = dd + 2 al (2 deg + dim + 2 al - 2)/rr;
hhat = vv dd - qq + 4 al (deg - 1) vv/rr
  + 4 al^2 deg (deg - 1) f2/rr^2
  + 4 al (deg + 2 al) (vv + 2 al deg f2/rr)/rr
  + 2 al vhat/rr + 4 al (al - 1) (deg + 2 al)^2 f2/rr^2;
qhat = qq + 4 al (deg + al) f2 dd/rr
  + 2 al (dim - 2 deg - 2 al - 1) vv/rr
  + 4 al (deg + al) (deg + 2 al (dim - 1)) f2/rr^2;
zero[vhat dhat - hhat - qhat, "Radial identity"];
zero[D[qhat, {f2, 2}], "Affine dependence on F^2"];
zero[D[qhat, m] - vhat (16 ga v/ss + 4 al/rr), "Derivative in m"];
zero[D[qhat, q] - vhat (16 ga u/ss + 2 al/rr), "Derivative in q"];
report["Master quotient, affine dependence, and propagation: PASS"];

(* Block 3. The three Bernstein matrices and the power obstruction. *)
normalized = {u -> lam, v -> 1 - lam, t -> theta lam^2,
  f2 -> omega theta lam^2 (1 - lam)};
hh = 4 (1 - lam) + theta lam;
pg1 = Together[qhat /. {al -> 0, ga -> 3/4} /. normalized];
pg2 = Together[qhat /. {al -> 1/2, ga -> 1/2} /. normalized];
n0 = 93 lam^2 theta - 48 lam^2 - 128 lam theta + 236 lam - 188
  + m (-12 lam^2 theta + 48 lam^2 + 12 lam theta - 96 lam + 48)
  + q (12 lam^2 theta - 48 lam^2 + 48 lam);
zero[(pg1 /. omega -> 0) hh - n0, "G1 endpoint numerator"];
bg1 = bernstein[(pg1 /. {m -> 8, q -> 3, omega -> 1}) hh^3,
  lam, theta, 4, 3];
matrixCheck[bg1, {{3136, 7448, 11760, 16072}, {16, 1432, 3293, 5599},
  {0, 1108/3, 2855/3, 1747}, {0, 0, 277/2, 1405/4}, {0, 0, 0, 1}},
  "G1 Bernstein matrix", True];
n083 = Expand[n0 /. {m -> 8, q -> 3}];
zero[n083 - lam (33 lam - 32) theta - 192 lam^2 + 388 lam - 196,
  "G1 base numerator"];
zero[(n083 /. theta -> 1) - (15 lam - 14)^2, "G1 square"];
zero[(n083 /. theta -> 0) - 4 (lam - 1) (48 lam - 49), "G1 boundary"];
zero[(pg1 /. {m -> 4, q -> 4, omega -> 0, lam -> 1/3, theta -> 1}) + 1,
  "G1 failure at (4,4)"];
m0 = Cancel[(pg2 /. {m -> 4, q -> 4, omega -> 0}) hh];
zero[m0 - lam (lam^2 theta^2 - 8 lam^2 theta + 16 lam^2
  + 64 lam theta - 64 lam - 48 theta + 48), "G2 endpoint numerator"];
b0 = bernstein[m0, lam, theta, 3, 2];
matrixCheck[b0, {{0, 0, 0}, {16, 8, 0}, {32/3, 16/3, 0}, {0, 4, 9}},
  "G2 omega=0 Bernstein matrix", True];
m1 = Cancel[(pg2 /. {m -> 4, q -> 4, omega -> 1}) hh^3];
b1 = bernstein[m1, lam, theta, 6, 4];
matrixCheck[b1, {{0, 720, 1440, 2160, 2880},
  {128, 864, 4880/3, 2416, 3232},
  {256/3, 4112/5, 14648/9, 12504/5, 51632/15},
  {128/5, 1464/5, 10976/15, 6726/5, 2136},
  {0, 976/15, 1064/5, 7379/15, 4756/5},
  {0, 0, 244/9, 302/3, 814/3}, {0, 0, 0, 2, 9}},
  "G2 omega=1 Bernstein matrix", True];
p43 = Together[qhat /. {m -> 4, q -> 3} /. normalized];
edge = Cancel[p43 /. {theta -> 1, omega -> 0}];
zero[Limit[edge, lam -> 0, Direction -> -1] + 8 (ga - 1) (2 ga - 1),
  "Power obstruction at lambda=0"];
zero[(p43 /. {lam -> 2/3, theta -> 1, omega -> 0})
  + 8/3 (2 al^2 + (8 ga - 5) al + 8 ga^2 - 6 ga + 3),
  "Power obstruction at lambda=2/3"];
report["Three Bernstein matrices and power obstruction: PASS"];

(* Block 4. Jacobi integrals: Gamma values and rational recurrences. *)
sj = 4 u (1 - u) + u^2 theta;
kj = 16 (4 u (1 - u) + (u + 4 (1 - u)) u^2 theta);
integrand[a_, b_] := Expand[(a b + 1/2) kj sj^(a + b - 3/2)
  - 32 sj^(a + b - 1/2) + (81/4 - 16 a b) sj^(a + b + 1/2)];
momentGamma[i_, j_] := Gamma[i + 3] Gamma[3/2]/Gamma[i + 9/2]
  Gamma[j + 3/2] Gamma[1/2]/(Pi Gamma[j + 2]);
um[0] = 16/105; tm[0] = 1/2;
um[i_Integer] /; i > 0 := um[i] = um[i - 1] 2 (i + 2)/(2 i + 7);
tm[j_Integer] /; j > 0 := tm[j] = tm[j - 1] (2 j + 1)/(2 j + 2);
momentRecurrence[i_, j_] := um[i] tm[j];
weighted[poly_, moment_] := Module[{a = CoefficientList[poly, {u, theta}]},
  Total[Flatten[Table[a[[i, j]] moment[i - 1, j - 1],
    {i, Length[a]}, {j, Length[a[[1]]]}]]]];
exponents = {3/4, 7/4};
jacobi = Table[weighted[integrand[a, b], momentGamma],
  {a, exponents}, {b, exponents}];
jacobiRec = Table[weighted[integrand[a, b], momentRecurrence],
  {a, exponents}, {b, exponents}];
expectedJacobi = {{96/715, 471936/1616615},
  {471936/1616615, 571904/969969}};
matrixCheck[jacobi, expectedJacobi, "Jacobi matrix"];
matrixCheck[jacobiRec, expectedJacobi, "Independent Jacobi moments"];
direction = {1, -1/2};
zero[direction.jacobi.direction + 9952/969969, "Negative Jacobi form"];
report["Jacobi matrix and independent rational moments: PASS"];
report["Negative Jacobi value: ", direction.jacobi.direction];

(* Block 5. The quaternionic Clifford model and coordinate contractions. *)
id4 = IdentityMatrix[4]; zz4 = ConstantArray[0, {4, 4}];
ei = {{0, -1, 0, 0}, {1, 0, 0, 0}, {0, 0, 0, -1}, {0, 0, 1, 0}};
ej = {{0, 0, -1, 0}, {0, 0, 0, 1}, {1, 0, 0, 0}, {0, -1, 0, 0}};
block[a_, b_, c_, d_] := Join[MapThread[Join, {a, b}],
  MapThread[Join, {c, d}]];
matrices = Join[{block[id4, zz4, zz4, -id4],
  block[zz4, id4, id4, zz4]}, block[zz4, #, -#, zz4] & /@ {ei, ej, ei.ej}];
Do[zero[matrices[[i]] - Transpose[matrices[[i]]], "Symmetry"];
  zero[Tr[matrices[[i]]], "Trace"];
  Do[zero[matrices[[i]].matrices[[j]] + matrices[[j]].matrices[[i]]
    - 2 If[i == j, 1, 0] IdentityMatrix[8], "Clifford relation"],
    {j, 5}], {i, 5}];
report["Quaternionic Clifford relations: PASS"];
yExamples = {{1, 2, -1, 0, 2, 0, 1, -2}, {1, 0, 0, 0, 1, 0, 0, 0},
  {1, 0, 0, 0, 1, 0, 0, 0}, {0, 0, 0, 0, 0, 0, 0, 0},
  {0, 0, 0, 1, 1, 0, 0, 0}};
zExamples = {{1, -2, 1, 3, -1}, {1, 1, 0, 0, 0}, {1, 0, 0, 0, 0},
  {1, 2, 0, -1, 1}, {0, 0, 0, 0, 0}};
gradient[f_, vars_] := D[f, #] & /@ vars;
hessian[f_, vars_] := Table[D[f, a, b], {a, vars}, {b, vars}];
Do[Module[{ys = Array[y, 8], zs = Array[z, qv + 1], vars, tau,
  cubic, spoly, gf, gs, hf, hs, yval, zval, rules, fval, uv, vv0,
  tv, rv, sv, kv, gfv, gsv, hfv, hsv, residuals},
  vars = Join[ys, zs];
  tau = Table[ys.matrices[[i]].ys, {i, qv + 1}];
  cubic = Expand[tau.zs]; gf = gradient[cubic, vars];
  spoly = Expand[gf.gf]; gs = gradient[spoly, vars];
  hf = hessian[cubic, vars]; hs = hessian[spoly, vars];
  Do[yval = yExamples[[j]]; zval = Take[zExamples[[j]], qv + 1];
    rules = Thread[vars -> Join[yval, zval]];
    fval = cubic /. rules; uv = yval.yval; vv0 = zval.zval;
    tv = (tau.tau) /. rules; rv = uv + vv0; sv = 4 uv vv0 + tv;
    kv = 16 (4 uv vv0 rv + (uv + 4 vv0) tv);
    {gfv, gsv, hfv, hsv} = {gf, gs, hf, hs} /. rules;
    residuals = {gfv.gfv - sv, gsv.gsv - kv, gfv.gsv - 16 rv fval,
      Tr[hfv], Tr[hsv] - 64 vv0 - 16 (qv + 1) uv,
      gfv.hfv.gfv - 8 rv fval, gfv.hfv.gsv - kv/2,
      gsv.hfv.gsv - 32 fval (4 uv^2 + 12 uv vv0 + 4 vv0^2 + tv),
      gfv.hsv.gfv - (32 uv vv0^2 + 32 uv^2 vv0 + 8 uv tv - 16 vv0 tv
        + 96 fval^2),
      gfv.hsv.gsv - fval (128 uv^2 + 640 uv vv0 + 128 vv0^2 + 96 tv),
      gsv.hsv.gsv - (512 uv vv0^3 + 2048 uv^2 vv0^2 + 512 uv^3 vv0
        + 1280 vv0^2 tv + 1920 uv vv0 tv + 128 uv^2 tv + 64 tv^2)};
    Do[zero[residuals[[k]], StringJoin["Ambient q=", ToString[qv],
      ", point=", ToString[j], ", identity=", ToString[k]]],
      {k, Length[residuals]}], {j, 5}];
  report["55 ambient contractions at (m,q)=(4,", qv, "): PASS"]],
  {qv, {3, 4}}];

(* Block 6. Symbolic Hopf identities on the unit sphere. *)
xs = {x1, x2, x3, x4, x5, x6, x7, x8}; norm2 = xs.xs;
hopf = Table[xs.matrices[[i]].xs, {i, 5}];
zero[hopf.hopf - norm2^2, "Hopf norm"];
sphereGrad = Table[2 (matrices[[i]].xs - hopf[[i]] xs), {i, 5}];
Do[zero[sphereGrad[[i]].sphereGrad[[j]]
  - 4 (If[i == j, 1, 0] - hopf[[i]] hopf[[j]])
  - 4 (norm2 - 1) (If[i == j, 1, 0] + hopf[[i]] hopf[[j]]),
  "Hopf spherical gradient identity"], {i, 5}, {j, i, 5}];
report["Symbolic Hopf norm and spherical gradient identities: PASS"];
\end{lstlisting}

\endgroup

\end{document}